   \DeclareFontFamily{U}{wncy}{}
    \DeclareFontShape{U}{wncy}{m}{n}{<->wncyr10}{}
    \DeclareSymbolFont{mcy}{U}{wncy}{m}{n}
    \DeclareMathSymbol{\Sha}{\mathord}{mcy}{"58}
\newcommand{\defnword}[1]{\textbf{#1}}
\newcommand{\comment}[1]{}
\newcommand{\on}[1]{\operatorname{#1}}
\DeclareSymbolFontAlphabet{\mathbbl}{bbold}
\DeclareSymbolFontAlphabet{\mathbb}{AMSb}
\numberwithin{equation}{subsubsection}
\newtheorem{introthm}{Theorem}
\newtheorem{introprop}{Proposition}
\newtheorem{proposition}[subsubsection]{Proposition}
\newtheorem{theorem}[subsubsection]{Theorem}
\newtheorem*{thm*}{Theorem}
\newtheorem{lemma}[subsubsection]{Lemma}
\newtheorem{sublemma}[subsubsection]{Sublemma}
\newtheorem*{lem*}{Lemma}
\newtheorem{corollary}[subsubsection]{Corollary}
\newtheorem{observation}[subsubsection]{Observation}
\theoremstyle{definition}
\newtheorem{definition}[subsubsection]{Definition}
\newtheorem{construction}[subsubsection]{Construction}
\newtheorem{example}[subsubsection]{Example}
\newtheorem{introrem}{Remark}
\newtheorem{remark}[subsubsection]{Remark}
\newtheorem{notation}[subsubsection]{Notation}
\newtheorem{assumption}[subsubsection]{Assumption}
\newtheorem*{assump*}{Assumption}
\newcommand{\Square}[8]{\begin{diagram}
  #1&\rTo^{#2}&#3\\
  \dTo^{#4}&&\dTo_{#5}\\
  #6&\rTo_{#7}&#8
\end{diagram}
}
\newcommand{\Int}{\mathbb{Z}}
\newcommand{\Aff}{\mathbb{A}}
\newcommand{\Adele}{\mathbb{A}}
\newcommand{\Field}{\mathbb{F}}
\newcommand{\Rat}{\mathbb{Q}}
\newcommand{\Lie}{\on{Lie}}
\newcommand{\Dieu}{\mathbb{D}}
\newcommand{\Map}{\on{Map}}
\newcommand{\BT}[2][G,\mu]{\mathrm{BT}^{#1}_{#2}}
\newcommand{\Wind}[3][G,\mu]{\mathsf{Wind}^{#1}_{#2,#3}}
\newcommand{\Disp}[2][G,\mu]{\mathsf{Disp}^{#1}_{#2}}
\newcommand{\dbqt}[2]{\tensor[_{#1}]{\sslash}{_{#2}}}
\newcommand{\Prism}{\mathbbl{\Delta}}
\newcommand{\mup}[1][p]{\mbox{$\raisebox{-0.59ex}
  {$l$}\hspace{-0.18em}\mu\hspace{-0.88em}\raisebox{-0.98ex}{\scalebox{2}
  {$\color{white}.$}}\hspace{-0.416em}\raisebox{+0.88ex}
  {$\color{white}.$}\hspace{0.46em}_{#1}$}{}}
\newcommand{\Rees}{\mathcal{R}}
\newcommand{\defn}{\overset{\mathrm{defn}}{=}}
\newcommand{\oneb}{1\text{-bdd}}
\newcommand{\modpI}{\heartsuit}
\newcommand{\preoneb}{\diamondsuit}
\newcommand{\defbase}{\hbar}
\newcommand{\degzero}[1]{{#1}^\flat}
\newcommand{\colim}{\on{colim}}
\newcommand{\im}{\on{im}}
\newcommand{\gr}{\on{gr}}
\newcommand{\Rg}{\mathscr{O}}
\newcommand{\Reg}[1]{\Rg_{#1}}
\newcommand{\Hom}{\on{Hom}}
\newcommand{\End}{\on{End}}
\newcommand{\Ext}{\on{Ext}}
\newcommand{\Aut}{\on{Aut}}
\newcommand{\Sym}{\on{Sym}}
\newcommand{\Gm}{\mathbb{G}_m}
\newcommand{\Gmh}[1]{\mathbb{G}_{m,#1}}
\newcommand{\Ga}{\mathbb{G}_a}
\newcommand{\Pic}{\on{Pic}}
\newcommand{\crys}{\on{crys}}
\newcommand{\dR}{\on{dR}}
\newcommand{\pow}[1]{[\vert#1\vert]}
\newcommand{\Mod}[2][ ]{\on{Mod}_{#2}^{#1}}
\newcommand{\Spec}{\on{Spec}}
\newcommand{\Spf}{\on{Spf}}
\newcommand{\et}{\on{\acute{e}t}}
\newcommand{\Fzip}{F\mathrm{Zip}}
\newcommand{\Fil}{\on{Fil}}
\newcommand{\mx}{\mathfrak{m}}
\newcommand{\rank}{\on{rank}}
\newcommand{\Sh}{\on{Sh}}
\newcommand{\Ss}{\mathcal{S}}
\newcommand{\Res}{\on{Res}}
\newcommand{\hker}{\operatorname{hker}}
\newcommand{\hcoker}{\operatorname{hcoker}}
\newcommand{\ad}{\on{ad}}
\newcommand{\GSp}{\on{GSp}}
\newcommand{\GL}{\on{GL}}
\newcommand{\op}{\on{op}}
\begin{document}
\title[Conjectures of Drinfeld]{An algebraicity conjecture of Drinfeld and the moduli of $p$-divisible groups}
\author{Zachary Gardner}
\address{Zachary Gardner\\
Department of Mathematics\\
Maloney Hall\\
Boston College\\
Chestnut Hill, MA 02467\\
USA}
\email{gardneza@bc.edu}
\author{Keerthi Madapusi}
\address{Keerthi Madapusi\\
Department of Mathematics\\
Maloney Hall\\
Boston College\\
Chestnut Hill, MA 02467\\
USA}
\email{madapusi@bc.edu}

\begin{abstract}
We use the newly developed stacky prismatic technology of Drinfeld and Bhatt-Lurie to give a uniform, group-theoretic construction of smooth stacks $\mathrm{BT}^{G,\mu}_{n}$ attached to a smooth affine group scheme $G$ over $\mathbb{Z}_p$ and $1$-bounded cocharacter $\mu$, verifying a recent conjecture of Drinfeld. This can be viewed as a refinement of results of B\"ultel-Pappas, who gave a related construction using $(G,\mu)$-displays defined via rings of Witt vectors. We show that, when $G = \mathrm{GL}_h$ and $\mu$ is a minuscule cocharacter, these stacks are isomorphic to the stack of truncated $p$-divisible groups of height $h$ and dimension $d$ (the latter depending on $\mu$). This gives a generalization of results of Ansch\"utz-Le Bras, yielding a linear algebraic classification of $p$-divisible groups over very general $p$-adic bases, and verifying another conjecture of Drinfeld.

The proofs use deformation techniques from derived algebraic geometry, combined with an animated variant of Lau's theory of higher frames and displays, and---with a view towards applications to the study of local and global Shimura varieties---actually prove representability results for a wide range of stacks whose tangent complexes are $1$-bounded in a suitable sense. As an immediate application, we prove algebraicity for the stack of perfect $F$-gauges of Hodge-Tate weights $0,1$ and level $n$.
\end{abstract}

\maketitle

\tableofcontents

\section{Introduction}

The goal of this paper is to prove two recent conjectures of Drinfeld~\cite{drinfeld2023shimurian}. The first of these has to do with a Dieudonn\'e theory for $p$-divisible groups over arbitrary $p$-adic formal schemes; that is, we aim to describe $p$-divisible groups, or more generally truncated $p$-divisible groups or Barsotti-Tate groups, in terms of linear algebraic data. For the purposes of this paper, this last phrase means a subcategory of vector bundles on a formal stack, though it has historically taken the form of a description in terms of modules equipped with a Frobenius semi-linear map along with certain additional structures. 

The stacks we consider here arose in recent work of Bhatt-Lurie~\cites{bhatt2022absolute,bhatt2022prismatization,bhatt_lectures} and Drinfeld~\cite{drinfeld2022prismatization}. These authors have shown that one can associate with every $p$-adic formal scheme $X$ a $p$-adic formal stack\footnote{This is actually a \emph{derived} formal stack that is in general not a classical object. We will attempt to ignore this fact in this introduction.} $X^{\mathrm{syn}}$, its \emph{syntomification}, whose coherent cohomology computes the $p$-adic syntomic cohomology of $X$. If $X = \Spf R$ is affine, we will also denote this by $R^{\mathrm{syn}}$.\footnote{We have adopted this notation from the lecture notes of Bhatt~\cite{bhatt_lectures}.} Vector bundles of rank $h$ on this stack  and its mod-$p^n$ fibers---which are examples of objects known as $F$-\emph{gauges over $X$}---have a natural $h$-tuple of locally constant integer valued functions on $X$ associated with them: these are the \emph{Hodge-Tate weights}. We prove:

\begin{introthm}
\label{introthm:dieudonne}
Let $\mathcal{BT}_n(X)$ be the category of $n$-truncated Barsotti-Tate groups over $X$~\cite{MR0801922}, and let $\mathrm{Vect}_{\{0,1\}}(X^{\mathrm{syn}})$ be the category of vector bundles on $X^{\mathrm{syn}}\otimes\Int/p^n\Int$ with Hodge-Tate weights in $\{0,1\}$. Then there is a canonical equivalence of categories
\[
\mathcal{G}_n:\mathrm{Vect}_{\{0,1\}}(X^{\mathrm{syn}}\otimes\Int/p^n\Int)\xrightarrow{\simeq}\mathcal{BT}_n(X)
\]
compatible with Cartier duality.
\end{introthm}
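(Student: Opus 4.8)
The plan is to deduce the theorem from an isomorphism of smooth $p$-adic formal algebraic stacks over $\Spf\Int_p$. Under the running hypothesis on $X$, the groupoid $\mathcal{BT}_n(X)$ is the value at $X$ of (the $p$-completion of) Illusie's moduli stack $\mathcal{BT}_n$ of $n$-truncated Barsotti-Tate groups, while $\mathrm{Vect}_{\{0,1\}}(X^{\mathrm{syn}}\otimes\Int/p^n\Int)$ is the value at $X$ of $\bigsqcup_\mu\BT[\GL_h,\mu]{n}$, the stack of perfect $F$-gauges of Hodge-Tate weights $0,1$ and level $n$ over $\GL_h$ (the union being over minuscule $\mu$, i.e.\ over dimensions $d$), whose algebraicity and smoothness are among the results announced above; the point is that, the Hodge-Tate weights lying in $\{0,1\}$, such an $F$-gauge is exactly a minuscule prismatic $F$-crystal of rank $h$, with its Nygaard/gauge structure pinned down by the Hodge filtration. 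Both functors are insensitive to $p$-completing $R$, so it suffices to produce a natural equivalence of these two stacks, compatibly in $n$; the assertion of $\mathcal{G}_n$ for general $X$ then follows by quasisyntomic descent, and the canonicity is naturality in $R$.

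First I would construct the morphism $\mathcal{G}_n\colon\bigsqcup_\mu\BT[\GL_h,\mu]{n}\to\mathcal{BT}_n$. Over a quasiregular semiperfectoid ring $R$ the syntomification is explicit and, by the previous remark, a vector bundle on $R^{\mathrm{syn}}\otimes\Int/p^n\Int$ of weights in $\{0,1\}$ is a minuscule Breuil-Kisin module over $\Prism_R$ modulo $p^n$. In the $p$-torsion-free case ($n=\infty$) these are precisely the prismatic Dieudonn\'e modules of Ansch\"utz-Le Bras, classifying $p$-divisible groups over $R$; in stacky form this is an equivalence $\bigsqcup_\mu\BT[\GL_h,\mu]{\infty}\simeq\mathcal{BT}$. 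Since both $\BT[\GL_h,\mu]{\infty}\to\BT[\GL_h,\mu]{n}$ and $\mathcal{BT}\to\mathcal{BT}_n$ are smooth and surjective (the latter by Illusie, the former among the results above), this equivalence descends along the two covers to the desired $\mathcal{G}_n$, naturally and compatibly in $n$.

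To see $\mathcal{G}_n$ is an isomorphism it suffices, both stacks being $p$-complete and algebraic of finite presentation over $\Int_p$, to check that $\mathcal{G}_n$ is \'etale and induces an equivalence of groupoids on $\overline{\Field}_p$-points---an \'etale morphism of $p$-complete algebraic stacks that is bijective on geometric points is an isomorphism. The $\overline{\Field}_p$-point statement, including the matching of automorphism groups, is classical contravariant Dieudonn\'e theory modulo $p^n$ over a perfect field. For \'etaleness it is enough, the stacks being smooth of finite presentation, that $\mathcal{G}_n$ induce an isomorphism on tangent complexes at every point; here one compares Illusie's deformation complex of an $n$-truncated Barsotti-Tate group (in terms of its truncated cotangent complex and Lie algebra) with the level-$n$ syntomic cohomology $\mathrm{R}\Gamma_{\mathrm{syn}}$ of $\SEnd(E)$ suitably twisted down (so that only its degree-$\leq 1$ part contributes), using the comparison of syntomic with prismatic and de Rham cohomology for the universal object. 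I expect this tangent-complex comparison to be the main obstacle: it is exactly where $1$-boundedness of the tangent complex is invoked, and it amounts to identifying the prismatic/crystalline obstruction theory of truncated Barsotti-Tate groups with the fppf one on the nose and functorially---which is what the animated higher-frames-and-displays formalism is designed to accomplish.

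Finally, compatibility with Cartier duality: the Cartier dual of an $n$-truncated Barsotti-Tate group of height $h$ and dimension $d$ is again one, of dimension $h-d$, so the involution to match on the linear side is $E\mapsto\dual{E}\otimes\Rg^{\mathrm{syn}}\{1\}$, the Breuil-Kisin twist, which preserves the condition ``weights in $\{0,1\}$'' while interchanging the weights $0$ and $1$. I would verify that $\mathcal{G}_n$ intertwines the two involutions by reducing, via the descent above, to a quasiregular semiperfectoid base, where both sides are computed on Breuil-Kisin modules and the identity to be checked is the duality statement of Ansch\"utz-Le Bras; checking it on $\overline{\Field}_p$-points, where it is once more classical, serves as a consistency check.
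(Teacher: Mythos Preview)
Your overall strategy---reduce to an isomorphism of smooth $p$-adic formal stacks---matches the paper's, but your execution diverges at two key points and has a genuine gap.

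\textbf{Construction of $\mathcal{G}_n$.} Your ``descent'' argument does not work as written: having an equivalence $\Phi_\infty:\BT[\GL_h,\mu]{\infty}\xrightarrow{\simeq}\mathcal{BT}^{h,d}$ together with smooth surjections down to level $n$ on each side does \emph{not} produce a map at level $n$. You would need to know that $\Phi_\infty$ intertwines the two truncation maps, i.e.\ that two level-$\infty$ $F$-gauges agreeing mod $p^n$ yield $p$-divisible groups with isomorphic $p^n$-torsion. This is plausible from the Ansch\"utz--Le~Bras construction, but it is exactly the compatibility you are trying to establish, and invoking it here is circular. The paper avoids this entirely: $\mathcal{G}_n$ is defined \emph{directly} at level $n$ by $\mathcal{G}_n(\mathcal{M})(C)=\tau^{\le 0}R\Gamma(C^{\mathrm{syn}}\otimes\Int/p^n\Int,\mathcal{M})$, and a substantial part of the argument (Theorem~\ref{thm:g_functor}) is proving that this syntomic cohomology is in fact represented by an $n$-truncated Barsotti--Tate group of the correct height and dimension.

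\textbf{Showing $\mathcal{G}_n$ is an equivalence.} Your route (\'etale plus bijection on geometric points) is different from the paper's. The paper constructs an explicit inverse $\mathcal{M}$ using Mondal's functor on qrsp inputs (Proposition~\ref{prop:alb_mondal}), then verifies directly that the unit $\mathrm{id}\to\mathcal{G}\circ\mathcal{M}$ and counit $\mathcal{M}\circ\mathcal{G}\to\mathrm{id}$ are isomorphisms. The crucial ingredient is Cartier duality (Theorem~\ref{thm:cartier_duality}): the map $\mathcal{G}_n(\mathcal{M}^*)\to\mathcal{G}_n(\mathcal{M})^*$ is built from the Bhatt--Lurie identification $\mathcal{G}_n(\mathcal{O}_n\{1\})\simeq\mup[p^n]$, and its being an isomorphism is proved by reducing to universal deformation rings $\kappa\pow{t_1,\ldots,t_{d(h-d)}}$ and invoking de~Jong's results on isogenies over equicharacteristic DVRs. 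Once Cartier duality is in hand, the counit argument is a short manipulation (Lemma~\ref{lem:G_functor_conservative}). Your tangent-complex comparison---matching Illusie's deformation complex with syntomic cohomology of $\SEnd(\mathcal{M})$---is not carried out in the paper and would itself be a nontrivial theorem; you correctly flag it as the main obstacle, but you do not resolve it.

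In short: the paper's route is ``syntomic-cohomology functor $+$ explicit inverse $+$ Cartier duality via de~Jong'', whereas yours is ``descend from $n=\infty$ $+$ \'etaleness via deformation theory''. The first step of yours has a gap, and the second is left as an obstacle.
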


\begin{introrem}
\label{introrem:historical}
Here are some (very incomplete) historical remarks, though see also~\cite[\S 7]{drinfeld2024shimurian}. We will write $\mathcal{BT}(X)$ for the category of $p$-divisible groups over $X$:
\begin{itemize}
   \item The first attempt at a complete description of this category for a particular $X$ was probably by Dieudonn\'e-Manin~\cite{MR0157972}, where $X = \Spec \kappa$ with $\kappa$ a perfect field of characteristic $p$. 
   \item A uniform proof of a description of $\mathcal{BT}(\Spec\kappa)$ in terms of Dieudonn\'e modules was given by Fontaine~\cite{MR0498610}.
   \item A general construction of a \emph{crystalline} Dieudonn\'e functor was given in~\cite{bbm:cris_ii}, and this was used by de Jong~\cite{dejong:formal_rigid}---building on subsequent work of Berthelot-Messing~\cite{Berthelot1990-gp}---to exhibit an equivalence between $p$-divisible groups and Dieudonn\'e $F$-crystals over formally smooth formal schemes over $\Field_p$ whose reduced scheme is of finite type over a field with finite $p$-basis.
   \item When $X = \Spec R$ with $R$ a perfect $\Field_p$-algebra, a form of Theorem~\ref{introthm:dieudonne} is due to Gabber and Lau~\cite{MR2983008}: One can show that $\mathrm{Vect}_{\{0,1\}}(X^{\mathrm{syn}}\otimes\Int/p^n\Int)$ is equivalent to a category of finite locally free $W_n(R)$-modules equipped with certain additional structures appearing in \emph{loc. cit.}
   \item When $X = \Spf R$ for $p$-complete $R$, $p$-divisible \emph{formal} groups have been classified by Zink~\cite{MR1827031} and Lau~\cite{MR2983008} in terms of Witt vector displays.

   \item When $X$ is quasisyntomic, Ansch\"utz and Le Bras demonstrated in~\cite{MR4530092} an equivalence of categories between $\mathcal{BT}(X)$ and a certain category of \emph{admissible} $\varphi$-modules over a sheaf of rings $\mathcal{O}^{\mathrm{pris}}$ obtained using prismatic cohomology. One can once again reformulate their result as proving Theorem~\ref{introthm:dieudonne} for such rings. See also the recent papers of Guo-Li~\cite{guo2023frobenius} and Mondal~\cite{Mondal2024-cy}, where a similar connection is made. Mondal actually proves a classification theorem for all finite locally free $p$-power torsion commutative group schemes over quasisyntomic $X$ in terms of $F$-gauges.

   \item Perhaps the most general existing results are those of Lau in~\cite{lau2018divided}, where one finds a classification of $p$-divisible groups over many $\Field_p$-algebras, including all schemes of finite type over a field with finite $p$-basis, and, more generally, any Noetherian $F$-finite $\Field_p$-algebra. For $p>3$, we can bootstrap this to a classification over $p$-nilpotent bases lifting such $\Field_p$-algebras.

   \item Therefore, the main content of the above theorem is its validity for all $p$-adic formal schemes, as well as for not necessarily formal $p$-divisible groups. Our proofs are uniform, without consideration of special cases, and are largely independent of previous classifications: see Remark~\ref{rem:mondal_not_needed} in the body of the paper.
\end{itemize}
\end{introrem}

\begin{introrem}
The compatibility with Cartier duality takes the following shape: There is a canonical object $\mathcal{O}^{\mathrm{syn}}\{1\}$ in $\mathrm{Vect}_{\{0,1\}}(X^{\mathrm{syn}})$ of rank $1$, the \emph{Breuil-Kisin twist}, which we can tensor with any vector bundle $\mathcal{M}$ over $X^{\mathrm{syn}}\otimes\Int/p^n\Int$ to obtain the twist $\mathcal{M}\{1\}$. If $\mathcal{M}$ has Hodge-Tate weights $0,1$, then so does $\mathcal{M}^\vee\{1\}$, and we now have a canonical isomorphism of truncated Barsotti-Tate groups
\[
\mathcal{G}_n(\mathcal{M}^\vee\{1\})\xrightarrow{\simeq}\mathcal{G}_n(\mathcal{M})^*,
\]
where the right hand side is the Cartier dual of $\mathcal{G}_n(\mathcal{M})$.
\end{introrem}

\begin{introrem}
\label{introrem:syn_cohom}
One striking feature of the theorem to those familiar with Dieudonn\'e theory hitherto is the natural direction of the functor realizing the equivalence. Usually, one associates linear algebraic objects with $p$-divisible groups and their truncations. Here, our functor $\mathcal{G}_n$ goes in the \emph{other} direction and associates truncated Barsotti-Tate groups with objects that are more linear algebraic in nature. Its definition is in terms of syntomic cohomology: That is, for any map $f:\Spf C\to X$, we set
\[
\mathcal{G}_n(\mathcal{M})(C) = \tau^{\le 0}R\Gamma(C^{\mathrm{syn}}\otimes\Int/p^n\Int,(f^{\mathrm{syn}})^*\mathcal{M}).
\]
As such it is completely canonical, compatible with arbitrary base-change and satisfies quasisyntomic descent. The proof will in fact show that it satisfies fpqc descent.
\end{introrem}

\begin{introrem}
As noted above, in~\cite{Mondal2024-cy}, Mondal extends the results of Ansch\"utz-Le Bras and shows that the category of finite flat group schemes over quasisyntomic formal schemes is equivalent to a certain subcategory of the category of perfect $F$-gauges of Hodge-Tate weights $\{0,1\}$ and Tor amplitude $[-1,0]$. In forthcoming work~\cite{madapusi_mondal}, this will be generalized to the same context as that found in Theorem~\ref{introthm:dieudonne}.
\end{introrem}

\begin{introrem}
As a prior footnote observed, $X^{\mathrm{syn}}$ is not in general a classical object, and, correspondingly, the category of vector bundles on $X^{\mathrm{syn}}$ is in general an \emph{$\infty$-category} that is not classical. However, the theorem shows that the subcategory spanned by the objects with Hodge-Tate weights in $\{0,1\}$ \emph{is} classical. 

It is possible that this is because this category depends only on the classical truncation of $X^{\mathrm{syn}}$, though we have not been able to verify this here. We do know that this is the case if one restricts to the subcategory spanned by the $F$-gauges that satisfy a certain nilpotence condition: this follows from Remark~\ref{rem:nilpotent_locus_general} in the body of the paper. This subcategory corresponds via the equivalence of Theorem~\ref{introthm:dieudonne} to that spanned either by the truncated Barsotti-Tate groups that do not admit any non-trivial \'etale quotients---or (up to Cartier duality) by those that do not admit any non-trivial multiplicative subgroups---at any geometric point.
\end{introrem}

\subsection{Method of proof}

Our proof is geometric in nature. Its starting point is the fundamental result of Grothendieck that the stack $\mathrm{BT}_n$ of $n$-truncated Barsotti-Tate groups is a smooth $p$-adic formal Artin stack~\cite{MR0801922}.\footnote{One can circumvent the use of Grothendieck's theorem, and in fact get an alternate proof of it, by making use of the classification results of Lau from~\cite{lau2018divided}. See Remark~\ref{rem:mondal_not_needed}.} We begin by showing the following analogue of Grothendieck's theorem:
\begin{introthm}
\label{introthm:vect_repble}
The assignment\footnote{We write $\mathcal{C}_\simeq$ for the underlying groupoid of any ($\infty$-)category $\mathcal{C}$.}
\[
X\mapsto \mathrm{Vect}_{\{0,1\}}(X^{\mathrm{syn}}\otimes\Int/p^n\Int)_{\simeq}
\]
is represented by a smooth $p$-adic formal Artin stack over $\Int_p$.
\end{introthm}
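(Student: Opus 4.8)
The plan is to recognise the functor of Theorem~\ref{introthm:vect_repble} as the $(\GL_h,\mu)$-display stack attached to the minuscule cocharacter $\mu$ of $\GL_h$ of weight $d$, and then to prove that this display stack --- the analogue here of Grothendieck's theorem that $\mathrm{BT}_n$ is smooth --- is a smooth $p$-adic formal Artin stack by verifying the hypotheses of the Artin--Lurie representability theorem over each $\Int/p^n\Int$ and assembling the results as $n$ varies. Write $\mathcal{V}_n(R) = \mathrm{Vect}_{\{0,1\}}(\Spf(R)^{\mathrm{syn}}\otimes\Int/p^n\Int)_\simeq$ for an animated $p$-nilpotent ring $R$; by the stacky prismatic machinery this makes sense functorially, and for $G=\GL_h$ it is a model for the display stack $\BT{n}$ of this paper. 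The first step is bookkeeping: attach to $R$ its \emph{syntomic (higher) frame} $\mathfrak{F}_R$ --- an animated cosimplicial ring with Nygaard filtration and Frobenius whose totalisation recovers $\Spf(R)^{\mathrm{syn}}$ with its $\Gm$-action --- and identify $\mathcal{V}_n(R)$ with the groupoid $\Disp{n}(\mathfrak{F}_R)_\simeq$ of $n$-truncated $(\GL_h,\mu)$-displays over $\mathfrak{F}_R$, so that it suffices to treat display stacks over frames of this kind.

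Next I would clear the soft hypotheses. Faithfully flat (indeed fpqc) descent for $\mathcal{V}_n$ is inherited from descent for $R\mapsto\Spf(R)^{\mathrm{syn}}$ and for vector bundles on a stack; infinitesimal cohesiveness and nilcompleteness are likewise inherited, since the syntomification sends Milnor squares (resp. Postnikov towers) of animated rings to the corresponding pushouts (resp. limits) of stacks, and the restriction to Hodge--Tate weights $\{0,1\}$ already intervenes here, collapsing the a priori $\infty$-categorical $\mathrm{Vect}(-)$ to an ordinary groupoid. Local finite presentation is subtler, because the syntomification does not commute with all filtered colimits of animated rings; the point is that, once one restricts to weight-$\{0,1\}$ objects, the relevant categories do commute with filtered colimits, so $\mathcal{V}_n$ is locally of finite presentation and, over $\Int/p^n\Int$-algebras, is determined by its values on animated finite-type ones. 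Finally one needs integrability: for a complete Noetherian local $\Int_p$-algebra $A$ with maximal ideal $\mathfrak{m}$ the map $\mathcal{V}_n(A)\to\lim_k\mathcal{V}_n(A/\mathfrak{m}^k)$ should be an equivalence, which I would deduce from a Grothendieck-existence statement for coherent cohomology on the syntomification --- here the weight-$\{0,1\}$ hypothesis forces the $\mathcal{O}$-modules that occur to be coherent with cohomology in a bounded range of degrees, which is exactly what makes the $\mathfrak{m}$-adic limit converge.

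The heart of the matter is the cotangent complex, since producing a perfect cotangent complex of Tor-amplitude $[-1,0]$ simultaneously makes the representability theorem apply and forces smoothness --- and hence, as $\Int_p$ is classical, classicality of $\mathcal{V}_n$. Fix an $R$-point corresponding to an $F$-gauge $\mathcal{M}$ of weights $\{0,1\}$. Working over the frame $\mathfrak{F}_R$, the deformation theory of an $n$-truncated display is governed by the hypercohomology of an explicit two-term ``linearised adjoint'' complex: first-order deformations form an $H^1$, infinitesimal automorphisms the corresponding $H^0$, and the obstruction space is an $H^2$ whose vanishing is equivalent to the claim --- the \emph{$1$-boundedness} of the syntomic frame --- that the adjoint $F$-gauge $\SEnd(\mathcal{M})$ (which has weights $\{-1,0,1\}$) has syntomic cohomology concentrated in degrees $[0,1]$ relative to the base. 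Concretely, on each prismatic chart of $\Spf(R)^{\mathrm{syn}}\otimes\Int/p^n\Int$ the display datum is a point of a Grassmannian of Hodge filtrations times an affine variety of admissible Frobenii, modulo $\GL_h$ of the chart, so $[\textrm{smooth}/\textrm{smooth}]$ and manifestly two-term; the $1$-boundedness estimate --- proved using the Bhatt--Lurie descriptions of the cotangent complexes of $R^{\Prism}$, $R^{\mathrm{HT}}$ and $R^{\mathcal{N}}$ together with a weight count --- ensures that the totalised complex keeps Tor-amplitude $[-1,0]$ and commutes with base change. Lurie's criterion then produces the smooth formal Artin stack.

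The main obstacle is precisely this $1$-boundedness estimate, and it is genuinely a phenomenon of \emph{low} Hodge--Tate weight: the cohomology of a general vector bundle on $\Spf(R)^{\mathrm{syn}}$ is unbounded, and it is the restriction to weights in $\{0,1\}$ --- hence to weights in $\{-1,0,1\}$ after passing to $\SEnd$ --- that collapses the deformation complex to two terms. For $n>1$ one cannot reduce to the mod-$p$ fibre, since $\Spf(R)^{\mathrm{syn}}\otimes\Int/p^n\Int$ is a genuine gerbe-like thickening of its reduction; I expect to handle this by d\'evissage along the $p$-adic filtration, reducing to $n=1$ where the Hodge--Tate/Sen-operator description is cleanest. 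The second delicate point, with no counterpart in the quasisyntomic setting of Ansch\"utz--Le Bras, is integrability together with the failure of the syntomification to commute with colimits; both are ultimately controlled by the same coherence-and-degree bounds that the weight restriction supplies.
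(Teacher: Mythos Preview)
Your instinct that $1$-boundedness is the crux is right, and the identification with $\BT[\GL_h,\mu_d]{n}$ is exactly what the paper does (Proposition~\ref{prop:vect_to_BT}). But two of the steps you declare ``inherited'' are not, and the cotangent-complex argument is circular.

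First, the claim that infinitesimal cohesiveness and nilcompleteness follow because ``syntomification sends Milnor squares to pushouts'' is unjustified. Transmutation makes $R\mapsto R^{\mathrm{syn}}$ send \emph{colimits} of animated rings to limits of stacks; Milnor squares are \emph{pullbacks} of rings, and there is no formal reason $(-)^{\mathrm{syn}}$ converts those to pushouts. The paper never asserts this. Instead it passes by quasisyntomic descent to semiperfectoid $R$, where $R^{\mathcal{N}}$ is the Rees stack of Nygaard-filtered $\Prism_R$ (Theorem~\ref{thm:semiperf_crys}), and then invokes the filtered-integrability result Proposition~\ref{prop:1_bounded_cartesian}---which genuinely needs the $1$-bounded hypothesis---to control sections under filtered square-zero extensions (Lemma~\ref{lem:cartesian_square_bootstrapping_needed}). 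The $p$-adic Tot-descent statement Proposition~\ref{prop:devissage} is then proved by hand via the Hodge--Tate locus, not by any general cohesiveness of syntomification. Your side remark that the weight restriction ``collapses $\mathrm{Vect}(-)$ to an ordinary groupoid'' is a \emph{consequence} of the theorem, not an available input.

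Second, the sketch for the cotangent complex---``on each prismatic chart the display is $[\text{smooth}/\text{smooth}]$, so the totalised complex keeps Tor-amplitude $[-1,0]$''---does not work: totalizing a cosimplicial diagram of two-term complexes over charts has no reason to stay two-term, and bounding the syntomic cohomology of $\SEnd(\mathcal{M})$ in degrees $[0,1]$ is essentially equivalent to the representability you are trying to prove. The paper's route is different and concrete: it first proves representability of $\BT{1}\otimes\Field_p$ by d\'evissage to the stack $\Disp{1}$ of $F$-zips (Theorem~\ref{thm:devissage_to_fzips}), identifying the fibers as gerbes banded by a height-one finite flat group scheme via the Bragg--Olsson theorem on fppf cohomology (Theorem~\ref{thm:repbility_height_one}). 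Representability over $\Field_p$ is thus obtained \emph{before} any cotangent estimate; bootstrapping in $n$ (Proposition~\ref{prop:bootstrapping_coeffs}) and in the base (Proposition~\ref{prop:devissage}) follows, and only then does Grothendieck--Messing (Corollary~\ref{cor:groth_messing}) identify the cotangent complex as $\varpi^*\mathbb{L}_{BP_\mu^{-,(n)}/BG^{(n)}}$. The paper does note (Remark~\ref{rem:artin-lurie_alternative}) that Artin--Lurie can be invoked, but only once mod-$p$ representability and Tot descent are already in hand---so your integrability-via-Grothendieck-existence step is also bypassed.
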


Theorem~\ref{introthm:dieudonne} can be reduced to the assertion that this $p$-adic formal Artin stack---which we will denote for the purposes of this introduction by $\mathrm{Vect}^{\mathrm{syn}}_{\{0,1\},n}$---is canonically isomorphic to $\mathrm{BT}_n$. To construct this isomorphism, we need another representability result.

\begin{introthm}
\label{introthm:vect_sections_repble}
For any $\mathcal{M}$ in $\mathrm{Vect}_{\{0,1\}}(X^{\mathrm{syn}})$ the functor $\mathcal{G}_n(\mathcal{M})$ on formal schemes over $X$ given for $f:\Spf C\to X$ by
\[
\mathcal{G}_n(\mathcal{M})(C) = \tau^{\le 0}R\Gamma(C^{\mathrm{syn}}\otimes\Int/p^n\Int,(f^{\mathrm{syn}})^*\mathcal{M})
\]
is represented by a truncated Barsotti-Tate group scheme over $X$.
\end{introthm}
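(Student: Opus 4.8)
The plan is to identify $\mathcal{G}_n(\mathcal{M})$ with a truncated Barsotti--Tate group by first understanding its structure as a derived group functor, then proving it is classical, flat, and $p$-power torsion of the right height, and finally checking the Barsotti--Tate condition pointwise. Since the assertion is local on $X$, we may assume $X = \Spf R$ with $\Omega^1_{R/pR}$ finitely generated. The key input is the Bhatt--Lurie description of $R^{\mathrm{syn}}\otimes\Int/p^n\Int$ as glued from the Hodge--Tate and ``de Rham'' pieces via the filtered/graded incarnation of prismatic cohomology: pulling a weight-$\{0,1\}$ vector bundle $\mathcal{M}$ back along $f^{\mathrm{syn}}$ and taking $\tau^{\le 0}R\Gamma$ should, by the standard descent spectral sequence for the syntomification, produce a complex concentrated in degrees $[-1,0]$ whose $H^0$ and $H^{-1}$ are controlled by the Hodge--Tate weight filtration. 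First I would make this precise: write down the fiber sequence computing $R\Gamma(C^{\mathrm{syn}}\otimes\Int/p^n\Int, -)$ in terms of prismatic and Hodge--Tate cohomology (the ``syntomic = fixed points of Frobenius on filtered prismatic'' picture), apply it to $(f^{\mathrm{syn}})^*\mathcal{M}$, and observe that because $\mathcal{M}$ has Hodge--Tate weights in $\{0,1\}$ the relevant prismatic complexes are themselves concentrated in degree $0$, so the only contributions to $\tau^{\le 0}$ come from the equalizer of two maps between rank-$h$ (resp. rank-$d$) locally free modules, giving a two-term complex and hence a commutative group functor valued in grouplike objects, functorial and base-change compatible by construction.

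Next I would prove that this group functor is \emph{classical} (i.e. takes discrete values on discrete $C$) and in fact representable by a finite locally free group scheme. Here the natural strategy is to reduce to a universal/affine situation and use the representability machinery promised by the paper's ``animated higher frames and displays'' technology: $\mathcal{G}_n(\mathcal{M})(C)$ is the $C$-points of the $\Int/p^n$-linear ``$(G,\mu)$-display'' attached to $\mathcal{M}$ with $G = \GL_h$ and $\mu$ minuscule, and one has a canonical $1$-bounded tangent complex computation showing the deformation functor is unobstructed with tangent space a finite free module of the expected rank $= d(h-d)$. Combined with the fact (which I would extract from the structure of $R^{\mathrm{syn}}$ and the weight hypothesis) that $H^{-1}$ vanishes when $C$ is, say, $p$-torsion-free and more generally is killed by $p^n$, one gets that $\mathcal{G}_n(\mathcal{M})$ is represented by a finite flat group scheme of order $p^{nh}$: flatness over $R$ follows from the local criterion since the construction commutes with base change and the fibers have constant order, and the order count comes from the Euler characteristic of the two-term complex over an arbitrary field-valued point (computing $H^0$ and $H^{-1}$ via the Hodge--Tate weight decomposition there).

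Finally, I would verify that $\mathcal{G}_n(\mathcal{M})$ is \emph{Barsotti--Tate of level $n$}, i.e. that multiplication by $p$ gives a surjection $\mathcal{G}_n(\mathcal{M})\twoheadrightarrow \mathcal{G}_n(\mathcal{M})[p^{n-1}]$ with the latter identified with the analogous construction at level $n-1$ (when $n\ge 2$), and that at level $1$ the group is ``truncated Barsotti--Tate of level $1$'' in Grothendieck's sense. This is where I expect the main obstacle to lie: it requires commuting $\tau^{\le 0}R\Gamma$ with the reduction maps $\otimes\Int/p^{n-1}$ and the $p$-multiplication maps on $\mathcal{M}$, which is a genuine statement about the (co)homology of the syntomification and not formal, since $\tau^{\le 0}$ is not exact. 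The clean way to handle it is to first establish the level-$n$ statement over a ``nice'' base --- e.g. a quasisyntomic or even perfectoid base, where by Remark~\ref{introrem:historical} (Ansch\"utz--Le~Bras, Gabber--Lau) the functor $\mathcal{G}_n$ is already known to land in $\mathcal{BT}_n$ --- and then \emph{descend}: since every $R$ as above admits a quasisyntomic (indeed $p$-completely faithfully flat) cover by such rings, and since being (truncated) Barsotti--Tate is a property that can be checked fppf-locally and is stable under the flat base change we have shown $\mathcal{G}_n$ to be compatible with, the general case follows. In effect, representability by \emph{some} finite flat group scheme is proven directly via the display/tangent-complex argument, while the sharper Barsotti--Tate property is bootstrapped from the known cases by flat descent along quasisyntomic covers, using the base-change compatibility of $\mathcal{G}_n$ recorded in Remark~\ref{introrem:syn_cohom}.
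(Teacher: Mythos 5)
Your outline captures the right opening moves (reduce to a two-term complex via the Hodge--Tate weight bound; get representability from the $1$-bounded / frame machinery; establish flatness by base change plus pointwise finiteness), but the middle and final steps diverge from what actually works and contain a couple of genuine problems.

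First, a small but symptomatic confusion: you write that the tangent complex computation gives a finite free module of rank $d(h-d)$. That is the tangent space of the \emph{moduli stack} $\BT[\GL_h,\mu_d]{n}$, not of the group functor $\mathcal{G}_n(\mathcal{M})$ over $R$. The relevant object for the representability of $\mathcal{G}_n(\mathcal{M})$ is the cotangent complex $\gr^{-1}_{\mathrm{Hdg}}M_n[1]$, which is of rank $d$ and concentrated in degree $-1$; it is exactly this quasi-smoothness of virtual dimension $0$ that gets leveraged (via a criterion of the form: a quasi-smooth finitely presented derived scheme of virtual codimension $0$ is flat iff it has finite fibers over geometric points), and you should not need to invoke an order count separately. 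Conflating the two tangent spaces suggests you are not clearly distinguishing the deformation theory of the moduli problem from that of the group scheme.

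Second, and more seriously, your proposed route to the Barsotti--Tate property is circular. You want to first know the statement over quasisyntomic (or quasiregular semiperfectoid) bases by citing Ansch\"utz--Le~Bras or Gabber--Lau, and then descend. But identifying $\mathcal{G}_n(\mathcal{M})$ with the Ansch\"utz--Le~Bras construction on such inputs is, up to translation, the content of Theorem~\ref{introthm:dieudonne} itself --- and the only way the paper establishes that identification is by first knowing Theorem~\ref{introthm:vect_sections_repble} unconditionally (so as to produce the map $\mathcal{G}_n:\mathrm{Vect}^{\mathrm{syn}}_{\{0,1\},n}\to\mathrm{BT}_n$ between smooth Artin stacks and then construct an inverse). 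You cannot use the known classification theorems as an input here. The correct mechanism, which you do not identify, is elementary and internal: tensor $\mathcal{M}$ with the short exact sequence $0\to\Int/p^r\Int\to\Int/p^n\Int\to\Int/p^{n-r}\Int\to 0$ to get a fiber sequence $R\Gamma_{\mathrm{syn}}(\mathcal{M}_r)\to R\Gamma_{\mathrm{syn}}(\mathcal{M})\to R\Gamma_{\mathrm{syn}}(\mathcal{M}_{n-r})$. Taking connective truncations (once one knows the relevant $\mathcal{G}_m(\mathcal{M})$ are finite flat, which comes out of the quasi-smoothness criterion plus an induction over algebraically closed fields) gives $\mathcal{G}_r(\mathcal{M})\simeq\hker^{\mathrm{cn}}(\mathcal{G}_n(\mathcal{M})\to\mathcal{G}_{n-r}(\mathcal{M}))$; applying this simultaneously with $r$ and $n-r$ shows that $\ker(p^r) = \mathcal{G}_r(\mathcal{M}) = \im(p^{n-r})$, which is precisely the BT exactness condition. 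The level-$1$ BT condition ($\ker F = \im V$) is then checked by \'etale-locally lifting $\mathcal{M}_1$ to level $2$ --- using the representability of the moduli stack, which is an independent theorem --- together with an explicit computation of the Frobenius and Verschiebung via the $F$-zip description. Your worry about commuting $\tau^{\le 0}$ with reduction is real, but it is resolved by this internal fiber-sequence argument, not by descent from a known case.
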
 

Theorems~\ref{introthm:vect_repble} and~\ref{introthm:vect_sections_repble} together now give us a map of smooth $p$-adic formal Artin stacks
\[
\mathcal{G}_n:\mathrm{Vect}^{\mathrm{syn}}_{\{0,1\},n}\to \mathrm{BT}_n.
\]

To get a map in the other direction, by the smoothness of the stacks involved, and quasisyntomic descent, it suffices to define a canonical map $\mathcal{M}:\mathrm{BT}_n(X)\to \mathrm{Vect}^{\mathrm{syn}}_{\{0,1\},n}(X)$ when $X = \Spf R$ with $R$ quasiregular semiperfectoid (qrsp). For this, we use the functor defined by Mondal~\cite{Mondal2024-cy}, which is a reinterpretation of that of Ansch\"utz-LeBras~\cite{MR4530092}. We could have also used the Dieudonn\'e functor of Berthelot-Breen-Messing~\cite{bbm:cris_ii} for characteristic $p$ inputs and the results of Lau~\cite{lau2018divided}, which would have the consequence of actually giving an alternate proof of Grothendieck's smoothness theorem; see Remark~\ref{rem:mondal_not_needed}

With the functor $\mathcal{M}$ in hand, the verification that it is indeed an inverse proceeds via a direct and quite simple argument that comes down to the compatibility of the functor $\mathcal{G}_n$ with Cartier duality. This in turn relies on two things:
\begin{enumerate}
   \item A computation of Bhatt-Lurie showing that we have a canonical isomorphism $\mathcal{G}_n(\mathcal{O}^{\mathrm{syn}}_n\{1\}) \simeq \mup[p^n]$;
   \item Results of Berthelot-Messing~\cite{Berthelot1990-gp} and de Jong~\cite{de-Jong1998-ki} on crystalline Dieudonn\'e theory. In terms of classification, only the full faithfulness of the crystalline Dieudonn\'e functor for complete DVRs in characteristic $p$ is needed.
\end{enumerate}

\subsection{Truncated $(G,\mu)$-apertures}
\label{subsec:trunc_Gmu}
The representability result in Theorem~\ref{introthm:vect_repble} is a special case of a more general result that proves another conjecture of Drinfeld from~\cite{drinfeld2023shimurian}. Here is the setup for this: We start with a smooth affine group scheme $G$ over $\Int_p$ (not necessarily reductive!) and a cocharacter $\mu:\Gm\to G_{\mathcal{O}}$ defined over the ring of integers $\mathcal{O}$ of a finite unramified extension of $\Rat_p$ that is \defnword{$1$-bounded} in the sense of Lau~~\cite{MR4355476}, so that the weights of the adjoint action of $\mu$ on the Lie algebra $\mathfrak{g}$ are bounded above by $1$. For example, if $G$ is reductive, then $\mu$ will simply be a minuscule cocharacter of $G_{\mathcal{O}}$. A standard example, for non-negative integers $d\leq h$ with $h>0$, is $G = \GL_h$ with $\mu = \mu_d$ given by $z\mapsto \mathrm{diag}(\underbrace{z,\ldots,z}_{d},1,\ldots,1)$. 

When $\mu$ is defined over $\Int_p$, Drinfeld has given a definition for a stack $\BT{n}$ associated with the pair $(G,\mu)$ that specializes to an open and closed substack of $\mathrm{Vect}^{\mathrm{syn}}_{\{0,1\},n}$ when $(G,\mu) = (\GL_h,\mu_d)$. He conjectured that this should be representable by a smooth $0$-dimensional $p$-adic formal Artin stack over $\Int_p$. We generalize this to the case where $\mu$ is defined over a finite unramified ring of integers.\footnote{In fact, one can do this over an arbitrary base, but we restrict ourselves to this case here, since it appears to suffice for global applications.}

\begin{introrem}
The purpose of the stacks $\BT{n}$ is to give a \emph{group-theoretic} construction of a putative stack of truncated $p$-divisible groups equipped with `$G$-structure'. In particular, such a construction should apply even in the case of the exceptional groups of type $E_6$ and $E_7$, which admit minuscule cocharacters, but do not admit any faithful representations in which such cocharacters remain minuscule: This means that there is no direct way to access `motives' of such type through $p$-divisible groups or abelian varieties. Even in the case of a group like $\GSp_{2g}$, the correct interpretation of what a `symplectic structure' on a $p$-divisible group should be is a subtle point. Furthermore, studying functoriality for group homomorphisms is somewhat annoying from this perspective, since such maps in general do not in general have any compatibility with the faithful representations giving rise to $p$-divisible groups. All of these issues are addressed cleanly and systematically by the stacks $\BT{n}$ studied in this article. Theorem~\ref{introthm:dieudonne} shows that the theory is indeed a generalization of the classical story of $p$-divisible groups.
\end{introrem}

To get to the definition of the stacks, we begin with a cartoon of how the syntomification is constructed. For any $p$-complete commutative ring $R$, the stack $R^{\mathrm{syn}}$ is obtained as follows. We have (derived) $p$-adic formal stacks $R^{\Prism},R^{\mathcal{N}}$: These are the \emph{prismatization} of $R$ and the \emph{(Nygaard) filtered prismatization} of $R$, respectively. The second of these is a \emph{filtered stack}: it lives naturally over $\mathbb{A}^1/\Gm$. The open locus lying over the point $\Gm/\Gm$ can be identified with $R^\Prism$: this is the \emph{de Rham} embedding of $R^\Prism$ into $R^{\mathcal{N}}$. There is another open immersion of $R^\Prism$ into $R^{\mathcal{N}}$, called the \emph{Hodge-Tate} embedding, that is physically disjoint from the de Rham embedding. The syntomification is obtained by gluing these two copies of $R^\Prism$ together.

\begin{introrem}
\label{introrem:perfect_case}
When $R$ is a perfect $\Field_p$-algebra, we can identify $R^\Prism$ with $\Spf W(R)$ and describe $R^{\mathcal{N}}$ via the Rees construction applied to the $p$-adic filtration on $W(R)$: this yields a stack isomorphic to $[\Spf W(R)[u,t]/(ut-p)/\Gm]$. Here, $u$ has degree $1$ and $t$ has degree $-1$, and the de Rham and Hodge-Tate embeddings correspond respectively to the loci $\{t\neq  0\}$ and $\{u\neq 0\}$ (though the latter appears with a Frobenius twist). Objects over the mod-$p$ fiber of the syntomification can be interpreted as giving two filtrations on objects over $R$---a decreasing Hodge filtration and an increasing conjugate filtration---along with an identification of their associated gradeds up to Frobenius twist. In other words, vector bundles over this stack are the $F$-zips of Moonen-Pink-Wedhorn-Ziegler~\cite{Pink2015-ye}.
\end{introrem}

Let us return to the question of defining $\BT{n}$. Using the Breuil-Kisin twist $\mathcal{O}^{\mathrm{syn}}_n\{1\}$ and the cocharacter $\mu$, we can produce a canonical $G$-torsor $\mathcal{P}_\mu$ over $\mathcal{O}^{\mathcal{N}}$. We now define $\BT{n}$ as the groupoid-valued functor on $\mathrm{CRing}^{p\text{-comp}}_{\heartsuit,\mathcal{O}/}$, the category of $p$-complete commutative $\mathcal{O}$-algebras $R$: $\BT{n}(R)$ is the ($\infty$)-groupoid of flat local $G$-torsors on $R^{\mathrm{syn}}\otimes\Int/p^n\Int$ whose restriction to $R^{\mathcal{N}}\otimes\Int/p^n\Int$ is isomorphic \emph{flat locally on $\Spec R$} to $\mathcal{P}_\mu$.

\begin{introrem}
This local triviality condition should be viewed as an analogue of the possibly familiar Kottwitz signature condition appearing in the moduli description of Shimura varieties of PEL type: When $G = \GL_h$, the definition is essentially concerned with vector bundles on $R^{\mathrm{syn}}\otimes\Int/p^n\Int$. Any such $F$-gauge gives rise to a filtered vector bundle over $\Spec(R\otimes\Int/p^n\Int)$ equipped with a \emph{Hodge filtration}. The triviality condition imposed here fixes the type of this filtration.
\end{introrem}

The next theorem proves~\cite[Conjecture C.3.1]{drinfeld2023shimurian}.\footnote{Drinfeld takes the cocharacter $\mu$ to be a map $\Gm\to \Aut(G)$ defined over $\Int_p$ and gives a slightly different definition for $\BT{n}$, so we are technically proving something very closely related to Drinfeld's conjecture. See Remark~\ref{rem:drinfeld_definition} for a discussion of this.}

\begin{introthm}
\label{introthm:main}
The formal prestack $\BT{n}$ is represented by a zero-dimensional quasi-compact smooth $p$-adic formal Artin stack over $\mathcal{O}$ with affine diagonal. Moreover, the natural map $\BT{n+1}\to \BT{n}$ is smooth and surjective.
\end{introthm}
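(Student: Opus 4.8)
The plan is to reduce everything to a statement about derived deformation theory and the $1$-boundedness of the relevant tangent complex, following the blueprint already visible in the introduction: namely that $\BT{n}$ sits inside the ambient stack $\mathrm{Vect}^{\mathrm{syn}}_{\{0,1\},n}$ (when $(G,\mu)=(\GL_h,\mu_d)$) as an open and closed substack, and more generally is a $(G,\mu)$-twisted analogue. First I would set up $\BT{n}$ as a prestack on $p$-complete animated $\mathcal{O}$-algebras, not just the classical subcategory, by the formula "flat local $G$-torsors on $R^{\mathrm{syn}}\otimes\Int/p^n\Int$ that are flat-locally $\mathcal{P}_\mu$", and observe that this satisfies fppf (indeed quasisyntomic) descent because torsors, the syntomification functor $R\mapsto R^{\mathrm{syn}}$, and the local-triviality condition all do. The point of passing to the derived/animated world is that one can then test for algebraicity using Artin–Lurie representability: one checks that the prestack is nilcomplete, infinitesimally cohesive, integrable, admits a cotangent complex, and is locally of finite presentation, and that its truncation behaves well. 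The bulk of this is formal once one knows the syntomification of a qrsp ring is well understood, which is where I would invoke the animated frames-and-displays machinery promised in the abstract to get a concrete model for $\BT{n}$ over qrsp bases.

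The technical heart is the cotangent complex computation, and I expect this to be the main obstacle. One must show that the tangent complex of $\BT{n}$ at an $R$-point is concentrated in degrees $[-1,\infty)$ with $\pi_{-1}$ (the obstruction space), $\pi_0$ (automorphisms/deformations), and that, crucially, it is \emph{$1$-bounded} in the sense that after base change the relevant Hodge-filtered pieces vanish in the right range — this is exactly where the hypothesis that $\mu$ is $1$-bounded (weights of $\mu$ on $\mathfrak{g}$ bounded above by $1$) enters. Concretely, the tangent complex should be computed as $R\Gamma$ of the syntomification with coefficients in the adjoint bundle $\mathfrak{g}_{\mathcal{P}_\mu}$ twisted appropriately, i.e. $\tau^{\ge ?}R\Gamma\bigl((R^{\mathrm{syn}}\otimes\Int/p^n\Int),\ \mathrm{ad}(\mathcal{P}_\mu)\bigr)$, and $1$-boundedness of $\mu$ forces the Hodge–Tate weights of this adjoint gauge to lie in $(-\infty,1]$, which by the Nygaard/filtered description of the syntomification kills cohomology above degree $1$ and makes the deformation problem unobstructed in the appropriate sense. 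Proving this vanishing rigorously requires the Rees-construction description of $R^{\mathcal{N}}$ recalled in Remark~\ref{introrem:perfect_case} together with a careful Čech analysis of the gluing of the two copies of $R^{\Prism}$ into $R^{\mathrm{syn}}$; I would isolate this as a lemma on the cohomology of $1$-bounded $F$-gauges.

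Given the cotangent complex in amplitude $[-1,0]$ (in cohomological terms $[0,1]$), smoothness and zero-dimensionality follow: smoothness because the obstruction space is, after the local-triviality reduction, killed — one shows any deformation of a local-$\mathcal{P}_\mu$ torsor along a square-zero extension exists, using that the obstruction lives in an $H^2$ that vanishes by the $1$-bounded cohomology lemma — and zero-dimensionality because $\pi_0$ of the tangent complex (the "dimension" part coming from $H^1$ of the adjoint) vanishes for the same reason, leaving only automorphisms in degree $0$ and a possibly nonzero $H^1$ contributing to the $[-1]$-part, i.e. the stack is smooth of relative dimension $0$ over $\mathcal{O}$. Quasi-compactness comes from exhibiting a smooth affine (in fact affine over a Witt-vector display stack) atlas via the frames-and-displays presentation over a single qrsp cover, and affineness of the diagonal follows because $\Isom$ between two $G$-torsors on $R^{\mathrm{syn}}\otimes\Int/p^n\Int$ satisfying the triviality condition is a closed subscheme of a twisted form of $G$, hence affine. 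Finally, for the last sentence: the transition map $\BT{n+1}\to\BT{n}$ is induced by restriction along $R^{\mathrm{syn}}\otimes\Int/p^{n}\Int\hookrightarrow R^{\mathrm{syn}}\otimes\Int/p^{n+1}\Int$; its fiber is a torsor under $H^0$ of the adjoint gauge mod $p$ and its obstruction lies in $H^1$, so the $1$-bounded cohomology lemma again shows the obstruction vanishes, giving smoothness, and surjectivity follows from smoothness plus the fact that every mod-$p^n$ point lifts mod-$p^{n+1}$ fppf-locally — which one checks on qrsp bases where the display-theoretic description makes the lift explicit. I would present the $n=1$ base case and the inductive step separately, since the $n=1$ case (relating $\BT{1}$ to $F$-zips for $(G,\mu)$, cf. Remark~\ref{introrem:perfect_case}) may need the mod-$p$ syntomification computed by hand.
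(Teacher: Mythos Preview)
Your proposal has the right ingredients but contains a concrete error in the zero-dimensionality step, and it underestimates where the real work lies. You assert that $\pi_0$ of the tangent complex---which you identify with $H^1_{\mathrm{syn}}(\mathrm{ad}(\mathcal{P}))$---vanishes ``for the same reason'' as the obstruction space. This is false: $H^1_{\mathrm{syn}}(\mathrm{ad}(\mathcal{P}))$ is the deformation space, and over a perfect field it has dimension $\dim G-\dim P^-_\mu>0$ (see Lemma~\ref{lem:discreteness_def_theory}). Zero-dimensionality is the statement that the ranks of $H^0_{\mathrm{syn}}$ and $H^1_{\mathrm{syn}}$ agree, not that either vanishes, and no ``$1$-bounded cohomology lemma'' will give you that directly. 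Your smoothness argument is also too quick: deforming along a square-zero $R'\twoheadrightarrow R$ means comparing $G$-torsors on $(R')^{\mathrm{syn}}\otimes\Int/p^n$ and $R^{\mathrm{syn}}\otimes\Int/p^n$, and the map between these syntomifications is \emph{not} a square-zero thickening with ideal $\ker(R'\to R)\otimes(\text{something})$. Understanding it is exactly what the frame-theoretic Grothendieck--Messing theory does (Theorem~\ref{introthm:groth_mess}, proved via Proposition~\ref{prop:def_theory_frames} and the divided-Frobenius nilpotence of Proposition~\ref{prop:dot_varphi_1_nilpotent}). Once that Cartesian square is in hand, the cotangent complex of $\BT{n}$ is identified with the pullback of $\mathbb{L}_{BP^{-,(n)}_\mu/BG^{(n)}}$ (Corollary~\ref{cor:cotangent_complex}), and smoothness and zero-dimensionality become transparent.

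There is also a structural gap: you invoke Artin--Lurie but do not address integrability, which is nontrivial here and which the paper never checks directly. Instead, representability is first established in characteristic $p$ at level $1$: $\BT{1}\otimes\Field_p$ is a gerbe over the already-algebraic $F$-zip stack $\Disp{1}$ banded by a finite flat height-one group scheme (Theorem~\ref{introthm:map_to_f-zips}, Theorem~\ref{thm:devissage_to_fzips}), using the frame machinery of Section~\ref{sec:higher_frames} and the Bragg--Olsson result of Section~\ref{sec:technical_repbility}. Your final sentence gestures at this case, but it is not a side computation---it is the step where algebraicity actually enters. From there the paper bootstraps in $n$ (Proposition~\ref{prop:bootstrapping_coeffs}, where your $H^2$-vanishing intuition \emph{is} the right input, via Corollary~\ref{cor:1_bounded_level_1_repble}) and then from $\Field_p$ to $\Int_p$ via derived Tot descent (Proposition~\ref{introprop:derived_descent}); Artin--Lurie is mentioned only as an alternative endgame (Remark~\ref{rem:artin-lurie_alternative}), and even there only \emph{after} the mod-$p$ representability is known.
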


\begin{introrem}
\label{introrem:bp}
As far as we are aware, the work of B\"ultel-Pappas~\cite{MR4120808} was the first to attempt to construct such stacks in generality. However, their construction---which involves working with a more direct generalization of the perfect case explained in Remark~\ref{introrem:perfect_case} using Witt vectors---has the expected properties only when restricted to what the authors there call the `adjoint nilpotent' locus. When considering the stack of $p$-divisible groups, this amounts to working only with the connected ones. We show that the more elaborate syntomic construction here recovers that of B\"ultel-Pappas when restricted to this nilpotent locus; see Remark~\ref{rem:adjoint_nilpotent_locus}.

We should also make note of the work of K. Ito~\cite{ito2024prismaticgdisplaysdescenttheory}: He defines the notion of a \emph{prismatic $G$-display} using the prismatic site of Bhatt-Scholze. See the discussion in Section 7 of \emph{loc. cit.} for the connection to the definitions here. A closely related notion is studied by Hedayatzadeh-Partofard~\cite{hedayatzadeh_partofard}, and their main result can be viewed as a special case of Theorem~\ref{introthm:groth_mess} below.
\end{introrem}

\begin{introrem}
\label{introrem:parahoric}
One should formulate and prove versions of the Theorem~\ref{introthm:main} `with coefficients' (see for instance~\cite{ito2023deformation} or~\cite{marks2023prismatic}), allowing smooth group schemes over the ring of integers of finite extensions of $\Rat_p$. This will be considered in forthcoming work of the first author, Z. G. 

There is also the very interesting question of finding the correct analogues of $\BT{n}$ associated with parahoric group schemes. In particular, these analogues should somehow be aware of the corresponding local models as appearing for instance in~\cite{anschutz2022padictheorylocalmodels}. However, this appears to require a genuinely new idea.
\end{introrem}

Let us now record some other results about $\BT{n}$ that are of independent interest, and give some idea of the proof of Theorem~\ref{introthm:main} along the way.

Following Drinfeld, we first obtain a somewhat explicit description of the mod-$p$ fiber $\BT{1}\otimes\Field_p$. To explain this, recall that we can associate with the pair $(G,\mu)$ the algebraic $k$-stack $\Disp{1}$ of \defnword{$F$-zips with $G$-structure and type $\mu$}; see~\cite{Pink2015-ye}: It is a smooth zero-dimensional Artin stack over $k$ with affine diagonal, and $\Disp{1}(R)$ is obtained by replacing $R^{\mathrm{syn}}\otimes\Field_p$ with the \emph{$F$-zip stack} $R^{\Fzip}$ in the definition of $\BT{1}(R)$. The stack $R^{\Fzip}$ is a sort of toy model for the mod-$p$ syntomification, and will play a significant technical role in our proofs. In any case, we now have:

\begin{introthm}\label{introthm:map_to_f-zips}
There is a natural map $\BT{1}\otimes \Field_p\to \Disp{1}$ that is a relatively representable by a smooth zero-dimensional Artin stack with relatively affine diagonal: in fact, it is a gerbe banded by a finite flat commutative $p$-group scheme of height one, the \emph{Lau group scheme}. In particular, $\BT{1}\otimes\Field_p$ is a smooth zero-dimensional Artin stack over $k$ with affine diagonal.
\end{introthm}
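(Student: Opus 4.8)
The plan is to build the map $\BT{1}\otimes\Field_p\to\Disp{1}$ by restricting an $F$-gauge-with-$G$-structure to the Hodge--Tate locus and the de Rham locus of the mod-$p$ syntomification, then to analyze its fibers explicitly. Recall from Remark~\ref{introrem:perfect_case} and the surrounding discussion that for a characteristic-$p$ ring $R$, the stack $R^{\mathcal N}\otimes\Field_p$ is, \'etale- or quasisyntomic-locally, a Rees-type stack carrying the two filtrations whose associated gradeds glue up to Frobenius twist; vector bundles on $R^{\mathrm{syn}}\otimes\Field_p$ are precisely $F$-zips. The first step is to promote this to the $G$-equivariant setting: a flat-local $G$-torsor on $R^{\mathrm{syn}}\otimes\Field_p$ locally isomorphic to $\mathcal P_\mu$ restricts to a $G$-torsor on $R^{\mathcal N}\otimes\Field_p$ with a $\mu$-filtration and, after passing to the two boundary copies of $R^\Prism\otimes\Field_p\simeq \Spec R$ (one with a Frobenius twist), yields exactly the data of an $F$-zip with $G$-structure of type $\mu$ in the sense of Pink--Wedhorn--Ziegler. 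This defines the functor $\BT{1}\otimes\Field_p\to\Disp{1}$; naturality in $R$ is immediate from the construction. Since $\Disp{1}$ is already known to be a smooth zero-dimensional Artin stack with affine diagonal, the statement about $\BT{1}\otimes\Field_p$ will follow once the map is shown to be representable by smooth zero-dimensional stacks with relatively affine diagonal.

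The second and main step is to identify the fiber of this map over a given $F$-zip with $G$-structure. Working quasisyntomic-locally and unwinding the gluing description of the syntomification, the extra data in an $F$-gauge beyond its underlying $F$-zip is the lift of the glued isomorphism from the associated gradeds to the full (Nygaard-filtered) prismatic level modulo the Nygaard ideal squared — concretely, a torsor under the kernel sheaf controlling infinitesimal deformations of the gluing datum. Because $\mu$ is $1$-bounded, the relevant piece of $\mathrm{ad}(\mathcal P_\mu)$ has adjoint weights $\le 1$, so this kernel is represented by a finite flat commutative group scheme of height one; this is the \emph{Lau group scheme}. One then checks that the fiber is a torsor (hence gerbe) under this group scheme: local non-emptiness comes from the local triviality built into the definition of $\BT{1}$, and the transitivity of the action is a direct cocycle computation. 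Smoothness and zero-dimensionality of the fibers, and the relative affineness of the diagonal, then follow from the corresponding properties of a finite flat height-one group scheme.

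The hard part will be the explicit identification of the kernel sheaf with an honest finite flat commutative height-one group scheme, uniformly in the base $R$ and functorially in $(G,\mu)$. This requires pinning down the Rees/stacky presentation of $R^{\mathcal N}\otimes\Field_p$ precisely enough to compute the automorphisms of $\mathcal P_\mu$ that are trivial on the underlying $F$-zip, and checking that the resulting group scheme is flat (not merely of finite presentation) and of height one in the relevant sense — this is exactly where $1$-boundedness of $\mu$ is essential, as a weight-$2$ contribution would break flatness or height one. I expect the rest — representability of the fiber as a gerbe, descent of the construction from the qrsp site to all of $\mathrm{CRing}^{f,p\text{-comp}}_{\heartsuit,\mathcal O/}$, and deducing the final sentence about $\BT{1}\otimes\Field_p$ from the already-established properties of $\Disp{1}$ — to be formal once this local computation is in place.
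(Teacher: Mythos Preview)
Your overall strategy is sound and matches the paper's: build the map by restriction to the $F$-zip level, then analyze the fibers. But the crucial step---``this kernel is represented by a finite flat commutative group scheme of height one''---is asserted rather than proved, and the heuristic ``modulo the Nygaard ideal squared'' does not pin down the right object. The $1$-boundedness of $\mu$ tells you that the relevant piece of the adjoint representation sits in weight $\le 1$, but that alone does not give flatness or finiteness of the automorphism sheaf; a priori the fiber could be something like the kernel of a non-flat map of vector groups.

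The paper supplies the missing mechanism in two stages. First, via the frame-theoretic machinery (Proposition~\ref{prop:abstract_devissage_to_fzips} and Theorem~\ref{thm:devissage_to_fzips}), it shows that the fiber of $\Gamma_{\mathrm{syn}}(\mathcal{X})\to\Gamma_{\Fzip}(\mathcal{X})$ is governed by an explicit two-term complex
\[
\gr^{-1}_{\mathrm{Hdg}}M(\mathcal{X})\otimes Z^1_{\Prism}\xrightarrow{\,q_{1,\psi}-q_{2,\psi}\,}\gr^{-1}_{\mathrm{Hdg}}M(\mathcal{X})\otimes\gr^{\mathrm{conj}}_1\overline{\Prism},
\]
where $q_{1,\psi}$ is linear and $q_{2,\psi}$ is Frobenius-semilinear. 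This is the Artin--Milne complex attached to the $\varphi$-module $(\gr^{-1}_{\mathrm{Hdg}}M(\mathcal{X}),\psi)$; the $1$-boundedness of $\mu$ is what ensures only $\gr^{-1}$ appears. Second, the representability of this functor by (the classifying stack of) a finite flat height-one group scheme is a nontrivial external input: the Bragg--Olsson theorem (Theorem~\ref{thm:repbility_height_one}, Corollary~\ref{cor:fppf_cohomology}), which identifies $\mathsf{S}_{(N,\psi)[1]}$ with $BG(N,\psi)$ via the Artin--Milne computation of fppf cohomology. Neither of these ingredients is visible in your sketch, and both are essential. In particular, the passage from ``torsor under a sheaf'' to ``gerbe banded by a finite flat group scheme'' goes through $\mathsf{S}_0(\mathcal{X}_1)\simeq BG(N,\psi)$: the fiber is a torsor under a \emph{classifying stack}, not directly under the group scheme itself.
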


\begin{introrem}
When restricted to smooth inputs and $\mu$ is defined over $\Int_p$, this result is due to Drinfeld~\cite{drinfeld2023shimurian}. We verify here that his description continues to hold in general.
\end{introrem}

With Theorem~\ref{introthm:map_to_f-zips} in hand, the rest of the proof of Theorem~\ref{introthm:main} comes down to a double bootstrapping argument. First, we inductively establish  representability for $\BT{n}\otimes\Field_p$ for $n\ge 1$. For this, note that, given an object $\mathcal{P}\in \BT{n}(R)$, we can twist the adjoint representation on $\mathfrak{g}$ by $\mathcal{P}$ to obtain a vector bundle $(\mathfrak{g})_{\mathcal{P}}$ over $R^{\mathrm{syn}}\otimes\Int/p^n\Int$. It is not difficult now to see that the fibers of $\BT{n+1}\to \BT{n}$ over $\mathcal{P}$ are controlled by the syntomic cohomology of this $F$-gauge. The main property that makes this $F$-gauge tractable is that it has Hodge-Tate weights bounded by $1$: this is a direct consequence of the fact that $\mu$ is $1$-bounded. The inductive argument therefore comes down to a special case of the following theorem, which is also an input into the proof of Theorem~\ref{introthm:vect_sections_repble}:

\begin{introthm}\label{introthm:f_gauges}
Suppose that $R\in \mathrm{CRing}^{p\text{-comp}}$ and suppose that $\mathcal{M}$ is an $F$-gauge over $R$ corresponding to a perfect complex on $R^{\mathrm{syn}}\otimes\Int/p^n\Int$ with Tor amplitude in $[-r,\infty)$ and Hodge-Tate weights bounded by $1$. Then the assignment on $p$-complete $R$-algebras given by
\[
C\mapsto \tau^{\le 0}R\Gamma(C^{\mathrm{syn}}\otimes\Int/p^n\Int,\mathcal{M}\vert_{C^{\mathrm{syn}}\otimes\Int/p^n\Int})
\]
is represented by a locally finitely presented $p$-adic formal derived algebraic $r$-stack over $R$.
\end{introthm}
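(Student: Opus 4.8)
The plan is to verify a representability criterion of Artin--Lurie type, adapted to $p$-adic formal derived algebraic stacks, for the functor
\[
F\colon C\ \longmapsto\ \tau^{\le 0}R\Gamma\bigl(C^{\mathrm{syn}}\otimes\Int/p^n\Int,\ \mathcal{M}\vert_{C^{\mathrm{syn}}\otimes\Int/p^n\Int}\bigr),
\]
the decisive input being a computation of its cotangent complex together with a bound on the latter's Tor-amplitude coming from the Hodge--Tate weight hypothesis. One may first reduce to the case $n=1$ by a d\'evissage along the $p$-adic filtration on $\Int/p^n\Int$, whose graded pieces are again $F$-gauges with Hodge--Tate weights $\le 1$ and Tor-amplitude in $[-r,\infty)$, using that $\tau^{\le 0}$ preserves limits (hence fibre sequences) and that the class of formal derived $r$-stacks is closed under the resulting relative constructions; I suppress this reduction below.

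The formal inputs are quickly disposed of. Because syntomic cohomology satisfies fpqc (in particular quasisyntomic) descent and $\tau^{\le 0}$ preserves limits, $F$ is a quasisyntomic sheaf on $p$-complete $R$-algebras; since the quasiregular semiperfectoid (qrsp) algebras form a basis of the quasisyntomic site, it suffices to verify the hypotheses of the representability theorem after restriction to qrsp algebras. Nilcompleteness, infinitesimal cohesiveness and integrability of $F$ follow from the same properties of the prismatization and of $C\mapsto R\Gamma(C^{\mathrm{syn}}\otimes\Int/p^n\Int,\mathcal{M})$; local almost finite presentation follows from finiteness of prismatic cohomology (Bhatt--Scholze, Bhatt--Lurie) together with the running hypothesis $R\in\mathrm{CRing}^{f,p\text{-comp}}$, i.e.\ that $\Omega^1_{\pi_0(R)/p\pi_0(R)}$ is finitely generated, which is precisely what allows a reduction to qrsp covers with finitely presented transition data.

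The heart of the proof is the cotangent complex. Given a square-zero extension $\widetilde{C}\to C$ with connective square-zero kernel a $C$-module $I$, functoriality of the (filtered) prismatization together with the Bhatt--Lurie computation of the cotangent complex of $C^{\mathrm{syn}}$ over $C$ --- which is the pullback of $\mathbb{L}_{C/\Int_p}$ twisted by a Breuil--Kisin line bundle and shifted, hence an $F$-gauge of non-positive Hodge--Tate weight --- shows that $\widetilde{C}^{\mathrm{syn}}$ is a (pro-)square-zero extension of $C^{\mathrm{syn}}$ whose kernel is the pullback of $I$ twisted by that $F$-gauge. Linearity of cohomology in the coefficients then identifies the cotangent complex of $F$ with $\tau^{\le 0}R\Gamma_{\mathrm{syn}}$ of $\mathcal{M}$ tensored with this twist of $I$; since the twisting gauge has Hodge--Tate weight $\le 0$ the tensor product is again an $F$-gauge of Hodge--Tate weights $\le 1$ with Tor-amplitude still bounded below by $-r$. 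One is thus reduced to the key cohomological estimate: over a qrsp ring $C$ the de Rham and Nygaard-filtered prismatic cohomologies are discrete, the Hodge--Tate stack $C^{\Prism,\mathrm{HT}}$ is a gerbe banded by $\mathbb{G}_a^{\sharp}$ and hence of cohomological dimension $1$, and consequently $R\Gamma_{\mathrm{syn}}(C,\mathcal{N})$ of any $F$-gauge $\mathcal{N}$ with Hodge--Tate weights $\le 1$ and Tor-amplitude in $[-r,\infty)$ has cohomology concentrated in degrees $[-r,1]$, with top cohomology of controlled formation. It follows that $\mathbb{L}_F$ lies in Tor-amplitude $[-r,1]$, so $F$ is an $r$-stack, and combined with the previous paragraph and the representability theorem, $F$ is represented by a locally finitely presented $p$-adic formal derived algebraic $r$-stack over $R$.

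The step I expect to be the main obstacle is the interaction between the truncation $\tau^{\le 0}$ and base change: $\tau^{\le 0}$ of a complex involves a kernel in top degree, which does not in general commute with base change, so to know that $F$ is infinitesimally cohesive and locally of finite presentation --- and not merely a sheaf equipped with a cotangent complex --- one must show that the degree-$1$ cohomology of $R\Gamma_{\mathrm{syn}}$ of an $F$-gauge of Hodge--Tate weights $\le 1$ has formation compatible with base change (equivalently, is flat in families). This is exactly where the Hodge--Tate weight bound, i.e.\ the ``$1$-boundedness'' of $\mu$ in the applications, enters essentially; establishing it requires unwinding the explicit two-term description of syntomic cohomology over qrsp rings afforded by the animated higher-frames-and-displays formalism developed earlier in the paper. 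A secondary, bookkeeping-type task is to pin down the square-zero extension statement for $(-)^{\mathrm{syn}}$ precisely enough --- working with the filtered prismatization over $\mathbb{A}^1/\Gm$ and the de Rham and Hodge--Tate embeddings of $C^{\Prism}$ --- to make the linearization of the deformation problem rigorous.
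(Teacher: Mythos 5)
Your plan — verify the Artin--Lurie criteria directly for $C\mapsto\tau^{\le 0}R\Gamma_{\mathrm{syn}}(\mathcal{M})$ — is a genuinely different route from the paper's. The paper first reduces to $\Field_p$-algebras, then (Theorem \ref{thm:devissage_to_fzips}) exhibits $\Gamma_{\mathrm{syn}}(\mathcal{M})$ as a pullback of the representable $F$-zip stack $\Gamma_{\Fzip}$ against an Artin--Milne-type stack $\mathsf{S}_1$, whose representability is Bragg--Olsson's theorem (Theorem \ref{thm:repbility_height_one}) identifying such a complex with fppf cohomology of a height-one finite flat group scheme; it then bootstraps first along powers of $p$ in the coefficients (Proposition \ref{prop:bootstrapping_coeffs}) and finally from $\Field_p$ up to $\Int/p^n\Int$-bases via derived Tot descent and Grothendieck--Messing theory (Corollary \ref{cor:groth_messing}, Theorem \ref{thm:general_representability}). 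The paper does invoke Artin--Lurie once, but only at that final bootstrapping step (Remark \ref{rem:artin-lurie_alternative}), \emph{given} that the mod-$p$ functor is already known to be representable; it does not, and I think cannot, use it to produce the mod-$p$ representability from scratch.

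This is where your argument has a real gap, and you half-notice it yourself. You correctly flag that the interaction of $\tau^{\le 0}$ with base change is ``the main obstacle,'' but you characterize resolving it as ``unwinding the explicit two-term description'' over qrsp rings plus a cohomological-dimension bound. That undersells it badly. The two-term complex you get over a qrsp $C$ is exactly the Artin--Milne complex of the relevant height-one $\varphi$-module, and what is needed is not an estimate but the assertion that this prestack is a derived Artin stack; this is precisely the Bragg--Olsson theorem, which is the single non-formal external input of the whole section and has no elementary replacement. Without it, the claim that ``$\mathbb{L}_F$ lies in Tor-amplitude $[-r,1]$'' is not even well-posed, because you haven't shown $F$ admits a cotangent complex: applying $\tau^{\le 0}$ to the linearized fibre sequence in your square-zero computation produces a fibre sequence only after checking the connecting map out of $H^0$ vanishes, which is exactly the base-change compatibility of the top cohomology you are trying to establish. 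Similarly, you assert integrability ``follows from the same properties of prismatization,'' but integrability is the hardest of the Artin--Lurie hypotheses; the paper obtains it only for the auxiliary attractor, repeller and fixed-point stacks (Proposition \ref{prop:t-integrable_repble}, via Halpern-Leistner--Preygel formal properness plus the completeness results in the appendix) and then transfers it to $\Gamma_{\mathrm{syn}}$ through the Cartesian squares of the d\'evissage, never directly. As written, your proof postpones the essential content of the theorem into two sentences at the end and treats it as bookkeeping.
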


The second bootstrapping argument involves a derived descent statement, encapsulated by:
\begin{introprop}
\label{introprop:derived_descent}
The natural map
\[
\BT{n}(R) \to \mathrm{Tot}\left(\BT{n}(R\otimes^{\mathbb{L}}\Field_p^{\otimes_{\Int}^{\mathbb{L}}(\bullet + 1)})\right)
\]
is an equivalence.
\end{introprop}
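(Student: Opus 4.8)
The plan is to identify the proposition with the assertion that $\BT{n}$ satisfies descent along a particular cover. Observe that the map $R\to R\otimes^{\mathbb{L}}_{\Int}\Field_p$ is a cover in the (animated) quasisyntomic topology: its cotangent complex relative to $R$ is $\bigl(R\otimes^{\mathbb{L}}_{\Int}\Field_p\bigr)[1]$, of Tor-amplitude in $[-1,0]$, and modulo $p$ on $\pi_0$ it induces the identity map of $\pi_0(R)/p$, which is in particular faithfully flat. Its \v{C}ech nerve is exactly the cosimplicial ring $R^{\bullet}:=R\otimes^{\mathbb{L}}_{\Int}\Field_p^{\otimes^{\mathbb{L}}_{\Int}(\bullet+1)}$ appearing in the statement, so it suffices to show that $\BT{n}$, viewed as a functor on $p$-complete animated $\mathcal{O}$-algebras, is a sheaf for this cover.

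Recall that $\BT{n}(R)$ is built out of the $p$-adic formal stacks $R^{\mathcal{N}}\otimes\Int/p^n\Int\hookrightarrow R^{\mathrm{syn}}\otimes\Int/p^n\Int$ and the canonical $G$-torsor $\mathcal{P}_\mu$: an object is a flat-local $G$-torsor on $R^{\mathrm{syn}}\otimes\Int/p^n\Int$ whose restriction to $R^{\mathcal{N}}\otimes\Int/p^n\Int$ is, flat-locally on $\Spec R$, isomorphic to $\mathcal{P}_\mu$. The first input needed is that the assignments $R\mapsto R^{\mathrm{syn}}\otimes\Int/p^n\Int$ and $R\mapsto R^{\mathcal{N}}\otimes\Int/p^n\Int$, valued in $p$-adic formal stacks, are quasisyntomic sheaves; this holds essentially by construction, since (following Bhatt--Lurie and Drinfeld) both are glued from the prismatization, and prismatization carries a quasisyntomic cover to a presentation of its target as a colimit along which quasi-coherent sheaves, hence also flat-local $G$-torsors, descend. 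If convenient, one reduces to the case of quasiregular semiperfectoid $R$ --- a property inherited by $R\otimes^{\mathbb{L}}_{\Int}\Field_p$ --- where $R^{\mathrm{syn}}\otimes\Int/p^n\Int$ has an explicit description and the descent becomes concrete.

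The second input concerns the triviality condition and $\mathcal{P}_\mu$. For each $i\ge0$ one has $\pi_0(R^{i})\cong\pi_0(R)/p$, so $\Spec R^{i}\to\Spec R$ is a homeomorphism on underlying spaces; consequently ``flat-locally on $\Spec R$'' conditions are unambiguous across the cosimplicial diagram. In particular, the requirement that a $G$-torsor restrict on $R^{\mathcal{N}}\otimes\Int/p^n\Int$ to something flat-locally $\cong\mathcal{P}_\mu$ --- equivalently, that the associated locally constant ``type'' function equal $\mu$ --- is detected after pullback along any single arrow $R\to R^{i}$, and $\mathcal{P}_\mu$, built functorially from the Breuil--Kisin twist and $\mu$, pulls back compatibly. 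Combining the two inputs gives the desired equivalence $\BT{n}(R)\simeq\mathrm{Tot}\bigl(\BT{n}(R^{\bullet})\bigr)$.

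The main obstacle I anticipate lies in the first input: verifying that $(-)^{\mathrm{syn}}\otimes\Int/p^n\Int$, together with its torsor-theoretic structure, is a genuine sheaf for the \emph{animated} quasisyntomic topology --- i.e.\ descent for the \emph{stack} and for flat-local $G$-torsors on it, not merely for prismatic or syntomic cohomology. This requires the colimit presentation $R^{\mathrm{syn}}\otimes\Int/p^n\Int=\colim_{\Delta^{\op}}(R^{\bullet})^{\mathrm{syn}}\otimes\Int/p^n\Int$ to be effective for the flat topology (a van~Kampen-type input), and one must also ensure that $\BT{n}$ has been extended from its original discrete inputs to the genuinely animated rings $R^{\bullet}$ in a way that manifestly inherits these descent properties.
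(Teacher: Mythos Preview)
Your central claim---that $R\to R\otimes^{\mathbb{L}}_{\Int}\Field_p = R/^{\mathbb{L}}p$ is a quasisyntomic cover---is false. By the paper's definition, $p$-quasisyntomic requires $p$-complete flatness, i.e.\ that $S/^{\mathbb{L}}p$ be flat over $R/^{\mathbb{L}}p$. Here $S=R/^{\mathbb{L}}p$, so $S/^{\mathbb{L}}p = (R/^{\mathbb{L}}p)\otimes^{\mathbb{L}}_{\Int}\Field_p \simeq R/^{\mathbb{L}}p \oplus (R/^{\mathbb{L}}p)[1]$, which is never flat over $R/^{\mathbb{L}}p$ (unless $R/^{\mathbb{L}}p=0$). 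Your remark that ``modulo $p$ on $\pi_0$ it induces the identity'' is checking the wrong thing: that is the map $\pi_0(R)/p\to\pi_0(S)/p$, not the flatness of $S/^{\mathbb{L}}p$ as an $R/^{\mathbb{L}}p$-module. Consequently the induced map $(R/^{\mathbb{L}}p)^{\mathrm{syn}}\to R^{\mathrm{syn}}$ is \emph{not} an fpqc cover, and the descent input you want does not exist.

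The paper's proof (Proposition~\ref{prop:devissage}) proceeds very differently, precisely because one cannot invoke flat descent here. It splits $\Gamma_{\mathrm{syn}}(\mathcal{X})$ into its $\Gamma_{\mathcal{N}}$ and $\Gamma_{\Prism}$ pieces via the equalizer~\eqref{eqn:sections_first_desc} and proves Tot descent for each. For $C^{\Prism}$, one first passes to the Hodge-Tate locus $C^{\Prism}\times_{\Int_p^{\Prism}}\Int_p^{\mathrm{HT}}$ by $I$-adic completeness on semiperfectoid inputs, then uses the structure $\Int_p^{\mathrm{HT}}\simeq B\Gm^\sharp$ together with the Halpern-Leistner--Preygel observation (Remark~\ref{rem:hl_preygel}): for a $p$-complete $A$, any nilcomplete infinitesimally cohesive target satisfies Tot descent along $A\to A/^{\mathbb{L}}p$. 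This is the correct substitute for flat descent in this non-flat situation, but it only applies to targets already known to be Artin (or at least nilcomplete and cohesive), which is why the argument routes through $\Spf\Int_p$ and the Hodge-Tate locus rather than appealing to it on $\Gamma_{\mathrm{syn}}(\mathcal{X})$ directly. For $C^{\mathcal{N}}$, filtered integrability (Lemma~\ref{lem:cartesian_square_bootstrapping_needed}) rewrites sections as a fiber product of sections over $C^{\Prism}$ with the attractor $X^{-}$ and $X$, to which the same remark applies. The concern you raise at the end is real, but the actual obstacle appears earlier, at the flatness step.
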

Note that, even to state this result, one needs to be working with \emph{animated} commutative rings. We will do so systematically in the body of the paper.

To make full use of the proposition, we also need some finer control of the deformation theory of $\BT{n}$. This involves an interesting (and in a sense elementary) technical tool: Weil restriction from $\Int/p^n\Int$ to $\Int_p$, an operation that is only fully sensible in the derived realm. This yields, for any $p$-adic formal Artin stack $X$, a new \emph{derived} $p$-adic formal Artin stack $X^{(n)}$, whose values are characterized by
\[
X^{(n)}(R) = X(R\otimes^{\mathbb{L}}\Int/p^n\Int).
\]
Using this, for any animated divided power thickening $(R'\twoheadrightarrow R,\gamma)$ in $\mathrm{CRing}^{p\text{-comp}}_{\mathcal{O}/}$ we can write down a canonical commuting diagram
\begin{equation}\label{eqn:commuting_square}
\Square{\BT{n}(R')}{}{BP^{-,(n)}_\mu(R')}{}{}{\BT{n}(R)}{}{BP^{-,(n)}_\mu(R)\times_{BG^{(n)}(R)}BG^{(n)}(R')}.
\end{equation}
Here, $P^{-}_\mu\subset G_{\mathcal{O}}$ is the parabolic subgroup associated with the non-negative eigenspaces of the adjoint action of $\mu$, and $BH$ for any group scheme $H$ denotes its classifying stack. The obstruction theory for $\BT{n}$ is now captured by the following result: 

\begin{introthm}
[Grothendieck-Messing theory]
\label{introthm:groth_mess}
The above commuting square is \emph{Cartesian} when the divided powers are \emph{nilpotent}.
\end{introthm}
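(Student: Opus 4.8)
The plan is to run the classical deduction of Grothendieck--Messing theory from the crystalline rigidity of (prismatic) Dieudonn\'e theory, now in the stacky-prismatic setting. Along a nilpotent divided power thickening $R'\twoheadrightarrow R$, the de Rham/prismatic part of the syntomification is rigid, so the only freedom in deforming an object of $\BT{n}(R)$ to $\BT{n}(R')$ lies in lifting its Hodge filtration; and because $\mu$ is $1$-bounded, that Hodge filtration is precisely a reduction of structure to the parabolic $P^-_\mu$. This is exactly what the Cartesian square~\eqref{eqn:commuting_square} records, so the proof amounts to isolating these two phenomena and matching them up.

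\textbf{Step 1 (Reductions).} The four corners of~\eqref{eqn:commuting_square} are $p$-quasisyntomic sheaves and Cartesianness is a local condition, so I would first pass to a $p$-quasisyntomic hypercover to reduce to the case where $R$ is quasiregular semiperfectoid; the thickening and its divided powers lift along the cover in the animated sense, keeping $R'$ in $\mathrm{CRing}^{f,p\text{-comp}}_{\mathcal{O}/}$ and qrsp-locally, so that the explicit prism-theoretic descriptions of $R^{\Prism}$, $R^{\mathcal{N}}$, $R^{\mathrm{syn}}$ and the identification of $\BT{n}$ with $(G,\mu)$-displays over the associated mod-$p^n$ syntomic higher frame (from the body of the paper) become available. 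Next, filtering $J=\ker(R'\to R)$ by its divided power powers $J\supseteq J^{[2]}\supseteq\cdots$, each step $R'/J^{[k+1]}\to R'/J^{[k]}$ is a square-zero extension carrying nilpotent divided powers, since $(J^{[k]})^2\subseteq J^{[2k]}\subseteq J^{[k+1]}$ for $k\geq 1$; as the associated squares paste, I may assume that $R'\to R$ is itself a square-zero extension by an $R$-module $J$ equipped with nilpotent divided powers.

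\textbf{Step 2 (Linearization and the crystalline collapse --- the crux).} Fix $\mathcal{P}\in\BT{n}(R)$ and form the adjoint-twisted $F$-gauge $(\mathfrak{g})_{\mathcal{P}}$ over $R^{\mathrm{syn}}\otimes\Int/p^n\Int$, which has Hodge--Tate weights bounded by $1$ since $\mu$ is $1$-bounded. By a direct computation with $G$-torsors --- the same mechanism as in the proof of Theorem~\ref{introthm:f_gauges} --- the homotopy fiber of $\BT{n}(R')\to\BT{n}(R)$ over $\mathcal{P}$ is governed by the syntomic cohomology of $(\mathfrak{g})_{\mathcal{P}}$ with coefficients in the square-zero ideal $J$, while the homotopy fiber of $BP^{-,(n)}_\mu(R')\to BP^{-,(n)}_\mu(R)\times_{BG^{(n)}(R)}BG^{(n)}(R')$ over the image of $\mathcal{P}$ is governed by $J\otimes_R(\mathfrak{g}/\mathfrak{p}^-_\mu)_{\mathcal{P}}$, the linearization of the schematic morphism $BP^-_\mu\to BG$. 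What must be shown is that the natural comparison map between these two is an equivalence, and this is exactly where the divided powers --- and their nilpotence --- are used: the divided power structure on $J$ trivializes the de Rham/prismatic contribution to the syntomic cohomology of $(\mathfrak{g})_{\mathcal{P}}\otimes^{\mathbb{L}}J$, so that this syntomic cohomology is computed by the Hodge-filtered part of $(\mathfrak{g})_{\mathcal{P}}$ alone; the weight bound coming from $1$-boundedness then identifies that part with $J\otimes_R(\mathfrak{g}/\mathfrak{p}^-_\mu)_{\mathcal{P}}$ concentrated in a single degree, with no residual higher obstruction, so the comparison map is an equivalence and the square is Cartesian. An alternative and perhaps cleaner execution of this step is to import a Grothendieck--Messing theorem for $(G,\mu)$-displays along divided power morphisms of higher frames, in the spirit of Lau, applied to the syntomic frame of $R'\to R$ from Step 1 and transported through the $(n)$-Weil restriction formalism.

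\textbf{Main obstacle.} The real work is in Step 2: making precise, in the derived, stacky, mod-$p^n$ setting, the assertion that a nilpotent divided power structure kills the prismatic/de Rham part of the syntomic cohomology of a weight-$\leq 1$ $F$-gauge and that exactly the parabolic-reduction data survives. Concretely this means pinning down the map $(R')^{\mathrm{syn}}\otimes\Int/p^n\Int\to R^{\mathrm{syn}}\otimes\Int/p^n\Int$, locating the divided power envelope inside the relevant prism, and verifying that it is genuinely the \emph{nilpotence} of the divided powers --- not merely their existence --- that forces the two linearizations to agree on the nose, the point being the convergence of the relevant Rees/filtration-adic expansions. The remaining ingredients --- compatibility of every identification with the Breuil--Kisin-twisted torsor $\mathcal{P}_\mu$ that cuts out the local triviality condition, and the animated base change of divided power thickenings along quasisyntomic covers --- are routine but must still be checked.
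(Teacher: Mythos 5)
Your overall architecture is sound, and your flagged "alternative route" --- importing a Grothendieck--Messing statement for $(G,\mu)$-displays over higher frames along divided power morphisms, in the spirit of Lau --- is in fact the paper's \emph{primary} route, not an alternative. The quasisyntomic descent reduction to semiperfectoid inputs and the d\'evissage to square-zero thickenings (filtering $J$ by $J^{[k]}$ at the classical level, together with Postnikov truncations in the animated case) also match the argument in the body of the paper. But Step~2 contains a genuine gap, and not just an admission of incompleteness: the mechanism you describe is not the mechanism that actually works.

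You say that the divided power structure on $J$ ``trivializes the de Rham/prismatic contribution to the syntomic cohomology of $(\mathfrak{g})_{\mathcal{P}}\otimes^{\mathbb{L}}J$.'' This is the wrong linearization, and it is the wrong mechanism. The homotopy fiber of $\BT{n}(R')\to\BT{n}(R)$ over $\mathcal{P}$ is controlled not by $J\otimes(\mathfrak{g})_{\mathcal{P}}$ but by the kernel $K=\hker(\Prism_{R'}\to\Prism_R)$ twisted against the adjoint gauge, i.e.\ by the prismatic-level kernel of the frame map $\underline{\Prism}_{R'}\to\underline{\Prism}_R$. The \emph{entire content} of the theorem is that this prismatic-kernel-valued obstruction space collapses to the Hodge-filtration data $J\otimes_R(\mathfrak{g}/\mathfrak{p}^-_\mu)_{\mathcal{P}}$, and the collapse does not happen because a de Rham piece "trivializes" --- it happens because the divided Frobenius $\dot{\varphi}_1$ on $K/{}^{\mathbb{L}}(p,I)$ is \emph{locally nilpotent}, so the operator $\sigma^*_\xi-\mathrm{id}$ on the twisted kernel is a self-equivalence, forcing the relevant fiber to vanish. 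This is the ``unique lifting principle'' of Zink type (Proposition~\ref{prop:def_theory_frames} via Lemma~\ref{lem:A_gauge_nilpotent} in the paper), and it is a Fitting-lemma argument, not a vanishing-of-a-summand argument. Your outline does not name the operator whose invertibility is at stake, nor the statement --- Proposition~\ref{prop:dot_varphi_1_nilpotent} --- that the nilpotence of the classical divided powers $\gamma$ is \emph{equivalent} to the local nilpotence of the divided Frobenius on $\hker(\Prism_{R'}\to\Prism_R)$. That equivalence is a nontrivial explicit computation with the Hodge and conjugate filtrations of $A_{\crys}$ and is the genuinely new technical input; without it, Step~2 is a black box that cannot be opened by the considerations you list. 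Separately, you drop the bootstrapping from characteristic~$p$ to $\Int_p$: the explicit nilpotence computation is carried out for semiperfect $\Field_p$-algebras, and the general $p$-complete case is recovered by derived Tot descent along $R\to R\otimes\Field_p^{\otimes\bullet+1}$ (Proposition~\ref{introprop:derived_descent}), a step your reduction to ``semiperfectoid'' does not render unnecessary because the nilpotence statement itself is proved through mod-$p$ descent.
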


This should be viewed as a truncated analogue of classical Grothendieck-Messing theory, which classifies liftings of $p$-divisible groups across classical nilpotent divided power thickenings in terms of lifts of the Hodge filtration on its crystalline realization. We first prove this when $R'$ is an $\Field_p$-algebra, and then lift it to general inputs using Proposition~\ref{introprop:derived_descent}. It is now not hard to deduce the general case of Theorem~\ref{introthm:main} from its mod-$p$ version (at least when $p>2$) by applying Theorem~\ref{introthm:groth_mess} to the canonical nilpotent divided power thickening $R\to R/{}^{\mathbb{L}}p$. A very slightly more involved argument also works when $p=2$. 

One can obtain a more general Grothendieck-Messing theory for not necessarily nilpotent divided power thickenings by restricting to the nilpotent locus, which---as explained in Remark~\ref{introrem:bp}---can also be described in terms of the $(G,\mu)$-displays of B\"ultel-Pappas. 

\subsection{Higher animated frames}
A key technical device we use is a generalization to the animated realm of the notion of a \emph{higher frame} introduced by Lau~\cite{MR4355476}. We call this an \emph{animated higher frame} or simply \emph{frame}. This is combined with an important structural result due to Bhatt-Lurie that---in the terminology introduced here---says that the syntomification of a semiperfectoid algebra can be realized from (and in fact determines) a canonical frame structure on its absolute prismatic cohomology. We can now combine this with quasisyntomic descent in order to translate questions about stacks over syntomifications to assertions about objects living over frames.

The flexibility afforded by this translation turns out to be very useful, since the category of frames permits various constructions that are not visible on the level of the cohomological stacks. We exploit this flexibility to prove Proposition~\ref{prop:def_theory_frames}, a technical frame-theoretic progenitor of Theorem~\ref{introthm:groth_mess} applying to somewhat general thickenings of frames, using two tools:
\begin{enumerate}
   \item Derived deformation theory in the filtered context.
   \item An explicit understanding of the stack $R^{\Fzip}$ associated with the $1$-truncated Witt frame (also termed the \emph{zip frame} by Lau~\cite{MR4355476}) $\underline{W_1(R)}$.
\end{enumerate}
This method can be viewed as an animated refinement (and a substantial generalization) of a unique lifting principle that is (by now) quite classical, is due essentially to Zink, and appears in some form or other already in various papers on related topics, including those of Lau~\cite{MR4355476}, B\"ultel-Pappas~\cite{MR4120808}, and also the recent work of Bartling~\cite{bartling2022mathcalgmudisplayslocalshtuka} and Hedayatzadeh-Partofard~\cite{hedayatzadeh_partofard}.

With this technical backup in our pockets, the proof of Theorem~\ref{introthm:groth_mess} is reduced to a nilpotence result for the divided Frobenius on the fiber of the map between prismatic cohomologies of a nilpotent divided power thickening, which we prove at the end of Section~\ref{sec:bld_stacks}. 

One interesting point here is that the map of frames to which one would like to apply this reasoning to---given by Nygaard filtered prismatic cohomology---is \emph{not} surjective for square-zero extensions of classical rings. In our applications in \S\ref{subsec:1-bounded_stacks_def_theory}, we use derived algebraic geometry again to reduce to the case of certain square-zero extensions of \emph{animated} commutative rings where the map in question is in fact surjective.

We also use similar techniques---derived deformation theory and reduction to the case of the zip frame---to prove Proposition~\ref{prop:abstract_devissage_to_fzips}, the technical base for the proof of Theorem~\ref{introthm:map_to_f-zips}.

\subsection{Further remarks on the proofs}
All the results above are special cases of theorems about objects that we call \emph{$1$-bounded stacks}, whose precise definition is a bit technical and can be found in~\S\ref{subsec:1_bounded_stacks}. Roughly speaking, a $1$-bounded stack is a(n almost) finitely presented stack over the syntomification of a $p$-complete ring, equipped with additional `bounding data' for the Hodge-Tate weights of its cotangent complex. This bounding condition ensures that the deformation theory is controlled by the sections of an $F$-gauge with Hodge-Tate weights bounded by $1$.\footnote{This condition appears essential in order to obtain representable objects: The syntomic cohomology of Breuil-Kisin twists of Hodge-Tate weights greater than $1$ is known to not yield representable functors.} 

Given such an object $\mathcal{X}$ over $R^{\mathrm{syn}}\otimes\Int/p^n\Int$, we can define a functor on animated $p$-complete $R$-algebras by:
\[
\Gamma_{\mathrm{syn}}(\mathcal{X}):C\mapsto \Map_{/R^{\mathrm{syn}}\otimes\Int/p^n\Int}(C^{\mathrm{syn}}\otimes\Int/p^n\Int,\mathcal{X}).
\]

The condition of $1$-boundedness is essentially the one that ensures that $\Gamma_{\mathrm{syn}}(\mathcal{X})$ is representable: The arguments sketched in~\S\ref{subsec:trunc_Gmu} go through when applied to this prestack. Examples of $1$-bounded stacks include:
\begin{itemize}
   \item The `stack' over $\mathcal{O}^{\mathrm{syn}}\otimes\Int/p^n\Int$ parameterizing $G$-torsors that are isomorphic to $\mathcal{P}_\mu$ when restricted to $B\Gm\times\Spec \kappa$ for any algebraically closed field $\kappa$: this is of course relevant for Theorem~\ref{introthm:main};
   \item Total spaces of vector bundles (and perfect complexes) with Hodge-Tate weights bounded by $1$: this is relevant for Theorem~\ref{introthm:vect_sections_repble}.
   \item The stack $\mathrm{Perf}\times (\Int_p^{\mathrm{syn}}\otimes\Int/p^n\Int)$ of perfect complexes, equipped with bounding data that picks out perfect $F$-gauges of Hodge-Tate weights $0,1$.
\end{itemize}

The general representability result boils down via the technical inputs from animated frames explained above to knowing the representability of certain Artin-Milne type cohomology groups, generalizing the fppf cohomology of finite flat group schemes of height one. For this, we use representability results of Bragg and Olsson~\cite{bragg2021representability}, which we present and amplify into a somewhat broader context in Section~\ref{sec:technical_repbility}.

This level of generality is responsible for some of the bulk of this paper. Our justification for indulging in it is that it will be required for future applications, including, for instance, the construction of spaces of \emph{isogenies} between objects in $\BT{\infty}$~\cite{lee_madapusi}, leading to a general construction of Rapoport-Zink spaces as well as $p$-Hecke correspondences \emph{without} the direct involvement of $p$-divisible groups. It is also used for the construction of special cycles on Shimura varieties in~\cite{Madapusi2022-rp}. We expect that it will help address some of the difficulty in constructing the correct analogues of $\BT{n}$ when $G$ is a parahoric, non-reductive group scheme.

As a more immediate consequence, we are able to obtain an extension of Theorem~\ref{introthm:vect_repble} to perfect $F$-gauges.
\begin{introthm}
\label{introthm:perf_fgauges}
The prestack $\mathrm{Perf}^{\mathrm{syn}}_{\{0,1\},n}$ assigning to every $p$-complete ring $R$ the $\infty$-groupoid of perfect complexes on $R^{\mathrm{syn}}\otimes\Int/p^n\Int$ with Hodge-Tate weights in $\{0,1\}$ is represented by a locally finitely presented derived $p$-adic formal Artin stack over $\Int_p$. Moreover, the prestack $\mathrm{Perf}^{\mathrm{syn}}_{0,n}$ classifying perfect complexes on $R^{\mathrm{syn}}\otimes\Int/p^n\Int$ with Hodge-Tate weights $0$ is canonically isomorphic to the $p$-adic formal stack of perfect complexes of lisse $\Int/p^n\Int$-sheaves.
\end{introthm}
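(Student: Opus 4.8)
For the first assertion, the plan is to realize $\mathrm{Perf}^{\mathrm{syn}}_{\{0,1\},n}$ as the global sections $\Gamma_{\mathrm{syn}}(\mathcal{X})$ of a $1$-bounded stack and then invoke the general representability theorem of \S\ref{sec:technical_repbility}, of which Theorem~\ref{introthm:f_gauges} and Theorem~\ref{introthm:vect_repble} are the instances already recorded. Working over $\mathcal{O}=\Int_p$, I would take $\mathcal{X}$ to be the open and closed substack of $(\Int_p^{\mathrm{syn}}\otimes\Int/p^n\Int)\times\mathbf{Perf}$, where $\mathbf{Perf}$ is the moduli stack of perfect complexes, cut out by the (locally constant) condition that the Hodge--Tate weights lie in $\{0,1\}$; by construction a point of $\Gamma_{\mathrm{syn}}(\mathcal{X})(R)$ is exactly a perfect complex on $R^{\mathrm{syn}}\otimes\Int/p^n\Int$ with Hodge--Tate weights in $\{0,1\}$. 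The decisive verification is that $\mathcal{X}$ is $1$-bounded: the deformation theory of $\mathbf{Perf}$ at a perfect complex $\mathcal{M}$ is governed by $R\mathcal{H}om(\mathcal{M},\mathcal{M})\simeq \mathcal{M}^{\vee}\otimes\mathcal{M}$, and if $\mathcal{M}$ has Hodge--Tate weights in $\{0,1\}$ then $\mathcal{M}^{\vee}\otimes\mathcal{M}$ has Hodge--Tate weights in $\{-1,0,1\}$, in particular bounded by $1$ --- which is precisely the condition built into the definition of $1$-boundedness, and is the reason the admissible weight window must have length at most one. Granting this, the general theorem --- which reduces, via quasisyntomic descent and the frame-theoretic translation, to the representability of the relevant Artin--Milne type cohomology functors following the results of Bragg--Olsson --- shows $\Gamma_{\mathrm{syn}}(\mathcal{X})$ is a locally finitely presented $p$-adic derived Artin stack over $\Int_p$.

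Since the condition on Hodge--Tate weights does not by itself bound Tor-amplitude, I would first run this argument for the open substacks of objects of Tor-amplitude in $[-r,r]$ --- where $\mathbf{Perf}_{[-r,r]}$ is a genuine derived Artin stack --- and then pass to the increasing union over $r$, which remains locally finitely presented. This step also recovers Theorem~\ref{introthm:vect_repble} as the locus of Tor-amplitude $[0,0]$ (where the additional smoothness is the content of that theorem).

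For the second assertion, taking the weight window to be $\{0\}\subseteq\{0,1\}$ the first part already exhibits $\mathrm{Perf}^{\mathrm{syn}}_{0,n}$ as a $p$-adic derived Artin stack. To identify it with the stack of perfect complexes of lisse $\Int/p^n\Int$-sheaves, I would construct, functorially in the $p$-complete ring $R$, the \'etale-realization functor from perfect complexes of Hodge--Tate weight $0$ on $R^{\mathrm{syn}}\otimes\Int/p^n\Int$ to perfect complexes of lisse $\Int/p^n\Int$-sheaves on $\Spec R$, obtained by restricting a weight-$0$ $F$-gauge to the \'etale part of the syntomification as in the comparison theory of Bhatt--Lurie, and then show it is an equivalence for each $R$. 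By quasisyntomic descent on both sides it suffices to check this when $R$ is quasiregular semiperfectoid, where the syntomification is controlled by the explicit Bhatt--Lurie frame structure and the assertion reduces to the (essentially known) statement that weight-$0$ perfect $F$-gauges over such $R$ are exactly the perfect complexes of $\Int/p^n\Int$-local systems on $\Spec R$. Passing to underlying groupoids and using that both sides are stacks for the quasisyntomic topology then yields the claimed isomorphism of $p$-adic formal stacks.

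I expect the principal obstacle in the first assertion to be bookkeeping rather than analysis: placing the relative moduli of perfect complexes over the syntomification cleanly within the axiomatics of $1$-bounded stacks and managing the Tor-amplitude exhaustion compatibly, since the substantive input (Artin--Milne representability following Bragg--Olsson) is black-boxed. For the second assertion the delicate point is the functoriality of the \'etale-realization comparison in $R$ together with quasisyntomic descent for the stack of lisse $\Int/p^n\Int$-sheaves on arbitrary, possibly non-quasisyntomic, $p$-adic formal schemes --- this is what legitimizes the reduction to the quasiregular semiperfectoid case.
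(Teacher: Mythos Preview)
Your approach to the first assertion is essentially the same as the paper's: both realize $\mathrm{Perf}^{\mathrm{syn}}_{\{0,1\},n}$ as $\Gamma_{\mathrm{syn}}(\mathcal{X})$ for the $1$-bounded stack $\mathcal{P}_{\{0,1\}}$ of Example~\ref{ex:perfect_HT_wts_01_stack} and invoke the general representability theorem (Theorem~\ref{thm:general_representability}). The paper does not explicitly exhaust by Tor amplitude; instead it observes that although $\mathcal{P}$ lacks quasi-affine diagonal, the $1$-bounded stack $\mathcal{P}_{\{0,1\}}$ is nonetheless graded and filtered integrable (Remarks~\ref{rem:use_of_graded_completeness},~\ref{rem:use_of_filtered_completeness}) and that its attractor and fixed-point loci are directly representable, so hypothesis (2) of Theorem~\ref{thm:general_representability} applies as stated.

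For the second assertion your route diverges from the paper's, and the paper's is both simpler and avoids the delicate points you flag. The paper does not construct an \'etale-realization functor and then descend; instead it observes that $\mathrm{Perf}^{\mathrm{syn}}_{\{0\},n}$ is the open substack of $\mathrm{Perf}^{\mathrm{syn}}_{\{0,1\},n}$ where $\gr^{-1}_{\mathrm{Hdg}}M_{\mathrm{taut}}\simeq 0$, and then reads off from the cotangent complex formula of Theorem~\ref{thm:HTwts01_representable} (namely $(\gr^{-1}_{\mathrm{Hdg}}M_{\mathrm{taut}})^\vee\otimes\Fil^0_{\mathrm{Hdg}}M_{\mathrm{taut}}$) that this substack is \emph{\'etale} over $\Spf\Int_p$. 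An \'etale formal stack is determined by its values on perfect $\Field_p$-algebras, and for perfect $R$ the groupoid in question is that of perfect $W_n(R)$-complexes with a Frobenius isomorphism, which Katz's classical result identifies with lisse $\Int/p^n\Int$-sheaves. This bypasses entirely the construction of a comparison functor over general $R$ and the quasisyntomic descent for lisse sheaves that you correctly identified as the delicate step. Note also that the Bhatt--Lurie \'etale realization you gesture at (cf.~\S\ref{subsec:etale}) lands in sheaves on the adic \emph{generic fiber}, not on $\Spec R$; the actual functor realizing the equivalence on connective objects is $\mathcal{M}\mapsto\Gamma_{\mathrm{syn}}(\mathcal{M})$, which is \'etale precisely because the cotangent complex vanishes.
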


This result will be used in~\cite{madapusi_mondal} to extend Theorem~\ref{introthm:dieudonne} to a classification of all finite flat $p$-power torsion commutative group schemes over $X$ in terms of certain perfect $F$-gauges.

\subsection{Application to Shimura varieties}
Theorem~\ref{introthm:main} also has a global application, which was the main motivation for one of us (K.M.) to pursue the work here, and is now explored in~\cite{MadYoucis}. Suppose that $(G,X)$ is a Shimura datum with reflex field $E$, and that $G$ is unramified at $p$ with reductive model $G_{\Int_p}$: This implies in particular that $E$ is unramified over $p$. Fix a place $v\mid p$ of $E$, and choose an $\Reg{E_v}$-rational representative $\mu^{-1}:\Gm\to G_{\Reg{E_v}}$ for the (inverse of the) conjugacy class of Shimura cocharacters underlying $X$. 

Consider a neat level subgroup $K\subset G(\Adele_f)$ of the form $K_pK^p$ with $K_p = G_{\Int_p}(\Int_p)$ and $K^p\subset G(\Adele_f^p)$ a compact open subgroup. Associated with this is the Shimura variety $\Sh_K$ over $E$, and over it we find a canonical pro-\'etale $G^c(\Int_p)$-torsor $\mathbf{Et}_{K,p}$. 

In~\cite{MadYoucis}, we find the definition of an \emph{integral canonical model} $\Ss_K$ for $\Sh_K$ over $\Reg{E,(v)}$, which asks for the existence of a formally \'etale map of formal stacks over $\Reg{E_v}$
\[
  \varpi: \widehat{\Ss}_K\to \BT[G^c_{\Int_p},\mu^{-1}]{\infty},
\]
whose \'etale realization (via a functor such as the one described in~\cite[\S 6.3]{bhatt_lectures}) is $\mathbf{Et}_{K,p}$, and also requires that the adic generic fiber $\widehat{\Ss}_{K,\eta}$ contain all the classical points of the analytification of $\Sh_{K,E_v}$ where $\mathbf{Et}_{K,p}$ is crystalline.

Furthermore, we now know the following facts about such integral canonical models (see \emph{op.\@ cit.}):
\begin{enumerate}
   \item Integral canonical models are unique up to unique isomorphism, and in fact satisfy a strong mapping property with respect to \emph{normal} excellent algebraic spaces over $\Reg{E,(v)}$.

   \item The integral models of abelian type constructed by Kisin~\cite{kisin:abelian} and Kim--Madapusi~\cite{Kim2016-fb} are integral canonical models in this sense.

   \item If either $p>2$ and $(G,X)$ is \emph{pre}-abelian, or if $p$ is sufficiently large (the bound depending only on the adjoint datum $(G^{\ad},X^{\ad})$), an integral canonical model $\Ss_K$ exists.
 \end{enumerate}
The above results amount to a unification---and a substantial generalization---of the results of Imai--Kato--Youcis~\cite{imai2023prismatic} and Bakker--Shankar--Tsimerman~\cite{bst}, and provide the required setup for applying the results of this paper to global questions.

\subsection{A note on the terminology}

Various categories of objects associated with the pair $(G,\mu)$ show up in this paper, and we have tried our best to find some coherent way for distinguishing between them. Here are some possibly helpful remarks for the reader:
\begin{itemize}
   \item For objects appearing over $p^n$-torsion bases, we have used the adjective \defnword{$n$-truncated}: this is compatible via Theorem~\ref{thm:dieudonne} with the corresponding terminology for Barsotti-Tate groups.
   \item For objects associated with the (higher animated) frames appearing in Section~\ref{sec:higher_frames}, we have used the term $(G,\mu)$-\defnword{windows}: This harkens to Zink's original terminology in~\cite{MR1827031}.
   \item Upon the advice of Drinfeld, we have reserved the term $(G,\mu)$-\defnword{display} for objects associated with the Witt vector frame: this is compatible with the terminology in~\cite{MR4120808}.
   \item Finally, for the fundamental objects living over the syntomification stacks, we have coined the term $(G,\mu)$-\defnword{aperture}: The (admittedly vague) inspiration behind this choice is the aperture in a camera, which directs light onto the lens, a lens that can occasionally be a \emph{prismatic} one. 
   \item Given a particular frame, one can often produce $(G,\mu)$-windows over that frame from $(G,\mu)$-apertures over a quotient ring (see Remark~\ref{rem:pullback_to_frames}): in this sense, frames can be viewed as a device for expanding apertures into windows.
\end{itemize}

\subsection{Structure of the paper}

\begin{itemize}
   \item We begin in Section~\ref{sec:prelim} with some background on derived stacks. We also recall the notion of derived Weil restriction, and some facts about divided powers in the animated context.
   \item In Section~\ref{sec:filtered_rings_stacks}, we recall the story of filtered animated rings and as well as of filtered derived stacks via C. Simpson's perspective of viewing such gadgets as objects over $\Aff^1/\Gm$. We give an account of our notion of a $1$-bounded stack, give examples of such objects and prove some general facts about them.
   \item Section~\ref{sec:higher_frames} contains the technical latticework undergirding this whole enterprise. Here, we present our generalization of Lau's theory of higher frames and displays from~\cite{MR4355476} in an animated context (though, as mentioned above, we use the term `window' instead of `display'). We then use this to prove an abstract version of Grothendieck-Messing theory for $1$-bounded stacks in \S~\ref{subsec:abstract_def_theory}, and we also prove an abstract version of the `reduction to $F$-zips', Theorem~\ref{introthm:map_to_f-zips}, in \S~\ref{subsec:torsor_struct}.
   \item In Section~\ref{sec:bld_stacks}, we review the stack-theoretic constructions of Drinfeld and Bhatt-Lurie from~\cite{bhatt_lectures},~\cite{bhatt2022absolute},~\cite{bhatt2022prismatization} and~\cite{drinfeld2022prismatization}. Our treatment of the Nygaard filtered prismatization here---arising from conversations with Juan Esteban Rodr\'iguez Camargo---appears to be new and works cleanly for animated inputs. Using this perspective, we recall in \S~\ref{subsec:nygaard_filtered_affineness} the \emph{filtered affineness} of the various stacks when working with \emph{semiperfectoid} rings, where the stacks of Drinfeld and Bhatt-Lurie are now obtained---via the Rees construction---from Nygaard filtered prismatic cohomology. We end with an important nilpotence result on the first divided Frobenius on the fiber between the prismatic cohomologies of a nilpotent divided power extension.
   \item Section~\ref{sec:technical_repbility} recalls a result of Bragg-Olsson on the representability of derived stacks that parameterize the fppf cohomology of certain `perfect complexes' of finite flat group schemes of height one and extends it to the almost perfect case.
   \item We then prove our general representability theorems for stacks of sections associated with $1$-bounded stacks: this takes up Section~\ref{sec:abstract}. We follow the strategy sketched above: Representability on the level of $F$-zips is first lifted to representability of the stack of sections over the mod-$p$ syntomification of $\Field_p$-algebras using filtered affineness for semiperfectoid inputs and the results of \S~\ref{subsec:torsor_struct}. This is then bootstrapped to representability over the syntomification of $\Field_p$-algebras, followed by a further bootstrapping up to arbitrary $p$-nilpotent algebras. We give some applications of our general representability results for stacks of $F$-gauges, and prove Theorems~\ref{introthm:f_gauges} and~\ref{introthm:perf_fgauges}.
   \item Section~\ref{sec:main} is where we define the stacks $\BT{n}$ and prove Theorems~\ref{introthm:main},~\ref{introthm:map_to_f-zips} and~\ref{introthm:groth_mess} as consequences of the general results of the previous section.

   \item In Section~\ref{sec:explicit}, we use deformation theory and a strategy introduced by Ito~\cite{ito2023deformation} to give explicit descriptions of the points of $\BT{n}$ valued in certain regular complete local Noetherian rings, and show that the deformation rings defined by Faltings in~\cite[\S 7]{faltings:very_ramified} in fact provide explicit coordinates for the complete local rings of $\BT{\infty} = \varprojlim_n\BT{n}$.
   \item Finally, in Section~\ref{sec:bt_classification}, we gather our results together to prove Theorem~\ref{introthm:dieudonne}. The reader will also find some complements dealing (among other things) with polarizations and compatibility with the classical de Rham and crystalline realizations.
   \item The short appendix~\ref{app:completeness} collects some completeness results in the context of graded and filtered commutative rings that are used in Section~\ref{sec:filtered_rings_stacks}.
\end{itemize}

\section*{Acknowledgments}
Both Z. G. and K. M. were partially supported by NSF grant DMS-2200804.

We thank Bhargav Bhatt, Juan Esteban Rodr\'iguez Camargo, Vladimir Drinfeld, Manuel Hoff, Naoki Imai, Hiroki Kato, Shubhodip Mondal, Ayan Nath and Alex Youcis for helpful remarks, corrections, and conversations.

We also thank Eike Lau for for many essential corrections and clarifications of our understanding of frames.

Very special thanks are due to Akhil Mathew for many enlightening discussions on different aspects of the paper. Crucially, it was his idea that one should use syntomic cohomology to define the functor inducing the equivalence in Theorem~\ref{introthm:dieudonne}, and this was a key motivator for much of the rest of the content of this paper as well. 

\section{Notation and other conventions}
\label{sec:notation}

\begin{enumerate}
  \item We adopt a resolutely $\infty$-categorical approach. This means that all operations, including (but not limited to) limits, colimits, tensor products, exterior powers etc. are always to be understood in a derived sense, unless otherwise stated.

  \item We will use $\mathrm{Spc}$ to denote the $\infty$-category of spaces, anima, or homotopy types: roughly speaking, this is the localization of the Quillen model category of simplicial sets with respect to homotopy equivalences.

  \item A map $X\to Y$ in $\mathrm{Spc}$ is \defnword{surjective} if the induced map $\pi_0(X)\to \pi_0(Y)$ is a surjective map of sets; we will denote surjective maps with $\twoheadrightarrow$.

  \item For any $\infty$-category $\mathcal{C}$ and an object $c$ of $\mathcal{C}$, we will write $\mathcal{C}_{c/}$ (resp. $\mathcal{C}_{/c}$) for the comma $\infty$-categories of arrows $c\to d$ (resp. $d\to c$).
 
  \item We will in a few places make reference to the process of \defnword{animation}, as described say in~\cite[Appendix A]{Mao2021-jt}. This is a systematic way to get well-behaved $\infty$-categories and functors between them, starting from `nice' classical categories $\mathcal{C}$ with a set $\mathcal{C}_0$ of compact, projective generators. The animation of such a category is the $\infty$-category $\mathcal{P}_{\Sigma}(\mathcal{C}_0)$ of presheaves of spaces on $\mathcal{C}_0$ that preserve finite products.

  \item We will denote by $\mathrm{CRing}$ the $\infty$-category of \defnword{animated commutative rings}, obtained via the process of animation from the usual category of commutative rings. Objects here can be viewed as being simplicial commutative rings up to homotopy equivalence.

  \item We will follow homological notation for $\mathrm{CRing}$: For any $n\in \Int_{\ge 0}$, $\mathrm{CRing}_{\leq n}$ will be the subcategory of $\mathrm{CRing}$ spanned by those objects $R$ with $\pi_k(R) = 0$ for $k>n$; that is, by the \defnword{$n$-truncated objects}. If $n = 0$, we will write $\mathrm{CRing}^{\heartsuit}$ instead of $\mathrm{CRing}_{\leq 0}$: its objects are the \defnword{discrete} or classical commutative rings, and the category can be identified with the usual category of commutative rings. 

  \item Any animated commutative ring $R$ admits a \defnword{Postnikov tower} $\{\tau_{\leq n}R\}_{n\in \Int_{\ge 0}}$ where $R\to \tau_{\leq n}R$ is the universal arrow from $R$ into $\mathrm{CRing}_{\leq n}$ and the natural map $R\to \varprojlim_n \tau_{\leq n}R$ is an equivalence.

  \item We will also need the notion of a \defnword{stable $\infty$-category} from~\cite{Lurie2017-oh}: this is the $\infty$-category analogue of a triangulated category. The basic example is the $\infty$-category $\Mod{R}$, the derived $\infty$-category of $R$-modules. We will use \emph{cohomological} conventions for these objects and so will write for instance $H^{-1}(M)$ instead of $\pi_1(M)$.

  \item An important feature of a stable $\infty$-category $\mathcal{C}$ is that it has an initial and final object $0$, and, for any map $f:X\to Y$ in $\mathcal{C}$, we have the \defnword{homotopy cokernel} $\hcoker(f)$ defined as the pushout of $0\to Y$ along $f$. We will sometimes abuse notation and write $Y/X$ for this object.

  \item If $R\in \mathrm{CRing}^{\heartsuit}$ is a classical commutative ring, $M\in \Mod{R}$ is a complex of $R$-modules, and $a_1,\ldots,a_m\in R$ form a regular sequence, we will write $M/{}^{\mathbb{L}}(a_1,\ldots,a_m)$ for the derived tensor product
  \[
    M\otimes^{\mathbb{L}}_RR/(a_1,\ldots,a_m).
  \]

  \item In any stable $\infty$-category $\mathcal{C}$ and an object $X$ in $\mathcal{C}$, we set $X[1]=\hcoker(X\to 0)$: this gives a shift functor $\mathcal{C}\to \mathcal{C}$ with inverse $X\mapsto X[-1]$, and we set $\hker(f:X\to Y)  = \hcoker(f)[-1]$. 

  \item Given an animated commutative ring $R$, we will write $\Mod[\mathrm{cn}]{R}$ for the sub $\infty$-category spanned by the connective objects (that is, objects  with no cohomology in positive degrees), and $\mathrm{Perf}(R)$ for the sub $\infty$-category spanned by the perfect complexes. 

  \item We have a truncation operator $\tau^{\leq 0}:\Mod{R}\to \Mod[\mathrm{cn}]{R}$ defined as the right adjoint to the natural functor in the other direction. This leads to truncation operators $\tau^{\leq n}$ and cotruncation operators $\tau^{\geq n}$ for any $n\in \Int$ in the usual way.

  \item If $f:X\to Y$ is a map in $\Mod[\mathrm{cn}]{R}$, we set $\hker^{\mathrm{cn}}(f) = \tau^{\leq 0}\hker(f)$: this is the \defnword{connective (homotopy) kernel}.

  \item For any stable $\infty$-category $\mathcal{C}$, the mapping spaces $\Map_{\mathcal{C}}(X,Y)$ between any two objects have canonical lifts to the $\infty$-category of connective spectra. We will be interested in stable $\infty$-categories like $\Mod{R}$, which are $\Mod[\mathrm{cn}]{\Int}$-modules, in the sense that the mapping spaces have canonical lifts to $\Mod[\mathrm{cn}]{\Int}$. In this case, we can extend the mapping spaces $\Map_{\mathcal{C}}(X,Y)$ from $\Mod[\mathrm{cn}]{\Int}$ to objects $\mathrm{RHom}_{\mathcal{C}}(X,Y)$ in $\Mod{\Int}$ by taking 
  \[
    \mathrm{RHom}_{\mathcal{C}}(X,Y) = \colim_{k\geq 0}\Map_{\mathcal{C}}(X,Y[k])[-k]\in \Mod{\Int}.
  \]
  When $\mathcal{C} = \Mod{R}$ for an animated commutative ring $R$, this lifts to the \defnword{internal Hom} in $\Mod{R}$, which we will denote by $\mathrm{RHom}_R(X,Y)$.

  \item We will write $\bm{\Delta}$ for the usual \defnword{simplex} category with objects the sets $\{0,1,\ldots,n\}$ and morphisms given by the non-decreasing functions between them.

  \item A \defnword{cosimplicial object} $S^{(\bullet)}$ in an $\infty$-category  $\mathcal{C}$ is a functor
  \begin{align*}
  \bm{\Delta}&\mapsto \mathcal{C}\\
  [n]&\mapsto S^{(n)}.
  \end{align*}
  If $\mathcal{C}$ admits limits, we will write $\mathrm{Tot}S^{(\bullet)}$ for the limit of the corresponding functor: this is the \defnword{totalization} of $S^{(\bullet)}$.

  \item Given any $\infty$-category $\mathcal{C}$ with finite coproducts, and any object $S$ in $\mathcal{C}$ there is a canonical cosimplicial object $S^{(\bullet)}$ in $\mathcal{C}$, the \defnword{\u{C}ech conerve} with $S^{(n)} = \bigsqcup_{i\in [n]}S$. 

  \item If $X$ is a (derived) stack (resp. an object of $\Mod{R}$ for some $R$), and $N\in \Int\backslash\{0\}$, we will write $X[N^{-1}]$ for the base-change $\Spec \Int[N^{-1}]\times X\to \Spec \Int[N^{-1}]$ (resp. for the base-change $\Int[N^{-1}]\otimes_{\Int}X$ in $\Mod{\Int[N^{-1}]\otimes_{\Int}R}$). On the rare occasions when these notations collide, context will make the usage clear.
\end{enumerate}

\section{Stacks and other preliminaries}
\label{sec:prelim}

\subsection{Square-zero extensions and differential conditions}
\label{subsec:square_zero}

Given a pair $(R,M)$ with $R\in \mathrm{CRing}$ and $M\in \Mod[\mathrm{cn}]{R}$, we have a canonical object $R\oplus M\in \mathrm{CRing}_{R/{}/R}$, the \defnword{trivial square-zero extension} of $R$ by $M$: This is obtained by animating the construction on such pairs with $R$ a polynomial algebra and $M$ a finite free $R$-module to the usual square-zero extension $R\oplus M$.

If $R\in \mathrm{CRing}_{A/}$, we set
\[
\mathrm{Der}_A(R,M) = \Map_{A/{}/R}(R,R\oplus M).
\]
This is the space of $A$-\defnword{derivations} of $R$ \defnword{valued in $M$}. We always have the trivial $A$-derivation $d_{\mathrm{triv}} = (\mathrm{id},0)$.

A \defnword{square-zero extension} of $R$ by $M$ in $\mathrm{CRing}_{A/}$ is a surjective map $R'\twoheadrightarrow R$ in $\mathrm{CRing}_{A/}$ such that there exists an $A$-derivation $d:R\to R\oplus M[1]$ and an equivalence of $A$-algebras
\[
R' \xrightarrow{\simeq} R\times_{d,R\oplus M[1],d_{\mathrm{triv}}}R.
\]

We have the \defnword{cotangent complex} $\mathbb{L}_{R/A}\in \Mod[\mathrm{cn}]{R}$: this is obtained by animating the functor taking maps $S\to S'$ of polynomial rings over $\Int$ in finitely many variables to the module of differentials $\Omega^1_{S'/S}$, and is characterized by the property that, for any trivial square zero extension $R\oplus M\twoheadrightarrow R$, there is a canonical equivalence
\[
\Map_{{R}}(\mathbb{L}_{R/A},M)\xrightarrow{\simeq}\mathrm{Der}_A(R,M).
\]

\begin{definition}\label{defn:differential_conditions}
An $R$-algebra $C\in \mathrm{CRing}_{R/}$ is \defnword{finitely presented} (over $R$) if the functor $S\mapsto \mathrm{Map}_{\mathrm{CRing}_{R/}}(C,S)$ respects filtered colimits. For any such finitely presented $C$, the cotangent complex $\mathbb{L}_{C/R}\in \Mod[\mathrm{cn}]{C}$ is perfect; see~\cite[(17.4.3.18)]{Lurie2017-oh}.

If in addition $\mathbb{L}_{C/R}$ is $1$-connective, we say that $C$ is \defnword{unramified} over $R$; if $\mathbb{L}_{C/R}\simeq 0$, we say that $C$ is \defnword{\'etale} over $R$.

We say that a finitely presented $C\in \mathrm{CRing}_{R/}$ is \defnword{smooth} over $R$ if $\mathbb{L}_{C/R}\in \Mod[\mathrm{cn}]{C}$ is locally free of finite rank. It is \defnword{quasi-smooth} if $\mathbb{L}_{C/R}$ is perfect with Tor amplitude $[-1,0]$.
\end{definition}

\subsection{Derived (pre)stacks}

Suppose that $\mathcal{C}$ is an $\infty$-category admitting all finite and sequential limits, totalizations of cosimplicial objects, and filtered colimits. A $\mathcal{C}$-\defnword{valued prestack over} $R\in \mathrm{CRing}$ is a functor
\[
F:\mathrm{CRing}_{R/}\to \mathcal{C}.
\]
If $\mathcal{C} = \mathrm{Spc}$,  we will simply call $F$ a \defnword{prestack over} $R$. Such objects organize into an $\infty$-category $\mathrm{PStk}_R$.

We view such prestacks as presheaves on the $\infty$-category of derived affine schemes $\Spec R'$ over $R$ (by definition opposite to $\mathrm{CRing}_{R/}$), and we can consider the subcategory of prestacks that are fpqc (resp. \'etale) sheaves---that is, presheaves satisfying descent along faithfully flat (resp. faithfully flat and \'etale) maps $\Spec R'\to \Spec R$.

\begin{definition}
\label{defn:artin_stacks}
Following To\"en-Vezzosi~\cite{TV2}, we will say that $F$ is $0$-\defnword{geometric} if we have $F \simeq \Spec R'$ for some $R'\in \mathrm{CRing}_{R/}$, and, inductively, that it is an $n$-\defnword{geometric derived Artin stack over $R$} for an integer $n\ge 1$ if it is an \'etale sheaf and admits a surjective cover $f:U = \sqcup_{i\in I}\Spec R'_i\to F$ of \'etale sheaves with $R'_i\in \mathrm{CRing}_{R/}$ satisfying the following condition: For every $S\in \mathrm{CRing}_{R/}$ and $x\in F(S)$, the base-change $U\times_{f,F,x}\Spec S\to \Spec S$ is represented by an $(n-1)$-geometric derived Artin stack over $S$. 

Following Lurie~\cite[\S 5]{lurie_thesis}, we will say that $F$ is a \defnword{derived Artin $n$-stack over $R$} if it is $m$-geometric for some $m$ and is such that $F(R')$ is $n$-truncated for all discrete $R'\in \mathrm{CRing}_{R/,\heartsuit}$. A derived Artin $0$-stack over $R$ will be called a \defnword{derived algebraic space over $R$}.

A \defnword{derived Artin stack over $R$} is a prestack $F$ that is a derived Artin $n$-stack for some $n\ge 0$. If $R = \Int$, then we will simply say `derived Artin stack' instead.

A map of $X\to Y$ of prestacks over $R$ is a \defnword{relative derived Artin stack} if, for every $R$-algebra $C$ and every $y\in Y(C)$, the base-change $X_y\to \Spec C$ is a derived Artin stack over $C$.
\end{definition}

\begin{definition}
\label{defn:almost_fp}
A prestack $F$ over $R$ is \defnword{locally of finite presentation} or \defnword{locally finitely presented} if for every filtered system $\{C_i\}_{i\in I}$ in $\mathrm{CRing}_{R/}$ with colimit $C\in \mathrm{CRing}_{R/}$, the natural map
\[
\colim_{i\in I}F(C_i)\to F(C)
\]
is an equivalence.

It is \defnword{almost locally of finite presentation} or \emph{almost locally finitely presented} if the above map is an equivalence for filtered colimits of $k$-truncated animated commutative $R$-algebras for all $k\ge 0$.
\end{definition}

\begin{definition}
\label{defn:formally_smooth}
A prestack $F$ over $R$ is \defnword{formally smooth} if for every square-zero extension $C'\twoheadrightarrow C$ in $\mathrm{CRing}_{R/}$, the map $F(C')\to F(C)$ is surjective.

A derived Artin stack over $R$ is \defnword{smooth} if is locally finitely presented and formally smooth.
\end{definition}

\begin{definition}
A prestack $F$ over $A\in \mathrm{CRing}$ that is an fpqc sheaf is \defnword{classical} if it is equivalent as an fpqc sheaf to the left Kan extension to $\mathrm{CRing}_{A/}$ of its classical truncation $F_{\mathrm{cl}}:\mathrm{CRing}_{\pi_0(A)/}\to \mathrm{Spc}$: That is, it is a colimit of derived affine schemes $\Spec B$ with $B\in \mathrm{CRing}_{\pi_0(A)/}$ in the $\infty$-category of fpqc sheaves on $\mathrm{CRing}_{A/}^{\op}$. The functor $F\mapsto F_{\mathrm{cl}}$ is fully faithful when restricted to classical prestacks. 
\end{definition}

\subsubsection{}\label{subsubsec:cotangent_complex}
For any prestack $F\in \mathrm{PStk}_{R}$, we have an $\infty$-category $\mathrm{QCoh}(F)$ of \defnword{quasi-coherent sheaves on $F$}. The precise definition can be found in~\cite[\S 6.2.2]{Lurie2018-kh}: roughly speaking, it is obtained by right Kan extension of the contravariant functor sending $S\in \mathrm{CRing}_{R/}$ to $\Mod{S}$. One can think of an object $\mathcal{M}$ in $\mathrm{QCoh}(F)$ as a way of assigning to every point $x\in F(S)$ an object $\mathcal{M}_x\in\Mod{S}$ compatible with base-change. This $\infty$-category is particularly well-behaved when $F$ is \defnword{quasi-geometric}~\cite[\S 9.1]{Lurie2018-kh}: this means that $F$ is an fpqc sheaf with quasi-affine diagonal admitting a flat cover by an affine derived scheme. Most of the prestacks we will encounter in this paper will be quasi-geometric or instances of a formal analogue of this notion.

\begin{definition}
We will say that $\mathcal{M}$ is \defnword{connective} if $\mathcal{M}_x$ belongs to $\Mod[\mathrm{cn}]{S}$ for each $x\in F(S)$ as above. We will say that it is \defnword{almost connective} if, for every $x\in F(S)$, there exists $n\in \Int_{\geq 0}$ such that $\mathcal{M}_x[n]$ is connective. We will say that it is \defnword{perfect} if, for every $x\in F(S)$, $\mathcal{M}_x$ is perfect. It is \defnword{almost perfect} if, for every $x\in F(S)$, $\mathcal{M}_x$ is almost perfect: That is, there exists $m\ge 0$ such that $\mathcal{M}_x[m]$ is connective and such that, for all $k \ge 1$, $\tau^{\ge -k}(\mathcal{M}_x[m])$ is a finitely presented object in the $\infty$-subcategory spanned by the $k$-truncated connective objects in $\Mod{S}$. Concretely, any such object is up to shifting the geometric realization of a simplicial object valued in projective $S$-modules of finite rank. See~\cite[\S 7.2.4]{Lurie2017-oh},\cite[Appendix A]{MR4560539}.

Write $\mathrm{QCoh}^{\mathrm{cn}}(F)$ (resp. $\mathrm{QCoh}^{\mathrm{acn}}(F)$, resp. $\mathrm{Perf}(F)$, resp. $\mathrm{APerf}(F)$) for the  $\infty$-category spanned by the connective (resp. almost connective, resp. perfect, resp. almost perfect) objects in $\mathrm{QCoh}(F)$.
\end{definition}

\begin{definition}
\label{defn:cotangent_complex}
Following~\cite[\S 17.2.4]{Lurie2018-kh}, we will say that a morphism $f:F\to G$ in $\mathrm{PStk}_{R}$ \defnword{admits a cotangent complex} if there exists $\mathbb{L}_{F/G}\in \mathrm{QCoh}^{\mathrm{acn}}(X)$ such that, for every $C\in \mathrm{CRing}_{R/}$, every $M\in \Mod[\mathrm{cn}]{R}$, and every $x\in F(C)$, we have a canonical equivalence
\[
\mathrm{Map}_{C}(\mathbb{L}_{F/G,x},M)\xrightarrow{\simeq}\mathrm{fib}_{(f(x)[M],x)}(F(C\oplus M)\to G(C\oplus M)\times_{G(C)}F(C)).
\]
Here, $f(x)[M]\in G(C\oplus M)$ is the image of $f(x)$ along the natural section $G(C)\to G(C\oplus M)$.

If $F = \Spec C$ and $G = \Spec D$, then by Yoneda, any morphism $f:F\to G$ corresponds to an arrow $D\to C$ in $\mathrm{CRing}_{R/}$, and $f$ admits a cotangent complex, namely $\mathbb{L}_{C/D}$.
\end{definition}

\begin{remark}
\label{rem:classicality}
Suppose that $F$ is a locally finitely presented derived Artin stack over $R\in \mathrm{CRing}_{\heartsuit}$ such that the cotangent complex $\mathbb{L}_{F/R}$ is a perfect complex of \emph{non-negative} Tor-amplitude. Then $F$ is smooth and classical. By an argument via induction on $n$ where $F$ is an $n$-geometric derived Artin stack, this reduces to the fact that smooth $R$-algebras are flat over $R$ and thus classical; see~\cite[Prop. 3.4.9]{lurie_thesis}.
\end{remark}

\subsection{Derived vector stacks}
\label{subsec:vector stacks}

We have the classical construction associating with every finite locally free $R$-module $M$ the affine $R$-scheme $\mathbf{V}(M)$ with ring of functions $\Sym_R(M^\vee)$ the symmetric algebra of the $R$-dual $M^\vee$ of $M$.\footnote{This is Grothendieck's convention.} Its functor of points is given by $S\mapsto \Hom_R(M,S)$.

One can now consider, for any $R\in \mathrm{CRing}$ and any almost perfect complex $M\in \Mod{R}$, the prestack
\[
\mathrm{CRing}_{R/}\xrightarrow{S\mapsto \Map_{{R}}(M,S)}\mathrm{Spc}.
\]
It is represented by an almost finitely presented derived Artin $n$-stack $\mathbf{V}(M)$ over $R$ where $n$ is such that $M[n]$ is connective. When $M$ is connective, this is derived affine and represented by the spectrum of the derived symmetric algebra $\Sym_R(M)$, and the general case is obtained by taking iterated classifying stacks.

It is easy to see from the definition that $\mathbf{V}(M)$ has cotangent complex given by
\[
\mathbb{L}_{\mathbf{V}(M)/R}\simeq \Reg{\mathbf{V}(M)}\otimes_RM.
\]

\subsection{$p$-adic formal stacks}
\label{subsec:padic_formal_stacks}

Let $\mathrm{CRing}^{p\text{-nilp}}$ be the subcategory of $\mathrm{CRing}$ spanned by those objects $R$ such that $p$ is nilpotent in $\pi_0(R)$. 

\begin{definition}
A \defnword{$p$-adic formal prestack} over $R\in \mathrm{CRing}$ is simply a $\mathrm{Spc}$-valued functor on $\mathrm{CRing}^{p\text{-nilp}}_{R/}$. 
\end{definition}

\begin{definition}
For any $R$-algebra $S$, the restriction of the affine scheme $\Spec S$ to $\mathrm{CRing}^{p\text{-nilp}}_{R/}$ yields a $p$-adic formal prestack, which, since $p$ will be fixed in this paper, we will denote simply by $\Spf S$. This depends only on the $p$-completion of $S$.
\end{definition}

\begin{definition}
A $p$-adic formal prestack is a \defnword{derived $p$-adic formal Artin stack} if, for each $n\ge 1$, its restriction to $\mathrm{CRing}_{(\Int/p^n\Int)/}$ is represented by a derived Artin stack. Given such a derived $p$-adic formal Artin stack $F$, we will say that it is \defnword{foo}, if `foo' is an attribute applicable to derived Artin stacks, and, if for each $n\ge 1$, the restriction of $F$ to $\mathrm{CRing}_{(\Int/p^n\Int)/}$ is a derived Artin stack that is foo.
\end{definition}

\begin{definition}
Suppose that we have a surjective map $A\twoheadrightarrow\overline{A}$ in $\mathrm{CRing}$ with fiber $J$ such that $\pi_0(\overline{A})_{\mathrm{red}}$ is an $\Field_p$-algebra. Then we can consider the $p$-adic formal prestack $\Spf(A,J)$ given for each $C\in \mathrm{CRing}^{p\text{-nilp}}$ by
\[
\Spf(A,J)(C) = \Map_{\mathrm{CRing}}(A,C)\times_{\Map_{\mathrm{CRing}}(\pi_0(A),\pi_0(C)_{\mathrm{red}})}\Map_{\mathrm{CRing}}(\pi_0(\overline{A})_{\mathrm{red}},\pi_0(C)_{\mathrm{red}}).
\]
In other words, we are looking at maps $A\to C$ such that the image of $J$ is locally nilpotent in $\pi_0(C)$. If $J$ is clear from context, we will sometimes just write $\Spf(A)$ instead.
\end{definition}

\begin{remark}
Let $\mathrm{CRing}^{p\text{-comp}}$ be the subcategory of $\mathrm{CRing}$ spanned by the (derived) $p$-complete animated commutative rings. Then any $p$-adic formal prestack $\mathcal{Y}$ can be extended to a functor on $\mathrm{CRing}^{p\text{-comp}}$. For any $p$-complete $A$, we have
\[
\mathcal{Y}(A) \defn \varprojlim_{n}\mathcal{Y}(A/{}^{\mathbb{L}}p^n).
\]
More precisely, this definition allows us to evaluate $\mathcal{Y}$ on any animated commutative ring $A$, but its value at $A$ depends only on the $p$-completion of $A$.
\end{remark}

\subsection{Weil restrictions}
\label{subsec:weil}
A very useful aspect of derived geometry is the ability to construct well-behaved Weil restrictions along certain non-flat maps. This will enable us to correctly identify the local models for our stacks from Theorem~\ref{introthm:main}.

\begin{definition}
Given $R\in \mathrm{CRing}$, for any prestack $X$ over $R/{}^{\mathbb{L}}p^n$, we will define its \defnword{Weil restriction} $\Res_{(R/{}^{\mathbb{L}}p^n)/R}X$ to be the $p$-adic formal prestack over $R$ given by the composition
\[
\mathrm{CRing}^{p\text{-nilp}}_{R/}\xrightarrow{C\mapsto C/{}^{\mathbb{L}}p^n}\mathrm{CRing}_{R/{}^{\mathbb{L}}p^n/}\xrightarrow{X}\mathrm{Spc}.
\]
\end{definition}

\begin{definition}
If $Y$ is a $p$-adic formal prestack over $R$, we will set
\[
Y^{(n)} = \Res_{(R/{}^{\mathbb{L}}p^n)/R}(Y\vert_{\mathrm{CRing}_{R/{}^{\mathbb{L}}p^n}/}).
\]
There is then a canonical map $\alpha^{(n)}:Y\to Y^{(n)}$ given on points by $Y(C)\to Y(C/{}^{\mathbb{L}}p^n)$. 
\end{definition}

\begin{remark}
There is a natural functor 
\[
\mathrm{QCoh}(X)\xrightarrow{\mathcal{F}\mapsto \mathcal{F}^{(n)}} \mathrm{QCoh}(\Res_{(\Int/p^n\Int)/\Int_p}X).
\]
With any $\tilde{x}\in (\Res_{(\Int/p^n\Int)/\Int_p}X)(C)$ corresponding to $x\in X(C/{}^{\mathbb{L}}p^n)$ it associates the object $\mathcal{F}^{(n)}_{\tilde{x}}$, which is the image of $\mathcal{F}_{x}[-1]\in \Mod{C/{}^{\mathbb{L}}p^n}$ in $\Mod{C}$.
\end{remark}

\begin{proposition}
\label{prop:weil_restriction}
Suppose that we have a map $Y\to Z$ of prestacks over $\Int/p^n\Int$ that is a relative locally almost finitely presented (resp. smooth, resp. \'etale) derived Artin $r$-stack with cotangent complex $\mathcal{L} \defn \mathbb{L}_{Y/Z}$. Then 
\[
\Res_{(\Int/p^n\Int)/\Int_p}Y\to \Res_{(\Int/p^n\Int)/\Int_p}Z
\]
is once again a relative locally almost finitely presented (resp. smooth, resp. \'etale) derived $p$-adic formal Artin $(r+1)$-stack, and we have a canonical identification
\[
\mathbb{L}_{(\Res_{(\Int/p^n\Int)/\Int_p}Y)/(\Res_{(\Int/p^n\Int)/\Int_p}Z)} \xrightarrow{\simeq}\mathcal{L}^{(n)}.
\]
\end{proposition}
\begin{proof}
Set 
\[
\tilde{Y} = \Res_{\Int/p^n\Int/\Int_p}Y; \tilde{Z} =  \Res_{\Int/p^n\Int/\Int_p}Z.
\]
To begin, suppose that we have $\tilde{y}\in\tilde{Y}(C)$ corresponding to $y\in Y(C/{}^{\mathbb{L}}p^n)$ with image $\tilde{z}\in \tilde{Z}(C)$ corresponding to $z\in Z(C/{}^{\mathbb{L}}p^n)$. Let $\tilde{z}'\in \tilde{Z}(C\oplus M)$ be the trivial lift of $\tilde{z}$ corresponding to $z'\in Z((C\oplus M)/{}^{\mathbb{L}}p^n)$. Then we have:
\begin{align*}
\mathrm{fib}_{(\tilde{y},\tilde{z}')}(\tilde{Y}(C\oplus M)\to \tilde{Y}(C)\times_{\tilde{Z}(C)}\tilde{Z}(C\oplus M)) & = \mathrm{fib}_{(y,z')}(Y((C\oplus M)/{}^{\mathbb{L}}p^n)\to Y(C/{}^{\mathbb{L}}p^n)\times_{Z(C/{}^{\mathbb{L}}p^n)}Z((C\oplus M)/{}^{\mathbb{L}}p^n))\\
&\simeq \Map_{C/{}^{\mathbb{L}}p^n}(\mathcal{L}_y,M/{}^{\mathbb{L}}p^n)\\
&\simeq \Map_{C/{}^{\mathbb{L}}p^n}(\mathcal{L}_y,\mathrm{RHom}_{C}(C/{}^{\mathbb{L}}p^n,M[1]))\\
&\simeq \Map_{{C}}(i_{n,*}\mathcal{L}_y[-1],M).
\end{align*}
This proves that the cotangent complex is as claimed.

Now, given a map $\Spec R\to \tilde{Z}$ with $R\in \mathrm{CRing}^{p\text{-nilp}}$ corresponding to a map $\Spec R/{}^{\mathbb{L}}p^n\to Z$, we see that
\[
\tilde{Y}\times_{\tilde{Z}}\Spec R \simeq \Res_{(R/{}^{\mathbb{L}}p^n)/R}(Y\times_Z\Spec R/{}^{\mathbb{L}}p^n).
\]
Therefore, the first assertion amounts to showing that $\tilde{V} \defn \Res_{(R/{}^{\mathbb{L}}p^n)/R}V$ is a locally almost finitely presented derived Artin $(r+1)$-stack over $R$ whenever $V$ is a locally almost finitely presented derived Artin $r$-stack over $R/{}^{\mathbb{L}}p^n$. If $\pi_0(R)$ is a $G$-ring, then, given our description of the cotangent complex in the first paragraph, this follows quite easily from Artin-Lurie representability~\cite[Theorem 7.1.6]{lurie_thesis}. The general case can be deduced from this via standard approximation techniques.

For the remaining assertions, note that, if $Y$ is smooth over $Z$, so that $\mathcal{L}$ is a perfect complex with Tor-amplitude in $[0,\infty)$, then $\mathcal{L}^{(n)}$ is also perfect with Tor-amplitude in $[0,\infty)$, showing that $\tilde{Y}$ is a smooth Artin stack over $\tilde{Z}$. The same argument shows that $\tilde{Y}$ is \'etale over $\tilde{Z}$ when $Y$ is \'etale over $Z$.
\end{proof}

\subsection{Divided powers}
\label{subsec:divided_powers}

We will also need the notion of animated divided powers, which is an additional structure $\gamma$ on surjective maps $R'\twoheadrightarrow R$ in $\mathrm{CRing}$ that `animates' the classical notion. 

\subsubsection{}
We follow the presentation from~\cite[\S 3.2]{Mao2021-jt}, where one obtains an $\infty$-category $\mathrm{AniPDPair}$ via the process of animation: one takes the full subcategory $\mathcal{E}^{0}$ of the classical category $\mathrm{PDPair}$ of divided power thickenings $(R'\twoheadrightarrow R,\gamma)$ spanned by those thickenings of the form
\[
(D_{(Y)}\Int[X,Y]\twoheadrightarrow \Int[X],\gamma)
\]
where $X,Y$ are finite sets of variables and $D_{(Y)}\Int[X,Y]$ is the divided power envelope of $\Int[X,Y]\xrightarrow{Y\mapsto 0}\Int[X]$ equipped with its tautological divided powers, and then takes $\mathrm{AniPDPair} = \mathcal{P}_{\Sigma}(\mathcal{E}^0)$ to be the $\infty$-category of finite product preserving presheaves on $\mathcal{E}^0$. The natural functor $\mathrm{PDPair}\to \mathrm{AniPDPair}$ obtained via the Yoneda map is fully faithful; see~\cite[Lemma 3.13]{Mao2021-jt}.

\subsubsection{}
There is a forgetful functor $\mathrm{AniPDPair}\to \mathrm{AniPair}$ that preserves all limits and finite colimits to the $\infty$-category $\mathrm{AniPair}$ of surjective maps $R'\twoheadrightarrow R$, and we can view a divided power structure $\gamma$ on such a surjective map as being a lift along the forgetful functor. The forgetful functor admits a left adjoint, the \defnword{divided power envelope}, carrying $f:R'\twoheadrightarrow R$ to a surjection $D(f)\twoheadrightarrow R$ equipped with a divided power structure. For all this, see the discussion in~\cite[\S 3.2]{Mao2021-jt}.

\begin{definition}
A \defnword{divided power extension (or thickening)} is a pair $(R'\twoheadrightarrow R,\gamma)$, where $\gamma$ is a divided power structure on $R'\twoheadrightarrow R$.
\end{definition}

\subsubsection{}
We now explain the relationship with divided power algebras. For any $R\in \mathrm{CRing}$ and any $M\in \Mod[\mathrm{cn}]{R}$, we have the \defnword{animated divided power algebra} $\Gamma_R(M)$: this is obtained by animating the usual divided power algebra on pairs $(R,M)$ with $R$ a polynomial algebra in finitely many variables and $M$ is a finite free $R$-module defined for instance in~\cite[App. A]{berthelot_ogus:notes}. This is in some sense a classical construction that goes back to the seminal work of Dold-Puppe~\cite{MR0150183}. 

By construction, $\Gamma_R(M)$ is an object in $\mathrm{CRing}_{R/}$, equipped with a map of $R$-modules $M\to \Gamma_R(M)$ that satisfies a certain universal property. To explain this, write $\mathbb{G}_a^\sharp$ for the affine scheme $\Spec\Gamma_{\Int}(\Int)$: this is the divided power envelope of the origin in the additive group $\Ga$. 
\begin{lemma}
\label{lem:divided_powers_univ}
For any $M\in\Mod[\mathrm{cn}]{R}$ and for any other $R$-algebra $C$, we have a canonical equivalence
\[
\Map_{\mathrm{CRing}_{R/}}(\Gamma_R(M),C)\xrightarrow{\simeq}\Map_{{R}}(M,\mathbb{G}_a^\sharp(C)).
\]
\end{lemma}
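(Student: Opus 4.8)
The plan is to establish the adjunction by reducing to the classical case via animation, using the fact that both sides of the desired equivalence are built by animating functors that are known to be adjoint in the classical world. First I would observe that both assignments
\[
R \mapsto \bigl(M \mapsto \Gamma_R(M)\bigr), \qquad R \mapsto \bigl(C \mapsto \Gamma_a^\sharp(C)\bigr)
\]
(writing $\Gamma_a^\sharp(C) = \mathbb{G}_a^\sharp(C) = \Map_{\mathrm{CRing}}(\Gamma_{\Int}(\Int), C)$) are obtained by animating their restrictions to the category of polynomial rings $R$ in finitely many variables together with finite free modules $M$ (for the left side) and to finitely presented inputs $C$ (for the right side). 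On that classical full subcategory, the universal property of the divided power algebra $\Gamma_R(M)$ is elementary: a map of $R$-algebras $\Gamma_R(M) \to C$ is the same as an $R$-linear map $M \to C$ landing in the divided-power-nilpotent part, i.e.\ factoring through $\mathbb{G}_a^\sharp(C) \subset C$, since $\mathbb{G}_a^\sharp = \Spec \Gamma_{\Int}(\Int)$ corepresents exactly the functor of ``elements admitting a compatible system of divided powers.''

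The key steps, in order, are: (1) record the classical adjunction on the generating subcategory — a map $\Gamma_R(M) \to C$ in $\mathrm{CRing}_{R/}$ corresponds to an $R$-module map $M \to \mathbb{G}_a^\sharp(C)$ — citing~\cite[App.~A]{berthelot_ogus:notes}; (2) check that both sides, as functors of the triple $(R, M, C)$, commute with sifted colimits in the appropriate variables, so that the natural transformation extends uniquely from the generators to all animated inputs; (3) conclude that the natural comparison map $\Map_{\mathrm{CRing}_{R/}}(\Gamma_R(M),C) \to \Map_{\Mod{R}}(M,\mathbb{G}_a^\sharp(C))$, which exists functorially via the map of $R$-modules $M \to \Gamma_R(M)$ postcomposed with $\Gamma_R(M) \to C$, is an equivalence because it is so on generators and everything in sight is a left Kan extension. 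Concretely, for step (3) I would fix $R$ and $C$ and note both functors of $M$ (namely $M \mapsto \Map_{\mathrm{CRing}_{R/}}(\Gamma_R(M),C)$ and $M \mapsto \Map_{\Mod{R}}(M,\mathbb{G}_a^\sharp(C))$) send colimits in $M$ to limits and agree on finite free modules; since $\Mod[\mathrm{cn}]{R}$ is generated under sifted colimits by finite free modules, the natural transformation between them is an equivalence.

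The main obstacle I anticipate is \emph{bookkeeping of the animation}, specifically making precise that $\Gamma_R(M)$ as a functor of the pair $(R,M)$ is the animation of the classical construction in a way compatible with the $R$-algebra structure and the structure map $M \to \Gamma_R(M)$, and that $\mathbb{G}_a^\sharp(C) = \Map_{\mathrm{CRing}}(\Gamma_{\Int}(\Int),C)$ is the correct animated object receiving the comparison (in particular that $\mathbb{G}_a^\sharp(C)$ lands in $\Mod{R}$ — equivalently in $\Mod{\Int}$ — functorially, which uses that $\Gamma_{\Int}(\Int)$ is an $\Int$-coalgebra so its spectrum is an additive group scheme). Once the variance and the module structures are pinned down, the equivalence is formal: both sides corepresent/represent the same functor on generators, and animation preserves (co)limits. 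A secondary subtlety is connectivity — one should check the construction $\Gamma_R(-)$ genuinely lands in $\mathrm{CRing}_{R/}$ for connective $M$ and that no truncation issues arise when $M$ has higher homotopy; this is handled by the general theory of animated divided power algebras as in~\cite[\S 3.2]{Mao2021-jt}, which I would cite rather than reprove.
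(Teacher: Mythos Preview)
Your proposal is correct and follows essentially the same route as the paper: fix $R$ and $C$, observe that both sides are functors of $M$ taking sifted colimits in $\Mod[\mathrm{cn}]{R}$ to limits in $\mathrm{Spc}$, and reduce to the case $M=R^n$, where the claim becomes the classical identity $\Gamma_R(R^n)\simeq \Gamma_R(R)^{\otimes_R n}$ from~\cite[Prop.~(A2)]{berthelot_ogus:notes}. The paper's proof is exactly your step~(3); your additional discussion of animation in the variables $R,C$ and of the $R$-module structure on $\mathbb{G}_a^\sharp(C)$ is not needed once one fixes those and varies only $M$.
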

\begin{proof}
Both sides of the purported equivalence are evaluations on $M$ of functors $\Mod[\mathrm{cn}]{R}\to \mathrm{Spc}^{\op}$ that preserve sifted colimits. Therefore, by~\cite[Prop. 5.5.8.15]{Lurie2009-oc}, it suffices to construct a canonical equivalence between these functors when evaluated on free $R$-modules of finite rank. That is, we want to construct isomorphisms
\[
\Map_{\mathrm{CRing}_{R/}}(\Gamma_R(R^n),C)\xrightarrow{\simeq}\mathbb{G}_a^\sharp(C)^n;
\]
or in other words isomorphisms
\[
\Gamma_R(R^n)\xrightarrow{\simeq}\underbrace{\Gamma_R(R)\otimes_R\otimes\cdots\otimes_R\Gamma_R(R)}_{n}
\]
in $\mathrm{CRing}_{R/}$. This is classical; see for instance~\cite[Prop. (A2)]{berthelot_ogus:notes}.

\end{proof} 

\begin{remark}
\label{rem:divided_powers_div_algebra}
For each $m\ge 1$, there is a canonical map of schemes $u_m:\Ga^\sharp\to \Ga$ induced by the canonical generator for the degree-$m$ piece $\Gamma^m_{\Int}(\Int)\subset \Gamma_{\Int}(\Int)$: this is the $m$-th divided power map. If $R'\twoheadrightarrow R$ is in $\mathrm{AniPair}$ with homotopy kernel $I$, then a divided power structure on $I$ gives rise to a map of $R'$-algebras $\Gamma_{R'}(I)\to R'$---equivalently a map of $R'$-modules $I \to \Ga^\sharp(R')$---equipped with: 
\begin{itemize}
   \item A homotopy equivalence between the induced map $I\to \Gamma_{R'}(I)\to R'$ with the tautological one;
   \item For each $m\ge 1$, a lift $\gamma_m:I\to I$ of the composition
   \[
     I \to \Ga^\sharp(R')\xrightarrow{u_m}R'.
   \]
\end{itemize}
In the classical setting, the divided power operators $\gamma_m$ determine the divided power structure on $I$ completely.
\end{remark}

\begin{definition}
\label{defn:nilpotent_divided_powers}
Suppose that $(R'\twoheadrightarrow R,\gamma)$ is a divided power extension with $\pi_0(R')$ a $p$-nilpotent ring. Then we will say that $R'\twoheadrightarrow R$ is a \defnword{locally nilpotent} (resp. \defnword{nilpotent}) divided power extension if $\gamma_p:I\to I$ is a locally nilpotent\footnote{That is, a map that is a filtered colimit of nilpotent endomorphisms. Here, `nilpotence' is being used in the context of pointed spaces, where a self-map $f:(X,*)\to (X,*)$ is nilpotent if some power $f^m$ is nullhomotopic, that is, is homotopic to the constant map with value $*$.} (resp. nilpotent) endomorphism. If the divided power structure is nilpotent, its \defnword{nilpotence degree} is the smallest integer $m\ge 1$ such that $\gamma_p^m$ is nullhomotopic.
\end{definition}

\begin{remark}
\label{rem:nilpotent_divided_powers_classical}
When $(R'\twoheadrightarrow R,\gamma)$ is a classical divided power extension of $p$-nilpotent rings (that is, it lies in the image of $\mathrm{PDPair}$ with $R'$ $p$-nilpotent), then the local nilpotence condition simply says that every element $x\in I = \ker(R'\to R)$ satisfies $\gamma_m(x) = 0$ for all $m$ sufficiently large. Indeed, if $\gamma_p^r(x) = 0$, then a short calculation with the properties of divided powers and $p$-adic valuations of factorials shows that, for all $m\ge 1$, and all $k<p^r$, there exists a unit $u\in \Int_{(p)}^\times$ such that
\[
\gamma_{p^rm+k}(x) = u\gamma_m(\gamma_{p}^r(x))\gamma_k(x) = 0.
\] 
\end{remark} 

\begin{remark}
\label{rem:nilpotent_finitely_generated_divided_powers}
If $I$ is finitely generated, then local nilpotence is equivalent to saying that there exists an integer $m\ge 1$ such that we have $I^{[m]} = 0$. Here, $I^{[m]}$ is the ideal generated by $\gamma_r(x)$ for $r\ge m$ and $x\in I$. Indeed, let $x_1,\ldots,x_k$ be generators for $I$, and let $n_0$ be such that $\gamma_m(x_i) = 0$ for all $m\ge n_0$. Then the identities
\[
\gamma_n(y_1+\cdots+y_k) = \sum_{n_1+\ldots+n_k = n}\prod_{i=1}^k\gamma_{n_i}(y_i)\;;\; \gamma_n(ay) = a^n\gamma_n(y)
\]  
show that we have $\gamma_m(x) = 0$ for all $x\in I$ and all $m\ge kn_0$.
\end{remark}

\section{Filtered abstractions}
\label{sec:filtered_rings_stacks}

The purpose of this section is to introduce enough background about filtered animated commutative rings and modules so that we can discuss the key notion of a \emph{$1$-bounded stack}. We also give examples, as well as record some important properties, of these objects.

\subsection{Graded rings and modules}
\label{subsec:graded}

\subsubsection{}
As usual, a graded ring or module can be viewed as a $\Gm$-equivariant object. Therefore, given $R\in \mathrm{CRing}$, we will define the $\infty$-category of  \defnword{graded animated commutative $R$-algebras} to be the opposite to the $\infty$-category of relatively affine map of derived stacks $X\to B\Gm\times \Spec R$. Let $\mathcal{O}(1)$ be the inverse tautological bundle over $B\Gm$, and set $\mathcal{O}(i) = \mathcal{O}(1)^{\otimes i}$. Then, given a relatively affine map $X\to B\Gm\times \Spec R$, we will denote the coresponding graded animated ring symbolically by $B_\bullet = \oplus_i B_i$, where $B_i = R\Gamma(X,\mathcal{O}(i))$, so that $X = (\Spec B_\bullet)/\Gm$.

\subsubsection{}
The $\infty$-category $\mathrm{GrMod}_{B_\bullet}$ of \defnword{graded $B_\bullet$-modules} is the category $\mathrm{QCoh}(X)$. Symbolically, if $\mathcal{F}$ is a quasicoherent sheaf over $X$, we can write the associated graded module in the form $M_\bullet = \oplus_iM_i$, where $M_i = R\Gamma(X,\mathcal{F}\otimes\mathcal{O}(i))$. Note that by construction this is a symmetric monoidal $\infty$-category, and we denote the associated graded tensor product of $M_\bullet$ and $N_\bullet$ in $\mathrm{GrMod}_{B_\bullet}$ by
\[
M_\bullet\otimes_{B_\bullet}N_\bullet.
\]
We will usually write $\Map_{B_\bullet}(M_\bullet,N_\bullet)$ for mapping spaces in this category.

Given any graded module $M_\bullet$ over $B_\bullet$ and an integer $i$, we obtain the \defnword{$i$-shifted module} $M_\bullet(i)$: If $M_\bullet$ is associated with a quasicoherent sheaf $\mathcal{F}$ over $(\Spec B_\bullet)/\Gm$, $M_\bullet(i)$ is associated with $\mathcal{F}\otimes \mathcal{O}(i)$ and satisfies $(M_\bullet(i))_m = M_{m+i}$.

\subsubsection{}
Note that we can use this optic to speak of \defnword{graded perfect} $B_\bullet$-modules and \defnword{graded vector bundles} over $B_\bullet$: they will correpond to perfect complexes (resp. vector bundles) over $X$.

For any $R$-algebra $C$, the relatively affine map $B\Gm\times \Spec C\to B\Gm\times \Spec R$ corresponds to $C$ with its \emph{trivial} grading. In this case, we can speak simply of graded $C$-modules, etc.

\subsection{Filtered objects via the Rees construction}
\label{subsec:rees_construction}

We will make frequent use of the quotient stack $\mathbb{A}^1/\Gm$, where we view $\Gm$ as acting on the affine line via $(t,z)\mapsto tz^{-1}$. Explicitly, this stack parameterizes line bundles $\mathcal{L}$ equipped with a \emph{co}section $t:\mathcal{L}\to \Rg$.

\subsubsection{}
This stack gives a geometric method for dealing with filtered objects~\cite{moulinos}. More precisely, for any $R\in \mathrm{CRing}$, there is a canonical equivalence 
\[
\mathrm{QCoh}(\mathbb{A}^1/\Gm\times \Spec R)\xrightarrow{\simeq}\mathrm{FilMod}_R,
\]
where the right hand side is the stable $\infty$-category of filtered objects in $\Mod{R}$: classically, if $R = \pi_0(R)$ is discrete, then its associated triangulated derived category is the usual filtered derived category.

Symbolically, under this equivalence, a filtered module $\Fil^\bullet M$ on the right is associated with the $\Gm$-equivariant $R[t]$-module
\[
\mathrm{Rees}(\Fil^\bullet M) = \bigoplus_{i\in \Int}\Fil^iM\cdot t^{-i}.
\] 
Our convention is that $t$ lives in graded degree $1$. For the functor in the other direction, note that we have a canonical family of line bundles $\mathcal{O}(n) = \mathcal{O}(1)^{\otimes n}$ over $\Aff^1/\Gm$ indexed by integers $n\in \Int$: Here, $\mathcal{O}(1)$ is the inverse tautological line bundle $\mathcal{L}^{\otimes -1}$. Note that we have canonical maps $t:\mathcal{O}(i) \to \mathcal{O}(i+1)$. Given a quasi-coherent sheaf $\mathcal{F}$ over $\Aff^1/\Gm\times \Spec R$, we now obtain a filtered module $\Fil^\bullet M$ by setting $\Fil^iM = R\Gamma(\Aff^1/\Gm\times \Spec R,\mathcal{F}\otimes \mathcal{O}(-i))$ with the transition maps given by $t$. 

\begin{definition}
Any $R$-module $M$, viewed as a quasi-coherent sheaf on $\Spec R$ pulls back to a quasi-coherent sheaf on $\Aff^1/\Gm\times \Spec R$, and this yields a filtered $R$-module $\Fil^\bullet_{\mathrm{triv}}M$ with underlying $R$-module $M$. This filtration is just the \defnword{trivial filtration} with $\Fil^i_{\mathrm{triv}}M = M$ if $i\leq 0$ and $0$ otherwise.
\end{definition}

\begin{definition}
A \defnword{filtered stack over $R$} is an $R$-stack $X$ equipped with a map to $\Aff^1/\Gm\times\Spec R$; we will view it as a filtration on the $R$-stack $X_{(t\neq 0)}$ with associated graded $X_{(t=0)}\to B\Gm$.
\end{definition}

\subsection{Filtered animated commutative rings and filtered modules}

\subsubsection{}
The Rees equivalence also gives us a compact way of defining \defnword{filtered animated commutative $R$-algebras}. These correspond to relatively \emph{affine} stacks over $\Aff^1/\Gm\times \Spec R$. Symbolically, given a filtered $R$-algebra $\Fil^\bullet S$, the $\Gm$-equivariant $R[t]$-module $\mathrm{Rees}(\Fil^\bullet S)$ has a canonical $\Gm$-equivariant structure of an animated commutative $R[t]$-algebra, and taking the quotient of the associated affine scheme over $\Aff^1\times \Spec R$ yields the associated affine morphism
\[
\mathcal{R}(\Fil^\bullet S)\to \Aff^1/\Gm \times \Spec R.
\]
We will call the source of this map the associated \defnword{Rees stack}. Note that the fiber of this stack over the open point $\Gm/\Gm$ is canonically isomorphic to $\Spec S$, and its fiber over $B\Gm$ is the relatively affine stack associated with the graded ring $\bigoplus_i\gr^{-i}S$.

Note that we can now give a precise meaning to the $\infty$-category of filtered animated commutative algebras over the filtered animated commutative ring $\Fil^\bullet S$: it is opposite to the category of relatively affine stacks over $\mathcal{R}(\Fil^\bullet S)$. Moreover, fiber products in this opposite category correspond to filtered tensor products of filtered $\Fil^\bullet S$-algebras.

Observe also that any animated commutative ring $R$ admits a lift $\Fil^\bullet_{\mathrm{triv}}R$ to a \emph{trivially} filtered animated commutative ring corresponding to $\Aff^1/\Gm\times\Spec R$: We have $\Fil^i_{\mathrm{triv}}R = R$ if $i\leq 0$ and $\Fil^i_{\mathrm{triv}}R = 0$ otherwise.

\subsubsection{}
A \defnword{filtered module} over $\Fil^\bullet S$ is now just a quasi-coherent sheaf $\mathcal{F}$ over the associated Rees stack. Once again, concretely, one can write it in the form $\Fil^\bullet M$ where the $S$-modules $\Fil^iM$ are obtained as global sections of suitable twists of $\mathcal{F}$. Write $\mathrm{FilMod}_{\Fil^\bullet S}$ for the associated $\infty$-category. We will write mapping spaces in this category in the form $\Map_{\Fil^\bullet S}(\_,\_)$. If $\Fil^\bullet_{\mathrm{triv}}S$ is the \emph{trivial} filtration, then we will also write $\Map_{\mathrm{FilMod}_S}(\_,\_)$ for this mapping space.

Note that this gives us a symmetrical monoidal $\infty$-category by definition, where the tensor product corresponds to that of quasicoherent sheaves on the Rees stack. We will denote the associated product between filtered $\Fil^\bullet S$-modules $\Fil^\bullet M$ and $\Fil^\bullet N$ by
\[
\Fil^\bullet M\otimes_{\Fil^\bullet S}\Fil^\bullet N.
\]

Using this optic, we can also systematically talk about \defnword{filtered perfect} complexes as well as \defnword{filtered vector bundles} over $\Fil^\bullet S$: these correspond to perfect complexes (resp. vector bundles) on the associated Rees stacks.

Pullback from $\Rees(\Fil^\bullet S)$ to the closed substack $\Rees(\Fil^\bullet S)_{(t=0)}$ yields a symmetric monoidal functor from $\mathrm{FilMod}_{\Fil^\bullet S}$ to $\mathrm{GrMod}_{\gr^\bullet S}$: this is just the functor taking a filtered module to its associated graded.

\subsection{Increasing filtrations}

There is a variant of the above that looks at objects over the stack $\mathbb{A}^1_+/\Gm$ classifying \emph{sections} of line bundles $u:\Rg\to L$: this corresponds to the `usual' action of $\Gm$ on $\Aff^1$. We will write $\Aff^1_+\times\Spec R = \Spec R[u]$ where $u$ has graded degree $-1$.

Quasi-coherent sheaves over this stack are now equivalent to \emph{increasingly} filtered modules $\Fil_\bullet M$, and relatively affine schemes over it are now equivalent to increasingly filtered animated commutative rings $\Fil_\bullet S$. We will denote the corresponding Rees construction by $\Rees_+(\Fil_\bullet S)$. Symbolically, we have
\[
\Rees_+(\Fil_\bullet S) = \Spec\left(\bigoplus_i\Fil_iS\cdot u^i\right)/\Gm.
\]

Observe that any animated commutative ring $R$ admits a lift $\Fil^{\mathrm{triv}}_{\bullet} R$ to a \emph{trivially} increasingly filtered animated commutative ring corresponding to $\Aff^1_+/\Gm\times\Spec R$: We have $\Fil^{\mathrm{triv}}_{i}R = R$ if $i\ge 0$ and $\Fil^{\mathrm{triv}}_iR = 0$ otherwise.

\subsection{Filtered deformation theory}
\label{subsec:filtered_square-zero}

\subsubsection{}
Every filtered animated commutative algebra $\Fil^\bullet S$ over a filtered animated commutative ring $\Fil^\bullet R$ admits a filtered cotangent complex $\mathbb{L}_{\Fil^\bullet S/\Fil^\bullet R}$: this is a filtered $\Fil^\bullet S$-module corresponding to the cotangent complex of the associated Rees stacks. This controls the filtered deformation theory as follows:

A map of filtered animated commutative rings $\Fil^\bullet S' \to \Fil^\bullet S$ is a \defnword{filtered square-zero extension} if the corresponding map of $\Gm$-equivariant affine schemes over $\Aff^1/\Gm$ is a square zero thickening. In this case the fiber of the map of filtered rings is a filtered $\Fil^\bullet S$-module $\Fil^\bullet M$. 

Given a connective filtered module $\Fil^\bullet M$ over $\Fil^\bullet S$, we can consider the \emph{trivial} square-zero extension $\Fil^\bullet S\oplus \Fil^\bullet M$. We then have a canonical equivalence:
\[
\Map_{\Fil^\bullet R/}(\Fil^\bullet S,\Fil^\bullet S\oplus \Fil^\bullet M) \simeq \Map_{\Fil^\bullet S}(\mathbb{L}_{\Fil^\bullet S/\Fil^\bullet R},\Fil^\bullet M).
\]
Sections of either equivalent space will be called \defnword{$\Fil^\bullet R$-derivations} from $\Fil^\bullet S$ to $\Fil^\bullet M$. 

One way to obtain square-zero extensions with fiber $\Fil^\bullet M$ therefore is as the left vertical arrow of a Cartesian diagram of the form
\[
\begin{diagram}
\Fil^\bullet S'&\rTo&\Fil^\bullet S\\
\dTo&&\dTo_{d_{\mathrm{triv}}}\\
\Fil^\bullet S&\rTo_{d}&\Fil^\bullet S\oplus \Fil^\bullet M[1]
\end{diagram}
\]
where the right vertical arrow is the trivial map and the horizontal one on the bottom is a $\Fil^\bullet R$-derivation.

\subsubsection{}
Now suppose that $X\to \Rees(\Fil^\bullet S)$ is an fppf (or even smooth) sheaf admitting a relative cotangent complex $\mathbb{L}_X\defn \mathbb{L}_{X/\Rees(\Fil^\bullet S)}$. For any filtered $\Fil^\bullet S$-algebra $\Fil^\bullet A$, set
\[
X(\Fil^\bullet A) = \Map_{/\Rees(\Fil^\bullet S)}(\Rees(\Fil^\bullet A),X).
\]
Then we obtain a Cartesian diagram
\[
\begin{diagram}
X(\Fil^\bullet S')&\rTo&X(\Fil^\bullet S)\\
\dTo&&\dTo_{d_{\mathrm{triv}}}\\
X(\Fil^\bullet S)&\rTo_{d}&X(\Fil^\bullet S\oplus \Fil^\bullet M[1]).
\end{diagram}
\]

Moreover, for any $x\in X(\Fil^\bullet S)$, pulling $\mathbb{L}_X$ along $x$ yields a filtered module $\Fil^\bullet \mathbb{L}_{X,x}$ over $\Fil^\bullet S$, and we have a canonical equivalence:
\[
\mathrm{fib}_x(X(\Fil^\bullet S\oplus \Fil^\bullet M[1])\to X(\Fil^\bullet S))\xrightarrow{\simeq}\Map_{\Fil^\bullet S}(\Fil^\bullet \mathbb{L}_{X,x},\Fil^\bullet M[1]).
\]

\subsection{The attractor stack}
\label{subsec:attractor_stacks}

The terminology we will use here is borrowed (with a sign difference) from~\cite{drinfeld2015algebraic}.

\begin{definition}
Suppose that we have a prestack $\mathcal{Y}\to B\Gm\times\Spec R$. The associated \defnword{fixed point locus} is the functor $X^0$ on $R$-algebras given by
\[
Y^0(C) =  \Map_{B\Gm\times \Spec R}(B\Gm \times \Spec C,\mathcal{Y}).
\]
 \end{definition} 

\begin{definition}
Suppose that we have a prestack $\mathcal{X}\to \Aff^1/\Gm\times \Spec R$; its associated \defnword{attractor stack} or simply \defnword{attractor} is the functor $X^-$ on $R$-algebras given by:
\[
X^-(C) = \Map_{/\Aff^1/\Gm\times \Spec R}(\Aff^1/\Gm \times \Spec C,\mathcal{X}).
\]
We define its fixed point prestack $X^0$ to be that of the restriction $\mathcal{X}_{(t=0)}$ of $\mathcal{X}$ over the closed substack $B\Gm\times\Spec R$. 
\end{definition}

\subsubsection{}
In other words, $X^-$ (resp. $X^0$) is the Weil restriction of $\mathcal{X}$ (resp. $\mathcal{X}_{(t=0)}$) from $\Aff^1/\Gm\times \Spec R$ (resp. $B\Gm\times \Spec R$) down to $\Spec R$. Note that the sequence of natural maps
\[
B\Gm\times\Spec R \hookrightarrow \Aff^1/\Gm\times \Spec R \to B\Gm\times\Spec R
\]
yields maps
\[
X^0\leftarrow X^-\leftarrow X^0
\]
whose composition is the identity.

\subsubsection{}
If we have a prestack $\mathcal{X}\to \Aff^1_+/\Gm$ then we have the analogous notion of the \defnword{repeller} $X^+$ associated with $\mathcal{X}$, which also admits maps $X^0\to X^+\to X^0$ whose composition is the identity.

If $\mathcal{Y}\to B\Gm$ is a graded prestack, then we will define its attractor and repeller to be those associated with its pullback over $\Aff^1/\Gm$ and $\Aff^1_+/\Gm$, respectively.

\begin{remark}
If $\mathcal{X}$ is the pullback of an algebraic space over $B\Gm$, these notions are studied by Drinfeld in~\cite{drinfeld2015algebraic}.
\end{remark}

\subsubsection{}
\label{subsec:attractor_cotangent_complex}
Suppose that $\mathcal{X}$ is locally almost finitely presented over $\Aff^1/\Gm\times \Spec R$ and that it admits an almost perfect relative cotangent complex. Note that, over $X^-$, we have a canonical filtered almost perfect complex $\Fil^\bullet\mathbb{L}^-_{\mathcal{X}}$: This associates with every $x\in X^-(C)$ the filtered module corresponding to the pullback of the cotangent complex $\mathbb{L}_{\mathcal{X}/(\Aff^1/\Gm\times\Spec R)}$ to $\Aff^1/\Gm\times \Spec C$ along $x$. 

Similarly, over $X^0$, we have a canonical graded almost perfect complex $\mathbb{L}^0_{\mathcal{X},\bullet}$: it is isomorphic to the associated graded of the restriction of $\Fil^\bullet\mathbb{L}^-_{\mathcal{X}}$ along $X^0\to X^-$.

\begin{lemma}
\label{lem:attractor_cotangent_complex}
The prestack $X^-$ admits an almost perfect cotangent complex over $X$; we have
\[
\mathbb{L}_{X^-/R} \simeq \mathbb{L}^-_{\mathcal{X}}/\Fil^1\mathbb{L}^-_{\mathcal{X}}.
\]
Similarly, the prestack $X^0$ admits an almost perfect cotangent complex over $X$ with
\[
\mathbb{L}_{X^0/R} \simeq \mathbb{L}^0_{\mathcal{X},0}.
\]
In particular, we have
\[
\mathbb{L}_{X^-/X}\simeq \Fil^1\mathbb{L}^-_{\mathcal{X}}[1]
\]
\end{lemma}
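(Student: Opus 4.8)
The plan is to verify the universal property of Definition~\ref{defn:cotangent_complex} directly, reducing the deformation theory of the Weil restriction $X^-$ (resp.\ $X^0$) to that of $\mathcal{X}$ over $\Aff^1/\Gm\times\Spec R$ (resp.\ of $\mathcal{X}_{(t=0)}$ over $B\Gm\times\Spec R$), combined with the elementary adjunction describing maps of filtered (resp.\ graded) modules into a trivially filtered (resp.\ trivially graded) module.

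First I would fix $C\in\mathrm{CRing}_{R/}$, a connective $C$-module $M$, and a point $x\in X^-(C)$, corresponding to a map $\tilde x\colon\Aff^1/\Gm\times\Spec C\to\mathcal{X}$ over $\Aff^1/\Gm\times\Spec R$; write $\pi_C\colon\Aff^1/\Gm\times\Spec C\to\Spec C$ for the projection. Since square-zero extensions are stable under base change, $\Aff^1/\Gm\times\Spec(C\oplus M)$ is the trivial square-zero extension of $\Aff^1/\Gm\times\Spec C$ by $\pi_C^{*}M$; and since $\Spec R$ is terminal among prestacks over $R$, the fiber in Definition~\ref{defn:cotangent_complex} is simply $\mathrm{fib}_x(X^-(C\oplus M)\to X^-(C))$. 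Unwinding the definition of the attractor as a Weil restriction and invoking the universal property of the relative cotangent complex $\mathbb{L}_{\mathcal{X}/(\Aff^1/\Gm\times\Spec R)}$ (which exists and is perfect by hypothesis), this fiber is identified with
\[
\Map_{\mathrm{QCoh}(\Aff^1/\Gm\times\Spec C)}\big(\tilde x^{*}\mathbb{L}_{\mathcal{X}/(\Aff^1/\Gm\times\Spec R)},\ \pi_C^{*}M\big).
\]
By the very definition of $\Fil^\bullet\mathbb{L}^-_{\mathcal{X}}$ in \S\ref{subsec:attractor_cotangent_complex} the first argument is the Rees module of the filtered perfect complex $\Fil^\bullet\mathbb{L}^-_{\mathcal{X},x}$ over $C$, while $\pi_C^{*}M$ is the Rees module of the trivially filtered module $\Fil^\bullet_{\mathrm{triv}}M$; so under the Rees equivalence this mapping space becomes $\Map_{\mathrm{FilMod}_C}(\Fil^\bullet\mathbb{L}^-_{\mathcal{X},x},\Fil^\bullet_{\mathrm{triv}}M)$.

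The next step is the adjunction: for any filtered $C$-module $\Fil^\bullet P$ with underlying module $P$, the functor $\Fil^\bullet_{\mathrm{triv}}(-)$ is right adjoint to $\Fil^\bullet P\mapsto\hcoker(\Fil^1P\to P)$, so that $\Map_{\mathrm{FilMod}_C}(\Fil^\bullet P,\Fil^\bullet_{\mathrm{triv}}M)\simeq\Map_{\Mod{C}}(P/\Fil^1P,M)$ — on ordinary filtered modules this is just the observation that a filtered homomorphism into a trivially filtered module is exactly a homomorphism out of $P$ annihilating $\Fil^1P$. Taking $P=\mathbb{L}^-_{\mathcal{X},x}$, the fiber becomes $\Map_{\Mod{C}}(\mathbb{L}^-_{\mathcal{X},x}/\Fil^1\mathbb{L}^-_{\mathcal{X},x},M)$, naturally in the pair $(x,M)$. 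This is precisely the condition of Definition~\ref{defn:cotangent_complex}, so $X^-$ admits a cotangent complex equal to $\mathbb{L}^-_{\mathcal{X}}/\Fil^1\mathbb{L}^-_{\mathcal{X}}$; it is perfect because $\Fil^\bullet\mathbb{L}^-_{\mathcal{X}}$ is a filtered \emph{perfect} complex, so both $\mathbb{L}^-_{\mathcal{X}}$ and $\Fil^1\mathbb{L}^-_{\mathcal{X}}$ are perfect over $X^-$ and the homotopy cokernel of a map of perfect complexes is perfect. For $X^0$ one runs the identical argument, using that $X^0$ is the Weil restriction of $\mathcal{X}_{(t=0)}$ along $B\Gm\times\Spec R\to\Spec R$: for $x\in X^0(C)$ with $\bar x\colon B\Gm\times\Spec C\to\mathcal{X}_{(t=0)}$ the relevant fiber reduces in the same way to $\Map_{\mathrm{GrMod}_C}(\bar x^{*}\mathbb{L}_{\mathcal{X}_{(t=0)}/(B\Gm\times\Spec R)},\rho_C^{*}M)$, where $\rho_C\colon B\Gm\times\Spec C\to\Spec C$ is the projection; the first argument is the graded perfect complex $\mathbb{L}^0_{\mathcal{X},\bullet}$ at $x$ (which, by base change of cotangent complexes, is the associated graded of the restriction of $\Fil^\bullet\mathbb{L}^-_{\mathcal{X}}$), and $\rho_C^{*}M$ is $M$ with the trivial grading. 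The graded analogue of the adjunction, $\Map_{\mathrm{GrMod}_C}(N_\bullet,M^{\mathrm{triv}})\simeq\Map_{\Mod{C}}(N_0,M)$, then identifies the fiber with $\Map_{\Mod{C}}(\mathbb{L}^0_{\mathcal{X},0,x},M)$, giving $\mathbb{L}_{X^0/R}=\mathbb{L}^0_{\mathcal{X},0}$, which is perfect as the degree-zero part of a bounded graded perfect complex.

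The point that requires care is keeping the Rees/filtered-module dictionary straight under the paper's decreasing-filtration conventions: one must check that it really is the trivial filtration $\Fil^\bullet_{\mathrm{triv}}M$ — and not the constant filtration — that corresponds to $\pi_C^{*}M$, and that the adjunction produces the cofiber $P/\Fil^1P$ of the \emph{underlying} module rather than $\Fil^0P/\Fil^1P$; one must also ensure that all the identifications above are natural in $(x,M)$, so that they assemble into genuine objects of $\mathrm{Perf}(X^-)$ and $\mathrm{Perf}(X^0)$. Everything else is formal.
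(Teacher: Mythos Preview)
Your proof is correct and follows essentially the same approach as the paper's. The paper's proof is terse --- it simply invokes the filtered deformation theory from \S\ref{subsec:filtered_square-zero} to identify the fiber as $\Map_{\mathrm{FilMod}_C}(\Fil^\bullet\mathbb{L}^-_{\mathcal{X},x},\Fil^\bullet_{\mathrm{triv}}M)$ and then applies the adjunction to obtain $\Map_{\Mod{C}}(\mathbb{L}^-_{\mathcal{X},x}/\Fil^1\mathbb{L}^-_{\mathcal{X},x},M)$ --- so your write-up is essentially an expanded version of the same argument, with the Rees dictionary and the adjunction made explicit.
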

\begin{proof}
From the discussion in~\S\ref{subsec:filtered_square-zero}, we see that, for $C\in \mathrm{CRing}_{R/}$, $M\in \Mod[\mathrm{cn}]{C}$, and $x\in X^-(C)$, we have
\[
\mathrm{fib}_x(X^-(C\oplus M)\to X^-(C))\simeq \Map_{\mathrm{FilMod}_C}(\Fil^\bullet\mathbb{L}^-_{\mathcal{X},x},\Fil^\bullet_{\mathrm{triv}}M)\simeq \Map_{{C}}(\mathbb{L}^-_{\mathcal{X},x}/\Fil^1\mathbb{L}^-_{\mathcal{X},x},M).
\]
This proves the first part of the lemma. The proof of the second is entirely analogous, and the last follows from the canonical fiber sequence
\[
\mathbb{L}_{X/R}\vert_{X^-}\to \mathbb{L}_{X^-/R}\to \mathbb{L}_{X^-/X}.
\]
\end{proof}

We will now give a general criterion for representability of $X^-$, $X^+$ and $X^0$ due to Halpern-Leistner and Preygel~\cite[Example 1.2.2]{MR4560539}.

\begin{proposition}
\label{prop:t-integrable_repble}
Suppose that $\pi_0(R)$ is a $G$-ring and that $\mathcal{X}\to \Aff^1/\Gm\times \Spec R$ is a locally almost finitely presented derived Artin $1$-stack with quasi-affine (resp. affine) diagonal. Then $X^-,X^0,X^+$ are locally almost finitely presented derived Artin $1$-stacks over $R$, and if $\mathcal{X}$ is \emph{flat} over $\Aff^1/\Gm\times\Spec R$, then $X^-,X^0,X^+$ all have quasi-affine (resp. affine) diagonal.
\end{proposition}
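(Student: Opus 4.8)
The plan is to recognise $X^-$, $X^0$, and $X^+$ as Weil restrictions and to deduce the statement from the mapping-stack representability theorem of Halpern-Leistner--Preygel~\cite{MR4560539}, of which this is essentially~\cite[Example~1.2.2]{MR4560539}. By construction $X^-=\Res_{\pi}\mathcal{X}$, where $\pi\colon\Aff^1/\Gm\times\Spec R\to\Spec R$ is the projection; likewise $X^0=\Res_{\pi_0}\bigl(\mathcal{X}_{(t=0)}\bigr)$ with $\pi_0\colon B\Gm\times\Spec R\to\Spec R$; and $X^+$ has exactly the form of $X^-$ after replacing the $\Gm$-action $(t,z)\mapsto tz^{-1}$ defining $\Aff^1/\Gm$ by its inverse, an operation that identifies $\Aff^1_+/\Gm$ with $\Aff^1/\Gm$ and carries $\mathcal{X}$ to an object of the same type. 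So it suffices to treat $\Res_{\pi_Z}\mathcal{Y}$ for $Z\in\{\Aff^1/\Gm,\,B\Gm\}$, $\pi_Z\colon Z\times\Spec R\to\Spec R$ the projection, and $\mathcal{Y}\to Z\times\Spec R$ a locally finitely presented derived Artin $1$-stack with quasi-affine (resp.\ affine) diagonal.

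The first step is to realise such a Weil restriction as a base change of a relative mapping stack. Composing $\mathcal{Y}\to Z\times\Spec R$ with $\pi_Z$ makes $\mathcal{Y}$ a prestack over $\Spec R$, and one has a Cartesian square
\[
\begin{diagram}
\Res_{\pi_Z}\mathcal{Y}&\rTo&\underline{\Map}_{\Spec R}(Z\times\Spec R,\,\mathcal{Y})\\
\dTo&&\dTo\\
\Spec R&\rTo^{\id}&\underline{\Map}_{\Spec R}(Z\times\Spec R,\,Z\times\Spec R),
\end{diagram}
\]
in which the right vertical map is post-composition with $\mathcal{Y}\to Z\times\Spec R$ and the lower horizontal map is the identity section. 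Since being a locally finitely presented derived Artin $1$-stack and having quasi-affine (resp.\ affine) diagonal are both stable under base change, it is enough to establish these properties for $\underline{\Map}_{\Spec R}(Z\times\Spec R,\mathcal{Y})$.

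Next I would apply~\cite{MR4560539}, checking the hypotheses on the \emph{source}: the projection $Z\times\Spec R\to\Spec R$ is flat, of finite presentation, perfect, and cohomologically proper in the sense of \emph{loc.\ cit.}, of cohomological dimension $0$ --- the last point because $\Gm$ is linearly reductive over $\Int$, so that higher cohomology of quasi-coherent sheaves on $B\Gm$ and on $\Aff^1/\Gm$ vanishes; that $B\Gm$ and $\Theta:=\Aff^1/\Gm$ meet these conditions is the content of~\cite[Example~1.2.2]{MR4560539}. The hypotheses on the \emph{target} --- that $\mathcal{Y}$ is locally finitely presented and a derived Artin stack with quasi-affine (resp.\ affine) diagonal over $Z\times\Spec R$ --- hold by assumption, while the hypothesis that $\pi_0(R)$ be a $G$-ring is what supplies the Artin--Lurie integrability input in the argument of \emph{loc.\ cit.} Granting these, we obtain that $\underline{\Map}_{\Spec R}(Z\times\Spec R,\mathcal{Y})$ is a locally finitely presented derived Artin stack over $R$.

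It remains to pin down the truncation level and the quasi-affineness of the diagonal. The space of identifications of two $T$-points $s,s'$ of the mapping stack is the Weil restriction along $Z\times T\to T$ of the pullback along $(s,s')$ of the relative diagonal of $\mathcal{Y}$ over $Z\times\Spec R$ --- a quasi-affine (resp.\ affine) stack over $Z\times T$. Since $Z\times T\to T$ is flat, of finite presentation, and cohomologically proper of cohomological dimension $0$, this Weil restriction is an algebraic space over $T$, in particular $0$-truncated, so the mapping stack --- and hence each of $X^-$, $X^+$, $X^0$ --- is a derived Artin $1$-stack, with no flatness needed. To upgrade ``algebraic space'' to ``quasi-affine (resp.\ affine)'' one invokes the flatness of $\mathcal{X}$ over $\Aff^1/\Gm\times\Spec R$: Weil restriction of a quasi-affine (resp.\ affine) stack along the flat but not classically proper morphism $\pi_Z$ stays quasi-affine (resp.\ affine) by the base-change analysis of such Weil restrictions in~\cite{MR4560539}. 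The main obstacle is therefore twofold: confirming that $\Aff^1/\Gm$ and $B\Gm$ genuinely satisfy the perfectness and cohomological-properness hypotheses of \emph{loc.\ cit.}, so that its mapping-stack theorem and Weil-restriction results apply at all, and tracking exactly how the flatness of $\mathcal{X}$ keeps the diagonal quasi-affine (resp.\ affine) rather than merely algebraic. Both are addressed in~\cite{MR4560539}, so the argument reduces essentially to the identification of $X^-$, $X^+$, $X^0$ as base changes of mapping stacks out of $B\Gm$ and $\Aff^1/\Gm$.
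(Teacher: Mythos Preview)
Your proposal is correct and, like the paper, ultimately rests on \cite[Example~1.2.2]{MR4560539}, so there is no gap. The route, however, differs in emphasis. You package $X^-$, $X^0$, $X^+$ as fibers of mapping stacks and invoke the Halpern-Leistner--Preygel theorem as a black box, relying on the cohomological properness of $B\Gm$ and $\Aff^1/\Gm$ established in \emph{loc.\ cit.} The paper instead unpacks the proof by verifying Artin--Lurie representability directly, isolating integrability as the only nontrivial condition and proving it via two concrete completeness results (Propositions~\ref{prop:BGm_complete} and~\ref{prop:A1Gm_complete}) obtained from Tannakian reconstruction. The paper even remarks that Halpern-Leistner--Preygel's own argument proceeds via cohomological projectivity, and then deliberately translates this into the explicit statements just cited; for $X^-$ it further reduces integrability to that of $X^0$ via filtered deformation theory. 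Your approach is cleaner as a citation; the paper's buys a self-contained account of the integrability step that is reused later in the paper (e.g.\ in the proof of Proposition~\ref{prop:1_bounded_cartesian} and in Remark~\ref{rem:BGmu_simple}). For the diagonal assertion both approaches defer to \cite[Proposition~5.1.15]{MR4560539}.
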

\begin{proof}
We recall some key points of the proof, which uses Lurie's derived generalization of Artin's representability theorem~\cite[Theorem 7.1.6]{lurie_thesis}. 

It is straightforward to see that $X^-,X^0,X^+$ are all \'etale sheaves that are locally almost finitely presented, nilcomplete and infinitesimally cohesive. We have already seen that they admit almost perfect cotangent complexes, and it is clear that their classical truncations are valued in $1$-truncated spaces. 

The main difficulty now is to show that they are \emph{integrable} (condition (3) in~\emph{loc. cit.}). The authors of~\cite{MR4560539} appeal to a very general argument that applies to a wide class of quotient stacks, which are shown to be \emph{cohomologically projective} and hence \emph{formally proper}. We can translate this into rather concrete assertions in the particular cases we are dealing with here. 

For $X^0$, one uses Proposition~\ref{prop:BGm_complete}.

For $X^-$ (the argument for $X^+$ is identical), we need to know that the map
\begin{align}\label{eqn:attractor_integrability_map}
\Map_{/\Aff^1/\Gm\times \Spec R}(\Aff^1/\Gm \times \Spec C,\mathcal{X})\to\varprojlim_m\Map_{/\Aff^1/\Gm\times \Spec R}(\Aff^1/\Gm \times \Spec C/\mathfrak{m}^m,\mathcal{X}).
\end{align}
is an isomorphism. Here, $C$ is a complete local Noetherian $R$-algebra with maximal ideal $\mx$. 

For this, one first finds from Proposition~\ref{prop:A1Gm_complete} that, for any Noetherian $B\in \mathrm{CRing}_{\heartsuit,R/}$, we have
\[
\Map_{/\Aff^1/\Gm\times \Spec R}(\Aff^1/\Gm \times \Spec B,\mathcal{X})\xrightarrow{\simeq}\varprojlim_n\Map_{/\Aff^1/\Gm\times \Spec R}((\Aff^1/\Gm)_{(t^n=0)} \times \Spec B,\mathcal{X}).
\]

Via filtered deformation theory, the desired integrability for $X^-$ now reduces to the already known assertion for $X^0$.\footnote{This argument is closely related to one appearing in~\cite{drinfeld2015algebraic}.}

It remains only to check the assertion about the diagonal, which is~\cite[Proposition 5.1.15]{MR4560539}.
\end{proof}

\subsection{$1$-bounded fixed points}
\label{subsec:1-bounded_fixed_points}

Suppose that $\mathcal{Z}\to B\Gm\times\Spec R$ is a relative locally almost finitely presented derived Artin stack and let $Z^0\to \Spec R$ be the corresponding fixed point locus. 

\begin{definition}
As observed in~\S\ref{subsec:attractor_cotangent_complex}, over $Z^0$ we have the canonical graded almost perfect complex $\mathbb{L}^0_{\mathcal{Z},\bullet}$. The locus $Z^0_{\oneb}$ of \defnword{$1$-bounded fixed points} is the locus of $Z^0$ where we have $\mathbb{L}^0_{\mathcal{Z},i}\simeq 0$ for $i<-1$. If $\mathbb{L}^0_{\mathcal{Z},i}$ is perfect for all $i<-1$, and nullhomotopic for almost all but finitely many such $i$, then this locus is in fact an \emph{open} substack of $Z^0$.
\end{definition}

\begin{example}
\label{ex:sections_1_bounded_perfect_complexes}
Suppose that we have $\mathcal{M}\in \mathrm{APerf}(B\Gm\times \Spec R)$ corresponding to a graded $R$-module $M_\bullet$. Then we can take $\mathcal{Z} = \mathbf{V}(\mathcal{M})\to B\Gm\times \Spec R$ to be the associated vector stack. 

One checks that the corresponding fixed point locus $Z^0$ now is just the vector stack $\mathbf{V}(M_0)\to \Spec R$, while the graded almost perfect complex $\mathbb{L}^0_{\mathcal{Z},\bullet}$ corresponds simply to the restriction of $\mathcal{M}$ to $B\Gm\times \mathbf{V}(M_0)$. This implies that
\[
Z^0_{\oneb} = \mathbf{V}(M_0)\times_{\Spec R}(\Spec R)_{\oneb},
\]
where $(\Spec R)_{\oneb}\subset \Spec R$ is the open locus $M_i$ becomes nullhomotopic for $i<-1$. 
\end{example}  

\begin{example}
\label{ex:perfect_complexes_HTwts_01}
Consider the stack $\mathcal{P}:R\mapsto \mathrm{Perf}(R)_{\simeq}$ on $\mathrm{CRing}$: this is represented by a locally finitely presented derived Artin stack over $\Int$; see~\cite[\S~3]{TOEN2007387}. 

Now take $\mathcal{Z} = \mathcal{P}\times B\Gm\to B\Gm$: the fixed point locus $Z^0$ associates with every $R\in  \mathrm{CRing}$ the $\infty$-groupoid of graded perfect $R$-modules.

The cotangent complex of $\mathcal{P}$ is $M_{\mathrm{taut}}^\vee\otimes M_{\mathrm{taut}}$, where $M_{\mathrm{taut}}\in \mathrm{Perf}(\mathcal{P})$ is the tautological perfect complex. From this, one finds that the graded perfect complex $\mathbb{L}^0_{\mathcal{Z},\bullet}$ is $M_{\mathrm{taut},\bullet}^\vee\otimes M_{\mathrm{taut},\bullet}$, where $M_{\mathrm{taut},\bullet}$ is the tautological graded perfect complex over $Z^0$.

Now, $Z^0_{\oneb}$ is precisely the locus where $M^\vee_{\mathrm{taut},i}\otimes M_{\mathrm{taut},j}\simeq 0$ for all $i,j\in \Int$ with $|j-i|>1$.

In particular, the locus where $M_{\mathrm{taut},i} \simeq 0$ for $i\neq 0,1$ is an open substack of $Z^0_{\oneb}$.
\end{example}

\begin{remark}
If $\mathcal{Z}$ is relatively locally finitely presented, then $\mathbb{L}^0_{\mathcal{Z},\bullet}$ is a graded perfect complex, and the condition of being $1$-bounded can be framed in terms of its dual graded tangent complex $\mathbb{T}^0_{\mathcal{Z},\bullet}$ by requiring that we have $\mathbb{T}^0_{\mathcal{Z},i}\simeq 0$ for $i>1$.
\end{remark}

\subsection{$1$-bounded stacks}
\label{subsec:1_bounded_stacks}

\begin{definition}
\label{defn:pointed_graded}
Suppose that $A\in \mathrm{CRing}$ and $R\in \mathrm{CRing}_{A/}$. An \defnword{$R$-pointed graded prestack} over $A$ is a prestack $\mathcal{Y}\to B\Gmh{A}$ equipped with a morphism $\iota:B\Gm\times\Spec R\to \mathcal{Y}$ of graded prestacks. In particular, for such a prestack, any relative derived Artin stack $\mathcal{Z}\to \mathcal{Y}$ has an associated fixed point locus $Z^0\to \Spec R$ obtained from the base-change of $\mathcal{Z}$ over $B\Gm\times\Spec R$.

Usually $R$ will be implicit, and we will simply call the pair $(\mathcal{Y},\iota)$ a \emph{pointed} graded prestack. If the `point' $\iota$ is also clear from context, we will just refer to $\mathcal{Y}$ as a pointed graded prestack.
\end{definition}

\begin{definition}
A \defnword{$1$-bounded stack} $\mathcal{X} = (\mathcal{X}^\preoneb,X^0)\to (\mathcal{Y},\iota)$ over $(\mathcal{Y},\iota)$ (or simply $\mathcal{Y}$ if $\iota$ is clear from context) consists of the following data: 
\begin{enumerate}
   \item A relative locally almost finitely presented derived Artin $r$-stack $\mathcal{X}^\preoneb\to \mathcal{Y}$;
   \item An open immersion $X^0\hookrightarrow X^{\preoneb,0}$ factoring through $X^{\preoneb,0}_{\oneb}$, which we will refer to as the \defnword{fixed point locus} of $\mathcal{X}$.
\end{enumerate}

One can speak of maps between $1$-bounded stacks over $\mathcal{Y}$ in the obvious way. 
\end{definition}

\subsubsection{}
Suppose that $\mathcal{Y}\to \Aff^1/\Gm\times\Spec A$ is in fact a filtered prestack, and that $\iota$ lifts to a map of filtered stacks $\Aff^1/\Gm\times\Spec R\to \mathcal{Y}$ over $A$. Then we will associate with any $1$-bounded stack $\mathcal{X}\to \mathcal{Y}$ its \defnword{attractor} $X^-\to \Spec R$ by setting $X^- \defn X^{\preoneb,-}\times_{X^{\preoneb,0}}X^0$. Here $X^{\preoneb,-}$ is the attractor of the base-change of $\mathcal{X}^\preoneb$ over $\Aff^1/\Gm\times \Spec R$. Analogously, given a lift $\Aff^1_+/\Gm\times \Spec R\to \mathcal{Y}$, we can define an associated repeller $X^+$.

\subsubsection{}
If $(\mathcal{Z},\eta)\to (\mathcal{Y},\iota)$ is a map of pointed graded prestacks with $\eta:B\Gm\times \Spec C\to \mathcal{Z}$ and $\mathcal{X} = (\mathcal{X}^\preoneb,X^0)$ is a $1$-bounded stack over $\mathcal{Y}$, we set
\[
\Map_{/(\mathcal{Y},\iota)}((\mathcal{Z},\iota),\mathcal{X}) = \Map_{/\mathcal{Y}}(\mathcal{Z},\mathcal{X}^\preoneb)\times_{X^{\preoneb,0}(C)}X^0(C).
\]
Here, the map 
\[
\Map_{/\mathcal{Y}}(\mathcal{Z},\mathcal{X}^\preoneb)\to X^{\preoneb,0}(C) =\Map_{/\mathcal{Y}}(B\Gm\times\Spec C,\mathcal{X}^{\preoneb})
\]
is obtained via restriction along $\eta$.

If $\eta$ and $\iota$ are clear from context, we will simply write $\Map_{/\mathcal{Y}}(\mathcal{Z},\mathcal{X})$ for this space.

Here are our main examples:

\begin{example}
\label{ex:1_bounded_perfect_complexes_stack}
Example~\ref{ex:sections_1_bounded_perfect_complexes} shows that, if $\mathcal{M}\in \mathrm{APerf}(\mathcal{Y})$ is an almost perfect complex whose restriction over $B\Gm\times\Spec R$ yields a graded complex concentrated in degrees $\ge -1$, then the stack $\mathcal{X}^\preoneb \defn \mathbf{V}(\mathcal{M})\to \mathcal{Y}$ underlies a $1$-bounded stack over $\mathcal{Y}$ with $X^0 = X^{\preoneb,0}$.
\end{example} 

\begin{example}
\label{ex:perfect_HT_wts_01_stack}
Example~\ref{ex:perfect_complexes_HTwts_01} shows that, when $\mathcal{Y} = B\Gm$ (viewed as a pointed graded stack in the tautological sense), then we obtain a $1$-bounded stack $\mathcal{P}_{\{0,1\}}\to B\Gm$ with $\mathcal{P}_{\{0,1\}}^\preoneb = \mathcal{P}\times B\Gm$, and $P_{\{0,1\}}^0\subset P_{\{0,1\}}^{\preoneb,0}$ is the open substack parameterizing graded perfect complex $M_\bullet$ with $M_i\simeq 0$ for $i\neq 0,1$.

For any pointed graded stack $(\mathcal{Z},\eta)\to B\Gm$, the space
\[
\mathrm{Perf}_{\{0,1\}}((\mathcal{Z},\eta))\defn \Map_{/B\Gm}(\mathcal{Z},\mathcal{P}_{\{0,1\}})
\]
is the $\infty$-groupoid of perfect complexes on $\mathcal{Z}$ whose restriction along $\eta$ is in graded degrees $0,1$.

If we take $\Aff^1/\Gm\to B\Gm$ to be the canonical map, then the associated attractor is the stack 
\[
R\mapsto \mathrm{Perf}_{\{0,1\}}(\Aff^1/\Gm\times\Spec R) 
\]
of filtered perfect complexes $\Fil^\bullet M$ with $\gr^iM \simeq 0$ for $i\neq 0,-1$. 

Similarly, the repeller associated with $\Aff^1_+/\Gm\to B\Gm$ is the stack of ascendingly filtered perfect complexes $\Fil_\bullet M$ with $\gr_iM\simeq 0$ for $i\neq 0,1$.

Both these stacks are locally finitely presented relative derived Artin stacks over $\Spec \Int$.  This can be verified using Artin-Lurie representability: the only difficulty is integrability, but this is easily checked using results from Appendix~\ref{app:completeness}.
\end{example}    

\begin{example}
 \label{ex:vect_HT_wts_01_stack}
We have an `open substack' $\mathcal{V}_{\{0,1\}}$ of $\mathcal{P}_{\{0,1\}}$ by restricting to the open locus $\mathcal{V}_{\{0,1\}}^\preoneb\subset \mathcal{P}_{\{0,1\}}^\preoneb$, where the tautological perfect complex is in fact a vector bundle. 

For any pair of non-negative integers $d\leq h$, we can further refine this to the $1$-bounded stack $\mathcal{V}_{\{0,1\}}^{h,d} = (\mathcal{V}_{\{0,1\}}^\preoneb,V^{0,h,d}_{\{0,1\}})$, where $V^{0,h,d}_{\{0,1\}}$ is the open and closed substack of the fixed point locus parameterizing graded vector bundles $M_\bullet$ such that $M_i\simeq 0$ for $i\neq 0,1$, and such that $M_1$ is a vector bundle of rank $d$ and $M_0$ is a vector bundle of rank $h-d$.

The attractor $V^{-,h,d}_{\{0,1\}}$ is the stack of filtered vector bundles $\Fil^\bullet V$ where: $V$ has rank $h$; $\gr^i V\simeq 0$ for $i\neq 0,-1$; and $\gr^{-1}V$ has rank $d$.
\end{example} 

\subsection{Cocharacters of group schemes and twisted group stacks}
\label{subsec:cochar}
This following discussion is essentially from~\cite[\S 2.3]{drinfeld2023shimurian}. Suppose that $G$ is a smooth affine group scheme over a classical commutative ring $R$ and let $\mu: \Gmh{R'}\to G_{R'}$ be a cocharacter defined over some $R'\in \mathrm{CRing}_{R/}$, inducing a $\Gmh{R'}$-action on $G_{R'}$ via the adjoint action.

\begin{remark}
\label{rem:drinfeld_definition_cochars}
In~\cite{drinfeld2023shimurian}, Drinfeld considers a seemingly more general situation where $\Gmh{R}$ acts on $G$ via a map $\Gmh{R'}\to \Aut(G)_{R'}$. For instance, one can consider the case of $G = \Ga$ equipped with the natural action of $\Gm$. 

This case can be subsumed---perhaps a bit unnaturally---into the setup here by viewing such a map as a cocharacter of the $R'$-group scheme $G\rtimes_\mu \Gm$, where the semi-direct product is defined by the action of $\Gmh{R'}$ on $G_{R'}$ via $\mu$.

See also Remark~\ref{rem:drinfeld_definition} below.
\end{remark}

\subsubsection{}
The fpqc quotient of $G_{R'}$ by the action of $\mu$ yields a group stack $G\{\mu\}$ over $B\Gmh{R'}$. We have subgroups $U_\mu^{\pm}\subset P_{\mu}^{\pm} \subset G_{R'}$ with $P_{\mu}^{\pm}/U_\mu^{\pm}\simeq M_\mu$ independent of sign: Namely, $P_{\mu}^-$ (resp. $P_{\mu}^+$) is the attractor (resp. repeller) of the base-change of $G\{\mu\}$ over $\Aff^1/\Gm\times\Spec R'$, and $M_\mu$ is the fixed point locus of $G\{\mu\}$.  

Explicitly, given an $R'$-algebra $S$, we have
\begin{align*}
P_{\mu}^-(S) = \Map_{B\Gmh{R}}\left(\Aff^1/\Gm\times \Spec S,G\{\mu\}\right)\;&;\;  P_{\mu}^+(S) = \Map_{B\Gmh{R}}\left(\Aff^1_+/\Gm\times \Spec S,G\{\mu\}\right);\\
M_\mu(S) &= \Map_{B\Gmh{R}}\left(B\Gm\times\Spec S,G\{\mu\}\right).
\end{align*}
Restriction to the open point $\Gm/\Gm\subset \Aff^1/\Gm$ now gives a closed immersion of group schemes $P^{\pm}_\mu\hookrightarrow G$. The section $M_\mu\to P_\mu^{\pm}$ exhibits it as the centralizer in $P_\mu^{\pm}$ (and in $G$) of $\mu$.

The subgroup $U_\mu^{\pm}\subset P_\mu^{\pm}$ is the kernel of the map to $M_\mu$.

\subsubsection{}
In terms of Lie algebras, the action of $\mu$ gives us a grading of the base-change of $\mathfrak{g} \defn \Lie G$ over $R'$:
\[
\mathfrak{g}_{R'} = \bigoplus_{i\in \Int}\mathfrak{g}_i,
\]
where $\Gm$ acts on $\mathfrak{g}_i$ via $z\mapsto z^{-i}$. We now have:\footnote{We are following the sign conventions from~\cite{MR4355476}}
\[
\Lie P^{\pm}_\mu = \bigoplus_{\pm i\ge 0}\mathfrak{g}_i\;;\; \Lie U^\pm_\mu = \bigoplus_{\pm i>0}\mathfrak{g}_i\;;\; \Lie M_\mu = \mathfrak{g}_0.
\]

When $G$ is reductive, then what we have defined here are the parabolic and unipotent subgroups of $G$ associated with $\mu$.

\subsubsection{}
\label{subsubsec:BGmu_equivalence}
Note that the cocharacter $\mu:\Gmh{R'}\to G_{R'}$ yields a map
\[
B\mu:B\Gmh{R'}\to BG_{R'}
\]
yielding a canonical $G$-torsor $\mathcal{P}_\mu\to B\Gmh{R'}$: The automorphisms of this torsor are represented by the group stack $G\{\mu\}\to B\Gmh{R'}$, and twisting by $\mathcal{P}_\mu$ yields an isomorphism
\[
BG\times B\Gmh{R'}\xrightarrow{\simeq}BG\{\mu\}
\]
of $B\Gmh{R'}$-stacks carrying $\mathcal{P}_\mu$ to the trivial $G\{\mu\}$-torsor.

In particular, we can canonically view every $G\{\mu\}$-torsor over a $B\Gmh{R'}$-stack as a $G$-torsor, and \emph{vice versa}.

\subsubsection{}
\label{subsubsec:fixed_points_BG}
Consider the pointed graded stack $(B\Gmh{R},\iota_{R'})$, where $\iota_{R'}:B\Gmh{R'}\to B\Gmh{R}$ is the structure morphism. If we take the stack $\mathcal{Z} = BG\times B\Gm\to B\Gmh{R}$, the associated fixed point locus $Z^0$ over $R'$ parameterizes, for any $C\in \mathrm{CRing}_{R'/}$, $G$-torsors over $B\Gm\times \Spec C$. 

Note that by the discussion in~\eqref{subsubsec:BGmu_equivalence}, for any $R'$-algebra $C$, we can also view $Z^0(C)$ as the $\infty$-groupoid of $G\{\mu\}$-torsors over $B\Gm\times \Spec C$. Unwinding definitions, one finds that, for any $C\in \mathrm{CRing}_{R'/}$, $Z^0(C)$ is the $\infty$-groupoid of the following equivalent kinds of objects:
\begin{itemize}
   \item $G\{\mu\}$-torsors over $B\Gm\times\Spec C$;
   \item  $G\rtimes_\mu \Gm$-equivariant schemes $\mathcal{P}\to \Spec C$ such that the underlying $G$ action presents $\mathcal{P}$ as a $G$-torsor over $C$.
\end{itemize}

In the next lemma, given $C\in \mathrm{CRing}_{R'/}$ and a $G$-torsor $\mathcal{Q}\to B\Gm\times \Spec C$, we will write $\mathcal{Q}^\mu$ for the corresponding $G\{\mu\}$-torsor.
\begin{lemma}
\label{lem:BGmu_trivial_locus}
There is a canonical open and closed immersion $BM_\mu\to Z^0$ mapping isomorphically onto the locus of $G$-torsors $\mathcal{Q}\to B\Gm\times \Spec C$ satisfying the following equivalent conditions when $\Spec C$ is connected:
\begin{itemize}
    \item[(1a)] There exists an \'etale cover $C\to C'$ such that the restriction of $\mathcal{Q}$ over $B\Gm\times \Spec C'$ is isomorphic to $\mathcal{P}_\mu\otimes_{\mathcal{O}}C'$;

    \item[(1b)]There exists an \'etale cover $C\to C'$ such that the restriction of $\mathcal{Q}^\mu$ over $B\Gm\times \Spec C'$ is trivial;
    \item[(2a)]For every geometric point $C\to \kappa$ of $\Spec C$, the $G$-torsor $x^*\mathcal{Q}$ over $B\Gm\times\Spec \kappa$ is isomorphic to $\mathcal{P}_\mu\otimes_{\mathcal{O}}\kappa$;
     \item[(2b)]For every geometric point $C\to \kappa$ of $\Spec C$, the $G\{\mu\}$-torsor $x^*\mathcal{Q}^\mu$ over $B\Gm\times\Spec \kappa$ is trivial;
    \item[(3a)]For \emph{some} geometric point $C\to \kappa$ of $\Spec C$, the $G$-torsor $x^* \mathcal{Q}$ over $B\Gm\times\Spec \kappa$ is isomorphic to $\mathcal{P}_\mu\otimes_{\mathcal{O}}\kappa$;
    \item[(3b)]For \emph{some} geometric point $C\to \kappa$ of $\Spec C$, the $G\{\mu\}$-torsor $x^* \mathcal{Q}^\mu$ over $B\Gm\times\Spec \kappa$ is trivial.
 \end{itemize}
\end{lemma}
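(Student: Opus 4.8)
The plan is to realize $BM_\mu\to Z^0$ as the map from the gerbe of forms of $\mathcal{P}_\mu$, to show it is an open and closed immersion, and then to unwind the six conditions.

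First I would construct the map and record its basic properties. By \S\ref{subsubsec:BGmu_equivalence} the automorphism group scheme of $\mathcal{P}_\mu$, viewed as a $G$-torsor over $B\Gmh{R'}$ (equivalently, of the trivial $G\{\mu\}$-torsor), is exactly $M_\mu$, so that $M_\mu=\underline{\Aut}_{Z^0}(\mathcal{P}_\mu)$. The canonical map $f\colon BM_\mu=B\underline{\Aut}_{Z^0}(\mathcal{P}_\mu)\to Z^0$, which sends an $M_\mu$-torsor $E$ over $\Spec C$ to the twist $\mathcal{P}_\mu\otimes^{M_\mu}E$ over $B\Gm\times\Spec C$, is a monomorphism: it is the inclusion of the full substack of objects of $Z^0$ that are fppf-locally isomorphic to $\mathcal{P}_\mu$. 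One can check it is moreover étale via the cotangent complex: using $\mathbb{L}_{BG_{R'}/R'}\simeq\mathfrak{g}^\vee[-1]$, the weight decomposition $\mathfrak{g}_{R'}=\bigoplus_i\mathfrak{g}_i$ for the adjoint action of $\mu$, the vanishing of $R\Gamma(B\Gm,\mathcal{O}(i))$ for $i\neq0$, and Lemma~\ref{lem:attractor_cotangent_complex}, one finds $\mathbb{L}_{Z^0/R'}|_{\mathcal{P}_\mu}\simeq\mathfrak{g}_0^\vee[-1]$, matching $\mathbb{L}_{BM_\mu/R'}\simeq\mathfrak{m}_\mu^\vee[-1]$; the comparison $f^*\mathbb{L}_{Z^0/R'}\to\mathbb{L}_{BM_\mu/R'}$ becomes the identity on $\mathfrak{g}_0^\vee[-1]$ after restriction along the smooth atlas $\Spec R'\to BM_\mu$, so $\mathbb{L}_{BM_\mu/Z^0}\simeq0$ and $f$ is an étale monomorphism, i.e.\ an open immersion.

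The main obstacle is to see that the image of $f$ is also \emph{closed}. For this I would use the identification $Z^0\simeq[\underline{\Hom}(\Gm,G)/G]$, where $\underline{\Hom}(\Gm,G)$ is the scheme of cocharacters and $G$ acts by conjugation; this holds because a $G$-torsor over $B\Gm\times\Spec C$ pulls back along the canonical $\Gm$-torsor $\Spec C\to B\Gm$ to a $\Gm$-equivariant $G$-torsor over $\Spec C$ that is étale-locally trivial, and a trivialization exhibits it as a cocharacter $\Gm\to G$ well-defined up to conjugation. Under this identification $BM_\mu$ is the substack $[(G\cdot\mu)/G]=[(G/M_\mu)/G]$, so it suffices to show the orbit $G\cdot\mu$ is open and closed in $\underline{\Hom}(\Gm,G)$. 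Openness is rigidity of cocharacters: the orbit map $G/M_\mu\to\underline{\Hom}(\Gm,G)$ is a finitely presented monomorphism which is formally étale because $H^1_{\mathrm{alg}}(\Gm,\mathfrak{g})=0$ ($\Gm$ being linearly reductive, every infinitesimal deformation of a homomorphism $\Gm\to G$ is inner), hence an open immersion; closedness then follows because the complement of $G\cdot\mu$ is a union of such orbits, hence open. (A little care over non-algebraically closed residue fields of $R'$ is handled by fpqc descent of open- and closedness down to geometric fibres; in the reductive case the statement is contained in~\cite[\S 2.3]{drinfeld2023shimurian}.)

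Granting that $U\defn f(BM_\mu)\subseteq Z^0$ is open and closed, the six conditions unwind routinely. For connected $\Spec C$ and $\mathcal{Q}\in Z^0(C)$, the classifying map $\Spec C\to Z^0$ factors through $U$ iff the open-and-closed subscheme $\Spec C\times_{Z^0}U$ is nonempty, iff some geometric point of $\Spec C$ lands in $U$. A geometric point $x\colon\Spec\kappa\to\Spec C$ lands in $U$ iff $x^*\mathcal{Q}\in U(\kappa)$, and $U(\kappa)$ is a single point: since $M_\mu$ is smooth, every $M_\mu$-torsor over the algebraically closed field $\kappa$ is trivial, so $U(\kappa)$ is the class of $\mathcal{P}_\mu\otimes\kappa$ alone; thus $x$ lands in $U$ iff $x^*\mathcal{Q}\simeq\mathcal{P}_\mu\otimes\kappa$ iff $x^*\mathcal{Q}^\mu$ is trivial, giving the equivalence of (3a), (3b) and ``$\mathcal{Q}$ factors through $U$''. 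On the other hand ``$\mathcal{Q}$ factors through $U$'' is visibly equivalent to (1a) and (1b): a $G$-torsor becoming isomorphic to $\mathcal{P}_\mu$ over some étale cover has structure group reducing to $\underline{\Aut}(\mathcal{P}_\mu)=M_\mu$, hence lies in $U$, and conversely. Finally (1a)$\Rightarrow$(2a)$\Rightarrow$(3a) is trivial and (2a)$\Leftrightarrow$(2b) is the twisting equivalence over $\kappa$, closing the loop; and since an open and closed immersion is an isomorphism onto its image, $f$ identifies $BM_\mu$ with the locus of the statement.
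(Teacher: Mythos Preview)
Your argument is correct and lands at the same endpoint as the paper's, but the route differs in emphasis. The paper identifies the $(1b)$-locus with $BM_\mu$ directly---an \'etale-locally trivial $G\{\mu\}$-torsor over $B\Gm\times\Spec C$ is exactly an \'etale torsor for the fixed-point group of $G\{\mu\}$, which is $M_\mu$---and then cites Drinfeld~\cite[\S C.2.3]{drinfeld2023shimurian} for the open-and-closed decomposition of $Z^0$ indexed by $G$-conjugacy classes of cocharacters $\Gm\to G\rtimes_\mu\Gm$ lifting the identity, of which $BM_\mu$ is the component for the trivial lift. You instead unpack this: a cotangent-complex calculation for openness, and a direct rigidity argument on $\underline{\Hom}(\Gm,G)$ for closedness. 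Your cotangent-complex step is correct but somewhat redundant, since the rigidity argument (every $G$-orbit in $\underline{\Hom}(\Gm,G)$ is open) already yields both openness and closedness simultaneously. Your parenthetical hedge on closedness over the non-field base $R'$ is exactly the point where the paper's appeal to Drinfeld does the work: one needs that conjugacy of cocharacters is an \'etale-locally constant invariant (an SGA~3 fact), so that the orbit decomposition globalizes over $\Spec R'$. Both proofs are ultimately the same rigidity principle, yours made explicit.
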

\begin{proof}
The (a),(b) counterparts in each numbered pair are equivalent, so we will replace $\mathcal{Q}$ by $\mathcal{Q}^\mu$ and prove the (b) sides of each pair.

The map $BM_\mu\to Z^0$ associates with each $M_\mu$-torsor $\mathcal{P}^0$ the $G$-torsor obtained via pushforward along the map $M_\mu\to G_{\mathcal{O}}$: Such a $G$-torsor is equipped with a canonical extension to an action of $G\rtimes_{\mu}\Gm$.

Let us now show that this yields an isomorphism of $BM_\mu(C)$ with the space of $G\{\mu\}$-torsors over $B\Gm\times \Spec C$ satisfying any of the three given conditions (1b), (2b) and (3b). For condition (1b), it is easy: Giving such an object over $B\Gm\times \Spec C$ is the same as giving an \'etale torsor over $C$ for the fixed point group scheme of $G\{\mu\}$, which is of course $M_\mu$. 

To finish, it is enough to see that $BM_\mu$ is an open and closed substack of $Z^0$. The quickest way to see this is to observe, as Drinfeld does in~\cite[\S C.2.3]{drinfeld2023shimurian} that we have discrete invariants on $Z^0$ given by $G$-conjugacy classes of cocharacters $\Gm\to G\rtimes_\mu \Gm$ lifting the identity map of $\Gm$. Now, $BM_\mu$ is the open and closed substack of $Z^0$ associated with the trivial such lift.
\end{proof}

\begin{remark}
\label{rem:BGmu_simple}
Let $Z^-$ be the attractor (of the base-change over $\Aff^1/\Gm$ of) $\mathcal{Z}$. Then we find that $Z^-(C)\times_{Z^0(C)}BM_\mu(C)$ is spanned by $G\{\mu\}$-torsors $\mathcal{Q}^\mu$ over $\Aff^1/\Gm\times\Spec C$ satisfying the following condition: There exists an \'etale cover $C\to C'$ such that the restriction of $\mathcal{Q}^\mu$ over $\Aff^1/\Gm\times \Spec C'$ is trivial. Indeed, this amounts to checking that a $G\{\mu\}$-torsor $\mathcal{Q}^\mu$ over $\Aff^1/\Gm\times \Spec C$ with trivial restriction over $B\Gm\times\Spec C$ is itself trivial. This is because $\mathcal{Q}^\mu$ is smooth over $\Aff^1/\Gm\times \Spec C$, and we have
\[
\Map_{\Aff^1/\Gm\times \Spec C}(\Aff^1/\Gm\times\Spec C,\mathcal{Q}^\mu)\xrightarrow{\simeq}\varprojlim_n\Map_{\Aff^1/\Gm\times \Spec C}((\Aff^1/\Gm)_{(t^m=0)}\times\Spec C,\mathcal{Q}^\mu).
\]
See the proof of Proposition~\ref{prop:X_attractor_repble}.

In particular, $Z^-\times_{Z^0}BM_\mu$ is isomorphic to the stack of \'etale torsors for the attractor group scheme associated with $G\{\mu\}$, which is of course $P^-_{\mu}$. In other words, we have $Z^-\times_{Z^0}BM_\mu\simeq BP^-_\mu$.
\end{remark}

\subsection{$1$-bounded cocharacters}
\label{subsec:1-bounded_case}
Here, we will present a key example of a $1$-bounded stack relevant to Theorem~\ref{introthm:main}. The notation will be as above.

\begin{definition}
Following~\cite{MR4355476}, we will say that $\mu$ is \defnword{$1$-bounded} if, under the adjoint action of $\mu$, we have $\mathfrak{g}_i = 0$ for $i>1$. In this case, we will set $\mathfrak{g}^+_\mu = \mathfrak{g}_{1}$. 

If $G$ is reductive, then this condition is equivalent to asking that $\mu$ be \emph{minuscule}.
\end{definition}

\begin{lemma}
\label{lem:1-bounded_exp}
If $\mu$ is $1$-bounded, the exponential map induces an equivalence:
\[
\mathbf{V}(\mathfrak{g}^{+,\vee}_\mu)\xrightarrow{\simeq}U^{+}_\mu.
\]
In particular, for any $C\in \mathrm{CRing}_{R'/}$, we have an equivalence
\[
\mathfrak{g}^+_\mu\otimes_{R}(C/{}^{\mathbb{L}}p^n)\xrightarrow[\simeq]{\exp}U^{+,(n)}_\mu(C).
\]
\end{lemma}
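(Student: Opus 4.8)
The plan is to treat the first equivalence as a structural fact about $1$-bounded cocharacters (essentially due to Lau) and then deduce the ``in particular'' clause formally by passing to derived Weil restrictions. For the first equivalence, I would begin by unwinding the conventions of~\S\ref{subsec:cochar}: there $\Lie U^+_\mu = \bigoplus_{i>0}\mathfrak{g}_i$, so $1$-boundedness of $\mu$ forces $\Lie U^+_\mu = \mathfrak{g}_1 = \mathfrak{g}^+_\mu$, and moreover $[\mathfrak{g}^+_\mu,\mathfrak{g}^+_\mu]\subseteq\mathfrak{g}_2 = 0$, i.e. $\mathfrak{g}^+_\mu$ is an abelian Lie algebra. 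The group scheme $U^+_\mu$ is smooth and affine over $R'$ with geometrically connected unipotent fibres, and $\mu$ equips it (via conjugation) with a $\Gm$-action carrying the single weight $1$ on the Lie algebra; concretely, its coordinate ring is graded with degree-zero part $R'$ and augmentation ideal $I$ satisfying $I/I^2\simeq\mathfrak{g}_\mu^{+,\vee}$ concentrated in degree one. A graded Nakayama argument then produces a $\Gm$-equivariant isomorphism of $R'$-schemes $\Sym_{R'}(\mathfrak{g}_\mu^{+,\vee})\xrightarrow{\simeq}\mathcal{O}(U^+_\mu)$, that is, $\mathbf{V}(\mathfrak{g}^+_\mu)\xrightarrow{\simeq}U^+_\mu$; one then checks that the transported group law is the additive one and that the isomorphism is the exponential, which is defined already over $R'$ (rather than only after inverting $p$) precisely because the vanishing of the bracket degenerates the Baker--Campbell--Hausdorff series to $(x,y)\mapsto x+y$. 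Since both the notion of $1$-boundedness and this structure result go back to~\cite{MR4355476}, I would cite \emph{loc.\ cit.}\ for this first part rather than reproduce the argument.

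For the second statement, I would apply the operation $(-)^{(n)} = \Res_{(\Int/p^n\Int)/\Int_p}(-)$ of~\S\ref{subsec:weil} to the isomorphism $\exp\colon\mathbf{V}(\mathfrak{g}^+_\mu)\xrightarrow{\simeq}U^+_\mu$ of $p$-adic formal prestacks over $R'$. By Lemma~\ref{lem:weil_restriction}---or simply because $(-)^{(n)}$ is a functor and hence carries equivalences to equivalences---this yields an equivalence $\mathbf{V}(\mathfrak{g}^+_\mu)^{(n)}\xrightarrow{\simeq}U^{+,(n)}_\mu$. Evaluating on $C\in\mathrm{CRing}_{R'/}$ and unwinding the definition of Weil restriction, the target is $U^+_\mu(C/{}^{\mathbb{L}}p^n)$ while the source is $\mathbf{V}(\mathfrak{g}^+_\mu)(C/{}^{\mathbb{L}}p^n)$; since $\mathfrak{g}^+_\mu$ is a direct summand of the finite projective $R'$-module $\mathfrak{g}_{R'}$, the prestack $\mathbf{V}(\mathfrak{g}^+_\mu)$ is the classical affine scheme $\Spec\Sym_{R'}(\mathfrak{g}_\mu^{+,\vee})$ and is computed on every animated test ring by the corresponding tensor product, so the source equals $\mathfrak{g}^+_\mu\otimes_{R}(C/{}^{\mathbb{L}}p^n)$. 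Composing these identifications produces the asserted equivalence, with structure map induced by $\exp$.

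The only genuine content is the first part---that $U^+_\mu$ is the vector group on $\mathfrak{g}^+_\mu$ via an \emph{integral} exponential---and this is exactly where I would lean on~\cite{MR4355476}; everything else is a formal manipulation with Weil restriction, for which the groundwork has already been laid in~\S\ref{subsec:weil}. The one point that deserves care is bookkeeping with the $\Gm$-weight conventions of~\S\ref{subsec:cochar}, so as to be sure that ``$1$-bounded'' really does place $\Lie U^+_\mu$ in a single weight; once that is pinned down, the structure of $U^+_\mu$ and then the whole lemma follow.
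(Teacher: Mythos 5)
Your proposal matches the paper's approach: the paper's proof is a one-line citation of \cite[Lemma~6.3.2]{MR4355476} for the integral exponential isomorphism, and you cite the same source for exactly that step. The extra unwinding of Lau's argument and the Weil-restriction bookkeeping you supply for the ``in particular'' clause are both correct, and merely make explicit what the paper leaves implicit.
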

\begin{proof}
See~\cite[Lemma 6.3.2]{MR4355476}.
\end{proof}

\subsubsection{}
We now isolate a particular open substack of $Z^0_{\oneb}$ using the $1$-bounded condition on $\mu$. For this, we begin by noting that, for every representation $W$ of $G$ defined over $R'$, the usual twisting process by the $G$-torsor $\mathcal{P}$ over $B\Gm\times \Spec C$ yields a canonical graded vector bundle $\mathcal{M}_\bullet(W)_{\mathcal{P}}$ over $\Spec C$. 

In this way, we obtain a graded vector bundle $\mathcal{M}_\bullet(W)$ over $Z^0$. Now, the graded perfect complex $\mathbb{T}^0_{\mathcal{Z},\bullet}$ over $Z^0$ is simply $\mathcal{M}_\bullet(\mathfrak{g})[-1]$. 

The $1$-bounded locus $Z^0_{\oneb}$ is the open and closed locus over which we have $\mathcal{M}_i(\mathfrak{g})\simeq 0$ for $i>1$.

Over $BM_\mu$, for each $i\in \Int$, we have the vector bundles $\mathcal{M}^0(\mathfrak{g}_i)$ obtained by twisting the representation $\mathfrak{g}_i$ by the universal $M_\mu$-torsor. The restriction of $\mathbb{T}^0_{\mathcal{Z},\bullet}$ to $BM_\mu$ is then seen to be isomorphic to the graded complex $\bigoplus_{i\in\Int}\mathcal{M}^0(\mathfrak{g}_i)(-i)[-1]$. In particular, the $1$-boundedness of $\mu$ ensures exactly that this is a $1$-bounded complex. That is, we have defined a map $BM_\mu\to Z^0_{\oneb}$.

\begin{definition}
\label{defn:BGmu_1-bounded_stack}
$\mathcal{B}(G,\mu)$ will be the $1$-bounded stack over the pointed graded stack $(B\Gm\times\Spec R,\iota_{R'})$ given by the pair $(BG\times B\Gm,BM_\mu)$.

Remark~\ref{rem:BGmu_simple} shows that the attractor $B(G,\mu)^-$ is simply $BP^-_\mu$, and an analogous argument shows that the repeller is $BP^+_\mu$.
\end{definition}

\begin{remark}
\label{rem:independence_of_mu}
It is easy to see from the definitions that $\mathcal{B}(G,\mu)$ depends only on the isomorphism class of the map $B\mu:B\Gmh{R'}\to BG_{R'}$. In particular, it depends on the cocharacter $\mu$ only up to conjugacy. 
\end{remark}

\subsection{Deformations of $1$-bounded fixed points}
\label{subsec:deform_1bounded_fixed_pts}
Suppose that $\mathcal{Y} = \Spec(B_\bullet)/\Gm$ for a non-positively graded animated commutative ring $B_\bullet$.

\subsubsection{}
For any $m\ge 0$, we have the animated commutative graded `quotient' $B_\bullet\to B_{\ge -m}$ with underlying graded $B_0$-module $\bigoplus_{i\ge -m}B_i(-i)$. To construct this, we need a different perspective on the $\infty$-category of non-positively graded animated commutative rings: They can also be obtained as the \emph{animation} of the category of non-positively graded commutative rings, which admits a set of compact projective generators given by graded polynomial algebras in finitely many homogeneous variables in non-positive degrees. From this perspective, the quotient map $B_\bullet\to B_{\ge -m}$ is simply the animation of the usual construction for non-positively graded commutative rings. Note in particular, that $B_{\ge -1}$ is the \emph{graded} trivial square-zero extension of $B_0$ by $B_{-1}(1)$. 
 
\subsubsection{}
Let $\mathcal{X} = (\mathcal{X}^\preoneb,X^0)\to (\mathcal{Z},\iota)$ be a $1$-bounded stack over a pointed graded prestack with $\iota:B\Gm\times\Spec R\to \mathcal{Z}$. Suppose that we have a map of pointed prestacks $\mathcal{Y}\to \mathcal{Z}$. 

\begin{proposition}
\label{prop:1bounded_graded_deformation}
Suppose that $\mathcal{X}^\preoneb\to \mathcal{Z}$ is graded integrable (Definition~\ref{defn:graded_integrable}). Then the natural map
\[
\Map_{/\mathcal{Z}}(\mathcal{Y},\mathcal{X})\to \Map_{/\mathcal{Z}}((\Spec B_{\ge-1})/\Gm,\mathcal{X})
\]
is an equivalence.
\end{proposition}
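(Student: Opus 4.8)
The plan is to reach the statement by a filtered deformation-theoretic bootstrap along the grading of $B_\bullet$. For $m\ge 0$ write $\mathcal{Y}_m\defn(\Spec B_{\ge-m})/\Gm$, so that $\mathcal{Y}_0 = B\Gm\times\Spec B_0$ is the closed substack carrying the point of $\mathcal{Y}$, $\mathcal{Y}_1 = (\Spec B_{\ge-1})/\Gm$ is the target appearing in the proposition, and one has closed immersions $\mathcal{Y}_0\hookrightarrow\cdots\hookrightarrow\mathcal{Y}_m\hookrightarrow\mathcal{Y}_{m+1}\hookrightarrow\cdots\hookrightarrow\mathcal{Y}$, compatibly with the structure maps to $\mathcal{Z}$ and with the graded points. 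Since $B_\bullet$ is non-positively graded, $\mathcal{Y}$ and $\mathcal{Y}_m$ agree in every graded degree $\ge -m$, so a morphism out of $\mathcal{Y}$ — being, locally on the lfp stack $\mathcal{X}^\diamond$ with quasi-affine diagonal, cut out of a graded affine space by finitely many homogeneous equations — involves only boundedly many graded degrees; the completeness results of Appendix~\ref{app:completeness}, applied exactly as in the integrability part of the proof of Proposition~\ref{prop:t-integrable_repble}, then give
\[
\Map_{/\mathcal{Z}}(\mathcal{Y},\mathcal{X}^\diamond)\xrightarrow{\ \simeq\ }\varprojlim_m\Map_{/\mathcal{Z}}(\mathcal{Y}_m,\mathcal{X}^\diamond).
\]
Forming the fibre product with $X^0(B_0)$ over $X^{\diamond,0}(B_0)$ — which is constant in $m$ and receives all the ``restrict to $\mathcal{Y}_0$'' maps compatibly — upgrades this to $\Map_{/\mathcal{Z}}(\mathcal{Y},\mathcal{X})\xrightarrow{\simeq}\varprojlim_m\Map_{/\mathcal{Z}}(\mathcal{Y}_m,\mathcal{X})$. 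Thus it suffices to prove that each transition map $\Map_{/\mathcal{Z}}(\mathcal{Y}_{m+1},\mathcal{X})\to\Map_{/\mathcal{Z}}(\mathcal{Y}_m,\mathcal{X})$ is an equivalence for $m\ge 1$.

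Fix $m\ge 1$. A degree count shows $B_{<0}\cdot B_{-m-1}=0$ inside $B_{\ge-m-1}$, so $\mathcal{Y}_m\hookrightarrow\mathcal{Y}_{m+1}$ is a square-zero extension whose ideal is $\iota_*\bigl(B_{-m-1}(m+1)\bigr)$, the pushforward along $\iota\colon\mathcal{Y}_0\hookrightarrow\mathcal{Y}_{m+1}$ of the $B_0$-module $B_{-m-1}$ placed in graded degree $-m-1$. As $\mathcal{X}^\diamond\to\mathcal{Z}$ is lfp Artin it carries a perfect relative cotangent complex $\mathbb{L}_{\mathcal{X}^\diamond/\mathcal{Z}}$ and is infinitesimally cohesive, so for any $f\colon\mathcal{Y}_m\to\mathcal{X}^\diamond$ over $\mathcal{Z}$ the homotopy fibre over $f$ of $\Map_{/\mathcal{Z}}(\mathcal{Y}_{m+1},\mathcal{X}^\diamond)\to\Map_{/\mathcal{Z}}(\mathcal{Y}_m,\mathcal{X}^\diamond)$ is, when nonempty, a torsor under the mapping space $\Map_{\mathcal{Y}_m}\!\bigl(f^*\mathbb{L}_{\mathcal{X}^\diamond/\mathcal{Z}},\iota_*(B_{-m-1}(m+1))\bigr)$, with obstruction in the mapping space into the $[1]$-shift (Definition~\ref{defn:cotangent_complex}, and the filtered variant of \S\ref{subsec:filtered_square-zero}). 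By the $(\iota^*,\iota_*)$-adjunction the relevant complex is
\[
\underline{\Map}_{B\Gm\times\Spec B_0}\!\bigl(g^*\mathbb{L}_{\mathcal{X}^\diamond/\mathcal{Z}},\ B_{-m-1}(m+1)\bigr),\qquad g\defn f|_{\mathcal{Y}_0},
\]
and $g^*\mathbb{L}_{\mathcal{X}^\diamond/\mathcal{Z}}$ is the graded perfect complex $\mathbb{L}^0_{\mathcal{X}^\diamond,\bullet}$ of \S\ref{subsec:attractor_cotangent_complex}--\S\ref{subsec:1-bounded_fixed_points} evaluated at the point $g\in X^{\diamond,0}(B_0)$.

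The $1$-boundedness now makes everything vanish: on $\Map_{/\mathcal{Z}}(\mathcal{Y}_m,\mathcal{X})$ the restriction $g$ lands in $X^0\subseteq X^{\diamond,0}_{\oneb}$, so the graded dual $\mathbb{T}^0_{\mathcal{X}^\diamond,\bullet}$ of $\mathbb{L}^0_{\mathcal{X}^\diamond,\bullet}$ satisfies $\mathbb{T}^0_{\mathcal{X}^\diamond,i}|_g\simeq 0$ for $i>1$, i.e.\ $g^*\mathbb{L}_{\mathcal{X}^\diamond/\mathcal{Z}}$ is concentrated in graded degrees $\ge -1$; since $B_{-m-1}(m+1)$ sits in graded degree $-m-1\le -2$, the graded mapping complex above and all its shifts vanish. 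Hence the homotopy fibre over every such $f$ is contractible (the obstruction automatically dies and no deformations occur). Because $\Map_{/\mathcal{Z}}(\mathcal{Y}_{m+1},\mathcal{X})$ is exactly the preimage in $\Map_{/\mathcal{Z}}(\mathcal{Y}_{m+1},\mathcal{X}^\diamond)$ of $\Map_{/\mathcal{Z}}(\mathcal{Y}_m,\mathcal{X})\subseteq\Map_{/\mathcal{Z}}(\mathcal{Y}_m,\mathcal{X}^\diamond)$ under restriction — the fixed point of $\tilde f$ is that of $\tilde f|_{\mathcal{Y}_m}$ — the square relating the two pairs of mapping spaces is Cartesian, so $\Map_{/\mathcal{Z}}(\mathcal{Y}_{m+1},\mathcal{X})\to\Map_{/\mathcal{Z}}(\mathcal{Y}_m,\mathcal{X})$ is an equivalence. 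Feeding this into the tower of the first paragraph yields $\varprojlim_m\Map_{/\mathcal{Z}}(\mathcal{Y}_m,\mathcal{X})\xrightarrow{\simeq}\Map_{/\mathcal{Z}}(\mathcal{Y}_1,\mathcal{X})$, which composed with the first display proves the proposition.

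The main obstacle is the convergence/completeness input of the first paragraph, namely the reduction of $\Map_{/\mathcal{Z}}(\mathcal{Y},\mathcal{X}^\diamond)$ to the limit over the $\mathcal{Y}_m$. This is genuinely nontrivial — the individual transition maps on $\Map_{/\mathcal{Z}}(-,\mathcal{X}^\diamond)$ (as opposed to $\Map_{/\mathcal{Z}}(-,\mathcal{X})$) need \emph{not} be equivalences — and it is precisely here that the quasi-affine diagonal of $\mathcal{X}^\diamond\to\mathcal{Z}$ enters, to descend the essentially formal affine-chart statement (where boundedness of the grading does the work) to all of $\mathcal{X}^\diamond$. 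Once this is granted, the rest is routine square-zero obstruction theory combined with the purely graded-degree vanishing forced by $1$-boundedness.
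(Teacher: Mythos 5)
Your reduction of the statement to (i) the limit decomposition $\Map_{/\mathcal{Z}}(\mathcal{Y},\mathcal{X}^\diamond)\xrightarrow{\simeq}\varprojlim_m\Map_{/\mathcal{Z}}(\mathcal{Y}_m,\mathcal{X}^\diamond)$ and (ii) vanishing of the graded obstruction/deformation spaces along the tower for $m\ge 1$ is the right shape, and the degree count in step (ii) --- $\mathbb{L}^0_{\mathcal{X}^\diamond,\bullet}|_g$ concentrated in graded degrees $\ge -1$ versus the ideal $B_{-m-1}(m+1)$ sitting in degree $-m-1\le -2$ --- is exactly the mechanism the paper uses to show that the fibre of the restriction depends only on $B_{\ge -1}$.

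The gap is in step (i). You invoke Proposition~\ref{prop:graded_completeness} from Appendix~\ref{app:completeness} to obtain the limit decomposition, but that proposition is stated, and proved, only for $B_\bullet$ a \emph{discrete} non-positively graded commutative ring: its proof runs through the Tannakian reconstruction Theorem~\ref{thm:tannakian} of Bhatt and Halpern-Leistner, which applies to classical Noetherian Artin stacks and does not extend to the animated $\mathcal{Y}=(\Spec B_\bullet)/\Gm$ for free. In Proposition~\ref{prop:1bounded_graded_deformation}, however, $B_\bullet$ is an arbitrary non-positively graded \emph{animated} commutative ring, so this is not a result you are entitled to quote. The paper bridges this by first running the square-zero deformation argument along the Postnikov tower $\tau_{\le (k+1)}B_\bullet\to\tau_{\le k}B_\bullet$ --- here the $1$-boundedness again makes everything depend only on the degrees $0$ and $-1$ --- and uses nilcompleteness of $\mathcal{X}^\diamond$ to reduce to a discrete graded ring, and only \emph{then} applies Proposition~\ref{prop:graded_completeness} and the graded tower you consider. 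Without that Postnikov reduction, or some replacement for it, your first display is unjustified, and the rest of the argument has nothing to stand on. (Your observation that $B_{\ge -m-1}\to B_{\ge -m}$ has square-zero fibre is also cleanest in the discrete case; in the animated setting it is the paper's more abstract framing in terms of general graded square-zero extensions $C'_\bullet\to C_\bullet$, together with the Postnikov filtration, that makes the deformation-theoretic bookkeeping watertight.)
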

\begin{proof}
Consider the following general situation: Suppose that $C'_\bullet \to C_\bullet$ is a square-zero extension of animated non-positively graded commutative $B_\bullet$-algebras with fiber $I_\bullet$. By the graded analogue of the discussion in~\S\ref{subsec:filtered_square-zero}, one sees that the fiber of the map
\begin{align*}
\Map_{/\mathcal{Z}}((\Spec C'_\bullet)/\Gm,\mathcal{X}^\preoneb)\to \Map_{/\mathcal{Z}}((\Spec C)/\Gm,\mathcal{X}^\preoneb)
\end{align*}
over a section $x$ has the following features:
\begin{itemize}
   \item The obstruction to its being non-empty is given by a section of
   \[
    \Map_{C_\bullet}(\mathbb{L}_{\mathcal{X}^\preoneb,x,\bullet},I_\bullet[1])
   \]
    where $\mathbb{L}_{\mathcal{X}^\preoneb,x,\bullet}$ is the graded $C_\bullet$-module obtained via pulling the relative cotangent complex of $\mathcal{X}^\preoneb$ over $\mathcal{Y}$ along $x$. 
    \item If the obstruction is nullhomotopic, then the fiber is equivalent to
    \[
     \Map_{C_\bullet}(\mathbb{L}_{\mathcal{X}^\preoneb,x,\bullet},I_\bullet).
    \]
\end{itemize}

Let $x_0\in \Map_{/\mathcal{Y}}((\Spf C_0)/\Gm,\mathcal{X}^\preoneb) = X^{\preoneb,0}(C_0)$ be the image of $x$. Then Lemma~\ref{lem:graded_weight_filtration} below gives us a canonical increasing and complete filtration $\Fil_\bullet^{\mathrm{wt}}\mathbb{L}_{\mathcal{X}^\preoneb,x,\bullet}$ with
\[
\gr_{-i}^{\mathrm{wt}}\mathbb{L}_{\mathcal{X}^\preoneb,x,\bullet}\simeq C_\bullet(-i)\otimes_{C_0}\mathbb{L}_{\mathcal{X}^\preoneb,x_{0},i},
\]
and we have
\[
\Map_{C_\bullet}(\gr_{-i}^{\mathrm{wt}}\mathbb{L}_{\mathcal{X}^\preoneb,x,\bullet},I_\bullet)\simeq \Map_{\mathrm{GrMod}_{C_0}}(\mathbb{L}_{\mathcal{X}^\preoneb,x_{0},i},I_\bullet(i))\simeq \Map_{{C_0}}(\mathbb{L}_{\mathcal{X}^\preoneb,x_{0},i},I_i).
\]

The right hand side here is is non-trivial only when $\mathbb{L}_{\mathcal{X}^\preoneb,x_{0},i}$ is not nullhomotopic and when $i\leq 0$.

Now, if $x_0$ is in the image of $X^0(C_0)$, then it maps to a $1$-bounded fixed point, and so $\mathbb{L}_{\mathcal{X}^\preoneb,x_{0},i} \simeq 0$ for $i\le -2$. From this, we deduce that the fiber over $x$ depends only on the quotient $I_{-1}(1)\oplus I_0$ of $I_\bullet$. More precisely, the following square is Cartesian
\[
\begin{diagram}
\Map_{/\mathcal{Z}}((\Spec C'_\bullet)/\Gm,\mathcal{X})&\rTo&\Map_{/\mathcal{Z}}((\Spec C_\bullet)/\Gm,\mathcal{X})\\
\dTo&&\dTo\\
\Map_{/\mathcal{Z}}((\Spec C'_{\ge -1})/\Gm,\mathcal{X})&\rTo&\Map_{/\mathcal{Z}}((\Spec C_{\ge -1})/\Gm,\mathcal{X})
\end{diagram}
\]

Applying this with $C'_\bullet \to C_\bullet$ the map $\tau_{\leq (k+1)}B_\bullet\to \tau_{\leq k}B_\bullet$ for $k\ge 0$ and using the fact that $\mathcal{X}^\preoneb$ is a nilcomplete smooth sheaf reduces to the situation where $B_\bullet$ is a \emph{discrete} graded commutative ring, so that $\mathcal{Y}$ is now a classical stack. 

We can now complete the proof by applying the same reasoning again to the square-zero thickenings $B_{\ge -m}\to B_{\ge -m+1}$ for $m\ge 2$ and using graded integrability.
\end{proof}

\begin{remark}
\label{rem:use_of_graded_completeness}
Proposition~\ref{prop:graded_completeness} shows that $\mathcal{X}\to (\mathcal{Z},\iota)$ is graded integrable whenever $\mathcal{X}^\preoneb\to \mathcal{Z}$ has quasi-affine diagonal. 

However, graded integrability holds under weaker hypotheses. For instance, if $\mathcal{X}^\preoneb = \mathcal{P}\times B\Gm\to \mathcal{Z} = B\Gm$, where $\mathcal{P}$ is as in Example~\ref{ex:perfect_complexes_HTwts_01}, then, even though $\mathcal{P}$ does not have quasi-affine diagonal over $\Spec \Int$, we still know that the map
\[
\Map((\Spec B_\bullet)/\Gm,\mathcal{P}) = \mathrm{Perf}((\Spec B_\bullet)/\Gm)\to \varprojlim_m \mathrm{Perf}((\Spec B_{\ge -m})/\Gm) = \varprojlim_m\Map((\Spec B_{\ge -m})/\Gm,\mathcal{P}) 
\]
is an equivalence. In other words, the $1$-bounded stack $\mathcal{P}_{\{0,1\}}\to B\Gm$ from Example~\ref{ex:perfect_HT_wts_01_stack} is graded integrable.

Similarly, the $1$-bounded stack from Example~\ref{ex:1_bounded_perfect_complexes_stack} is also graded integrable.
\end{remark}

\subsection{A useful cartesian square}

\subsubsection{}
Suppose that $\Fil^\bullet S$ is a non-negatively filtered animated commutative ring, and set $\overline{S} = \gr^0S$. The map $S\to \overline{S}$ underlies an arrow $\Fil^\bullet S \to \Fil^\bullet_{\mathrm{triv}}\overline{S}$ of filtered animated commutative rings corresponding to a map of stacks
\[
\Aff^1/\Gm\times \Spec \overline{S}\to \Rees(\Fil^\bullet S)
\]
whose restriction over the open point of $\Aff^1/\Gm$ is the closed immersion $\Spec \overline{S}\to \Spec S$.

We will view $\mathcal{Y}\defn \Rees(\Fil^\bullet S)$ as a pointed graded stack via the composition
\[
B\Gm\times \Spec \overline{S}\to \Aff^1/\Gm\times \Spec \overline{S}\to \mathcal{Y}.
\]
\subsubsection{}
Let $\mathcal{X} = (\mathcal{X}^\preoneb,X^0)\to (\mathcal{Z},\iota)$ be a $1$-bounded stack over a pointed graded prestack with $\iota:B\Gm\times\Spec R\to \mathcal{Z}$, and let $X^-\to \Spec R$ be its associated attractor.

Suppose that we have a map of pointed prestacks $\mathcal{Y}\to \mathcal{Z}$. We now have a commutative diagram
\begin{align}\label{diagram:1bounded_comm_square}
\begin{diagram}
\Map_{/\mathcal{Z}}(\mathcal{Y},\mathcal{X})&\rTo&\Map_{/\mathcal{Z}}(\Spec S,\mathcal{X}^\preoneb)\\
\dTo&&\dTo\\
X^-(S)=\Map_{/\mathcal{Z}}(\Aff^1/\Gm\times\Spec \overline{S},\mathcal{X})&\rTo&\Map_{/\mathcal{Z}}(\Spec\overline{S},\mathcal{X}^\preoneb).
\end{diagram}
\end{align}

\begin{proposition}
\label{prop:1_bounded_cartesian}
Suppose that the kernel of the map
\[
\pi_0(S)\to \pi_0(\overline{S})
\]
is locally nilpotent and that $\mathcal{X}^\preoneb\to \mathcal{Z}$ is filtered integrable (Definition~\ref{defn:filtered_integrable}) and also satisfies the following additional condition: For any classical ring $R$ and any locally nilpotent ideal $I\subset R$, and for any map $\Spec R \to \mathcal{Z}$, we have
\[
  \mathcal{X}^\preoneb(R) = \varprojlim_m \mathcal{X}^\preoneb(R/I^m).
\] 
Then~\eqref{diagram:1bounded_comm_square} is a Cartesian square.
\end{proposition}
\begin{proof}
Let $\Fil^\bullet_{(S\twoheadrightarrow\overline{S})}S$ be the non-negatively filtered animated commutative ring with $\Fil^0_{(S\twoheadrightarrow\overline{S})}S\simeq S$ and 
\[
\gr^i_{(S\twoheadrightarrow\overline{S})}S\simeq \begin{cases}
\overline{S}&\text{if $i=0$};\\
0&\text{otherwise}.
\end{cases}
\]
The associated Rees algebra sits in a Cartesian square of graded animated commutative rings
\begin{align}\label{eqn:rees_cartesian_construction}
\begin{diagram}
\bigoplus_i\Fil^i_{(S\twoheadrightarrow\overline{S})}S\cdot t^{-i}&\rTo&S[t,t^{-1}]\\
\dTo&&\dTo\\
\overline{S}[t]&\rTo&\overline{S}[t,t^{-1}].
\end{diagram}
\end{align}
One can obtain this construction for instance by animating the obvious one for surjections of polynomial algebras over $\Int$.

For any non-negatively filtered animated commutative ring $\Fil^\bullet A$, view $\Rees(\Fil^\bullet A)$ as a pointed graded prestack with
\[
\iota:B\Gm\times \Spec \gr^0A \to \Rees(\Fil^\bullet A).
\]
Also, set
\[
\Fil^\bullet A' = \Fil^\bullet_{(A\twoheadrightarrow\gr^0(A))}A,
\]
where the right hand side is defined as above. If $\Rees(\Fil^\bullet A)$ is a pointed graded stack over $(\mathcal{Z},\iota)$, set
\[
\mathcal{X}(\Fil^\bullet A) \defn \Map_{/\mathcal{Z}}(\Rees(\Fil^\bullet A),\mathcal{X})
\]

The diagram~\eqref{diagram:1bounded_comm_square} is Cartesian with $\Rees(\Fil^\bullet S)$ replaced by $\Rees(\Fil^\bullet S')$. This is because of the Cartesian nature of~\eqref{eqn:rees_cartesian_construction}, and the \emph{surjectivity} of the vertical maps; see (4) of~\cite[Theorem 7.5.1]{lurie_thesis}.

Note that there is a natural map $\Rees(\Fil^\bullet S')\to \Rees(\Fil^\bullet S)$. To complete the proof of the proposition it now suffices to show that the corresponding map
\[
\mathcal{X}(\Fil^\bullet S)\to \mathcal{X}(\Fil^\bullet S')
\]
is an equivalence. We will prove this using filtered deformation theory. 

Suppose quite generally that $\Fil^\bullet B\to \Fil^\bullet A$ is a square-zero extension of non-negatively filtered animated commutative rings with fiber $\Fil^\bullet K$, and write $\Fil^\bullet K'$ for the fiber of the induced map $\Fil^\bullet B'\to \Fil^\bullet A'$. If $\Rees(\Fil^\bullet B)$ is a pointed graded stack over $(\mathcal{Z},\iota)$, by the discussion in~\S\ref{subsec:filtered_square-zero}, we obtain a Cartesian square
\[
\begin{diagram}
\mathcal{X}(\Fil^\bullet B)&\rTo&\mathcal{X}(\Fil^\bullet A)\times_{X^0(\gr^0A)}X^0(\gr^0B)\\
\dTo&&\dTo\\
\mathcal{X}(\Fil^\bullet A)\times_{X^0(\gr^0A)}X^0(\gr^0B)&\rTo&\mathcal{X}(\Fil^\bullet A\oplus\Fil^\bullet K[1])\times_{X^0(\gr^0A)}X^0(\gr^0B).
\end{diagram}
\]
Moreover, if $\mathbb{L}_{\mathcal{X}^\preoneb/\mathcal{Y}}$ is the relative cotangent complex, then for any $x\in \mathcal{X}(\Fil^\bullet A)$, we obtain a filtered $\Fil^\bullet A$-module $\Fil^\bullet \mathbb{L}_{\mathcal{X}^\preoneb/\mathcal{Y},x}$, and we have a canonical equivalence:
\begin{align*}
\mathrm{fib}_x(\mathcal{X}(\Fil^\bullet A\oplus \Fil^\bullet K[1])\to \mathcal{X}(\Fil^\bullet A))&\simeq\Map_{\Fil^\bullet A}(\Fil^\bullet \mathbb{L}_{\mathcal{X}^\preoneb/\mathcal{Y},x},\Fil^\bullet K[1]).
\end{align*}
Similarly, if $x'\in \mathcal{X}(\Fil^\bullet A')$ is the image of $x$, we have
\begin{align*}
\mathrm{fib}_{x'}(\mathcal{X}(\Fil^\bullet A'\oplus \Fil^\bullet K'[1])\to \mathcal{X}(\Fil^\bullet A'))&\simeq\Map_{\Fil^\bullet A}(\Fil^\bullet \mathbb{L}_{\mathcal{X}^\preoneb/\mathcal{Y},x},\Fil^\bullet K'[1]).
\end{align*}

We claim that the natural map between these fibers is an equivalence. For this, set 
\[
\Fil^\bullet J  = \hcoker(\Fil^\bullet K \to \Fil^\bullet K').
\]
The claim would follow if we knew that
\[
\Map_{\Fil^\bullet A}(\Fil^\bullet \mathbb{L}_{\mathcal{X}^\preoneb/\mathcal{Y},x},\Fil^\bullet J[1])\simeq 0.
\]
For this, first note that by construction we have $\Fil^i J \simeq 0$ for $i\leq 1$. Now, the $1$-bounded condition essentially ensures that $\Fil^\bullet \mathbb{L}_{\mathcal{X}^\preoneb/\mathcal{Y},x}$ is generated in filtered degrees $\leq 1$, and so the mapping space above vanishes as desired. 

To see this precisely, one can argue as follows. If $\Fil^\bullet \overline{\mathbb{L}}_{\mathcal{X}^\preoneb/\mathcal{Y},x}$ is the filtered base-change over $\Fil^\bullet_{\mathrm{triv}}\gr^0A$, then the $1$-bounded condition tells us that $\gr^i\overline{\mathbb{L}}_{\mathcal{X}^\preoneb/\mathcal{Y},x}\simeq 0$ for $i\ge 2$. By Lemma~\ref{lem:graded_weight_filtration}, $\gr^\bullet \mathbb{L}_{\mathcal{X}^\preoneb/\mathcal{Y},x}$ admits an increasing filtration with $i$-th graded piece isomorphic to
\[
\gr^i\overline{\mathbb{L}}_{\mathcal{X}^\preoneb/ \mathcal{Y},x}(i)\otimes_{\gr^0A}\gr^\bullet A,
\]
and, for each $i$ and each $j\ge 1$, we have
\[
\Map_{\gr^\bullet A}(\gr^i\overline{\mathbb{L}}_{\mathcal{X}^\preoneb/ \mathcal{Y},x}(i)\otimes_{\gr^0A}\gr^\bullet A,\gr^\bullet J[1](-j))\simeq \Map_{\gr^0A}(\gr^i\overline{\mathbb{L}}_{\mathcal{X}^\preoneb/ \mathcal{Y},x},\gr^{i+j}J[1])\simeq 0
\]
since either the source or the target of this mapping space has to be zero. Using this, one finds that 
\[
\Map_{\Fil^\bullet A}(\mathbb{L}_{\mathcal{X}^\preoneb/\mathcal{Y},x},\gr^\bullet J[1](-j))\simeq \Map_{\gr^\bullet A}(\gr^\bullet\mathbb{L}_{\mathcal{X}^\preoneb/\mathcal{Y},x},\gr^\bullet J[1](-j))\simeq 0.
\]
Here we are viewing graded modules over $\gr^\bullet A$ as filtered modules over $\Fil^\bullet A$ with trivial transition maps: Geometrically, this corresponds to pushforward along the closed immersion $\Rees(\Fil^\bullet A)_{(t=0)}\to \Rees(\Fil^\bullet A)$.

Using the cofiber sequence
\[
\Fil^\bullet J(-j+1)\to \Fil^\bullet J(-j)\to (\gr^\bullet J)(-j)
\]
now shows that we have
\[
\Map_{\Fil^\bullet A}(\Fil^\bullet \mathbb{L}_{\mathcal{X}^\preoneb/\mathcal{Y},x},\Fil^\bullet J[1])\simeq \Map_{\Fil^\bullet A}(\Fil^\bullet \mathbb{L}_{\mathcal{X}^\preoneb/\mathcal{Y},x},\Fil^\bullet J[1](-j))
\]
for all $j\ge 1$. Now, for every $m\in\Int$, the $m$-th filtered piece of $\Fil^\bullet J[1](-m+1)$ is zero. Therefore, we conclude that the mapping spaces in question are also zero.

The claim that we just verified shows that, if $\mathcal{X}(\Fil^\bullet A)\to \mathcal{X}(\Fil^\bullet A')$ is an isomorphism, then so is $\mathcal{X}(\Fil^\bullet B)\to \mathcal{X}(\Fil^\bullet B')$. We will use this principle repeatedly in what follows.

For every $k$, we have the truncated filtered animated commutative ring $\tau_{\leq k}(\Fil^\bullet S)$ obtained by taking the corresponding truncation for the associated Rees algebra: this is a square-zero extension of $\tau_{\leq(k-1)}\Fil^\bullet S$ by a filtered module $\pi_k(\Fil^\bullet S)[k]$. Via the deformation argument above, combined with induction on $k$ and nilcompleteness, one therefore reduces to showing that the map
\[
\mathcal{X}(\pi_0(\Fil^\bullet S))\to \mathcal{X}(\pi_0(\Fil^\bullet S'))
\]
is an isomorphism

So we can assume that $\Fil^\bullet S$ corresponds to a classical graded ring $\mathrm{Rees}(\Fil^\bullet S)$. Set $I = \mathrm{im}(\Fil^1S)\subset S$: this is locally nilpotent by hypothesis. For $m\ge 1$, let $\Fil^\bullet S_m = \Fil^\bullet S/I^m\Fil^\bullet S$ be the induced filtered structure on the ring $S_m = S/I^m$. We now claim that we have
\[
\mathcal{X}(\Fil^\bullet S) = \varprojlim_m \mathcal{X}(\Fil^\bullet S_m).
\]
Via smooth affine descent, this follows from our hypotheses: Indeed, they ensure that $\Map_{/\mathcal{Z}}(\Spec A,\mathcal{X}) = \varprojlim_m \Map_{/\mathcal{Z}}(\Spec A/J^m,\mathcal{X})$ for any discrete commutative ring $A$ with ideal $J\subset A$, complete for the $J$-adic topology, equipped with $\Spec A\to \mathcal{Z}$. We apply this with $A = \mathrm{Rees}(\Fil^\bullet S)\otimes_{\Int}\Int[t,t^{-1}]^{\otimes_{\Int} r}$ for $r\ge 0$ and $J\subset A$ the ideal generated by $I$, and appeal to descent to deduce the claimed equality.

Therefore, using the deformation argument again, we are reduced to the case $S = S_1$, where the map $\Fil^1S\to S$ is $0$. 

In the notation of~\eqref{subsubsec:filtered_completion}, we now have $\Fil^\bullet_{(1)}S = \Fil^\bullet_{\mathrm{triv}}S = \Fil^\bullet_{(1)}S'$, and $\Fil^\bullet_{(m)}S' = \Fil^\bullet S'$ for $m\ge 2$. Therefore, the proposition holds with $\Fil^\bullet S$ replaced with $\Fil^\bullet_{(m)}S$ for any $m\ge 1$. We now conclude using filtered integrability.
\end{proof}

\begin{remark}
The above result can be viewed as a generalization of~\cite[Remark 6.3.3]{MR4355476}.
\end{remark}

\begin{definition}
\label{defn:strongly_integrable}
Given the above result and Proposition~\ref{prop:1bounded_graded_deformation}, we will find the following notion useful: A $1$-bounded stack $\mathcal{X}$ over a pointed graded stack $(\mathcal{Z},\iota)$ is \defnword{strongly integrable} if it is filtered integrable, graded integrable, and also satisfies the following condition: For any classical ring $R$ and any locally nilpotent ideal $I\subset R$, and for any map $\Spec R \to \mathcal{Z}$, we have
\[
  \mathcal{X}^\preoneb(R) = \varprojlim_m \mathcal{X}^\preoneb(R/I^m).
\]
\end{definition}  

\begin{remark}
\label{rem:use_of_filtered_completeness}
Proposition~\ref{prop:filtered_completeness} tells us that $\mathcal{X}$ is filtered integrable whenever $\mathcal{X}^\preoneb\to \mathcal{Z}$ has quasi-affine diagonal. Moreover, by Noetherian approximation, and Theorem 1.5 from~\cite{BHATT2017576}, it is in fact strongly integrable: Indeed, we already observed in Remark~\ref{rem:use_of_graded_completeness} that it is graded integrable.

These conditions are also valid more generally: For instance, if $\mathcal{X}= \mathcal{P}_{\{0,1\}}\to B\Gm$ with $\mathcal{X}^\preoneb = \mathcal{P}\times B\Gm$ (notation as in Example~\ref{ex:1_bounded_perfect_complexes_stack}), then we still know that the map
\[
\Map(\Rees(\Fil^\bullet S),\mathcal{P}) = \varprojlim_m\Map(\Rees(\Fil^\bullet_{(m)}S),\mathcal{P}) 
\]
is an equivalence. Moreover, for any $I$-adically complete ring $R$, we also have
\[
\mathcal{P}(R) = \mathrm{Perf}(R) = \varprojlim_m \mathrm{Perf}(R/I^m) = \varprojlim_m \mathcal{P}(R/I^m).
\]
See~\cite[Lemma 8.2]{BHATT2017576}. Therefore, combined with Remark~\ref{rem:use_of_graded_completeness}, we find that $\mathcal{X}$ is strongly integrable.

Similarly, the $1$-bounded stack from Example~\ref{ex:sections_1_bounded_perfect_complexes} is also strongly integrable.
\end{remark}

\section{Animated higher frames and windows}
\label{sec:higher_frames}

The purpose of this section is to give an account of the theory of~\cite{MR4355476} in an animated context, and to use it to prove two crucial technical results, Propositions~\ref{prop:def_theory_frames} and~\ref{prop:abstract_devissage_to_fzips}. They play an essential role in the establishment of the representability theorems of Section~\ref{sec:abstract}.

\subsection{Generalized Cartier divisors}

\begin{definition}
Recall that a \defnword{generalized Cartier divisor} for an animated commutative ring $R$ is a surjective map $R\twoheadrightarrow\overline{R}$ whose homotopy kernel $I$ is an invertible $R$-module. By abuse of notation we will refer to such an object via the cosection $s:I\to R$, which is the same as a map $s:\Spec R\to \Aff^1/\Gm$.
\end{definition}

\subsubsection{}
Any generalized Cartier divisor lifts $R$ to a filtered animated commutative ring $\Fil^\bullet_I R$ where the filtration is the $I$-adic one given by
\[
\Fil^k_I R = \begin{cases}
I^{\otimes k}&\text{if $k\ge 0$}\\
R&\text{if $k<0$},
\end{cases}
\]
and the transition maps are the identity for $k\leq 0$ and given by 
\[
I^{\otimes k} \simeq I\otimes_RI^{\otimes(k-1)}\xrightarrow{s\otimes 1}R\otimes_RI^{\otimes(k-1)}\simeq I^{\otimes (k-1)}
\]
for $k>0$. We will also have occasion to consider the \defnword{two-sided $I$-adic filtration} given by $\Fil^k_{I,\pm} R = I^{\otimes k}$ for all $k\in \Int$, which once again underlies a filtered animated commutative ring with
\[
\Rees(\Fil^\bullet_{I,\pm}R)\simeq \Spec R.
\]

To verify the assertions in the previous paragraph, using the classifying map $s:\Spec R\to \Aff^1/\Gm$, one reduces everything to the case where $R = \Int[x]$ with $I=x\Int[x]$, and here everything can be checked explicitly.

\begin{remark}
\label{rem:rees_stack_points}
This also gives a concrete way of thinking of a point $\Spec R\to \Rees(\Fil^\bullet S)$ of the Rees stack corresponding to a filtered commutative ring $\Fil^\bullet S$: it is equivalent to giving a generalized Cartier divisor $I\to R$, along with a map of filtered animated commutative rings $\Fil^\bullet S\to \Fil^\bullet_{I}R$.
\end{remark}

\subsubsection{}
For any $M\in \Mod{R}$, we will set $M[I^{-1}] = \colim_{k\ge 0} I^{-k}\otimes_RM$, where the transition maps are induced by $s$.

When we have an isomorphism $R\xrightarrow{\simeq}I$ of $R$-modules given by a section $\xi$ of $I$, we will write $\Fil^\bullet_\xi R$ and $\Fil^\bullet_{\xi,\pm}R$ for these filtered rings.

For any $R$-module $M$, we will write $M/{}^{\mathbb{L}}(p,I)$ for $M/{}^{\mathbb{L}}p\otimes_R\overline{R}$.

If $R'\in \mathrm{CRing}_{R/}$ is an $R$-algebra, and $s':I' = R'\otimes_RI\xrightarrow{1\otimes s}R'$, then we will sometimes also denote the $I'$-adic filtrations on $R'$ by $\Fil^\bullet_I R'$ and $\Fil^\bullet_{I,\pm}R'$.

\subsection{Formal Rees stacks}
\label{subsec:formal_rees}

Suppose that $I\to A$ is a generalized Cartier divisor with $p$-complete quotient $\overline{A}$, and suppose that $A$ underlies a filtered animated commutative ring $\Fil^\bullet A$.

\begin{definition}
The \defnword{formal Rees stack} associated with this datum is the one associating with each $R\in \mathrm{CRing}^{p\text{-nilp}}$ the space of generalized Cartier divisors $J\to R$ along with maps $\Fil^\bullet A\to \Fil^\bullet_{J,\pm}R$ (see Remark~\ref{rem:rees_stack_points}) such that the underlying map $A\to R$ is in $\Spf(A,I)(R)$. 
\end{definition}

In the sequel, the Rees construction will only be appealed to in this formal context. Therefore, by abuse of notation, we will denote this formal stack once again by $\Rees(\Fil^\bullet A)$. 

In particular, we have $\Rees(\Fil^\bullet_{I,\pm}A) \simeq \Spf(A,I)$. 

\subsection{Witt vectors, $\delta$-rings and prisms}

We recall the notion of an animated $\delta$-ring from~\cite[App. A]{bhatt2022prismatization}. First, we note:

\begin{remark}
\label{rem:frobenius_mod_p}
Every $R\in \mathrm{CRing}_{\Field_p/}$ is equipped with a canonical Frobenius endomorphism $\varphi:R\to R$ obtained by animating the usual Frobenius endomorphism for polynomial algebras over $\Field_p$. In particular, for any $R\in \mathrm{CRing}$, we obtain a canonical map
\[
\overline{\varphi}:R\xrightarrow{\mathrm{can}}R/{}^{\mathbb{L}}p\xrightarrow{\varphi}R/{}^{\mathbb{L}}p,
\]
where $\mathrm{can}:R\to R/{}^{\mathbb{L}}p$ is the canonical surjection.
\end{remark}

\subsubsection{}
Now, one defines for any animated commutative $\Int_{(p)}$-algebra $R$ the $2$-truncated ($p$-typical) Witt ring $W_2(R)$ with underlying space $R^2$ such that the projection onto the first coordinate is a map of animated commutative rings $W_2(R)\twoheadrightarrow R$. This amounts to the observation that the functor $C\mapsto W_2(C)$ on discrete commutative rings is represented by a smooth ring scheme, and so extends canonically to an animated ring scheme equipped with a map to $\Ga$. There exists a canonical Cartesian square of animated commutative rings
\begin{align}\label{eqn:w2_cart_square}
\begin{diagram}
W_2(R)&\rTo&R\\
\dTo&&\dTo_{\mathrm{can}}\\
R&\rTo_{\overline{\varphi}}&R/{}^{\mathbb{L}}p.
\end{diagram} 
\end{align}

\begin{definition}
A \defnword{$\delta$-structure} on an animated $\Int_{(p)}$-algebra $R$ is a section of the natural map $W_2(R)\to R$.

An (animated) \defnword{$\delta$-ring} is an animated $\Int_{(p)}$-algebra $R$ equipped with a $\delta$-structure. We obtain an $\infty$-category $\mathrm{CRing}_\delta$ of animated $\delta$-rings in the usual fashion. 
\end{definition}

\begin{remark}
Via projection onto the second coordinate of $W_2(R)$, such a section yields an operator $\delta:R\to R$ satisfying certain properties, which, when $R$ is discrete, completely determines the $\delta$-structure.
\end{remark}

\begin{remark}
The Cartesian square~\eqref{eqn:w2_cart_square} shows that, giving a $\delta$ structure on $R$ is equivalent to giving a lift $\varphi:R\to R$ of the map $\overline{\varphi}:R\to R/{}^{\mathbb{L}}p$. In particular, when $R$ is flat over $\Int_{(p)}$, a $\delta$-structure is equivalent to giving an endomorphism $\varphi:R\to R$ lifting the $p$-power Frobenius endomorphism of $R/pR$.
\end{remark}

\begin{remark}
\label{rem:naive_frobenius_lift}
Note that, if $R$ is an $\Field_p$-algebra, then the natural map $R\to R/{}^{\mathbb{L}}p$ is \emph{not} a morphism in $\mathrm{CRing}_{\Field_p/}$, and is in fact \emph{not} compatible with the natural Frobenius endomorphisms on source and target. Therefore, the Frobenius endomorphism of $R$ does not underlie a prism structure on $R$.
\end{remark}

\begin{remark}
\label{rem:witt_postnikov_truncation}
If $R$ is $k$-truncated, so is $W_2(R)$. This shows that, for any $\delta$-ring $R$, the composition $R\to W_2(R)\to W_2(\tau_{\leq k}R)$ factors through $\tau_{\leq k}R$. In other words, $\tau_{\leq k}R$ inherits a canonical $\delta$-ring structure from $R$.
\end{remark}

\begin{remark}
The forgetful functor from $\mathrm{CRing}_\delta\to \mathrm{CRing}$ admits both a left and right adjoint: The former is the free $\delta$-ring functor and the latter is the Witt functor $A\mapsto W(A)$.
\end{remark}

\begin{lemma}
\label{lem:extracting_phi_roots}
 Let $\Int_{(p)}\{x\}$ be the free $\delta$-ring obtained by hitting $\Int_{(p)}[x]$ with the left adjoint. Then the associated Frobenius lift $\Int_{(p)}\{x\}\xrightarrow{x\mapsto \varphi(x)} \Int_{(p)}\{x\}$ is faithfully flat.
\end{lemma}
\begin{proof}
See~\cite[Lemma 2.11]{Bhatt2022-ee}.
\end{proof}

\begin{definition}
\label{defn:define_prisms}
Following~\cite[Def. 2.4]{bhatt2022prismatization}, we define a(n animated) \defnword{prism} to be an animated $\delta$-ring $A$ equipped with a generalized Cartier divisor $I\to A$ with quotient $\overline{A}$ such that the following conditions hold:
\begin{enumerate}
   \item $A$ is $(p,I)$-complete.
   \item Given a perfect field $k$ of characteristic $p$ and a map $A\to W(k)$ of $\delta$-rings, we have $W(k)\otimes_A\overline{A}\simeq k$.
\end{enumerate}
If we want to emphasize the Cartier divisor, we will sometimes also denote the prism by $(A,s:I\to A)$.
\end{definition}

\begin{definition}
A prism $(A,I)$ is \defnword{transversal} if $A$ is flat over $\Int_p$ and the map $I\to A$ is injective mod-$p$.
\end{definition}

\begin{definition}
A prism $(A,I)$ is \defnword{perfect} if the Frobenius lift $\varphi:A\to A$ is an isomorphism.
\end{definition}

\begin{remark}
If $(A,I)$ is a perfect prism, $A/{}^{\mathbb{L}}p$ is a perfect $\Field_p$-algebra and so is discrete; this implies that $A$ is also discrete and $p$-torsion free. It is a result of Bhatt-Scholze~\cite[Theorem 3.10]{Bhatt2022-ee} that the assignment $(A,I)\mapsto \overline{A}$ is an equivalence of categories between perfect prisms and perfectoid rings. The inverse carries a perfectoid ring $R$ to the perfect prism $(A_{\mathrm{inf}}(R),\ker \theta)$ where $A_{\mathrm{inf}}(R) = W(R^\flat)$ with $R^\flat$ the tilt of $R$ and $\theta:A_{\mathrm{inf}}(R)\to R$ is the usual map.
\end{remark}

\subsubsection{}
\label{subsubsec:abstract_bk_twist}
Associated with any prism $(A,I)$ is a canonical invertible module $A\{1\}$ over $A$ constructed in~\cite[Proposition 2.5.1]{bhatt2022absolute} and characterized by the following properties:
\begin{enumerate}
    \item (\cite[(2.2.11)]{bhatt2022absolute}) For any \emph{transversal} prism $(A,I)$, we have
   \[
     A\{1\} = \varprojlim_k I_k/I_{k+1},
   \]
   where $I_k = I\varphi_A^*(I)\cdots(\varphi_A^{k-1})^*(I)$, and the maps $I_k/I_{k+1}\to I_{k-1}/I_k$ are induced by dividing the natural maps by $p$.
   \item (\cite[(2.5.3)]{bhatt2022absolute}) For any map of prisms $(A,I)\to (B,J)$, there is a canonical isomorphism $B\otimes_AA\{1\}\xrightarrow{\simeq}B\{1\}$.
\end{enumerate}

By~\cite[Remark 2.5.9]{bhatt2022absolute}, we have a canonical isomorphism $I\otimes_A\varphi^*(A\{1\})\xrightarrow{\simeq}A\{1\}$.

For any $M\in \Mod{A}$ and $i\in\Int$, we will set $M\{i\} \defn M\otimes_A A\{1\}^{\otimes i}$.

\subsection{Animated higher frames}
\label{subsec:animated_frames}

\begin{definition}
A(n animated higher) \defnword{frame} $\underline{A}$ is a tuple $(\Fil^\bullet A, I\xrightarrow{s}A,\Phi,A\{1\})$, where:
\begin{enumerate}
   \item $\Fil^\bullet A$ is a non-negatively filtered $(p,I)$-complete animated commutative ring;
   \item $s:I\to A$ is a generalized Cartier divisor;
   \item $\Phi:\Fil^\bullet A\to \Fil^\bullet_I A$ is a map of filtered animated commutative rings such that the underlying endomorphism $\varphi:A\to A$ of animated commutative rings is a `na\"ive' Frobenius lift in the sense that the induced endomorphism of $\pi_0(A)/p\pi_0(A)$ is Frobenius;
   \item $A\{1\}$ is an invertible $A$-module equipped with an isomorphism\footnote{The main role of this `abstract' Breuil-Kisin twist is in the interpretation of the $(G,\mu)$-windows appearing in \S~\ref{subsec:G_mu_windows} below in terms of the general definitions of \S~\ref{subsec:abstract_def_theory} (see Remark~\ref{rem:B'Gmu_abstract}). As such, it can be ignored for now. In cases of interest, this twist will either be trivial or be determined by a prism structure on $(A,I)$.}
   \[
    I\otimes_A\varphi^*A\{1\}\xrightarrow{\simeq}A\{1\}.
   \]
\end{enumerate}
Frames organize into an $\infty$-category in a natural way. The morphisms 
\[
\underline{A}\to \underline{A}' =  (\Fil^\bullet A', I'\xrightarrow{s'}A',\Phi',A'\{1\})
\]
are maps $f:\Fil^\bullet A\to \Fil^\bullet B$ of filtered animated commutative rings equipped with isomorphisms $A'\otimes_A(I\xrightarrow{s}A)\simeq (I'\xrightarrow{s'}A')$ and $A'\otimes_AA\{1\}\xrightarrow{\simeq}A'\{1\}$, along with a commuting diagram
\[
\begin{diagram}
\Fil^\bullet A&\rTo^\Phi&\Fil^\bullet_IA\\
\dTo&&\dTo\\
\Fil^\bullet A'&\rTo_{\Phi'}&\Fil^\bullet_{I'}A'\simeq A'\otimes_A\Fil^\bullet_IA.
\end{diagram}
\]
\end{definition}

\subsubsection{}
Given a frame $\underline{A}$, we will write $R_A$ for the $p$-complete animated commutative ring $\gr^0A$ and $\overline{A}$ for the quotient of $I\xrightarrow{s}A$.

Let $\Fil^\bullet_{I,\pm}A$ be the two-sided $I$-adic filtration on $A$; then we obtain a map
\[
\Phi_{\pm}:\Fil^\bullet A \to \Fil^\bullet_{I,\pm} A,
\]
which restricts to $\Phi$ in non-negative degrees, and which in filtered degree $-i$ (for $i\in \Int_{>0}$) is given by $s^{-i}\circ\varphi$.

Write $\varphi_i:\Fil^iA \to I^{\otimes i}$ for the filtered degree-$i$ component of $\Phi$.

For any $M\in\Mod{A}$ and $i\in\Int$, set $M\{i\} = M\otimes_AA\{1\}^{\otimes i}$.

\begin{definition}
\label{defn:p-adic_frames}
If $(I \xrightarrow{s} A) = (A\xrightarrow{p} A)$ with $A\{1\}\simeq A$ we will say that $\underline{A}$ is a \defnword{$p$-adic frame}. 

Since $A\{1\}$ and $I\to A$ are superfluous here, we will denote a $p$-adic frame by a tuple $(A,\Fil^\bullet A,\Phi)$.
\end{definition}

\begin{remark}
\label{rem:relationship_with_lau}
Our definition of an animated frame is inspired by the definition of Lau in~\cite[\S 2]{MR4355476}, but allows for more general objects. In fact, our notion of a $p$-adic frame is quite close to being the natural animated generalization of Lau's definition. 

Indeed, suppose that we have a $p$-adic frame $\underline{A}$ such that $A$ is a discrete $\Int_p$-algebra, and such that $\Fil^i A$ is also discrete for all $i\ge 0$. Then we obtain the graded Rees ring
\[
 S(\underline{A}) \defn \mathrm{Rees}(\Fil^\bullet A) = \bigoplus_{i=0}\Fil^iA\cdot t^{-i}
\]
along with maps $\tau,\sigma:S(\underline{A}) \to A$. Here, $\tau$ is given by 
\[
\tau:S(\underline{A})\to S(\underline{A})/(t-1)\simeq A
\]
and $\sigma$ as the composition
\[
S(\underline{A}) \xrightarrow{\mathrm{Rees}(\Phi_{\pm})} \mathrm{Rees}(\Fil^\bullet_{p,\pm}A)\to \mathrm{Rees}(\Fil^\bullet_{p,\pm}A)/(t-p)\simeq A.
\]
The triple $(S(\underline{A}),\sigma,\tau)$ is---after a sign change in the graded degrees---a (higher) frame as defined by Lau~\cite[Definition 2.0.1]{MR4355476} 
\end{remark}

\begin{remark}
\label{rem:relationship_with_lau_reverse}
One can somewhat reverse this process: Suppose that $(S,\sigma,\tau)$ is a higher frame in the sense of Lau. Then one obtains an animated filtered commutative ring $\Fil^\bullet S_0$ with underlying ring $S_0$ and filtration given by $\Fil^i S_0 = S_i\xrightarrow{\tau_i} S_0$. Note that this is in general \emph{not} a filtered ring in the classical sense, since the $S_0$-modules $\Fil^i S_0$ are not necessarily ideals in $S_0$. This is the case for the truncated Witt frame from~\cite[Example 2.1.6]{MR4355476}.

As explained in~\cite[Remark 2.0.2]{MR4355476}, $\sigma_0:S_0\to S_0$ is a lift of the mod-$p$ Frobenius on $S/pS$, and $\sigma_i$ gives a map $\Fil^iS_0 \to S_0$ such that $p\sigma_{i+1} = \sigma_i\vert_{\Fil^{i+1}S_0}$ for all $i\ge 0$. This means precisely that these maps organize into a map $\Phi:\Fil^\bullet S_0\to \Fil^\bullet_pS_0$ of filtered animated commutative rings. 

If we now assume that each $S_i$ is derived $p$-complete, then we have recovered our notion of a $p$-adic frame. 
\end{remark}

\begin{remark}
\label{rem:reason_for_I}
As noted in the previous remarks, our definition is a closely related to that of Lau if we restrict to $p$-adic frames. To motivate our more general definition, we need to look ahead to Lemma~\ref{lem:nygaard_filtered_frame} and Theorem~\ref{thm:semiperf_crys} below. These results show that the Nygaard filtered prismatization and syntomification of a semiperfectoid ring can be described in terms of an animated higher frame---as defined here---obtained from its Nygaard filtered prismatic cohomology. Thus, we will be able to apply the theory from this section to study objects living over the stacks that will appear in Section~\ref{sec:bld_stacks}, and we do so in Section~\ref{sec:abstract}.
\end{remark}

\begin{definition}
\label{defn:prismatic_frames}
A frame $\underline{A}$ is \defnword{prismatic} if the following conditions hold:
\begin{itemize}
   \item The pair $(A,I)$ is a prism.
   \item The endomorphism $\varphi:A\to A$ is the one obtained from the underlying $\delta$-ring structure on $A$. In particular, it is a lift of the Frobenius endomorphism of $A/{}^{\mathbb{L}}p$.
   \item The invertible module $A\{1\}$ and the datum of the isomorphism $I\otimes_A\varphi^*A\{1\}\simeq A\{1\}$ are the canonical ones from~\eqref{subsubsec:abstract_bk_twist}.
\end{itemize}
Since the datum of $A\{1\}$ is superfluous here, we will denote a prismatic prism by the tuple $(A,I\to A,\Fil^\bullet A,\Phi)$. Note that any $p$-adic frame whose na\"ive Frobenius lift underlies a $\delta$-ring structure is automatically prismatic.
\end{definition}

\begin{example}
[The Witt frames]
\label{example:witt_frame}
Suppose that we have $R\in \mathrm{CRing}^{p\text{-comp}}$. If $R$ is discrete, since $W(R)$ is a derived $p$-complete $\delta$-ring, putting Remark~\ref{rem:relationship_with_lau_reverse} together with~\cite[Example 2.1.3]{MR4355476} gives us the \defnword{Witt frame} $\underline{W(R)}$ associated with $R$. More generally, by applying this to the $p$-completed ring of functions of the Witt scheme $W$, we see that $W(R)$ underlies a $p$-adic frame $\underline{W(R)}$ for any $p$-complete animated commutative ring $R$. It is a $p$-adic frame and the filtration $\Fil^\bullet_{\mathrm{Lau}}W(R)$ is given by 
\[
\Fil^i_{\mathrm{Lau}}W(R) = \begin{cases}
W(R)&\text{if $i\leq 0$};\\
F_*W(R)&\text{if $i \ge 1$},
\end{cases}
\]
with transition map $\Fil^1_{\mathrm{Lau}}W(R) =F_*W(R)\to W(R)$ given by the Verschiebung map, and that in higher degrees given by $F_*W(R)\xrightarrow{p}F_*W(R)$. The filtered Frobenius lift is obtained in filtered degree $0$ via the fact that we have $FV = p$, and is the identity in filtered degrees $i\ge 1$.

Note that this frame is prismatic.
\end{example}

\begin{example}
[The truncated Witt frames]
\label{example:truncated_witt_frames}
When $R$ is an $\Field_p$-algebra, in~\cite[Example 2.1.6]{MR4355476}, Lau also defines the \emph{truncated} Witt frames with underlying ring $W_n(R)$ for $n\ge 1$. In particular, when $n=1$, we obtain the \defnword{zip frame} from Example 2.1.7 of \emph{loc. cit.}. Similarly to the previous example, Remark~\ref{rem:relationship_with_lau_reverse} now also gives us $p$-adic frames $\underline{W_n(R)}$; however, they will no longer be prismatic. Explicitly, the underlying filtered animated commutative ring at level $n$ is $\Fil^\bullet_{\mathrm{Lau}}W_n(R)$ with 
\[
\Fil^i_{\mathrm{Lau}}W_n(R) = \begin{cases}
W_n(R)&\text{if $i\leq 0$};\\
F_*W_n(R)&\text{if $i \ge 1$},
\end{cases}
\]
and once again the transition maps are given by Verschiebung in degree $1$ and by multiplication by $p$ in higher degrees. The filtered Frobenius lift is given by $F$ in degree $0$ and the identity in higher degrees.
\end{example}

The Witt frames have some useful universal properties. To explain them we need to introduce the following notion.

\begin{definition}[Laminations]
\label{defn:laminated}
A \defnword{lamination} for a prismatic frame $\underline{A}$ is the provision of an isomorphism of generalized Cartier divisors\footnote{These are in fact Cartier-Witt divisors for $R_A$; see~\cite[Example 2.11]{bhatt2022prismatization} and also \S~\ref{subsec:cartier-witt} below.}
\[
 (I\otimes_AW(R_A)\to W(R_A))\xrightarrow{\simeq}(W(R_A)\xrightarrow{p}W(R_A))
\]
and a lift of the resulting isomorphism 
\[
\overline{A}\otimes_AW(R_A)=W(R_A)/{}^{\mathbb{L}}(I\otimes_AW(R_A))\xrightarrow{\simeq}W(R_A)/{}^{\mathbb{L}}p 
\]
to an isomorphism of $R_A$-algebras. Here, the $R_A$-algebra structure on the left is via the composition
\[
   R_A=\gr^0A \xrightarrow{\overline{\varphi} = \gr^0\Phi}\gr^0_IA = \overline{A}\to \overline{A}\otimes_AW(R_A)
\]
while that on the right is obtained from the map $R_A\to F_*\overline{W(R_A)}$ induced by the Frobenius lift $F:W(R_A)\to F_*W(R_A)$. 

A frame $\underline{A}$ is \defnword{laminated} when it is equipped with a lamination.
\end{definition}

\begin{lemma}
\label{lem:laminated_frames}
The following data are equivalent for a prismatic frame $\underline{A}$:
\begin{enumerate}
   \item A map of frames $\underline{A}\to \underline{W(R_A)}$ extending the canonical map $\lambda_A:A\to W(R_A)$.
   \item A lamination for $\underline{A}$.
\end{enumerate}
\end{lemma}
\begin{proof}
   The direction (1)$\Rightarrow$(2) is straightforward. The other direction uses the following description of the filtration underlying the Witt frame $\underline{W(R)}$: We have a Cartesian square of filtered animated commutative rings
 \[
  \begin{diagram}
  \Fil^\bullet_{\mathrm{Lau}}W(R)&\rTo&\Fil^\bullet_{\mathrm{triv}}R\\
  \dTo&&\dTo\\
  \Fil^\bullet_pF_*W(R)&\rTo&\Fil^\bullet_{\mathrm{triv}}F_*\overline{W(R)},
  \end{diagram}  
 \]
 where the right vertical arrow is obtained from the map $R\to F_*\overline{W(R)}$ induced by the Frobenius lift $F:W(R)\to F_*W(R)$. 

 If $\underline{A}$ is prismatic, then the $\delta$-ring structure on $A$ yields a map $\lambda_A:A\to W(R_A)$. A lamination for $\underline{A}$ extends this to a map of filtered animated commutative rings
 \[
    \Fil^\bullet_I\lambda_A: \Fil^\bullet_IA \to \Fil^\bullet_pW(R_A).
 \]
There is now a canonical map $\Fil^\bullet A\to \Fil^\bullet_{\mathrm{Lau}}W(R_A)$ induced by
 \[
   \Fil^\bullet A \xrightarrow{(\Phi,\mathrm{can})}\varphi_*\Fil^\bullet_IA \times_{\Fil^\bullet_{\mathrm{triv}}\varphi_*\overline{A}} \Fil^\bullet_{\mathrm{triv}}R_A \xrightarrow{(\varphi_*\Fil^\bullet_I\lambda_A,\mathrm{id})} \Fil^\bullet_pF_*W(R_A)\times_{\Fil^\bullet_{\mathrm{triv}}F_*\overline{W(R_A)}} \Fil^\bullet_{\mathrm{triv}}R_A.
 \]
 Note that for the definition of the second map we are crucially using the fact that $\varphi_*\overline{A}\to F_*\overline{W(R_A)}$ is a map of $R_A$-algebras. It is not difficult to see now that this underlies the map of frames whose existence is asserted in (1). 
\end{proof}

\begin{remark}
   An earlier version of this paper erroneously omitted the condition that the isomorphism of quotient rings be one of $R_A$-algebras. In particular, we had asserted the existence of a canonical map of frames $\underline{A}\to \underline{W(R_A)}$ for any $p$-adic prismatic frame $\underline{A}$. Here is an example communicated to us by Eike Lau that shows that this is incorrect: Let $R=\Field_p[\epsilon]$ be the ring of dual numbers over $\Field_p$, and let $h:R\to R$ be the automorphism given by $\epsilon\mapsto u\epsilon$ for $u\ne 1\in \Field_p^\times$. Then one can define a frame $\underline{A}$ with the same underlying prism as $\underline{W(R)}$ but where the map $\Phi:\Fil^\bullet_{\mathrm{Lau}}W(R)\to \Fil^\bullet_pF_*W(R)$ is replaced with $\Phi' = \Fil^\bullet_pW(h)\circ \Phi$. In this case, $R_A=R$ and the map $\lambda_A$ is simply the identity, but it cannot be extended to a map of frames $\underline{A}\to \underline{W(R)}$. Indeed, if it could be, then we would have $\Phi'=\Phi$. 

   The issue is that the natural identification $\varphi_*\overline{A} = F_*\overline{W(R)}$ is not an isomorphism of $R$-algebras. To see this, note that $\overline{W(R)}$ is a trivial square-zero extension of $R$ by $\Ga^\sharp(R)[1]$, and the map $R\to F_*\overline{W(R)}$ is classified (after shifting) by a map of complexes $ R\simeq \mathbb{L}_{R/\Field_p}[-1]\to \Ga^\sharp(R)\subset W(R)$ carrying $1$ to $[\epsilon]$, while the map $R\to \varphi_*\overline{A}$ corresponds to the map carrying $1$ to $[u\epsilon]$.
\end{remark}

There is however the following result.
\begin{lemma}
\label{lem:witt_frame_univ_property}
Suppose that $\underline{A}$ is a $p$-adic frame with $R_A$ an $\Field_p$-algebra and that $\varphi:A\to A$ is a lift of the Frobenius endomorphism of $R_A$\footnote{Note that this holds if either  $\underline{A}$ is prismatic, or if $A\to R_A$ is a map of $\Field_p$-algebras and $\varphi$ is the Frobenius endomorphism of $A$.}. Then there is a canonical map of frames $\underline{A} \to \underline{W_1(R_A)}$.
\end{lemma}
\begin{proof}
 When $R$ is an $\Field_p$-algebra, we have a Cartesian square of filtered animated commutative $R$-algebras
 \[
  \begin{diagram}
  \Fil^\bullet_{\mathrm{Lau}}W_1(R)&\rTo&\Fil^\bullet_{\mathrm{triv}}R\\
  \dTo&&\dTo\\
  \Fil^\bullet_p\varphi_*R&\rTo&\Fil^\bullet_{\mathrm{triv}}\varphi_*R,
  \end{diagram}  
 \]
 where the right vertical arrow is induced by the Frobenius endomorphism of $R$ and the bottom horizontal arrow is obtained via base-change from $\Field_p$ of the map of filtered animated commutative rings corresponding to the map of graded $\Field_p[t]$-algebras
 \[
   \Field_p[t,u]/(ut)\xrightarrow{u\mapsto 0}\Field_p[t].
 \] 

 Now, the map $\Fil^\bullet A\to \Fil^\bullet_{\mathrm{Lau}}W_1(R_A)$ underlying the map of frames $\underline{A}\to \underline{W_1(R_A)}$ is given by
 \[
   \Fil^\bullet A \xrightarrow{(\Phi,\mathrm{can})}\Fil^\bullet_p\varphi_*A \times_{\Fil^\bullet_{\mathrm{triv}}\varphi_*R_A} \Fil^\bullet_{\mathrm{triv}}R_A \xrightarrow{(\varphi_* \mathrm{can},\mathrm{id})} \Fil^\bullet_p\varphi_*R_A\times_{\Fil^\bullet_{\mathrm{triv}}\varphi_*R_A} \Fil^\bullet_{\mathrm{triv}}R_A.
 \]
\end{proof}

\begin{example}
[Breuil-Kisin frames]
\label{example:bk_frames}
Take $(A,I')$ to be a transversal prism, so that $I'\subset A$ is an ideal with $A/I'$ (and hence) $A$ $p$-completely flat, and let $I = \varphi(I')\subset A$. Taking $\Fil^iA = \Fil^i_{I'}A$ completes $(A,\varphi)$ to a prismatic frame $\underline{A}$. When $A = W(k)\pow{u}$ for a perfect field $k$ with $\varphi(u) = u^p$, and $I' = (E(u))$ is generated by an Eisenstein ideal, this appears in the classical Breuil-Kisin theory. 

The argument from~\cite[Proposition 3.6.6]{bhatt2022absolute} shows that $\underline{A}$ is canonically laminated: More precisely, $I'\otimes_AW(R_A)\to W(R_A)$ is a Cartier-Witt divisor in the Hodge-Tate locus of the Cartier-Witt stack (we will encounter this in this paper in \S~\ref{subsec:hodge-tate_locus}), and so its Frobenius twist $I\otimes_AW(R_A)\to W(R_A)$ is canonically isomorphic to $W(R_A)\xrightarrow{p}W(R_A)$. Moreover, since the map $A\to W(R_A)$ is a map of $\delta$-rings, one sees that the map $R_A\to \overline{W(R_A)}$ is the canonical one.

More generally, given any prismatic frame $\underline{A}$, we will say that it is of \defnword{Breuil-Kisin type} if the filtration $\Fil^\bullet A$ is of the form $\Fil^\bullet_{I'}A$, where $(A,I')$ is another prism structure on $A$ equipped with an isomorphism $\varphi^*I'\xrightarrow{\simeq}I$, and the map $\Phi$ is the associated map of filtered animated commutative rings. The argument from the previous paragraph shows that $\underline{A}$ also admits a lamination. See also Remark~\ref{rem:bk_filtered_prisms} and Lemma~\ref{lem:filtered_prisms_laminated} below.
\end{example}

\begin{example}
[The $A_{\mathrm{inf}}$ frame]
\label{example:a_inf_frame}
As a special case of the previous example, we have the case where $(A,I')$ is a perfect prism with $R\defn A/I'$ perfectoid. 

When $p=0\in R$, $A = A_{\mathrm{inf}}(R) = W(R)$ with $I = pW(R)$, and this gives a special case of Example~\ref{example:witt_frame}.
\end{example}

\begin{remark}[Operations of frames]
\label{rem:operations_on_frames}
For future reference, let us record some basic operations on frames:
\begin{enumerate}
   \item (Base-change) Suppose that $\underline{A}$ is a frame and that $\varphi_A:A\to A$ is the underlying na\"ive Frobenius lift. If $(A,\varphi_A)\to (B,\varphi_B)$ is a map of animated commutative rings equipped with na\"ive Frobenius lifts such that $B$ is also $(p,I)$-complete, then one sees that the tuple $(B\otimes_A\Fil^\bullet A,B\otimes_AI\to B,B\otimes_AA\{1\})$\footnote{The tensor products here are meant to be in the $(p,I)$-complete category.} underlies a canonical frame $B\otimes_A\underline{A}$. The filtered Frobenius lift is the one underlying the map
   \[
   \varphi^*_B\Fil^\bullet B \simeq B\otimes_A\varphi^*_A\Fil^\bullet B \xrightarrow{\mathrm{id}\otimes\Phi} B\otimes_A\Fil^\bullet_I A = \Fil^\bullet_{B\otimes_AI}B.
   \]
   Here, we are viewing $\Phi$ as a map $\varphi_A^*\Fil^\bullet A\to \Fil^\bullet_IA$ of filtered animated commutative $A$-algebras.
   \item (Reduction-mod-$p^n$) Applying this with $B=A/{}^{\mathbb{L}}p^n$ with its induced na\"ive Frobenius lift shows that we have a canonical frame $\underline{A}/{}^{\mathbb{L}}p^n$ obtained by reducing the original one mod $p^n$. Note that, if $A$ is not prismatic, then this induced lift will not agree with the canonical Frobenius endomorphism of $A/{}^{\mathbb{L}}p$ for $n=1$. Therefore, to avoid confusion, we will reserve this operation for prismatic frames in the sequel.
   \item (Postnikov tower) For $k\ge 0$, there is a canonical frame structure $\tau_{\leq k}\underline{A}$ obtained by taking the $k$-truncations of all the defining data. By Remark~\ref{rem:witt_postnikov_truncation}, this operation preserves the property of being prismatic.
\end{enumerate}
\end{remark}   

\begin{remark}[Mapping to zip frames]
\label{rem:mod_p_and_I_frames}
Suppose that $\underline{A} = (A,I\to A,\Fil^\bullet A,\Phi)$ is a prismatic frame. Then the map $A\to A/{}^{\mathbb{L}}(p,I)$ is compatible with Frobenius, and so by Remark~\ref{rem:operations_on_frames} we obtain via base-change a frame $\underline{A}^\modpI \defn \underline{A}/{}^{\mathbb{L}}(p,I)$. If further we can fix an \emph{orientation} for the prism\footnote{This is mostly for convenience. We could omit this condition at the cost of replacing the truncated Witt frame by a twisted version that incorporates the invertible module $I$.}---that is, a trivialization of $A$-modules $A\xrightarrow{\simeq}I$---then $\underline{A}^\modpI$ is now a $p$-adic frame, and so Lemma~\ref{lem:witt_frame_univ_property} gives us maps of frames 
\[
\underline{A}\to \underline{A}^\modpI\to \underline{W_1(R_{A^\modpI})}.
\]
If $\underline{A}$ is a $p$-adic prismatic frame, then the lemma already gives a map $\underline{A}\to \underline{W_1(R_A)}$ through which the above map factors.
\end{remark}

\begin{definition}[Stacks associated with frames]
Let $\Spf A \defn \Spf(A,I)$ be the $p$-adic formal scheme obtained from $A$ with its $(p,I)$-adic topology. If $\Rees(\Fil^\bullet A)\to \Aff^1/\Gm\times \Spf\Int_p$ is the associated formal Rees stack over $\Spf\Int_p$ as in~\S\ref{subsec:formal_rees}, we obtain two maps
\[
\tau,\sigma:\Spf A \hookrightarrow \Rees(\Fil^\bullet A)
\]
as follows:
\begin{itemize}
   \item $\tau$ is obtained by pulling back the open point
   \[
    \Gm/\Gm\times \Spf \Int_p\hookrightarrow \Aff^1/\Gm\times \Spf\Int_p.
   \]
   It is in particular, an open immersion.
   \item $\sigma$ is obtained as the composition
   \[
   \Spf A \xrightarrow{\simeq} \Rees(\Fil^\bullet_{I,\pm}A) \xrightarrow{\Rees(\Phi_{\pm})} \Rees(\Fil^\bullet A).
   \]
\end{itemize}
\end{definition}

\begin{remark}
\label{rem:abstract_structure_map}
We also have a canonical map $\pi:\Rees(\Fil^\bullet A)\to \mathbb{A}^1/\Gm\times \Spf A$ arising from the map of filtered rings $\Fil^\bullet_{\mathrm{triv}}A\to \Fil^\bullet A$. From it, we obtain a map $\Rees(\Fil^\bullet A)\to \Spf A$ whose composition with $\tau$ is the identity, while its composition with $\sigma$ is the endomorphism of $\Spf A$ obtained from the Frobenius lift $\varphi$.
\end{remark}

\begin{remark}[The abstract filtered de Rham point]
\label{rem:abstract_de_rham_point}
The restriction of the map $\tau$ to $\Spf{R_A}$ extends along the open immersion
\[
\Gm/\Gm\times\Spf {R_A}\hookrightarrow \Aff^1/\Gm\times \Spec {R_A}
\]
to a map $\Aff^1/\Gm\times\Spf {R_A}\to \Rees(\Fil^\bullet A)$. On the level of filtered rings this corresponds to the map $\Fil^\bullet A\to \Fil^\bullet_{\mathrm{triv}}{R_A}$. 

In particular, by restricting this last map to $B\Gm\times \Spf R_A$, we can view $\mathcal{R}(\Fil^\bullet A)$ as an $R_A$-pointed graded formal stack in the sense of Definition~\ref{defn:pointed_graded}.
\end{remark}

\begin{example}[The stacks in the oriented Breuil-Kisin case]
\label{ex:stacks_in_bk_case}
Suppose that $\underline{A}$ is of Breuil-Kisin type associated with a transversal prism $(A,I')$. Assume also that we have an orientation given by a generator $\xi'\in I'$. Then $\Fil^\bullet A = \Fil^\bullet_{\xi'}A$, and so we have
\[
\Rees(\Fil^\bullet A) = \Spf\left(\bigoplus_i(\xi')^{\min(0,i)}t^{-i}A\right)/\Gm \xleftarrow[\simeq]{\xi't^{-1}\mapsto u}\Spf\left(A[u,t]/(ut-\xi')\right)/\Gm
\]
The map $\tau$ now corresponds simply to the open immersion of the locus $t\neq 0$. The map $\sigma$ is a bit more interesting: On the level of maps of graded rings it corresponds to the composition
\[
A[u,t]/(ut-\xi')\to (\varphi_*A)[u,t]/(ut-\xi)\to (\varphi_*A)[u,u^{-1},t]/(ut-\xi)\xrightarrow{\simeq}(\varphi_*A)[u,u^{-1}].
\]
Here, the first map is induced by the Frobenius lift $\varphi$ acting on coefficients. Put more simply, $\sigma$ is obtained by inverting $u$ and then applying the Frobenius endomorphism of $\Spf A$. It is an open immersion if and only if $(A,I')$ is a perfect prism.
\end{example}

Before we state the next result, recall the following:
\begin{definition}
We will say that a map $B\to C$ in $\mathrm{CRing}$ is \defnword{Henselian} if $\pi_0(B)\to \pi_0(C)$ is surjective, and if $(\pi_0(B),\ker(\pi_0(B)\to\pi_0(C)))$ is a Henselian pair as defined for instance in~\cite[\href{https://stacks.math.columbia.edu/tag/09XD}{Tag 09XD}]{stacks-project}.
\end{definition}

\begin{proposition}
\label{prop:frame_props_hensel}
Suppose that $\underline{A}$ is a (prismatic) frame. Then $A\twoheadrightarrow {R_A}$ is Henselian. Moreover, every $p$-completely \'etale map ${R_A}\to {R_{A'}}$ lifts uniquely to a $(p,I)$-completely \'etale map $A\to A'$\footnote{By this, we mean that $A'$ is $(p,I)$-complete, and $A/{}^{\mathbb{L}}(p,I)\to A'/{}^{\mathbb{L}}(p,I)$ is \'etale.}, where $A'$ underlies a (prismatic) frame $\underline{A}'$ uniquely determined by the fact that $\Fil^\bullet A' = \Fil^\bullet A\otimes_AA'$.
\end{proposition}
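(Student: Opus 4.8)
The plan is to reduce everything to a statement about $\pi_0$ and then to exploit the compatibility of the na\"ive Frobenius lift with the filtration. Write $J\subseteq\pi_0(A)$ for the image of $\pi_0(\Fil^1A)$, so that $\pi_0(R_A)=\pi_0(\gr^0A)=\pi_0(A)/J$; by the definition of a Henselian map recalled above, the first assertion amounts to showing that $(\pi_0(A),J)$ is a Henselian pair. Let $\bar I\subseteq\pi_0(A)$ denote the image of $I$. The compatibility of $\Phi\colon\Fil^\bullet A\to\Fil^\bullet_I A$ with the transition maps in filtered degrees $0$ and $1$ shows that $\varphi$ carries $J$ into $\bar I$; since $\varphi$ is a na\"ive Frobenius lift, $\varphi(a)\equiv a^p\pmod p$, and therefore $j^p\in(p,\bar I)$ for every $j\in J$. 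Iterating (using $x\in\mathfrak a\Rightarrow x^p\in\mathfrak a^p$) gives $j^{p^k}\in(p,\bar I)^{p^{k-1}}$ for all $k\ge1$; in particular $J\subseteq\sqrt{(p,\bar I)}$, and since $A$ is $(p,I)$-complete the pair $(\pi_0(A),(p,\bar I))$ is Henselian, so $J$ lies in the Jacobson radical of $\pi_0(A)$.

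To upgrade this to Henselian-ness of $(\pi_0(A),J)$, I would use the $(p,\bar I)$-adic completeness of $\pi_0(A)$: one has $\pi_0(A)=\varprojlim_n\pi_0(A)/(p,\bar I)^n$ and, taking the quotient by $J$, also $\pi_0(R_A)=\varprojlim_n\pi_0(A)/(J+(p,\bar I)^n)$. For each $n$ the surjection $\pi_0(A)/(p,\bar I)^n\twoheadrightarrow\pi_0(A)/(J+(p,\bar I)^n)$ has kernel the image of $J$, and the estimate above shows every element of this kernel is nilpotent; hence it is a Henselian pair. Consequently an \'etale $\pi_0(A)$-algebra together with a section over $\pi_0(R_A)$ admits a unique compatible system of sections over the rings $\pi_0(A)/(p,\bar I)^n$, and hence a section over $\pi_0(A)$ in the limit. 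This shows $A\twoheadrightarrow R_A$ is Henselian.

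For the remaining statement, a $p$-completely \'etale map $R_A\to R_{A'}$ is classified by an \'etale $\pi_0(R_A)/p$-algebra. Using the Henselian surjection $A\twoheadrightarrow R_A$ just established together with the $(p,I)$-adic completeness of $A$, this lifts uniquely to a $(p,I)$-complete, $(p,I)$-completely \'etale $A$-algebra $A'$ (the existence and uniqueness coming from the Henselian-pair lifting property and the vanishing of the relative cotangent complex of an \'etale morphism, applied along the tower $A/(p,I)^{n}$). One then sets $\Fil^\bullet A'\defn\Fil^\bullet A\otimes_AA'$ and $A'\{1\}\defn A'\otimes_AA\{1\}$; that the resulting tuple is a frame is routine, since \'etale base change preserves $(p,I)$-completeness, $\Phi$ and the structure isomorphism for $A'\{1\}$ base-change, and $\gr^0A'\simeq R_A\otimes_AA'$ recovers $R_{A'}$ over $R_A$ by construction. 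In the prismatic case, the $\delta$-structure on $A$ extends uniquely to $A'$ by the rigidity of $\delta$-structures along $(p,I)$-completely \'etale maps, so $\underline{A}'$ is again prismatic; and $\underline{A}'$ is unique because it is so at each step.

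I expect the main obstacle to be the heart of the first two paragraphs: turning the Frobenius estimate $j^p\in(p,\bar I)$ into the full Henselian-pair statement for $(\pi_0(A),J)$ --- i.e.\ the nilpotence of $J$ modulo $(p,\bar I)^n$ together with the passage to the limit --- and taking care of the derived/animated subtleties hidden in the phrase ``$(p,I)$-complete''. Once $A\twoheadrightarrow R_A$ is known to be Henselian, lifting \'etale algebras and transporting the frame structure are formal. Alternatively, one could first treat the case where $A$ and all $\Fil^iA$ are discrete, invoking the classical statement of Lau~\cite{MR4355476}, and then bootstrap to the general case along the Postnikov tower of Remark~\ref{rem:operations_on_frames}, but the direct argument above seems cleaner.
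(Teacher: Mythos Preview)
Your approach is essentially the paper's: show that $J=\ker(\pi_0(A)\to\pi_0(R_A))$ maps to a locally nilpotent ideal in $\pi_0(A)/(p,\bar I)$ via the Frobenius estimate $\varphi(J)\subseteq\bar I$, then combine with $(p,I)$-completeness. The paper compresses this to a single sentence and cites \cite[Lemma~4.1.28]{MR4530092}.

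There is one technical slip in your second paragraph worth flagging. You assert classical $(p,\bar I)$-adic completeness of $\pi_0(A)$, writing $\pi_0(A)=\varprojlim_n\pi_0(A)/(p,\bar I)^n$; but the hypothesis only gives \emph{derived} $(p,I)$-completeness of $A$, and derived completeness of $\pi_0(A)$ does not in general imply classical adic completeness. The fix is that derived completeness already yields the Henselian property for $(\pi_0(A),(p,\bar I))$ directly (no tower needed), and likewise for $(\pi_0(R_A),(p,\bar I_{R_A}))$ since $R_A=\gr^0A$ is a cofiber of $(p,I)$-complete modules. With these two Henselian pairs and your local-nilpotence estimate for $\bar J\subseteq\pi_0(A)/(p,\bar I)$, the section-lifting argument goes through: lift a given section $\sigma\colon B\to\pi_0(R_A)$ first to $\pi_0(A)$ using the Henselian pair $(\pi_0(A),J+(p,\bar I))$, then check the resulting map agrees with $\sigma$ modulo $J$ by uniqueness of sections in the Henselian pair $(\pi_0(R_A),(p,\bar I_{R_A}))$. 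You anticipated this subtlety in your final paragraph; this is where it bites.
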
 
\begin{proof}
This is an animated variant of~\cite[Lemma 4.2.3]{MR4355476}.

Let us check that $A\twoheadrightarrow {R_A}$ is Henselian. We follow the argument from~\cite[Lemma 4.1.28]{MR4530092}. Since $A$ is $(p,I)$-adically complete, it is enough to check that $\pi_0(A/{}^{\mathbb{L}}(p,I))\to \pi_0({R_A}/^{\mathbb{L}}(p,I))$ is Henselian, which is true since its kernel is locally nilpotent; indeed, our hypotheses imply that it is annihilated by the $p$-power Frobenius.

In fact, this argument also proves the assertion on lifting $p$-completely \'etale maps to $(p,I)$-completely \'etale ones; see~\cite[\href{https://stacks.math.columbia.edu/tag/0ALI}{Tag 0ALI}]{stacks-project}. 

Similarly, we also have an endomorphism $\varphi':A'\to A'$ extending $\varphi:A\to A$ and lifting the Frobenius endomorphism of $\pi_0(A')/p\pi_0(A')$. The corresponding filtered map $\Phi':\Fil^\bullet A' \to \Fil^\bullet_I A'$ is now given by
\[
\Fil^\bullet A' \simeq A'\otimes_A\Fil^\bullet A' \xrightarrow{\varphi'\otimes \Phi}A'\otimes_A\Fil^\bullet_I A' \simeq \Fil^\bullet_I A'.
\]

If $\underline{A}$ is prismatic, we can interpret $\delta$-ring structures on $A'$ as sections $A'\to W_2(A')$, and the $(p,I)$-complete \'etaleness of $A'$ over $A$ guarantees that there exists a unique (up to homotopy) such section lifting the corresponding one for $A$.
\end{proof}

\subsection{$(G,\mu)$-windows over frames}
\label{subsec:G_mu_windows}
Now, suppose that we are in the situation of~\S\ref{subsec:cochar}, so that $G$ is a smooth group scheme over $\Int_p$, $\mathcal{O}$ is the ring of integers in a finite unramified extension of $\Int_p$ and $\mu:\Gmh{\mathcal{O}}\to G_{\mathcal{O}}$ is a cocharacter. The associated map $B\mu:B\Gmh{\mathcal{O}}\to BG_{\mathcal{O}}$ classifies a $G$-torsor $\mathcal{P}_\mu$ over $B\Gmh{\mathcal{O}}$.

\subsubsection{}
Let $k$ be the residue field of $\mathcal{O}$. If $\underline{A}$ is a frame such that ${R_A}$ lifts to $\mathrm{CRing}_{k/}$, then $A$ lifts canonically to $\mathrm{CRing}_{\mathcal{O}/}$: Lift the map $\mathcal{O}\to {R_A}/^{\mathbb{L}}(p,I)$ first to $\mathcal{O}\to A/{}^{\mathbb{L}}(p,I)$ using local nilpotence and the formal \'etaleness of $\mathcal{O}$, and then to $A$ by $(p,I)$-completeness.

We will view $\Rees(\Fil^\bullet A)$ as living over the stack $B\Gm$ via the line bundle associated with the filtered module 
\[
\Fil^\bullet A\{1\} \defn \Fil^\bullet A(-1)\otimes_{A}A\{1\}. 
\]
Note that the restriction of this line bundle along $\tau$ corresponds to the $A$-module $A\{1\}$, while that along $\sigma$ corresponds to $I\otimes_A\varphi^*A\{1\}\simeq A\{1\}$. Therefore, if we take the structure map $\Spf A \to B\Gm$ classifying the line bundle associated with $A\{1\}$, both $\sigma,\tau$ can be viewed as maps of stacks over $B\Gm$.

\begin{proposition}
\label{prop:fpqc_is_etale}
The following are equivalent for an fpqc $G\{\mu\}$-torsor $\mathcal{Q}^\mu$ over $\Rees(\Fil^\bullet A)\otimes\Int/p^n\Int$.
\begin{itemize}
   \item[(1)]$\mathcal{Q}^\mu$ is trivial \'etale-locally on $\Spf {R_A}$. That is, there exists a $p$-completely \'etale map ${R_A}\to {R_{A'}}$ such that the restriction of $\mathcal{Q}^\mu$ over $\Rees(\Fil^\bullet A')\otimes\Int/p^n\Int$ is trivial.
   \item[(2)]The restriction of $\mathcal{Q}^\mu$ to $\Rees(\Fil^\bullet A)_{(t=0)}\otimes\Int/p^n\Int$ is trivial \'etale-locally on $\Spf {R_A}$.
   \item[(3a)]The restriction of $\mathcal{Q}^\mu$ over $\Aff^1/\Gm\times \Spf R_A$ is trivial \'etale-locally on $\Spf {R_A}$.
   \item[(3b)]For every geometric point ${R_A}\to \kappa$ of $\Spf {R_A}$, the restriction of $\mathcal{Q}^\mu$ over $\mathbb{A}^1/\Gm\times\Spec \kappa$ is trivial.
   \item[(4a)]The restriction of $\mathcal{Q}^\mu$ over $B\Gm\times \Spf R_A$ is trivial \'etale-locally on $\Spf {R_A}$.
   \item[(4b)]For every geometric point ${R_A}\to \kappa$ of $\Spf {R_A}$, the restriction of $\mathcal{Q}^\mu$ over $B\Gm\times\Spec \kappa$ is trivial.
\end{itemize}
If $\Spf R_A$ is connected, then this is also equivalent to: For \emph{some} geometric point $R_A\to \kappa$ of $\Spf R_A$, the restriction of $\mathcal{Q}^\mu$ over $B\Gm\times \Spec \kappa$ is trivial.
\end{proposition}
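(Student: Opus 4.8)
The plan is to establish the chain $(1)\Leftrightarrow(2)\Leftrightarrow(3a)\Leftrightarrow(4a)$ directly, and then to obtain the conditions phrased via geometric points, together with the final assertion, from Lemma~\ref{lem:BGmu_trivial_locus} and Remark~\ref{rem:BGmu_simple}. Throughout one uses \S\ref{subsubsec:BGmu_equivalence}: since $\Rees(\Fil^\bullet A)$ is a $B\Gm$-stack (Remark~\ref{rem:abstract_de_rham_point}), a $G\{\mu\}$-torsor over $\Rees(\Fil^\bullet A)\otimes\Int/p^n\Int$ is the same datum as a $G$-torsor, hence---$G$ being smooth affine---corresponds to an étale (equivalently fppf) torsor, so there is no distinction between fpqc- and étale-local triviality to worry about; moreover such a torsor is classified by a map to the smooth quasi-compact Artin stack $BG$ over $\Int_p$, which has affine diagonal.

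First I would record the easy implications, each a pullback of a trivial torsor along a morphism of stacks: $(1)\Rightarrow(2)$ restricts to the closed substack $\Rees(\Fil^\bullet A)_{(t=0)}$; $(1)\Rightarrow(3a)$ pulls back along the map $\Aff^1/\Gm\times\Spec R_A\to\Rees(\Fil^\bullet A)$ of Remark~\ref{rem:abstract_de_rham_point}; and $(3a)\Rightarrow(4a)$, $(2)\Rightarrow(4a)$ restrict along the closed immersions $B\Gm\times\Spec R_A\hookrightarrow\Aff^1/\Gm\times\Spec R_A$ and $B\Gm\times\Spec R_A\hookrightarrow\Rees(\Fil^\bullet A)_{(t=0)}$.

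The content is in the three converse implications $(4a)\Rightarrow(3a)$, $(4a)\Rightarrow(2)$ and $(2)\Rightarrow(1)$, which I would prove by a single lifting argument. Passing to a $p$-completely étale cover of $R_A$---which lifts uniquely to a $(p,I)$-completely étale cover of the frame by Proposition~\ref{prop:frame_props_hensel}---one reduces to the case where the given restriction of $\mathcal{Q}^\mu$ is \emph{trivial}, i.e. admits a section. One then exhibits the target stack ($\Aff^1/\Gm\times\Spec R_A/^{\mathbb{L}}p^n$; $\Rees(\Fil^\bullet A)_{(t=0)}\otimes\Int/p^n\Int$; $\Rees(\Fil^\bullet A)\otimes\Int/p^n\Int$) as an inverse limit of the evident tower of square-zero thickenings of the degree-zero substack $B\Gm\times\Spec R_A/^{\mathbb{L}}p^n$---the $(t^m=0)$-loci of $\Aff^1/\Gm$; the partial sums $\bigoplus_{0\le j\le m}\gr^jA$; and the $(t^m=0)$-loci of the formal Rees stack, respectively---and lifts the section up this tower one square-zero step at a time. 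Each lift is unobstructed: $\mathcal{Q}^\mu$ is smooth over the base, and the obstruction and the torsor of lifts live in the cohomology of a quasi-coherent sheaf on a stack of the shape $[\Spec B_\bullet/\Gm]\times\Spec C$ with $C$ affine, whose positive-degree cohomology vanishes (it computes the cohomology over the affine $\Spec C$ of the degree-zero graded piece). The passage to the limit is supplied by the completeness results of Appendix~\ref{app:completeness}---compare Propositions~\ref{prop:A1Gm_complete} and~\ref{prop:graded_completeness}, the latter being applicable because a $G$-torsor has affine, hence quasi-affine, diagonal. Note that $1$-boundedness of $\mu$ is not used here: triviality is soft enough that only the smoothness of $G$ and the completeness of the Rees construction enter.

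Finally, for the conditions over geometric points: having proved $(3a)\Leftrightarrow(4a)$, conditions $(3b)$ and $(4b)$ are reduced to point-wise assertions over $\Spec\kappa$; there the equivalence of the $\Aff^1/\Gm$- and $B\Gm$-versions is Remark~\ref{rem:BGmu_simple} (a $G\{\mu\}$-torsor over $\Aff^1/\Gm\times\Spec\kappa$ with trivial restriction to $B\Gm\times\Spec\kappa$ is itself trivial), and the equivalence of $(4a)$ with triviality of $\mathcal{Q}^\mu$ at every geometric point over $B\Gm\times\Spec\kappa$---and, when $\Spf R_A$ is connected, with triviality at a single geometric point---is Lemma~\ref{lem:BGmu_trivial_locus}, applied on the connected components of $\Spec R_A/^{\mathbb{L}}p^n$ and using that $BM_\mu$ is open and closed in $Z^0$. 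The step I expect to be the main obstacle is making the completeness input in the lifting argument precise, namely that for the \emph{formal} Rees stack the mapping space into a smooth relatively quasi-affine stack, taken modulo $p^n$, is genuinely the inverse limit over the $(t^m=0)$-thickenings.
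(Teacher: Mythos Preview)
Your strategy is close to the paper's, and you correctly identify the completeness step as the main obstacle—but your proposed resolution does not work as stated. The formal Rees stack $\Rees(\Fil^\bullet A)\otimes\Int/p^n\Int$ is \emph{not} the inverse limit of its $(t^m=0)$-loci for the purpose of mapping into Artin stacks: the filtration $\Fil^\bullet A$ is not assumed complete, so compatible sections over the tower would only give a section over the $t$-completion. Likewise, Proposition~\ref{prop:graded_completeness} requires a \emph{discrete} graded ring and Proposition~\ref{prop:A1Gm_complete} requires a \emph{Noetherian} base, and neither hypothesis holds for $\gr^\bullet A/{}^{\mathbb{L}}p^n$ or $R_A/{}^{\mathbb{L}}p^n$ in general.

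The paper supplies the missing idea. It first establishes (3a)$\Leftrightarrow$(3b)$\Leftrightarrow$(4a)$\Leftrightarrow$(4b) (and the final unnumbered assertion) directly from Lemma~\ref{lem:BGmu_trivial_locus} and Remark~\ref{rem:BGmu_simple}—so your tower argument for $(4a)\Rightarrow(3a)$ is redundant—and then proves only (3a)$\Rightarrow$(1). The key move is to reduce modulo $(p,I)$: since $A$ is $(p,I)$-complete, it suffices to lift a section of $\mathcal{Q}^\mu$ from $\Aff^1/\Gm\times\Spec R_A/{}^{\mathbb{L}}(p,I)$ to $\Rees(\Fil^\bullet A/{}^{\mathbb{L}}(p,I))$. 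Mod $(p,I)$, the kernel of $\pi_0(A/{}^{\mathbb{L}}(p,I))\to\pi_0(R_A/{}^{\mathbb{L}}(p,I))$ is \emph{nilpotent}—it is annihilated by Frobenius (see the proof of Proposition~\ref{prop:frame_props_hensel}). This nilpotence is precisely what makes the lifting a finite process after further Postnikov-type reductions: one reduces to a discrete filtered ring with $\Fil^1\to B$ the zero map, at which point Proposition~\ref{prop:filtered_completeness} applies. (When $\mu$ is $1$-bounded one can shortcut via Proposition~\ref{prop:1_bounded_cartesian}, but as you note the general case does not need $1$-boundedness.) The $(p,I)$-reduction and the resulting nilpotence are the ingredients your argument is missing.
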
 
\begin{proof}
The equivalence of (3a), (3b), (4a) and (4b) follows from Lemma~\ref{lem:BGmu_trivial_locus} and Remark~\ref{rem:BGmu_simple}, as does the equivalence of these statements with the last unnumbered assertion.

We can finish by showing (3a)$\Rightarrow$(1). Since the stacks involved are $(p,I)$-complete, it is enough to know that every section of the smooth relative scheme $\mathcal{Q}^\mu$ over $\Aff^1/\Gm\times \Spec R_A/{}^{\mathbb{L}}(p,I)$ can be lifted to a section over $\Rees(\Fil^\bullet A/{}^{\mathbb{L}}(p,I))$.

Note that the kernel of the classical truncation of $A/{}^{\mathbb{L}}(p,I)\to R_A/{}^{\mathbb{L}}(p,I)$ is a locally nilpotent thickening, as observed in the proof of Proposition~\ref{prop:frame_props_hensel}. If $\mu$ is $1$-bounded, we can now conclude using Proposition~\ref{prop:1_bounded_cartesian}.

In general, we can use the argument from the proof of \emph{loc. cit.} to prove the following claim: Given a non-negatively filtered animated commutative ring $\Fil^\bullet B$ where the kernel of $\pi_0(B)\to \pi_0(\gr^0B)$ is locally nilpotent, and given a map $X\to \Rees(\Fil^\bullet B)$ fibered by smooth schemes, $X$ admits a section over $\Rees(\Fil^\bullet B)$ if and only if it admits a section over $\Aff^1/\Gm\times\Spec B$. Indeed, using the invariance of this property under filtered nilpotent thickenings of $\Fil^\bullet B$, we reduce successively to the case where $\Fil^\bullet B$ is discrete, then to the case where $\Fil^1B \to B$ is the zero map, and then using Proposition~\ref{prop:filtered_completeness} to the case where the Rees algebra of $\Fil^\bullet B$ is itself a nilpotent extension of $B[t]$.
\end{proof}

\begin{definition}
\label{defn:expl_desc_quotient}
$\Wind{\infty}{\underline{A}}({R_A})$ is the $\infty$-groupoid of $G$-torsors $\mathcal{Q}$ over $\Rees(\Fil^\bullet A)$ equipped with an isomorphism of $G$-torsors $\xi:\sigma^*\mathcal{Q}\xrightarrow{\simeq}\tau^*\mathcal{Q}$ over $\Spf R_A$, and satisfying the following equivalent conditions: 
\begin{enumerate}
   \item The associated $G\{\mu\}$-torsor $\mathcal{Q}^\mu$ is trivial \'etale locally on $\Spf {R_A}$.
   \item For every geometric point ${R_A}\to \kappa$, the restriction of $\mathcal{Q}^\mu$ over $B\Gm\times\Spec \kappa$ is trivial.
   \item For every geometric point $R_A\to \kappa$, the restriction of $\mathcal{Q}$ over $B\Gm\times\Spec \kappa$ is isomorphic to $\mathcal{P}_\mu$.
\end{enumerate}
We will refer to the objects of this $\infty$-groupoid as \defnword{$(G,\mu)$-windows} over $\underline{A}$.

When $\underline{A}$ is prismatic, we will define the $\infty$-groupoid of \defnword{$n$-truncated $(G,\mu)$-windows} over $\underline{A}$ by replacing $\Rees(\Fil^\bullet A)$ with $\Rees(\Fil^\bullet A)\otimes\Int/p^n\Int$ in the above definition, and asking for the isomorphism $\xi$ to be of $G$-torsors over $A/{}^{\mathbb{L}}p^n$.

We have
\[
\Wind{\infty}{\underline{A}} = \varprojlim_n \Wind{n}{\underline{A}}.
\]
\end{definition}

\begin{definition}
\label{defn:displays}
We will call $(G,\mu)$-windows over the Witt vector frame $\underline{W(R)}$ $(G,\mu)$\defnword{-displays over $R$}, and write $\Disp{\infty}(R)$ for the $\infty$-groupoid spanned by them. 

If $R$ is an $\Field_p$-algebra, then, for $n\ge 1$, we will call $(G,\mu)$-windows over the truncated Witt vector frame $\underline{W_n(R)}$ $n$-\defnword{truncated} $(G,\mu)$\defnword{-displays}.\footnote{This is unfortunately inconsistent with our use of the adjective `$n$-truncated' for windows, but it is compatible with the definition from~\cite{MR2983008}. For perfect $R$, the two notions of truncatedness agree.}

For classical $R$, these definitions recover the definition from~\cite{MR4120808} (for $n=\infty$) and~\cite[\S 5]{MR4355476}.

When $n=1$, one can show that, in the language of ~\cite{Pink2015-ye}, $\Disp{1}$ is the stack over $k$ parameterizing $G$-zips of type $\mu$.
\end{definition}

\begin{remark}
\label{rem:double_quotient_desc}
Here is a slightly different perspective on the definition, closer to the treatment in~\cites{MR4355476,MR4120808}. Suppose that we have a trivialization $A\xrightarrow{\simeq}A\{1\}$.

Let $\mathcal{Q}_0$ be the $G$-torsor over $\Rees(\Fil^\bullet A)\otimes\Int/p^n\Int$ obtained from the torsor $\mathcal{P}_\mu\to B\Gmh{\mathcal{O}}$: Its automorphisms are represented by the sheaf $L^+_{\underline{A}}G^{(n)}\{\mu\}$ given by
\[
L^+_{\underline{A}}G^{(n)}\{\mu\}({R_{A'}}) = \Map_{B\Gmh{\mathcal{O}}}(\Rees(\Fil^\bullet A')\otimes\Int/p^n\Int,G\{\mu\}).
\]

Because of our chosen trivialization of $A\{1\}$, the restriction of $\mathcal{Q}_0$ along both $\tau$ and $\sigma$ is also trivial, and so its automorphisms are represented on the $p$-completely \'etale site of $R_A$ by the sheaf $L^+_{\underline{A}}G^{(n)}$ given by
\[
L^+_{\underline{A}}G^{(n)}({R_{A'}}) = G^{(n)}(A') =  G(A'/^{\mathbb{L}}p^n).
\]

Pullback along $\sigma$ and $\tau$ gives two maps
\[
\sigma,\tau:L^+_{\underline{A}}G^{(n)}\{\mu\}\to L^+_{\underline{A}}G^{(n)}
\]

We now have
\[
 \Wind{n}{\underline{A}}= [L^+_{\underline{A}}G^{(n)}\dbqt{\sigma}{\tau} L^+_{\underline{A}}G^{(n)}\{\mu\}],
\]
where the right hand side indicates the quotient by the action given symbolically by
\[
L^+_{\underline{A}}G^{(n)}\times L^+_{\underline{A}}G^{(n)}\{\mu\}\xrightarrow{(h,g)\mapsto \tau(h)^{-1}g\sigma(h)}L^+_{\underline{A}}G^{(n)}.
\]
\end{remark}

\begin{remark}
\label{rem:infty_displays}
Under the same condition as in the previous remark, the $\infty$-groupoid of $(G,\mu)$-windows over $\underline{A}$ can be viewed as the quotient
\[
[L^+_{\underline{A}}G\dbqt{\sigma}{\tau} L^+_{\underline{A}}G\{\mu\}]
\]
with 
\[
L^+_{\underline{A}}G({R_{A'}}) = G(A')\;;\; L^+_{\underline{A}}G\{\mu\}({R_{A'}}) = \Map_{B\Gmh{\mathcal{O}}}(\Rees(\Fil^\bullet A'),G\{\mu\}).
\]

Note that, by Proposition~\ref{prop:1_bounded_cartesian}, if $\mu$ is $1$-bounded, we have
\[
L^+_{\underline{A}}G\{\mu\}({R_{A'}}) \simeq G(A')\times_{G(R_{A'})}P^-_\mu(R_{A'}).
\]
\end{remark}

\emph{Assume for the rest of this subsection that $\mu$ is $1$-bounded.}

\begin{remark}
\label{rem:witt_frame_windows}
If $R$ is a discrete $p$-complete $\mathcal{O}$-algebra and $\underline{W(R)}$ is the Witt frame from Example~\ref{example:witt_frame}, then the above description shows that our notion of a $(G,\mu)$-display in terms of $(G,\mu)$-windows over $\underline{W(R)}$, recovers the notion defined in~\cite{MR4120808}.
\end{remark}

\begin{remark}
\label{rem:B'Gmu_abstract}
Let $\mathfrak{S}(\underline{A})$ be the coequalizer in $p$-adic formal prestacks of the two maps
\[
\sigma,\tau:\Spf A \to \Rees(\Fil^\bullet A).
\]
Unwinding definitions, $\Wind{n}{\underline{A}}({R_A})$ is simply the space 
\[
\Map_{/B\Gm}(\mathfrak{S}(\underline{A})\otimes\Int/p^n\Int,\mathcal{B}(G,\mu)),
\]
where $\mathcal{B}(G,\mu)\to (B\Gm,\iota_{\mathcal{O}})$ is the $1$-bounded stack from Definition~\ref{defn:BGmu_1-bounded_stack} and we are viewing $\mathfrak{S}(\underline{A})$ as a pointed graded stack via Remark~\ref{rem:abstract_de_rham_point}.
\end{remark}

\begin{remark}
\label{rem:dilatations}
Suppose that $A$ is flat over $\Int_p$. Suppose also that we have $\Fil^1A = (E)$ for some non-zero divisor $E\in A$, so that $R_A = A/(E)$, $I = (\varphi(E))$ and $A\{1\}$ is trivial. In particular, $\underline{A}$ is prismatic.

Let $H_{\mu}\to G_{A}$ be the \emph{dilatation} of $G_{A}$ along the subgroup $P^-_\mu\otimes R_A \subset G\otimes R_A$ (see for instance~\cite[\S 3.2]{blr}). This is a smooth affine group scheme over $A$ characterized by the fact that, for any flat $A$-algebra $B$, we have $H_\mu(B) = G(B)\times_{G(B/(E))}P^-_\mu(B/(E))$. In particular, conjugation by $\mu(E)$ on the generic fiber of $G_{\mathcal{O}}$ restricts to a map $\mathrm{int}(\mu(E)):H_\mu\to G_{A}$

Then we have $L^+_{\underline{A}}G\{\mu\}(R_{A'}) = H_\mu(A')$, $\tau$ is just the natural map $H_\mu(A')\to G(A')$, while $\sigma$ is given by $\varphi\circ \mathrm{int}(\mu(E))$; see the argument in~\cite[Lemma 5.2.1]{MR4355476}.

Therefore, we see that $\Wind{\infty}{\underline{A}}$ is the \'etale sheafification of the functor
\[
R_{A'}\mapsto [G(A')\dbqt{\sigma*}{\tau^*}H_\mu(A')].
\]
In other words, a section over $A'$, amounts to giving an $H_\mu$-torsor $\mathcal{P}$ over $A$ for the $p$-completely \'etale topology, along with an isomorphism $\xi:\sigma^* \mathcal{P}\xrightarrow{\simeq} \tau^* \mathcal{P}$. 
\end{remark}

\begin{remark}
\label{rem:trivial_filtration_displays}
There is another situation in which the description of $\Wind{\infty}{\underline{A}}$ simplifies considerably: If $A$ is flat over $\Int_p$, $\Fil^iA = 0$ for $i\ge 1$, $R_A = A$ and $I = (p)$. This is of course not a laminated frame.

Then $L^+_{\underline{A}}G\{\mu\}(R_{A'}) = P^-_\mu(A')$, and $\tau^*:P^-_\mu(A')\to G(A')$ is the natural map, with $\sigma$ is given by $\varphi\circ \mathrm{int}(\mu(p))$. 

We find that an object of $\Wind{\infty}{\underline{A}}(A')$ is simply a $P^-_\mu$-torsor $\mathcal{P}'$ over $A'$ for the $p$-completely \'etale topology, along with an isomorphism $\sigma^* \mathcal{P}'\xrightarrow{\simeq}\tau^* \mathcal{P}'$
\end{remark}

\subsection{$\underline{A}$-gauges and the case of $\GL_h$}
\label{subsec:A_gauges}

Here, we look at what the above definitions specialize to for $G = \GL_h$ for some $h\ge 1$. First, a more general definition.

\begin{definition}
An \defnword{$\underline{A}$-gauge of level $n$} (resp. \defnword{$\underline{A}$-gauge}) is a quasicoherent sheaf $\mathcal{M}\in \mathrm{QCoh}(\Rees(\Fil^\bullet A)\otimes\Int/p^n\Int)$ (resp. $\mathcal{M}\in \mathrm{QCoh}(\Rees(\Fil^\bullet A))$) equipped with an isomorphism $\xi:\sigma^*\mathcal{M}\xrightarrow{\simeq}\tau^*\mathcal{M}$ in $\mathrm{QCoh}(\Spf A/{}^{\mathbb{L}}p^n)$ (resp. $\mathrm{QCoh}(\Spf A)$).

Note that by definition an $\underline{A}/{}^{\mathbb{L}}p^n$-gauge is the same as an $\underline{A}$-gauge of level $n$. 

As usual, one can append the adjectives `vector bundle', `perfect', `almost perfect', `connective' and `almost connective' to obtain objects in the corresponding full subcategories.
\end{definition}

\begin{remark}
We can also view $\underline{A}$-gauges (of level $n$) as quasicoherent sheaves over the prestack $\mathfrak{S}(\underline{A})$ from Remark~\ref{rem:B'Gmu_abstract}.
\end{remark}

\begin{definition}
\label{defn:A-gauge_HT_weights}
Every $\underline{A}$-gauge yields via pullback along  
\[
B\Gm\times \Spec \pi_0(R_A/{}^{\mathbb{L}}(p,I))\to \Rees(\Fil^\bullet A)
\]
a graded complex over $\pi_0(R_A/{}^{\mathbb{L}}(p,I))$. The \defnword{Hodge-Tate weights} of an $\underline{A}$-gauge are the integers $i\in \Int$ such that the associated graded complex is not nullhomotopic in degree $i$.
\end{definition}

\begin{remark}
\label{rem:sigma_star_expl}
Explicitly, given an $\underline{A}$-gauge $(\mathcal{M},\xi)$, we can view $\mathcal{M}$ as a derived $(p,I)$-complete filtered module $\Fil^\bullet\mathsf{M}$ over $\Fil^\bullet A$. Base-change along $\Phi_{\pm}:\Fil^\bullet A\to \Fil^\bullet_{I,\pm}A$ yields a filtered module $\Phi_{\pm}^*\Fil^\bullet \mathsf{M}$ over $\Fil^\bullet_{I,\pm}A$ whose degree-$0$ filtered piece is an $A$-module $\mathsf{M}_\sigma$ corresponding to $\sigma^*\mathcal{M}$. The isomorphism $\xi$ now corresponds to an isomorphism $\mathsf{M}_\sigma\xrightarrow{\simeq}\mathsf{M}$ in $\Mod{A}$. 
\end{remark}

\begin{remark}[Parasitic $\underline{A}$-gauges]
\label{rem:parasitic_gauges}
Suppose that we have a quasicoherent sheaf $\mathcal{Q}$ over $\Aff^1/\Gm\times\Spf R_A$ whose restriction over $\Gm/\Gm\times\Spf R_A$ is nullhomotopic: this corresponds to a filtered module $\Fil^\bullet Q$ over $R_A$ such that $\colim_n \Fil^nQ \simeq 0$. Via pushforward along the closed immersion 
\[
\iota:\Aff^1/\Gm\times\Spf R_A = \Rees(\Fil^\bullet_{\mathrm{triv}}R_A)\hookrightarrow \Rees(\Fil^\bullet A)
\]
we obtain a quasicoherent sheaf $\iota_* \mathcal{Q}$ over $\Rees(\Fil^\bullet A)$. One checks that we have
\[
\tau^*\iota_* \mathcal{Q} \simeq 0 \simeq \sigma^*\iota_* \mathcal{Q}.
\]
The first equivalence is from our hypothesis on $\mathcal{Q}$, while the second applies generally to any pushforward along $\iota$. In particular, $\iota_* \mathcal{Q}$ is trivially equipped with the structure of an $\underline{A}$-gauge. We will call this the \defnword{parasitic $\underline{A}$-gauge associated with $\mathcal{Q}$} (or, equivalently, with $\Fil^\bullet Q$).
\end{remark}  

\begin{example}
 \label{ex:parasitic_gauges}
In \S~\ref{subsec:abstract_def_theory}, we will need a particular instance of a parasitic $\underline{A}$-gauge arising in the following way: Given any $p$-complete $N\in \Mod{R_A}$ and an integer $m\in \Int$, there is a canonical $p$-complete filtered module $\Fil^\bullet Q(N,m)$ over $R_A$ such that, for any other $p$-complete filtered module $\Fil^\bullet M$, we have a canonical equivalence
\[
\mathrm{RHom}_{\Fil^\bullet_{\mathrm{triv}}R_A}(\Fil^\bullet M,\Fil^\bullet Q(N,m))\simeq \mathrm{RHom}_{R_A}(\Fil^m M,N).
\]
We have $\Fil^jQ(N,m) = N$ if $j\ge m$ and $0$ otherwise. The transition maps are the identity in filtered degrees $>m$ and $0$ in other degrees. 

Let $\mathcal{M}_{\mathrm{par}}(N,m)$ be the parasitic $\underline{A}$-gauge associated with $\Fil^\bullet Q(N,m)$. If $\Fil^\bullet \mathsf{M}_{\mathrm{par}}(N,m)$ is the associated filtered module over $\Fil^\bullet A$, then, for any other filtered module $\Fil^\bullet \mathsf{M}$ with filtered base-change $\Fil^\bullet_{\mathrm{Hdg}}M$ over $R_A$, we have
\[
\mathrm{RHom}_{\Fil^\bullet A}(\Fil^\bullet \mathsf{M},\Fil^\bullet \mathsf{M}_{\mathrm{par}}(N,m))\simeq \mathrm{RHom}_{\Fil^\bullet_{\mathrm{triv}}R_A}(\Fil^\bullet_{\mathrm{Hdg}}M,\Fil^\bullet Q(N,m))\simeq \mathrm{RHom}_{R_A}(\Fil^m_{\mathrm{Hdg}}M,N).
\]
In particular, if $\Fil^\bullet \mathsf{M}$ underlies an $\underline{A}$-gauge $(\mathcal{M},\xi)$, then giving a map of $\underline{A}$-gauges $\mathcal{M}\to \mathcal{M}_{\mathrm{par}}(N,m)$ is equivalent to giving a map $\Fil^m_{\mathrm{Hdg}}M\to N$ in $\Mod{R_A}$.
\end{example}

\begin{remark}
$\underline{A}$-gauges (of level $n$) organize into a symmetric monoidal stable $\infty$-category $\underline{A}\mathsf{-gauge}$ ($\underline{A}\mathsf{-gauge}_n$).

For any map $\underline{A}\to \underline{B}$ of frames, there is a natural base-change map 
\[
\underline{A}\mathsf{-gauge}\xrightarrow{(\mathcal{M},\xi)\mapsto \underline{B}\otimes_{\underline{A}}(\mathcal{M},\xi)}\underline{B}\mathsf{-gauge}
\]
that induces functors between gauges of level $n\ge 1$.
\end{remark}

The proof of the next result is as in~\cite[Example 5.3.5]{MR4355476}, and follows from Proposition~\ref{prop:fpqc_is_etale}.
\begin{proposition}
\label{prop:abstract_f-gauge_gl_n}
Suppose that $\mu:\Gm\to \GL_h$ is the cocharacter given by
\[
z\mapsto \mathrm{diag}(z^{m_1},z^{m_2},\ldots,z^{m_h})
\]
for $m_1,m_2,\ldots,m_n\in \Int$. Then $\Wind[\GL_h,\mu]{n}{\underline{A}}$ is equivalent to the $\infty$-groupoid of $\underline{A}$-gauges $\mathcal{M}$ of level $n$ such that the underlying filtered module $\Fil^\bullet\mathsf{M}$ satisfies the following condition: There exists a $p$-completely \'etale cover ${R_A}\to{R_{A'}}$ and an isomorphism
\[
\Fil^\bullet A'\otimes_{\Fil^\bullet A}\Fil^\bullet \mathsf{M} \xrightarrow{\simeq}\bigoplus_{i=1}^h \Fil^\bullet A'\{m_i\}/{}^{\mathbb{L}}p^n.
\]
Here, we have set
\[
\Fil^\bullet A'\{m_i\} \defn (\Fil^\bullet A')(-m_i)\otimes_AA\{m_i\}.
\]
\end{proposition}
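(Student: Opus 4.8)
The plan is to unwind Definition~\ref{defn:expl_desc_quotient} for $G = \GL_h$ and match it against the definition of an $\underline{A}$-gauge. First I would observe that a $\GL_h$-torsor $\mathcal{Q}$ over $\Rees(\Fil^\bullet A)\otimes\Int/p^n\Int$ is the same datum as a vector bundle $\mathcal{M}$ of rank $h$ on that stack, i.e.\ a derived $I$-complete filtered module $\Fil^\bullet\mathsf{M}$ over $\Fil^\bullet A/{}^{\mathbb{L}}p^n$ whose underlying module is finite locally free. Via the Rees dictionary of~\S\ref{subsec:rees_construction} and Remark~\ref{rem:sigma_star_expl}, an isomorphism $\sigma^*\mathcal{Q}\xrightarrow{\simeq}\tau^*\mathcal{Q}$ of $\GL_h$-torsors is precisely an isomorphism $\xi:\mathsf{M}_\sigma\xrightarrow{\simeq}\mathsf{M}$ in $\Mod{A/{}^{\mathbb{L}}p^n}$, i.e.\ exactly the data making $(\mathcal{M},\xi)$ an $\underline{A}$-gauge of level $n$. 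So the content is entirely in translating the local triviality condition on the associated $G\{\mu\}$-torsor into the stated local isomorphism of filtered modules.

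Next I would identify the ``standard'' $\GL_h\{\mu\}$-torsor, namely $\mathcal{P}_\mu$ arising from $B\mu:B\Gmh{\mathcal{O}}\to B\GL_{h,\mathcal{O}}$. Here the key point is that the Breuil--Kisin twist $\Fil^\bullet A\{1\}$ of~\S\ref{subsec:higher_displays} is exactly the line bundle realizing the grading, so pulling back the standard representation of $\GL_h$ along $\mu = \mathrm{diag}(z^{m_1},\dots,z^{m_h})$ and twisting by $\mathcal{P}_\mu$ produces, on the Rees stack, the filtered module $\bigoplus_{i=1}^h \Fil^\bullet A\{m_i\}$ with its tautological $\xi$. (One should check that the $\tau$- and $\sigma$-pullbacks of $\Fil^\bullet A\{m_i\}$ are both canonically $A\{m_i\}/{}^{\mathbb{L}}p^n$, using $I\otimes_A\varphi^*A\{1\}\simeq A\{1\}$, so that the tautological identification $\xi$ is indeed an isomorphism over $\Spf A/{}^{\mathbb{L}}p^n$.) Then, by the equivalence in~\S\ref{subsubsec:BGmu_equivalence} between $G\{\mu\}$-torsors and $G$-torsors, the condition in Definition~\ref{defn:expl_desc_quotient} that $\mathcal{Q}^\mu$ be trivial $p$-completely \'etale locally on $\Spf R_A$ translates into: $p$-completely \'etale locally on $R_A$, the filtered module $\Fil^\bullet\mathsf{M}$ becomes isomorphic to $\bigoplus_{i=1}^h\Fil^\bullet A'\{m_i\}/{}^{\mathbb{L}}p^n$ compatibly with $\xi$. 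Finally I would invoke Proposition~\ref{prop:fpqc_is_etale} to see that it is harmless to phrase the local triviality condition either fpqc-locally or $p$-completely \'etale-locally, and that it suffices to check it on the associated graded / over $B\Gm\times\Spec R_A/{}^{\mathbb{L}}p^n$, matching the condition exactly as stated.

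Concretely, the steps in order: (1) record the translation between $\GL_h$-torsors on $\Rees(\Fil^\bullet A)\otimes\Int/p^n\Int$ and filtered modules $\Fil^\bullet\mathsf{M}$, and between the gluing isomorphism of torsors and the gauge datum $\xi$; (2) compute the filtered module underlying $\mathcal{P}_\mu$ (twisted appropriately by the Breuil--Kisin line bundle) and its canonical $\xi$, obtaining $\bigoplus_i\Fil^\bullet A\{m_i\}/{}^{\mathbb{L}}p^n$; (3) use~\S\ref{subsubsec:BGmu_equivalence} to rewrite the triviality-of-$\mathcal{Q}^\mu$ condition as an isomorphism of filtered modules to this standard one, $p$-completely \'etale locally; (4) apply Proposition~\ref{prop:fpqc_is_etale} to reconcile the various equivalent forms of ``locally trivial'' and to note that the forgetful functor $(\mathcal{M},\xi)\mapsto \Fil^\bullet\mathsf{M}$ does not lose information about the condition; (5) conclude the claimed equivalence of $\infty$-groupoids. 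As indicated in the paper, the argument closely follows~\cite[Example~5.3.5]{MR4355476}; the main obstacle is purely bookkeeping in step~(2)---keeping the Breuil--Kisin twists, the $\varphi$-pullbacks, and the grading/filtration conventions consistent so that the local model really is $\bigoplus_{i=1}^h\Fil^\bullet A'\{m_i\}/{}^{\mathbb{L}}p^n$ with the tautological $\xi$, rather than some twist of it. Everything else is a direct unwinding of Definitions~\ref{defn:BGmu_1-bounded_stack} and~\ref{defn:expl_desc_quotient} together with the already-established Proposition~\ref{prop:fpqc_is_etale}.
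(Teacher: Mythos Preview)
Your proposal is correct and follows exactly the approach the paper intends: the paper's entire proof is the sentence ``as in~\cite[Example 5.3.5]{MR4355476},'' and your steps (1)--(5) are precisely what unwinding that reference in the present framework amounts to. One small clarification: in step~(3) you should drop the phrase ``compatibly with $\xi$,'' since the triviality condition in Definition~\ref{defn:expl_desc_quotient} concerns only the $G\{\mu\}$-torsor $\mathcal{Q}^\mu$ over the Rees stack (not the gluing datum), which is why the proposition's condition is stated purely on $\Fil^\bullet\mathsf{M}$---you already catch this in step~(4).
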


\subsection{Divided Dieudonn\'e complexes}
\label{subsec:alb}
We will fix a prismatic frame $\underline{A}$. The purpose of this subsection is to connect the definitions here with those of Ansch\"utz-Le Bras~\cite{MR4530092}. More precisely, we show that perfect $\underline{A}$-gauges with Hodge-Tate weights in $\{0,1\}$ admit a more concrete description modeled after the notion of a filtered Dieudonn\'e module appearing in \emph{loc. cit.} Similar discussions---though in a more limited context---can be found in~\cite{guo2023frobenius} and~\cite{Mondal2024-cy}.

\begin{definition}
\label{defn:adm_dieudonne}
Set $R = R_A$. Then the filtered Frobenius lift carrying $\Fil^1A$ to $I$ induces a map $\overline{\varphi}:R\to \overline{A}$. A \defnword{divided Dieudonn\'e complex over $R$ with respect to $\underline{A}$} is a tuple $(\mathsf{M},\Fil^0M\to M,\mathsf{M}\xrightarrow{\Psi_{\mathsf{M}}}\varphi^*\mathsf{M},\xi)$ such that:
\begin{enumerate}
   \item $\mathsf{M}\in \mathrm{Perf}(A)$ is a perfect complex over $A$;
   \item $\Fil^0M \to M \defn R\otimes_{A}\mathsf{M}$ is a map of perfect complexes over $R$;
   \item $\Psi_{\mathsf{M}}:\mathsf{M}\to \varphi^*\mathsf{M}$ is a map of perfect complexes over $A$;
   \item $\xi$ is an isomorphism of perfect complexes over $R$ sitting in a diagram
   \[
    \begin{diagram}
    \varphi^*\mathsf{M}&\rTo&\hcoker(\Psi_{\mathsf{M}})\\
    &\rdTo&\dTo^{\simeq}_{\xi}\\
    &&\overline{A}\otimes_{\overline{\varphi},R}\gr^{-1}M.
    \end{diagram}
   \] 
\end{enumerate}
Here, $\gr^{-1}M = M/\Fil^0M$ and the diagonal map is obtained as the composition
\[
\varphi^*\mathsf{M}\to \overline{A}\otimes_A\varphi^*\mathsf{M}\xrightarrow{\simeq}\overline{A}\otimes_{\overline{\varphi},R}M\to \overline{A}\otimes_{\overline{\varphi},R}\gr^{-1}M.
\]

These can be organized into an $\infty$-category in a natural way, which we denote by $\mathrm{DDC}_{\underline{A}}(R)$.

A divided Dieudonn\'e complex has \defnword{Tor amplitude in $[a,b]$} if $\mathsf{M}$ is in $\mathrm{Perf}^{[a,b]}(A)$ and if $\Fil^0M,\gr^{-1}M$ are both in $\mathrm{Perf}^{[a,b]}(R)$. Write $\mathrm{DDC}^{[a,b]}_{\underline{A}}(R)$ for the subcategory spanned by the objects with Tor amplitude in $[a,b]$.
\end{definition}  

\begin{remark}
\label{rem:only_classical_frame_needed}
Observe that the definition of a divided Dieudonn\'e complex does not use the full frame structure. Indeed, we only need the prism $(A,I)$ and a commuting square of animated commutative rings
\[
\begin{diagram}
A&\rTo^\varphi&A\\
\dTo&&\dTo\\
R&\rTo_{\overline\varphi}&\overline{A}.
\end{diagram}
\]
With this setup, we can define categories $\mathrm{DDC}^{[a,b]}_{(A,I)}(R)$ just as before.
\end{remark}

\begin{proposition}
\label{prop:adm_dieudonne}
Let $\mathrm{Perf}_{\{0,1\}}^{[a,b]}(\underline{A}\mathsf{-gauge})$ be the full subcategory of $\underline{A}\mathsf{-gauge}$ spanned by the perfect $\underline{A}$-gauges with Hodge-Tate weights in $\{0,1\}$ and Tor amplitude in $[a,b]$. Then there is a canonical equivalence of $\infty$-categories
\[
\mathrm{Perf}_{\{0,1\}}^{[a,b]}(\underline{A}\mathsf{-gauge})\xrightarrow{\simeq}\mathrm{DDC}^{[a,b]}_{\underline{A}}(R).
\]
\end{proposition}

\begin{proof}
Let $\mathcal{P}_{\{0,1\}}\to B\Gm$ be the $1$-bounded stack from Example~\ref{ex:perfect_HT_wts_01_stack}, so that $\Map_{/B\Gm}(\Rees(\Fil^\bullet A),\mathcal{P}_{\{0,1\}})$ is canonically equivalent to the $\infty$-groupoid underlying the $\infty$-category $\mathrm{Perf}_{\{0,1\}}(\Rees(\Fil^\bullet A))$ of perfect complexes over $\Rees(\Fil^\bullet A)$ with Hodge-Tate weights in $\{0,1\}$. Then Proposition~\ref{prop:1_bounded_cartesian} shows that this $\infty$-groupoid is equivalent to that of tuples $(\mathsf{M},\Fil^\bullet M,\eta)$ where:
\begin{enumerate}
   \item $\mathsf{M}\in \mathrm{Perf}(A)$;
   \item $\Fil^\bullet M$ is a filtered perfect complex over $R$ with associated gradeds supported in degrees $0,-1$;
   \item $\eta:R\otimes_A\mathsf{M}\xrightarrow{\simeq}M$ is an isomorphism of perfect complexes over $R$.
\end{enumerate}
In fact, one can upgrade this equivalence to one between the $\infty$-category of such tuples and $\mathrm{Perf}_{\{0,1\}}(\Rees(\Fil^\bullet A))$. Indeed, morphisms between two objects $\mathcal{M},\mathcal{M}'$ in the latter category are parameterized by sections of the $1$-bounded stack $\mathbf{V}(\mathcal{M}\otimes \mathcal{M}^{',\vee})$ (see Example~\ref{ex:1_bounded_perfect_complexes_stack}), and one can now apply Proposition~\ref{prop:1_bounded_cartesian} once again to conclude.

Given a tuple $(\mathsf{M},\Fil^\bullet M,\eta)$ mapping to $\mathcal{M}$ in $\mathrm{Perf}_{\{0,1\}}(\Rees(\Fil^\bullet A))$, let $\Fil^\bullet \mathsf{M}$ be the filtered perfect complex over $\Fil^\bullet A$ corresponding to $\mathcal{M}$, and let $\Fil^\bullet\varphi^*\mathsf{M}$ be the filtered perfect complex over $\Fil^\bullet_IA$ obtained via base-change along the filtered Frobenius lift $\Phi$. By Lemma~\ref{lem:graded_weight_filtration}, we find:
\[
\gr^i\varphi^*\mathsf{M} \simeq \begin{cases}
0&\text{if $i<-1$}\\
\overline{A}\otimes_{\overline\varphi,R}\gr^{-1}M&\text{if $i=-1$}.
\end{cases}
\]
Using this, one finds a canonical identification $\mathsf{M}_\sigma\xrightarrow{\simeq}\Fil^0\varphi^*\mathsf{M}$, and so giving an isomorphism $\mathsf{M}_\sigma\xrightarrow{\simeq}\mathsf{M}$ is equivalent to giving a fiber sequence of perfect complexes over $A$ of the form
\[
\mathsf{M}\to \varphi^*\mathsf{M}\to \overline{A}\otimes_R\gr^{-1}M.
\]
The datum of the pair $(\Fil^\bullet M,\eta)$ is equivalent once again to giving the underlying map of perfect complexes $\Fil^0M \to R\otimes_A\mathsf{M}$ over $R$. With this, we have completed the proof of the proposition, except for the matching of the conditions on Tor amplitude. But this can be verified directly.
\end{proof}

\begin{corollary}
\label{cor:adm_dieudonne_vect}
Let $\mathrm{Vect}_{\{0,1\}}(\underline{A}\mathsf{-gauge})$ be the full subcategory of $\underline{A}\mathsf{-gauge}$ spanned by the vector bundle $\underline{A}$-gauges with Hodge-Tate weights in $\{0,1\}$. Then there is a canonical equivalence of $\infty$-categories
\[
\mathrm{Vect}_{\{0,1\}}(\underline{A}\mathsf{-gauge})\xrightarrow{\simeq}\mathrm{DDC}^{[0,0]}_{\underline{A}}(R).
\]
\end{corollary}

\subsubsection{}\label{subsubsec:vect_bundles_alb}
We can now complete the connection with~\cite{MR4530092}. Suppose that $\underline{A}$ is \emph{classical} in the following sense: $A$ is $p$-completely flat, $\Fil^\bullet A$ is a discrete filtered commutative ring filtered by $A$-submodules $\Fil^iA\subset A$, and also $I\subset A$ is a locally principal ideal. Suppose also that we have $\Fil^1A = \varphi^{-1}(I)\subset A$; equivalently, we are assuming that the map $\overline\varphi:R\to \overline{A}$ is \emph{injective}.

\begin{proposition}
\label{prop:adm_dieudonne_alb}
Under these conditions, there is a natural equivalence of categories between $\mathrm{Vect}_{\{0,1\}}(\underline{A}\mathsf{-gauge})$ and the category of pairs $(\mathsf{N},\varphi_{\mathsf{N}})$ where:
\begin{enumerate}
   \item $\mathsf{N}$ is a finite locally free $A$-module;
   \item $\varphi_{\mathsf{N}}:\mathsf{N}\to \mathsf{N}$ is a $\varphi$-linear map such that the cokernel of the linearization $\varphi^*\mathsf{N}\to \mathsf{N}$ is killed by $I$;
   \item The image of the composition $\mathsf{N}\xrightarrow{\varphi_{\mathsf{N}}}\mathsf{N}\to \mathsf{N}/I\mathsf{N}$ is a locally free $R$-module $F_{\mathsf{N}}$ such that the induced map $\overline{A}\otimes_{{R}}F_{\mathsf{N}}\to \mathsf{N}/I\mathsf{N}$ is injective.
\end{enumerate}
\end{proposition}
\begin{proof}
By Corollary~\ref{cor:adm_dieudonne_vect}, $\mathrm{Vect}_{\{0,1\}}(\underline{A}\mathsf{-gauge})$ is equivalent to the category of tuples $(\mathsf{M},\Psi_{\mathsf{M}},M\to \gr^{-1}M)$ where:
\begin{enumerate}
   \item $\mathsf{M}$ is a finite locally free $A$-module;
   \item $\Psi_{\mathsf{M}}:\mathsf{M}\to \varphi^*\mathsf{M}$ is an injective $A$-linear map;
   \item  $M = \mathsf{M}/(\Fil^1A)\mathsf{M}$ and $M\to \gr^{-1}M$ is a surjection onto a finite locally free $R$-module $\gr^{-1}M$
\end{enumerate}
such that $\Psi_{\mathsf{M}}(\mathsf{M})$ is the kernel of the composition
\[
\varphi^*\mathsf{M}\to \overline{A}\otimes_A\varphi^*\mathsf{M}\xrightarrow{\simeq}\overline{A}\otimes_{\overline{\varphi},R}M \to \overline{A}\otimes_{\overline{\varphi},R}\gr^{-1}M.
\]

Given such a tuple, we will identify $\mathsf{M}$ with a submodule of $\varphi^*\mathsf{M}$ via $\Psi_{\mathsf{M}}$. Note that the short exact sequence 
\[
0\to \mathsf{M}\to \varphi^*\mathsf{M}\to \overline{A}\otimes_{\overline\varphi,R}\gr^{-1}M\to 0
\]
implies in particular that we have $I\varphi^*\mathsf{M}\subset \mathsf{M}$. Now, set $\mathsf{N} = \mathsf{M}\{-1\}$, so that we have a map
\[
\varphi_{\mathsf{N}}:\mathsf{N}\xrightarrow{n\mapsto \varphi^*n}\varphi^*\mathsf{N} \simeq \varphi^* A\{-1\}\otimes_A\varphi^*\mathsf{M} \simeq A\{-1\}\otimes_AI\varphi^*\mathsf{M}\subset \mathsf{M}\{-1\} = \mathsf{N}.
\]
We claim the pair $(\mathsf{N},\varphi_{\mathsf{N}})$ satisfies the conditions in the proposition. Indeed, set $\Fil^0\mathsf{M} = \ker(\mathsf{M}\to \gr^{-1}M)$. Then one checks that we have $\varphi_{\mathsf{N}}^{-1}(I\mathsf{N}) = (\Fil^0\mathsf{M})\{-1\}$. Hence the composition of the projection onto $\mathsf{N}/I\mathsf{N}$ with $\varphi_{\mathsf{N}}$ has image $F_{\mathsf{N}}\simeq (\gr^{-1}M)\{-1\}$. Similarly, one finds that the base-change  along $\overline\varphi$ of $F_{\mathsf{N}}$ maps isomorphically onto the direct summand $(\overline{A}\otimes_R\gr^{-1}M)\{-1\}$ of $\mathsf{N}/I\mathsf{N}$.

Conversely, given a pair $(\mathsf{N},\varphi_{\mathsf{N}})$, we set 
\[
\mathsf{M} = \mathsf{N}\{1\}\;;\;\Fil^0\mathsf{M} =\varphi_{\mathsf{N}}^{-1}(I\mathsf{N})\{1\}\;;\; \gr^{-1}M = \mathsf{M}/\Fil^0\mathsf{M}. 
\]
Then condition (2) of the proposition tells us that $\gr^{-1}M$ is locally free over $R$. To obtain the map $\Psi_{\mathsf{M}}$, we first observe that we have
\[
I\mathsf{M}\{-1\} = (1\otimes\varphi_{\mathsf{N}})^{-1}(I\mathsf{N})\subset \varphi^*\mathsf{N}. 
\]
Indeed, since $I\mathsf{M}_{\sigma}\{-1\}=\ker(\varphi^*\mathsf{N}\to \overline{A}\otimes_{R}F_{\mathsf{N}})$, this is equivalent to condition (3), which asserts that $\overline{A}\otimes_{R}F_{\mathsf{N}}$ maps injectively into $\mathsf{N}/I\mathsf{N}$. Now, tensoring this inclusion with $I^{\otimes -1}\{1\}$ gives us $\Psi_{\mathsf{M}}$.
\end{proof}

\begin{remark}
\label{rem:comparison_with_alb}
As in Remark~\ref{rem:only_classical_frame_needed}, the definition of the category of pairs $(\mathsf{N},\varphi_{\mathsf{N}})$ appearing in the above proposition requires only the discrete prism $(A,I)$, since we have $R = A/\varphi^{-1}(I)$. When $(A,I) = (\Prism_R,I_R)$ is the initial prism associated with a qrsp ring $R$, the proof of Proposition~\ref{prop:adm_dieudonne_alb} shows that the category $\mathrm{DDC}^{[0,0]}_{(\Prism_R,I_R)}(R)$ is equivalent to the category of \emph{admissible prismatic Dieudonn\'e modules over} $R$ from~\cite[Definition 4.10]{MR4530092}. Later, in Section~\ref{sec:bld_stacks}, we will see that there is in fact a canonical frame $\underline{\Prism}_R$ extending the data of $(\Prism_R,I_R)$ and $\Fil^1\Prism_R = \ker(\Prism_R\to R)$. Therefore, these categories can be further identified with the category $\mathrm{Vect}_{\{0,1\}}(\underline{\Prism}_R\mathsf{-gauge})$, which in turn can be identified with the category of vector bundle $F$-gauges over $R$ with Hodge-Tate weights in $\{0,1\}$; see Proposition~\ref{prop:semiperfect_f-gauges}.
\end{remark}

\subsection{$\underline{W_1(R)}$-gauges and $F$-zips}
\label{subsec:Fzip_stack}

Suppose that $R$ is an $\Field_p$-algebra, so that we have the associated $1$-truncated Witt frame $\underline{W_1(R)}$. We will now find that $\underline{W_1(R)}$-gauges are the same as (derived) $F$-zips as appearing for instance in~\cite{epiga:10375}. In general, as explained in the introduction, the stack $R^{\Fzip}$ associated with this frame will play an important role in what follows, so we begin by describing it explicitly.

\subsubsection{}\label{subsubsec:fzip_stack_construction}
As explained in~\cite[Example 2.1.7]{MR4355476}, the Rees algebra $\mathrm{Rees}(\Fil^\bullet_{\mathrm{Lau}}W_1(R))$ admits the following description as a graded $R$-algebra:
\begin{align}\label{eqn:rees_W1_explicit}
\mathrm{Rees}(\Fil^\bullet_{\mathrm{Lau}}W_1(R)) = R[t]\times_{\varphi_*R}\varphi_*R[u].
\end{align}
Here, $t$ is in graded degree $1$ as usual, and $u$ is in graded degree $-1$. The map $R[t]\to \varphi_*R$ is the composition of the Frobenius map $R\to \varphi_*R$ with $t\mapsto 0$ and the map $\varphi_*R[u]\to \varphi_*R$ is given by $u\mapsto 0$.

Geometrically, this is telling us that $\Rees(\Fil^\bullet_{\mathrm{Lau}}W_1(R))$ is obtained as follows. Consider the two canonical closed immersions
\begin{align}\label{eqn:lambda_maps}
\chi_+:B\Gm\times\Spec \varphi_*R&\to \Aff^1_+/\Gm\times\Spec \varphi_*R \\
\chi_-:B\Gm\times\Spec R&\to \Aff^1/\Gm\times\Spec R
\end{align}
of stacks over $R$ and let ${}^\varphi\chi_-$ be the composition
\[
B\Gm\times\Spec \varphi_*R\xrightarrow{\mathrm{id}\times\varphi}B\Gm\times\Spec R\xrightarrow{\chi_-}\Aff^1/\Gm\times\Spec R.
\]

Then we obtain $\Rees(\Fil^\bullet_{\mathrm{Lau}}W_1(R))$ by gluing the two closed substacks $\Aff^1/\Gm\times\Spec R$ and $\Aff^1_+/\Gm\times\Spec \varphi_*R$ along $B\Gm\times \Spec \varphi_*R$ via the maps ${}^\varphi\chi_-$ and $\chi_+$.

\subsubsection{}
Let us now consider the two maps $\tau,\sigma:\Spec R \to \Rees(\Fil^\bullet_{\mathrm{Lau}}W_1(R))$. For this, note that we have a canonical map
\[
\Rees(\Fil^\bullet_{\mathrm{Lau}}W_1(R))\to \Aff^1/\Gm\times \Aff^1_+/\Gm
\]
of $B\Gm$-stacks. It suffices to construct this for $R = \Field_p$, where it arises from the identity $\Field_p[t]\times_{\Field_p}\Field_p[u] = \Field_p[t,u]/(ut)$.

It is now not difficult to check that the map $\tau$ (resp. $\sigma$) is the open immersion obtained as the pre-image of the open locus $t\neq 0$ (resp. $u\neq 0$) of $\Aff^1/\Gm\times \Aff^1_+/\Gm$.

\begin{definition}
The stack $R^{\Fzip}$ (the \defnword{$F$-zip stack for $R$}) is the stack $\mathfrak{S}(\underline{W_1(R)})$ obtained as the coequalizer of the open immersions
\[
\tau,\sigma:\Spec R \to \Rees(\Fil^\bullet_{\mathrm{Lau}}W_1(R))
\]
\end{definition}

\begin{remark}
\label{rem:fzip_stack_alternate}
We now obtain the following alternative construction of $R^{\Fzip}$: We first glue $\mathbb{A}^1/\Gm\times \Spec R$ with $\mathbb{A}^1_+/\Gm\times \Spec \varphi_*R$ along the open substack $\Spec R\simeq \Gm/\Gm\times \Spec R$. Denote the resulting stack by $Y_R$: Note that this is \emph{not} a stack over $\Spec R$, only over $\Spec \Field_p$. 

$R^{\Fzip}$ is now obtained as the coequalizer of the two maps 
\[
B\Gm\times\Spec \varphi_*R\xrightarrow{\chi_+}\Aff^1_+/\Gm\times\Spec \varphi_*R\hookrightarrow Y_R\;;\; B\Gm\times\Spec R\xrightarrow{{}^\varphi\chi_-}\Aff^1/\Gm\times\Spec R\hookrightarrow Y_R.
\]
\end{remark}

\begin{remark}
This stack is the same as the stack $\mathfrak{X}_S$ defined by Yaylali in~\cite[Appendix]{epiga:10375} with $S = \Spec R$.
\end{remark}   

\begin{definition}
An \defnword{$F$-zip over $R$} is an $\underline{W_1(R)}$-gauge. Equivalently, it is an object in $\mathrm{QCoh}(R^{\Fzip})$.
\end{definition}

\begin{remark}
\label{rem:fzip_explicit_desc}
Using Remark~\ref{rem:fzip_stack_alternate}, one sees that giving an $F$-zip $\bm{M}$ over $R$ is equivalent to specifying the following data:
\begin{itemize}
   \item A decreasingly filtered complex $\Fil^\bullet_{\mathrm{Hdg}}M^-$ over $R$ obtained via pullback along the map $\Aff^1/\Gm\times\Spec R \to R^{\Fzip}$;
   \item An increasingly filtered complex $\Fil_\bullet^{\mathrm{conj}}M^+$ over $R$ obtained via pullback along $\Aff^1_+/\Gm\times\Spec R \to R^{\Fzip}$;
   \item An isomorphism $\eta:M^+\xrightarrow{\simeq} M^-$ in $\Mod{R}$ identifying both with a common $R$-module $M$;
   \item An isomorphism of graded $R$-modules
   \[
     \alpha:\gr_\bullet^{\mathrm{conj}}M^+\xrightarrow{\simeq}\varphi^*\gr^{-\bullet}_{\mathrm{Hdg}}M^-.
   \]
\end{itemize}
In the sequel, we will use the identification $M^+\simeq M^-\simeq M$ to drop all superscripts.
\end{remark}

\begin{remark}
When $R$ is classical and $\bm{M}$ is a vector bundle over $R^{\Fzip}$, we essentially recover the definition of Moonen-Pink-Wedhorn-Ziegler from~\cite{Pink2015-ye}. The de Rham cohomology of any smooth projective scheme over $R$ with degenerating Hodge-to-de Rham spectral sequence, when equipped with its decreasing Hodge filtration and its increasing conjugate filtration, yields an example of such an $F$-zip. The more general definition given here corresponds to that of a \emph{derived} $F$-zip as appearing in the work of Yaylali~\cite[\S 3.6]{epiga:10375}.
\end{remark}

\begin{remark}
\label{rem:fzip_ht_weights}
Note that the Hodge-Tate weights of an $F$-zip $\bm{M}$ as defined in Definition~\ref{defn:A-gauge_HT_weights}) are precisely the integers $i$ such that $\gr^{-i}_{\mathrm{Hdg}}M$ is not nullhomotopic.
\end{remark}

\begin{construction}
\label{const:fzip_cohomology}
Given an $F$-zip $\bm{M}$ over $R$, we obtain $\Mod{\Field_p}$-valued prestacks over $R$:
\begin{align*}
R\Gamma^*_{\Fzip}(\bm{M}):C&\mapsto \mathrm{RHom}_{\mathrm{QCoh}(C^{\Fzip})}(\bm{M}\vert_{C^{\Fzip}},\Reg{C^{\Fzip}})\;;\\
R\Gamma_{\Fzip}(\bm{M}):C&\mapsto R\Gamma(C^{\Fzip},\bm{M}\vert_{C^{\Fzip}}).
\end{align*}
We can make this `$F$-zip cohomology' quite explicit using the description of the stack from Remark~\ref{rem:fzip_stack_alternate}. Let us do this for the first functor. We obtain two maps
\[
q_1,q_2:\mathrm{RHom}_{R}(M/\Fil^{\mathrm{conj}}_{-1}M,C)\times_{\mathrm{RHom}_R(M,C)}\mathrm{RHom}_{R}(M/\Fil^1_{\mathrm{Hdg}}M,C)\to \mathrm{RHom}_R(\gr^{\mathrm{conj}}_0M,C),
\]
where the first is via restriction to $\gr^{\mathrm{conj}}_0M$, and is $R$-linear, while the second is via 
\[
\mathrm{RHom}_{R}(M/\Fil_{\mathrm{Hdg}}^{1}M,C) \to\mathrm{RHom}_{R}(\gr^0_{\mathrm{Hdg}}M,C) \xrightarrow{\varphi^*\circ \alpha} \mathrm{RHom}_R(\gr^{\mathrm{conj}}_0M,C)
\]
and so is $\varphi$-semilinear. We now have:
\begin{align}
\label{eqn:section_fzips_perf_comp}
R\Gamma^*_{F\mathrm{Zip}}(\bm{M})(C) \simeq \hker\left(q_1-q_2\right).
\end{align}
For a full justification of this isomorphism, one can argue as in the proof of Lemma~\ref{lem:gamma_fzip_description} below.
\end{construction}

\begin{remark}
\label{rem:fzip_cohom-perfect_f-gauges}
If $\bm{M}$ is perfect with dual $F$-zip $\bm{M}^\vee$, then we have a canonical isomorphism
\[
R\Gamma^*_{\Fzip}(\bm{M})\xrightarrow{\simeq}R\Gamma_{\Fzip}(\bm{M}^\vee).
\]
\end{remark}

\begin{remark}
\label{rem:fzips_from_A_gauges}
Suppose that $\underline{A}$ is a prismatic frame and that we have fixed an orientation for $(A,I)$ (we will simply refer to this data as an \emph{oriented} prismatic frame from now on). Then Remark~\ref{rem:mod_p_and_I_frames} tells us that we have a map of prestacks $R_{A^\modpI}^{\Fzip}\to \mathfrak{S}(\underline{A})$, where 
\[
R_{A^\modpI} = R_A/{}^{\mathbb{L}}(I,p).
\]
In particular, there is a symmetric monoidal functor 
\[
\underline{A}\mathsf{-gauge}\xrightarrow{(\mathcal{M},\xi)\mapsto \bm{M}^\modpI} \mathrm{QCoh}(R_{A^\modpI}^{\Fzip}).
\]
If $\underline{A}$ is a $p$-adic prismatic frame, then we actually have a map $R_A^{\Fzip}\to \mathfrak{S}(\underline{A})$, and so we obtain a symmetric monoidal functor
\[
\underline{A}\mathsf{-gauge}\xrightarrow{(\mathcal{M},\xi)\mapsto \bm{M}} \mathrm{QCoh}(R_{A}^{\Fzip})
\]
lifting the previous one.
\end{remark}

\subsection{Abstract deformation theory for $1$-bounded stacks}
\label{subsec:abstract_def_theory} 
For any frame $\underline{B}$, we can view $\Rees(\Fil^\bullet B)$ as an  $R_B$-pointed graded formal stack using Remark~\ref{rem:abstract_de_rham_point}. More precisely, for every $n,m\ge 1$, we can view $\Rees(\Fil^\bullet B)\otimes_B B/{}^{\mathbb{L}}(p^n,I^m)$ as an $R_B/{}^{\mathbb{L}}(p^n,I^m)$-pointed graded stack. We will assume for simplicity that $\underline{B}$ is an oriented prismatic frame as in Remark~\ref{rem:fzips_from_A_gauges}. Given a map of prismatic frames $\underline{B}\to \underline{C}$, $\underline{C}$ again inherits the orientation.

\begin{construction}
\label{const:1_bounded_sections}
Suppose that $\mathcal{X} = (\mathcal{X}^\preoneb,X^0)\to \Rees(\Fil^\bullet B)$ is a strongly integrable $1$-bounded formal stack. By this, we mean that it is an inverse system of strongly integrable (in the sense of Definition~\ref{defn:strongly_integrable}) $1$-bounded stacks over $\Rees(\Fil^\bullet B)\otimes_BB/{}^{\mathbb{L}}(p^n,I^m)$. Suppose also that it is equipped with an isomorphism $\xi:\sigma^*\mathcal{X}^\preoneb\xrightarrow{\simeq}\tau^*\mathcal{X}^\preoneb$ of formal stacks over $\Spf B$. Then, for any $p$-completely \'etale ${R_B}$-algebra ${R_{B'}}$ we will set
\[
\begin{diagram}
\Gamma_{\underline{B}}(\mathcal{X},\xi)(R_{B'}) \defn \mathrm{eq}\biggl(\Map(\Rees(\Fil^\bullet B'),\mathcal{X})&\pile{\rTo^{\xi\circ\sigma^*}\\ \rTo_{\tau^*}}&\mathcal{X}^\preoneb(B')\biggr).
\end{diagram}
\]
All mapping spaces here are of formal prestacks over $\Rees(\Fil^\bullet B)$, and we have set $\mathcal{X}^\preoneb(B') = \Map(\Spf B',\mathcal{X}^\preoneb)$, where we are viewing $\Spf(B')$ as a formal scheme over $\Rees(\Fil^\bullet B)$ via the map $\tau$.

Equivalently, if $\mathfrak{S}(\underline{B})$ is as in Remark~\ref{rem:B'Gmu_abstract}, then $\xi$ gives a descent $\mathcal{X}_\xi\to \mathfrak{S}(\underline{B})$ for $\mathcal{X}$, and we have
\[
\Gamma_{\underline{B}}(\mathcal{X},\xi)(R_{B'})  = \Map(\mathfrak{S}(\underline{B}),\mathcal{X}_\xi)
\]
\end{construction}

\begin{remark}
\label{rem:weil_restriction_commuting_square}
Suppose that the structure map for $\mathcal{X}^\preoneb$ factors through $\Rees(\Fil^\bullet B)\otimes\Int/p^n\Int$ for some $n\ge 1$. For clarity, denote the corresponding stack by
\[
\mathcal{Y}^\preoneb\to \Rees(\Fil^\bullet B)\otimes\Int/p^n\Int.
\]
Since $\Rees(\Fil^\bullet B)\otimes\Int/p^n\Int$ is $R_B/{}^{\mathbb{L}}p^n$-pointed, the asociated fixed point stack $Y^{\preoneb,0}$ lives over $R_B/{}^{\mathbb{L}}p^n$, and the fixed point stack $X^{\preoneb,0}\to \Spf R_B$ for $\mathcal{X}^\preoneb$ is obtained via Weil restriction (see \S\ref{subsec:weil}):  $X^{\preoneb,0} = \Res_{(R_B/{}^{\mathbb{L}}p^n)/R_B}Y^{\preoneb,0}$. The open substack $X^0\subset X^{\preoneb,0}$ of $1$-bounded points determines and is determined by a canonical open substack $Y^0\subset Y^{\preoneb,0}$ such that $X^0$ is the Weil restriction of $Y^0$ along $R_B\to R_B/{}^{\mathbb{L}}p^n$. The attractor stack $X^-$ is similarly the Weil restriction of $Y^- = Y^{\preoneb,-}\times_{Y^{\preoneb,0}}Y^0$.

In particular, we find that $\mathcal{X}$ arises from a $1$-bounded stack $\mathcal{Y} = (\mathcal{Y}^\preoneb,Y^0)$ over $\Rees(\Fil^\bullet B)\otimes\Int/p^n\Int$. Suppose now that the isomorphism $\xi$ arises from an isomorphism $\xi_n:\sigma^* \mathcal{Y}^\preoneb \xrightarrow{\simeq} \tau^* \mathcal{Y}^\preoneb$ of stacks over $B/{}^{\mathbb{L}}p^n$. Then we can define
\[
\begin{diagram}
\Gamma_{\underline{B}}(\mathcal{Y},\xi_n)(R_{B'}) \defn \mathrm{eq}\biggl(\Map(\Rees(\Fil^\bullet B')\otimes\Int/p^n\Int,\mathcal{Y})&\pile{\rTo^{\xi_n\circ\sigma^*}\\ \rTo_{\tau^*}}&\mathcal{Y}^\preoneb(B'/{}^{\mathbb{L}}p^n)\biggr).
\end{diagram}
\]
We now have a canonical isomorphism of prestacks $\Gamma_{\underline{B}}(\mathcal{Y},\xi_n)\xrightarrow{\simeq}\Gamma_{\underline{B}}(\mathcal{X},\xi)$
\end{remark}

 \begin{remark}
\label{rem:1_bounded_sections_dependence}
By the filtered integrability of $\mathcal{X}$, we have
\begin{align*}
\Map(\Rees(\Fil^\bullet B'),\mathcal{X})&\simeq \Map(\Aff^1/\Gm\times \Spf R_{B'},\mathcal{X})\times_{\mathcal{X}^\preoneb(R_{B'})}\mathcal{X}^\preoneb(B').
\end{align*}
Indeed, it is enough to know that we have
\[
\Map(\Rees(\Fil^\bullet B'/{}^{\mathbb{L}}(p^n,I^m)),\mathcal{X})\simeq \Map(\Aff^1/\Gm\times \Spec R_{B'}/{}^{\mathbb{L}}(p^n,I^m),\mathcal{X})\times_{\mathcal{X}^\preoneb(R_{B'}/{}^{\mathbb{L}}(p^n,I^m))}\mathcal{X}^\preoneb(B'/{}^{\mathbb{L}}(p^n,I^m))
\]
for each $n,m\ge 1$, and this follows from the filtered integrability assumption, because the kernel of $\pi_0(B'/{}^{\mathbb{L}}(p,I))\to \pi_0(R_{B'}{}^{\mathbb{L}}(p,I))$ is locally nilpotent, as observed in the proof of Proposition~\ref{prop:frame_props_hensel}.
\end{remark}

\begin{construction}
\label{const:maps_between_frames}
Let $q:\underline{B}\to \underline{A}$ be a map of frames such that the associated map ${R_B}\to {R_A}$ is a locally nilpotent thickening---that is, $\pi_0({R_B})\to \pi_0({R_A})$ is a surjection with locally nilpotent kernel. We now have a canonical equivalence of small $p$-completely \'etale sites $(R_B)_{\et}\xrightarrow[\simeq]{B'\mapsto A'}(R_A)_{\et}$. Write $\Gamma_{\underline{A}}(\mathcal{X},\xi)$ for the sheaf on $(R_B)_{\et}\xrightarrow{\simeq}(R_A)_{\et}$ obtained from the pullback of $(\mathcal{X},\xi)$ over $\Rees(\Fil^\bullet A)$. We then obtain a canonical map of sheaves
\[
q_*:\Gamma_{\underline{B}}(\mathcal{X},\xi)\to \Gamma_{\underline{A}}(\mathcal{X},\xi).
\]
\end{construction}

\begin{definition}
Let $X^{-}$ be the (formal) attractor on $p$-complete $R_B$-algebras given by
\[
R\mapsto \Map(\Aff^1/\Gm\times \Spf R,\mathcal{X}),
\]
and let $X$ be the stack $R\mapsto \mathcal{X}^\preoneb(R) = \Map(\Spf R,\mathcal{X}^\preoneb)$. For any map of frames $\underline{B}\to \underline{A}$ as in Construction~\ref{const:maps_between_frames}, write $X^-_A$ (resp. $X_A$) for the sheaf $X^-_A:R_{B'}\mapsto X^-(R_{A'})$ (resp. $X_A:R_{B'}\mapsto X(R_{A'})$) on $(R_B)_{\et}$.
\end{definition}

\begin{construction}
\label{const:commuting_square_frames}
Suppose that $B\twoheadrightarrow A$ is surjective, and set $K = \hker(B\twoheadrightarrow A)$. Suppose in addition that the map $B\to {R_B}$ factors through a map $\pi:A\to {R_B}$ lifting $A\to {R_A}$; equivalently, $K\to B$ factors through a map $\Fil^1B\to B$. Note that this is trivially the case if either $B\xrightarrow{\simeq}A$ or ${R_B}\xrightarrow{\simeq}{R_A}$. Then we have a commuting diagram
\[
\Square{\Gamma_{\underline{B}}(\mathcal{X},\xi)}{}{X^{-}}{q_*}{}{\Gamma_{\underline{A}}(\mathcal{X},\xi)}{}{X^{-}_A\times_{X_A}X}.
\]
of sheaves on $(R_B)_{\et}$. Here, the top arrow arises from pullback along the map from Remark~\ref{rem:abstract_de_rham_point} (for $\underline{B}$), while that on the bottom is from this map (for $\underline{A}$) in the first co-ordinate combined with pullback along $\tau\circ \Spf(\pi):\Spf R_B\to \Rees(\Fil^\bullet A)$ in the second co-ordinate.
\end{construction}

There is a trivial situation in which the square from Construction~\ref{const:commuting_square_frames} is Cartesian.
\begin{proposition}
\label{prop:def_theory_frames_trivial}
Suppose that $q:B\xrightarrow{\simeq}A$, and the lift $A\xrightarrow{\simeq}B\to {R_B}$ is the obvious one. Then the square in Construction~\ref{const:commuting_square_frames} obtained via the map $A\xrightarrow{\simeq}B\to {R_B}$ is Cartesian.
\end{proposition}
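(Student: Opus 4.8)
The plan is to unwind both sides of the square of Remark~\ref{rem:commuting_square_frames} using the explicit description of the sections prestacks from Remark~\ref{rem:1_bounded_sections_dependence}, and then to observe that the difference between $\semiframe{A}$ and $\semiframe{B}$ — namely the quotient — is visible only through the attractor $X^{-}$ appearing on the $\tau$-side, while the $\sigma$-side is computed over $\overline{A}$ alone and is therefore insensitive to it. Since $q$ is an isomorphism of underlying rings I identify $B = A$ via $q$; then the generalized Cartier divisor $I$, the na\"ive Frobenius lift $\varphi$, and hence $\overline{A} = A/{}^{\mathbb{L}}I$ coincide for $\semiframe{A}$ and $\semiframe{B}$, with $\Fil^1 B = \hker(A\twoheadrightarrow R_B)\subseteq \Fil^1 A = \hker(A\twoheadrightarrow R_A)$ and $R_B\twoheadrightarrow R_A$ the given locally nilpotent thickening; the ``obvious'' lift $\pi$ is just the defining surjection $A = B\twoheadrightarrow R_B$.

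Writing $X(-) = \mathcal{X}^\diamond(-)$ and $X^{-}(-) = \Map(\Aff^1/\Gm\times\Spec(-),\mathcal{X})$ as in Remark~\ref{rem:commuting_square_frames}, I would first use (the semiframe form of) Proposition~\ref{prop:1_bounded_cartesian} — the kernels of $\pi_0(A)\to\pi_0(R_C)$ being nilpotent, as in the proof of Proposition~\ref{prop:frame_props_hensel} — together with Remark~\ref{rem:1_bounded_sections_dependence} to identify, for $\semiframe{C}\in\{\semiframe{A},\semiframe{B}\}$,
\[
\Gamma_{\semiframe{C}}(\mathcal{X},\xi)(R_C)\;\simeq\;\mathrm{eq}\Bigl(X^{-}(R_C)\times_{X(R_C)}X(A)\;\rightrightarrows\;X(A)\Bigr),
\]
where $\tau^*$ is the projection to the factor $X(A)$ and $\xi\circ\sigma^*$ is the composition, spelled out in Remark~\ref{rem:1_bounded_sections_dependence}, that factors through the attractor of $\sigma^*\mathcal{X}^\diamond$ over $\overline{A}$. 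I would then record the purely formal equivalence of iterated fibre products
\[
X^{-}(R_B)\times_{X(R_B)}X(A)\;\xrightarrow{\simeq}\;\bigl(X^{-}(R_A)\times_{X(R_A)}X(A)\bigr)\times_{X^{-}(R_A)\times_{X(R_A)}X(R_B)}X^{-}(R_B),
\]
which holds since the $X(A)$-factor does not change under base change along $R_B\twoheadrightarrow R_A$, the comparison map corresponding to the first projection. Rewriting the two equalizers as fibre products over the diagonal of $X(A)\times X(A)$ and combining with this equivalence, the assertion that the square of Remark~\ref{rem:commuting_square_frames} is Cartesian reduces to the claim that the pair $(\xi\sigma^*,\tau^*)$ for $\semiframe{B}$ is the pullback, along the comparison map $X^{-}(R_B)\times_{X(R_B)}X(A)\to X^{-}(R_A)\times_{X(R_A)}X(A)$, of the corresponding pair for $\semiframe{A}$.

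For the $\tau^*$-components this is immediate, both being the projection to the common factor $X(A)$. For the $\xi\sigma^*$-components the key observation is that the structure maps to $\overline{A} = A/{}^{\mathbb{L}}I$ are compatible, i.e.\ $\theta_B\colon R_B\to\overline{A}$ factors as $R_B\twoheadrightarrow R_A\xrightarrow{\theta_A}\overline{A}$, since $\theta_A$ and $\theta_B$ are both induced by the common Frobenius lift $\varphi\colon A\to A$. Consequently the operation ``restrict the attractor along $\theta$, twist by $\varphi$'' entering $\xi\sigma^*$ for $\semiframe{B}$ factors through the same operation for $\semiframe{A}$, and every remaining ingredient of $\xi\sigma^*$ (the $A$- and $\overline{A}$-level data, the $\varphi$-twist on $X(A)$, and the gluing datum $\xi$) is literally the same for the two semiframes; so the two descriptions of $\xi\sigma^*$ agree. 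I expect the only delicate part to be the bookkeeping of the nested fibre products produced by Remark~\ref{rem:1_bounded_sections_dependence} and the verification that the comparison map is compatible with all the projections in sight; once the factorization $\theta_B = \theta_A\circ(R_B\twoheadrightarrow R_A)$ is noted, this is routine, which is precisely what makes the proposition ``trivial''.
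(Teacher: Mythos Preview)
Your proposal is correct and follows essentially the same approach as the paper: both use filtered integrability to identify $\Map(\Rees(\Fil^\bullet_{(C\twoheadrightarrow R_C)}C),\mathcal{X})\simeq X^{-}(R_C)\times_{X(R_C)}X(A)$ for $C\in\{A,B\}$, after which the Cartesian property is a formal fibre-product manipulation. The paper compresses everything after this identification into ``follows quite formally,'' whereas you spell out the compatibility of $\tau^*$ and $\xi\circ\sigma^*$ under base change along $R_B\twoheadrightarrow R_A$ (via the factorization $\theta_B=\theta_A\circ(R_B\twoheadrightarrow R_A)$); this extra bookkeeping is correct but not strictly necessary, since once both equalizers have the same target $X(A)$ and the $\sigma$-side depends only on data over $A$ and $\overline{A}$, the argument collapses.
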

\begin{proof}
For any $p$-completely \'etale map ${R_B}\to {R_{B'}}$ reducing to ${R_A}\to {R_{A'}}$, first note that our hypothesis says that $X(B')\xrightarrow{\simeq}X(A')$.

By Remark~\ref{rem:1_bounded_sections_dependence}, we have
\begin{align*}
\Map(\Rees(\Fil^\bullet B'),\mathcal{X})&\simeq X^{-}(R_{B'})\times_{X(R_{B'})}\mathcal{X}^\preoneb(B');\\
\Map(\Rees(\Fil^\bullet A'),\mathcal{X})&\simeq X^{-}(R_{A'})\times_{X(R_{A'})}\mathcal{X}^\preoneb(A').
\end{align*}

The proposition follows quite formally now from the two previous paragraphs.
\end{proof}

\begin{remark}
\label{rem:phi1_lifts}
With the hypotheses of Construction~\ref{const:commuting_square_frames}, set $\widetilde{\Fil}^1A = \hker(\pi: A\to {R_B})$, which factors through $\Fil^1A\to A$ via a map $v:\widetilde{\Fil}^1A \to \Fil^1A$. This gives us a canonical $\varphi$-semilinear map $\dot{\varphi}_1:K\to K$ such that we have a commuting diagram with exact rows\footnote{Here and elsewhere we are using the orientation on $(B,I)$ to identify the target of the divided Frobenius lift with $B$.}:
\[
\begin{diagram}
K&\rTo&\Fil^1B&\rTo&\widetilde{\Fil}^1A\\
\dTo^{\dot{\varphi}_1}&&\dTo^{\varphi_1}&&\dTo_{(\varphi_1)\circ v}\\
K&\rTo&B&\rTo&A.
\end{diagram}
\]
\end{remark}

\begin{proposition}
\label{prop:def_theory_frames}
With the hypotheses and notation of Remark~\ref{rem:phi1_lifts}, suppose that the following additional conditions hold:
\begin{enumerate}
   \item $\underline{B}$ and $\underline{A}$ are prismatic.
   \item $\pi_0(\Fil^1 B)\twoheadrightarrow \pi_0(\Fil^1 A)$ is surjective; equivalently,
   \[
\Fil^1K = \hker(\Fil^1B\to \Fil^1A)
\]
is connective.

   \item The map $K\to K$ induced by $\dot{\varphi}_1$ is topologically locally nilpotent; equivalently, the endomorphism induced on $K/{}^{\mathbb{L}}(p,I)$ is locally nilpotent.\footnote{That is, an endomorphism that is a filtered colimit of nilpotent endomorphisms.}
\end{enumerate}
Then the commuting square in Construction~\ref{const:commuting_square_frames} is Cartesian.
\end{proposition}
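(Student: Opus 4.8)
The plan is to run the "unique lifting principle" going back in essence to Zink, and exploited in this context by Lau~\cite{MR4355476} and B\"ultel--Pappas~\cite{MR4120808}. We must show that for every section $s_A\in\Gamma_{\semiframe{A}}(\mathcal{X},\xi)(R_A)$ and every point of $X^-(R_B)$ lying over the image of $s_A$ in $X^-(R_A)\times_{X(R_A)}X(R_B)$, the homotopy fibre of $q_*$ over this data is contractible. Since $q_*$ is a morphism of \'etale sheaves on the common small site $(R_B)_{\et}\simeq(R_A)_{\et}$, and all the prestacks in sight are nilcomplete (being built from the locally finitely presented stack $\mathcal{X}^\diamond$), one may argue on stalks; the case $K=0$ is exactly Proposition~\ref{prop:def_theory_frames_trivial}, and the task is to handle general connective $K$.

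First I would unwind the sections using the filtered integrability of $\mathcal{X}$ (Remark~\ref{rem:1_bounded_sections_dependence}): a section over $\semiframe{B}$ is a pair $(\alpha_B^-,m_B)$ with $\alpha_B^-\in X^-(R_B)$ and $m_B\in\mathcal{X}^\diamond(B)$ lifting the de Rham point $\alpha_B^-|_{\Spec R_B}$, together with a homotopy realizing the descent identity $m_B\simeq\Theta_B(\alpha_B^-,m_B)$, where $\Theta_B:=\xi\circ\sigma_B^*$; similarly over $\semiframe{A}$. Thus, having fixed $\alpha_B^-$ (with de Rham point $m'_B$ over $R_B$) and the section $s_A=(\alpha_A^-,m_A)$ below it, the fibre to be shown contractible is the space of homotopy fixed points of $\Theta_B$ acting on the space $L$ of lifts of $m_A$ along $q$ that restrict to $m'_B$ over $R_B$. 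By the deformation theory of the locally finitely presented $\mathcal{X}^\diamond$ (in the form of \S\ref{subsec:filtered_square-zero}), $L$, once nonempty, is a torsor under the connective mapping space $\Map_{\Mod{B}}(\mathbb{L}_{\mathcal{X}^\diamond,m_A},K')$, where $K'=\hker(K\to\gr^0K)$ is connective by hypothesis~(1) (and equals $K$ when $R_B\xrightarrow{\simeq}R_A$); moreover $\Theta_B$ carries $L$ into $L$, since over $\semiframe{A}$ the descent identity $m_A\simeq\Theta_A(\alpha_A^-,m_A)$ holds and $\Theta_B$ is compatible with $\Theta_A$ under $q$ and with the de Rham points over $R_B$.

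The key computation is that the self-map $\Theta_B$ of the torsor $L$ is affine over the $\Map_{\Mod{B}}(\mathbb{L}_{\mathcal{X}^\diamond,m_A},K')$-action, with linear part given by post-composition with the endomorphism $\theta$ of $K'$ induced by the map $\dot{\varphi}_1$ of Remark~\ref{rem:phi1_lifts} (incorporating the $\mu$-twist built into $\sigma_B$). Granting this, one concludes as follows: by hypothesis~(2), $\dot{\varphi}_1$ is topologically locally nilpotent on $K$; writing $K$ mod $(p,I)$ as a filtered colimit of $\dot{\varphi}_1$-stable pieces on which $\dot{\varphi}_1$ is nilpotent, lifting back by $(p,I)$-completeness, and using that $\mathcal{X}^\diamond$ is locally finitely presented so that the relevant mapping spectra commute with the colimit, one reduces to the case in which the induced endomorphism $\theta$ of the connective spectrum underlying $\Map_{\Mod{B}}(\mathbb{L}_{\mathcal{X}^\diamond,m_A},K')$ is genuinely nilpotent. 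Then $\mathrm{id}-\theta$ is an equivalence (finite geometric series), so the space of homotopy fixed points of the affine self-map $\Theta_B$ — which is the fibre of $\mathrm{id}-\theta$ over the class measuring the failure of a chosen point of $L$ to be fixed — is contractible (and in particular $L\neq\varnothing$). Hence the fibre of $q_*$ is contractible and the square of Remark~\ref{rem:commuting_square_frames} is Cartesian.

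The main obstacle is precisely the identification of the linear part of the $\Theta_B$-action with post-composition by $\dot{\varphi}_1$: this requires tracing through the factorization of $\sigma_B^*$ in Remark~\ref{rem:1_bounded_sections_dependence} through $\Rees(\Fil^\bullet_{\varphi_*(B\twoheadrightarrow\overline{B})}\varphi_*B)$, the compatibility of the filtered cotangent complex of $\mathcal{X}^\diamond$ with base change along $\Phi_\pm$, and — crucially — the $1$-boundedness of $\mathcal{X}$, which is exactly what collapses the matching condition to the single operator $\dot{\varphi}_1$ on $K'=\Fil^1K$ rather than an infinite tower of Frobenius divisions (cf. hypothesis~(1)). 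The remaining bookkeeping — reducing to nilpotent $\theta$ via the $(p,I)$-complete filtered-colimit argument, and the Postnikov-tower reductions justified by nilcompleteness — is routine but must be carried out with care.
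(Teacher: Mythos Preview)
Your proposal has a genuine gap at the step where you assert that the space $L$ of lifts of $m_A$ along $q:B\twoheadrightarrow A$ (restricting to $m'_B$ over $R_B$) is, once nonempty, a torsor under $\Map_{\Mod{B}}(\mathbb{L}_{\mathcal{X}^\diamond,m_A},K')$. The deformation theory of \S\ref{subsec:filtered_square-zero} only gives such a torsor description for \emph{square-zero} extensions, and $B\twoheadrightarrow A$ is merely a surjection of animated rings with connective fiber $K$. For a general such surjection the fiber of $\mathcal{X}^\diamond(B)\to\mathcal{X}^\diamond(A)$ has no such linear structure, and there is no single affine self-map to which one can apply the $(\mathrm{id}-\theta)^{-1}$ trick. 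Your closing remark about ``Postnikov-tower reductions'' does not repair this: Postnikov truncation on $B$ alone does not make the map $B\to A$ square-zero, and you never introduce the further filtration by powers of $K$ (in the discrete case) that would.

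The paper's argument handles exactly this point by a two-step factorization. First it introduces the intermediate semiframe ${}_B\semiframe{A}$ with underlying ring $A$ but with $\widetilde{\Fil}^1A=\hker(A\twoheadrightarrow R_B)$; since ${}_B\semiframe{A}\to\semiframe{A}$ is an equivalence on underlying rings, Proposition~\ref{prop:def_theory_frames_trivial} identifies the comparison over that leg with the desired fibered product over $X^-$. This reduces to the case $R_B\xrightarrow{\simeq}R_A$ (Corollary~\ref{cor:def_theory_frames}). Second, that corollary is proved not by a single unique-lifting step but by iterated square-zero d\'evissage: Postnikov towers on $\semiframe{B}$ and $\semiframe{A}$, then the $p$-adic filtration, then replacing $\Fil^1B$ by its image, and finally the filtration $B_j=B/K^j$ once $K$ is a nilpotent ideal in a discrete $\Field_p$-algebra. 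At each square-zero stage one applies the linear Lemma~\ref{lem:A_gauge_nilpotent}, where the relevant endomorphism is not simply post-composition with $\dot{\varphi}_1$ on $K$ but rather the map $K\otimes_B\gr^{-1}\mathsf{M}\xrightarrow{\dot{\varphi}_1\otimes\overline{\xi}_{-1}}K\otimes_B\mathsf{M}\to K\otimes_B\gr^{-1}\mathsf{M}$ (this is where $1$-boundedness enters, and its local nilpotence is what follows from hypothesis~(2)). Your sketch collapses both the factorization through ${}_B\semiframe{A}$ and the square-zero d\'evissage into a single torsor computation that is not justified as written.
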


\begin{corollary}
\label{cor:def_theory_frames}
With the hypotheses as in Proposition~\ref{prop:def_theory_frames}, if ${R_B}\xrightarrow{\simeq}{R_A}$, then $q_*$ is an equivalence.
\end{corollary}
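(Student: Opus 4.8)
The plan is to obtain the corollary as a formal consequence of Proposition~\ref{prop:def_theory_frames}: once its hypotheses are in force, the Cartesian square of Remark~\ref{rem:commuting_square_frames} has a right-hand vertical arrow that degenerates to an equivalence when $R_B\simeq R_A$, so the left-hand vertical arrow $q_*$ is an equivalence as well; it then remains to upgrade this from sections over $R_B$ to the full \'etale sheaf.

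First I would reduce to a statement about sections. Since $q_*$ is a morphism of \'etale sheaves on $(R_B)_{\et}\simeq (R_A)_{\et}$, it is an equivalence as soon as it induces an equivalence on sections over every $p$-completely \'etale $R_B\to R_{B'}$ (reducing to some $R_A\to R_{A'}$). By the semiframe analogue of Proposition~\ref{prop:frame_props_hensel} (see the discussion after Definition~\ref{defn:semiframe}) such a map lifts uniquely to a $(p,I)$-completely \'etale map $\semiframe{B}\to\semiframe{B'}$, and likewise $\semiframe{A}\to\semiframe{A'}$; using the tensor-product construction of Remark~\ref{rem:operations_on_semiframes} one identifies $\semiframe{A'}$ with $\semiframe{A}\otimes_{\semiframe{B}}\semiframe{B'}$ and obtains a map $q':\semiframe{B'}\to\semiframe{A'}$ of semiframes lying over $q$, while by construction $\Gamma_{\semiframe{B}}(\mathcal{X},\xi)(R_{B'})\simeq\Gamma_{\semiframe{B'}}(\mathcal{X},\xi)(R_{B'})$ compatibly with $q_*$. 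Thus it suffices to prove the analogous equivalence for each $q'$.

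Second, I would check that $q'$ still satisfies all the hypotheses of Proposition~\ref{prop:def_theory_frames}. Since $B\to B'$ is $(p,I)$-completely \'etale, $\Fil^1 B'$ and $K'\defn\hker(B'\twoheadrightarrow A')$ are obtained from $\Fil^1 B$ and $K$ by completed base change along $B\to B'$, so the surjectivity on $\pi_0$ of condition~(1) is preserved; the endomorphism $\dot\varphi_1'$ of $K'$ is the base change of $\dot\varphi_1$, and its reduction mod $(p,I)$ is the base change of that of $\dot\varphi_1$ along an \'etale map, hence still locally nilpotent, giving condition~(2); and the factorization hypothesis of Remark~\ref{rem:commuting_square_frames} is automatic since $R_{B'}\xrightarrow{\simeq}R_{A'}$. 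Applying Proposition~\ref{prop:def_theory_frames} to $q'$, the square relating $\Gamma_{\semiframe{B'}}(\mathcal{X},\xi)(R_{B'})$, $X^{-}(R_{B'})$, $\Gamma_{\semiframe{A'}}(\mathcal{X},\xi)(R_{A'})$ and $X^{-}(R_{A'})\times_{X(R_{A'})}X(R_{B'})$ (with $X$, $X^-$ as in Remark~\ref{rem:commuting_square_frames}, now over $R_{B'}$-algebras) is Cartesian. Because $R_{B'}\xrightarrow{\simeq}R_{A'}$, the maps $X^-(R_{B'})\to X^-(R_{A'})$ and $X(R_{B'})\to X(R_{A'})$ are equivalences, so $X^-(R_{A'})\times_{X(R_{A'})}X(R_{B'})\simeq X^-(R_{B'})$ and the right vertical arrow of the square is an equivalence; hence so is the left vertical arrow, which is the section of $q_*$ over $R_{B'}$.

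The content is entirely carried by Proposition~\ref{prop:def_theory_frames}; the only point requiring a little care --- the mild ``main obstacle'' --- is the bookkeeping in the first two steps: that \'etale maps lift (uniquely) to semiframes, that the base-changed datum $(\semiframe{B'},q',\mathcal{X},\xi)$ still meets the hypotheses of Proposition~\ref{prop:def_theory_frames}, and that formation of $\Gamma_{\semiframe{B}}(\mathcal{X},\xi)$ commutes with this base change. Granting these, the corollary follows at once from the Cartesian square and the fact that its right vertical arrow becomes an equivalence precisely when $R_B\simeq R_A$.
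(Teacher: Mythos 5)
Your argument is circular in the context of the paper's actual logical structure, and it misses the real mathematical content of the corollary.

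In the paper, Corollary~\ref{cor:def_theory_frames} is proved \emph{before} Proposition~\ref{prop:def_theory_frames}, and the proposition's proof explicitly \emph{uses} the corollary: there, one factors $q$ as $\semiframe{B}\to {}_B\semiframe{A}\to \semiframe{A}$, handles the second map with Proposition~\ref{prop:def_theory_frames_trivial}, and then invokes the ``already known'' corollary for the first map, where $R_B\xrightarrow{\simeq}R_{{}_B\semiframe{A}}$. Your proposal inverts this dependency: you assume the Cartesian square of Proposition~\ref{prop:def_theory_frames} and read off the corollary by observing that the right vertical arrow degenerates when $R_B\simeq R_A$. This implication is of course correct as an implication, but it cannot serve as the paper's proof of the corollary, since the only proof of the proposition on offer already presupposes the corollary. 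You would need to supply an independent proof of Proposition~\ref{prop:def_theory_frames} to break the cycle, and you have not done so.

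Relatedly, your proof contains none of the ingredients that actually make the corollary work. The paper's argument is a genuine d\'evissage: for a square-zero extension of semiframes it produces a Cartesian deformation square~\eqref{eqn:semiframe_deformation} controlled by a semiframe-gauge, then invokes Lemma~\ref{lem:A_gauge_nilpotent} --- which is exactly where the $1$-boundedness of $\mathcal{X}$ and the topological local nilpotence of $\dot\varphi_1$ enter --- to see that each deformation step is insensitive to passing from $\semiframe{B}$ to $\semiframe{A}$. This is iterated through the Postnikov tower and the $p$-adic filtration to reduce to the case where $B$ and $\Fil^1B$ are discrete $\Field_p$-algebras, and finally handled by filtering $K\subset \Fil^1B\subset B$ by its powers, using $\varphi$-semilinearity of $\varphi_1$ to descend $\dot\varphi_1$ along the quotients $B/K^j$. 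None of this reduction, and in particular none of the use of Lemma~\ref{lem:A_gauge_nilpotent} and the nilpotence hypothesis, appears in your proposal. Put differently: the hypotheses of the proposition are the nilpotence conditions (1) and (2), and your proof never explains why those conditions matter --- because you have delegated all of that to a proposition whose proof depends on the very statement you are trying to establish.

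The \'etale bookkeeping steps you lay out (lifting \'etale maps to semiframes, compatibility of $\Gamma_{\semiframe{B}}(\mathcal{X},\xi)$ with \'etale base change, stability of the hypotheses) are fine in themselves and are implicit in the paper's setup, but they are not where the difficulty lies.
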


\begin{remark}
The proof of Proposition~\ref{prop:def_theory_frames}, which is the main result of this subsection, will be given below, and is inspired by the arguments of Lau~\cite{MR4355476}, who, in our language here, considered the case of the $1$-bounded stack yielding $(G,\mu)$-windows. Similar arguments, building on the work of Zink~\cite{MR1827031} for nilpotent displays, also appear in ~\cite{MR4120808},~\cite{bartling2022mathcalgmudisplayslocalshtuka} and~\cite{hedayatzadeh_partofard}. All of these arguments involve some kind of nilpotence criterion. Our result here is closest in spirit to that of Lau, since the locus of nilpotence is on the fiber $K$, while in the other works, it is manifested in the display or window that is being deformed. 
\end{remark}

\begin{remark}
\label{rem:witt_surjective_def_theory_application}
Suppose that $\underline{B}$ is a laminated prismatic frame. Then Lemma~\ref{lem:laminated_frames} gives us a canonical map of frames $q:\underline{B}\to \underline{A}\defn \underline{W(R_B)}$. The map $B\to W(R_B)$ is surjective if and only if $B$ surjects onto $W(C)/pW(C)$ where we have set $C = \pi_0(R_B)/p\pi_0(R_B)$. If $C$ is semiperfect, then we have $W(C)/pW(C) = C$, and so this surjectivity is immediate. Therefore, if we knew the topological local nilpotence of the endomorphism $\dot{\varphi}_1$, then it would follow from Corollary~\ref{cor:def_theory_frames} that $q_*$ is an equivalence, and so we can compute $\Gamma_{\underline{B}}(\mathcal{X},\xi)$ using the Witt frame. In practice, this kind of nilpotence is seldom true. However, one can salvage the situation by transferring the nilpotence condition to the pair $(\mathcal{X},\xi)$ instead. See Remark~\ref{rem:nilpotent_locus_witt} below.
\end{remark}

We now begin our preparations for the proof.
\begin{remark}
\label{rem:frames_surjective}
By $(p,I)$-completeness, to see that $B\to A$ is surjective, it is enough to know that $B/{}^{\mathbb{L}}(p,I)\to A/{}^{\mathbb{L}}(p,I)$ is so. Moreover, in the situation of Construction~\ref{const:commuting_square_frames}, we have a commuting diagram with exact rows
\[
\begin{diagram}
K&\rTo&\Fil^1B&\rTo&\widetilde{\Fil}^1A\\
\dTo&&\dTo&&\dTo\\
\Fil^1K&\rTo&\Fil^1B&\rTo&\Fil^1A\\
\dTo&&\dTo&&\dTo\\
K&\rTo&B&\rTo&A
\end{diagram}
\]
where the composition of the vertical arrows on the left is isomorphic to the identity on $K$. In other words, we have a section $K\to \Fil^1K$ splitting the fiber sequence
\[
R_K[-1]\to \Fil^1K\to K,
\]
where $R_K = \hker(R_B\twoheadrightarrow R_A)$. Therefore, we have $\Fil^1K \xrightarrow{\simeq}K\oplus R_K[-1]$. This shows in particular that condition (2) of Proposition~\ref{prop:def_theory_frames} holds if and only if $R_K$ is $1$-connective.
\end{remark}

\begin{construction}
\label{const:K_B_gauge}
Set $\Fil^\bullet K \defn \hker(\Fil^\bullet B\to \Fil^\bullet A)$: The associated quasicoherent sheaf $\mathcal{K}$ over $\Rees(\Fil^\bullet B)$ naturally underlies an $\underline{B}$-gauge $(\mathcal{K},\xi)$, where $\xi$ is determined by the fact that both $\sigma^* \mathcal{K}$ and $\tau^* \mathcal{K}$ can be canonically identified with the quasicoherent sheaf over $\Spf B$ associated with $K$. 
\end{construction}

\begin{lemma}
\label{lem:factors_through_hodge}
There is a canonical fiber sequence of $\underline{B}$-gauges
\[
\mathcal{K}'\to \mathcal{K}\to \mathcal{M}_{\mathrm{par}}(R_K[-1],1)
\]
where $\mathcal{K}'$ has Hodge-Tate weights $\le -1$. Here, $\mathcal{M}_{\mathrm{par}}(R_K[-1],1)$ is the parasitic $\underline{B}$-gauge from Example~\ref{ex:parasitic_gauges}. 
\end{lemma}
\begin{proof}
Let $\Fil^\bullet_{\mathrm{Hdg}}(R_B\otimes_BK)$ be the filtered base-change of $\Fil^\bullet K$ along $\Fil^\bullet B\to \Fil^\bullet_{\mathrm{triv}}R_B$. Then  above factors though $\Fil^1_{\mathrm{Hdg}}(R_B\otimes_BK)$. As observed in Example~\ref{ex:parasitic_gauges}, giving the second map in the purported fiber sequence amounts to giving a map $\Fil^1_{\mathrm{Hdg}}(R_B\otimes_BK)\to R_K[-1]$. To see this, it is enough to know that the map $\Fil^1K\to R_K[-1]$ induced by the splitting from Remark~\ref{rem:frames_surjective} factors through $\Fil^1_{\mathrm{Hdg}}(R_B\otimes_BK)\to R_K[-1]$. Now, there is a natural fiber sequence
\[
\Fil^1B\otimes_BK \to \Fil^1K \to \Fil^1_{\mathrm{Hdg}}(R_B\otimes_BK),
\]
so we want to show that the induced map $\Fil^1B\otimes_BK\to R_K[-1]$ is nullhomotopic. Equivalently, we want to know that it factors through the map $K\to \Fil^1K$ defined in Remark~\ref{rem:frames_surjective}. Unraveling definitions, this amounts to knowing that $\Fil^1B\otimes_BA\to A$ factors through $\widetilde{\Fil}^1A = \hker(A\to R_B)$, which is clear.

To finish, we need to check that $\mathcal{K}' = \hker(\mathcal{K}\to \mathcal{M}_{\mathrm{par}}(R_K[-1],1))$ has Hodge-Tate weights $\leq -1$. This amounts to knowing that, if $\Fil^\bullet K'$ is the associated filtered module, then $\gr^0K' \simeq 0$. Equivalently, we want to know that the map $\gr^0K \to \gr^0\mathsf{M}_{\mathrm{par}}(R_K[-1],1)$ is an isomorphism; but the source and target of this map are both canonically identified with $R_K$ with the map being identified with the identity.
\end{proof}

The key tool for the proof of Proposition~\ref{prop:def_theory_frames} is deformation theory in the form of the next lemma.
\begin{lemma}
\label{lem:def_theory_for_prismatic_frames}
Suppose that we have a commuting diagram of frames
\[
\Square{\underline{B}}{}{\underline{A}}{}{}{\underline{D}}{}{\underline{C}}
\]
satisfying the following properties:
\begin{enumerate}
   \item The horizontal arrows satisfy the hypotheses of Proposition~\ref{prop:def_theory_frames}. 
   \item The lift $C\to R_D$ is compatible with the lift $A\to R_B$ in that we have a commuting square
   \[
   \Square{A}{}{C}{}{}{R_B}{}{R_D}.
   \]
   \item The vertical arrows are square-zero extensions. More precisely, the underlying maps $\Fil^\bullet B \to \Fil^\bullet D$ and $\Fil^\bullet A \to \Fil^\bullet C$ are square-zero extensions of filtered animated commutative rings.

   \item The commuting square 
   \[
\Square{\Gamma_{\underline{D}}(\mathcal{X},\xi)}{}{X^{-}_D}{}{}{\Gamma_{\underline{C}}(\mathcal{X},\xi)}{}{X^{-}_C\times_{X_C}X_D}.
\]
from Construction~\ref{const:commuting_square_frames} applied to $\underline{D}\to \underline{C}$ is Cartesian.
\end{enumerate}
Then the corresponding commuting square for $\underline{B}\to \underline{A}$ is also Cartesian.
\end{lemma}

The bulk of the rest of this subsection will be devoted to the proof of Lemma~\ref{lem:def_theory_for_prismatic_frames}. For now, let us see that the lemma implies the proposition.
\begin{proof}
[Proof of Proposition~\ref{prop:def_theory_frames} assuming Lemma~\ref{lem:def_theory_for_prismatic_frames}]
Applying the lemma to the squares
\[
\Square{\tau_{\leq (k+1)}\underline{B}}{}{\tau_{\leq (k+1)}\underline{A}}{}{}{\tau_{\leq k}\underline{B}}{}{\tau_{\leq k}\underline{A}}
\]
for $k\ge 0$ and using nilcompleteness for the stacks involved, we reduce to the case where $\mathcal{R}(\Fil^\bullet B)$ and $\mathcal{R}(\Fil^\bullet A)$ are classical $(p,I)$-complete formal stacks (with bounded $p$-power and $I$-power torsion).

Now, the images of $K$ in $B/(p,I)B$ and $\pi_0(R_B)/p\pi_0(R_B)$ are locally nilpotent. Therefore, using the strong integrability hypothesis, we find that if we set\footnote{We are implicitly using the following fact: If $I,J\subset B$ are $\delta$-ideals in the classical $\delta$-ring $B$ (that is, we have $\delta(I)\subset I,\delta(J)\subset J$, or equivalently the $\delta$-structure descends to both $B/I$ and $B/J$), then $IJ$ is once again a $\delta$-ideal. This follows easily from the defining identites $\delta(x+y) = \delta(x)+\delta(y) + \sum_{i=1}^{p-1}\frac{1}{p}\binom{p}{i}x^iy^{p-i}$ and $\delta(xy) = x^p\delta(y)+y^p\delta(x)+p\delta(x)\delta(y)$.}
\[
\underline{B}_m = \pi_0(B/K^m\otimes_B\underline{B}),
\]
then we have
\begin{align*}
\Gamma_{\underline{B}}(\mathcal{X},\xi) = \varprojlim_{m}\Gamma_{B_m}(\mathcal{X},\xi)\;;\;X^- = \varprojlim_m X^-_{B_m}\;;\; X = \varprojlim_m X_{B_m}.
\end{align*}
Therefore, by applying the lemma to the squares
\[
\begin{diagram}
\underline{B}_{m+1}&\rTo&\underline{A}\\
\dTo&&\dEquals\\
\underline{B}_m&\rTo&\underline{A},
\end{diagram}
\]
we reduce to the case $B=B_1=A$. Here, the result follows from Proposition~\ref{prop:def_theory_frames_trivial}.
\end{proof}

\begin{construction}
\label{const:A_gauges_RMap}
Suppose that we have $\underline{A}$-gauges $(\mathcal{M},\xi)$, $(\mathcal{M}',\xi')$ corresponding to $(p,I)$-complete filtered modules $\Fil^\bullet \mathsf{M}$, $\Fil^\bullet\mathsf{M}'$ over $\Fil^\bullet A$, with underlying $A$-modules $\mathsf{M}$ and $\mathsf{M}'$, and isomorphisms
\[
\xi:\mathsf{M}_\sigma\xrightarrow{\simeq}\mathsf{M}\;;\;\xi':\mathsf{M}'_\sigma\xrightarrow{\simeq}\mathsf{M}'
\]
of $A$-modules. Here, $\mathsf{M}_\sigma$ and $\mathsf{M}'_\sigma$ are the $A$-modules underlying the quasicoherent sheaves obtained from $\mathcal{M}$ and $\mathcal{M}'$ via restriction along $\sigma$. Set
\[
RH_{\underline{A}}(\mathcal{M},\mathcal{M}')({R_A})\defn \hker(\mathrm{RHom}_{\Fil^\bullet A}(\Fil^\bullet \mathsf{M},\Fil^\bullet \mathsf{M}')\xrightarrow{\xi'\circ\sigma^*\circ \xi^{-1}-\tau^*}\mathrm{RHom}_{A}(\mathsf{M},\mathsf{M}')).
\]
Put more succinctly, we have
\[
RH_{\underline{A}}(\mathcal{M},\mathcal{M}')({R_A}) = \mathrm{RHom}_{\underline{A}\mathsf{-gauge}}((\mathcal{M},\xi),(\mathcal{M}',\xi')).
\]
\end{construction}

\begin{construction}
\label{const:operator_needs_to_be_nilpotent}
Suppose that $\mathcal{M}$ (resp. $\mathcal{M}'$) has Hodge-Tate weights $\ge -1$ (resp. $\leq -1$). Let $\bm{M}^\modpI$ and $\bm{M}^{',\modpI}$ be the associated $F$-zips over $R_{A^\modpI}$ (see Remark~\ref{rem:fzips_from_A_gauges}). They correspond to tuples
\[
(\Fil^\bullet_{\mathrm{Hdg}}M^\modpI,\Fil^{\mathrm{conj}}M^\modpI,\varphi^*\gr^\bullet_{\mathrm{Hdg}}M^\modpI\xrightarrow{\simeq}\gr^{\mathrm{conj}}_\bullet M^\modpI)\;;\; (\Fil^\bullet_{\mathrm{Hdg}}M^{',\modpI},\Fil^{\mathrm{conj}}M^{',\modpI},\varphi^*\gr^\bullet_{\mathrm{Hdg}}M^{',\modpI}\xrightarrow{\simeq}\gr^{\mathrm{conj}}_\bullet M^{',\modpI}).
\]
The hypotheses on the weights tell us that 
\begin{align*}
\gr^i_{\mathrm{Hdg}}M^\modpI \simeq \gr^{\mathrm{conj}}_iM^\modpI \simeq 0\text{ for $i\le 1$};\\
\gr^i_{\mathrm{Hdg}}M^{',\modpI} \simeq \gr^{\mathrm{conj}}_iM^{',\modpI} \simeq 0\text{ for $i\ge 1$};
\end{align*}
Therefore, we obtain maps
\begin{align*}
\psi_{\bm{M}^\modpI}:\Fil^1_{\mathrm{Hdg}}M^\modpI\to M^\modpI \to \gr^{\mathrm{conj}}_1M^\modpI\xrightarrow{\simeq}\varphi^*\Fil^1_{\mathrm{Hdg}}M^\modpI\\
\psi^*_{\bm{M}^{',\modpI}}:\varphi^*\gr^1_{\mathrm{Hdg}}M^{',\modpI}\xrightarrow{\simeq}\Fil^{\mathrm{conj}}_1M^{',\modpI}\to M^{',\modpI} \to \gr^1_{\mathrm{Hdg}}M^{',\modpI}.
\end{align*}
This yields an operator 
\[
\psi^\modpI_{\bm{M},\bm{M}'}: \mathrm{RHom}_{R_{A^\modpI}}(\Fil^1_{\mathrm{Hdg}}M^\modpI,\gr^{1}_{\mathrm{Hdg}}M^{',\modpI})\to \mathrm{RHom}_{R_{A^\modpI}}(\Fil^1_{\mathrm{Hdg}}M^\modpI,\gr^{1}_{\mathrm{Hdg}}M^{',\modpI})
\]
given as the composition
\begin{align*}
\mathrm{RHom}_{R_{A^\modpI}}(\Fil^1_{\mathrm{Hdg}}M^\modpI,\gr^{1}_{\mathrm{Hdg}}M^{',\modpI})&\xrightarrow{\varphi^*}\mathrm{RHom}_{R_{A^\modpI}}(\varphi^*\Fil^1_{\mathrm{Hdg}}M^\modpI,\varphi^*\gr^{1}_{\mathrm{Hdg}}M^{',\modpI})\\
&\simeq \mathrm{RHom}_{R_{A^\modpI}}(\gr^{\mathrm{conj}}_{1}M^\modpI,\Fil^{\mathrm{conj}}_{1}M^{',\modpI})\\
&\to  \mathrm{RHom}_{R_{A^\modpI}}(M^\modpI,M^{',\modpI})\\
&\to \mathrm{RHom}_{R_{A^\modpI}}(\Fil^1_{\mathrm{Hdg}}M^\modpI,\gr^{1}_{\mathrm{Hdg}}M^{',\modpI}).
\end{align*}
Here, the penultimate (resp. last) arrow is obtained via precomposition along $M^\modpI\to \gr^{\mathrm{conj}}_{1}M^\modpI$ (resp. $\Fil^1_{\mathrm{Hdg}}M^\modpI\to M^\modpI$) and postcomposition along $\Fil^{\mathrm{conj}}_1M^{',\modpI}\to M^{',\modpI}$ (resp. $M^{',\modpI}\to \gr^1_{\mathrm{Hdg}}M^{',\modpI}$).
\end{construction}

\begin{remark}
\label{rem:operator_needs_to_be_nilpotent}
If $\underline{A}$ is $p$-adic and $R_A$ is an $\Field_p$-algebra, then we obtain $F$-zips $\bm{M},\bm{M}'$ over $R_A$ from which $\bm{M}^\modpI,\bm{M}^{',\modpI}$ are produced via base-change along the natural map $R_A\to R_{A^\modpI}$. Moreover, the operator $\psi^\modpI_{\bm{M},\bm{M}'}$ arises from an operator
\[
\psi_{\bm{M},\bm{M}'}:\mathrm{RHom}_{R_{A}}(\Fil^1_{\mathrm{Hdg}}M,\gr^{1}_{\mathrm{Hdg}}M^{'})\to \mathrm{RHom}_{R_{A}}(\Fil^1_{\mathrm{Hdg}}M,\gr^{1}_{\mathrm{Hdg}}M^{'}).
\]
Since $R_{A^\modpI} = R/{}^{\mathbb{L}}(p,I)\simeq R/{}^{\mathbb{L}}(0,0)$ is an iterated square-zero extension of $R_A$, $\psi^\modpI_{\bm{M},\bm{M}'}$ is locally nilpotent if and only if $\psi_{\bm{M},\bm{M}^{'}}$ is so.
\end{remark}

\begin{lemma}
\label{lem:A_gauge_nilpotent}
Suppose that we have $\underline{A}$-gauges $(\mathcal{M},\xi)$ and $(\mathcal{M}',\xi)$ with the following properties:
\begin{enumerate}
   \item $\mathcal{M}$ is almost perfect with Hodge-Tate weights $\ge -1$ and $\mathcal{M}'$ has Hodge-Tate weights $\leq -1$;
   \item The operator $\psi^\modpI_{\bm{M},\bm{M}'}$ is locally nilpotent.
\end{enumerate}
Then $RH_{\underline{A}}(\mathcal{M},\mathcal{M}')({R_A})\simeq 0$.
\end{lemma} 
\begin{proof}
With the notation as in Construction~\ref{const:A_gauges_RMap}, we want to show that the map
\[
\xi'\circ \sigma^*\circ \xi^{-1} - \tau^*:\mathrm{RHom}_{\Fil^\bullet A}(\Fil^\bullet \mathsf{M},\Fil^\bullet \mathsf{M}'')\to \mathrm{RHom}_{A}(\mathsf{M},\mathsf{M}')
\]
is an equivalence.

Let $\Fil^\bullet_{\mathrm{Hdg}}M'$ be the filtered $R_A$-module obtained via pullback along $\Aff^1/\Gm\times\Spf R_A \to \Rees(\Fil^\bullet A)$. Our hypothesis on the Hodge-Tate weights of $\mathcal{M}'$ implies
\[
\gr^{1}\mathsf{M}'\xrightarrow{\simeq} \gr^{1}_{\mathrm{Hdg}}M'\;;\; \gr^i\mathsf{M}'\simeq 0\text{ for $i<1$}.
\]
This can be seen for instance from Lemma~\ref{lem:graded_weight_filtration} and its proof. Arguing as in the proof of Proposition~\ref{prop:1_bounded_cartesian}, and using the condition on the Hodge-Tate weights of $\mathcal{M}$, one now finds that the natural maps
\[
\mathrm{RHom}_{\Fil^\bullet A}(\Fil^\bullet \mathsf{M},\Fil^\bullet \mathsf{M}')\to \mathrm{RHom}_{\Fil^\bullet A}(\Fil^\bullet \mathsf{M},\Fil^\bullet \mathsf{M}'(-j))
\]
are equivalences for all $j\ge 1$, and this shows that the map
\[
\tau^*:\mathrm{RHom}_{\Fil^\bullet A}(\Fil^\bullet \mathsf{M},\Fil^\bullet \mathsf{M}')\to \mathrm{RHom}_{A}(\mathsf{M},\mathsf{M}')
\]
is an equivalence.

To finish, therefore, it is enough to know that the composition
\[
\mathrm{RHom}_{A}(\mathsf{M},\mathsf{M}')\xrightarrow{(\tau^*)^{-1}}\mathrm{RHom}_{\Fil^\bullet A}(\Fil^\bullet \mathsf{M},\Fil^\bullet \mathsf{M}')\xrightarrow{\xi'\circ \sigma^*\circ \xi^{-1}}\mathrm{RHom}_{A}(\mathsf{M},\mathsf{M}')
\]
is a topologically locally nilpotent endomorphism. We will in fact see that, with $\mathcal{M}^{',\modpI} = \mathcal{M}'/{}^{\mathbb{L}}(p,I)$, the induced endomorphism of $\mathrm{RHom}_{A}(\mathsf{M},\mathsf{M}^{',\modpI})$ is locally nilpotent. 

Let $\Fil^\bullet \varphi^*\mathsf{M}$ and $\Fil^\bullet\varphi^*\mathsf{M}'$ be the filtered complexes over $\Fil^\bullet_IA$ obtained via base-change along $\Phi:\Fil^\bullet A\to \Fil^\bullet_IA$. Then the condition on the Hodge-Tate weights of $\mathcal{M}$ tells us that we have
\[
\mathsf{M}_\sigma \xrightarrow{\simeq}I^{-1}\otimes_A\Fil^1\varphi^*\mathsf{M}\simeq \Fil^1\varphi^*\mathsf{M}, 
\]
where we have used the chosen orientation for $I$ for the last isomorphism. From this, we obtain a map
\[
\overline{\zeta}:\mathsf{M}\xrightarrow[\simeq]{\xi^{-1}}\mathsf{M}_\sigma\xrightarrow{\simeq}\Fil^1\varphi^*\mathsf{M}\to \overline{A}\otimes_{\overline{\varphi},R_A}\Fil^1_{\mathrm{Hdg}}M.
\]
On the other hand, consider the composition
\[
\varphi^*\mathsf{M}'= \Fil^1\varphi^*\mathsf{M}'\xrightarrow{\simeq}I^{-1}\otimes_A\Fil^1\varphi^*\mathsf{M}'\to \mathsf{M}'_\sigma\xrightarrow[\simeq]{\xi'}\mathsf{M}'.
\]
This factors through a map
\[
\overline{\xi}:\overline{A}\otimes_{\overline\varphi,R_A}\gr^{1}M^{',\modpI}\xrightarrow{\simeq}\gr^1\varphi^*\mathsf{M}^{',\modpI}\to \mathsf{M}^{',\modpI}.
\]

One now checks that the relevant endomorphism of $\mathrm{RHom}_{A}(\mathsf{M},\mathsf{M}^{',\modpI})$ is given as the composition
\begin{align*}
   \mathrm{RHom}_{A}(\mathsf{M},\mathsf{M}^{',\modpI})&\to \mathrm{RHom}_{R_{A^\modpI}}(M^\modpI,M^{',\modpI})\\
   &\to \mathrm{RHom}_{R_{A^\modpI}}(\Fil^1_{\mathrm{Hdg}}M^\modpI,\gr^1_{\mathrm{Hdg}}M^{',\modpI})\\
   &\xrightarrow{\overline{A}\otimes_{\overline\varphi,R_A}(\cdot)}\mathrm{RHom}_{\overline{A}^\modpI}(\overline{A}\otimes_{\overline\varphi,R_A}\Fil^1_{\mathrm{Hdg}}M^\modpI,\overline{A}\otimes_{\overline\varphi,R_A}\gr^1_{\mathrm{Hdg}}M^{',\modpI})\\
   &\xrightarrow{\overline\xi\circ(\cdot)\circ\overline{\zeta}} \mathrm{RHom}_{A}(\mathsf{M},\mathsf{M}^{',\modpI}).
\end{align*}
This shows that it is enough to know the local nilpotence of the endomorphism obtained as the composition
\begin{align*}
 \mathrm{RHom}_{R_{A^\modpI}}(\Fil^1_{\mathrm{Hdg}}M^\modpI,\gr^1_{\mathrm{Hdg}}M^{',\modpI})&\xrightarrow{\overline{A}\otimes_{\overline\varphi,R_A}(\cdot)}\mathrm{RHom}_{\overline{A}^\modpI}(\overline{A}\otimes_{\overline\varphi,R_A}\Fil^1_{\mathrm{Hdg}}M^\modpI,\overline{A}\otimes_{\overline\varphi,R_A}\gr^1_{\mathrm{Hdg}}M^{',\modpI})\\
   &\xrightarrow{\overline\xi\circ(\cdot)\circ\overline{\zeta}} \mathrm{RHom}_{A}(\mathsf{M},\mathsf{M}^{',\modpI})\\
   &\to \mathrm{RHom}_{R_{A^\modpI}}(M^\modpI,M^{',\modpI})\\
    &\to \mathrm{RHom}_{R_{A^\modpI}}(\Fil^1_{\mathrm{Hdg}}M^\modpI,\gr^1_{\mathrm{Hdg}}M^{',\modpI})
\end{align*}
But this is nothing but $\psi^\modpI_{\bm{M},\bm{M}'}$!
\end{proof}

\begin{remark}
\label{rem:nilpotence_conditions}
The local nilpotence of $\psi^\modpI_{\bm{M},\bm{M}'}$ can be deduced from two finer conditions:
\begin{enumerate}
   \item A condition on $\psi_{\bm{M}^\modpI}$: Successive pullback and composition yields a map
   \[
     \Phi_n:\Map_{R_{A^\modpI}}(\Fil^1_{\mathrm{Hdg}}M^\modpI,\varphi^*\Fil^1_{\mathrm{Hdg}}M^\modpI)\xrightarrow{f\mapsto (\varphi^n)^*f \circ (\varphi^{n-1})^*f \circ \cdots\circ \varphi^*f \circ f}\Map_{R_{A^\modpI}}(\Fil^1_{\mathrm{Hdg}}M^\modpI,(\varphi^{n+1})^*\Fil^1_{\mathrm{Hdg}}M^\modpI)
   \]
   If we have $\Phi_n(\psi_{\bm{M}^\modpI})\simeq 0$ for some $n$, then $\psi^\modpI_{\bm{M},\bm{M}'}$ will be nilpotent.

   \item A condition on $\psi^*_{\bm{M}^{',\modpI}}$: We can view $\psi^*_{\bm{M}^{',\modpI}}$ as an endomorphism in the classical derived category of the connective complex of $\Field_p$-vector spaces underlying $\gr^1_{\mathrm{Hdg}}M^{',\modpI}$; that is, as an element of $\End_{D^{\leq 0}(\Field_p)}(\gr^1_{\mathrm{Hdg}}M^{',\modpI})$. Within this ring, we have the ideal of locally nilpotent endomorphisms
   \[
   \mathrm{End}^{\mathrm{nilp}}_{D^{\leq 0}(\Field_p)}(\gr^1_{\mathrm{Hdg}}M^{',\modpI}),
   \]
   and we can ask that $\psi^*_{\bm{M}^{',\modpI}}$ belong to this ideal. This will imply that $\psi^\modpI_{\bm{M},\bm{M}'}$ is locally nilpotent.
\end{enumerate}
\end{remark}

\begin{construction}
\label{const:f_zip_ideal_psi_star}
Let us put ourselves in the setup of Lemma~\ref{lem:def_theory_for_prismatic_frames}. Set
\begin{align*}
\Fil^\bullet J = \hker(\Fil^\bullet B\to \Fil^\bullet D)\;&;\; \Fil^\bullet I = \hker(\Fil^\bullet A\to \Fil^\bullet C);\\
\Fil^\bullet P = \hker(\Fil^\bullet D\to \Fil^\bullet C)\;&;\; \Fil^\bullet N = \hker(\Fil^\bullet J\to \Fil^\bullet I)\simeq \hker(\Fil^\bullet K\to \Fil^\bullet P).
\end{align*}
Since they are fibers of square zero extensions, we can view $\Fil^\bullet J$ (resp. $\Fil^\bullet I$) as filtered modules over $\Fil^\bullet D$ (resp. $\Fil^\bullet C$). As in Construction~\ref{const:K_B_gauge}, we then obtain a $\underline{D}$-gauge $\mathcal{J}$ (resp. $\underline{C}$-gauge $\mathcal{I}$) with underlying filtered module $\Fil^\bullet J$ (resp. $\Fil^\bullet I$). Similarly, associated with $\Fil^\bullet P$ and $\Fil^\bullet N$, we have $\underline{D}$-gauges $\mathcal{P}$ and $\mathcal{N}$. Note that we have fiber sequences of $\underline{B}$-gauges\footnote{Given a map of frames $\underline{S}\to \underline{T}$, we can via restriction view any $\underline{T}$-gauge also an $\underline{S}$-gauge. We will do so freely in what follows.}
\[
\mathcal{N}\to \mathcal{K}\to \mathcal{P}\;;\; \mathcal{N}\to \mathcal{J}\to \mathcal{I}.  
\]
Moreover, by Lemma~\ref{lem:factors_through_hodge}, we have a commutative diagram of $\underline{B}$-gauges
\[
\begin{diagram}
\mathcal{N}'&\rTo&\mathcal{N}&\rTo&\mathcal{M}_{\mathrm{par}}(R_N[-1],1)\\
\dTo&&\dTo&&\dTo\\
\mathcal{K}'&\rTo&\mathcal{K}&\rTo&\mathcal{M}_{\mathrm{par}}(R_K[-1],1)\\
\dTo&&\dTo&&\dTo\\
\mathcal{P}'&\rTo&\mathcal{P}&\rTo&\mathcal{M}_{\mathrm{par}}(R_P[-1],1)
\end{diagram}
\]
where the rows and columns are all fiber sequences and where the left vertical row only involves $\underline{B}$-gauges with Hodge-Tate weights $\leq -1$. Here, we have set $R_N = \hker(R_K\to R_P)$.
\end{construction}

\begin{remark}
\label{rem:K_P_N_nilpotence}
Let $\bm{K}^{',\modpI}$ be the $F$-zip over $R_{B^\modpI}$ associated with $\mathcal{K}'$. Then, as in Construction~\ref{const:operator_needs_to_be_nilpotent}, we obtain an operator
\[
\psi^*_{\bm{K}^{',\modpI}}:\varphi^*\gr^1_{\mathrm{Hdg}}K^{',\modpI} \to \gr^1_{\mathrm{Hdg}}K^{',\modpI}.
\]
Similarly, we have operators
\[
\psi^*_{\bm{N}^{',\modpI}}:\varphi^*\gr^1_{\mathrm{Hdg}}N^{',\modpI} \to \gr^1_{\mathrm{Hdg}}N^{',\modpI}\;;\; \psi^*_{\bm{P}^{',\modpI}}:\varphi^*\gr^1_{\mathrm{Hdg}}P^{',\modpI} \to \gr^1_{\mathrm{Hdg}}P^{',\modpI}
\]
obtained from $\mathcal{N}'$ and $\mathcal{P}'$, respectively. Now, we have 
\[
\gr^1_{\mathrm{Hdg}}K^{',\modpI}\simeq \gr^1K/{}^{\mathbb{L}}(p,I),
\]
and the underlying $\varphi$-semilinear map for $\psi^*_{\bm{K}^{',\modpI}}$ is the composition
\[
\gr^1K/{}^{\mathbb{L}}(p,I) \to K/{}^{\mathbb{L}}(p,I)\to \Fil^1K/{}^{\mathbb{L}}(p,I)\to \gr^1K/{}^{\mathbb{L}}(p,I)
\]
where the first map is induced from the divided Frobenius lift $\varphi_1:\Fil^1K\to K$, and the second is obtained from the section $K\to \Fil^1K$ seen in Remark~\ref{rem:frames_surjective}. In particular, we find that the topological local nilpotence of $\dot\varphi_1$ implies the local nilpotence of $\psi^*_{\bm{K}^{',\modpI}}$ in the sense of condition (2) of Remark~\ref{rem:nilpotence_conditions}. A similar conclusion holds for $\psi^*_{\bm{P}^{',\modpI}}$, and hence for $\psi^*_{\bm{N}^{',\modpI}}$ as well.
\end{remark}

\begin{construction}
\label{const:various_xs_cotangent}
For each $y\in X(R)$ and $y^-\in X^-(R)$, the relative cotangent complex of $\mathcal{X}^\preoneb$ yields via pullback an almost perfect module $\mathbb{L}_{\mathcal{X}^\preoneb,y}\in \Mod{R}$ and a filtered almost perfect module $\Fil^\bullet\mathbb{L}_{\mathcal{X}^\preoneb,y^-}\in \mathrm{FilMod}_R$. 

For $S = B,A,C,D$, and $Q\in \Mod{S}$, define maps of sheaves
\[
\underline{\Map}(\mathbb{L}_X,Q) \to X_S\;;\; \underline{\Map}(\Fil^1\mathbb{L}^-_X,Q) \to X^-_S\;;\; \underline{\Map}(\mathbb{L}^-_X/\Fil^1\mathbb{L}^-_X,Q) \to X^-_S
\]
as follows: The fiber of $\underline{\Map}(\mathbb{L}_X,Q)$ (resp. $\underline{\Map}(\Fil^1\mathbb{L}^-_X,Q)$, $\underline{\Map}(\mathbb{L}^-_X/\Fil^1\mathbb{L}^-_X,Q)$) over $y\in X_S(R_{B'}) = X(R_{S'})$ (resp. $y^-\in X^-_S(R_{B'}) = X^-(R_{S'})$) is the sheaf on $(R_{B'})_{\et}$ given by
\begin{align*}
B''&\mapsto \Map_{R_{S'}}(\mathbb{L}_{X,y},R_{S''}\otimes_{R_{S}}N)\\
\text{(resp. }B''&\mapsto \Map_{R_{S'}}(\Fil^1\mathbb{L}^-_{X,y^-},R_{S''}\otimes_{R_{S}}N))\\
\text{(resp. }B''&\mapsto \Map_{R_{S'}}(\mathbb{L}^-_{X,y^-}/\Fil^1\mathbb{L}^-_{X,y^-},R_{S''}\otimes_{R_{S}}N))
\end{align*}
\end{construction}

\begin{lemma}
\label{lem:various_xs_cartesian}
There is a canonical Cartesian square of sheaves on $(R_B)_{\et}$
\[
\begin{diagram}
X^-&\rTo&X^-_D\\
\dTo&&\dTo_0\\
(X\times_{X_A}X^-_A)\times_{X_D\times_{X_C}X^-_C}X^-_D&\rTo&\underline{\Map}(\Fil^1\mathbb{L}^-_X,R_N)
\end{diagram}  
\]
where the two arrows out of $X^-$ are the natural ones.
\end{lemma}
\begin{proof}
This is an exercise in deformation theory. An application of Lemma~\ref{lem:attractor_cotangent_complex} gives us canonical Cartesian squares
\[
\begin{diagram}
X^-&\rTo&X^-_D\\
\dTo&&\dTo_0\\
X^-_D&\rTo&\underline{\Map}(\mathbb{L}^-_X/\Fil^1\mathbb{L}^-_X,R_J[1]);
\end{diagram}
\]
and
\[
\begin{diagram}
X\times_{X_A}X^-_A&\rTo&X_D\times_{X_C}X^-_C\\
\dTo&&\dTo_0\\
X_D\times_{X_C}X^-_C&\rTo&\underline{\Map}(\mathbb{L}_X,R_J[1])\times_{\underline{\Map}(\mathbb{L}_X,R_I[1])}\underline{\Map}(\mathbb{L}^-_X/\Fil^1\mathbb{L}^-_X,R_I[1]).
\end{diagram}
\]
The lemma can be shown by combining these diagrams appropriately.
\end{proof}

\begin{proof}
[Proof of Lemma~\ref{lem:def_theory_for_prismatic_frames}]
For simplicity, we will omit $\xi$ from the notation in what follows. It suffices to show that the diagram
\[
\begin{diagram}
\Gamma_{\underline{B}}(\mathcal{X})&\rTo&X^-\\
\dTo&&\dTo\\
\Gamma_{\underline{A}}(\mathcal{X})\times_{\Gamma_{\underline{C}}(\mathcal{X})}\Gamma_{\underline{D}}(\mathcal{X})&\rTo&(X\times_{X_A}X^-_A)\times_{X_D\times_{X_C}X^-_C}X^-_D
\end{diagram}  
\]
is Cartesian. Using the Cartesian square from Lemma~\ref{lem:various_xs_cartesian}, we reduce to showing that the resulting diagram
\[
\begin{diagram}
\Gamma_{\underline{B}}(\mathcal{X})&\rTo&X^-_D\\
\dTo&&\dTo_0\\
\Gamma_{\underline{A}}(\mathcal{X})\times_{\Gamma_{\underline{C}}(\mathcal{X})}\Gamma_{\underline{D}}(\mathcal{X})&\rTo&\underline{\Map}(\Fil^1\mathbb{L}^-_X,R_N)
\end{diagram} 
\]
is Cartesian.

Given any $x\in\Gamma_{\underline{C}}(\mathcal{X},\xi)(R_{C'})$, pulling back the cotangent complex of $\mathcal{X}^\preoneb$ along $x$ yields an almost perfect $\underline{C}'$-gauge $\mathcal{L}(\mathcal{X})_x$. We now obtain a map of sheaves $\underline{H}(\mathcal{L}(\mathcal{X}),\mathcal{I}[1])\to \Gamma_{\underline{C}}(\mathcal{X})$ whose fiber over $x$ is the sheaf of animated abelian groups on $(R_{C'})_{\et}$ given by
\[
H_{\underline{C}'}(\mathcal{L}(\mathcal{X})_x,\underline{C}'\otimes_{\underline{C}}\mathcal{I}[1]) \defn \tau^{\leq 0}RH_{\underline{C}'}(\mathcal{L}(\mathcal{X})_x,\underline{C}'\otimes_{\underline{C}}\mathcal{I}[1]).
\]
Here, we have denoted by $\underline{C}'\otimes_{\underline{C}}\mathcal{I}$ the $\underline{C}'$-gauge obtained from $\mathcal{I}$ via the obvious base-change operation. The same construction applied with $\mathcal{I}$ replaced by $\mathcal{J}$ and $\mathcal{N}$ gives maps of sheaves 
\[
\underline{H}(\mathcal{L}(\mathcal{X}),\mathcal{J}[1])\to \Gamma_{\underline{D}}(\mathcal{X})\;;\; \underline{H}(\mathcal{L}(\mathcal{X}),\mathcal{N}[1])\to \Gamma_{\underline{D}}(\mathcal{X}).
\]

Standard arguments using filtered deformation theory (see the proof of Proposition~\ref{prop:bootstrapping_coeffs} below for instance) tell us that we have canonical isomorphisms of sheaves
\begin{align}\label{eqn:GammaB_GammaD_Gamma_A_Gamma_C}
\Gamma_{\underline{A}}(\mathcal{X}) \xrightarrow{\simeq}\Gamma_{\underline{C}}(\mathcal{X})\times_{\mathrm{ob}_C,\underline{H}(\mathcal{L}(\mathcal{X}),\mathcal{I}[1]),0}\Gamma_{\underline{C}}(\mathcal{X});\\
\Gamma_{\underline{B}}(\mathcal{X}) \xrightarrow{\simeq}\Gamma_{\underline{D}}(\mathcal{X})\times_{\mathrm{ob}_D,\underline{H}(\mathcal{L}(\mathcal{X}),\mathcal{J}[1]),0}\Gamma_{\underline{D}}(\mathcal{X}).\nonumber
\end{align}
for certain sections
\begin{align*}
\mathrm{ob}_C:\Gamma_{\underline{C}}(\mathcal{X})\to \underline{H}(\mathcal{L}(\mathcal{X}),\mathcal{I}[1])\;;&\;
\mathrm{ob}_D:\Gamma_{\underline{D}}(\mathcal{X})\to \underline{H}(\mathcal{L}(\mathcal{X}),\mathcal{J}[1]).
\end{align*}

These isomorphisms imply that the restriction of $\mathrm{ob}_D$ along the projection to the second coordinate lifts to a map
\[
\mathrm{ob}:\Gamma_{\underline{A}}(\mathcal{X})\times_{\Gamma_{\underline{C}}(\mathcal{X})}\Gamma_{\underline{D}}(\mathcal{X})\to \underline{H}(\mathcal{L}(\mathcal{X}),\mathcal{N}[1])
\]
of sheaves over $\Gamma_{\underline{D}}(\mathcal{X})$, and furthermore that we have a canonical Cartesian diagram
\[
\begin{diagram}
\Gamma_{\underline{B}}(\mathcal{X})&\rTo&\Gamma_{\underline{D}}(\mathcal{X})\\
\dTo&&\dTo_0\\
\Gamma_{\underline{A}}(\mathcal{X})\times_{\Gamma_{\underline{C}}(\mathcal{X})}\Gamma_{\underline{D}}(\mathcal{X})&\rTo_{\mathrm{ob}}&\underline{H}(\mathcal{L}(\mathcal{X}),\mathcal{N}[1]).
\end{diagram} 
\]
Therefore, the proof of the lemma is completed by:
\begin{sublemma}
\label{lem:application_of_parasitism}
There is a canonical isomorphism
\[
\underline{H}(\mathcal{L}(\mathcal{X}),\mathcal{N}[1])\xrightarrow{\simeq}\Gamma_{\underline{D}}(\mathcal{X})\times_{X^-_D}\underline{\Map}(\Fil^1\mathbb{L}^-_X,R_N).
\]
\end{sublemma}
\begin{proof}
Given $x\in \Gamma_{\underline{D}}(\mathcal{X})(R_{B'})$, the associated $\underline{D}'$-gauge $\mathcal{L}(\mathcal{X})_x$ has Hodge-Tate weights $\ge -1$ by our $1$-boundedness hypothesis. Explicitly, the underlying quasicoherent sheaf over $\Rees(\Fil^\bullet D')$ is obtained via pullback of the relative cotangent complex of $\mathcal{X}^\preoneb$ along the section $\Rees(\Fil^\bullet D')\to \mathcal{X}^\preoneb$ underlying $x$. The associated filtered module $\Fil^\bullet_{\mathrm{Hdg}}L(\mathcal{X})_x$ over $R_{D'}$ admits the following description: Let $\eta^-_{D'}(x)\in X^-_D(R_{B'}) = X^-(R_{D'})$ be the image of $x$ under the map $ \Gamma_{\underline{D}}(\mathcal{X})\to X^-_D$. Then we have a canonical isomorphism
\[
\Fil^\bullet_{\mathrm{Hdg}}L(\mathcal{X})_x \simeq \Fil^\bullet\mathbb{L}^-_{X,\eta^-_{D'}(x)} \in \mathrm{FilMod}_{R_{D'}}.
\] 

From Construction~\ref{const:f_zip_ideal_psi_star}, we obtain a fiber sequence of $\underline{D}$-gauges
\[
\mathcal{N}'[1]\to \mathcal{N}[1]\to \mathcal{M}_{\mathrm{par}}(R_N,1)
\]
with $\mathcal{N}'[1]$ having Hodge-Tate weights $\le -1$. Now, as explained in Example~\ref{ex:parasitic_gauges}, and from the discussion in the first paragraph of the proof, we obtain a canonical isomorphism
\[
\underline{H}(\mathcal{L}(\mathcal{X}), \mathcal{M}_{\mathrm{par}}(R_N,1))\xrightarrow{\simeq}\Gamma_{\underline{D}}(\mathcal{X})\times_{X^-_D}\underline{\Map}(\Fil^1\mathbb{L}^-_X,R_N)
\]

Therefore, to prove the lemma, it is now enough to know that $\underline{H}(\mathcal{L}(\mathcal{X}),\mathcal{N}'[m]) \simeq 0$ for all $m\in \Int$. This follows from Lemma~\ref{lem:A_gauge_nilpotent}, criterion (2) from Remark~\ref{rem:nilpotence_conditions}, and Remark~\ref{rem:K_P_N_nilpotence}.
\end{proof}
\end{proof}

\begin{construction}
 \label{const:f_zip_sections_psi_x}
Since we have a map of frames $\underline{B}\to \underline{W_1(R_{B^\modpI})}$, we can consider the associated sheaf $\Gamma_{\underline{W_1(R_{B^\modpI})}}(\mathcal{X},\xi)$ on $(R_B)_{\et}$ equipped with a natural map  
\[
\Gamma_{\underline{A}}(\mathcal{X},\xi)\to \Gamma_{\underline{W_1(R_{B^\modpI})}}(\mathcal{X},\xi)
\]
Given $x\in \Gamma_{\underline{W_1(R_{B^\modpI})}}(\mathcal{X},\xi)(R_B)$, by pulling back the cotangent complex of $\mathcal{X}^\preoneb$ along the associated section of $\mathcal{X}$, we obtain an $F$-zip $\bm{L}^\modpI(\mathcal{X})_x$ over $R_{B^\modpI}$ with Hodge-Tate weights $\ge -1$. In particular, via the process explained in Construction~\ref{const:operator_needs_to_be_nilpotent}, we obtain an operator
\[
\psi^\modpI_x:\Fil^1L^\modpI(\mathcal{X})_x \to \varphi^*\Fil^1L^\modpI(\mathcal{X})_x.
\]
As $x$ varies, this data organizes into an $F$-zip $\bm{L}^\modpI(\mathcal{X})$ over $\Gamma_{\underline{W_1(R_{B^\modpI})}}(\mathcal{X},\xi)$ along with an operator $\psi^\modpI:\Fil^1L^\modpI(\mathcal{X}) \to \varphi^*\Fil^1L^\modpI(\mathcal{X})$.
\end{construction}

\begin{remark}
\label{rem:nilpotent_locus}
An analysis of the proof of Lemma~\ref{lem:def_theory_for_prismatic_frames}, and in particular that of Sublemma~\ref{lem:application_of_parasitism}, shows that it goes through if one replaces the topological nilpotence hypothesis on $\dot\varphi_1$ with the following condition: The operator $\psi^\modpI:\Fil^1_{\mathrm{Hdg}}L^\modpI(\mathcal{X})\to \varphi^*\Fil^1_{\mathrm{Hdg}}L^\modpI(\mathcal{X})$ satisfies $\Phi_n(\psi^\modpI)\simeq 0$ for $n$ sufficiently large. Indeed, this amounts to replacing the use of nilpotence criterion (2) of Remark~\ref{rem:nilpotence_conditions} with that of criterion (1). 

If $\bm{L}^\modpI(\mathcal{X})$ is perfect with dual $F$-zip $\bm{T}^\modpI(\mathcal{X})$ obtained from the relative tangent complex of $\mathcal{X}$, then this is also equivalent to requiring that the dual map 
\[
\psi^{\modpI,*} = (\psi^\modpI)^\vee:\varphi^*\gr^{-1}_{\mathrm{Hdg}}T^\modpI(\mathcal{X})\to \gr^{-1}_{\mathrm{Hdg}}T^\modpI(\mathcal{X}),
\]
when viewed as a $\varphi$-semilinear endomorphism of $\gr^{-1}_{\mathrm{Hdg}}T^\modpI(\mathcal{X})$, is nilpotent in the usual sense. 
\end{remark}

\begin{remark}
\label{rem:nilpotent_locus_witt}
Taking Remark~\ref{rem:witt_surjective_def_theory_application} into account, we see that $\Gamma_{\underline{B}}(\mathcal{X},\xi)$ can be computed using the Witt frame $\underline{W(R_B)}$ when $\pi_0(R_B)/p\pi_0(R_B)$ is semiperfect and when the nilpotence condition of Remark~\ref{rem:nilpotent_locus} holds.
\end{remark}

\begin{remark}
\label{rem:nilpotent_locus_p-adic}
Suppose that $\underline{B}$ is a $p$-adic prismatic frame and that $\mathcal{X}$ arises from a pair $(\mathcal{Y},\xi_1)$ over $\Rees(\Fil^\bullet B)\otimes\Field_p$ as in Remark~\ref{rem:weil_restriction_commuting_square}. Then by the same process from which $\bm{L}^\modpI(\mathcal{X})$ was obtained, we get an $F$-zip $\bm{L}(\mathcal{Y})$ over $\Gamma_{\underline{W_1(R_B)}}(\mathcal{Y},\xi_1)$, and by Remark~\ref{rem:operator_needs_to_be_nilpotent}, it is enough to check that the corresponding operator 
\[
\psi:\Fil^1_{\mathrm{Hdg}}L(\mathcal{Y})\to \varphi^*\Fil^1_{\mathrm{Hdg}}L(\mathcal{Y})
\]
satisfies $\Phi_n(\psi) \simeq 0$ for $n$ sufficiently large.
\end{remark}

\subsubsection{}
There is a somewhat different (and much simpler) situation in which one also obtains a Cartesian diagram as above.  This will prove useful to us in Section~\ref{sec:explicit}. Instead of the hypotheses from Proposition~\ref{prop:def_theory_frames}, suppose instead that the map $\sigma:\Spf B\to \Rees(\Fil^\bullet B)$ admits a factoring
\[
\Spf B\xrightarrow{\overline{\sigma}}\Rees(\Fil^\bullet A)\to \Rees(\Fil^\bullet B)
\]
along with a factoring of $\sigma:\Spf A\to \Rees(\Fil^\bullet A)$ as
\[
\Spf A \to \Spf B \xrightarrow{\overline{\sigma}}\Rees(\Fil^\bullet A).
\]
Here, the first map is the one arising from the map of frames.

\begin{proposition}
\label{prop:hokey_def_theory}
Under these hypotheses, there is a canonical Cartesian diagram
\[
\Square{\Gamma_{\underline{B}}(\mathcal{X},\xi)({R_B})}{}{X^{-}(R_B)}{q_*}{}{\Gamma_{\underline{A}}(\mathcal{X},\xi)({R_A})}{}{X^{-}(R_A)\times_{X(R_A)}X(R_B)}.
\]
\end{proposition}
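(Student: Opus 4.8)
The plan is to deduce the Cartesian square by a purely formal manipulation of finite homotopy limits, with the two factorization hypotheses on $\sigma$ playing the role that the topological local nilpotence of $\dot\varphi_1$ played in Proposition~\ref{prop:def_theory_frames}; I expect the argument to require essentially no deformation-theoretic d\'evissage beyond what is already packaged in the hypothesis that $\mathcal{X}$ is filtered integrable, so that it is closer in spirit to Proposition~\ref{prop:def_theory_frames_trivial}. Write $\iota\colon\Rees(\Fil^\bullet A)\to\Rees(\Fil^\bullet B)$ and $j\colon\Spf A\to\Spf B$ for the maps induced by $q$, let $\mathcal{X}_A$ denote the pullback of $\mathcal{X}$ along $\iota$ with its induced isomorphism $\xi_A$, and recall that, up to these identifications, $q_*$ is the restriction map along $\iota$ followed by restriction of the base from $R_B$ to $R_A$.

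First, I would unwind both sides. Using the filtered integrability of $\mathcal{X}$ and Remark~\ref{rem:1_bounded_sections_dependence}, for any $p$-completely \'etale $R_B$-algebra $R_{B'}$ with reduction $R_{A'}$ along $q$ I can identify $\Map(\Rees(\Fil^\bullet B')\otimes\Int/p^n\Int,\mathcal{X})$ with
\[
X^{-,(n)}(R_{B'})\times_{X^{(n)}(R_{B'})}\mathcal{X}^\diamond(B'/{}^{\mathbb{L}}p^n),
\]
in such a way that $\tau^*$ becomes the second projection; likewise over $\underline{A}$. Under this, a point of $\Gamma_{\underline{B}}(\mathcal{X},\xi)(R_{B'})$ amounts to a filtered section $\beta\in X^{-,(n)}(R_{B'})$, a section $m\in\mathcal{X}^\diamond(B'/{}^{\mathbb{L}}p^n)$ restricting to the same element of $X^{(n)}(R_{B'})$, and an identification $\xi\circ\sigma^*(\beta,m)\simeq m$ in $\mathcal{X}^\diamond(B'/{}^{\mathbb{L}}p^n)$; similarly for $\Gamma_{\underline{A}}(\mathcal{X},\xi)(R_{A'})$.

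The crucial step is to use the two factorizations to pin down this last piece of data. The factoring $\Spf B\xrightarrow{\overline\sigma}\Rees(\Fil^\bullet A)\xrightarrow{\iota}\Rees(\Fil^\bullet B)$ of $\sigma$ shows that $\sigma^*\phi\simeq\overline\sigma^*(\iota^*\phi)$ for every section $\phi$ over $\Rees(\Fil^\bullet B')$, so that $\xi\circ\sigma^*$ factors through $\iota^*$, hence through $q_*$, as $\widetilde\sigma\circ\iota^*$ for a map $\widetilde\sigma\colon\Map(\Rees(\Fil^\bullet A'),\mathcal{X}_A)\to\mathcal{X}^\diamond(B'/{}^{\mathbb{L}}p^n)$ assembled from $\overline\sigma$ and the pullback of $\xi$ to $B'$. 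The second factoring $\Spf A\xrightarrow{j}\Spf B\xrightarrow{\overline\sigma}\Rees(\Fil^\bullet A)$, together with the naturality of $\tau$ with respect to $q$ (Remark~\ref{rem:abstract_de_rham_point}) and the compatibility of $\xi_A$ with $\xi$, then gives $\xi_A\circ\sigma^*\simeq q^*\circ\widetilde\sigma$ (base change along $B'\to A'$). Thus both equalizers are built from the \emph{same} auxiliary map $\widetilde\sigma$, and the equalizer datum over $\underline{B}$ is, after pullback along $B'\to A'$, exactly the equalizer datum over $\underline{A}$.

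Finally, substituting the identifications of the previous two paragraphs into the two equalizers and re-parenthesizing the resulting iterated limit yields the square: one separates off the part of the data depending only on $(\iota^*\phi,\,q^*m)$ — which is precisely a point of $\Gamma_{\underline{A}}(\mathcal{X},\xi)(R_{A'})$ — from the part recording the lift of the attractor section from $R_{A'}$ to $R_{B'}$ and of $q^*m$ to $m$ — which is precisely the fiber of $X^{-,(n)}(R_{B'})\to X^{-,(n)}(R_{A'})\times_{X^{(n)}(R_{A'})}X^{(n)}(R_{B'})$. Carrying this out functorially in $R_{B'}$ gives the Cartesian square of \'etale sheaves on $(R_B)_{\et}$. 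The main obstacle I anticipate is the homotopy-coherent bookkeeping in the crucial step: one must verify that the two given factorizations of $\sigma$, the frame map $q$, and the isomorphism $\xi$ fit together coherently enough to produce a \emph{single} map $\widetilde\sigma$ satisfying both identities above (not merely two maps agreeing up to an unspecified homotopy), while keeping careful track of the Weil-restriction superscripts $(\cdot)^{(n)}$ and of the base changes between $R_B$ and $R_A$. Once that coherence is secured, the last paragraph is a routine rearrangement of finite limits.
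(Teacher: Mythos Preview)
Your proposal is correct and follows essentially the same approach as the paper: both arguments use the first factorization of $\sigma_B$ to show that the equalizer datum over $\underline{B}$ factors through $\iota^*$, the second factorization to identify the induced datum with the equalizer datum over $\underline{A}$, and filtered integrability to recognize the remaining lifting data as the fiber of $X^{-,(n)}(R_B)\to X^{-,(n)}(R_A)\times_{X^{(n)}(R_A)}X^{(n)}(R_B)$.

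The main organizational difference is the placement of filtered integrability. The paper first writes two explicit Cartesian squares expressing $\Gamma_{\underline{A}}(\mathcal{X},\xi)(R_A)$ and $\Gamma_{\underline{B}}(\mathcal{X},\xi)(R_B)$ as pullbacks over $\mathcal{X}^\diamond(A/{}^{\mathbb{L}}p^n)\times\mathcal{X}^\diamond(B/{}^{\mathbb{L}}p^n)$ via $(\tau^*,\overline\sigma^*)$ and $(\tau^*,\sigma^*)$ respectively, combines them formally into a single Cartesian square over $\Map(\Rees(\Fil^\bullet A)\otimes\Int/p^n\Int,\mathcal{X})\times_{\tau^*,\mathcal{X}^\diamond(A/{}^{\mathbb{L}}p^n)}\mathcal{X}^\diamond(B/{}^{\mathbb{L}}p^n)$, and only then invokes filtered integrability once to identify this base with $X^{-,(n)}(R_A)\times_{X^{(n)}(R_A)}X^{(n)}(R_B)$. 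This ordering sidesteps precisely the homotopy-coherence bookkeeping you flag as the main obstacle: the two Cartesian squares are simple rewritings of the defining equalizers (introducing a redundant variable $m\in\mathcal{X}^\diamond(B/{}^{\mathbb{L}}p^n)$), and pasting Cartesian squares is automatic. Your route, applying filtered integrability first and then tracking $(\beta,m)$ through the factorizations, arrives at the same place but makes the ``re-parenthesizing'' step carry more weight; the paper's ordering makes that step a one-liner.
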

\begin{proof}
We have Cartesian squares
\[
\begin{matrix}
\Square{\Gamma_{\underline{A}}(\mathcal{X},\xi)({R_A})}{}{\Map(\Rees(\Fil^\bullet A),\mathcal{X})}{}{(\tau^*,\overline{\sigma}^*)}{\mathcal{X}^\preoneb(B)}{(q_*, \mathrm{id})}{\mathcal{X}^\preoneb(A)\times \mathcal{X}^\preoneb(B)}&&&;&&&
\Square{\Gamma_{\underline{B}}(\mathcal{X},\xi)({R_B})}{}{\Map(\Rees(\Fil^\bullet B),\mathcal{X})}{}{(\tau^*,\sigma^*)}{\mathcal{X}^\preoneb(B)}{\Delta}{\mathcal{X}^\preoneb(B)\times \mathcal{X}^\preoneb(B)}
\end{matrix},
\]
which together yield another Cartesian square
\[
\Square{\Gamma_{\underline{B}}(\mathcal{X},\xi)({R_B})}{}{\Map(\Rees(\Fil^\bullet B),\mathcal{X})}{}{(q_*,\sigma^*)}{\Gamma_{\underline{A}}(\mathcal{X},\xi)({R_A})}{}{\Map(\Rees(\Fil^\bullet A),\mathcal{X})\times_{\tau^*,\mathcal{X}^\preoneb(A)}\mathcal{X}^\preoneb(B)}
\]

The proof of the proposition is now completed using the filtered integrability of $\mathcal{X}$.
\end{proof}

\subsection{A pseudo-torsor structure associated with $1$-bounded stacks}
\label{subsec:torsor_struct}

Suppose that we have a $p$-adic prismatic frame $\underline{A}$ as in Definitions~\ref{defn:p-adic_frames} and~\ref{defn:prismatic_frames}, and suppose also that the following conditions hold:
\begin{itemize}
   \item $\underline{A}$ is laminated, so that we have a map of frames $\underline{A}\to \underline{W(R_A)}$ (Lemma~\ref{lem:laminated_frames}).
   \item $R_A$ is a \emph{semiperfect} $\Field_p$-algebra.
\end{itemize}
 Let us take a $1$-bounded stack $\mathcal{Y}\to \Rees(\Fil^\bullet A)\otimes\Field_p$ that is strongly integrable, and is equipped with an isomorphism $\xi_1:\sigma^* \mathcal{Y}^\preoneb\to \tau^* \mathcal{Y}^\preoneb$ over $\Spec \overline{A}$. We will view this data as a $1$-bounded stack $\mathcal{Y}$ over $\mathfrak{S}(\underline{A})$. Via Remark~\ref{rem:weil_restriction_commuting_square}, we obtain the sheaf 
\[
\Gamma_{\underline{A}}(\mathcal{Y}) \defn \Gamma_{\underline{A}}(\mathcal{Y},\xi_1)
\]
over $(R_A)_{\et}$. Further, as in Remark~\ref{rem:nilpotent_locus_p-adic}, we can consider the sheaf $\Gamma_{\Fzip}(\mathcal{Y})\defn \Gamma_{\underline{W_1(R_A)}}(\mathcal{Y},\xi_1)$, and we obtain a canonical map
\[
\Gamma_{\underline{A}}(\mathcal{Y})\to \Gamma_{\Fzip}(\mathcal{Y})
\] 
of sheaves on $(R_A)_{\et}$. The goal of this section is to prove Proposition~\ref{prop:abstract_devissage_to_fzips}, which gives a somewhat explicit picture of this map of stacks. Later, in Section~\ref{sec:abstract}, we will apply it to prove a general version of Theorem~\ref{introthm:map_to_f-zips}.

\begin{remark}
We will see that $\Gamma_{\underline{A}}(\mathcal{Y})$ is a pseudotorsor over $\Gamma_{\Fzip}(\mathcal{Y})$ for a particular group stack constructed from $\underline{A}$ and the relative cotangent complex of $\mathcal{Y}^\preoneb$. This group stack is an abstraction of the complexes of Artin-Milne computing the fppf cohomology of height $1$ group schemes~\cite{artin_milne}. 

The basic strategy behind the study of the map involves the following steps:
\begin{enumerate} 
   \item When $\underline{A}$ is the Witt frame $\underline{W(R_A)}$, $\mathfrak{S}(\underline{W(R_A)})\otimes\Field_p$ is a square-zero thickening of $R_A^{\Fzip} = \mathfrak{S}(\underline{W_1(R_A)})$, and so we can use deformation theory to describe the map $\Gamma_{\underline{W(R_A)}}(\mathcal{Y})\to \Gamma_{\Fzip}(\mathcal{Y})$.
   \item In general, we have a map of frames $\underline{A}\to \underline{W(R_A)}$ lifting the mod-$p$ map from above, and we can use it, along with the $1$-boundedness of $\mathcal{Y}$, to lift the description in the first step from $\underline{W(R_A)}$ to $\underline{A}$. This step also uses deformation theory and the strong integrability of $\mathcal{Y}$.
\end{enumerate}
To carry this out, we break down the stacks $\Rees(\Fil^\bullet A)\otimes\Field_p$ and $\Rees(\Fil^\bullet_{\mathrm{Lau}}W_1(R_A))$ via compatible stratifications by certain somewhat explicit substacks, and we use a stratum-by-stratum analysis to get a grip on the picture.
\end{remark}

\begin{remark}
\label{rem:p-adic_structure_map}
Our hypotheses imply that the filtration $\Fil^\bullet A$ is $p$-adic, in the sense that $\Rees(\Fil^\bullet A)$ is a $p$-adic formal stack over 
\[
\Rees(\Fil^\bullet_p\Int_p)\simeq \Spf\Int_p[u,t]/(ut-p)/\Gm, 
\]
where $u$ has degree $-1$ and $t$ has degree $1$. We can interpret $u$ as yielding, for every $i\in \Int$, a commuting diagram
\[
\begin{diagram}
\Fil^{i-1}A\cdot t^{-i+1}&\rTo^u&\Fil^{i}A\cdot t^{-i}\\
&\rdTo_p&\dTo_t\\
&&\Fil^{i-1}A\cdot t^{-i+1}.
\end{diagram}
\]
This gives us a diagram of closed immersions
\[
\begin{diagram}
&&\Rees(\Fil^\bullet A)_{(u=0)}\\
&\ruTo^{\lambda_-}&&\rdTo\\
\Rees(\Fil^\bullet A)_{(t=u=0)}&&&&&\Rees(\Fil^\bullet A)\otimes\Field_p\\
&\rdTo_{\lambda_+}&&\ruTo\\
&&\Rees(\Fil^\bullet A)_{(t=0)}
\end{diagram}
\]
obtained via base-change from the diagram
\[
\begin{diagram}
&&\Aff^1/\Gm\times\Spec\Field_p\simeq\Rees(\Fil^\bullet_p\Int_p)_{(u=0)}\\
&\ruTo&&\rdTo\\
B\Gm\times\Spec\Field_p\simeq \Rees(\Fil^\bullet_p\Int_p)_{(t=u=0)}&&&&&\Rees(\Fil^\bullet_p \Int_p)\otimes\Field_p\\
&\rdTo&&\ruTo\\
&&\Aff^1_+/\Gm\times\Spec\Field_p\simeq\Rees(\Fil^\bullet_p\Int_p)_{(t=0)}.
\end{diagram}
\]
\end{remark}

\begin{remark}
\label{rem:structure_fzip_to_syn}
We have canonical maps of stacks over $\Spec \overline{A}$
\begin{align*}
\alpha_-:\Aff^1/\Gm\times\Spec R_{A}\simeq \Rees(\Fil^\bullet_{\mathrm{Lau}}W_1(R_A))_{(u=0)} &\to \Rees(\Fil^\bullet A)_{(u=0)}\hookrightarrow \Rees(\Fil^\bullet A)\otimes\Field_p;\\
\alpha_+:\Aff^1_+/\Gm\times\Spec \varphi_*R_{A}\simeq \Rees(\Fil^\bullet_{\mathrm{Lau}}W_1(R_A))_{(t=0)}  &\to \Rees(\Fil^\bullet A)_{(t=0)}\hookrightarrow \Rees(\Fil^\bullet A)\otimes\Field_p.
\end{align*}

In particular, $\alpha_-$ gives a map $\alpha_0:B\Gm\times\Spec R_{A}\to \Rees(\Fil^\bullet A)_{(t=u=0)}$, and we can use it to view $\Rees(\Fil^\bullet A)_{(u=0)}$, $\Rees(\Fil^\bullet A)_{(t=0)}$ and $\Rees(\Fil^\bullet A)_{(t=u=0)}$ as $R_{A}$-pointed graded stacks.
\end{remark}

\begin{definition}
\label{defn:fp_stacks_from_fzip}
Let $Y^-\to \Spec R_A$ (resp. $Y^0\to \Spec R_A$, $Y^+\to \Spec R_A$) be the attractor stack (resp. fixed point stack, repeller stack) over $R_A$ for $\mathcal{Y}^\preoneb\to \Rees(\Fil^\bullet A)\otimes\Field_p$ associated with the maps $\alpha_-$ (resp. $\alpha_0$, $\alpha_+$). 
\end{definition}

\begin{remark}
We have canonical maps $Y^-\to Y^0$ and $Y^+\to {}^\varphi Y^0$, where ${}^\varphi Y^0 = Y^0\times_{\Spec R_A,\varphi}\Spec R_A$, induced via restriction along the projections $\Aff^1/\Gm\to B\Gm$ and $\Aff^1_+/\Gm\to B\Gm$. 

We also have maps $\chi^*_-:Y^0\to Y^-$ and $\chi^*_+:Y^+\to {}^{\varphi}Y^0$ induced by restriction along the closed immersions $\chi_-:B\Gm\to \Aff^1/\Gm$ and $\chi_+:B\Gm\to \Aff^1_+/\Gm$. 

There are also maps $j_-^*:Y^-\to Y$ and $j_+^*:Y^+\to {}^\varphi Y = Y\times_{\Spec R_A,\varphi}\Spec R_A$ obtained via restriction along the open points of $\Aff^1/\Gm$ and $\Aff^1_+/\Gm$. The isomorphism $\xi:\sigma^*\mathcal{Y}^\preoneb \to \tau^* \mathcal{Y}^\preoneb$ induces an isomorphism ${}^\varphi Y \xrightarrow{\simeq}Y$ of stacks over $R_A$, which we also denote by $\xi$. In particular, composing with this isomorphism gives a map $Y^+\to Y$.

Finally, we have the relative Frobenius map $Y^0\xrightarrow{y\mapsto {}^\varphi y} {}^{\varphi}Y^0$ over $\Spec R_A$.
\end{remark}

\begin{lemma}
\label{lem:gamma_fzip_description}
There is a natural isomorphism of \'etale sheaves
\[
\begin{diagram}
\Gamma_{\Fzip}(\mathcal{Y}) &\xrightarrow{\simeq} \mathrm{eq}\biggl[Y^0\times_{{}^\varphi Y^0}Y^+\times_{Y}Y^-&\pile{\rTo^{\mathrm{pr}_1}\\ \rTo_{\chi^*_-\circ \mathrm{pr}_3}}&Y^0\biggr].
\end{diagram}
\]
\end{lemma}
\begin{proof}
By definition, we have
\[
\begin{diagram}
\Gamma_{\Fzip}(\mathcal{Y})(R_{A'}) &\xrightarrow{\simeq} \mathrm{eq}\biggl[\Map(\mathcal{R}(\Fil^\bullet_{\mathrm{Lau}}W_1(R_{A'})),\mathcal{Y})&\pile{\rTo^{\xi\circ \sigma^*}\\ \rTo_{\tau^*}}&Y(R_{A'})\biggr].
\end{diagram}
\]

The compositions
\[
\Map(\Rees(\Fil^\bullet_{\mathrm{Lau}}W_1(R_A)),\mathcal{Y})\to Y^-\xrightarrow{j_-^*} Y\;;\; \Map(\Rees(\Fil^\bullet_{\mathrm{Lau}}W_1(R_A)),\mathcal{Y})\to Y^+\xrightarrow{j_-^*} {}^\varphi Y
\]
arising from restrictions along the $u=0$ and $t=0$ loci, respectively, can be identified with $\tau^*$ and $\sigma^*$.

Now, quite formally, we see that the right hand side in the isomorphism asserted by the lemma is equivalent to
\[
\begin{diagram}
 \mathrm{eq}\biggl(Y^+\times_{Y}Y^-&\pile{\rTo^{\chi_+^*\circ\mathrm{pr}_1}\\ \rTo_{{}^{\varphi}(\chi^*_-\circ \mathrm{pr}_2)}}&{}^\varphi Y^0\biggr).
\end{diagram}
\]

Therefore, to finish, it is enough to know that we have
\[
\begin{diagram}
\Map(\Rees(\Fil^\bullet_{\mathrm{Lau}}W_1(R_A)),\mathcal{Y}) &\xrightarrow{\simeq} \mathrm{eq}\biggl((Y^+\times Y^-)(R_{A'})&\pile{\rTo^{\chi_+^*\circ \mathrm{pr}_1}\\ \rTo_{{}^\varphi(\chi_-^*\circ \mathrm{pr}_2})}&{}^\varphi Y^0(R_{A'})\biggr).
\end{diagram}
\]

Let $T$ be the Rees algebra $\Rees(\Fil^\bullet_{\mathrm{Lau}}W_1(R_A))$. Via smooth descent for $\mathcal{Y}^\preoneb$, we are reduced to knowing that, for any $R_A$-algebra $S$, we have 
\[
\begin{diagram}
\mathcal{Y}^\preoneb(S\otimes_{R_A}T)&\xrightarrow{\simeq} \mathrm{eq}\biggl[\mathcal{Y}^\preoneb(S\otimes_{R_A}\varphi_*R_A[u])\times \mathcal{Y}^\preoneb(S\otimes_{R_A}R_A[t])&\pile{\rTo^{u\mapsto 0}\\ \rTo_{\varphi\circ (t\mapsto 0)}}&{}^\varphi \mathcal{Y}^\preoneb(S\otimes_{R_A}\varphi_*R_A)\biggr].
\end{diagram}
\]

The claim now follows from the cohesiveness of $\mathcal{Y}\times_{\Rees(\Fil^\bullet A)\otimes\Field_p}\Spec S\to \Spec S$ (property (4) of~\cite[Theorem 7.5.1]{lurie_thesis}), and the fact that the commutative diagram
\[
\begin{diagram}
S\otimes_{R_A}T&\rTo&S\otimes_{R_A}R_A[t]\\
\dTo&&\dTo\\
S\otimes_{R_A}\varphi_*R_A[u]&\rTo&S\otimes_{R_A}\varphi_*R_A
\end{diagram}
\]
is a Cartesian square of surjective maps of animated commutative rings. Indeed, it suffices to check this for $S = R_A$, where it was seen in the identity~\eqref{eqn:rees_W1_explicit}.
\end{proof} 

We would like a similar description of $\Gamma_{\underline{A}}(\mathcal{Y})$, and we begin by looking more closely at the substacks from Remark~\ref{rem:p-adic_structure_map}.

\begin{construction}[The abstract Hodge filtration]
\label{const:hodge_filtration_abstract}
The relatively affine map
\[
\Rees(\Fil^\bullet A)_{(u=0)}\to \Rees(\Fil^\bullet_p\Int_p)_{(u=0)}\simeq \Aff^1/\Gm\times\Spec \Field_p
\]
corresponds to a lift $\Fil^\bullet_{\mathrm{Hdg}}\overline{A}$ of $\overline{A}$ to an animated filtered commutative ring: this is the \defnword{Hodge filtration} on $\overline{A}$. By construction, we have
\[
\Fil^i_{\mathrm{Hdg}}\overline{A} = \hcoker(u:\Fil^{i-1}A\cdot t^{-i+1}\to \Fil^{i}A\cdot t^{-i}).
\]
We have an open immmersion
\[
j_-:\Spec \overline{A} \simeq \Rees(\Fil^\bullet_{\mathrm{Hdg}}\overline{A})_{(t\neq 0)}\hookrightarrow \Rees(\Fil^\bullet_{\mathrm{Hdg}} \overline{A})
\]
through which the map $\tau:\Spec\overline{A}\to \Rees(\Fil^\bullet A)\otimes\Field_p$ factors.
\end{construction}

\begin{example}
\label{ex:u=0witt}
Let us take the Witt frame from Example~\ref{example:witt_frame} associated with $R\in \mathrm{CRing}_{\Field_p/}$ semiperfect. The filtration in this case is $p$-adic: The map $u$ is an isomorphism in filtered degree $i\ge 1$, is the map $F:W(R)\to F_*W(R)$ in degree $0$ and is multiplication by $p$ in negative degrees. Thus we have
\[
\Fil^i_{\mathrm{Hdg}}\overline{W(R)}\xrightarrow{\simeq} \begin{cases}
\hcoker(F:W(R)\to F_*W(R))\simeq (W(R)[F])[1]\simeq \Ga^\sharp(R)[1]&\text{if $i=1$}\\
\overline{W(R)}&\text{if $i\leq 0$}\\
0&\text{otherwise}.
\end{cases}
\]
Here, in the case $i=1$, we have used the surjectivity of $F:W(R)\to F_*W(R)$---which is a consequence of the semiperfectness of $R$---and~\cite[Variant 3.4.12]{bhatt2022absolute}. Concretely, for $R$ discrete, the isomorphism $W(R)[F]\to \mathbb{G}_a^\sharp$ sends an element $x$ with Witt coordinates $(x_0,x_1,\ldots)$ to the element $x_0$ equipped with the unique collection of divided powers $\gamma_m(x_0)$ determined by the fact that $\gamma_p^r(x_0) = x_r$ for all $r\ge 0$.

The map 
\[
\Ga^\sharp(R)[1]\simeq \hcoker(F:W(R)\to F_*W(R)) \simeq \Fil^1_{\mathrm{Hdg}}\overline{W(R)}\to \overline{W(R)}
\]
is induced from $V:F_*W(R)\to W(R)$. 

In particular, we find that $\Fil^\bullet_{\mathrm{Hdg}}\overline{W(R)}$ is a square-zero extension of $\Fil^\bullet_{\mathrm{triv}}R$, and we have
\begin{align*}
\hker(\Fil^\bullet_{\mathrm{Hdg}}\overline{W(R)}\to\Fil^\bullet_{\mathrm{triv}}R)&\simeq\Ga^\sharp(R)[1]\otimes_R\Fil^\bullet_{\mathrm{triv}}R(1),
\end{align*}
where $\Fil^\bullet_{\mathrm{triv}}R(1)$ is the free filtered module of rank $1$ over $R$ with associated graded supported in degree $-1$.\footnote{Recall that according to our convention the $i$-th associated graded piece for a decreasing filtration is in graded degree $-i$.}

Note also that the associated graded algebra $\gr^\bullet_{\mathrm{Hdg}}\overline{W(R)}$ is supported in degrees $-1,0$, and is isomorphic as a graded $R$-algebra to $\Ga^\sharp(R)[1]\oplus R$.
\end{example}

\begin{construction}
[The abstract conjugate filtration]\label{const:conj_filt_abstract}
The relatively affine map
\[
\Rees(\Fil^\bullet A)_{(t=0)}\to \Rees(\Fil^\bullet_p\Int_p)_{(t=0)}\simeq \Aff^1_+/\Gm\times\Spec \Field_p
\]
corresponds to an increasingly filtered animated commutative $\Field_p$-algebra: The underlying animated commutative $\Field_p$-algebra corresponds to the derived affine scheme $\Rees(\Fil^\bullet A)_{(t=0,u\neq 0)}$, and the degree $-i$ component of the associated Rees algebra is isomorphic to $\gr^iA\cdot u^i$.

Now, $\sigma$ factors through a map 
\[
\Spf\overline{A}\simeq\Rees(\Fil^\bullet_pA)_{(t=0,u\neq 0)}\to \Rees(\Fil^\bullet A)_{(t=0,u\neq 0)}. 
\]
In our applications, $\underline{A}$ will be the frame associated with the Nygaard filtered prismatic cohomology of a semiperfect $\Field_p$-algebra $R$, and this map will be an isomorphism: This last fact is equivalent to the known assertion that the conjugate filtration on the Hodge-Tate cohomology of $R$ is exhaustive.

Therefore, we will write $\Fil^{\mathrm{conj}}_\bullet \overline{A}$ for the increasingly filtered animated commutative ring associated with $\Rees(\Fil^\bullet A)_{(t=0)}$, and call it the \defnword{conjugate filtration} on $\overline{A}$. In particular, we have a map
\[
j_+:\Spec \overline{A} \to \Rees(\Fil^{\mathrm{conj}}_\bullet\overline{A})_{(u\neq 0)}\hookrightarrow\Rees(\Fil^{\mathrm{conj}}_\bullet \overline{A})
\]
through which $\sigma$ factors.
\end{construction}

\begin{remark}
The pullbacks of both $\Rees(\Fil^\bullet A)_{(t=0)}$ and $\Rees(\Fil^\bullet A)_{(u=0)}$ over $B\Gm\times\Spec\Field_p$ are isomorphic to $\Rees(\Fil^\bullet A)_{(t=u=0)}$: this identification corresponds to an isomorphism of graded animated commutative rings
\[
\gr^{\mathrm{conj}}_\bullet \overline{A}\xrightarrow{\simeq}\gr^{\bullet}_{\mathrm{Hdg}}\overline{A}.
\]
\end{remark}

\begin{example}
\label{ex:t=0witt}
Let us return to the Witt frame from Example~\ref{ex:u=0witt}. Here, we have 
\begin{align*}
\Fil^{\mathrm{conj}}_i\overline{W(R)} = \gr^i_{\mathrm{Lau}}W(R)&\simeq \begin{cases}
R&\text{if $i=0$}\\
F_*\overline{W(R)}&\text{if $i\ge 1$}\\
0&\text{otherwise}.
\end{cases}
\end{align*}

The transition maps $\Fil^{\mathrm{conj}}_i\overline{W(R)}\to \Fil^{\mathrm{conj}}_{i+1}\overline{W(R)}$ are the identity when $i\ge 1$, while the map
\[
R = \Fil^{\mathrm{conj}}_0\overline{W(R)} \to \Fil^{\mathrm{conj}}_1\overline{W(R)} = F_*\overline{W(R)}
\]
is induced from the commuting diagram
\[
\begin{diagram}
W(R)&\rTo^F&F_*W(R)\\
\dTo&&\dTo\\
R&\rTo&F_*\overline{W(R)}.
\end{diagram}
\]

The map $\Fil^{\mathrm{conj}}_i\overline{W(R)}\to \overline{W(R)}$ for $i\ge 1$ is the identification $F_*\overline{W(R)}\simeq \overline{W(R)}$.
\end{example}

\begin{definition}
Define sheaves on $(R_A)_{\et}$:
\begin{align*}
\Gamma_{(t=0)}(\mathcal{Y}):R_{A'}&\mapsto \Map(\Rees(\Fil^\bullet A')_{(t=0)},\mathcal{Y});\\
\Gamma_{(u=0)}(\mathcal{Y}):R_{A'}&\mapsto \Map(\Rees(\Fil^\bullet A')_{(u=0)},\mathcal{Y});\\
\Gamma_{(t=u=0)}(\mathcal{Y}):R_{A'}&\mapsto \Map(\Rees(\Fil^\bullet A')_{(t=u=0)},\mathcal{Y});\\
\Gamma_{\overline{A}}(\mathcal{Y}):R_{A'}&\mapsto \mathcal{Y}(\overline{A}').
\end{align*}
All spaces of maps are over $\Rees(\Fil^\bullet A)\otimes\Field_p$, and we are viewing $\Spec \overline{A}'$ as a scheme over it via $\tau$.
\end{definition} 

\begin{lemma}
\label{lem:abstract_gamma_syn_alternate}
We have an isomorphism of \'etale sheaves
\[
\begin{diagram}
\Gamma_{\underline{A}}(\mathcal{Y})&\rTo^{\simeq}&\mathrm{eq}\biggl(\Gamma_{(t=0)}(\mathcal{Y})\times_{j_+^*,\Gamma_{\overline{A}}(\mathcal{Y}),j_-^*}\Gamma_{(u=0)}(\mathcal{Y})&\pile{\rTo^{\lambda_+^*\circ \mathrm{pr}_1}\\ \rTo_{\lambda_-^*\circ \mathrm{pr}_2}}&\Gamma_{(t=u=0)}(\mathcal{Y})\biggr).
\end{diagram}
\]
\end{lemma}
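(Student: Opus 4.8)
## Proof Plan for Lemma~\ref{lem:abstract_gamma_syn_alternate}

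The plan is to unwind both sides as mapping spaces out of explicit Rees stacks and to identify the equalizer on the right with a colimit (pushout) decomposition of $\Rees(\Fil^\bullet A)\otimes\Field_p$ itself. Recall that $\Rees(\Fil^\bullet A)$ is a $p$-adic formal stack over $\Rees(\Fil^\bullet_p\Int_p)\simeq \Spf\Int_p[u,t]/(ut-p)/\Gm$. After reduction mod $p$, the base becomes $\Spec\Field_p[u,t]/(ut)/\Gm$, and since $(u,t)$ is a regular sequence in $\Int_p[u,t]/(ut-p)$ lifting a regular sequence mod $p$, the closed substacks $\{u=0\}$ and $\{t=0\}$ of $\Rees(\Fil^\bullet A)\otimes\Field_p$ cover it, with intersection $\{u=t=0\}$. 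The key point is that $\Rees(\Fil^\bullet A)\otimes\Field_p$ is the pushout (in the category of $p$-adic formal stacks, or equivalently the relevant affine-local ring is the fiber product) of
\[
\Rees(\Fil^\bullet A)_{(u=0)}\leftarrow \Rees(\Fil^\bullet A)_{(u=t=0)}\rightarrow \Rees(\Fil^\bullet A)_{(t=0)},
\]
because the ring $\Fil^\bullet A/{}^{\mathbb{L}}p$-level Rees algebra sits in a derived pullback square of graded rings obtained by base change from $\Field_p[u,t]/(ut)\simeq \Field_p[u]\times_{\Field_p}\Field_p[t]$. This last identification is the concrete input, verified by the regularity of $(u,t)$.

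First I would make precise that, for any stack $\mathcal{X}^\diamond$ which is cohesive (infinitesimally cohesive and nilcomplete — part of being a locally finitely presented derived Artin stack), mapping out of a pushout of closed immersions along a regular-sequence-type decomposition turns the pushout into a fiber product of mapping spaces. Concretely, for a test ring $R_{A'}$ that is $p$-completely \'etale over $R_A$, one has
\[
\Map\bigl(\Rees(\Fil^\bullet A')\otimes\Field_p,\mathcal{X}\bigr)\xrightarrow{\simeq}\Map\bigl(\Rees(\Fil^\bullet A')_{(u=0)},\mathcal{X}\bigr)\times_{\Map(\Rees(\Fil^\bullet A')_{(u=t=0)},\mathcal{X})}\Map\bigl(\Rees(\Fil^\bullet A')_{(t=0)},\mathcal{X}\bigr)
\]
by the cohesiveness of $\mathcal{X}^\diamond$ (and the $1$-bounded condition keeps us inside the genuine mapping spaces rather than the ambient $\mathcal{X}^\diamond$, compatibly with the restriction maps to $X^0$). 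Then I would incorporate the Frobenius-gluing datum: by Remark~\ref{rem:B'Gmu_abstract} together with~\S\ref{subsec:abstract_def_theory}, $\Gamma_{\underline{A}}(\mathcal{X},\xi)(R_{A'})$ is the equalizer of $\xi\circ\sigma^*$ and $\tau^*$ on $\Map(\Rees(\Fil^\bullet A')\otimes\Int/p^n\Int,\mathcal{X})$ — in the present mod-$p$ setting, the equalizer of $\xi\circ\sigma^*,\tau^*:\Map(\Rees(\Fil^\bullet A')\otimes\Field_p,\mathcal{X})\rightrightarrows \mathcal{X}^\diamond(\overline{A}')$. The maps $\tau$ and $\sigma$ both factor through $\Spec\overline{A}'\simeq\Rees(\Fil^\bullet A')_{(u=0,t\neq0)}\hookrightarrow \Rees(\Fil^\bullet A')_{(u=0)}$ (for $\tau$, via $j_-$) and through $\Rees(\Fil^\bullet A')_{(t=0,u\neq0)}\hookrightarrow \Rees(\Fil^\bullet A')_{(t=0)}$ (for $\sigma$, via $j_+$, using $\Phi$); this is exactly the content recorded in~\S\ref{subsubsec:various_complexes_MX} and the construction of $j_\pm$. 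Hence the equalizer defining $\Gamma_{\underline{A}}(\mathcal{X},\xi)$ can be rewritten by first forming $\Gamma_{(t=0)}(\mathcal{X})\times_{\Gamma_{\overline{A}}(\mathcal{X})}\Gamma_{(u=0)}(\mathcal{X})$ — which glues the $t=0$ and $u=0$ pieces along the common restriction to $\Spec\overline{A}'$ via $j_-^*$ on one side and $\xi\circ j_+^*$ on the other — and then imposing the remaining coincidence on the overlap $\Rees(\Fil^\bullet A')_{(u=t=0)}$, which is precisely the equalizer of $\lambda_+^*\circ\mathrm{pr}_1$ and $\lambda_-^*\circ\mathrm{pr}_2$.

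The bookkeeping to carry out is the compatibility between the two descriptions of the gluing: one must check that the square
\[
\begin{diagram}
\Rees(\Fil^\bullet A')_{(u=t=0)}&\rTo^{\lambda_-}&\Rees(\Fil^\bullet A')_{(u=0)}\\
\dTo^{\lambda_+}&&\dTo\\
\Rees(\Fil^\bullet A')_{(t=0)}&\rTo&\Rees(\Fil^\bullet A')\otimes\Field_p
\end{diagram}
\]
is a pushout of closed immersions in the appropriate derived sense, and that the Frobenius-semilinear identification $\xi$ intertwines $j_+^*$ and $j_-^*$ with $\lambda_+^*$ and $\lambda_-^*$ after restriction. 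Both reduce, by the relative affineness of everything over $B\Gm\times\Spf\Int_p$ and by $(p,I)$-completeness, to the explicit commutative-algebra statement that $\Field_p[u,t]/(ut)$ is the derived fiber product $\Field_p[u]\times_{\Field_p[u,t]/(u,t)}\Field_p[t]$ — equivalently, that $(u,t)$ being a regular sequence makes the Koszul-type gluing exact — tensored up along $\Field_p[u,t]/(ut)\to \gr$-pieces of $\Fil^\bullet A/{}^{\mathbb{L}}p$. I expect the main obstacle to be \emph{not} the algebra but packaging the Frobenius datum cleanly: making sure that the single equalizer with respect to $(\tau^*,\xi\circ\sigma^*)$ really does reorganize, without loss of information or spurious extra conditions, into the two-step (fiber product, then equalizer) presentation on the right — i.e. carefully matching which copy of $\Spec\overline{A}'$ the maps $\tau$ and $\sigma$ land in, and tracking the Frobenius twist built into $\sigma = \Rees(\Phi_\pm)$ through the identification $\Rees(\Fil^\bullet A')_{(t=0,u\neq 0)}\simeq\Spec\overline{A}'$. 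Once this identification of diagrams is pinned down, the lemma follows formally from cohesiveness of $\mathcal{X}^\diamond$ and commutation of limits.
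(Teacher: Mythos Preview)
Your proposal is correct and follows exactly the approach the paper indicates: the paper's proof is the single sentence ``Using the cohesiveness of $\mathcal{X}^\diamond$ and the fact that $(u,t)$ is a regular sequence in $\Int_p[u,t]/(ut-p)$, one now finds,'' and you have accurately unpacked both ingredients---the pushout decomposition of $\Rees(\Fil^\bullet A')\otimes\Field_p$ coming from the regular sequence, and the reorganization of the $(\tau^*,\xi\circ\sigma^*)$-equalizer via $j_\pm$ and $\lambda_\pm$.
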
 
\begin{proof}
As in the proof of Lemma~\ref{lem:gamma_fzip_description}, this reduces via smooth descent to the claim that, for any $p$-adic formal affine scheme $\Spf T\to \Rees(\Fil^\bullet A)$ we have
\begin{align*}
\begin{diagram}
\Map(\Spec T/{}^{\mathbb{L}}p,\mathcal{Y})&\rTo^{\simeq}& \mathrm{eq}\biggl[\Map(\Spec T/{}^{\mathbb{L}}u,\mathcal{Y})\times \Map(\Spec T/{}^{\mathbb{L}}t,\mathcal{Y})&\pile{\rTo\\ \rTo}&\Map(\Spec T/{}^{\mathbb{L}}(u,t),\mathcal{Y})\biggr].
\end{diagram}
\end{align*}
Here, we have set
\begin{align*}
T/{}^{\mathbb{L}}u \defn T\otimes_{\Int_p[u,t],u\mapsto 0}\Int_p[t]\;;&\; T/{}^{\mathbb{L}}t \defn T\otimes_{\Int_p[u,t],t\mapsto 0}\Int_p[u]\\
T/{}^{\mathbb{L}}(u,t) &\defn T\otimes_{\Int_p[u,t],u\mapsto 0,t\mapsto 0}\Int_p
\end{align*}
and all the mapping spaces are over $\Rees(\Fil^\bullet A)\otimes\Field_p$.

Just like in the proof of Lemma~\ref{lem:gamma_fzip_description}, the claim now follows from the fact that the commutative diagram
\[
\begin{diagram}
T/{}^{\mathbb{L}}p&\rTo&T/{}^{\mathbb{L}}u\\
\dTo&&\dTo\\
T/{}^{\mathbb{L}}t&\rTo&T/{}^{\mathbb{L}}(u,t)
\end{diagram}
\]
of surjective maps of animated commutative rings is Cartesian. This last assertion only needs to be checked when $T = \Int_p[u,t]/(ut-p)$, where it is clear.
\end{proof}

\begin{construction}
\label{const:various_sheaves_on_ys}
As we saw in Construction~\ref{const:various_xs_cotangent}, there are certain quasicoherent sheaves over the stacks $Y,Y^{\pm},Y^0,\Gamma_{\Fzip}(\mathcal{Y})$ that can be produced from the relative cotangent complex $\mathbb{L}_{\mathcal{Y}^\preoneb}$ of $\mathcal{Y}^\preoneb$ over $\Rees(\Fil^\bullet A)\otimes\Field_p$. 

To begin, we can simply restrict the cotangent complex over $Y$ to obtain an almost perfect complex $L(\mathcal{Y})\in \mathrm{APerf}(Y)$. Its pullback over $Y^-$ (resp. $Y^+$) underlies a decreasingly filtered (resp. increasingly filtered) almost perfect complex $\Fil^\bullet_{\mathrm{Hdg}}L(\mathcal{Y})$ (resp. $\Fil^{\mathrm{conj}}L(\mathcal{Y})$). The associated graded complex $\mathrm{gr}^\bullet_{\mathrm{Hdg}}L(\mathcal{Y})$ (resp. $\mathrm{gr}^{\mathrm{conj}}_\bullet L(\mathcal{Y})$) is pulled back from a graded almost perfect complex over $Y^0$ (resp. over ${}^\varphi Y^0$) along $Y^-\to Y^0$ (resp. $Y^+\to {}^\varphi Y^0$), which we denote by the same symbol. 

Finally, over $\Gamma_{\Fzip}(\mathcal{Y})$, we have a canonical isomorphism between the pullbacks of $\varphi^*\gr^\bullet_{\mathrm{Hdg}}L(\mathcal{Y})$ and $\gr^{\mathrm{conj}}_\bullet L(\mathcal{Y})$ along the canonical map $\Gamma_{\Fzip}(\mathcal{Y})\to Y^0$ obtained via projection onto the target of the equalizer diagram in Lemma~\ref{lem:gamma_fzip_description}. This yields an $F$-zip $\bm{L}(\mathcal{Y})$ over $\Gamma_{\Fzip}(\mathcal{Y})$, which is precisely the one described in Remark~\ref{rem:nilpotent_locus_p-adic}.
\end{construction}

\begin{construction}
Let $\tilde{Q}$ be a quasicoherent sheaf over $\Spec R_A$ associated with an $R_A$-module $Q$. For any sheaf $Z$ over $(R_A)_{\et}$ and any almost perfect complex $M$ over $Z$, as in Construction~\ref{const:various_xs_cotangent}, we can define a map of sheaves
\[
\underline{\Map}(M,\tilde{Q})\to Z
\]
whose fiber over $y\in Z(R_{A'})$ is the sheaf on $(R_{A'})_{\et}$ given by
\[
A''\mapsto \Map_{R_{A'}}(M_y,R_{A''}\otimes_{R_A}Q).
\]
\end{construction}

\begin{construction}
Let $Z^1_{\underline{A}}$ be the quasicoherent \'etale sheaf\footnote{Recall that all sheaves are with respect to the \emph{small} \'etale site.} over $R_A$ given by
\[
Z^1_{\underline{A}}(R_{A'}) = \Fil_1^{\mathrm{conj}}\overline{A}'\times_{\overline{A}'}\Fil^1_{\mathrm{Hdg}}\overline{A}' \simeq \hker(\Fil_1^{\mathrm{conj}}\overline{A}'\to \varphi_*R_{A'}).
\]
The $R_{A'}$-module structure is given via the isomorphism $R_{A'}=\gr^0A'\xrightarrow{\simeq}\Fil^{\mathrm{conj}}_0\overline{A}'$.

Let $H^1_{\underline{A}}$ be the quasicoherent \'etale sheaf
\[
H^1_{\underline{A}}:R_{A'}\mapsto \gr^{\mathrm{conj}}_1\overline{A}'.
\]

Note that we have two maps
\[
q_1,q_2:Z^1_{\underline{A}}\to H^1_{\underline{A}}.
\]
The first of these is obtained via the natural map $\Fil_1^{\mathrm{conj}}\overline{A}'\to \gr^{\mathrm{conj}}_1\overline{A}'$, and is a linear map of quasicoherent sheaves, while the second is obtained from the composition
\[
Z^1_{\underline{A}}(R_{A'})\to \Fil^{1}_{\mathrm{Hdg}}\overline{A}'\to \gr^1_{\mathrm{Hdg}}\overline{A}'\xrightarrow{\simeq}\gr^{\mathrm{conj}}_1\overline{A}'.
\]
This one is $\varphi$-semilinear, and so corresponds to a map $1\otimes q_2:\varphi^*Z^1_{\underline{A}}\to H^1_{\underline{A}}$.
\end{construction}

\begin{example}
\label{ex:z1h1_witt}
Let us return to the example of the Witt frame. Here, using Examples~\ref{ex:u=0witt} and~\ref{ex:t=0witt}, we see that 
\begin{align*}
H^1_{\underline{W(R)}}(R)&\simeq \hcoker(R\to F_*\overline{W(R)})\simeq \Ga^\sharp(R)[1];\\
Z^1_{\underline{W(R)}}(R)&\simeq\hker(F_*\overline{W(R)}\to \varphi_*R)\simeq \varphi_*\Ga^{\sharp}[1]
\end{align*}
Here, in the first isomorphism, we have used the inverse of the composition of the isomorphisms
\[
\Ga^\sharp(R)[1]\simeq \hcoker(F:W(R)\to F_*W(R))\xrightarrow{\simeq}\hcoker(R\to F_*\overline{W(R)}).
\]

Via these identifications, $q_2$ is the identity on the underlying $\Field_p$-modules, corresponding to the $R$-linear counit $\varphi^*\varphi_*\Ga^\sharp(R)[1]\to \Ga^\sharp[1]$, while  $q_1$ arises after a shift from the map
\[
\varphi_*\Ga^\sharp(R)\simeq (F_*W(R))[F]\xrightarrow{V} W(R)[F]\simeq \Ga^\sharp(R). 
\] 
\end{example}

We can now describe the relative situation between the consituents of the equalizer diagrams involved in Lemmas~\ref{lem:gamma_fzip_description} and~\ref{lem:abstract_gamma_syn_alternate}.
\begin{lemma}
 \label{lem:abstract_map_to_fzips}
\begin{enumerate}
   \item There is a canonical equivalence
   \[
    \Gamma_{\overline{A}}(\mathcal{Y})\times_{Y}Y^-\xrightarrow{\simeq}\Gamma_{(u=0)}(\mathcal{Y}).
   \]
   \item There is a canonical Cartesian square of prestacks over $R_A$
   \[
    \begin{diagram}
    \Gamma_{(t=0)}(\mathcal{Y})&\rTo&Y^0\\
    \dTo&&\dTo_0\\
    Y^+\times_{{}^\varphi Y^0}Y^0&\rTo_{d^+}&\underline{\Map}(\Fil^1_{\mathrm{Hdg}}L(\mathcal{Y}),Z^1_{\underline{A}}[1])
    \end{diagram}
   \]

   \item There is a canonical map
   \[
    \Gamma_{(t=u=0)}(\mathcal{Y})\to Y^0
   \]
   presenting the source as a trivial torsor under $\underline{\Map}(\Fil^1_{\mathrm{Hdg}}L(\mathcal{Y}),H^1_{\underline{A}})$. In particular, there is a Cartesian square
   \[
     \begin{diagram}
    \Gamma_{(t=u=0)}(\mathcal{Y})&\rTo&Y^0\\
    \dTo&&\dTo_0\\
    Y^0&\rTo_{d^0\;\;\;\;}&\underline{\Map}(\Fil^1_{\mathrm{Hdg}}L(\mathcal{Y}),H^1_{\underline{A}}[1])
    \end{diagram}
   \]
\end{enumerate}
\end{lemma} 
\begin{proof}
The first assertion is immediate from filtered integrability and Proposition~\ref{prop:1_bounded_cartesian}. The other two will use graded integrability and Proposition~\ref{prop:1bounded_graded_deformation}. This tells us that we have
\begin{align*}
\Gamma_{(t=0)}(\mathcal{Y})(R_{A'}) &\simeq \Map\left(\Rees_+(\Fil^{\mathrm{conj}}_\bullet \overline{A}'),\mathcal{Y}\right)\\
&\simeq \Map\left(\Spec(\Fil^{\mathrm{conj}}_1\overline{A}'\cdot u \oplus \Fil^{\mathrm{conj}}_0\overline{A}')/\Gm,\mathcal{Y}\right),
\end{align*}
and
\begin{align*}
Y^+(R_{A'})&\simeq \Map(\Aff^1_+/\Gm\times\Spec R_{A'},\mathcal{Y})\\
&\simeq\Map\left(\Spec(\varphi_*(R_{A'}u\oplus R_{A'}))/\Gm,\mathcal{Y}\right).
\end{align*}

This description, combined with graded deformation theory, shows that $\Gamma_{(t=0)}(\mathcal{Y})$ is a trivial torsor over $Y^0$ under the sheaf $\underline{\Map}(\Fil^1_{\mathrm{Hdg}}L(\mathcal{Y}),\Fil^{\mathrm{conj}}_1)$. Here, $\Fil^{\mathrm{conj}}_1$ is the \'etale sheaf over $R_A$ given by $R_{A'}\mapsto \Fil^{\mathrm{conj}}_1\overline{A}'$.

Similarly, $Y^+$ is a trivial torsor over ${}^\varphi Y^0$---and therefore $Y^+\times_{{}^\varphi Y^0}Y^0$ is a trivial torsor over $Y^0$---under the sheaf $\underline{\Map}(\Fil^1_{\mathrm{Hdg}}L(\mathcal{Y}),\varphi_* \mathcal{O})$, where $\mathcal{O}$ is the structure sheaf on $(R_A)_{\et}$.

Assertion (2) follows from this and the definition of $Z^1_{\underline{A}}$. Assertion (3) is shown in similar fashion.
\end{proof}

\begin{notation}
Let us simplify our notation in the following way: Set $Y^{\Sigma} = Y^0\times_{{}^\varphi Y^0}Y^+$, $\Gamma_{\Fzip} = \Gamma_{\Fzip}(\mathcal{Y})$ and
\begin{align*}
\Gamma_{(t=0)} = \Gamma_{(t=0)}(\mathcal{Y})\times_{Y^\Sigma}\Gamma_{\Fzip}\;;&\; \Gamma_{(u=0)} = \Gamma_{(u=0)}(\mathcal{Y})\times_{Y^-}\Gamma_{\Fzip}\simeq \Gamma_{\overline{A}} = \Gamma_{\overline{A}}(\mathcal{Y})\times_Y\Gamma_{\Fzip};\\
\Gamma_{(t=u=0)} &= \Gamma_{(t=u=0)}(\mathcal{Y})\times_{Y^0}\Gamma_{\Fzip}.
\end{align*}
\end{notation}

\begin{remark}
\label{rem:zeta1zeta2maps}
Note that we have two maps $\zeta_1,\zeta_2:\Gamma_{(t=0)}\to \Gamma_{(t=u=0)}$ obtained as follows: The first arises from the natural map $\Gamma_{(t=0)}(\mathcal{Y})\to \Gamma_{(t=u=0)}(\mathcal{Y})$, while the second is defined as the composition
\begin{align}\label{eqn:t=0tou=t=0_second}
\Gamma_{(t=0)} &= \Gamma_{(t=0)}(\mathcal{Y})\times_{Y^\Sigma}\Gamma_{\Fzip}\\
&\to \Gamma_{\underline{A}}(\mathcal{Y})\times_{Y}\Gamma_{\Fzip}\nonumber\\
&\xleftarrow{\simeq} \Gamma_{(u=0)}(\mathcal{Y})\times_{Y^-}\Gamma_{\Fzip}\nonumber\\
&\to \Gamma_{(t=u=0)}(\mathcal{Y})\times_{Y^0}\Gamma_{\Fzip} = \Gamma_{(t=u=0)}.\nonumber
\end{align}
Unwinding definitions now shows that we have
\[
\begin{diagram}
\Gamma_{\underline{A}}(\mathcal{Y})&\rTo^{\simeq}&\mathrm{eq}\bigl(\Gamma_{(t=0)}&\pile{\rTo^{\zeta_1}\\ \rTo_{\zeta_2}}&\Gamma_{(t=u=0)}\bigr).
\end{diagram}
\]
\end{remark}   

\begin{remark}
\label{rem:gamma_u_t_pulled_back_to_gamma_zip}
Write 
\[
\gamma^0:\Gamma_{\Fzip}\to Y^0\;;\; \gamma^+:\Gamma_{\Fzip}\to Y^\Sigma
\]
for the tautological maps. Define maps
\begin{align*}
\delta^0:\Gamma_{\Fzip}&\xrightarrow{d^0\circ \gamma^0}\underline{\Map}(\Fil^1_{\mathrm{Hdg}}L(\mathcal{Y}),H^1_{\underline{A}}[1])\\
\delta^+:\Gamma_{\Fzip}&\xrightarrow{d^+\circ \gamma^+} \underline{\Map}(\Fil^1_{\mathrm{Hdg}}L(\mathcal{Y}),Z^1_{\underline{A}}[1])
\end{align*}
Lemma~\ref{lem:abstract_map_to_fzips} now gives us canonical isomorphisms
\begin{align}
\label{eqn:gamma_u_t_plus_pulled_back}
\Gamma_{(t=0)}&\simeq\Gamma_{\Fzip}\times_{0,\underline{\Map}(\Fil^1_{\mathrm{Hdg}}L(\mathcal{Y}),Z^1_{\underline{A}}[1]),\delta^+}\Gamma_{\Fzip};
\end{align}
\begin{align}
\label{eqn:gamma_u_t_0_pulled_back}
\Gamma_{(t=u=0)}&\simeq\Gamma_{\Fzip}\times_{0,\underline{\Map}(\Fil^1_{\mathrm{Hdg}}L(\mathcal{Y}),H^1_{\underline{A}}[1]),\delta^0}\Gamma_{\Fzip}.
\end{align}

\end{remark}   

\begin{construction}
\label{const:defining_q1psi_q2psi}
As explained in Remark~\ref{rem:nilpotent_locus_p-adic}, the $F$-zip $\bm{L}(\mathcal{Y})$ over $\Gamma_{\Fzip}$ with Hodge-Tate weights $\ge -1$ gives us a map 
\[
\psi:\Fil^1_{\mathrm{Hdg}}L(\mathcal{Y})\to \varphi^*\Fil^1_{\mathrm{Hdg}}L(\mathcal{Y})
\]
of almost perfect complexes over $\Gamma_{\Fzip}$.

Combining the maps $q_1,q_2:Z^1_{\underline{A}}\to H^1_{\underline{A}}$ with $\psi$ yields, for every $i\in \Int$, two further maps of \'etale sheaves
\[
q_{1,\psi}[i],q_{2,\psi}[i]:\underline{\Map}(\Fil^1_{\mathrm{Hdg}}L(\mathcal{Y}),Z^1_{\underline{A}}[i])\to \underline{\Map}(\Fil^1_{\mathrm{Hdg}}L(\mathcal{Y}),H^1_{\underline{A}}[i]).
\]
The map $q_{1,\psi}[i]$ is simply given by postcomposition with $q_1[i]$, and is independent of $\psi$, while the map $q_{2,\psi}[i]$ is given by the composition 
\[
\underline{\Map}(\Fil^1_{\mathrm{Hdg}}L(\mathcal{Y}),Z^1_{\underline{A}}[i])\xrightarrow{\varphi^*}\underline{\Map}(\varphi^*\Fil^1_{\mathrm{Hdg}}L(\mathcal{Y}),\varphi^*Z^1_{\underline{A}}[i])\xrightarrow{(1\otimes q_2)\circ ()\circ \psi}\underline{\Map}(\Fil^1_{\mathrm{Hdg}}L(\mathcal{Y}),H^1_{\underline{A}}[i]).
\]
We now set for any $i\in\Int$\footnote{The geometric meaning of this definition---when specialized to the case where $\underline{A} = \underline{\Prism}_R$ is the frame associated with the prismatic cohomology of a semiperfect $\Field_p$-algebra $R$---will be explained in Section~\ref{sec:technical_repbility}.}
\[
\Gamma_{\underline{A},\psi}(\Fil^{1}_{\mathrm{Hdg}}L(\mathcal{Y})[-i]) \defn \tau^{\leq 0}\hker(q_{1,\psi}[i]-q_{2,\psi}[i]).
\]
This is a $\Mod[\mathrm{cn}]{\Field_p}$-valued prestack over $\Gamma_{\Fzip}$.

We also obtain two further maps
\[
q_{1,\psi}[1]\circ \delta^+,q_{2,\psi}[1]\circ \delta^+:\Gamma_{\Fzip}\to \underline{\Map}(\Fil^1_{\mathrm{Hdg}}L(\mathcal{Y}),H^1_{\underline{A}}[1]).
\]
\end{construction}  

Here is the main result of this subsection:
\begin{proposition}
\label{prop:abstract_devissage_to_fzips}
\begin{enumerate}
   \item The maps $\delta^0,q_{1,\psi}[1]\circ \delta^+,q_{2,\psi}[1]\circ \delta^+$ are all canonically isomorphic. In particular, for $i=1,2$, $q_{i,\psi}[1]$ yields a map $\eta_i$ obtained as the composition of
   \begin{align*}
    \Gamma_{(t=0)}&\xrightarrow[\simeq]{\eqref{eqn:gamma_u_t_plus_pulled_back}}\Gamma_{\Fzip}\times_{0,\underline{\Map}(\Fil^1_{\mathrm{Hdg}}L(\mathcal{Y}),Z^1_{\underline{A}}[1]),\delta^+}\Gamma_{\Fzip}\\
    &\to\Gamma_{\Fzip}\times_{0,\underline{\Map}(\Fil^1_{\mathrm{Hdg}}L(\mathcal{Y}),H^1_{\underline{A}}[1]),q_{i,\psi}[1]\circ\delta^+}\Gamma_{\Fzip}\\
    &\simeq \Gamma_{\Fzip}\times_{0,\underline{\Map}(\Fil^1_{\mathrm{Hdg}}L(\mathcal{Y}),H^1_{\underline{A}}[1]),\delta^0}\Gamma_{\Fzip}\\
    &\xrightarrow[\simeq]{\eqref{eqn:gamma_u_t_0_pulled_back}}\Gamma_{(t=u=0)}.
    \end{align*}

    \item There are natural isomorphisms of maps $\eta_i\simeq \zeta_i$ for $i=1,2$. In particular, we have
  \[
\begin{diagram}
\Gamma_{\underline{A}}(\mathcal{Y})&\rTo^{\simeq}&\mathrm{eq}\bigl(\Gamma_{(t=0)}&\pile{\rTo^{\eta_1}\\ \rTo_{\eta_2}}&\Gamma_{(t=u=0)}\bigr).
\end{diagram}
\]

    \item There is a canonical Cartesian diagram
\[
\begin{diagram}
\Gamma_{\underline{A}}(\mathcal{Y})&\rTo&\Gamma_{\Fzip}\\
\dTo&&\dTo_0\\
\Gamma_{\Fzip}&\rTo&\Gamma_{\underline{A},\psi}(\Fil^1_{\mathrm{Hdg}}L(\mathcal{Y})[-1]).
\end{diagram}
\]
\end{enumerate}
\end{proposition}

\begin{remark}
\label{rem:note_on_proof_of_def_theory_prop}
Assertion (3) is simply a reinterpretation of the description of $\Gamma_{\underline{A}}(\mathcal{Y})$ given to us by the second part of assertion (2), which in turn is immediate from the first part and Remark~\ref{rem:zeta1zeta2maps}. Moreover, it is straightforward to see from the deformation theory that assertion (1) and the first part of assertion (2) both hold for $i=1$. The non-trivial part of the proposition therefore is seeing that they are also valid for $i=2$. 
\end{remark} 

\begin{remark}
\label{rem:connection_with_drinfeld_shimurian}
Though this is not obvious from the formulation here, the validity of assertion (1) for $i=2$ contains as a special case the key identity (5.6) of~\cite[Lemma 5.3.3]{drinfeld2023shimurian}. 
\end{remark}

 Just as in the proof of Proposition~\ref{prop:def_theory_frames}, the main input to the proof of Proposition~\ref{prop:abstract_devissage_to_fzips} is deformation theory, which we encapsulate in the Lemmas~\ref{lem:abstract_devissage_def_theory} and~\ref{lem:abstract_devissage_witt_vectors} below. For now, we make some preliminary remarks.

\begin{remark}
\label{rem:increasingly_filtered_modules}
Suppose that $\Fil_\bullet S$ is a non-negatively and increasingly filtered animated commutative ring equipped with a map $\pi:S\to \degzero{S}\defn \Fil_0S$ of animated commutative rings, and write $f$ for the composition $\degzero{S} = \Fil_0S\to S \to \degzero{S}$. The map $\pi$ induces a filtered morphism $\Fil_\bullet \pi:\Fil_\bullet S\to \Fil_\bullet^{\mathrm{triv}}\degzero{S}$. If $\gr_\bullet S$ is the associated graded object, then we have a projection $\varpi:\gr_\bullet S \to \gr_0S = \degzero{S}$ onto the degree $0$ part whose composition with the endomorphism $f$ agrees with $\gr_\bullet \pi$ if we view $\degzero{S}$ as a trivially graded animated commutative ring. Given a module $\Fil_\bullet M$ over $\Fil_\bullet S$, base-change along $\Fil_\bullet\pi$ gives an increasingly filtered module $\Fil_\bullet\degzero{M}$ over $\degzero{S}$. Taking the associated graded and then graded base-change along $\varpi$ gives us a graded module $\degzero{(\gr_\bullet M)}$ over $\degzero{S}$. We have a canonical isomorphism
\[
f^*\degzero{(\gr_\bullet M)}\xrightarrow{\simeq}\gr_\bullet \degzero{M}.
\]
Note in particular that for all $i\in \Int$, we have an $\degzero{S}$-linear map $\Fil_iM\to f_*\Fil_i\degzero{M}$.
\end{remark}

\begin{lemma}
\label{lem:increasingly_filtered_modules}
With the setup from Remark~\ref{rem:increasingly_filtered_modules}, let $\Fil_\bullet M$ and $\Fil_\bullet N$ be modules over $\Fil_\bullet S$ with the following properties:
\begin{enumerate}
   \item $\degzero{(\gr_i M)}\simeq 0$ for $i>1$;
   \item $\Fil_i N\simeq 0$ for $i<1$.
\end{enumerate}
Then there is a canonical isomorphism
\[
\zeta:\Map_{\Fil_\bullet S}(\Fil_\bullet M,\Fil_\bullet N)\xrightarrow{\simeq}\Map_{\degzero{S}}(\degzero{(\gr_1 M)},\Fil_1N) 
\]
such that we have a commuting diagram
\[
\begin{diagram}
\Map_{\degzero{S}}(\degzero{(\gr_1 M)},\Fil_1N)&\lTo_{\simeq}^\zeta&\Map_{\Fil_\bullet S}(\Fil_\bullet M,\Fil_\bullet N)&\rTo&\Map_S(M,N)\\
\dTo&&\dTo&&\dTo\\
\Map_{\degzero{S}}(\gr_1\degzero{M},\Fil_1\degzero{N})&\lTo_{\simeq}&\Map_{\Fil^{\mathrm{triv}}_\bullet\degzero{S}}(\Fil_\bullet\degzero{M},\Fil_\bullet\degzero{N})&\rTo&\Map_{\degzero{S}}(\degzero{M},\degzero{N})
\end{diagram}
\]
Here, the vertical arrow on the left is the composition
\[
\Map_{\degzero{S}}(\degzero{(\gr_1 M)},\Fil_1N)\to \Map_{\degzero{S}}(\degzero{(\gr_1 M)},f_*\Fil_1\degzero{N})\xrightarrow{\simeq}\Map_{\degzero{S}}(f^*\degzero{(\gr_1 M)},\Fil_1\degzero{N})\xrightarrow{\simeq}\Map_{\degzero{S}}(\gr_1\degzero{M},\Fil_1\degzero{N}).
\]

\end{lemma} 
\begin{proof}
Taking the associated graded and then base-change along $\varpi$ gives a canonical map
\[
\Map_{\Fil_\bullet S}(\Fil_\bullet M,\Fil_\bullet N)\to \Map_{\gr^{\mathrm{triv}}_\bullet\degzero{S}}(\degzero{(\gr_\bullet M)},\degzero{(\gr_\bullet N)}).
\]
Using our hypotheses, one can now use Lemma~\ref{lem:graded_weight_filtration} to see that this map is an isomorphism and also that the right hand side can be identified with $\Map_{\degzero{S}}(\degzero{(\gr_1M)},\Fil_1N)$. This gives us the isomorphism $\zeta$. The map on the bottom left is obtained from restriction to $\Fil_1\degzero{M}$: The fact that any such restriction must factor through $\gr_1\degzero{M}$ and that the resulting map of mapping spaces is an isomorphism can also be deduced using the weight filtration from Lemma~\ref{lem:graded_weight_filtration}.
\end{proof}

\begin{lemma}
\label{lem:abstract_devissage_def_theory}
Suppose that $\underline{A}\to \underline{A}_\defbase$ is a square-zero extension of prismatic, $p$-adic frames equipped with $p$-adic filtration, and suppose that assertions (1) and (2) of Proposition~\ref{prop:abstract_devissage_to_fzips} hold with $\underline{A}$ replaced with $\underline{A}_\defbase$. Then they hold for $\underline{A}$.
\end{lemma}

As usual, we will need to set up some prepwork before we embark on the proof.
\begin{notation}
Set $R = R_A$ and $R_\defbase = R_{A_{\defbase}}$. We have a canonical equivalence $R_{\et} \simeq R_{\defbase,\et}$. There are the counterparts of the various sheaves on $R_{\et}$ considered above, but obtained from $\underline{A}_\defbase$ instead of $\underline{A}$. We will distinguish these counterparts with a $\defbase$ index: $Y^\Sigma_{\defbase}$, $\Gamma_{(t=0),\defbase}(\mathcal{Y})$, $\Gamma_{\Fzip,\defbase}$, $L_{\defbase}(\mathcal{Y})$, etc. We will also simplify $L_{\defbase}(\mathcal{Y})$ to $L_{\defbase}$ in what follows.

Set
\[
\Fil^\bullet_{\mathrm{Hdg}}\overline{K} \defn \hker(\Fil^\bullet_{\mathrm{Hdg}}\overline{A}\to \Fil^\bullet_{\mathrm{Hdg}}\overline{A}_\defbase)\;;\;\Fil_\bullet^{\mathrm{conj}}\overline{K} \defn \hker(\Fil_\bullet^{\mathrm{conj}}\overline{A}\to \Fil_\bullet^{\mathrm{conj}}\overline{A}_\defbase).
\]
Also, set $I = \hker(R\to R_\defbase)$.
\end{notation}

\begin{construction}
Write $\Fil^{\mathrm{conj}}_{\bullet,\defbase}$ for the sheaf of increasingly filtered animated commutative rings given by $R'\mapsto \Fil^{\mathrm{conj}}_{\bullet}\overline{A}'_{\defbase}$. Over $\Gamma_{(t=0),\defbase}(\mathcal{Y})$, we obtain a sheaf of increasingly filtered modules $\Fil^{\mathrm{conj}}_{\bullet}\mathcal{L}_\defbase$ over $\Fil^{\mathrm{conj}}_{\bullet,\defbase}$, whose fiber over a point $x\in \Gamma_{(t=0),\defbase}(\mathcal{Y})(R')$ with underlying map
\[
y:\mathcal{R}_+(\Fil^{\mathrm{conj}}_\bullet A'_\defbase) \to \mathcal{Y}^\preoneb
\]
is the pullback $\Fil^{\mathrm{conj}}_\bullet\mathcal{L}_{\defbase,y}$ along $y$ of the relative cotangent complex of $\mathcal{Y}^\preoneb$ over $\Rees(\Fil^\bullet A)\otimes\Field_p$. Filtered base-change to the structure sheaf $\mathcal{O}_\defbase:R'\mapsto R'_{\defbase}$ with its trivial increasing filtration\footnote{The map in question is obtained from $\Fil^{\mathrm{conj}}_\bullet\overline{A}_\defbase \to R_\defbase$.} yields the pullback along the map $\Gamma_{(t=0),\defbase}(\mathcal{Y})\to Y^+_{\defbase}$ of the filtered module $\Fil^{\mathrm{conj}}_\bullet L_\defbase$. Similarly, taking the associated graded and then graded base-change to the structure sheaf $\mathcal{O}_\defbase$ with its trivial grading\footnote{Here, we are viewing $\mathcal{O}_\defbase$ as the zeroth graded piece of $\Fil^\bullet_{\mathrm{conj}}$.} gives the pullback along $\Gamma_{(t=0),\defbase}(\mathcal{Y})\to Y^0_{\defbase}$ of $\gr^\bullet_{\mathrm{Hdg}}L_\defbase$. 
\end{construction}

\begin{construction}
Write $\Fil^\bullet_{\mathrm{Hdg},\defbase}$ for the sheaf of decreasingly filtered animated commutative rings given by $R'\mapsto \Fil^\bullet_{\mathrm{Hdg}}\overline{A}'_{\defbase}$. Over $\Gamma_{(u=0),\defbase}(\mathcal{Y})$, we obtain from the cotangent complex of $\mathcal{Y}^\preoneb$ a sheaf of decreasingly filtered modules $\Fil^\bullet_{\mathrm{Hdg}}\mathcal{L}_\defbase$ over $\Fil^\bullet_{\mathrm{Hdg},\defbase}$. Filtered base-change to the structure sheaf $\mathcal{O}_\defbase:R'\mapsto R'_{\defbase}$ with its trivial decreasing filtration yields the pullback along the map $\Gamma_{(u=0),\defbase}(\mathcal{Y})\to Y^-_{\defbase}$ of the filtered module $\Fil^\bullet_{\mathrm{Hdg}} L_\defbase$. Similarly, taking the associated graded and then graded base-change to the structure sheaf $\mathcal{O}_\defbase$ with its trivial grading gives the pullback along $\Gamma_{(u=0),\defbase}(\mathcal{Y})\to Y^0_{\defbase}$ of $\gr^\bullet_{\mathrm{Hdg}}L_\defbase$. 
\end{construction}

\begin{construction}
Write $\mathrm{gr}^\bullet_\defbase$ for the sheaf of graded animated commutative rings given by
\[
R'\mapsto \gr^\bullet_{\mathrm{Hdg}}\overline{A}'_\defbase\simeq \gr^{\mathrm{conj}}_{\bullet}\overline{A}'_\defbase.
\]
Over $\Gamma_{(t=u=0),\defbase}(\mathcal{Y})$, entirely analogously to the previous constructions, we obtain a sheaf of graded $\mathrm{gr}^\bullet_\defbase$-modules $\mathcal{L}_{\bullet,\defbase}$ over $\Gamma_{(t=u=0),\defbase}(\mathcal{Y})$. Graded base-change to the structure sheaf gives the pullback along $\Gamma_{(t=u=0),\defbase}(\mathcal{Y})\to Y^0_{\defbase}$ of $\gr^\bullet_{\mathrm{Hdg}}L_\defbase$. Moreover, the pullback of this graded module to $\Gamma_{(t=0),\defbase}(\mathcal{Y})$ (resp. $\Gamma_{(u=0),\defbase}(\mathcal{Y})$) is canonically isomorphic to the associated graded for $\Fil_\bullet^{\mathrm{conj}}\mathcal{L}_\defbase$ (resp. $\Fil_{\mathrm{Hdg}}^{\bullet} \mathcal{L}_\defbase$).
\end{construction}  

\begin{remark}
The $\overline{A}_\defbase$-module $\overline{J} \defn \Fil^1_{\mathrm{Hdg}}\overline{K}$ can be equipped with the structure of a filtered module $\Fil_\bullet\overline{J}$ over $\Fil^{\mathrm{conj}}_\bullet\overline{A}_\defbase$ with
\[
\Fil_i\overline{J} = \begin{cases}
\Fil_i\overline{K}\times_{\overline{K}}\overline{J}&\text{if $i>0$};\\
0&\text{otherwise}.
\end{cases}
\]
Note in particular that we have
\begin{align*}
\Fil_1\overline{J} \xrightarrow{\simeq} \Fil_1^{\mathrm{conj}}\overline{K}\times_{\overline{K}}\Fil^1_{\mathrm{Hdg}}\overline{K}\xrightarrow{\simeq}Z^1_{\overline{K}} \defn \hker(Z^1_{\underline{A}}\to Z^1_{\underline{A}_\defbase}).
\end{align*}
Moreover, the map $\gr_\bullet\overline{J}\to \gr^{\mathrm{conj}}_\bullet\overline{K}$ factors through 
\[
\gr^{\mathrm{conj}}_{\ge 1}\overline{K} \defn \hker(\gr^{\mathrm{conj}}_\bullet\overline{K}\to \gr^{\mathrm{triv}}_\bullet I).
\]
Note also the composition of maps
\begin{align}\label{eqn:barJ_fil1_map}
\overline{J} = \Fil^1_{\mathrm{Hdg}}\overline{K} \to \gr^1_{\mathrm{Hdg}}\overline{K}\xrightarrow{\simeq}\gr_1^{\mathrm{conj}}\overline{K}.
\end{align}
\end{remark} 

\begin{proof}
[Proof of Lemma~\ref{lem:abstract_devissage_def_theory}]
Consider the map $\Gamma_{(t=0)}\to \Gamma_{(t=0),\defbase}\times_{\Gamma_{\Fzip,\defbase}}\Gamma_{\Fzip}$: An unwinding of the definitions shows that this can be rewritten as
\begin{align*}
\Gamma_{(t=0)}(\mathcal{Y})\times_{Y^\Sigma}\Gamma_{\Fzip}\to \left[\Gamma_{(t=0),\defbase}(\mathcal{Y})\times_{Y^\Sigma_\defbase}Y^\Sigma\right]\times_{Y^\Sigma}\Gamma_{\Fzip}.
\end{align*}

Via this and deformation theory we obtain a Cartesian square
\[
\Square{\Gamma_{(t=0)}}{}{\Gamma_{(t=0),\defbase}\times_{\Gamma_{\Fzip,\defbase}}\Gamma_{\Fzip}}{}{0}{\Gamma_{(t=0),\defbase}\times_{\Gamma_{\Fzip,\defbase}}\Gamma_{\Fzip}}{}{\underline{\Map}(\Fil^{\mathrm{conj}}_\bullet \mathcal{L}_\defbase,\Fil_\bullet \overline{J}[1]).}
\]

We also obtain similar Cartesian squares
\begin{align*}
\Square{\Gamma_{(u=0)}}{}{\Gamma_{(u=0),\defbase}\times_{\Gamma_{\Fzip,\defbase}}\Gamma_{\Fzip}}{}{0}{\Gamma_{(u=0),\defbase}\times_{\Gamma_{\Fzip,\defbase}}\Gamma_{\Fzip}}{}{\underline{\Map}(\mathcal{L}_\defbase,\overline{J}[1])}\;;\qquad&\Square{\Gamma_{(t=u=0)}}{}{\Gamma_{(t=u=0),\defbase}\times_{\Gamma_{\Fzip,\defbase}}\Gamma_{\Fzip}}{}{0}{\Gamma_{(t=u=0),\defbase}\times_{\Gamma_{\Fzip,\defbase}}\Gamma_{\Fzip}}{}{\underline{\Map}(\mathcal{L}_{\bullet,\defbase},\gr_{\ge 1}^{\mathrm{conj}}\overline{K}[1]).}
\end{align*}

Lemma~\ref{lem:increasingly_filtered_modules} gives us canonical isomorphisms
\begin{align}\label{eqn:t=0deftheory}
\underline{\Map}(\Fil^{\mathrm{conj}}_\bullet \mathcal{L}_\defbase,\Fil_\bullet \overline{J}[1])\xrightarrow{\simeq}\underline{\Map}(\Fil^1_{\mathrm{Hdg}}L_\defbase,\Fil_1\overline{J}[1])\xrightarrow{\simeq}\underline{\Map}(\Fil^1_{\mathrm{Hdg}}L_\defbase,Z^1_{\underline{K}}[1]).
\end{align}
Further, using Lemma~\ref{lem:graded_weight_filtration}, one deduces that there are canonical isomorphisms
\begin{align}\label{eqn:t=u=0deftheory}
\underline{\Map}(\mathcal{L}_{\bullet,\defbase},\gr_{\ge 1}^{\mathrm{conj}}\overline{K}[1])\xrightarrow{\simeq}\underline{\Map}(\Fil^1_{\mathrm{Hdg}}L_\defbase,\gr^{\mathrm{conj}}_1\overline{K}[1])\xrightarrow{\simeq}\underline{\Map}(\Fil^1_{\mathrm{Hdg}}L_\defbase,H^1_{\underline{K}}[1]),
\end{align}
where $H^1_{\underline{K}} \defn \hker(H^1_{\underline{A}}\to H^1_{\underline{A}_\defbase})$.

The map $\Gamma_{(t=0)}\to \Gamma_{(t=u=0)}$ is compatible via these isomorphisms and the Cartesian squares above with the composition
\begin{align*}
\underline{\Map}(\Fil^1_{\mathrm{Hdg}}L_\defbase,Z^1_{\underline{K}}[1])\to \underline{\Map}(\Fil^1_{\mathrm{Hdg}}L_\defbase,H^1_{\underline{K}}[1])
\end{align*}
induced by $q_1:Z^1_{\underline{K}}\to H^1_{\underline{K}}$.

The map $\Gamma_{(u=0)}\to \Gamma_{(t=u=0)}$ is compatible via the map
\begin{align}\label{eqn:u=0tot=u=0}
\underline{\Map}(\mathcal{L}_\defbase,\overline{J}[1])\to \underline{\Map}(L_\defbase,\gr_1^{\mathrm{conj}}\overline{K}[1])\to  \underline{\Map}(\Fil^1_{\mathrm{Hdg}}L_\defbase,H^1_{\underline{K}}[1]),
\end{align}
where the first map is obtained from post-composition with the map~\eqref{eqn:barJ_fil1_map}, and the second via restriction to $\Fil^1_{\mathrm{Hdg}}L_\defbase$. Note that the first mapping space is of morphisms linear over the sheaf $R'\mapsto \overline{A}'$ while the other two spaces are of maps linear over the structure sheaf.

The map from~\eqref{eqn:t=0tou=t=0_second} is in turn compatible with the composition
\begin{align*}
\underline{\Map}(\Fil^1_{\mathrm{Hdg}}L_\defbase,Z^1_{\underline{K}}[1])&\xrightarrow[\simeq]{\eqref{eqn:t=0deftheory}}\underline{\Map}(\Fil^{\mathrm{conj}}_\bullet \mathcal{L}_\defbase,\Fil_\bullet \overline{J}[1])\\
&\to \underline{\Map}(\mathcal{L}_\defbase, \overline{J}[1])\\
&\xrightarrow{\eqref{eqn:u=0tot=u=0}}\underline{\Map}(\Fil^1_{\mathrm{Hdg}}L_\defbase,H^1_{\underline{K}}[1]).
\end{align*}
Lemma~\ref{lem:increasingly_filtered_modules} tells us that the resulting map is canonically isomorphic to the composition
\begin{align*}
\underline{\Map}(\Fil^1_{\mathrm{Hdg}}L_\defbase,Z^1_{\underline{K}}[1])&\to \underline{\Map}(\Fil^1_{\mathrm{Hdg}}L_\defbase,\varphi_*H^1_{\underline{K}}[1])\\
&\xrightarrow{\simeq} \underline{\Map}(\varphi^*\Fil^1_{\mathrm{Hdg}}L_\defbase,H^1_{\underline{K}}[1])\\
&\xrightarrow{\simeq}\underline{\Map}(\gr^{\mathrm{conj}}_1L_\defbase,H^1_{\underline{K}}[1])\\
&\to\underline{\Map}(L_\defbase,H^1_{\underline{K}}[1])\\
&\to\underline{\Map}(\Fil^1_{\mathrm{Hdg}}L_\defbase,H^1_{\underline{K}}[1]).
\end{align*}

The three preceding paragraphs complete the verification of Lemma~\ref{lem:abstract_devissage_def_theory}.
\end{proof}

\begin{lemma}
\label{lem:abstract_devissage_witt_vectors}
Assertions (1) and (2) of Proposition~\ref{prop:abstract_devissage_to_fzips} hold for $\underline{A} = \underline{W(R)}$ the Witt vector frame associated with a semiperfect $\Field_p$-algebra $R$.
\end{lemma}
\begin{proof}
We'll follow the notation simplifications from Lemma~\ref{lem:abstract_devissage_def_theory}. To begin, Lemma~\ref{lem:abstract_map_to_fzips}, combined with Example~\ref{ex:z1h1_witt}, tells us that we have Cartesian squares 
\[
\Square{\Gamma_{(t=0)}\times_{\Gamma_{\overline{W(R)}}}\Gamma_{(u=0)}}{}{\Gamma_{\Fzip}}{}{0}{\Gamma_{\Fzip}}{\delta^+\;\qquad\;}{\underline{\Map}(\Fil^1_{\mathrm{Hdg}}L(\mathcal{Y}),\varphi_*\Ga^\sharp[2])}
\]
and
\begin{align*}
\Square{\Gamma_{(t=u=0)}}{}{\Gamma_{\Fzip}}{}{0}{\Gamma_{\Fzip}}{\delta^0\;\;}{\underline{\Map}(\Fil^1_{\mathrm{Hdg}}L(\mathcal{Y}),\Ga^\sharp[2]).}
\end{align*}

Since $\overline{W(R)}$ is a square-zero extension of $R$ with fiber $\Ga^\sharp(R)[1]$, standard deformation theory also gives us a Cartesian square
\[
\Square{\Gamma_{(u=0)}}{}{\Gamma_{\Fzip}}{}{0}{\Gamma_{\Fzip}}{\delta^-}{\underline{\Map}(L(\mathcal{Y}),\Ga^\sharp[2]).}
\]
Furthermore, the composition
\[
\Gamma_{\Fzip}\xrightarrow{\delta^-} \underline{\Map}(L(\mathcal{Y}),\Ga^\sharp[2])\to \underline{\Map}(\Fil^1_{\mathrm{Hdg}}L(\mathcal{Y}),\Ga^\sharp[2])
\]
is canonically isomorphic to $\delta^0$.

An unwinding of the deformation theory used above now shows that $\delta^-$ agrees with the composition
\[
\Gamma_{\Fzip}\xrightarrow{\delta^+}\underline{\Map}(\Fil^1_{\mathrm{Hdg}}L(\mathcal{Y}),\varphi_*\Ga^\sharp[2])\xrightarrow{\simeq}\underline{\Map}(\gr_1^{\mathrm{conj}}L(\mathcal{Y}),\Ga^\sharp[2])\to \underline{\Map}(L(\mathcal{Y}),\Ga^\sharp[2])
\]
Here, in the isomorphism in the middle, we have used the isomorphism $\varphi^*\Fil^1_{\mathrm{Hdg}}L(\mathcal{Y})\xrightarrow{\simeq}\gr^{\mathrm{conj}}_1L(\mathcal{Y})$ and adjunction for $\varphi$. 

Combining the last two paragraphs with the definition of $q_{2,\psi}$ and the explicit description of $q_2$ in Example~\ref{ex:z1h1_witt} now completes the proof.
\end{proof}

\begin{proof}
[Proof of Proposition~\ref{prop:abstract_devissage_to_fzips}]
To begin, note that the last assertion of the proposition follows immediately from the definitions and the first two assertions. 

The verification of the other two assertions follows the same format as the proof of Proposition~\ref{prop:def_theory_frames}, so we will be terse. First, we can use Lemma~\ref{lem:abstract_devissage_def_theory} and the nilcompleteness of the various prestacks involved to reduce to the case where the Rees algebra $\mathrm{Rees}(\Fil^\bullet A)$ is a discrete graded ring.

Now, Lemma~\ref{lem:laminated_frames} gives us a canonical map of frames $\underline{A}\to \underline{W(R)}$, where we have set $R = \pi_0(R_A)$. Since $R_A$ is semiperfect, this map is surjective on the underlying filtered animated commutative rings: First, the map $A\to W(R)$ is surjective. Indeed, by $p$-completeness, this assertion is equivalent to saying that $\pi_0(\overline{A})\to W(R)/pW(R)$ is surjective. But the semiperfectness of $R_A$ tells us that $W(R)/pW(R)\simeq R$. Next, the map $\Fil^i A \to \Fil^i_{\mathrm{Lau}}W(\pi_0(R_A))$ is surjective for each $i\ge 1$. By the definition of the Lau filtration on $W(R)$ and the fact that $\Fil^\bullet A$ is $p$-adic, it suffices to check this for $i=1$, where it follows from the fact that $\Fil^1_{\mathrm{Lau}}W(R) = F_*W(R)$ is a submodule of $W(R)$ via the map $V$.

As in Remark~\ref{rem:operations_on_frames}, classical base-change along $A\to W(R)$ (viewed as the derived base-change, followed by taking the $0$-truncation), yields a non-negatively filtered animated commutative ring $\Fil^\bullet W(R)$ lifting $W(R)$, underlying a frame ${}_A\underline{W(R)}$, and admitting surjective maps of frames
\[
\underline{A}\to {}_A\underline{W(R)}\to \underline{W(R)}.
\]
The kernel of the first map is locally nilpotent mod-$p$, and so---using strong integrability and Lemma~\ref{lem:abstract_devissage_def_theory} once again---it is enough to know that the proposition holds for the frame ${}_A\underline{W(R)}$. 

If $\Fil^\bullet K$ is the kernel of $\Fil^\bullet W(R)\to \Fil^\bullet_{\mathrm{Lau}}W(R)$, then the frame structure on $\Fil^\bullet W(R)$ descends to the filtered quotient $\Fil^\bullet_{(m)}W(R)$, where
\[
\Fil^i_{(m)}W(R) = \Fil^iW(R)/\bigl(\sum_{j_1+\ldots+j_m = i}\mathrm{im}(\Fil^{j_1}K\otimes_{W(R)}\otimes{\cdots}\otimes_{W(R)}\Fil^{j_m}K)\bigr)
\]
Now, we have $\Fil^\bullet_{(1)}W(R) = \Fil^\bullet_{\mathrm{Lau}}W(R)$ and the map $\Fil^\bullet_{(m+1)}W(R)\to\Fil^\bullet_{(m)}W(R)$ is a square-zero extension, for each $m\ge 1$. Therefore, we can use filtered integrability and the deformation argument above to reduced to the case of the Witt frame $\underline{W(R)}$, which is taken care of by Lemma~\ref{lem:abstract_devissage_witt_vectors}.
\end{proof}

Here is a useful corollary to Proposition~\ref{prop:abstract_devissage_to_fzips}.
\begin{corollary}
\label{cor:abstract_fzip_smooth_torsor}
Suppose that the following additional conditions hold:
\begin{itemize}
   \item $\mathcal{Y}^\preoneb$ is \emph{smooth} over $\Rees(\Fil^\bullet A)\otimes\Field_p$;
   \item Its relative tangent complex $\mathbb{T}_{\mathcal{Y}^\preoneb}$ is $1$-connective;
\end{itemize}
Then $\Gamma_{\underline{A}}(\mathcal{Y})\to \Gamma_{\Fzip}(\mathcal{Y})$ is a torsor under $\Gamma_{\underline{A},\psi}(\Fil^1_{\mathrm{Hdg}}L(\mathcal{Y}))$.
\end{corollary}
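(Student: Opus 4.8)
The plan is to read the statement off the Cartesian square of Proposition~\ref{prop:abstract_devissage_to_fzips}, which exhibits $\Gamma_{\underline{A}}(\mathcal{X},\xi)$ as the fibre, along the zero section, of the obstruction map $\mathrm{ob}\colon \Gamma_{\Fzip}(\mathcal{X})\to Y$, where $Y\defn \Gamma_{\underline{A},\psi}(\gr^{-1}_{\mathrm{Hdg}}M(\mathcal{X})[1])$ is viewed as a sheaf over $\Gamma_{\Fzip}(\mathcal{X})$. Since $\hker$ commutes with shifts, $Y = \tau^{\le 0}\bigl(\hker(q_{1,\psi}-q_{2,\psi})[1]\bigr)$, and $\Gamma_{\underline{A},\psi}(\gr^{-1}_{\mathrm{Hdg}}M(\mathcal{X})) = \tau^{\le 0}\hker(q_{1,\psi}-q_{2,\psi})$. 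The first step is to establish two connectivity facts. From the hypothesis that $R_A$ is semiperfect one gets $\Omega^1_{R_A/\Field_p}=0$ (every element is a $p$-th power, so all differentials vanish), hence $\mathbb{L}_{R_A/\Field_p}$ is $1$-connective, and likewise over every \'etale $R_A$-algebra; since the associated gradeds of the conjugate and Hodge filtrations on $\overline{A}'$ are built from (shifted exterior powers of) this cotangent complex, the quasi-coherent sheaves $Z^1_{\underline{A}}$ and $H^1_{\underline{A}}$ are connective. From the hypothesis that $\mathbb{T}_{\mathcal{X}^\diamond}$ is $1$-connective --- together with the smoothness and $1$-boundedness of $\mathcal{X}$, so that the Hodge filtration on $M(\mathcal{X})$ carries only the weights $0,-1$ --- one gets that $\gr^{-1}_{\mathrm{Hdg}}M(\mathcal{X})$ is a $1$-connective perfect complex of vector bundles.

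Granting these, a bookkeeping of the long exact sequences shows that $\hker(q_{1,\psi}-q_{2,\psi})$ lies in cohomological degrees $\le 0$, i.e.\ is connective, while $\hker(q_{1,\psi}-q_{2,\psi})[1]$ lies in degrees $\le -1$. Hence $\tau^{\le 0}$ is harmless, $Y\simeq \mathbf{V}\bigl(\hker(q_{1,\psi}-q_{2,\psi})[1]\bigr)$ is connected over $\Gamma_{\Fzip}(\mathcal{X})$, and --- by the elementary identity $\Omega\,\mathbf{V}(C[1])\simeq \mathbf{V}(C)$ for connective $C$ --- its loop sheaf over $\Gamma_{\Fzip}(\mathcal{X})$ is canonically $\mathbf{V}\bigl(\hker(q_{1,\psi}-q_{2,\psi})\bigr) = \Gamma_{\underline{A},\psi}(\gr^{-1}_{\mathrm{Hdg}}M(\mathcal{X}))$. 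Since $\mathcal{X}^\diamond$ is smooth, $\gr^{-1}_{\mathrm{Hdg}}M(\mathcal{X})$ and $Z^1_{\underline{A}},H^1_{\underline{A}}$ are (shifted) vector bundles, so $\Gamma_{\underline{A},\psi}(\gr^{-1}_{\mathrm{Hdg}}M(\mathcal{X}))$ is the total space $\mathbf{V}$ of a perfect complex concentrated in degrees $[-1,0]$, i.e.\ a vector stack, and all of the above takes place in the category of Artin stacks over $(R_A)_{\et}$.

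The conclusion then follows from formal properties of fibres of maps into sheaves of connective complexes: the fibre of $\mathrm{ob}$ along the zero section of the group-like sheaf $Y$ carries a canonical action of $\Omega Y$ making it a pseudo-torsor under $\Omega Y\simeq \Gamma_{\underline{A},\psi}(\gr^{-1}_{\mathrm{Hdg}}M(\mathcal{X}))$, and connectedness of $Y$ over $\Gamma_{\Fzip}(\mathcal{X})$ --- the vanishing of $H^0$ of its defining complex, which is exactly what $1$-connectivity of $\mathbb{T}_{\mathcal{X}^\diamond}$ bought us --- guarantees that $\mathrm{ob}$ is locally null, so that $\Gamma_{\underline{A}}(\mathcal{X},\xi)\to \Gamma_{\Fzip}(\mathcal{X})$ is an epimorphism of \'etale sheaves, hence a genuine torsor and not merely a pseudo-torsor, under $\Gamma_{\underline{A},\psi}(\gr^{-1}_{\mathrm{Hdg}}M(\mathcal{X}))$.

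The step I expect to be the main obstacle is the connectivity of $Z^1_{\underline{A}}$ and $H^1_{\underline{A}}$ for a general prismatic $p$-adic frame $\underline{A}$ with $R_A$ semiperfect: one must pin down the conjugate and Hodge filtrations on $\overline{A}'$ --- and the isomorphism of their associated gradeds --- precisely enough to recognise the relevant pieces as exterior powers of $\mathbb{L}_{R_{A'}/\Field_p}$, after which $1$-connectivity of that cotangent complex forces the needed bounds. This is cleanest after reducing, by the same deformation-theoretic d\'evissage used in the proof of Proposition~\ref{prop:abstract_devissage_to_fzips}, to the Witt frame, where Examples~\ref{ex:u=0witt},~\ref{ex:t=0witt} and~\ref{ex:z1h1_witt} make $Z^1_{\underline{W(R)}}\simeq H^1_{\underline{W(R)}}\simeq \hcoker(F\colon W(R)\to F_*W(R))$ explicit and where $\Omega^1_{R/\Field_p}=0$ translates directly into connectivity. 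A secondary, routine, point is to check that the loop-space and torsor formalism is applied on the level of sheaves of connective complexes on the small \'etale site of $R_A$ rather than to honest geometric stacks.
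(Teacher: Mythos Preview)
Your overall strategy---deduce the torsor property from the Cartesian square of Proposition~\ref{prop:abstract_devissage_to_fzips} by showing the obstruction sheaf $Y$ is $1$-connective, hence connected, and then identifying its loop sheaf---is a valid alternative to the paper's route. The paper instead goes back into Lemma~\ref{lem:abstract_map_to_fzips}(2) and upgrades the pseudo-torsor $\Gamma_{(t=0)}(\mathcal{X})\to X^+\times_{{}^\varphi X^0}X^0$ to an actual torsor by proving that the map of structure groups $\tau^{\le 0}(\gr^{-1}_{\mathrm{Hdg}}M(\mathcal{X})\otimes\Fil^{\mathrm{conj}}_1)\to\tau^{\le 0}(\gr^{-1}_{\mathrm{Hdg}}M(\mathcal{X})\otimes\varphi_*\mathcal{O})$ is surjective; the torsor property then propagates through the cubes. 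Your route treats the proposition as a black box; the paper's is more localized but requires reopening that proof.

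The gap is in your justification for the connectivity of $Z^1_{\underline{A}}$. For a \emph{general} prismatic $p$-adic frame, the gradeds $\gr^{\mathrm{conj}}_\bullet\overline{A}$ and the Hodge-filtered pieces are \emph{not} built from $\mathbb{L}_{R_A/\Field_p}$---that identification is special to the Nygaard-filtered prismatic cohomology frame $\underline{\Prism}_R$---and the d\'evissage to the Witt frame used in the proof of Proposition~\ref{prop:abstract_devissage_to_fzips} establishes only commutativity of a diagram, not connectivity of $Z^1_{\underline{A}}$ for the original frame. The correct argument is elementary and is exactly what the paper uses (at its earlier stage): $Z^1_{\underline{A}}=\hker(\Fil^{\mathrm{conj}}_1\overline{A}'\to\varphi_*R_{A'})$ is connective because that map is surjective on $\pi_0$, which holds since the composite $R_{A'}\simeq\Fil^{\mathrm{conj}}_0\overline{A}'\to\Fil^{\mathrm{conj}}_1\overline{A}'\to\varphi_*R_{A'}$ is the Frobenius of $R_{A'}$ (Remark~\ref{rem:abstract_conj_fil_0_frob}), surjective by semiperfectness. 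Connectivity of $H^1_{\underline{A}}=\gr^{\mathrm{conj}}_1\overline{A}'$ is automatic, being a cokernel of connective objects. With this correction your argument runs.
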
   
\begin{proof}
It is enough to know that, in assertion (2) of Lemma~\ref{lem:abstract_map_to_fzips}, we have the stronger assertion that
\[
\Gamma_{(t=0)}(\mathcal{Y})\to Y^+\times_{{}^\varphi Y^0}Y^0
\]
is a torsor under $\underline{\Map}(\Fil^1_{\mathrm{Hdg}}L(\mathcal{Y}),Z^1_{\underline{A}})$.

Looking at the proof of that assertion, we find that we need to know that the map
\[
(\Fil^1_{\mathrm{Hdg}}L(\mathcal{Y}))^\vee\otimes_{\mathcal{O}}\Fil^{\mathrm{conj}}_1\to (\Fil^1_{\mathrm{Hdg}}L(\mathcal{Y}))^\vee\otimes_{\mathcal{O}}\varphi_*\mathcal{O}
\]
is surjective on connective covers. But in fact our hypothesis on the tangent complex shows that the source and target of this map are already connective. Therefore, it is enough to know that the map
\[
R_{A'}\simeq \Fil^{\mathrm{conj}}_0A'\to \varphi_*R_{A'}
\]
is surjective for any \'etale map $R_A\to R_{A'}$. But this is guaranteed to us by the semiperfectness of $R_A$, and the fact that any \'etale algebra over a semiperfect ring is also semiperfect; see Lemma~\ref{lem:f-semiperf_etale} below.
\end{proof}

\section{The stacks of Drinfeld and Bhatt-Lurie}
\label{sec:bld_stacks}

\subsection{Transmutation}
\label{subsec:transmutation}

Suppose that we have a map $\pi:Z\to Y$ of $p$-adic formal prestacks such that $Z$ is a \defnword{relative ring prestack} over $Y$: For us, this will mean that we have specified a lift of the associated functor
\[
\mathrm{CRing}^{p\text{-nilp}}_{/Y}\xrightarrow{(C,y)\mapsto Z((C,y))}\mathrm{Spc}
\]
to a presheaf valued in $\mathrm{CRing}$, which we will denote by the same symbol.

Here, $\mathrm{CRing}^{p\text{-nilp}}_{/Y}$ is the $\infty$-category of pairs $(C,y)$ with $C\in \mathrm{CRing}^{p\text{-nilp}}$ and $y\in Y(C)$, and $Z((C,y))$ is the fiber of $Z(C)$ over $y$. Then, for any $R\in \mathrm{CRing}^{p\text{-nilp}}$, its \defnword{transmutation with respect to $\pi$} is the $p$-adic formal prestack over $Y$ given by
\begin{align*}
\mathfrak{Tr}_\pi(R):\mathrm{CRing}^{p\text{-nilp}}_{/Y}&\to \mathrm{Spc}\\
(C,y)&\mapsto \Map_{\mathrm{CRing}}(R,Z((C,y)))
\end{align*}

This gives us a limit preserving functor
\[
\mathrm{CRing}^{p\text{-nilp},\op}\xrightarrow{\Spec R \mapsto \mathfrak{Tr}_\pi(R)}\mathrm{PStk}_{/Y}.
\]

\subsection{Cartier-Witt divisors and prismatizations}
\label{subsec:cartier-witt}

Here, we quickly recall the story of (derived) absolute prismatizations from~\cite[\S 8]{bhatt2022prismatization}. 

\subsubsection{}
To begin, we have the $p$-adic formal prestack $\Int_p^\Prism$ (the notation is from~\cite{bhatt_lectures}) of Cartier-Witt divisors, denoted $\mathrm{WCart}$ in~\cite{bhatt2022absolute}. For $R\in \mathrm{CRing}^{p\text{-nilp}}$, $\Int_p^\Prism(R)$ parameterizes surjective maps $\pi:W(R)\twoheadrightarrow \overline{W(R)}$ of animated rings such that two properties hold:
\begin{itemize}
   \item $I = \hker(\pi)$ is a locally free $W(R)$-module of rank $1$;
   \item The map $\pi_0(I) \simeq I\otimes_{W(R)}W(\pi_0(R))\to W(\pi_0(R))$ is a Cartier-Witt divisor in the sense of~\cite[\S 3.1.1]{bhatt2022absolute}. 
\end{itemize}

The second condition means that, Zariski-locally on $\Spec R$, we have a $W(\pi_0(R))$-linear isomorphism $\pi_0(I)\simeq W(\pi_0(R))$ such that the composition $W(\pi_0(R))\simeq \pi_0(I)\to W(\pi_0(R))$ is given by multiplication by a \defnword{distinguished element} $d\in W_{\mathrm{dist}}(\pi_0(R))$, given in Witt coordinates by $(d_0,d_1,\ldots)$ with $d_0\in \pi_0(R)$ nilpotent mod-$p$ and with $d_1\in \pi_0(R)^\times$.

In this situation, we will call the map $I\to W(R)$ a \defnword{Cartier-Witt divisor} over $R$.

This description shows (see~\cite[Proposition 8.4]{bhatt2022prismatization}):
\begin{proposition}
\label{prop:ZpDelta_presentation}
We have $\Int_p^\Prism \simeq W_{\mathrm{dist}}/W^\times$, where $W_{\mathrm{dist}}$ is represented by the formal spectrum of $\Int_p[x_0,x_1^{\pm 1},x_2,\ldots]_{(x_0,p)}^{\wedge}$, and inherits the $W^\times$-action from the natural one on $W$, where $W$ is represented by the formal spectrum of $\Int_p[x_0,x_1,\ldots]_p^{\wedge}$. In particular, $\Int_p^\Prism$ is \emph{classical}.
\end{proposition}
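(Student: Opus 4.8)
The statement has two parts: (i) the identification $\Int_p^\Prism \simeq W_{\mathrm{dist}}/W^\times$, and (ii) the consequence that $\Int_p^\Prism$ is classical, together with the explicit descriptions of $W$ and $W_{\mathrm{dist}}$ as formal spectra. The plan is to unwind the moduli description of $\Int_p^\Prism(R)$ given just above the statement and match it term by term with the groupoid quotient $(W_{\mathrm{dist}}/W^\times)(R)$.

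First I would recall that, for $R \in \mathrm{CRing}^{p\text{-nilp}}$, a point of $\Int_p^\Prism(R)$ is a surjection $\pi: W(R) \twoheadrightarrow \overline{W(R)}$ whose kernel $I$ is an invertible $W(R)$-module and whose reduction $\pi_0(I) \to W(\pi_0(R))$ is a Cartier-Witt divisor. The key local observation — already stated in the excerpt — is that, Zariski-locally on $\Spec R$, such an $I$ is free of rank one, so the datum is locally equivalent to a choice of generator mapping to a distinguished element $d = (d_0, d_1, \ldots) \in W_{\mathrm{dist}}(\pi_0(R))$, i.e.\ $d_0$ nilpotent mod $p$ and $d_1 \in \pi_0(R)^\times$. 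Two generators differ by a unit in $W(R)$, which is exactly the $W^\times$-action. So the assignment sending a unit-normalized generator to its associated distinguished element sets up an equivalence of groupoids $\Int_p^\Prism(R) \simeq (W_{\mathrm{dist}}/W^\times)(R)$ functorially in $R$; I would phrase this as a map of prestacks and check it is an isomorphism by checking it on a Zariski cover (where both sides are $W_{\mathrm{dist}}(R)$ modulo the relevant automorphisms) and that the descent data match, which is formal since the $W^\times$-action is the same on both sides. Here one must be slightly careful that the condition "$d_1 \in \pi_0(R)^\times$" is what forces the invertibility/Cartier-Witt condition and that the mod-$p$ nilpotence of $d_0$ is automatic from $R$ being $p$-nilpotent together with the distinguished-element condition; this is where I would cite \cite[\S 3.1.1]{bhatt2022absolute} and \cite[Proposition 8.4]{bhatt2022prismatization}.

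For part (ii), once the quotient presentation is established, it suffices to identify the representing objects. The Witt scheme $W$ has, by definition, ring of functions the polynomial ring $\Int_p[x_0, x_1, \ldots]$ in the Witt coordinates; $p$-completing gives the formal scheme represented by $\Spf \Int_p[x_0, x_1, \ldots]_p^\wedge$. The open (and $p$-adic-formal-closed) condition cutting out $W_{\mathrm{dist}}$ inside $W$ is $x_1 \in R^\times$ together with $x_0$ nilpotent mod $p$; inverting $x_1$ and $(x_0,p)$-completing yields precisely $\Spf \Int_p[x_0, x_1^{\pm 1}, x_2, \ldots]_{(x_0,p)}^\wedge$, which is what I would verify directly. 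Since $W_{\mathrm{dist}}$ is thus represented by a formal affine scheme — in particular classical, i.e.\ a (filtered) colimit of classical affine schemes — and the quotient by the (classical, smooth) group $W^\times$ of a classical formal scheme is again classical (this uses that $W^\times$ is a classical group scheme and that the quotient stack of a classical object by a classical flat group is classical, as in the discussion of classicality in Section~\ref{sec:prelim}), we conclude that $\Int_p^\Prism$ is classical.

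\textbf{Main obstacle.} The genuinely substantive point is the first one: showing that the abstract moduli description (invertible-module kernel with a Cartier-Witt reduction condition) is Zariski-locally trivialized with transition functions living in $W^\times$, i.e.\ that the "local freeness" in the definition is Zariski-local rather than merely fpqc-local, and that the resulting descent datum is exactly a $W^\times$-torsor over $\Int_p^\Prism$. This is really the content of \cite[Proposition 8.4]{bhatt2022prismatization}, so in practice the proof I would write is short: set up the comparison map, invoke the cited proposition for the local structure, and then do the routine identification of the representing formal rings. The classicality of the quotient is then immediate once one notes $W_{\mathrm{dist}}$ is a classical formal scheme and $W^\times$ a classical group scheme acting on it.
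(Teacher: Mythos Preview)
Your proposal is correct and matches the paper's approach: the paper does not give its own proof but simply cites \cite[Proposition 8.4]{bhatt2022prismatization}, noting that the quotient presentation follows directly from the moduli description of Cartier--Witt divisors given immediately before the statement. Your sketch is a faithful reconstruction of that argument---local trivialization of the invertible ideal, identification of generators up to $W^\times$, and the explicit formal-scheme descriptions---and you correctly identify the cited reference as the source of the substantive content.
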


\subsubsection{}
Over $\Int_p^\Prism$, we have the tautological relative ring prestack $\mathbb{G}_a^\Prism$ given by $(W(R)\xrightarrow{\pi}\overline{W(R)})\mapsto \overline{W(R)}$. Transmutation with respect to this (see~\S\ref{subsec:transmutation}) now gives a functorial assignment $R\to R^{\Prism}$ from $\mathrm{CRing}^{p\text{-nilp}}$ to $\mathrm{PStk}_{/\Int_p^\Prism}$. Concretely, $R^\Prism$ associates to any $(W(C)\twoheadrightarrow\overline{W(C)})\in \Int_p^\Prism(C)$ the space $\Map_{\mathrm{CRing}}(R,\overline{W(C)})$. We call $R^\Prism$ the \defnword{prismatization} of $R$.

\begin{remark}
\label{rem:prismatization_of_Fp}
We have a canonical equivalence $\Spf \Int_p\xrightarrow{\simeq}\Field_p^\Prism$ induced by the Cartier-Witt divisor $W(\Field_p) = \Int_p\xrightarrow{p} \Int_p=W(\Field_p)$.
\end{remark}

\begin{remark}
\label{rem:frob_endomorphism}
There is a canonical `Frobenius lift' $\varphi:\Int_p^\Prism\to \Int_p^\Prism$ arising from the map $F:W\to W$, which carries a Cartier-Witt divisor $I\to W(R)$ to $F^*I \to W(R)$
\end{remark}

\begin{remark}
\label{rem:prisms_and_prismatization}
If $(A,I)$ is an animated prism as in~\cite[\S 2]{bhatt2022prismatization}, then there is a canonical map $\iota_{(A,I)}:\Spf A \to \Int_p^{\Prism}$ associating to each $p$-nilpotent $A$-algebra $C$, the Cartier-Witt divisor $I\otimes_AW(C) \to W(C)$. Here, $A\to W(C)$ is the canonical lift of $A\to C$ afforded by the $\delta$-ring structure on $A$. If we have a map $R\to A/I$---that is, if $(A,I)$ lifts to an object in the (animated) prismatic site of $R$---then $\iota_{(A,I)}$ admits a lift to a map to $R^{\Prism}$. 
\end{remark}

\subsection{The Hodge-Tate locus}
\label{subsec:hodge-tate_locus}

\subsubsection{}
Let $\hat{\Aff}^1$ be the $p$-adic formal completion of $\Aff^1$, equipped with the inverse of the usual action of $\Gm$ (as in~\S\ref{subsec:rees_construction}). Then, $\hat{\Aff}^1/\Gm$ parameterizes line bundles $\mathcal{L}$ equipped with a cosection $t:\mathcal{L}\to \Rg$ that is nilpotent mod-$p$, meaning that for some (hence any) local trivialization of $\mathcal{L}$, $t$ is given by a section of $\Rg$ that is nilpotent mod-$p$.

Then the natural map $W_{\mathrm{dist}}\to \hat{\Aff}^1$ descends to a map $\Int_p^\Prism\to \hat{\Aff}^1/\Gm$, and the \defnword{Hodge-Tate locus} $\Int_p^{\mathrm{HT}}$ is defined to be the fiber product (derived or classical: both are the same in this case)
\[
\Int_p^{\mathrm{HT}} = \Int_p^{\Prism}\times_{\hat{\Aff}^1/\Gm}B\Gm.
\]
This is a closed substack of $\Int_p^{\Prism}$ with locally invertible ideal sheaf, which Bhatt-Lurie make a very detailed study of in~\cite[\S 3.4]{bhatt2022absolute}. They show that $\Int_p^{\mathrm{HT}}$ has a somewhat concrete description.

To explain this, first note that we have a canonical map $\Spf \Int_p\to \Int_p^{\mathrm{HT}}$ corresponding to the Cartier-Witt divisor $W(\Int_p)\xrightarrow{V(1)}W(\Int_p)$.
\begin{proposition}
\label{prop:hodge_tate_locus}
The above map  presents $\Int_p^{\mathrm{HT}}$ as the formal classifying stack $B\mathbb{G}_m^\sharp$ over $\Spf \Int_p$. In particular, it is a flat surjection. Moreover, there is a natural equivalence
\[
\Spf \Int_p\times_{\Int_p^{\Prism}}\Field_p^{\Prism}\xrightarrow{\simeq}\mathbb{G}_m^\sharp\times \Spec \Field_p.
\]

\end{proposition}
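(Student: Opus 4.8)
\textbf{Proof strategy for Proposition~\ref{prop:hodge_tate_locus}.}

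The plan is to verify that the Cartier-Witt divisor $W(\Int_p)\xrightarrow{V(1)}W(\Int_p)$ really does define a point of $\Int_p^{\mathrm{HT}}$, and then to identify its automorphisms with $\mathbb{G}_m^\sharp$ and argue that the resulting map $B\mathbb{G}_m^\sharp\to \Int_p^{\mathrm{HT}}$ is an equivalence. First I would recall that $\Int_p^{\mathrm{HT}}$ sits inside $\Int_p^\Prism$ as the locus where the line bundle $I$ maps to zero in $\overline{W(R)}$ already on the level of the cosection $t:\mathcal{L}\to \Rg$, i.e.\ where the composite $I\to W(R)\to R$ (the `Hodge-Tate' reduction) vanishes; concretely, in the presentation $\Int_p^\Prism = W_{\mathrm{dist}}/W^\times$ of Proposition~\ref{prop:ZpDelta_presentation}, this is the vanishing of the coordinate $x_0$, so $\Int_p^{\mathrm{HT}} = (W_{\mathrm{dist}}\cap\{x_0=0\})/W^\times$. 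The element $V(1)\in W(\Int_p)$ has Witt components $(0,1,0,0,\ldots)$, hence lies in $W_{\mathrm{dist}}$ and has $x_0 = 0$, so it visibly defines a map $\Spf\Int_p\to \Int_p^{\mathrm{HT}}$. This step is routine bookkeeping with Witt coordinates.

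The heart of the matter is the identification of the automorphism group of this point. A point of $\Int_p^\Prism(R)$ of the form $I\xrightarrow{s}W(R)$ has automorphisms given by units $u\in W(R)^\times$ with $u\cdot s = s$ after the identification $I\simeq W(R)$ via $V(1)$; but over the Hodge-Tate locus $s$ is (locally) multiplication by $V(1)$, and $V(1)W(R) = VW(R)$ is the kernel of the ghost-component-zero / restriction map $W(R)\to R$ when $p$ is nilpotent, so the stabilizer of $V(1)$ in $W(R)^\times$ consists of those units $u$ with $u\cdot V(1) = V(1)$, i.e.\ $(u-1)V(1) = 0$. Using $V(1)W(R)\simeq W(R)/(\text{ann})$ and the standard identity $x\cdot V(y) = V(F(x)y)$, this condition becomes $F(u-1) = 0$, which (for $R$ $p$-nilpotent, since then $F$ on $W$ has the expected kernel) cuts out exactly the divided-power neighborhood of $1$ in $\Gm$, i.e.\ $\mathbb{G}_m^\sharp(R) = \Spec\Gamma_{\Int}(\Int)(R)$. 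Thus $\underline{\Aut}(V(1)) \simeq \mathbb{G}_m^\sharp$, and since $\Int_p^\Prism$ (hence $\Int_p^{\mathrm{HT}}$) is classical by Proposition~\ref{prop:ZpDelta_presentation}, this computation of automorphisms together with the fact that $V(1)$ is a point is most of what is needed. I expect the careful analysis of the stabilizer—matching $\{u : F(u-1)=0\}$ precisely with the functor of points of $\mathbb{G}_m^\sharp$, and doing so functorially in the $p$-nilpotent ring $R$—to be the main obstacle; this is where Bhatt-Lurie's detailed study in~\cite[\S 3.4]{bhatt2022absolute} is invoked.

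To promote `$V(1)$ is a point with automorphism group $\mathbb{G}_m^\sharp$' to the assertion that $B\mathbb{G}_m^\sharp\xrightarrow{\simeq}\Int_p^{\mathrm{HT}}$, I would check that the map $\Spf\Int_p\to \Int_p^{\mathrm{HT}}$ is a \emph{surjection} of fpqc (indeed flat) sheaves: locally on $\Spec R$ every Cartier-Witt divisor lying over the Hodge-Tate locus is, after trivializing $I$, given by a distinguished element with $x_0 = 0$, and any two such differ by an element of $1 + VW(R)$, hence become equal to $V(1)$ after a further flat (in fact, by the divided-power structure, even a nilpotent) cover; surjectivity then follows, and flatness of $B\mathbb{G}_m^\sharp\to\Spf\Int_p$ is clear since $\mathbb{G}_m^\sharp$ is flat (its ring of functions $\Gamma_{\Int}(\Int)$ is $\Int$-free). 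A flat surjection whose \v{C}ech nerve is the groupoid $[\Spf\Int_p/\mathbb{G}_m^\sharp]$ exhibits the target as $B\mathbb{G}_m^\sharp$. Finally, for the last equivalence, base-change along $\Field_p^\Prism\simeq\Spf\Int_p\hookrightarrow\Int_p^\Prism$ (Remark~\ref{rem:prismatization_of_Fp}, given by the Cartier-Witt divisor $\Int_p\xrightarrow{p}\Int_p$): the fiber product $\Spf\Int_p\times_{\Int_p^\Prism}\Field_p^\Prism$ parametrizes identifications of the divisor $V(1)$ with the divisor $p = FV(1)$, equivalently elements $w\in W(R)^\times$ with $w\cdot V(1) = p$; writing $w\cdot V(1) = V(F(w))$ and $p = V(F(1)) = V(p)$—or more precisely using $pW(R) = V(1)\cdot{}$(unit)—this becomes a torsor under the stabilizer $\mathbb{G}_m^\sharp$ with a canonical base point coming from the Frobenius $F$, hence is (non-canonically, but canonically once $F$ is used) isomorphic to $\mathbb{G}_m^\sharp\times\Spec\Field_p$. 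I would write this last part as an explicit computation with the two Cartier-Witt divisors, again citing~\cite[\S 3.4, \S 3.5]{bhatt2022absolute} for the precise form of the comparison.
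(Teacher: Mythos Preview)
Your proposal is correct in outline, and for the first assertion actually contains more detail than the paper, which simply cites \cite[Theorem 3.4.13]{bhatt2022absolute} and \cite[Lemma 4.5.2]{drinfeld2022prismatization} without further argument. Your sketch of the stabilizer computation---using $u\cdot V(1)=V(F(u))$ and the injectivity of $V$ to reduce to $F(u)=1$, then identifying $\{u\in W(R)^\times: F(u-1)=0\}$ with $\Gm^\sharp(R)$ via the fiber sequence $\Ga^\sharp\to W\xrightarrow{F}F_*W$ appearing in~\eqref{eqn:GadR_two_presentations}---is indeed the heart of what Bhatt--Lurie do, so there is no real divergence of approach here, only of detail.

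For the second assertion the paper takes a shorter route than your direct computation. Once the first part gives $\Spf\Int_p\to\Int_p^{\mathrm{HT}}$ as the universal $\Gm^\sharp$-torsor, base-change along $\Field_p^\Prism\to\Int_p^\Prism$ exhibits $\Spf\Int_p\times_{\Int_p^\Prism}\Field_p^\Prism$ as a $\Gm^\sharp$-torsor over $\Field_p^{\mathrm{HT}}\defn\Int_p^{\mathrm{HT}}\times_{\Int_p^\Prism}\Field_p^\Prism$. The paper then simply observes that under the equivalence $\Field_p^\Prism\simeq\Spf\Int_p$ of Remark~\ref{rem:prismatization_of_Fp}, the Hodge-Tate locus $\Field_p^{\mathrm{HT}}$ becomes the closed point $\Spec\Field_p\subset\Spf\Int_p$, and the resulting $\Gm^\sharp$-torsor over $\Spec\Field_p$ is canonically trivial (the two Cartier--Witt divisors $V(1)$ and $p$ agree in $W(\Field_p)=\Int_p$). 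Your direct approach would also work, but the Witt-vector identities in your last paragraph contain slips: one has $p=FV(1)$, not $V(F(1))$; and $V(F(1))=V(1)$, not $V(p)$. The identity $p=V(1)$ does hold in $W(R)$ for $\Field_p$-algebras $R$ (this is precisely the canonical trivialization), but not over general $p$-nilpotent $R$, so it is cleaner to first isolate the base $\Field_p^{\mathrm{HT}}=\Spec\Field_p$, as the paper does, before comparing the two divisors.
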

\begin{proof}
The first assertion is~\cite[Theorem 3.4.13]{bhatt2022absolute}; see also~\cite[Lemma 4.5.2]{drinfeld2022prismatization}. 

For the second, note that the left hand side is a canonically trivial $\Gm^\sharp$-torsor over 
\[
\Field_p^{\mathrm{HT}}\defn \Int_p^{\mathrm{HT}}\times_{\Int_p^{\Prism}}\Field_p^{\Prism},
\]
but this base is canonically identified with the closed subscheme $\Spec\Field_p\subset\Spf \Int_p$ via the equivalence of Remark~\ref{rem:prismatization_of_Fp}.
\end{proof}

\subsection{The Nygaard filtered prismatization}
\label{subsec:nygaard}

The underived story of the Nygaard filtered prismatization is explained in Bhatt's notes~\cite[\S 5]{bhatt_lectures} following Drinfeld in~\cite[\S 5]{drinfeld2022prismatization}. We will now explain how to make sense of these constructions in the context of animated rings. This explanation was helped greatly by conversations with Juan Esteban Rodr\'iguez Camargo.

\subsubsection{}
View $W$ as a ring stack over $\Spf \Int_p$, associating with each $p$-nilpotent $R$ the ring of Witt vectors $W(R)$. We will consider modules for the ring stack $W$ in the $\infty$-category of fppf sheaves over $R$, and refer to them simply as \emph{$W$-modules over $R$}. Here are some examples of $W$-modules: 
\begin{itemize}
\item Any quasicoherent sheaf can be viewed as a $\Ga$-module, and is hence equipped with the structure of a $W$-module via the map $W\to \Ga$.
\item Given an invertible $R$-module $L$, we can take the divided power envelope of the identity section within the vector group scheme $\mathbf{V}(L^\vee)$\footnote{Recall that we are using Grothendieck's convention, so that this is the total space of $L$.} to obtain the $W$-module $\mathbf{V}(L^\vee)^\sharp$. Here, the $W$-action is via the map $W\to \mathbb{G}_a$. 
   \item Given any $W$-module $M$, we obtain a new $W$-module $F_*M$ via restriction along the map $F:W\to W$. If $M=W$, then $F_*W$ is in fact an animated $W$-algebra.
   \item Given any $W(R)$-module $M$, we obtain an associated $W$-module $I\otimes_{W(R)}W$ that assigns to every $R$-algebra $S$ the $W(S)$-module $I\otimes_{W(R)}W(S)$.
\end{itemize}

We now have a diagram of $W$-modules over $\Spf \Int_p$ where all the rows and columns are fiber sequences of $W$-modules.
\begin{align}\label{eqn:GadR_two_presentations}
\begin{diagram}
&&F_*W&\rEquals&F_*W\\
&&\dTo_V&&\dTo_p\\
\Ga^\sharp&\rTo&W&\rTo^F& F_*W\\
\dEquals&&\dTo&&\dTo\\
\Ga^\sharp&\rTo&\Ga&\rTo &\Ga^{\dR}\simeq F_*W/{}^{\mathbb{L}}p.
\end{diagram}
\end{align}
See~\cite[Corollary 2.6.8]{bhatt_lectures}.

\subsubsection{}
Consider the (derived) $p$-adic formal stack $\Int_p^{\Prism}\times \Aff^1/\Gm$: over any $p$-nilpotent $R$, this parameterizes pairs $(I\xrightarrow{d}W(R),L\to R)$, consisting of a Cartier-Witt divisor and a generalized Cartier divisor. We will view the map of $W$-modules
\[
F_*M'\defn F_*(I\otimes_{W(R)}W)\xrightarrow{d} F_*W
\]
as a generalized Cartier divisor for the $W$-algebra $F_*W$. In turn this induces a generalized Cartier divisor
\[
F_*M'\otimes_{F_*W}\Ga^{\dR} \to \Ga^{\dR}
\]
for the $W$-algebra $\Ga^{\dR}$. On the other hand, over this formal prestack, we have another generalized Cartier divisor $L\otimes_{R}\Ga^{\dR}\to \Ga^{\dR}$ for the same $W$-algebra.

Suppose that we are given a map 
\[
\begin{diagram}
F_*M'\otimes_{F_*W}\Ga^{\dR}&\rTo^\alpha&L\otimes_{R}\Ga^{\dR}\\
&\rdTo(1,2)\ldTo(1,2)\\
&\Ga^{\dR}
\end{diagram}
\]
of generalized Cartier divisors. Then pulling back the fiber sequence
\[
\mathbf{V}(L^\vee)^\sharp\to L\otimes_R\Ga\to L\otimes_R\Ga^{\dR}
\]
of $W$-modules along the composition 
\[
F_*M'\to F_*M'\otimes_{F_*W}\Ga^{\dR}\xrightarrow{\alpha}L\otimes_R\Ga^{\dR}
\]
now gives us a sequence of $W$-modules
\[
\mathbf{V}(L^\vee)^\sharp \to M(\alpha)\to F_*M'.
\]

\begin{lemma}
\label{lem:filtered_cw_divisor}
There is a canonical map of $W$-modules $M(\alpha)\xrightarrow{d_\alpha} W$ whose cofiber has a canonical structure of a $W$-algebra.
\end{lemma}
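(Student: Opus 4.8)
The statement concerns the $W$-module $M(\alpha)$ sitting in the fiber sequence $\mathbf{V}(L)^\sharp \to M(\alpha)\to F_*M'$, and it asserts that there is a natural map $d_\alpha: M(\alpha)\to W$ whose cofiber is a $W$-algebra. The plan is to produce $d_\alpha$ by gluing the two obvious candidate maps on the two ends of the fiber sequence, and then to identify the cofiber with the honest quotient of a $W$-algebra. First I would recall that $F_*M' = F_*(I\otimes_{W(R)}W)$ carries the generalized Cartier divisor $F_*M'\xrightarrow{d}F_*W$, i.e.\ there is a canonical map $F_*M'\to F_*W$ landing in the $W$-algebra $F_*W$. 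On the other end, $\mathbf{V}(L)^\sharp$ maps to $W$ via the composite $\mathbf{V}(L)^\sharp\to L\otimes_R\Ga = \mathbf{V}(L)\to \Ga$ only after choosing a trivialization of $L$; the point is rather to use the map $W\to F_*W$ (namely $F$) and note that the fiber sequence $\mathbf{V}(L)^\sharp\to M(\alpha)\to F_*M'$ was constructed precisely by pulling back $\mathbf{V}(L)^\sharp\to L\otimes_R\Ga\to L\otimes_R\Ga^{\dR}$ along $F_*M'\to L\otimes_R\Ga^{\dR}$, which uses $\Ga^{\dR}\simeq F_*W/{}^{\mathbb{L}}p$ from diagram~\eqref{eqn:GadR_two_presentations}.

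\textbf{Key steps.} The cleanest route is to realize $M(\alpha)$ itself as a fiber product of $W$-modules each of which maps to a $W$-algebra. Concretely, using~\eqref{eqn:GadR_two_presentations}, $\Ga^\sharp$ is the fiber of $W\xrightarrow{F}F_*W$, and $\mathbf{V}(L)^\sharp = \mathbf{V}(L)\otimes$(suitable twist of $\Ga^\sharp$) is the fiber of $L\otimes_R W \to L\otimes_R F_*W$. Chasing the pullback defining $M(\alpha)$ through this, one gets a Cartesian square of $W$-modules
\[
\begin{diagram}
M(\alpha)&\rTo&L\otimes_R W\\
\dTo&&\dTo\\
F_*M'&\rTo&L\otimes_R F_*W
\end{diagram}
\]
where the bottom arrow is $\alpha$ (more precisely the composite $F_*M'\to F_*M'\otimes_{F_*W}\Ga^{\dR}\xrightarrow{\alpha}L\otimes_R\Ga^{\dR}$ lifted along $L\otimes_R F_*W\to L\otimes_R\Ga^{\dR}$, using that the cofiber of $F_*M'\xrightarrow{d}F_*W$ sees only $\Ga^{\dR}$), and the right arrow is $F$ tensored with $L$. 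Now $L\otimes_R W\to W$ after trivializing... no: instead observe that the map of generalized Cartier divisors $\alpha$ identifies the two cosections, so in fact $M(\alpha)$ is canonically the fiber product $F_*W \times_{F_*W} W$ twisted by $M'$ and $L$ in a way that the diagonal-type map $M(\alpha)\to W$ is forced. Then $d_\alpha$ is the composite $M(\alpha)\to L\otimes_R W \xrightarrow{\alpha^{-1}\text{-twist}} W$, using the datum of $\alpha$ to cancel the $L$ and $M'$ twists; the compatibility with $F_*M'\to F_*W\to W$ (wait---$F_*W$ is a $W$-algebra but the map $F_*W\to W$ is not a thing) is instead encoded by the fact that both compositions into $F_*W$ agree by construction of the fiber sequence.

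\textbf{The cofiber.} Given $d_\alpha: M(\alpha)\to W$, the cofiber $\overline{W}_\alpha := \hcoker(d_\alpha)$ acquires a $W$-algebra structure because it is the cofiber of a map \emph{of $W$-algebras} once we identify $M(\alpha)$ with an ideal: the Cartesian square above, together with the facts that $\hcoker(F_*M'\to F_*W) \simeq \Ga^{\dR}\otim_{F_*W}(F_*W/F_*M')$ is a $W$-algebra, that $\hcoker(\mathbf{V}(L)^\sharp\to L\otimes_RW)$ likewise is, and that $\alpha$ is a map of generalized Cartier divisors (hence induces a map of the associated quotient $W$-algebras), shows that $\hcoker(d_\alpha)$ is the fiber product over $\Ga^{\dR}$ (appropriately twisted) of two $W$-algebras and hence is a $W$-algebra. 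All of this can be checked on the universal case $R = \Int_p[x_0,x_1^{\pm},x_2,\ldots]^\wedge_{(p,x_0)}$ with $L$ trivialized, where everything becomes an explicit manipulation of Witt-vector rings, exactly as in the proof of~\cite[Corollary 2.6.8]{bhatt_lectures} and the discussion of $\Ga^{\dR}$ in~\cite{drinfeld2022prismatization}.

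\textbf{Main obstacle.} The genuine difficulty is not the existence of $d_\alpha$ but checking that the cofiber carries a \emph{canonical} $W$-algebra structure compatible with base change and with the de Rham/Hodge–Tate picture---i.e.\ that the multiplication is independent of the auxiliary trivializations used to write down $d_\alpha$. I expect to handle this by phrasing the whole construction $\alpha\mapsto (M(\alpha)\xrightarrow{d_\alpha}W)$ functorially on the stack $\Int_p^\Prism\times\Aff^1/\Gm$ equipped with the extra datum of $\alpha$, reducing to the universal (affine, trivialized) situation by flat descent, and there exhibiting $\hcoker(d_\alpha)$ explicitly as $\Ga^{\dR}\otimes_{F_*W,\,?} W$-type pushout whose ring structure is manifest. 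This is where the bookkeeping with the two generalized Cartier divisors and the Frobenius twist $F_*$ is most delicate, and it is the step I would write out with care.
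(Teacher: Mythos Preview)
Your proposal has a genuine gap in the construction of $d_\alpha$. The Cartesian square you write with corners $L\otimes_R W$ and $L\otimes_R F_*W$ is not the square defining $M(\alpha)$: by construction, $M(\alpha)$ is the pullback of $L\otimes_R\Ga \to L\otimes_R\Ga^{\dR}$ along $F_*M'\to L\otimes_R\Ga^{\dR}$, and there is no canonical lift of the latter map to $L\otimes_R F_*W$ (you acknowledge this and try to ``lift along $L\otimes_R F_*W\to L\otimes_R\Ga^{\dR}$'', but that map is not a monomorphism, so no such lift exists in general). Consequently your recipe ``$d_\alpha$ is the composite $M(\alpha)\to L\otimes_R W \xrightarrow{\alpha^{-1}\text{-twist}} W$'' is ill-defined: there is no map $M(\alpha)\to L\otimes_R W$, and $\alpha$ is a map of generalized Cartier divisors over $\Ga^{\dR}$, not an invertible object you can use to untwist $L$.

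The paper's proof avoids this entirely by working with \emph{filtered $W$-algebras}. The point is that a generalized Cartier divisor $J\to A$ upgrades $A$ to a non-negatively filtered $W$-algebra $\Fil^\bullet_J A$ (the $J$-adic filtration), and a map of generalized Cartier divisors is then automatically a map of filtered $W$-algebras. Applying this to $F_*M'\to F_*W$, to $L\otimes_R\Ga\to \Ga$, and to $L\otimes_R\Ga^{\dR}\to\Ga^{\dR}$, the map $\alpha$ becomes a map of filtered $W$-algebras $\Fil^\bullet_{M'}F_*W\to \Fil^\bullet_L\Ga^{\dR}$. One then \emph{defines} $\Fil^\bullet_\alpha W$ as the fiber product $\Fil^\bullet_{M'}F_*W \times_{\Fil^\bullet_L\Ga^{\dR}} \Fil^\bullet_L\Ga$ in filtered $W$-algebras. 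In degree~$0$ this recovers $W$ (by diagram~\eqref{eqn:GadR_two_presentations}), in degree~$1$ it recovers $M(\alpha)$ (by definition of the latter), and the transition map $\Fil^1_\alpha W\to\Fil^0_\alpha W$ is your $d_\alpha$. The cofiber is then $\gr^0_\alpha W$, which is a $W$-algebra simply because it is the zeroth graded piece of a filtered $W$-algebra---no trivializations, no universal-case reduction, no bookkeeping with Frobenius twists. Your instinct that the cofiber should be a fiber product of $W$-algebras was correct; the missing organizing principle is that this fiber product is visible at the level of filtered rings rather than at the level of modules.
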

\begin{proof}
We can lift $F_*W$ to a filtered $W$-algebra with filtration $\Fil^\bullet_{F_*M'} F_*W$ obtained from the generalized Cartier divisor $F_*M'\to F_*W$. Similarly, we can lift $\Ga^{\dR}$ (resp. $\Ga$) to a filtered $W$-algebra with filtration $\Fil^\bullet_L \Ga^{\dR}$ (resp. $\Fil^\bullet_L \Ga$).

The map $\alpha$ can be viewed now as yielding a map of non-negatively filtered animated $W$-algebras
\[
F_*\Fil^\bullet_{M'}W\to \Fil^\bullet_L \Ga^{\dR},
\]
and we can upgrade $W$ to a non-negatively filtered animated $W$-algebra $\Fil^\bullet_\alpha W$ such that we have a Cartesian square
\[
\begin{diagram}
\Fil^\bullet_\alpha W &\rTo & F_*\Fil^\bullet_{M'}W\\
\dTo&&\dTo\\
\Fil^\bullet_L \Ga&\rTo&\Fil^\bullet_L\Ga^{\dR}.
\end{diagram}
\]
This has the feature that $\Fil^\bullet_\alpha W = W$ and $\Fil^1_\alpha W = M(\alpha)$. In particular, we obtain a canonical map $M(\alpha)\to W$, and we also see that its cofiber is the $W$-algebra $\gr^0_\alpha W$. 
\end{proof}

\begin{definition}
\label{defn:ZpN}
Let $\Int_p^{\mathcal{N}}$ be the $p$-adic formal prestack whose values on $R\in \mathrm{CRing}^{p\text{-nilp}}$ are given by triples $(I\to W(R),L\to R,\alpha)$, where $(I\to W(R),L\to R)$ is a section of $\Int_p^\Prism\times \Aff^1/\Gm$, and $\alpha$ is a map of generalized Cartier divisors on ring stacks
\[
\alpha:(F_*M'\otimes_{F_*W}\Ga^{\dR}\to \Ga^{\dR})\to (L\otimes_R\Ga^{\dR}\to \Ga^{\dR}).
\]

By abuse of notation we will refer to this tuple by the symbol $M\xrightarrow{d} W$, where $M \defn M(\alpha)$ and $d = d_\alpha$, and refer to it as a \defnword{filtered Cartier-Witt divisor} over $R$.\footnote{We will see in Proposition~\ref{prop:nyg_classical} below this agrees with the definitions from~\cite{bhatt_lectures} for discrete inputs. The notation and terminology have been adapted from Bhatt's notes.} We will also write $W/_dM$ instead of $\gr^0_\alpha W$. We will also write $L_M\to R$ for the underlying generalized Cartier divisor and $M'\xrightarrow{d'} W$ for the map of $W$-modules $I\otimes_{W(R)}W\to W$ arising from the Cartier-Witt divisor $I\to W(R)$, so that we have a diagram of fiber sequences of $W$-modules
\[
\begin{diagram}
\mathbf{V}(L^\vee_M)^\sharp&\rTo&M&\rTo&F_*M'\\
\dTo&&\dTo_d&&\dTo_{d'}\\
\Ga^\sharp&\rTo&W&\rTo&F_*W
\end{diagram}
\]
\end{definition}

\begin{definition}
Over $\Int_p^{\mathcal{N}}$, we have the $p$-adic formal ring stack $\Ga^{\mathcal{N}}$ assigning to every filtered Cartier-Witt divisor $W\xrightarrow{d}M$ over $R$, the $W(R)$-algebra $(W/_dM)(R)$. 

For any $R\in\mathrm{CRing}^{p\text{-nilp}}$, we now define the \defnword{Nygaard filtered prismatization} $R^\mathcal{N}$ to be the transmutation of $\Spec R$ with respect to  $\mathbb{G}_a^{\mathcal{N}}$. That is, it is the $p$-adic formal prestack over $\Int_p^{\mathcal{N}}$ that associates with each $C\in \mathrm{CRing}^{p\text{-nilp}}$ and $(M\xrightarrow{d}W)\in\Int_p^{\mathcal{N}}(C)$ the space $\Map_{\mathrm{CRing}}(R,(W/_dM)(C))$.
\end{definition}

\begin{proposition}
\label{prop:nyg_classical}
The $p$-adic formal prestacks $\Int_p^{\mathcal{N}}$ and $\mathbb{G}_a^{\mathcal{N}}$ are classical. Moreover, their classical truncations agree with the descriptions in~\cite[\S 5]{bhatt_lectures}.
\end{proposition}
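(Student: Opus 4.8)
The plan is to show that each of the three pieces of data defining a filtered Cartier-Witt divisor over $R$ depends only on $\pi_0(R)$ when tested against discrete inputs, and that the resulting combinatorial description matches the one in~\cite[\S 5]{bhatt_lectures}. I would proceed in stages, reducing from the animated statement to a purely classical identification.

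First, I would recall that $\Int_p^\Prism$ is already classical by Proposition~\ref{prop:ZpDelta_presentation}, so the pair $(I\to W(R),L\to R)$ living over $\Int_p^\Prism\times\Aff^1/\Gm$ presents no difficulty: both factors are classical $p$-adic formal stacks. The only potentially non-classical ingredient is the datum $\alpha$ of a map of generalized Cartier divisors on ring stacks, together with the $W$-module $M(\alpha)$ and the map $d_\alpha:M(\alpha)\to W$ constructed in Lemma~\ref{lem:filtered_cw_divisor}. So the crux is to verify that the space of such $\alpha$ is discrete (i.e.\ a set) when $R$ is discrete, and that it transforms correctly under base change along $R\to\pi_0(R)$ — but for classicality, it suffices to check that the functor $R\mapsto \Int_p^{\mathcal{N}}(R)$ on $p$-nilpotent \emph{animated} rings is left Kan extended from its restriction to discrete rings, equivalently that it commutes with the relevant colimits and takes discrete values on discrete inputs.

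The key computational input is the diagram~\eqref{eqn:GadR_two_presentations} of $W$-modules and the fact that $\Ga^{\dR}\simeq F_*W/{}^{\mathbb{L}}p$, together with the observation that $\Ga^\sharp = \mathbf{V}(\Rg)^\sharp$ and $\mathbf{V}(L)^\sharp$ are represented by (the $p$-completions of) honest affine schemes — namely divided power envelopes, which are $\Int_p$-flat and hence classical. I would argue that a map $\alpha$ of the indicated generalized Cartier divisors of $\Ga^{\dR}$-modules is equivalent, after unwinding the fiber sequence $\mathbf{V}(L)^\sharp\to L\otimes_R\Ga\to L\otimes_R\Ga^{\dR}$ and the analogous one for $F_*W$, to a concrete piece of linear-algebraic data over $W(R)$ modulo $p$ — precisely the data that Bhatt encodes in~\cite[\S 5]{bhatt_lectures} via the description of $\Int_p^{\mathcal{N}}$ as (a quotient of) a space of pairs $(d, \text{filtration-datum})$. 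Here I would lean on the fact that, for $R$ discrete, $W(R)$ is discrete, $F_*W/{}^{\mathbb{L}}p$ has no higher homotopy, and the divided power envelopes involved are $p$-completely flat, so that all the mapping spaces in sight are discrete and the construction $\alpha\mapsto(M(\alpha),d_\alpha)$ is an equivalence onto the classical moduli problem of~\emph{loc.\ cit.} For $\Ga^{\mathcal{N}}$, the same flatness considerations show that $(W/_dM)(R) = \gr^0_\alpha W(R)$ is a discrete $W(R)$-algebra, computed as the pushout/pullback in the Cartesian square of Lemma~\ref{lem:filtered_cw_divisor}, and this pullback agrees with Bhatt's $\Ga^{\mathcal{N}}$ on discrete inputs essentially by construction.

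\textbf{Main obstacle.} The delicate point I expect to fight with is not the discreteness on discrete inputs — that follows from flatness of the divided power envelopes and of $W$ — but rather checking that the \emph{derived} prestack $\Int_p^{\mathcal{N}}$ as defined here is genuinely left Kan extended (classical as a formal prestack), i.e.\ that no derived phenomena are hidden in the passage from a discrete ring $R$ to its non-reduced or animated thickenings when forming the space of maps $\alpha$. Concretely, one must rule out that $\Map$ of generalized Cartier divisors acquires higher homotopy because $\Ga^{\dR}$, though it has discrete $R$-points, is the mod-$p$ reduction of $F_*W$ and hence its module category is derived in nature. The way I would handle this is to use the two presentations of $\Ga^{\dR}$ in~\eqref{eqn:GadR_two_presentations} to rewrite the datum $\alpha$ in terms of maps out of $F_*M'$ landing in the \emph{flat} $W$-module $L\otimes_R\Ga$ rather than its quotient $L\otimes_R\Ga^{\dR}$, controlling the discrepancy by the divided-power piece $\mathbf{V}(L)^\sharp$, which is itself flat; this trades the derived quotient for a genuine (flat) extension problem, at which point classicality and the comparison with~\cite{bhatt_lectures} become a matter of matching explicit coordinates, e.g.\ via Proposition~\ref{prop:ZpDelta_presentation} for the $W_{\mathrm{dist}}$-coordinate and the divided-power coordinates for the Nygaard part. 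I would then conclude that both $\Int_p^{\mathcal{N}}$ and $\Ga^{\mathcal{N}}$ are classical and identify, on discrete rings, with Drinfeld's and Bhatt's constructions.
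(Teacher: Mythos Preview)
Your strategy is correct and matches the paper's: the key move is exactly the one you identify in your ``Main obstacle'' paragraph, namely using the presentation~\eqref{eqn:GadR_two_presentations} to lift the datum $\alpha$ from $\Ga^{\dR}$ up to the flat $W$-module $\Ga$ (the paper phrases this as pulling back along the flat cover $\Ga\to\Ga^{\dR}$), thereby replacing the derived quotient by an honest fiber product of classical affine schemes. The paper carries this out more concretely than your sketch by first passing to the flat cover $W_{\mathrm{dist}}\times\Aff^1\to\Int_p^\Prism\times\Aff^1/\Gm$, then to the further cover by $\Ga$, and reducing classicality of the resulting affine scheme $Y$ to the explicit check that $ut-x_0$ is a non-zero-divisor in $\Field_p[u,t,x_0,x_1^{\pm 1},\ldots]_{(x_0)}^{\wedge}$; classicality of $\Ga^{\mathcal{N}}$ then follows because over $Y$ the $W$-module $M$ is visibly a colimit of the classical schemes $W$ and $\Ga^\sharp$.
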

\begin{proof}
By construction $\Int_p^{\mathcal{N}}$ lives over the classical formal prestack $\Int_p^\Prism\times \Aff^1/\Gm$. So it is enough to know that it is classical after flat base-change over the latter prestack.

Consider the pullback $X$ of $\Int_p^{\mathcal{N}}$ along the flat cover
\[
W_{\mathrm{dist}}\times \Aff^1\to \Int_p^\Prism\times \Aff^1/\Gm.
\]
Then the fiber of $X$ over a section $(d',t)\in  W_{\mathrm{dist}}(R)\times \Aff^1(R)$ is the stack of commuting squares of maps of $F_*W$-modules
\[
\begin{diagram}
F_*W&\rTo&\Ga^{\dR}\\
\dTo^{d'}&&\dTo_{t^{\mathrm{dR}}}\\
F_*W&\rTo&\Ga^{\dR}
\end{diagram}
\]
where the bottom arrow is the natural one, and where $t^{\mathrm{dR}}$ is the map induced by $t$ and the $\Ga$-algebra structure on $\Ga^{\dR}$. Another way of looking at $X$ is as the stack over $W_{\mathrm{dist}}\times \Aff^1\times \Ga^{\dR}$ sitting in a Cartesian diagram
\[
\begin{diagram}
X&\rTo&W_{\mathrm{dist}}\times \Aff^1\times \Ga^{\dR}\\
\dTo&&\dTo_{(d',t,\alpha)\mapsto (d^{',\dR},t^{\dR}\circ \alpha)}\\
\Ga^{\dR}&\rTo_{\Delta}&\Ga^{\dR}\times\Ga^{\dR}
\end{diagram}
\]
Here, $d^{',\dR}$ is the section of $\Ga^{\dR}$ associated with $d'$. 

Pulling back along the flat cover 
\[
W_{\mathrm{dist}}\times \Aff^1\times \Ga \to W_{\mathrm{dist}}\times \Aff^1\times \Ga^{\dR}
\]
gives us a flat cover $Y\to X$ sitting in a Cartesian diagram
\[
\begin{diagram}
Y&\rTo&W_{\mathrm{dist}}\times \Aff^1\times \Ga\\
\dTo&&\dTo_{(d',t,u)\mapsto (d^{',\dR},t^{\dR}\circ u^{\dR})}\\
\Ga^{\dR}&\rTo^{\Delta}&\Ga^{\dR}\times\Ga^{\dR}
\end{diagram}
\]

By invoking~\eqref{eqn:GadR_two_presentations}, we can write this as a composition of two Cartesian squares
\[
\begin{diagram}
Y&\rTo&W_{\mathrm{dist}}\times \Aff^1\times \Ga\\
\dTo&&\dTo_{(d',t,u)\mapsto (d',t\circ u)}\\
W&\rTo_{d\mapsto (F(d),\pi(d))}&W\times \Ga\\
\dTo&&\dTo_{(d',a)\mapsto (d^{',\dR},a^{\dR})}\\
\Ga^{\dR}&\rTo^{\Delta}&\Ga^{\dR}\times\Ga^{\dR}
\end{diagram}
\]

After all is said and done, the classicality of $\Int_p^{\mathcal{N}}$ is now reduced to the classicality of $Y$, which comes down to the assertion that the maps of formal schemes
\[
W\xrightarrow{(F,\pi)}W\times \mathbb{G}_a\;;\; W_{\mathrm{dist}}\times\Aff^1\times\mathbb{G}_a\xrightarrow{(\mathrm{id},m)}W\times \mathbb{G}_a
\]
are $p$-completely Tor-independent. Here, $m:\Aff^1\times\mathbb{G}_a\xrightarrow{(t,u)\mapsto tu}\mathbb{G}_a$ is the multiplication map. 

But these are actually maps of \emph{affine} formal schemes corresponding to maps of $p$-complete rings given by:
\begin{align*}
\alpha:\Int_p[T,x_0,x_1,\ldots]_p^\wedge &\xrightarrow{T\mapsto x_0,x_i\mapsto F^*x_i}\Int_p[x_0,x_1,\ldots]_p^\wedge\\
\beta:\Int_p[T,x_0,x_1,\ldots]_p^\wedge&\xrightarrow{T\mapsto ut,x_i\mapsto x_i}\Int_p[u,t,x_0,x_1^{\pm 1},\ldots]_{(x_0,p)}^{\wedge}.
\end{align*}
The first map is the composition of the map $\alpha':T\mapsto x_0,x_i\mapsto x_i$ with the \emph{flat} map $F^*$. Therefore, it is enough to check that $\alpha'$ and $\beta$ are $p$-completely Tor-independent. This comes down to the concrete (and easy) assertion that the element
\[
ut-x_0\in \Field_p[u,t,x_0,x_1^{\pm 1},\ldots]_{(x_0)}^{\wedge}
\]
is a non-zero divisor.

Let us now look at $\Ga^{\mathcal{N}}$. It suffices to check that it is classical after base-change to $Y$. Here, the above description shows that we have a commuting diagram of fiber sequences of $W$-modules
\begin{align}\label{eqn:fil_cartier_witt_local_description}
\begin{diagram}
\Ga^\sharp&\rTo&W&\rTo&F_*W\\
\dTo^{u^\sharp}&&\dTo&&\dEquals\\
\Ga^\sharp&\rTo&M&\rTo&F_*W\\
\dTo^{t^\sharp}&&\dTo&&\dTo_{d'}\\
\Ga^\sharp&\rTo&W&\rTo&F_*W
\end{diagram}
\end{align}
where the composition of the middle vertical arrows is multiplication by a section $d$ of $W$ with $F(d) = d'$, and where $M\to W$ is the universal filtered Cartier-Witt divisor over $Y$. This description shows that the $W$-module scheme $M$ over $Y$ is classical: It is a colimit of the classical schemes $W$ and $\Ga^\sharp$, and so the restriction of $\Ga^{\mathcal{N}}$ to $Y$ is also classical. 

The last assertion about the values on discrete inputs can now be deduced from~\cite[Lemma 5.2.8]{bhatt_lectures}.
\end{proof} 

\begin{remark}
\label{rem:fil_cw_pushout}
The proof shows that every filtered Cartier-Witt divisor can be obtained flat locally as one sitting in the diagram~\eqref{eqn:fil_cartier_witt_local_description} for sections $u,t$ of $\Ga$ and $d$ of $W$ with $d$ restricting to multiplication by $(tu)^\sharp$ on $\Ga^\sharp$. 

More generally, for any invertible module $L$ equipped with a section $R\xrightarrow{u}L$ and a cosection $L\xrightarrow{t}R$, and a section $d$ of $W$ restricting to $(t\circ u)^\sharp $ on $\Ga^\sharp$, we obtain a similar diagram with the middle sequence now an extension of $F_*W$ by $\mathbf{V}(L^\vee)^\sharp$. For future reference, we will denote a filtered Cartier-Witt divisor obtained in this way by $M(L,d,t,u)\to W$.
\end{remark}

\begin{remark}
\label{rem:juan_comment}
As pointed out to us by Camargo, the definition of $\Int_p^{\mathcal{N}}$ can also be formulated as follows. Let $(\Aff^1/\Gm)^{\mathrm{dR}}$ be the transmutation of $\Aff^1/\Gm$ with respect to the ring stack $\Ga^{\dR}$: It is a formal prestack over $\Spf \Int_p$ parameterizing generalized Cartier divisors $P\to \Ga^{\dR}$. There is a multiplication map 
\begin{align*}
\mu:\Aff^1/\Gm\times (\Aff^1/\Gm)^{\mathrm{dR}}&\to (\Aff^1/\Gm)^{\mathrm{dR}}\\
((L\xrightarrow{t} \Ga),(Q\xrightarrow{t'} \Ga^{\dR}))&\mapsto (L\otimes Q \xrightarrow{t'\circ (t\otimes \mathrm{id}_Q)}\Ga^{\dR}).
\end{align*}
We now have a Cartesian diagram:
\begin{align}\label{eqn:juan_square}
\begin{diagram}
\Int_p^{\mathcal{N}}&\rTo&\Aff^1/\Gm\times(\Aff^1/\Gm)^{\dR}\\
\dTo&&\dTo_\mu\\
\Int_p^{\Prism}&\rTo_{\qquad(M'\to W)\mapsto (F_*M'\otimes_{F_*W}\Ga^{\dR}\to \Ga^{\dR})}&(\Aff^1/\Gm)^{\dR}
\end{diagram}  
\end{align}
\end{remark}

\begin{remark}
\label{rem:nyg_filt_prismatization_of_Fp}
The prestack $\Field_p^\mathcal{N}$ admits a quite explicit description explained in~\cite[Prop. 5.4.2]{bhatt_lectures}. Suppose that we are given a filtered Cartier-Witt divisor $(M\xrightarrow{d}W)\in \Int_p^{\mathcal{N}}(R)$. Then it is not difficult to see that the existence of a map $\Field_p\to (W/_dM)(R)$ implies that $M\to W$ is isomorphic canonically to a filtered Cartier-Witt divisor of the form $M(L,p,t,u)\to W$. Therefore we have
\[
\Field_p^{\mathcal{N}}\simeq Z(ut-p)/\Gm,
\]
where $Z(ut-p)\subset \Aff^1\times\Aff^1_+= \Spf \Int_p[t,u]$ is the closed formal $\Gm$-equivariant subscheme defined by the equation $ut-p$.
\end{remark}

\subsection{The de Rham and Hodge-Tate embeddings and syntomification}
\label{subsec:syntomification}

We will now define two canonical open immersions $j_{\mathrm{dR}},j_{\mathrm{HT}}:\Int_p^\Prism\to \Int_p^{\mathcal{N}}$, called the \defnword{de Rham} and \defnword{Hodge-Tate} embeddings. These are described in~\cite[\S 5.3,5.6]{drinfeld2022prismatization}, and in~\cite[\S 5.3]{bhatt_lectures}. Let $M'\to W$ be the map of $W$-modules over $\Int_p^\Prism$ associated with the universal Cartier-Witt divisor over $\Int_p^\Prism$.

\begin{definition}
The open immersion $j_{\mathrm{dR}}$ is simply the open locus $\Int_p^\Prism\times_{\Aff^1/\Gm}(\Gm/\Gm)$ parameterizing filtered Cartier-Witt divisors $(M\xrightarrow{d} W)\in \Int_p^\Prism(R)$, where the underlying map $L_M\xrightarrow{\simeq} R$ is an isomorphism. Over this locus, the classifying map $(F_*M'\to F_*W) \to (\Ga^{\dR}\xrightarrow{\mathrm{id}}\Ga^{\dR})$ yields the filtered Cartier-Witt divisor given by
\[
\begin{diagram}
\Ga^\sharp&\rTo&M&\rTo&F_*M'\\
\dEquals&&\dTo&&\dTo_{d'}\\
\Ga^\sharp&\rTo&W&\rTo&F_*W
\end{diagram}
\]
where the right square is Cartesian.

The pullback of $j_{\dR}$ to the stack $Y$ from the proof of Proposition~\ref{prop:nyg_classical} is the open locus where $t\neq 0$. Equivalently, it is the pullback of the open locus 
\[
\Gm/\Gm \times (\Aff^1/\Gm)^{\dR} \hookrightarrow \Aff^1/\Gm\times (\Aff^1/\Gm)^{\dR}
\]
via the top horizontal map in~\eqref{eqn:juan_square}.
\end{definition}

\begin{definition}
The open immersion $j_{\mathrm{HT}}$ is given by the natural map
\[
(F_*F^*M'\xrightarrow{F^*(d')} F_*W)\to (F_*F^*M'\otimes_{F_*W}\Ga^{\dR}\to \Ga^{\dR})
\]
which classifies the filtered Cartier-Witt divisor given by
\[
\begin{diagram}
\mathbf{V}(L^\vee)^\sharp&\rTo&M'&\rTo&F_*F^*M'\\
\dTo&&\dTo&&\dTo\\
\Ga^\sharp&\rTo&W&\rTo&F_*W
\end{diagram}
\]
Here $L\to \Ga$ is the generalized Cartier divisor for $\Ga$ obtained via base-change from $M'\to W$. The filtered Cartier-Witt divisors obtained in this fashion will be called \defnword{invertible}.

The pullback of $j_{\mathrm{HT}}$ to the stack $Y$ from the proof of Proposition~\ref{prop:nyg_classical} is the open locus where $u\neq 0$. Equivalently, it is the pullback of the open locus 
\[
\Aff^1/\Gm \times (\Gm/\Gm)^{\dR} \hookrightarrow \Aff^1/\Gm\times (\Aff^1/\Gm)^{\dR}
\]
via the top horizontal map in~\eqref{eqn:juan_square}.
\end{definition}  

\begin{definition}
We now define the prestack $\Int_p^{\mathrm{syn}}$ to be the coequalizer of the immersions $j_{\mathrm{dR}},j_{\mathrm{HT}}$. Practically, what this means is that we have
\[
\begin{diagram}
\mathrm{QCoh}(\Int_p^{\mathrm{syn}})&\rTo^{\simeq}&\mathrm{eq}\biggl(\mathrm{QCoh}(\Int_p^{\mathcal{N}})&\pile{\rTo^{j_{\mathrm{dR}}^*}\\ \rTo_{j_{\mathrm{HT}}^*}}&\mathrm{QCoh}(\Int_p^{\Prism}) \biggr).
\end{diagram}
\]
\end{definition}

\begin{definition}
The process of transmutation now yields for every $R\in \mathrm{CRing}^{p\text{-nilp}}$ open immersions $j_{\mathrm{dR}},j_{\mathrm{HT}}:R^{\Prism}\to R^{\mathcal{N}}$, and we now define the \defnword{syntomification} of $R$, $R^{\mathrm{syn}}$, to be their coequalizer. By construction we have a canonical structure map $R^{\mathrm{syn}}\to \Int_p^{\mathrm{syn}}$. Equivalently, $R^{\mathrm{syn}}$ is the transmutation of $R$ with respect to the ring stack $\Ga^{\mathrm{syn}}\to \Int_p^{\mathrm{syn}}$.
\end{definition}

\subsection{The Breuil-Kisin twist}
\label{subsec:breuil-kisin}

Here we will describe a canonical line bundle $\mathcal{O}^{\mathrm{syn}}\{1\}$ on $\Int_p^{\mathrm{syn}}$ called the \defnword{Breuil-Kisin twist}. 

\subsubsection{}
Specifying such a line bundle is equivalent to specifying a line bundle $\mathcal{O}^{\mathcal{N}}\{1\}$ on $\Int_p^{\mathcal{N}}$ equipped with an isomorphism
\[
j_{\mathrm{dR}}^*\mathcal{O}^{\mathcal{N}}\{1\}\xrightarrow{\simeq}j_{\mathrm{HT}}^*\mathcal{O}^{\mathcal{N}}\{1\}
\]
of line bundles on $\Int_p^{\Prism}$. 

We begin by considering the pullback of the tautological line bundle on $B\Gm$: this gives a line bundle $\mathcal{O}^{B\Gm}\{1\}$ over $\Int_p^{\mathcal{N}}$. Note that we have a canonical trivialization
\begin{align}\label{eqn:de_rham_triv}
j_{\mathrm{dR}}^*\mathcal{O}^{B\Gm}\{1\}\xrightarrow{\simeq}\Reg{\Int_p^\Prism}.
\end{align}
Via the Hodge-Tate embedding, we obtain a canonical equivalence
\begin{align}\label{eqn:hodge_tate-divisor_pullback}
j_{\mathrm{HT}}^*\mathcal{O}^{B\Gm}\{1\}\xrightarrow{\simeq}\mathscr{I}_{\Int_p^{\mathrm{HT}}}
\end{align}
where the right hand side is the ideal sheaf of the Hodge-Tate divisor.

\subsubsection{}
Next, we consider a canonical line bundle $\mathcal{O}^{\Prism}\{1\}$ on $\Int_p^{\Prism}$ that is characterized up to isomorphism by any of the following properties:
\begin{itemize}
   \item (\cite[(2.2.11)]{bhatt2022absolute}) For any \emph{transversal} prism $(A,I)$ the pullback of $\mathcal{O}^{\Prism}\{1\}$ to $\Spf A$ under the map $\iota_{(A,I)}$ from Remark~\ref{rem:prisms_and_prismatization} is canonically isomorphic to $A\{1\}$.

   \item (\cite[\S 4.8,4.9]{drinfeld2022prismatization}) We have the line bundle $\Rg(\Int_p^{\mathrm{HT}})$ on $\Int_p^\Prism$ corresponding to the Hodge-Tate divisor. The endomorphism $\varphi^*-\mathrm{id}$ of the Picard group $\Pic(\Int_p^\Prism)$ is an equivalence, and we take $\mathcal{O}^{\Prism}\{1\}$ to be the preimage of $\Rg(\Int_p^{\mathrm{HT}})$.

   \item (\cite[(9.1.6)]{bhatt2022absolute}) Let $f:(\mathbb{P}^1_{\Int_p})^{\Prism}\to \Int_p^\Prism$ be the map of prismatizations arising from the structure morphism $\mathbb{P}^1_{\Int_p}\to \Spf \Int_p$. Then, we have $\mathcal{O}^{\Prism}\{-1\}\simeq R^2f_*\Rg$.
\end{itemize}

\subsubsection{}
There is a canonical isomorphism
\begin{align}\label{eqn:bk_delta_isom}
\varphi^*\mathcal{O}^{\Prism}\{1\}\xrightarrow{\simeq}\Rg(\Int_p^{\mathrm{HT}})\otimes\mathcal{O}^{\Prism}\{1\}
\end{align}
of line bundles over $\Int_p^\Prism$.

We now set
\[
\mathcal{O}^{\mathcal{N}}\{1\} = \mathcal{O}^{B\Gm}\{1\}\otimes\pi^*\mathcal{O}^{\Prism}\{1\}.
\]
Since $\pi\circ j_{\mathrm{dR}} = \mathrm{id}$ and $\pi\circ j_{HT} = \varphi$, combining~\eqref{eqn:bk_delta_isom},~\eqref{eqn:de_rham_triv} and~\eqref{eqn:hodge_tate-divisor_pullback} shows that this line bundle does admit a canonical descent to a line bundle $\mathcal{O}^{\mathrm{syn}}\{1\}$ over $\Int_p^{\mathrm{syn}}$.

\subsection{The canonical sections $x_{\mathrm{dR}}$ and $x_{\mathrm{dR}}^{\mathcal{N}}$}
\label{subsec:can_sections}

\subsubsection{}
For any $R\in \mathrm{CRing}^{p\text{-nilp}}$, we have canonical maps
\[
x_{\mathrm{dR}}:\Spec R \to R^{\Prism}\;;\; x_{\mathrm{dR}}^{\mathcal{N}}:\Aff^1/\Gm\times\Spec R \to R^{\mathcal{N}}
\]
with the following properties:
\begin{itemize}
   \item The composition
   \[
    \Aff^1/\Gm \times \Spec R\xrightarrow{x_{\mathrm{dR}}^{\mathcal{N}}} R^{\mathcal{N}}\xrightarrow{t}\Aff^1/\Gm
   \]
   is the canonical structure map.
   \item The restriction of $x_{dR}^{\mathcal{N}}$ to the open point $\Spec R$ is isomorphic to $x_{dR}$.

   \item We have a commuting square
   \[
    \Square{\Aff^1/\Gm\times \Spec R}{x^{\mathcal{N}}_{dR}}{R^{\mathcal{N}}}{}{\pi}{\Spec R}{x_{dR}}{R^\Prism}.
   \]
\end{itemize}

\subsubsection{}
The map $x_{dR}$ corresponds to the Cartier-Witt divisor $(W(R)\xrightarrow{p}W(R))$ with the map $R\to W(R)/{}^{\mathbb{L}}p$ the one through which the Frobenius endomorphism of $W(R)/{}^{\mathbb{L}}p$ canonically factors. Alternatively, there are equivalences of abstract animated rings 
\[
W(R)/{}^{\mathbb{L}}p \xrightarrow{\simeq}F_*\overline{W(R)} \xrightarrow{\simeq}\mathbb{G}_a^{dR}(R),
\]
and the desired map is the evaluation of the natural map $\mathbb{G}_a\to \mathbb{G}_a^{dR}$ on $R$.

\subsubsection{}
The map $x_{dR}^{\mathcal{N}}$ associates with every cosection $t:L\to C$ of a line bundle $L$ over an $R$-algebra $C$ the filtered Cartier-Witt divisor associated with the zero map $\Ga^{\dR}\to L\otimes_R\Ga^{\dR}$. Explicitly, it is given by the map
\[
M(t) = F_*W\oplus \mathbf{V}(L^\vee)^\sharp\xrightarrow{d=(V,t^\sharp)}W,
\]
where the quotient $W/_d M(t)$ is also a quotient of $W/VW \simeq \mathbb{G}_a$, giving us the map
\[
R \to C = \mathbb{G}_a(C)\to (W/_d M(t))(C).
\]

\subsection{Relationship with divided powers}
\label{subsec:divided_powers_lifting}

The next observation will be used later to formulate abstract Grothendieck-Messing type statements.

\begin{lemma}
\label{lem:lift_divided_powers}
Suppose that we have a divided power thickening $R'\twoheadrightarrow R$ in $\mathrm{CRing}^{p\text{-nilp}}$. Then the canonical point $x_{\dR,R'}$ from~\S\ref{subsec:can_sections} factors canonically through a point $\tilde{x}_{\dR,R'}:\Spec R'\to R^{\Prism}$.
\end{lemma}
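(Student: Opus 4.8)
The statement asserts that for a divided power thickening $R'\twoheadrightarrow R$ in $\mathrm{CRing}^{f,p\text{-nilp}}$, the canonical point $x_{\dR,R'}:\Spec R'\to R'^{\Prism}$ --- or rather its image in $R^{\Prism}$ after composing with the map $R'^{\Prism}\to R^{\Prism}$ induced by $R\to R'$ being... wait, the direction is $R'\to R$, so we actually get $R^{\Prism}\to R'^{\Prism}$; let me restate --- the point $x_{\dR,R}:\Spec R\to R^{\Prism}$ should factor through $\Spec R'\to R^{\Prism}$. Concretely, by definition of the prismatization as a transmutation, giving a map $\Spec R'\to R^{\Prism}$ lying over a Cartier-Witt divisor $I\to W(R')$ over $R'$ is the same as giving a ring map $R\to \overline{W(R')}$ compatible with the structure maps. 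So the plan is: first, identify the relevant Cartier-Witt divisor over $R'$; second, produce the requisite ring map $R\to \overline{W(R')}$ using the divided power structure; third, check compatibility with $x_{\dR,R}$.

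\textbf{Step 1: the Cartier-Witt divisor.} The point $x_{\dR,R}$ lies over the Cartier-Witt divisor $(W(R)\xrightarrow{p}W(R))$. The natural candidate over $R'$ is again $(W(R')\xrightarrow{p}W(R'))$; this is a Cartier-Witt divisor since $p$ is nilpotent mod-$p$ in the appropriate sense (here $p$ itself is the distinguished element, with Witt coordinate $(p,0,0,\ldots)$... actually $p = V(1) + $ correction; in any case $(W(R)\xrightarrow{p}W(R))$ is the Cartier-Witt divisor underlying $x_{\dR}$ as recalled in~\S\ref{subsec:can_sections}, and the same formula works over $R'$). Its quotient is $W(R')/{}^{\mathbb{L}}p \simeq \mathbb{G}_a^{\dR}(R')$.

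\textbf{Step 2: the ring map via divided powers.} We need a map $R\to W(R')/{}^{\mathbb{L}}p$ refining the canonical map $R\to W(R)/{}^{\mathbb{L}}p$ through which Frobenius factors. The key point is the classical fact --- going back to Berthelot--Messing and used throughout crystalline theory --- that a divided power thickening $R'\twoheadrightarrow R$ with kernel $J$ equipped with divided powers $\gamma$ admits a canonical section $s_\gamma: R \to W(R')/{}^{\mathbb{L}}p$ (or more precisely, the divided power structure produces a canonical "divided Frobenius" lift), because the PD-structure makes the $p$-th power map on $J$ divisible by $p!$, hence (using $p$ nilpotent and the PD-envelope being well-behaved) one gets a functorial ring map. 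In the stacky language of Bhatt--Lurie, this is exactly the statement that the divided power thickening produces a map $\Spec R' \to R^{\Prism}$: one uses the identification $\mathbb{G}_a^{\dR} \simeq \mathbb{G}_a^{\sharp}$-related structure, namely that $\mathbb{G}_a^{\dR}(R') = \mathbb{G}_a(R')^{\wedge}_J$ in the presence of PD-structure, together with the fiber sequence $\mathbb{G}_a^{\sharp}\to \mathbb{G}_a\to \mathbb{G}_a^{\dR}$ from~\eqref{eqn:GadR_two_presentations}. The divided powers give precisely a lift of $R\to R$ (identity) along $\mathbb{G}_a(R')\to \mathbb{G}_a(R)$ compatibly with the $\mathbb{G}_a^{\sharp}$-structure, and composing with $\mathbb{G}_a(R')\to \mathbb{G}_a^{\dR}(R') \simeq W(R')/{}^{\mathbb{L}}p$ yields the desired $\tilde{x}_{\dR,R'}$. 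I would carry this out by animating the classical construction: it suffices to exhibit the map on the generating objects $\mathcal{E}^0$ of $\mathrm{AniPDPair}$, i.e. on $(D_{(Y)}\Int[X,Y]\twoheadrightarrow \Int[X],\gamma)$, where it is the evident explicit formula, and then invoke that both sides are functors $\mathrm{AniPDPair}\to \mathrm{Spc}$ preserving the relevant colimits.

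\textbf{Step 3: compatibility.} Finally one checks that restricting $\tilde{x}_{\dR,R'}$ along $\Spec R \to \Spec R'$ recovers $x_{\dR,R}$. This is immediate from the construction: the map $R\to W(R')/{}^{\mathbb{L}}p$ composed with $W(R')/{}^{\mathbb{L}}p\to W(R)/{}^{\mathbb{L}}p$ is, by the defining property of the PD-section, the canonical map $R\to W(R)/{}^{\mathbb{L}}p$ (the one through which Frobenius on $W(R)/{}^{\mathbb{L}}p$ factors), which is exactly what defines $x_{\dR,R}$. The Cartier-Witt divisors match by Step 1.

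\textbf{Main obstacle.} The genuine work is in Step 2: making the classical Berthelot--Messing argument function in the animated/derived setting, i.e. verifying that the divided power structure really does produce a map into $\mathbb{G}_a^{\dR}$ functorially in the $\infty$-category $\mathrm{AniPDPair}$, and that this is compatible with all the structure. I expect this reduces cleanly to the explicit PD-polynomial case by the animation formalism of~\cite[\S 3.2]{Mao2021-jt} --- the only subtlety being bookkeeping with the $\mathbb{G}_a^{\sharp}$-module structures in~\eqref{eqn:GadR_two_presentations} and Lemma~\ref{lem:divided_powers_univ}, which relate $\Gamma_R(M)$-algebras to $\mathbb{G}_a^{\sharp}$-valued points. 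There is no deep difficulty, but it requires care to state the factorization canonically (as a $2$-commutative triangle, not just up to unspecified homotopy).
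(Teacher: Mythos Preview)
Your approach is essentially the same as the paper's, and the key idea is correct: use the fiber sequence $\Ga^\sharp\to \Ga\to \Ga^{\dR}$ together with the fact that divided powers on $J=\hker(R'\to R)$ give a canonical lift $J\to \Ga^\sharp(R')$ (this is exactly Remark~\ref{rem:divided_powers_div_algebra}). The paper's proof is a two-line version of your Step~2: one reformulates the claim as saying that the canonical map $R'\to \Ga^{\dR}(R')$ factors through $R$, reduces by sifted colimits to classical PD-thickenings of flat $\Int_p$-algebras, and then observes that the lift $J\to \Ga^\sharp(R')$ furnished by the divided powers means $J$ maps to zero in $\Ga^{\dR}(R')$.

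Two small points where your write-up is muddled. First, the compatibility you check in Step~3 is not the one the lemma asks for: the lemma wants $\tilde{x}_{\dR,R'}$ to factor $x_{\dR,R'}$ through $R^{\Prism}\to R'^{\Prism}$, i.e.\ that precomposing your map $R\to \Ga^{\dR}(R')$ with $R'\to R$ recovers the canonical $R'\to \Ga^{\dR}(R')$; this is automatic once you phrase Step~2 as ``the canonical map kills $J$''. Second, your Step~2 would be cleaner if stated exactly that way rather than as ``a lift of $R\to R$ along $\Ga(R')\to\Ga(R)$'', which obscures the point. The reduction to the generators of $\mathrm{AniPDPair}$ you outline is fine, though the paper simply invokes compatibility with sifted colimits.
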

\begin{proof}
The claim here is that the canonical map $R'\to \mathbb{G}^{\dR}_a(R')$ admits a factoring through $R$ that depends only on the divided powers on the fiber $J$ of $R'\twoheadrightarrow R$. Since the constructions and the conclusion are compatible with sifted colimits, we reduce to the case where $R'\twoheadrightarrow R$ is a classical divided power thickening of $p$-completely flat $\Int_p$-algebras.

Now, it suffices to construct a canonical lift of $R'$-modules $J\to \mathbb{G}_a^\sharp(R')$: this follows from Remark~\ref{rem:divided_powers_div_algebra}.
\end{proof}

\begin{remark}
\label{rem:crystalline_site}
The lemma can be interpreted as follows: For any $A\in \mathrm{CRing}^{p\text{-comp}}$, we can consider the \defnword{big crystalline site}, whose underlying $\infty$-category is the opposite to that of tuples $(A\to R,R'\twoheadrightarrow R,\gamma)$ of divided power extensions of objects in $\mathrm{CRing}^{p\text{-nilp}}_{A/}$. Associated with such an object we obtain a canonical map
\[
\Spf R'\to R^{\Prism}\to A^{\Prism}.
\]

If one wants to be more judicious, one could also restrict to the \emph{small} version where $A\to R$ is required to be \'etale and finitely presented. This will not make any difference in the sequel.

Note that the sites we are considering here are the \emph{animated} versions of the usual notions. The reader can consult~\cite[\S 4]{Mao2021-jt} for a comparison of the two notions, derived and classical.
\end{remark}  

\subsection{Prismatic cohomology}
\label{subsec:cohomology}

We will need the relationship between the stacks defined above and relative (Nygaard filtered) prismatic cohomology as constructed in~\cite{Bhatt2022-ee} and~\cite{bhatt2022absolute} for semiperfectoid rings.

\begin{definition}[Relative prismatic cohomology]
Suppose that we have a (classical) bounded prism $(A,I)$ with $\overline{A} = A/I$. Then, with any $R\in \mathrm{CRing}_{\overline{A}/}$ we can associate its \defnword{relative prismatic cohomology} $\Prism_{R/A}$. 

This can be obtained---see~\cite[Construction 4.1.3]{bhatt2022absolute}---as the left Kan extension of its restriction to polynomial $\overline{A}$-algebras $R$, where $\Prism_{R/A}$ is defined to be an inverse limit in the $\infty$-category of $p$-complete \emph{derived rings} over $A$ (see also~\cite[\S 7]{bhatt2022prismatization}):
\[
\Prism_{R/A}\xrightarrow{\simeq}\varprojlim_{(B,v)\in (R/A)_{\Prism}}B,
\]
where $(R/A)_{\Prism}$ is the prismatic site for $R$ relative to $(A,I)$.
\end{definition}

\begin{remark}[Conjugate filtered Hodge-Tate cohomology]
\label{rem:hodge-tate_cohomology}
The base-change $\overline{\Prism}_{R/A} \defn \overline{A}\otimes_A\Prism_{R/A}$ over $\Field_p$ is the \defnword{relative Hodge-Tate cohomology}, which is equipped with a canonical exhaustive increasing filtration $\Fil_\bullet^{\mathrm{conj}}\overline{\Prism}_{R/\Int_p}$ supported in non-positive degrees, and a canonical equivalence\footnote{The cotangent complex appearing here is the $p$\emph{-completed} version.}
\[
\gr_{i}^{\mathrm{conj}}\overline{\Prism}_{R/A}\{i\}\xrightarrow{\simeq}\wedge^i\mathbb{L}_{R/\overline{A}}[-i]
\]
for each $i\ge 0$; see~\cite[Remark 4.1.7]{bhatt2022absolute}. This is characterized by the property that it respects sifted colimits in $\overline{A}$-algebras and is isomorphic to the Postnikov filtration on $\overline{\Prism}_{R/A}$ for polynomial algebras $R$ over $\overline{A}$.
\end{remark}

\begin{definition}
\label{defn:semiperfectoid}
A $p$-complete animated commutative ring $R$ is \defnword{semiperfectoid} if there exists a \emph{perfectoid} ring $R_0$ and a surjective map $R_0\twoheadrightarrow R$. 

\end{definition}  

In~\cite[\S 4.4]{bhatt2022absolute} one finds the construction of the \defnword{absolute prismatic cohomology} $\Prism_R$ of any $p$-complete animated commutative ring $R$; see in particular Construction 4.4.10 of \emph{loc. cit.} We will need to know the following result due to Bhatt-Lurie and Holeman~\cite{holeman2023derived}:

\begin{theorem}[Affineness of the prismatization]
\label{thm:absolute_prismatic_cohom}
Suppose that $R_0$ is a perfectoid ring with corresponding perfect prism $(A,I) = (A_{\mathrm{inf}}(R_0),\ker\theta)$. Then:
\begin{enumerate}
   \item There are canonical isomorphisms of commutative rings
   \[
    \Prism_{R_0}\xrightarrow{\simeq}\Prism_{R_0/A}\xrightarrow{\simeq}A.
   \]
   \item In fact, for any $R_0$-algebra $R$, there is an isomorphism
   \[
    \Prism_R \xrightarrow{\simeq}\Prism_{R/A}.
   \]
   \item If $R$ is a semiperfectoid $R_0$-algebra, then $\Prism_{R}\simeq\Prism_{R/A}$ is a $(p,I)$-complete animated commutative ring, and there is a canonical isomorphism (independent of the choice of $R_0$) of $p$-adic formal stacks
   \[
   \Spf(\Prism_{R})\xrightarrow{\simeq}R^{\Prism}.
   \]
\end{enumerate}
\end{theorem}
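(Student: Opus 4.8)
<br>

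The plan is to establish the three parts in sequence, leaning on the known descriptions of prismatic cohomology for perfectoid and semiperfectoid rings. For part (1), I would invoke the fact that for a perfectoid ring $R_0$ with associated perfect prism $(A,I)=(A_{\mathrm{inf}}(R_0),\ker\theta)$, the relative prismatic site $(R_0/A)_{\Prism}$ has the initial object $(A,I)$ itself, so $\Prism_{R_0/A}\simeq A$ immediately. The identification $\Prism_{R_0}\xrightarrow{\simeq}\Prism_{R_0/A}$ is the content of the Bhatt-Lurie comparison: the absolute prismatic site of $R_0$ (Construction 4.4.10 of~\cite{bhatt2022absolute}) has a final object given by this perfect prism, since any prism mapping to a perfectoid ring factors uniquely through its $A_{\mathrm{inf}}$. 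This is essentially~\cite[Proposition 4.4.12]{bhatt2022absolute}, and I would cite it directly rather than reprove it.

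For part (2), I would use the base change property of prismatic cohomology: for any $R_0$-algebra $R$, there is a canonical map $\Prism_R\to \Prism_{R/A}$ obtained by observing that every object $(B,J)$ of the absolute prismatic site of $R$ receives a canonical map from $(A,I)$ (because $B$ is a prism over the perfectoid $R_0$, hence over $A_{\mathrm{inf}}(R_0)=A$), so the absolute site of $R$ is identified with the relative site $(R/A)_{\Prism}$. Taking limits gives the isomorphism. Again this is Bhatt-Lurie~\cite[§4.4]{bhatt2022absolute}, combined with Holeman's work~\cite{holeman2023derived} to handle the animated (non-discrete, non-bounded) generality; I would reduce to the discrete bounded case by left Kan extension along sifted colimits, which is legitimate since all the constructions involved (both $\Prism_R$ and $\Prism_{R/A}$) are defined by left Kan extension from polynomial $R_0$-algebras.

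For part (3), the statement that $\Prism_R\simeq\Prism_{R/A}$ is $(p,I)$-complete animated commutative (as opposed to merely a derived ring) when $R$ is semiperfectoid follows from the computation of $\overline{\Prism}_{R/A}$ via its conjugate filtration: since $R_0\twoheadrightarrow R$ is surjective, $\mathbb{L}_{R/R_0}$ has Tor amplitude in $[-1,0]$ and $\mathbb{L}_{R/\overline{A}}$ is connective, so each $\gr_i^{\mathrm{conj}}\overline{\Prism}_{R/A}\simeq \wedge^i\mathbb{L}_{R/\overline{A}}[-i]$ is connective, hence $\overline{\Prism}_{R/A}$ is connective and therefore an animated commutative ring; then $(p,I)$-completeness and $\Prism_{R/A}\otimes_A^{\mathbb{L}}\overline{A}\simeq\overline{\Prism}_{R/A}$ upgrade this to $\Prism_{R/A}$ itself being an animated commutative ring. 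Finally, for the identification $\Spf(\Prism_R)\xrightarrow{\simeq}R^{\Prism}$: I would produce the map by noting that $(\Prism_R, \ker(\Prism_R\to\overline{\Prism}_R))$ is an animated prism equipped with a map $R\to\Prism_R/I$, so by Remark~\ref{rem:prisms_and_prismatization} there is a canonical map $\Spf(\Prism_R)\to R^{\Prism}$. To check it is an equivalence, it suffices to evaluate on $p$-nilpotent test rings $C$ and, using the flat cover $\Spf A \to \Int_p^{\Prism}$ coming from the perfect prism, reduce to checking that for a Cartier-Witt divisor lying over this cover, the groupoid of maps $R\to\overline{W(C)}$ matches $\Map_{\mathrm{CRing}}(\Prism_R, -)$ evaluated appropriately — this is precisely the universal property of $\Prism_R$ as the initial object receiving the data. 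The independence of the choice of $R_0$ follows since both $\Prism_R$ (absolutely defined) and $R^{\Prism}$ are manifestly independent of $R_0$.

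The main obstacle is part (3), specifically the passage from the abstract universal-property construction of $\Prism_R$ to the honest geometric statement $\Spf(\Prism_R)\simeq R^{\Prism}$; this requires knowing that $\Prism_R$ is genuinely an animated commutative ring (not just a derived ring or an $E_\infty$-ring), which is where the semiperfectoid hypothesis is essential and where one must carefully track the conjugate filtration argument, and it also requires the comparison of the transmutation-theoretic definition of $R^{\Prism}$ with the site-theoretic limit defining $\Prism_R$ — a comparison that is genuinely due to Bhatt-Lurie and Holeman and that I would cite as~\cite[Theorem 4.4.12 and §8]{bhatt2022absolute},~\cite{holeman2023derived} rather than reprove.
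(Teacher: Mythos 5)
Your proposal is correct and follows essentially the same route as the paper: the content of parts (1) and (2) is delegated to Bhatt--Lurie~\cite[Prop.~4.4.12]{bhatt2022absolute}, the universal property of $A_{\mathrm{inf}}$ gives the second isomorphism in (1), and the construction of the map $\Spf(\Prism_R)\to R^{\Prism}$ in (3) comes from Remark~\ref{rem:prisms_and_prismatization} with the independence of $R_0$ traced to Holeman~\cite{holeman2023derived}. The one substantive difference is that the paper cites~\cite[Cor.~7.18]{bhatt2022prismatization} for the equivalence $\Spf(\Prism_R)\xrightarrow{\simeq}R^{\Prism}$, whereas you route it through~\cite[\S 8]{bhatt2022absolute}; your sketched reduction to the flat cover $\Spf A\to \Int_p^{\Prism}$ is the right idea but is doing more work than the paper, which treats that equivalence as a black box. (Also a small terminological slip: $(A,I)$ is the \emph{initial} object of the prismatic site of $R_0$, not the final one, though this does not affect the argument.)
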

\begin{proof}
The second statement follows from~\cite[Proposition 4.4.12]{bhatt2022absolute}, while the second isomorphism in assertion (1) is from the fact that $(A,I)$ is an initial prism with a map $R_0\to \overline{A}$. 

The last assertion is~\cite[Corollary 7.18]{bhatt2022prismatization}. The only additional remark to be made is that $(\Prism_R,\Prism_R\otimes_{A}I)$ is an initial prism equipped with a map $R\to \overline{\Prism}_R$, and so is independent of the choice of the perfectoid ring $R_0$: this follows for instance from~\cite[Theorem 3.3.7]{holeman2023derived}. The map $\Spf(\Prism_R)\to R^{\Prism}$ is canonical and arises from the general construction in Remark~\ref{rem:prisms_and_prismatization}.
\end{proof}

\begin{remark}
\label{rem:derived_crystalline_semiperfect}
Suppose that $R$ is a semiperfect $\Field_p$-algebra, then we can take $R_0 = R^\flat$ to be its perfection. In this case, we obtain isomorphisms
\begin{align}\label{eqn:semiperfect_prismatization}
\Prism_{R}\xrightarrow{\simeq}\Prism_{R/\Int_p}\xrightarrow[\simeq]{\gamma_{R/\Int_p}^{\mathrm{crys}}}\Prism^{\mathrm{crys}}_{R/\Int_p}\xrightarrow{\simeq}\Prism^{\mathrm{crys}}_{R/W(R^\flat)}\simeq A_{\mathrm{crys}}(R).
\end{align}

Here, $\Prism^{\mathrm{crys}}_{R/W(R^\flat)}$ (resp. $\Prism^{\mathrm{crys}}_{R/\Int_p}$) is the relative derived crystalline cohomology of $R$ over $W(R^\flat)$ (resp. over $\Int_p$) and $A_{\mathrm{crys}}(R)\twoheadrightarrow R$ is the $p$-completed \emph{animated} divided power envelope of the map $W(R^\flat)\twoheadrightarrow R$.\footnote{Note that this will not agree with the classical $p$-complete divided power envelope unless $R$ is quasisyntomic.} The second isomorphism is explained in~\cite[Remark 4.6.5]{bhatt2022absolute}, the third is a consequence of the fact that $W(R^\flat)$ is formally \'etale over $\Int_p$, and the last isomorphism is an animated enhancement of a result of Illusie~\cite[Prop. 4.64]{Mao2021-jt}. The composition of these isomorphisms is $\varphi$-semilinear over $W(R^\flat)$ via the isomorphism $\Prism_{R^\flat}\xrightarrow{\simeq}W(R^\flat)$. 

In fact, Remark 4.6.5 of~\cite{bhatt2022absolute} shows that we have canonical isomorphism
\[
\gamma_{R/\Int_p}^{\mathrm{crys}}:\Prism_{R/\Int_p}\xrightarrow{\simeq}\Prism^{\mathrm{crys}}_{R/\Int_p}
\]
for \emph{any} $R\in \mathrm{CRing}_{\Field_p/}$. In particular, for any such $R$ we have canonical isomorphisms
\begin{equation}
\label{eqn:semiperfect_mod_p_prismatization}
\overline{\Prism}_R\xrightarrow{\simeq} \Prism_R/{}^{\mathbb{L}}p\xrightarrow{\simeq}\left(\Prism^{\mathrm{crys}}_{R/\Int_p}\right)/{}^{\mathbb{L}}p\xrightarrow{\simeq}\dR_{R/\Field_p},
\end{equation}
where on the right hand side we now have the derived de Rham cohomology of $R$ over $\Field_p$.
\end{remark}

\begin{remark}
\label{rem:crys_cohomology_de_rham}
The isomorphism $\Prism_R\xrightarrow{\simeq}A_{\crys}(R)$ is compatible with the relationship between the big crystalline site and the prismatization explained in Remark~\ref{rem:crystalline_site}. Namely, if $R'\twoheadrightarrow R$ is a divided power thickening, then Lemma~\ref{lem:lift_divided_powers} gives a lift $\Prism_R\to R'$ of $\Prism_R\to R$, and the universal property of $A_{\crys}(R)$ as a $p$-completed divided power envelope also yields a lift $\Prism_R\xrightarrow{\simeq}A_{\crys}(R)\to R'$. These two are canonically isomorphic.
\end{remark}

\subsection{Filtered Cartier-Witt divisors associated with filtered prisms}
\label{subsec:frames_cartier-witt}

Our purpose here is to extend the construction from Remark~\ref{rem:prisms_and_prismatization} to the filtered setting. 

\begin{definition}
Let $\Fil^\bullet \Ga^{\dR}$ be the filtered $W$-algebra over $\Aff^1/\Gm$ assigning to every section $(L\xrightarrow{t}C)\in(\Aff^1/\Gm)(C)$ the filtered animated commutative ring $\Fil^\bullet_L\Ga^{\dR}$ (see notation in Lemma~\ref{lem:filtered_cw_divisor}). A \defnword{filtered prism} is a prismatic frame $\underline{A}$ equipped with a commuting diagram of filtered derived commutative $A$-algebras
\[
 \begin{diagram}
    \Fil^\bullet A &\rTo^\Phi& \varphi_*\Fil^\bullet_I A\\
    &\rdTo&\dTo_\zeta\\
    &&R\Gamma(\Rees(\Fil^\bullet A),\Fil^\bullet\Ga^{\dR}).
   \end{diagram}
\]
Moreover, we will ask for an isomorphism of the composition
\[
  \varphi_*\Fil^\bullet_I A\to  R\Gamma(\Rees(\Fil^\bullet A),\Fil^\bullet\Ga^{\dR})\to \varphi_*\Fil^\bullet_I\Ga^{\dR}(A)
\]
with the canonical map of filtered animated commutative $A$-algebras
\[
   \varphi_*\Fil^\bullet_I A\to \varphi_*\Fil^\bullet_I\Ga^{\dR}(A).
\]
We will denote such a structure by the pair $(\underline{A},\zeta)$. 

Explicitly, for every map $\eta:\Fil^\bullet A \to \Fil^\bullet_L C$ of animated commutative rings, $\zeta$ yields a commuting diagram of filtered animated commutative $A$-algebras
\[
 \begin{diagram}
    \Fil^\bullet A &\rTo^\Phi& \varphi_*\Fil^\bullet_I A\\
    \dTo^\eta&&\dTo_{\zeta(\eta)}\\
    \Fil^\bullet_LC&\rTo&\Fil^\bullet_L\Ga^{\dR}(C).
   \end{diagram}
\]
These diagrams are functorial in $\eta$, and agree with the obvious one when $(L\to C)=\varphi_*(I\to A)$.
\end{definition}

\begin{remark}
[Breuil-Kisin filtered prisms]
\label{rem:bk_filtered_prisms}
Consider the situation where $\underline{A}$ is associated with a prism $(A,I')$ as in Example~\ref{example:bk_frames}. Then we have a canonical filtered prism structure arising from the composition
\[
\zeta:\varphi_*\Fil^\bullet_IA \xrightarrow{\simeq}\Fil^\bullet_{I'}A\otimes_A\varphi_*A\to \Fil^\bullet_{I'}A\otimes_AF_*W(A)\to \Fil^\bullet_{I'}A\otimes_A\Ga^{\dR}(A)
\]
\end{remark}

\begin{proposition}
\label{prop:frames_to_nygaard}
Suppose that we are given a filtered prism $(\underline{A},\zeta)$. Then we have a map
\[
\iota_{(\underline{A},\zeta)}:\Rees(\Fil^\bullet A)\to R_A^{\mathcal{N}}
\]
of $p$-adic formal stacks over $\Aff^1/\Gm$ with the following properties:
\begin{enumerate}
   \item There is a canonical Cartesian square
\begin{align*}
\begin{diagram}
\Spf(A)&\rTo^{\tau}&\Rees(\Fil^\bullet A)\\
\dTo^{\iota_{(A,I)}}&&\dTo_{\iota_{(\underline{A},\zeta)}}\\
R_A^\Prism&\rTo_{j_{\dR}}&R^{\mathcal{N}}_A
\end{diagram}
\end{align*}

\item 
There is a canonical commutative diagram
\begin{align*}
\begin{diagram}
\Spf(A)&\rTo^{\sigma}&\Rees(\Fil^\bullet A)\\
\dTo^{\iota_{(A,I)}}&&\dTo_{\iota_{(\underline{A},\zeta)}}\\
R_A^\Prism&\rTo_{j_{\mathrm{HT}}}&R^{\mathcal{N}}_A.
\end{diagram}
\end{align*}

\item The composition of $\iota_{(\underline{A},\zeta)}$ with the map 
\[
\Aff^1/\Gm\times\Spf R_A \to \Rees(\Fil^\bullet A)
\]
from Remark~\ref{rem:abstract_de_rham_point} is isomorphic to the filtered de Rham section $x^{\mathcal{N}}_{\dR}:\Aff^1/\Gm\times\Spf R_A \to R_A^{\mathcal{N}}$
\end{enumerate}

\end{proposition}

\begin{proof}
Write $(M'\to W) = (I\otimes_AW\to W)$ for the canonical Cartier-Witt divisor over $\Spf A$. Given a generalized Cartier divisor $(L\to C)$ and a map $\eta:\Fil^\bullet A\to \Fil^\bullet_L C$ parameterizing a $C$-valued point of $\Rees(\Fil^\bullet A)$, we obtain a commuting diagram of filtered animated commutative $W$-algebras over $C$:
\begin{align}\label{eqn:commuting_diagram_zeta_eta}
    \begin{diagram}
    \Fil^\bullet A\otimes_AW&\rTo&\Fil^\bullet_{\alpha(\eta)}W &\rTo&F_*\Fil^\bullet_{M'}W\\
    &&\dTo&&\dTo\\
    &&\Fil^\bullet_LC\otimes_C\Ga&\rTo&\Fil^\bullet_LC\otimes_C\Ga^{\dR}.
 \end{diagram}
\end{align}
Here, the right vertical arrow is the factoring via $F_*\Fil^\bullet_{M'}W\simeq (\varphi_*\Fil^\bullet_IA)\otimes_{\varphi_*A}F_*W$ of $\zeta(\eta)$. The square on the right is Cartesian and $\Fil^\bullet_{\alpha(\eta)}W$ is the filtered animated $W$-algebra associated with the filtered Cartier-Witt divisor over $C$ parameterized by the right vertical map as in Lemma~\ref{lem:filtered_cw_divisor}. In particular, note that we obtain maps of animated commutative rings
\[
  R_A = \gr^0A \to \gr^0A\otimes_AW(C) \to (\gr^0_{\alpha(\eta)}W)(C).
\]
Therefore, we have produced from $\eta$ a point in $R_A^{\mathcal{N}}(C)$. This gives us the map $\iota_{(A,\zeta)}$.

If $(L\xrightarrow{\simeq}C)$ is an isomorphism, then it's clear that $\alpha(\eta)$ lies in the de Rham locus, and moreover that the top right arrow in~\eqref{eqn:commuting_diagram_zeta_eta} is an isomorphism on associated graded algebras. In particular, the induced map $(\gr^0_{\alpha(\eta)}W)(C)\to \overline{W(C)}$ is an isomorphism of $R_A$-algebras. This verifies assertion (1).

Let us look at assertion (2). When $\eta$ factors as
\[
   \Fil^\bullet A \to \varphi_*\Fil^\bullet_{I,\pm}A\to \Fil^\bullet_LC,
\]
the map $A\to C$ factors as $A\xrightarrow{\varphi}\varphi_*A \to C$. Then $\Fil^\bullet_LC$ is the base-change of $\Fil^\bullet_IA$ along $\varphi_*A\to C$ and the diagram of $W$-modules over $C$ from~\eqref{eqn:commuting_diagram_zeta_eta} is of the form
\[
\begin{diagram}
 \Fil^\bullet A\otimes_AW&\rTo&\varphi_*(\Fil^\bullet_IA)\otimes_{\varphi_*A}W &\rTo&F_*(\Fil^\bullet_IA\otimes_AW)\simeq F_*(\varphi_*(\Fil^\bullet_{\varphi^*I}A)\otimes_{\varphi_*A}W)\\
    &&\dTo&&\dTo\\
    &&\Fil^\bullet_LC\otimes_C\Ga&\rTo&\Fil^\bullet_LC\otimes_C\Ga^{\dR}
\end{diagram}
\]
where the top left horizontal arrow is obtained from the composition
\[
\Fil^\bullet A\otimes_AW\simeq \varphi_*\varphi^*\Fil^\bullet A\otimes_{\varphi_*A}W\xrightarrow{\Phi\otimes 1}\varphi_*(\Fil^\bullet_IA)\otimes_{\varphi_*A}W.
\]
This implies that $\alpha(\eta)$ lies in the Hodge-Tate locus.

For assertion (3), we begin with the following observation:
\begin{lemma}
   [Filtered prisms are laminated]
\label{lem:filtered_prisms_laminated}
The filtered prism structure canonically endows $\underline{A}$ with a lamination.
\end{lemma}
\begin{proof}
   Using the map $\Fil^\bullet A\to \Fil^\bullet_{\mathrm{triv}}R_A$ and the filtered prism structure, we get a commuting square of filtered animated commutative rings
   \[
     \begin{diagram}
    \Fil^\bullet A &\rTo&F_*\Fil^\bullet_{M'}W(R_A)\\
    \dTo&&\dTo\\
    \Fil^\bullet_{\mathrm{triv}}R_A&\rTo&\Fil^\bullet_{\mathrm{triv}}\Ga^{\dR}(R_A)
 \end{diagram}
   \]
   Unwinding definitions, the existence of the right vertical arrow means that $I\otimes_RW(R_A)\to W(R_A)/{}^{\mathbb{L}}p$ is equipped with a nullhomotopy, and hence that $I\otimes_RW(R_A)\to W(R_A)$ admits a factoring through $W(R_A)\xrightarrow{p}W(R_A)$. Since these are both Cartier-Witt divisors, we conclude that there is a canonical isomorphism between them. Moreover, the bottom horizontal arrow shows that the isomorphism between the associated quotient algebras in fact lifts to one of $R_A$-algebras.
\end{proof}
Therefore, for any $\eta$ factoring through a point of $\Aff^1/\Gm\times R_A$ associated with a generalized Cartier divisor $t:L\to C$, we see that the top right corner in the diagram~\eqref{eqn:commuting_diagram_zeta_eta} is $F_*\Fil^\bullet_pW$, and the right vertical arrow is the zero map on all positive filtered degrees. It now follows that we have
\[
   (\Fil^1_{\alpha(\eta)}W\to W)\xrightarrow{\simeq}(F_*W\oplus(L\otimes\Ga^\sharp)\xrightarrow{(V,t^\sharp)}W)
\]
and the map $\Fil^1A \to \Fil^1_{\alpha(\eta)}W \simeq F_*W\oplus (L\otimes\Ga^\sharp)$ is the divided Frobenius in the first coordinate and the zero map in the second coordinate. In particular, $\gr^0_{\alpha(\eta)}W$ is a quotient of $\Ga\simeq W/F_*W$, and the map $R_A\to (\gr^0_{\alpha(\eta)}W)(R_A)$ is just the evaluation at $R_A$ of this quotient map. This verifies (3) and completes the proof of the proposition.
\end{proof}

\begin{example}[The Breuil-Kisin case]
\label{ex:filtered_cw_transversal_prism}
Let us return to the situation of Remark~\ref{rem:bk_filtered_prisms}. A map $\eta:\Fil^\bullet A \to \Fil^\bullet_L C$ for an $A$-algebra $C$ corresponds to a factoring $C\otimes_AI'\xrightarrow{u}L\xrightarrow{t}C$ of the map $C\otimes_AI'\to C$. Associated with this is an arrow between maps of $W$-modules over $C$
\[
(F_*M'\to F_*W) \to (I'\otimes_A\Ga^{\dR}\to\Ga^{\dR})\to (L\otimes_C\Ga^{\dR}\xrightarrow{t^{\dR}}\Ga^{\dR})
\]
where the second map is given by $u^{\dR}$ on the source. The associated filtered Cartier-Witt divisor is obtained from $M'$ via pushforward along $u^{\dR}$. 
\end{example}

\begin{remark}[The perfectoid case]
\label{rem:perfectoid_case}
In the previous example, when $\underline{A} = \underline{A_{\mathrm{inf}}(R_0)} = \underline{\Prism}_{R_0}$ is the frame associated with a perfectoid ring $R_0$ (Example~\ref{example:a_inf_frame}), and $I' = (\varphi^{-1}(\xi))$ for an orientation $I = (\xi)$, then, via Example~\ref{ex:stacks_in_bk_case}, we see that we have a canonical filtered prism structure, which in this case is actually the \emph{unique} possible one, giving us a canonical map
\[
\Spf\left(\Prism_{R_0}[u,t]/(ut-\varphi^{-1}(\xi))\right)/\Gm \to R_0^{\mathcal{N}}.
\]
This map is actually an \emph{isomorphism}; see~\cite[Proposition 5.5.8]{bhatt_lectures}.
\end{remark}

\begin{example}
   [The Witt filtered prism]
\label{ex:witt_filtered_prism}
The Witt frame $\underline{W(R)}$ can be endowed with a canonical filtered prism structure, and so Proposition~\ref{prop:frames_to_nygaard} gives us a canonical map $\Rees(\Fil^\bullet_{\mathrm{Lau}}W(R))\to R^{\mathcal{N}}$. To see this structure, note that, quite generally, if $\underline{A}$ is a $p$-adically filtered frame, then we have a map $\Fil^\bullet_pA \to \Fil^\bullet A$, and base-change along $\varphi$ yields a map
\[
\tilde{\zeta}_{\underline{A}}:\varphi_*\Fil^\bullet_pA \simeq \Fil^\bullet_pA\otimes_A\varphi_*A\to \Fil^\bullet A\otimes_A\varphi_*A.
\]
When $A = W(R)$, we claim that the resulting diagram
\[
\begin{diagram}
   \Fil^\bullet_{\mathrm{Lau}}W(R)&\rTo&F_*\Fil^\bullet_pW(R)\\
   &\rdTo_{1\otimes F}&\dTo_{\tilde{\zeta}_{\underline{W(R)}}}\\
   &&\Fil^\bullet_{\mathrm{Lau}}W(R)\otimes_{W(R)}F_*W(R)
\end{diagram}
\]
is commutative. This follows from the fact that $\Fil^\bullet_{\mathrm{Lau}}W(R)\to F_*\Fil^\bullet_pW(R)$ is an isomorphism in filtered degrees $\ge 1$, while the right vertical arrow is an isomorphism in filtered degrees $\le 0$. In particular, mapping the vertical arrow further down to $R\Gamma(\Rees(\Fil^\bullet_{\mathrm{Lau}}W(R)),\Fil^\bullet \Ga^{\dR})$ now gives us a canonical filtered prism structure on $\underline{W(R)}$.
\end{example}

\begin{corollary}
\label{cor:frames_to_nygaard}
Suppose that we have a map of frames $\gamma:\underline{A}\to \underline{B}$ with $\underline{A}$ underlying a filtered prism $(\underline{A},\zeta)$. Let $\mathfrak{S}(\underline{B})$ be as in Remark~\ref{rem:B'Gmu_abstract}. Then we have an associated map
\[
\iota_{\gamma}:\mathfrak{S}(\underline{B})\to R_A^{\mathrm{syn}}
\]
of $p$-adic formal prestacks.
\end{corollary}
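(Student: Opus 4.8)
<br>

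The plan is to deduce this from Proposition~\ref{prop:frames_to_nygaard} by showing that the map $\iota_{\underline{A}}\colon \Rees(\Fil^\bullet A)\to R_A^{\mathcal{N}}$ is compatible with the gluing data that defines the syntomifications on both sides. First I would recall the two pieces of structure in play: on the source, $\mathrm{Syn}(\underline{B})$ is by construction (Remark~\ref{rem:B'Gmu_abstract}) the coequalizer of the two maps $\sigma,\tau\colon\Spf B\to\Rees(\Fil^\bullet B)$; on the target, $R_A^{\mathrm{syn}}$ is the coequalizer of $j_{\dR},j_{\mathrm{HT}}\colon R_A^{\Prism}\to R_A^{\mathcal{N}}$. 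So to produce $\iota_\gamma$ it suffices to produce a compatible pair $(f_1,f_2)$ where $f_1\colon\Rees(\Fil^\bullet B)\to R_A^{\mathcal{N}}$ and $f_2\colon\Spf B\to R_A^{\Prism}$, together with homotopies $f_1\circ\sigma\simeq j_{\mathrm{HT}}\circ f_2$ and $f_1\circ\tau\simeq j_{\dR}\circ f_2$. By the universal property of the coequalizer (a colimit, so mapping \emph{out} of it into the prestack $R_A^{\mathrm{syn}}$ is what we want, and $R_A^{\mathrm{syn}}$ is itself a coequalizer, but the relevant functoriality is that a map of coequalizer diagrams induces a map of coequalizers), this descends to the desired $\iota_\gamma\colon\mathrm{Syn}(\underline{B})\to R_A^{\mathrm{syn}}$.

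The construction of the pair $(f_1,f_2)$ proceeds as follows. Using the map of frames $\gamma\colon\underline{A}\to\underline{B}$, base-change along $\varphi_A\to\varphi_B$ (Remark~\ref{rem:operations_on_frames}(1)) transports the syntomic structure on $\underline{A}$ to a compatible datum for $\underline{B}$; more precisely, the composite $B\to R_B\otimes_B\varphi_*(B/^{\mathbb{L}}p)$ factors through $\overline{B}$ because it is obtained by base-change from the corresponding factoring for $\underline{A}$ through $\overline{A}$. Granting this, Proposition~\ref{prop:frames_to_nygaard} applied to $\underline{B}$ yields $\iota_{\underline{B}}\colon\Rees(\Fil^\bullet B)\to R_B^{\mathcal{N}}$ fitting into the two Cartesian squares with $j_{\dR}\circ\iota_{(B,I_B)}$ and $j_{\mathrm{HT}}\circ\iota_{(B,I_B)}$. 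Now post-compose with the functoriality of $\mathcal{N}$ and $\Prism$ in the ring: the ring map $R_A\to R_B$ (the $\gr^0$ of $\gamma$) induces $R_B^{\mathcal{N}}\to R_A^{\mathcal{N}}$ and $R_B^{\Prism}\to R_A^{\Prism}$ compatibly with $j_{\dR},j_{\mathrm{HT}}$ — this is immediate from the transmutation description of these stacks in \S\ref{subsec:syntomification}, since transmutation is contravariantly functorial in the ring. Setting $f_1 = (R_B^{\mathcal{N}}\to R_A^{\mathcal{N}})\circ\iota_{\underline{B}}$ and $f_2 = (R_B^{\Prism}\to R_A^{\Prism})\circ\iota_{(B,I_B)}$, the two Cartesian squares of Proposition~\ref{prop:frames_to_nygaard}, together with naturality of $j_{\dR},j_{\mathrm{HT}}$ in the ring, give exactly the required homotopies $f_1\circ\tau\simeq j_{\dR}\circ f_2$ and $f_1\circ\sigma\simeq j_{\mathrm{HT}}\circ f_2$.

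The last step is to check that $(f_1,f_2,\text{homotopies})$ assemble into a genuine map of coequalizer diagrams — i.e.\ that the two homotopies are themselves compatible in the appropriate higher sense so that the induced map on coequalizers is well-defined. In the $\infty$-categorical setting this amounts to producing a map of simplicial diagrams: the \v{C}ech-type diagram computing $\mathrm{Syn}(\underline{B})$ as a colimit maps to the one computing $R_A^{\mathrm{syn}}$. This is largely formal once the two squares are in hand, but it is the step that requires the most care: one must verify that the homotopy $f_1\circ\sigma\simeq j_{\mathrm{HT}}\circ f_2$ produced by the Hodge-Tate square and the homotopy $f_1\circ\tau\simeq j_{\dR}\circ f_2$ produced by the de Rham square are \emph{the} homotopies built into the respective coequalizer diagrams, not merely \emph{some} homotopies. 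I expect this compatibility to be the main (though not deep) obstacle — it is exactly the kind of coherence that the Cartesian-square formulation of Proposition~\ref{prop:frames_to_nygaard} is designed to make transparent, since a Cartesian square records precisely a homotopy together with its universal property. Concretely, I would phrase the proof as: ``apply Proposition~\ref{prop:frames_to_nygaard} to $\underline{B}$, compose with functoriality of $(-)^{\mathcal{N}}$ and $(-)^{\Prism}$ along $R_A\to R_B$, and pass to coequalizers using the two Cartesian squares.''
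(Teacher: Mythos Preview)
There is a genuine gap. Your argument hinges on ``apply Proposition~\ref{prop:frames_to_nygaard} to $\underline{B}$,'' but that proposition requires the frame in question to be \emph{prismatic} and to carry a syntomic structure. The corollary only assumes this for $\underline{A}$: the frame $\underline{B}$ is an arbitrary (animated higher) frame receiving a map from $\underline{A}$, and a map of frames does not transport the prismatic structure. Concretely, the construction in Proposition~\ref{prop:frames_to_nygaard} uses the $\delta$-ring structure on $A$ (through the Cartier--Witt divisor $I\otimes_A W\to W$) and the factoring $\overline{A}\to R_A\otimes_A\varphi_*(A/{}^{\mathbb{L}}p)$; neither of these is available for $\underline{B}$. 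Your appeal to Remark~\ref{rem:operations_on_frames}(1) does not help: that remark produces a new frame by base-change along a map of rings with Frobenius lifts, but does not equip an arbitrary target frame $\underline{B}$ with a $\delta$-structure, and a syntomic structure is only defined for prismatic frames (Definition~\ref{defn:syntomic_frame}).

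The paper's fix is much simpler and sidesteps this entirely: apply Proposition~\ref{prop:frames_to_nygaard} to $\underline{A}$ (where it is legitimate) to get $\iota_{\underline{A}}\colon\Rees(\Fil^\bullet A)\to R_A^{\mathcal{N}}$ with its two Cartesian squares, pass to coequalizers to get $\mathrm{Syn}(\underline{A})\to R_A^{\mathrm{syn}}$, and then precompose with the map $\mathrm{Syn}(\underline{B})\to\mathrm{Syn}(\underline{A})$ induced by $\gamma$. The latter exists because $\mathrm{Syn}(-)$ is contravariantly functorial in the frame: a map of frames gives compatible maps on the Rees stacks and on $\Spf(-)$ intertwining the respective $\sigma,\tau$, hence a map of coequalizers. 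No prismatic hypothesis on $\underline{B}$ is needed at any point.
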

\begin{proof}
Proposition~\ref{prop:frames_to_nygaard} gives a map $\mathfrak{S}(\underline{A})\to R_A^{\mathrm{syn}}$, which we can compose with the map $\mathfrak{S}(\underline{B})\to \mathfrak{S}(\underline{A})$ obtained from $\gamma$.
\end{proof}

\begin{remark}
\label{rem:iota_gamma_mod-pn}
If $B$ is a $\Int/p^n\Int$-algebra, then $\iota_\gamma$ will factor through $R_A^{\mathrm{syn}}\otimes\Int/p^n\Int$.
\end{remark} 

\begin{example}
   \label{ex:witt_frame_to_syntomification}
Suppose that $R$ is an $\Field_p$-algebra. Example~\ref{ex:witt_filtered_prism} and Corollary~\ref{cor:frames_to_nygaard} applied to the map of frames $\underline{W(R)}\to \underline{W_1(R)}$ now give us a canonical map
\[
   R^{\Fzip} = \mathfrak{S}(\underline{W_1(R)})\to R^{\mathrm{syn}}\otimes\Field_p.
\]
See \S~\ref{subsec:fzip_to_syn} for an explicit description of this map.
\end{example}

\subsection{Nygaard filtered prismatic cohomology}
\label{subsec:nygaard_filtered_affineness}

We now review the story of the Nygaard filtration on prismatic cohomology.

\begin{definition}
A map $R\to S$ of $p$-complete animated commutative rings is \defnword{$p$-quasisyntomic} (or simply \defnword{quasisyntomic}) if it is $p$-completely flat (that is, $S/{}^{\mathbb{L}}p$ is flat over $R/{}^{\mathbb{L}}p$), and if $\mathbb{L}_{S/R}$ has $p$-complete Tor amplitude $[-1,0]$: that is, $\mathbb{L}_{S/R}\otimes\Field_p$ has Tor amplitude $[-1,0]$ over $S/{}^{\mathbb{L}}p$.

We will say that $R\to S$ is a \defnword{quasisyntomic cover} if it is quasisyntomic and $S/{}^{\mathbb{L}}p$ is faithfully flat over $R/{}^{\mathbb{L}}p$.

These properties are invariant under base-change via maps $R\to R'$ of $p$-complete animated commutative rings.
\end{definition}

\begin{example}
The key example of a quasisyntomic map is
\[
\Int_p[T]^{\wedge}\to \Int_p[T^{1/p^\infty}]^{\wedge},
\]
where the superscript ${}^\wedge$ denotes $p$-adic completion. 
\end{example}

\begin{remark}[Relative Nygaard filtration]
\label{rem:relative_nygaard}
We begin with the story relative to a bounded prism $(A,I)$. For what we need, there is no harm in restricting even to perfect prisms, and we will do so.

Given $R\in \mathrm{CRing}^{p\text{-comp}}_{\overline{A}/}$,  Write $\varphi^*\Prism_{R/A}$ for the base-change $A\otimes_{\varphi,A}\Prism_{R/A}$. It is shown in~\cite[\S 5.1]{bhatt2022absolute} that there is now a canonical lift $\Fil^\bullet_{\mathcal{N}}\varphi^*\Prism_{R/A}$ of $\Prism_{R/A}$ to $\mathrm{FilMod}_{A}$ characterized by the following properties:
\begin{enumerate}
   \item It respects sifted colimits in $R$.
   \item It satisfies $p$-quasisyntomic descent with respect to $R$.
   \item If $R$ is a $p$-quasisyntomic over $\overline{A}$ such that the quotient ring $R/pR$ is generated by the images of $\overline{A}$ and $(R/pR)^{\flat}$, then $\Fil^i_{\mathcal{N}}\Prism_{R/A}$ is discrete and we have
   \[
    \Fil^i_{\mathcal{N}}\varphi^*\Prism_{R/A} = \{x\in \varphi^*\Prism_{R/A}:\; (1\otimes\varphi)(x)\in I^i\Prism_{R/A}\}.
   \]
\end{enumerate}
\end{remark}

\begin{remark}
\label{rem:filtered_frob_rel_nygaard}
The map $1\otimes\varphi:\varphi^*\Prism_{R/A}\to \Prism_{R/A}$ can be canonically lifted to a filtered map
   \[
    \Fil^\bullet_{\mathcal{N}}\varphi^*\Prism_{R/A}\to \Fil^\bullet_I\Prism_{R/A}.
   \]
inducing for every $i\in\Int$ an equivalence
   \[
   \gr^i_{\mathcal{N}}\varphi^*\Prism_{R/A}\xrightarrow{\simeq}I^i/I^{i+1}\otimes_{\overline{A}}\Fil^{\mathrm{conj}}_i\overline{\Prism}_{R/A}\simeq \Fil^{\mathrm{conj}}_i\overline{\Prism}_{R/A}\{i\}.
\]
See~\cite[Remark 5.1.2]{bhatt2022absolute}.
\end{remark}

\begin{remark}[Absolute Nygaard filtration]
\label{subsubsec:absolute_nygaard}
In~\cite[\S 5.5]{bhatt2022absolute}, we find the construction of an \emph{absolute} Nygaard filtration $\Fil^\bullet_{\mathcal{N}}\Prism_R$ on absolute prismatic cohomology. It can be characterized by two properties: First, it satisfies quasisyntomic descent. Second, if $(A,I)$ is a perfect prism and $R$ is an $R_0=\overline{A}$-algebra, then, by~\cite[Theorem 5.6.2]{bhatt2022absolute}, the isomorphism
\[
\Prism_{R}\xrightarrow[\simeq]{\text{Theorem}~\ref{thm:absolute_prismatic_cohom}(2)}\Prism_{R/A}\xrightarrow{\simeq}\varphi^*\Prism_{R/A}
\]
lifts to an isomorphism of filtered objects
\begin{align}\label{eqn:absolute_to_relative_nygaard}
\Fil^\bullet_{\mathcal{N}}\Prism_R\xrightarrow{\simeq}\Fil^\bullet_{\mathcal{N}}\varphi^*\Prism_{R/A}.
\end{align}

In fact, if $R$ is qrsp and $(\Prism_R,I_R)$ is the canonical initial prism equipped with a map $R\to \overline{\Prism}_R$, then we have
\[
\Fil^i_{\mathcal{N}}\Prism_R = \{x\in \Prism_R:\;\varphi(x)\in \Fil^i_{I_R}\Prism_R\}.
\]

More generally, if $R$ is semiperfectoid and $(\Prism_R,I_R)$ is the associated initial prism, then we have a canonical filtered map
\[
\Phi:\Fil^\bullet_{\mathcal{N}}\Prism_R\to \Fil^\bullet_{I_R}\Prism_R
\]
lifting the Frobenius endomorphism of $\Prism_R$, which agrees with the relative counterpart from Remark~\ref{rem:filtered_frob_rel_nygaard} via the isomorphism~\eqref{eqn:absolute_to_relative_nygaard}. This follows from the construction explained in~\cite[Notation 5.7.5]{bhatt2022absolute}.
\end{remark}

\begin{lemma}
\label{lem:nygaard_filtered_frame}
Suppose that $R$ is semiperfectoid. Then, in the notation of~\S\ref{subsec:animated_frames}, the tuple $(\Fil^\bullet_{\mathcal{N}}\Prism_{R},I_R\to \Prism_R,\Phi)$ underlies a \emph{filtered prism} $(\underline{\Prism}_{R},\zeta_R)$ with $\gr^0_{\mathcal{N}}\Prism_R\simeq R$. Moreover, when $R$ is perfectoid, this structure agrees with the Breuil--Kisin one explained in Remark~\ref{rem:perfectoid_case}.
\end{lemma}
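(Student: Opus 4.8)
The goal is to verify that the four axioms of a prismatic frame (Definition~\ref{defn:prismatic_frames}) hold for the tuple $(\Fil^\bullet_{\mathcal{N}}\Prism_R, I_R\to \Prism_R, \Phi)$ when $R$ is semiperfectoid, and then to exhibit the syntomic structure. By Theorem~\ref{thm:absolute_prismatic_cohom}(3), $\Prism_R$ is a $(p,I_R)$-complete animated commutative ring and $(\Prism_R, I_R)$ is the initial prism equipped with a map $R\to\overline{\Prism}_R$; in particular it is an animated $\delta$-ring, and the generalized Cartier divisor $I_R\to\Prism_R$ together with the $\delta$-structure is what supplies axioms (2),(3) in the definition of a prism-based frame, as well as the condition that $\varphi$ comes from the $\delta$-ring structure and that $\Prism_R\{1\}$ is the canonical Breuil--Kisin twist from~\S\ref{subsubsec:abstract_bk_twist} (so that the isomorphism $I_R\otimes_{\Prism_R}\varphi^*\Prism_R\{1\}\simeq\Prism_R\{1\}$ is the canonical one). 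What remains for the frame structure itself is: (i) $\Fil^\bullet_{\mathcal{N}}\Prism_R$ is a non-negatively filtered $(p,I_R)$-complete animated commutative ring with $\gr^0_{\mathcal{N}}\Prism_R\simeq R$; (ii) the filtered Frobenius $\Phi:\Fil^\bullet_{\mathcal{N}}\Prism_R\to\Fil^\bullet_{I_R}\Prism_R$ is a map of filtered animated commutative rings whose underlying endomorphism is a naive Frobenius lift.

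For (i), the non-negativity and the multiplicativity of the Nygaard filtration are part of its construction in~\cite[\S 5.5]{bhatt2022absolute}; $(p,I_R)$-completeness of each $\Fil^i_{\mathcal{N}}\Prism_R$ follows because the filtration satisfies $p$-quasisyntomic descent and the statement reduces, via~\eqref{eqn:absolute_to_relative_nygaard}, to the relative case over the perfect prism $(A,I)=(A_{\mathrm{inf}}(R_0),\ker\theta)$ attached to any perfectoid cover $R_0\twoheadrightarrow R$; there the relevant completeness is standard (or one invokes the appendix~\ref{app:completeness}). The identification $\gr^0_{\mathcal{N}}\Prism_R\simeq R$ is exactly~\cite[Remark 5.1.2]{bhatt2022absolute} in the guise of Remark~\ref{rem:filtered_frob_rel_nygaard}: $\gr^0_{\mathcal{N}}\varphi^*\Prism_{R/A}\simeq \Fil^{\mathrm{conj}}_0\overline{\Prism}_{R/A}\simeq R$, and one transports this through the isomorphism of filtered objects~\eqref{eqn:absolute_to_relative_nygaard}. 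For (ii), the existence of the filtered Frobenius $\Phi$ lifting the endomorphism $\varphi$ of $\Prism_R$, and its compatibility with the relative one, is recorded in~\S\ref{subsubsec:absolute_nygaard} following~\cite[Notation 5.7.5]{bhatt2022absolute}; that the underlying endomorphism of $\pi_0(\Prism_R)/p$ is Frobenius is immediate from the fact that $\Prism_R$ is a $\delta$-ring. This shows $\underline{\Prism}_R$ is a prismatic frame; note here one uses the final sentence of Definition~\ref{defn:prismatic_frames}, since the naive Frobenius lift underlies a $\delta$-ring structure.

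It remains to produce the syntomic structure of Definition~\ref{defn:syntomic_frame}, i.e. a factoring of the canonical map $\Prism_R\to R\otimes_{\Prism_R}\varphi_*(\Prism_R/{}^{\mathbb{L}}p)$ through $\overline{\Prism}_R = \Prism_R/{}^{\mathbb{L}}I_R$. Since $\gr^0_{\mathcal{N}}\Prism_R\simeq R$ and the filtered Frobenius $\Phi$ sends $\Fil^1_{\mathcal{N}}\Prism_R$ into $\Fil^1_{I_R}\Prism_R = I_R$, the map $\Prism_R=\Fil^0_{\mathcal{N}}\to\Fil^0_{I_R}=\Prism_R$ given by $\varphi$ descends on associated gradeds in degree $0$ to a map $R=\gr^0_{\mathcal{N}}\Prism_R\to \gr^0_{I_R}\Prism_R = \overline{\Prism}_R$; Frobenius-semilinearity packages this into the diagram
\[
\begin{diagram}
\Prism_R&&\\
\dTo&\rdTo&\\
\overline{\Prism}_R&\rTo&R\otimes_{\Prism_R}\varphi_*(\Prism_R/{}^{\mathbb{L}}p)
\end{diagram}
\]
as required. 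Concretely, one checks that the map $\varphi:\Prism_R\to\Prism_R$ composed with the projection $\Prism_R\to R\otimes_{\Prism_R}\varphi_*(\Prism_R/{}^{\mathbb{L}}p)$ factors through $\Prism_R\to\overline{\Prism}_R$ because $\varphi(I_R)\subseteq I_R\varphi^*(I_R)\subseteq p\Prism_R + (\text{image of }\Fil^1_{\mathcal{N}})$ ... more cleanly, because the image of $I_R$ in $R\otimes_{\Prism_R}\varphi_*(\Prism_R/p)$ vanishes, which is seen from $I_R\subseteq\ker(\Prism_R\to R)$ after base change. I expect the main obstacle to be purely bookkeeping: carefully matching the \emph{absolute} Nygaard filtration with the \emph{relative} one via~\eqref{eqn:absolute_to_relative_nygaard} so that all four structures (the filtration, the Cartier divisor, the Frobenius, the Breuil--Kisin twist) are simultaneously the canonical ones, and checking that the syntomic-structure diagram is compatible with base change in the perfectoid cover $R_0$ (independence of $R_0$), which ultimately rests on the initiality of $(\Prism_R, I_R)$ from Theorem~\ref{thm:absolute_prismatic_cohom}(3). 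No genuinely new input is needed beyond the cited results of Bhatt--Lurie and the animated formalism already set up in~\S\ref{subsec:animated_frames}.
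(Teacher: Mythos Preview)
Your verification of the frame axioms is essentially correct and aligned with the paper's approach: both rely on the Bhatt--Lurie constructions, with the paper being slightly more explicit about obtaining the filtered \emph{animated} commutative ring structure via right Kan extension from the qrsp case followed by left Kan extension from polynomial algebras.

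However, your argument for the syntomic structure has a genuine gap. The claim you ultimately rely on---that $I_R\subseteq\ker(\Prism_R\to R) = \Fil^1_{\mathcal{N}}\Prism_R$---is false in mixed characteristic. For a perfectoid ring $R_0$ with $p\neq 0$, one has $\Fil^1_{\mathcal{N}}\Prism_{R_0} = \varphi^{-1}(I_{R_0}) = (\varphi^{-1}(\xi))$, and under the identification $\Prism_{R_0}/(\varphi^{-1}(\xi))\simeq R_0$ (via $\theta\circ\varphi$) the image of $\xi$ is $\theta(\varphi(\xi)) = \theta(\xi^p + p\delta(\xi)) = p\cdot(\text{unit})\neq 0$. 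Thus $\xi\notin(\varphi^{-1}(\xi))$, so $I_{R_0}\not\subseteq\Fil^1_{\mathcal{N}}$. Your earlier attempt, asserting $\varphi(I_R)\subseteq I_R\varphi^*(I_R)$, is also incorrect: for a distinguished element $d$ one has $\varphi(d) = d^p + p\delta(d)$ with $\delta(d)$ a unit, and this does not lie in $(d\cdot\varphi(d))$. The preceding paragraph, which produces a map $R\to\overline{\Prism}_R$ on $\gr^0$, goes in the wrong direction and does not by itself yield the required factoring $\overline{\Prism}_R\to R\otimes_{\Prism_R}\varphi_*(\Prism_R/{}^{\mathbb{L}}p)$.

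What is actually needed is the containment $I_R\subseteq p\Prism_R + \varphi(\Fil^1_{\mathcal{N}}\Prism_R)\cdot\Prism_R$. One could attempt this directly, but the paper takes a cleaner route: it invokes mod-$p$ Tot descent for prismatic cohomology \cite[Proposition~5.5.24]{bhatt2022absolute} to reduce to the case where $R$ is an $\Field_p$-algebra. There the prism $(\Prism_R, I_R)$ is crystalline, i.e.\ $I_R = (p)$, and the syntomic structure is automatic by Example~\ref{ex:padic_syntomic_frame}.
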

\begin{proof}
We would like to first check that  $\Fil^\bullet_{\mathcal{N}}\Prism_R$ is a filtered animated commutative ring and that the map $\Phi$ is a map of filtered animated commutative rings. Via mod-$p$ Tot descent~\cite[Propositions 4.4.15, 5.5.24]{bhatt2022absolute}, we can reduce to the case where $R$ is a semiperfect $\Field_p$-algebra.  When $R$ is qrsp, then the assertion is clear, and to know it in general, it is easiest to note that the construction is via right Kan extension from $\Field_p$-algebras $R$ as in (3) of Remark~\ref{rem:relative_nygaard} followed by left Kan extension from polynomial $\Field_p$-algebras. This endows $\Fil^\bullet_{\mathcal{N}}\Prism_R$ with the structure of a filtered \emph{derived} commutative ring in general, and also shows that $\Phi$ is a map of filtered derived commutative rings. This assertion specializes when $R$ is semiperfect to the structure desired.

The other thing to be verified is the existence of a commuting diagram witnessing the structure of a filtered prism. Once again, via mod-$p$ Tot descent and the right Kan extension followed by left Kan extension procedure, we can reduce to the case where $R$ is a qrsp $\Field_p$-algebra. In fact, to make things more concrete for the general case, we will consider another possibility as well: One where $R$ is qrsp and $p$-torsionfree. In both these cases, we can make the following observations:
\begin{enumerate}
   \item $\Prism_R$ is classical and $p$-torsionfree, and $\Fil^\bullet_{\mathcal{N}}\Prism_R=\varphi^{-1}(\Fil^\bullet_IR)$ is a filtration by submodules.
   \item $I =\Prism_R$ is a principal ideal generated by the image $\xi$ of a distinguished element of $W(R_0^\flat)\simeq \Prism_{R_0}$. Moreover, $\xi = \varphi(\xi')$, where $\xi'$ is the image of another distinguished element. We have $\xi' = \xi = p$ when $R$ is an $\Field_p$-algebra. 
   \item By working with the affine cover $\Spf \mathrm{Rees}(\Fil^\bullet A)\to \Rees(\Fil^\bullet A)$, it is enough to check that we have a \emph{unique} factoring
   \[
      \begin{diagram}
          \Fil^\bullet_{\mathcal{N}}\Prism_R&\rTo&\varphi_*\Fil^\bullet_I\Prism_R\\
   &\rdTo&\dTo_{\zeta(\eta)}\\
   &&\Fil^\bullet_{L}\Ga^{\dR}(C)
      \end{diagram}
   \]
   under the hypothesis that $C$ is $p$-torsionfree and that $L = (t)$ for some non-zero divisor $t\in C$ such that $\eta(\xi')\in (t)$. In this case, we have $\Ga^{\dR}(C)\simeq W(C)/pW(C)$.
\end{enumerate}

We now consider the two cases separately:
\begin{itemize}
   \item ($p$-torsionfree) In this case, we can also assume that $(p,\xi)$ maps to a regular sequence in $W(C)$ under the natural maps $\Prism_R\to W(\Prism_R)\to W(C)$. In turn, this implies that $t$ maps to a non-zero divisor in $\Ga^{\dR}(C)$. In filtered degree $1$, the map $I\to \Fil^1_L\Ga^{\dR}(C) = t\Ga^{\dR}(C)$ underlying $\zeta(\eta)$ is now the unique one sending $\xi$ to the image of $\eta(\xi')\in tC$. The factoring in higher filtered degrees is now immediate.

   \item ($\Field_p$-algebra) Here, by Remark~\ref{rem:derived_crystalline_semiperfect}, we have an isomorphism
   \[
      A_{\crys}(R)\xrightarrow{\simeq}\Prism_R,
   \]
   where $A_{\crys}(R)$ is the $p$-completed (classical) divided power envelope of the map $W(R^\flat)\to R$. Let $J = \ker(R^\flat\to R)$; then, by~\cite[Proposition 5.3.6]{bhatt2022absolute}, we can further identify $\Fil^n_{\mathcal{N}}\Prism_R$ with the ideal in $A_{\crys}(R)$ generated by the elements of the form
   \begin{equation}\label{eqn:element_of_fil_m}
      p^{m_0}\gamma_{m_1}([x_1])\cdots\gamma_{m_k}([x_k])
   \end{equation}
   with $x_1,\ldots,x_k\in J$ and $m_0 + m_1 + \cdots + m_k\geq n$. In particular, we see that there exists $u\in C$ such that $p = ut$, and also, for all $x\in J$ and $m\ge 1$, there exists $s_m(x)\in C$ such that $\eta(\gamma_m(x)) = s_m(x)t^m$. Since $C$ has no $t$-torsion, we deduce that $s_1(x)$ belongs to $\Ga^\sharp(C)$ with $m$-th divided power given by $s_m(x)$. In turn, this implies that $s_m(x)$ is in the kernel of the map $C\to \Ga^{\dR}(C)$, and one sees that the map $\Fil^m\Prism_R\to \Fil^m_L\Ga^{\dR}(C)$ kills all elements of the form~\eqref{eqn:element_of_fil_m} with $m_i\ge 1$ for $i\ge 1$. This means that we have a commuting diagram
   \[
   \begin{diagram}
      \Fil^m_{\mathcal{N}}\Prism_R&\rTo&\varphi_* p^m\Prism_R\\
      \dTo^{\eta^{\dR}}&\ldTo_{\zeta(\eta)}^{p^m y\mapsto u^m\overline{y}}&\dTo\\
      \Fil^m_L\mathbb{G}_a^{\dR}(C) =\Ga^{\dR}(C)&\rTo_{t^m}&\Ga^{\dR}(C).
   \end{diagram}
   \]
   Here, $\eta^{\dR}$ is the composition of $\eta$ with the natural map $\Fil^\bullet_LC \to \Fil^\bullet_L\Ga^{\dR}(C)$, and $y\mapsto \overline{y}$ is the composition
   \[
      \Prism_R\to W(\Prism_R)\to W(C) \to W(C)/pW(C)\simeq \Ga^{\dR}(C).
   \]
   It's now clear that $\zeta$ as defined gives us the unique filtered prism structure on $\underline{\Prism}_R$.
\end{itemize}
\end{proof}

The following description of the Nygaard filtered prismatization of semiperfectoid rings due to Bhatt-Lurie will be important for us.
\begin{theorem}
\label{thm:semiperf_crys}
Suppose that $R$ is semiperfectoid. Then there is a canonical isomorphism
\[
\iota_{(\underline{\Prism}_R,\zeta_R)}:\Rees(\Fil^\bullet_{\mathcal{N}}\Prism_R)\xrightarrow{\simeq} R^{\mathcal{N}}
\]
of $p$-adic formal stacks.
\end{theorem}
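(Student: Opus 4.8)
The plan is to build the equivalence $\iota_{\underline{\Prism}_R}$ using the general construction of Proposition~\ref{prop:frames_to_nygaard} applied to the prismatic frame with syntomic structure $\underline{\Prism}_R$ furnished by Lemma~\ref{lem:nygaard_filtered_frame}, and then to check it is an equivalence by a descent-and-reduction argument. First I would recall that Proposition~\ref{prop:frames_to_nygaard}, applied to $\underline{A} = \underline{\Prism}_R$ (noting $R_A = \gr^0_{\mathcal N}\Prism_R \simeq R$), produces a canonical map of $p$-adic formal stacks over $\Aff^1/\Gm$
\[
\iota_{\underline{\Prism}_R}:\Rees(\Fil^\bullet_{\mathcal N}\Prism_R)\to R^{\mathcal N}
\]
fitting into the two Cartesian squares of \emph{loc. cit.} relating it to the de Rham and Hodge-Tate embeddings via the maps $\tau,\sigma:\Spf(\Prism_R)\hookrightarrow \Rees(\Fil^\bullet_{\mathcal N}\Prism_R)$ and $\iota_{(\Prism_R,I_R)}:\Spf(\Prism_R)\to R^{\Prism}$. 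By Theorem~\ref{thm:absolute_prismatic_cohom}(3), the latter map $\iota_{(\Prism_R,I_R)}$ is already an \emph{equivalence} $\Spf(\Prism_R)\xrightarrow{\simeq} R^{\Prism}$ for semiperfectoid $R$. So the remaining task is purely one of gluing: one knows $\iota_{\underline{\Prism}_R}$ restricts to equivalences over both the de Rham open locus $R^{\Prism}\xrightarrow{j_{\dR}}R^{\mathcal N}$ and the Hodge-Tate open locus $R^{\Prism}\xrightarrow{j_{\mathrm{HT}}}R^{\mathcal N}$, since on the source side these correspond (by the Cartesian squares) to the open immersion $\tau$ and the locally-closed map $\sigma$ pulled back along the open covers of $R^{\mathcal N}$.

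The main work is then to show that these two loci, together with the maps $\alpha_-,\alpha_+$ encoding the Hodge and conjugate filtrations, genuinely cover both $\Rees(\Fil^\bullet_{\mathcal N}\Prism_R)$ and $R^{\mathcal N}$ in a compatible way, so that the map is an equivalence globally and not just on the two opens. Here I would follow the same strategy as in the proof of Proposition~\ref{prop:nyg_classical}: both sides live over $\Int_p^{\Prism}\times\Aff^1/\Gm$, so it suffices to check the map is an equivalence after the flat base change along $W_{\mathrm{dist}}\times\Aff^1 \to \Int_p^{\Prism}\times\Aff^1/\Gm$, and then further along $W_{\mathrm{dist}}\times\Aff^1\times\Ga\to W_{\mathrm{dist}}\times\Aff^1\times\Ga^{\dR}$. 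After these base changes a filtered Cartier-Witt divisor takes the explicit local form $M(L,d,t,u)\to W$ of Remark~\ref{rem:fil_cw_pushout}, and the defining data of $R^{\mathcal N}$ becomes concrete: it is the transmutation $\Map_{\mathrm{CRing}}(R,(W/_dM)(-))$. On the frame side, the Rees stack $\Rees(\Fil^\bullet_{\mathcal N}\Prism_R)$ over the same base, by the construction in Proposition~\ref{prop:frames_to_nygaard}, assigns exactly the datum of a lift of $\Fil^\bullet_{\mathcal N}\Prism_R\to \Fil^\bullet_{L,\pm}C$, i.e. the same transmutation data pushed through the defining universal property of $\Prism_R$ as the \emph{initial} prism equipped with a map $R\to\overline{\Prism}_R$. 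Matching these amounts to the statement that $\Fil^\bullet_{\mathcal N}\Prism_R$, together with its Frobenius $\Phi:\Fil^\bullet_{\mathcal N}\Prism_R\to\Fil^\bullet_{I_R}\Prism_R$, represents the functor of filtered Cartier-Witt divisors over $R$---which is precisely the content encapsulated by the compatibility $(\mathrm{eq}~\ref{eqn:absolute_to_relative_nygaard})$ between absolute and relative Nygaard filtrations together with the filtered Frobenius of \S\ref{subsubsec:absolute_nygaard}.

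The cleanest way to organize the final comparison is a reduction to the already-known cases. By mod-$p$ Tot descent for prismatic cohomology \cite[Proposition 5.5.24]{bhatt2022absolute} and the corresponding descent for $R^{\mathcal N}$ (transmutation commutes with the relevant limits), both sides are the totalization of their values on $R\otimes^{\mathbb L}_{\Int}\Field_p^{\otimes^{\mathbb L}_{\Int}(\bullet+1)}$, which are semiperfect $\Field_p$-algebras. So I would first establish the equivalence when $R$ is a semiperfect $\Field_p$-algebra, using Remark~\ref{rem:derived_crystalline_semiperfect} to identify $\Prism_R\simeq A_{\crys}(R)$ and the explicit description of $\Field_p^{\mathcal N}$ from Remark~\ref{rem:nyg_filt_prismatization_of_Fp} as $Z(ut-p)/\Gm$, against which $\Fil^\bullet_{\mathcal N}\Prism_R$ can be checked directly; the $p$-adic prismatic frame case of Example~\ref{ex:padic_syntomic_frame} makes the syntomic structure transparent here. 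The general semiperfectoid case then follows by the descent spectral sequence / totalization comparison, given that $\iota_{\underline{\Prism}_R}$ is manifestly functorial in $R$ and both constructions send the cosimplicial object $\Field_p^{\otimes(\bullet+1)}$ to the cosimplicial diagram whose totalization they are.

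The hard part will be the bookkeeping in the second paragraph: verifying, via the flat-cover trick of Proposition~\ref{prop:nyg_classical}, that the map $\iota_{\underline{\Prism}_R}$ of Proposition~\ref{prop:frames_to_nygaard} is an isomorphism on the \emph{whole} Rees stack and not merely on the de Rham and Hodge-Tate loci --- in other words, that the filtered Nygaard structure on $\Prism_R$ is precisely what is needed to glue. The key point is that the construction $\Upsilon_\zeta(\eta)$ in the proof of Proposition~\ref{prop:frames_to_nygaard} is a bijection on local data when the frame is $\underline{\Prism}_R$; this hinges on the sharp form of the relative Nygaard filtration in \S\ref{subsubsec:relative_nygaard}(3) and Remark~\ref{rem:filtered_frob_rel_nygaard}, transported to the absolute setting by $(\mathrm{eq}~\ref{eqn:absolute_to_relative_nygaard})$. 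Everything else --- the descent reduction, the semiperfect base case --- is comparatively routine once this representability statement is in hand.
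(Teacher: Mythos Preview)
Your setup is correct and matches the paper: you construct $\iota_{\underline{\Prism}_R}$ via Proposition~\ref{prop:frames_to_nygaard} applied to the frame from Lemma~\ref{lem:nygaard_filtered_frame}, and you correctly observe (as does the paper) that the Cartesian squares together with Theorem~\ref{thm:absolute_prismatic_cohom}(3) show the map is an equivalence over the de Rham and Hodge-Tate loci. The divergence is in the main step --- extending this to the whole of $R^{\mathcal N}$. The paper does not attempt a self-contained argument here: it cites Bhatt's explicit construction of an inverse \cite[Theorem~5.5.10]{bhatt_lectures}, which uses a choice of perfectoid cover $R_0\twoheadrightarrow R$, and remarks that the construction works verbatim for general semiperfectoid $R$ (not only qrsp), with the caveat that independence of the choice of $R_0$ is unclear --- but this is harmless since one already has the canonical forward map.

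Your alternative route has real gaps. First, the Tot-descent reduction is not justified: while Nygaard-filtered prismatic cohomology does satisfy mod-$p$ Tot descent, your parenthetical ``transmutation commutes with the relevant limits'' for $R^{\mathcal N}$ is not obvious --- transmutation sends tensor products of rings to fiber products of stacks, but there is no general reason it should send the totalization $R\simeq\mathrm{Tot}(R\otimes\Field_p^{\otimes\bullet+1})$ to a diagram from which one can recover $R^{\mathcal N}$ in the required sense. Second, even granting the reduction, you do not actually prove the semiperfect base case; saying it ``can be checked directly'' against the description of $\Field_p^{\mathcal N}$ is not a proof. Third, your local analysis in the second and fourth paragraphs is circular: the sentence ``Matching these amounts to the statement that $\Fil^\bullet_{\mathcal N}\Prism_R$\ldots represents the functor of filtered Cartier-Witt divisors over $R$'' is a restatement of the theorem, and the isomorphism~\eqref{eqn:absolute_to_relative_nygaard} you invoke only identifies absolute with relative Nygaard filtrations --- it does not by itself yield representability. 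The ``key point'' you isolate in the last paragraph (that $\Upsilon_\zeta(\eta)$ is a bijection on local data) is precisely what Bhatt's explicit inverse establishes, and you have not supplied a substitute.
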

\begin{proof}
The existence of the map $\iota_{(\underline{\Prism}_R,\zeta_R)}$ follows from Lemma~\ref{lem:nygaard_filtered_frame} and Proposition~\ref{prop:frames_to_nygaard}. 

To complete the proof, we follow the format of~\cite[Theorem 5.5.10]{bhatt_lectures}. Choose a map $R_0\to R$ with $R_0$ perfectoid, and consider the map $\iota \defn \iota_{(\underline{\Prism}_R,\zeta_R)}$ as a map of stacks over 
\[
[\Spf(\Prism_{R_0}[u,t]/(ut-\varphi^{-1}(\xi)))/\Gm]\xrightarrow{\simeq}\Rees(\Fil^\bullet_{\mathcal{N}}\Prism_{R_0})\xrightarrow[\simeq]{\iota_{(\underline{\Prism}_{R_0},\zeta_{R_0})}}R_0^{\mathcal{N}}.
\]
See Remark~\ref{rem:perfectoid_case}. Theorem~\ref{thm:absolute_prismatic_cohom} now shows that $\iota$ is an isomorphism over the $t\neq 0$ locus. That it is also an isomorphism over the $u\neq 0$ locus follows from the same theorem and the fact that the conjugate filtration on the Hodge-Tate cohomology $\overline{\Prism}_{R/R_0}$ is exhaustive.

To see that $\iota$ is an isomorphism, it is now enough to know that it is also an isomorphism over the $t=u=0$ locus. The restriction of $\mathbb{G}^{\mathcal{N}}_{a}$ over 
\[
R^{\mathcal{N}}_{0,(t=u=0)}\simeq R_0^{\mathrm{HT}}\times B\Gm\simeq  \Spf R_0\times B\Gm
\]
 is canonically isomorphic to the trivial square-zero extension $\Ga\oplus (\mathcal{O}(1)\{-1\}\otimes\Ga^\sharp)[1]$. Therefore, the fiber of the map $R^{\mathcal{N}}_{(t=u=0)}\to R^{\mathcal{N}}_{0,(t=u=0)}$ over a point of $R_0^{\mathrm{HT}}\times B\Gm$--given by a point of $R_0^{\mathrm{HT}}(C)$---equivalent by our choice of orientation $\xi$ to giving an $R_0$-algebra structure on $C$ and a line bundle $L$ over $C$---is isomorphic to the space $\Map_{\mathrm{CRing}_{/R_0}}(R,C\oplus B(L\{-1\}\otimes_C\Ga^\sharp)(C))$. This space in turn parameterizes $R$-algebra structures on the $R_0$-algebra $C$ along with a section of
\[
\Map_{R}(\mathbb{L}_{R/R_0},B(L\{-1\}\otimes_C\Ga^\sharp)(C))\simeq\Map_R(\mathbb{L}_{R/R_0}\otimes_RL^\vee[-1]\{-1\},\Ga^\sharp(C))\simeq \Map_R(\Gamma_R(\mathbb{L}_{R/R_0}\otimes_RL^\vee[-1]\{-1\}),C).
\]
Here, we have used Lemma~\ref{lem:divided_powers_univ}. 

To summarize, $R^{\mathcal{N}}_{(t=u=0)}$ is represented over $R_0^{\mathrm{HT}}\times B\Gm$ by the relatively affine formal scheme represented by the formal spectrum of the graded $p$-complete animated commutative ring
\[
\Gamma_R(\mathbb{L}_{R/R_0}[-1]\{-1\}) \simeq \bigoplus_{i=0}^\infty \wedge^i\mathbb{L}_{R/R_0}[-i]\{-i\}.
\]
By the Hodge-Tate decomposition (see Remark~\ref{rem:hodge-tate_cohomology}), this is also the case for $\Rees(\Fil^\bullet_{\mathcal{N}}\Prism_R)_{(u=t=0)}$. 

Therefore, the map
\begin{align}\label{eqn:t=u=0_map}
\Rees(\Fil^\bullet_{\mathcal{N}}\Prism_R)_{(u=t=0)}\to R^{\mathcal{N}}_{(t=u=0)}
\end{align}
corresponds to an endomorphism of the graded $R_0$-algebra $\bigoplus_{i=0}^\infty \wedge^i\mathbb{L}_{R/R_0}[-i]\{-i\}$ that is functorial in the semiperfectoid $R_0$-algebra $R$, is compatible with tensor products and is the identity in degree $0$. We claim that this is in fact the identity. To see this, one follows the argument from~\cite[Theorem 5.5.10]{bhatt_lectures} to find that this endomorphism is obtained via left Kan extension from an endomorphism of the same functor but now defined on $p$-completely smooth $R_0$-algebras and valued in derived $R_0$-algebras that are $p$-complete. In fact, one can argue as in the proof of~\cite[Theorem 7.17]{bhatt2022prismatization} to the consideration just of the $p$-completed polynomial algebra $R_0[x]^{\wedge}$ over $R_0$. Here, the graded algebra is just $R_0[x]^{\wedge}\oplus R_0[x]^{\wedge}dx\{-1\}[-1]$, and one sees that the only possible functorial endomorphisms with the required properties are the identity and the map killing $dx$. 

We claim that the latter is not possible. Indeed, it is enough to find one semiperfectoid $R_0$-algebra $R$ such that the map~\eqref{eqn:t=u=0_map} does not factor through $R^{\mathrm{HT}}\times B\Gm$. If $R_0$ is $p$-torsion free, then we choose $R = R_0/pR_0$; otherwise, we choose $R = R_0/xR_0$ for some regular element $x\in R_0$. In fact, in the first case, we can replace $R_0$ with its tilt, and reduce to the consideration just of the second case, where the claim is easily verified using Remark~\ref{rem:derived_crystalline_semiperfect}.
\end{proof}

\subsection{Descent}
\label{subsec:descent_stacks}
The stacks we are concerned with here carry $p$-quasisyntomic covers to covers in the fpqc topology. This is well-known to experts, but we include proofs here, since it is important for what follows.

\begin{proposition}
   \label{prop:etale_goes_to_etale}
Suppose that we have a $p$-completely \'etale map $g:R\to S$ in $\mathrm{CRing}^{p\text{-comp}}$. Then the associated maps $g^{\mathcal{N}}:S^{\mathcal{N}}\to R^{\mathcal{N}}$ and $g^{\Prism}:S^{\Prism}\to R^{\Prism}$ are $(p,I)$-completely \'etale.
\end{proposition}
\begin{proof}
  The assertion for $g^\Prism$ is shown in~\cite[Remark 3.9]{bhatt2022prismatization}. The main point is that, for any $C\in \mathrm{CRing}^{p\text{-nilp}}$ and a Cartier-Witt divisor $I\to W(C)$ with quotient $\overline{W(C)}$, the small $p$-completely \'etale site of $\overline{W(C)}$ is equivalent to that of $W(C)$, and hence to that of $C$. This shows that, given $R\to \overline{W(C)}$, we have
\[
S\otimes_R\overline{W(C)} \simeq \overline{W(C')}
\]
for a canonical \'etale cover $C\to C'$.

Let us now look at $g^{\mathcal{N}}$: given $(M\xrightarrow{d} W)$ in $\Int_p^{\mathcal{N}}(C)$, $W/_dM$ is an extension of $\overline{W}$ by $\hcoker(t^\sharp)$, where $t^\sharp:\mathbf{V}(L^\vee)^\sharp\to \mathbb{G}_a^\sharp$ is the map associated with a cosection of a line bundle over $\Spec C$. Therefore, the map
\[
(W/_dM)(C)\to \overline{W(C)}
\]
is surjective as long as $C$ is $\mathbb{G}_a^\sharp$-acyclic. For such $C$, the same argument as in the previous paragraph shows that 
\[
S\otimes_R(W/_dM)(C)\simeq (W/_dM)(C')
\]
for a canonical \'etale map $C\to C'$. In general, we can choose a faithfully flat map $C\to D$ such that $D$ is $\mathbb{G}_a^\sharp$-acyclic, and so the conclusion from the previous sentence follows for $C$ by fpqc descent for \'etale $C$-algebras.
 \end{proof}

\begin{proposition}
\label{prop:flat_surjections}
Suppose that we have $g:R\to S$ in $\mathrm{CRing}^{p\text{-comp}}$ with $S$ the $p$-completion of $\Int_p[T^{1/p^\infty}]\otimes_{\Int_p[T]}R$ for some map $\Int_p[T]\to R$.  Then the associated maps $g^{\mathcal{N}}:S^{\mathcal{N}}\to R^{\mathcal{N}}$ and $g^{\Prism}:S^{\Prism}\to R^{\Prism}$ are surjective in the $p$-completely flat topology.
\end{proposition}
\begin{proof}
By the limit-preserving properties of the prismatization and Nygaard filtered prismatization functors, we are reduced to the situation where $R = \Int_p[T]^\wedge$ and $S = \Int_p[T^{1/p^\infty}]^\wedge$.

Here, we will use the following consequence of Lemma~\ref{lem:extracting_phi_roots}: For any $C\in \mathrm{CRing}^{p\text{-nilp}}$ and any map $\beta:\Int_p[T]\to W(C)$, we can find a faithfully flat map $C\to C'$, and an extension $\Int_p[T^{1/p^\infty}]\to W(C')$ of $\beta$. Now, given any diagram $W(C)\twoheadrightarrow \overline{W(C)}\leftarrow \Int_p[T]$ corresponding to a $C$-valued point of $R^\Prism$, we can first lift the second map to a map $\Int_p[T]\to W(C)$, and then find $C\to C'$ as in the previous sentence, so that we have a lift to $C'$-valued point of $S^\Prism$.

This completes the proof in the case of the prismatizations. For the Nygaard filtered prismatization, suppose that we have a point in $R^{\mathcal{N}}(C)$. By replacing $C$ with a flat cover if necessary we can assume that it is of the form $M(C,d,t,u)\to W$ (see Remark~\ref{rem:fil_cw_pushout}). We then have a surjection $W/{}^{\mathbb{L}}d\to W/M$ with homotopy kernel $\hcoker(u^\sharp)$. This means that, over $\mathbb{G}_a^\sharp$-acyclic algebras, any map $\Int_p[T]\to (W/M)(C)$ can be lifted \'etale locally to a map $\Int_p[T]\to W(C)/{}^{\mathbb{L}}d$, and thence to $W(C)$, and we can run the argument used for the prismatization again.
\end{proof}

\begin{corollary}
\label{cor:semiperf_qsynt}
For any $R\in \mathrm{CRing}^{p\text{-comp}}$, there exists a quasisyntomic cover $R\to R_\infty$ with $R_\infty^{\otimes_R m}$ \emph{semiperfectoid} for all $m\ge 1$, and also satisfying the following properties:
\begin{enumerate}
   \item The maps 
\[
R_\infty^{\Prism}\to R^{\Prism}\;;\; R_{\infty}^{\mathcal{N}}\to R^{\mathcal{N}}
\]
are surjective in the $p$-completely flat topology;
   \item $R_{\infty}^{\Prism}\simeq \Spf\Prism_R$ is a derived formal affine scheme and $R_{\infty}^{\mathcal{N}}\simeq \Rees(\Fil^\bullet_{\mathcal{N}}\Prism_R)$ is a derived $p$-adic formal Artin $1$-stack; 
   \item The natural maps
   \[
    (R_{\infty}\hat{\otimes}_RR_{\infty})^{\mathcal{N}} \to R_{\infty}^{\mathcal{N}}
   \]
   are $(p,I)$-completely faithfully flat, where $I = I_{R_{\infty}}$.
\end{enumerate}
\end{corollary}
\begin{proof}
Choose a set of generators $\{x_i:i\in I\}$ for $\pi_0(R)/p\pi_0(R)$, and a map $\Int_p[T_i:i\in I]^{\wedge}_p\to R$ carrying $T_i$ onto $x_i\in \pi_0(R)/p\pi_0(R)$. The $R$-algebra
\[
R_\infty \defn \Int_p[\zeta_p^{1/p^\infty}][T_i^{1/p^\infty}:i\in I]^{\wedge}_p\hat{\otimes}_{\Int_p[T_i:i\in I]^{\wedge}_p}R
\]
now does the job. By construction $R_\infty^{\otimes_R m}$ is semiperfectoid for all $m\ge 1$, and it follows from Proposition~\ref{prop:flat_surjections} that $R_\infty^\Prism\to R^\Prism$ and $R_\infty^\mathcal{N}\to R^\mathcal{N}$ are surjective in the $p$-completely flat topology: Indeed, the Proposition tells us that these are filtered limits of such covers, so one is reduced to the fact that a filtered colimit of faithfully flat maps of rings is once again faithfully flat.

Assertion (2) is immediate from Theorem~\ref{thm:semiperf_crys}.

The last assertion follows from Lemma~\ref{lem:faithfully_flat_nygaard} below.
\end{proof}

\begin{lemma}
\label{lem:faithfully_flat_nygaard}
Suppose that $R\to S$ is a quasisyntomic cover of semiperfectoid rings. Then the map
\[
S^{\mathcal{N}}\otimes\Int/p^n\Int\to R^{\mathcal{N}}\otimes\Int/p^n\Int
\]
is a faithfully flat map of derived $p$-adic formal Artin $1$-stacks.
\end{lemma}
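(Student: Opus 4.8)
The idea is to reduce the statement to the assertion for the \emph{relative} Nygaard filtered prismatic cohomology and then to known flatness properties of the latter. Using Theorem~\ref{thm:semiperf_crys}, we may replace $R^{\mathcal{N}}$ and $S^{\mathcal{N}}$ by the formal Rees stacks $\Rees(\Fil^\bullet_{\mathcal{N}}\Prism_R)$ and $\Rees(\Fil^\bullet_{\mathcal{N}}\Prism_S)$. Since the Rees construction is compatible with base change, the map in question is obtained from the map of filtered animated commutative rings $\Fil^\bullet_{\mathcal{N}}\Prism_R\to \Fil^\bullet_{\mathcal{N}}\Prism_S$ by applying $\Rees(-)\otimes\Int/p^n\Int$ and quotienting by $\Gm$. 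The claim that this map of formal stacks is $I_R$-completely faithfully flat thus amounts to the assertion that, for each $i\in\Int$, the map of graded pieces
\[
\gr^i_{\mathcal{N}}\Prism_R\otimes\Int/p^n\Int\to \gr^i_{\mathcal{N}}\Prism_S\otimes\Int/p^n\Int
\]
(equivalently, the map on associated Rees algebras mod $(p^n,I_R)$) is faithfully flat, and more precisely that the whole $\Gm$-equivariant Rees algebra $\Rees(\Fil^\bullet_{\mathcal{N}}\Prism_S)/{}^{\mathbb{L}}(p^n,I_R)$ is faithfully flat over $\Rees(\Fil^\bullet_{\mathcal{N}}\Prism_R)/{}^{\mathbb{L}}(p^n,I_R)$ as a graded module. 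It is harmless here to pass to $n=1$ by d\'evissage along the filtration by powers of $p$.

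First I would fix a common perfectoid ring $R_0$ mapping onto both $R$ and $S$ (possible since a perfectoid cover of $R$ can be refined to one of $S$, enlarging $R_0$ if necessary), with associated perfect prism $(A,I)=(A_{\mathrm{inf}}(R_0),\ker\theta)$, so that $I_R = I\Prism_R$ and $I_S = I\Prism_S$. By the comparison~\eqref{eqn:absolute_to_relative_nygaard} of Bhatt-Lurie, the absolute Nygaard-filtered prismatic cohomology of a semiperfectoid $R_0$-algebra is identified with $\Fil^\bullet_{\mathcal{N}}\varphi^*\Prism_{-/A}$. Then, by Remark~\ref{rem:filtered_frob_rel_nygaard}, the associated graded $\gr^i_{\mathcal{N}}\varphi^*\Prism_{R/A}$ is canonically $\Fil^{\mathrm{conj}}_i\overline{\Prism}_{R/A}\{i\}$, and similarly for $S$. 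Since the Breuil-Kisin twist $\{i\}$ is an invertible module pulled back from $A$, the flatness question reduces to showing that
\[
\Fil^{\mathrm{conj}}_i\overline{\Prism}_{R/A}\to \Fil^{\mathrm{conj}}_i\overline{\Prism}_{S/A}
\]
is faithfully flat for each $i\ge 0$ (and the negative-degree pieces contribute nothing new, as the two-sided behaviour is controlled by $I_R$). For this I would use the conjugate-graded description $\gr^{\mathrm{conj}}_i\overline{\Prism}_{-/A}\simeq \wedge^i\mathbb{L}_{-/\overline{A}}[-i]$ together with the fact that $R\to S$ is $p$-completely flat with $\mathbb{L}_{S/R}$ of Tor amplitude $[-1,0]$: flat base change for the cotangent complex gives $\mathbb{L}_{S/\overline{A}}\simeq S\otimes_R\mathbb{L}_{R/\overline{A}}$ in the relevant range, hence $\wedge^i\mathbb{L}_{S/\overline{A}}\simeq S\otimes_R\wedge^i\mathbb{L}_{R/\overline{A}}$, and one climbs the conjugate filtration inductively to conclude that $\overline{\Prism}_{S/A}\simeq S\otimes_R\overline{\Prism}_{R/A}$ is faithfully flat over $\overline{\Prism}_{R/A}$ in a filtered sense. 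Lifting back up the Nygaard filtration and re-twisting then yields that $\Prism_S\simeq \Prism_R\widehat{\otimes}\,(\text{something } I_R\text{-completely flat})$ and, more to the point, that $\Rees(\Fil^\bullet_{\mathcal{N}}\Prism_S)$ is $I_R$-completely faithfully flat over $\Rees(\Fil^\bullet_{\mathcal{N}}\Prism_R)$; reducing mod $p^n$ and descending along $\Gm$ preserves this.

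The main obstacle I anticipate is bookkeeping the filtrations correctly: one must be careful that \emph{flatness} is an assertion about the Rees algebra as a graded object (equivalently about all the filtered pieces simultaneously, not just the associated gradeds), so the inductive climb up the conjugate/Nygaard filtration has to be carried out at the level of the filtered modules, using that flatness is detected on associated gradeds for exhaustive filtrations, and that $I_R$-complete flatness is checked after reduction mod $(p,I_R)$. A secondary point requiring care is that $\Prism_R$ for semiperfectoid (as opposed to qrsp) $R$ need not be discrete, so the relevant flatness statements are derived ones; but this causes no real trouble since all the comparison isomorphisms invoked (Theorems~\ref{thm:absolute_prismatic_cohom} and~\ref{thm:semiperf_crys}, and Remark~\ref{rem:filtered_frob_rel_nygaard}) are stated for animated/semiperfectoid inputs, and $p$-completely flat base change for the cotangent complex holds in the derived setting. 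Once the filtered faithful flatness of $\Fil^\bullet_{\mathcal{N}}\Prism_R\to\Fil^\bullet_{\mathcal{N}}\Prism_S$ after $I_R$-completion is in hand, the assertion about formal Artin $1$-stacks is immediate, since each side is a quasi-compact quotient of a formal affine scheme by $\Gm$ and $I_R$-complete faithful flatness is stable under such quotients and under $\otimes\Int/p^n\Int$.
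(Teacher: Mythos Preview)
Your overall strategy—identify $R^{\mathcal{N}}$ and $S^{\mathcal{N}}$ with Rees stacks via Theorem~\ref{thm:semiperf_crys}, pass to the Nygaard filtration, and analyse the conjugate filtration through the Hodge--Tate comparison—is exactly what the paper does: its proof cites Theorem~\ref{thm:semiperf_crys} and then defers to~\cite[Proposition 2.29]{guo2023frobenius} for the filtered faithful-flatness argument you are attempting to sketch.

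However, the key technical step in your sketch is wrong. You assert that ``flat base change for the cotangent complex gives $\mathbb{L}_{S/\overline{A}}\simeq S\otimes_R\mathbb{L}_{R/\overline{A}}$,'' but the transitivity fiber sequence
\[
S\otimes_R\mathbb{L}_{R/\overline{A}}\to \mathbb{L}_{S/\overline{A}}\to \mathbb{L}_{S/R}
\]
shows the discrepancy is exactly $\mathbb{L}_{S/R}$, which is nonzero (of $p$-complete Tor amplitude $[-1,0]$) precisely because $R\to S$ is quasisyntomic rather than \'etale. Your consequent identifications $\wedge^i\mathbb{L}_{S/\overline{A}}\simeq S\otimes_R\wedge^i\mathbb{L}_{R/\overline{A}}$ and $\overline{\Prism}_{S/A}\simeq S\otimes_R\overline{\Prism}_{R/A}$ therefore fail; the latter is already false for $R=\Field_p$ and $S=\Field_p[x^{1/p^\infty}]/(x)$, where $\overline{\Prism}_R=\Field_p$ but $\gr_1^{\mathrm{conj}}\overline{\Prism}_S\simeq \mathbb{L}_{S/\Field_p}[-1]\simeq S\neq 0$. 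What must be shown is that the map on filtered prismatic cohomology is $(p,I_R)$-completely faithfully \emph{flat}, not that it arises by base change along $R\to S$; the argument in~\cite{guo2023frobenius} uses the transitivity triangle and the Tor-amplitude bound on $\mathbb{L}_{S/R}$ to control the conjugate-graded pieces, without ever claiming such a base-change isomorphism. (A minor aside: you need not arrange $R_0\twoheadrightarrow S$; by Theorem~\ref{thm:absolute_prismatic_cohom}(2) it suffices that $S$ be an $\overline{A}$-algebra via $R_0\to R\to S$.)
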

\begin{proof}
Given Theorem~\ref{thm:semiperf_crys}, the proof is identical to that of assertion (1) in the proof of~\cite[Proposition 2.29]{guo2023frobenius}.
\end{proof}

Combining Corollary~\ref{cor:semiperf_qsynt} with Lemma~\ref{lem:faithfully_flat_nygaard} also yields:
\begin{corollary}
\label{cor:quasisyntomic_descent_general}
Suppose that $R\to S$ is a quasisyntomic cover in $\mathrm{CRing}^{p\text{-comp}}$. Then the map
\[
S^{\mathcal{N}}\to R^{\mathcal{N}}
\]
is a surjection in the $p$-completely flat topology.
\end{corollary}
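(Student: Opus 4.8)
The plan is to reduce the statement to the already-established case of \emph{semiperfectoid} covers, namely Lemma~\ref{lem:faithfully_flat_nygaard}, by the usual device of refining an arbitrary quasisyntomic cover. First I would invoke Corollary~\ref{cor:semiperf_qsynt}: applying it to both $R$ and $S$ (both lie in $\mathrm{CRing}^{f,p\text{-comp}}$ by hypothesis, and this property passes to $p$-completely flat algebras) yields quasisyntomic covers $R\to R_\infty$ and $S\to S_\infty$ with all self-tensor powers semiperfectoid. To compare them, I would form $S'_\infty = (S_\infty \hat{\otimes}_R R_\infty)^{\wedge}_p$, which is a quasisyntomic cover of $S_\infty$ (base change of $R\to R_\infty$ along $R\to S_\infty$) and hence, being a quasisyntomic $S_\infty$-algebra all of whose tensor powers over $S_\infty$ are again of this form, is semiperfectoid together with its self-products. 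Thus after possibly replacing $S_\infty$ by $S'_\infty$ we may assume $R_\infty \to S_\infty$ is a map of semiperfectoid rings refining $R\to S$, i.e. we have a commuting square of quasisyntomic covers with $R_\infty, S_\infty$ (and all tensor powers) semiperfectoid.

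Next I would run the standard descent-of-descent argument on the level of the Nygaard-filtered prismatizations mod $p^n$. By Lemma~\ref{lem:faithfully_flat_nygaard}, each of the maps $R_\infty^{\mathcal{N}}\otimes\Int/p^n\Int \to R^{\mathcal{N}}\otimes\Int/p^n\Int$ and $S_\infty^{\mathcal{N}}\otimes\Int/p^n\Int \to S^{\mathcal{N}}\otimes\Int/p^n\Int$ is $I_{R_\infty}$- (resp.\ $I_{S_\infty}$-)completely faithfully flat, hence in particular an fpqc cover of the relevant derived formal Artin $1$-stacks; likewise the map from $S_\infty^{\mathcal{N}}$ (which by the refinement is also the prismatization of a semiperfectoid $R_\infty$-algebra) covers $R_\infty^{\mathcal{N}}$. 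Since the composite $S_\infty^{\mathcal{N}}\otimes\Int/p^n\Int \to S^{\mathcal{N}}\otimes\Int/p^n\Int \to R^{\mathcal{N}}\otimes\Int/p^n\Int$ factors through $R_\infty^{\mathcal{N}}\otimes\Int/p^n\Int$ and is an fpqc cover, and fpqc covers satisfy the usual cancellation property (if $g\circ f$ is an fpqc cover then so is $g$), it follows formally that $S^{\mathcal{N}}\otimes\Int/p^n\Int \to R^{\mathcal{N}}\otimes\Int/p^n\Int$ is an fpqc cover. This is really the same bookkeeping as in the proof of Corollary~\ref{cor:quasisyntomic_descent_general}'s ingredients, so I would present it compactly.

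The one point requiring genuine care --- and the step I expect to be the main obstacle --- is the verification that $S'_\infty = (S_\infty\hat{\otimes}_R R_\infty)^{\wedge}_p$ and all its $S_\infty$-tensor powers are semiperfectoid, together with the bookkeeping that the resulting map $R_\infty^{\mathcal{N}}\to S_\infty^{\mathcal{N}}$ is covered by the constructions of Corollary~\ref{cor:semiperf_qsynt}. Concretely, choosing the presentation $R_\infty = \Int_p[T_i^{1/p^\infty}]^{\wedge}\otimes_{\Int_p[T_i]^{\wedge}} R$ with the $T_i$ chosen to hit generators of $\Omega^1_{\pi_0(R/pR)/\Field_p}$, and noting that after base change to $S$ the images of the $T_i$ still generate $\Omega^1_{\pi_0(S/pS)/\Field_p}$ (because $R\to S$ is $p$-completely flat so the cotangent complex $\mathbb{L}_{(S/pS)/(R/pR)}$ has Tor-amplitude $[-1,0]$, which controls the surjectivity of $\pi_0(S/pS)\otimes_{\pi_0(R/pR)}\Omega^1_{\pi_0(R/pR)/\Field_p}\to \Omega^1_{\pi_0(S/pS)/\Field_p}$ up to the allowed extra generators), one sees that $S'_\infty$ is obtained from $S$ by adjoining $p$-power roots of elements generating the relevant differentials, hence is semiperfectoid; and the analogous statement for tensor powers follows since a quasisyntomic cover of a semiperfectoid ring whose tensor powers are semiperfectoid retains this property. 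Once this is in place, the remaining arguments are formal fpqc-sheaf manipulations, and I would keep them brief.
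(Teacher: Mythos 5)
Your overall route --- produce semiperfectoid covers $R\to R_\infty$ and $S\to S_\infty$ via Corollary~\ref{cor:semiperf_qsynt}, form $S'_\infty = (S_\infty\hat{\otimes}_R R_\infty)^\wedge$, apply Lemma~\ref{lem:faithfully_flat_nygaard} to the quasisyntomic cover $R_\infty\to S'_\infty$, and finish by the cancellation property of fpqc covers --- is exactly the intended combination of the two ingredients, and the skeleton works. But the justification you give for the semiperfectoidness of $S'_\infty$ contains a step that fails. You assert that Tor-amplitude $[-1,0]$ of $\mathbb{L}_{(S/pS)/(R/pR)}$ forces $\pi_0(S/pS)\otimes_{\pi_0(R/pR)}\Omega^1_{\pi_0(R/pR)/\Field_p}\to\Omega^1_{\pi_0(S/pS)/\Field_p}$ to be surjective (so that the $T_i$ chosen for $R$ already generate $\Omega^1_{\pi_0(S/pS)/\Field_p}$); this is false. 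A Tor-amplitude bound places no constraint on the size of $H^0(\mathbb{L}) = \Omega^1$. The quasisyntomic cover $R = \Int_p\to S = \Int_p\langle x\rangle$ is an explicit counterexample: here $\Omega^1_{\pi_0(R/pR)/\Field_p} = 0$ while $\Omega^1_{\pi_0(S/pS)/\Field_p}\ne 0$, so the map in question is the zero map into a nonzero module. The same example also refutes the earlier ``and hence'' to the effect that a quasisyntomic cover of a semiperfectoid ring is automatically semiperfectoid: $R = \Int_p$ is semiperfectoid (as a quotient of $\Int_p^{\mathrm{cyc}}$) but $S = \Int_p\langle x\rangle$ is not.

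Fortunately the claim you actually need is true, with a correct justification that makes no reference to whether the $T_i$ generate anything over $S$. Unwinding the completed tensor product gives $S'_\infty \simeq \Int_p[T_\bullet^{1/p^\infty},U_\bullet^{1/p^\infty}]^\wedge\otimes_{\Int_p[T_\bullet,U_\bullet]^\wedge}S$, where the $U_j$ come from the presentation $S_\infty = \Int_p[U_\bullet^{1/p^\infty}]^\wedge\otimes_{\Int_p[U_\bullet]^\wedge}S$ supplied by Corollary~\ref{cor:semiperf_qsynt} applied to $S$. Since the $dU_j$ already generate $\Omega^1_{\pi_0(S/pS)/\Field_p}$ by that construction, adjoining the extra $p$-power roots of the $T_i$ is harmless and the Bhatt--Lurie argument cited there applies verbatim. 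Equivalently: if $A\twoheadrightarrow S_\infty$ is a surjection from a perfectoid ring, then $A\hat{\otimes}_{\Int_p}\Int_p[T_\bullet^{1/p^\infty}]^\wedge$ is perfectoid and surjects onto $S'_\infty$. With the semiperfectoidness of $S'_\infty$ established this way, your remaining steps --- Lemma~\ref{lem:faithfully_flat_nygaard} applied to $R_\infty\to S'_\infty$, composition with the fpqc cover $R_\infty^{\mathcal{N}}\otimes\Int/p^n\Int\to R^{\mathcal{N}}\otimes\Int/p^n\Int$, and cancellation --- go through without change.
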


\subsection{A nilpotence result}
\label{subsec:nilpotence}

We will need a certain nilpotence result for applications to the abstract Grothendieck-Messing theory formulated in~\S\ref{subsec:1-bounded_stacks_def_theory}. Suppose that $(R'\twoheadrightarrow R,\eta)$ is a divided power thickening of semiperfectoid rings in $\mathrm{CRing}^{p\text{-nilp}}$. Then by Lemma~\ref{lem:lift_divided_powers} and Theorem~\ref{thm:absolute_prismatic_cohom}, the natural map $\Prism_{R'}\to R'$ factors canonically through $\Prism_R$.

Set
\[
K_{R'\twoheadrightarrow R} \defn \hker\left(\Prism_{R'}\to \Prism_R\right)
\]
Just as in Remark~\ref{rem:phi1_lifts}, the maps
\[
\varphi_1: \Fil^1_{\mathcal{N}}\Prism_{R'}\to I_{R'}\;;\; \varphi_1:\Fil^1_{\mathcal{N}}\Prism_R\to I_R
\]
now give rise to a $\varphi$-semilinear map
\[
K_{R'\twoheadrightarrow R} \to I_{R'}\otimes_{\Prism_{R'}}K_{R'\twoheadrightarrow R}.
\]
Using the defining properties of the Breuil-Kisin twist, this map yields a $\varphi$-semilinear endomorphism\footnote{If $M\to I_{R'}\otimes_{\Prism_{R'}}M$ is a $\varphi$-semilinear map for a $\Prism_{R'}$-module $M$, then it factors through a linear map $\varphi^*M \to I_{R'}\otimes_{\Prism_{R'}}M$. In turn this gives a linear map $\varphi^*(M\{1\}) \simeq I_{R'}^\vee\otimes_{\Prism_{R'}}(\varphi^*M)\{1\}\to (I_{R'}^\vee\otimes_{\Prism_{R'}}I_{R'})\otimes_{\Prism_{R'}}M\{1\}\simeq M\{1\}$ corresponding to a $\varphi$-semilinear endomorphism of $M\{1\}$.}
\[
\dot{\varphi}_1: K_{R'\twoheadrightarrow R}\{1\}\to K_{R'\twoheadrightarrow R}\{1\}.
\]

\begin{proposition}
\label{prop:dot_varphi_1_nilpotent}
Suppose that the divided power structure is nilpotent of nilpotence degree $m$ as in Definition~\ref{defn:nilpotent_divided_powers}. Then the composition 
\[
K_{R'\twoheadrightarrow R}\{1\} \xrightarrow{\dot{\varphi}_1^{2m+1}} K_{R'\twoheadrightarrow R}\{1\}\to K_{R'\twoheadrightarrow R}\{1\}/{}^{\mathbb{L}}(p,I)
\]
is nullhomotopic.
\end{proposition}

We will need a little preparation. 
\begin{assumption}
Until further notice, we will assume that $R'$ is an $\Field_p$-algebra. In particular, the Breuil-Kisin twist is trivial in this case, and so will be ignored.
\end{assumption}

\begin{notation}
Set $K = K_{R'\twoheadrightarrow R}$, $\Fil^\bullet A = \Fil^\bullet_{\mathcal{N}}\Prism_R$, $\Fil^\bullet A' = \Fil^\bullet_{\mathcal{N}}\Prism_{R'}$, and $I = \hker(R'\twoheadrightarrow R)$. We will use an overline $\overline{(\cdot)}$ to denote mod-$p$ reduction. Set 
\[
\Fil^iK = \hker(\Fil^iA'\to \Fil^iA)\;;\; \Fil^{\mathrm{conj}}_i\overline{K} = \hker(\Fil^{\mathrm{conj}}_i\overline{A}'\to \Fil^{\mathrm{conj}}_i\overline{A}).
\]
\end{notation}

\begin{remark}
\label{rem:reduction_to_map_u}
The composition $\Fil^1K\to K \to \overline{K}$ induced by $\varphi_1$ factors through $\Fil^{\mathrm{conj}}_1\overline{K}$. Moreover, we have a canonical isomorphism
\[
I=\hker(R'\twoheadrightarrow R)\xrightarrow{\simeq}\hker(\Fil^{\mathrm{conj}}_0\overline{A}'\to \Fil^{\mathrm{conj}}_0\overline{A})=\Fil^{\mathrm{conj}}_0\overline{K}.
\]
The factoring $A'\to A\to R'$ induces a splitting (see the argument in Remark~\ref{rem:frames_surjective}):
\begin{align}\label{eqn:fil1k_splitting}
\Fil^1K \xrightarrow{\simeq}K\oplus I[-1].
\end{align}
The endomorphism of $\overline{K}$ arising from $\dot{\varphi}_1$ is now given by the composition
\begin{align*}
\overline{K} \to \overline{\Fil^1K} \to \Fil^{\mathrm{conj}}_1\overline{K} \to \overline{K},
\end{align*}
where the first map is obtained from the splitting~\eqref{eqn:fil1k_splitting}. It will be useful therefore to consider the composition
\begin{align}\label{eqn:factoring_through_u}
u:\Fil^{\mathrm{conj}}_1\overline{K}\to \overline{K}\to \overline{\Fil^1K}\to \Fil^{\mathrm{conj}}_1\overline{K}
\end{align}

\end{remark}

\begin{remark}[The not-necessarily-nilpotent qrsp case]
\label{rem:nnn_qrsp_case}
Let us jettison the nilpotence hypothesis on the divided power structure, and observe that all the constructions involved here make sense for arbitrary divided power extensions $(R'\twoheadrightarrow R,\gamma)$ of $\Field_p$-algebras. 

Let us assume instead for the moment that $R'$ and $R$ are qrsp $\Field_p$-algebras and that we have $\varphi^n(I) = 0$ for $n$ sufficiently large; for instance, this is the case if the divided powers are nilpotent.  In this situation, we can understand $K$ and the section $K\to \Fil^1K$ in reasonably explicit fashion. Let $R^\flat$ be the inverse limit perfection of $R$, and hence also of $R'$. Then $A'$ (resp. $A$) is the $p$-completed divided power envelope of $W(R^\flat)\to R'$ (resp. $W(R^\flat)\to R$). The lift $A\to R'$ is obtained from the divided powers on $I$. The map $\Fil^\bullet A'\to \Fil^\bullet A$ is an injective map of classical filtered commutative rings, and so we have 
\[
K\simeq (A/A')[-1]\;;\;\Fil^1K = (\Fil^1A/\Fil^1A')[-1].
\]
Set $J' = \ker(R^\flat\to R')$ and $J = \ker(R^\flat\to R)$. Write $\pi:A\to A/A'$ for the quotient map. Then the $A'$-module $A/A'$ is topologically generated by elements of the form
\[
\pi(\gamma_{m_1}([\tilde{x}_1])\cdots\gamma_{m_r}([\tilde{x}_r])), 
\]
for $\tilde{x}_i\in J$ and $m_i\ge 2$. The divided powers here are those in $\Fil^1A$ given by the construction of $A$ as a divided power envelope. The section $K\to \Fil^1K$ is obtained after a cohomological shift from the map
\begin{align}\label{eqn:divided_power_section_explicit}
A/A' \to \Fil^1A/\Fil^1A'
\end{align}
sending $\pi(\gamma_m([\tilde{x}]))$ to the image of $\gamma_m([\tilde{x}]) - [\widetilde{\eta_m(x)}]$, where $x\in I$ is the image of $\tilde{x}$ and $\widetilde{\eta_m(x)}\in J$ is a lift of the divided power $\eta_m(x)\in I$. Products of such elements are sent compatibly to the products of their images.
\end{remark}

\begin{remark}
\label{rem:fil_1_conj_qrsp}
Let us now explicate the structure of $\Fil^{\mathrm{conj}}_1\overline{K}$ in the qrsp context. We begin by noting that the map 
\[
R\xrightarrow{\simeq}\Fil^{\mathrm{conj}}_0\overline{A}\subset \overline{A}
\]
carries $x\in R$ to $\tilde{x}^p$, for any $\tilde{x}\in \overline{A}$ lifting $x$; see~\cite[Remark F.7]{bhatt2022absolute}. Moreover, by Construction F.8 of \emph{loc. cit}, we see that 
$\Fil^{\mathrm{conj}}_1\overline{A}$ is generated as a module over $\Fil^{\mathrm{conj}}_0\overline{A}$ by elements of the form $\gamma_p(\tilde{x})$ for $\tilde{x}\in J$. Finally, by Proposition F.9 of \emph{loc. cit.}, there is a canonical isomorphism $J/J^2\xrightarrow{\simeq}\gr^{\mathrm{conj}}_1\overline{A}$ sending $\tilde{x}+J^2$ to the image of $\gamma_p(\tilde{x})$. Since a similar description holds for $\Fil^{\mathrm{conj}}_1\overline{A}'$, we obtain compatible short exact sequences
\[
0\to R'\to \Fil^{\mathrm{conj}}_1\overline{A}\to J'/(J')^2\to 0\;;\;0\to R\to \Fil^{\mathrm{conj}}_1\overline{A}\to J'/(J')^2\to 0.
\]
Together they yield a fiber sequence
\[
I \to \Fil^{\mathrm{conj}}_1\overline{K}\to \gr^{\mathrm{conj}}_1\overline{K}\simeq\hker(J'/(J')^2\to J/J^2).
\]
\end{remark}

\begin{lemma}
\label{lem:qrsp_case_u}
With the setup of Remark~\ref{rem:fil_1_conj_qrsp}, consider the commuting diagram 
\[
\begin{diagram}
H^0(\Fil^{\mathrm{conj}}_1\overline{K})[0]&\rTo &\Fil^{\mathrm{conj}}_1\overline{K}&\rTo& H^1(\Fil^{\mathrm{conj}}_1\overline{K})[-1]\\
\dTo^{H^0(u)[0]}&&\dTo^u&&\dTo^{H^1(u)[-1]}\\
H^0(\Fil^{\mathrm{conj}}_1\overline{K})[0]&\rTo &\Fil^{\mathrm{conj}}_1\overline{K}&\rTo& H^1(\Fil^{\mathrm{conj}}_1\overline{K})[-1]
\end{diagram}
\]
where the rows are the tautological fiber sequences. Set $u_0 = H^0(u)$ and $u_1 = H^1(u)$. Then:
\begin{enumerate}
   \item We have $u_0(I) = u_0(H^0(\Fil^{\mathrm{conj}}_1\overline{K}))\subset H^0(\Fil^{\mathrm{conj}}_1\overline{K})$.
   \item The composition
   \[
     I\xrightarrow{u_0}H^0(\Fil^{\mathrm{conj}}_1\overline{K})\to \Fil^{\mathrm{conj}}_1\overline{A}'
   \]
   carries $x\in I$ to $\eta_p(x) - \gamma'_p(\tilde{x}^p)$ where $\tilde{x}\in J$ is any lift of $x$. Here, we are viewing $\eta_p(x)$ as an element of $\overline{A}'$ via the identification $R'=\Fil_0^{\mathrm{conj}}\overline{A}'$.
   \item Via the isomorphisms $H^1(\Fil^{\mathrm{conj}}_1\overline{K}) \simeq J/(J^2+J')\simeq I/I^2$, $u_1$ corresponds to the endomorphism
   \[
      I/I^2\xrightarrow{x+I^2 \mapsto \eta_p(x)+I^2}I/I^2.
   \]
\end{enumerate}
In particular, if the divided power structure is nilpotent of nilpotence degree $m$, then $u^{2m}$ is nullhomotopic.
\end{lemma} 
\begin{proof}
Assertion (1) amounts to knowing that, for all $\tilde{x}\in J'\cap J^2$, $u$ kills $\gamma'_p(\tilde{x})$. Via the isomorphism $\zeta:H^0(\overline{K}) \simeq (A/A')[p]$, $\gamma'_p(\tilde{x})$ is mapped to the image of $\frac{\gamma_p([\tilde{x}])}{p}\in A$. If $\tilde{x} = \sum_i\tilde{y}_i\tilde{z}_i$ for $\tilde{y}_i,\tilde{z}_i\in J$, then we find that
\[
\zeta(\tilde{x}) = (p-1)!\sum_{i}\pi(\gamma_p([\tilde{y}_i])\gamma_p([\tilde{z}_i]))\overset{\eqref{eqn:divided_power_section_explicit}}{\mapsto}(p-1)!\sum_i(\gamma_p[\tilde{y}_i]-[\widetilde{\eta_p(y_i)}])(\gamma_p[\tilde{z}_i]-[\widetilde{\eta_p(z_i)}])\in (\Fil^1A/\Fil^1A')[p].
\]
It is clear now that this element dies under the divided Frobenius: it lies in the image of $\Fil^2A$.

Let us now proceed to assertion (2). The map $I\to H^0(\Fil^{\mathrm{conj}}_1\overline{K})\subset H^0(\overline{K}) \simeq (A/A')[p]$ now admits the following explicit description: It associates with each $x\in I$ the element $(p-1)!\pi(\gamma_p([\tilde{x}]))$, where $\tilde{x}\in J$ is any lift of $x$. Via the section~\eqref{eqn:divided_power_section_explicit} we obtain maps
\[
I\to (A/A')[p]\to (\Fil^1A/\Fil^1A')[p]\to \overline{\Fil^1A'}
\]
whose composition now takes $x$ to the image in $\overline{\Fil^1A'}$ of the element
\[
[\tilde{x}^p] - p!\cdot[\widetilde{\eta_p(x)}]\in \Fil^1A'.
\]
Hitting this element with the divided Frobenius and then taking the image in $\overline{A}'$ gives
\[
u(x) = (p-1)!\left(\gamma'_p(\tilde{x}^p) - \eta_p(x)\right) = \eta_p(x) - \gamma'_p(\tilde{x}^p)\in  H^0(\Fil^{\mathrm{conj}}_1\overline{K}).
\]

Now for assertion (3): Unwinding definitions, we see that the composition of this endomorphism with the inclusion
\[
J/(J^2+J')\xrightarrow{\tilde{x} \mapsto \gamma_p(\tilde{x})}\overline{A}/\mathrm{im}(\overline{A}')
\]
can be described as follows: Given $\tilde{x}\in J$ lifting an element of $J/(J^2+J')$, we consider $\gamma_p([\tilde{x}])\in A$, map it to the element $\gamma_p([\tilde{x}]) - [\widetilde{\eta_p(x)}]\in \Fil^1A$, hit it with the divided Frobenius to get the element $\frac{\gamma_p([\tilde{x}]^p)}{p} - (p-1)!\gamma_p([\widetilde{\eta_p(x)}])\in A$ and take its image in $\overline{A}/\mathrm{im}(\overline{A}')$. Since $\gamma_p([\tilde{x}^p])$ is divisible by $p^2$, this image is exactly that of $\widetilde{\eta_p(x)}\in J$.

Finally, note that (1), (2) and (3) together imply that, when $\eta_p^m = 0$, $u_0^m$ and $u_1^m$ are both the zero endomorphism. Therefore, $u^m$ factors through a map $H^1(\Fil^{\mathrm{conj}}_1\overline{K})[-1]\to H^0(\Fil^{\mathrm{conj}}_1\overline{K})$ and so its square will be nullhomotopic.
\end{proof}

\begin{proof}
[Proof of Proposition~\ref{prop:dot_varphi_1_nilpotent}]
Suppose first that $R'$ and $R$ are quasisyntomic $\Field_p$-algebras, and that the divided power structure has nilpotence degree $m$. Consider the endomorphism $u$ from~\eqref{eqn:factoring_through_u}. Then, using quasisyntomic descent and Lemma~\ref{lem:qrsp_case_u}, we find that $u^{2m} = 0$. Since $\dot{\varphi}_1^{2m+1}$ factors through $u^{2m}$ mod $p$, this shows that we have $\dot{\varphi}_1^{2m+1} = 0$ mod $p$.

The case where $R'$ is a general $\Field_p$-algebra can be deduced from this and the fact that the $\infty$-category of divided power extensions $(R'\twoheadrightarrow R,\eta)$ of $\Field_p$-algebras of nilpotence degree $m$ is generated under sifted colimits by extensions where $R'$ and $R$ are quasisyntomic. This boils down to the observation that the universal quotient of the divided power algebra $\Field_p\langle X\rangle$ in one variable in which the divided power has nilpotence degree $m$ is isomorphic to $\Field_p[X_0,\ldots,X_{m-1}]/(X_0^p,\ldots,X_{m-1}^p)$.

For the general case, using mod-$p$ Tot descent for prismatic cohomology~\cite[Proposition 5.5.24]{bhatt2022absolute}, one now sees that, even when $R'\in \mathrm{CRing}^{p\text{-comp}}$ is arbitrary, the divided power structure on $R'\twoheadrightarrow R$ being nilpotent of nilpotence degree $m$ implies that $\dot{\varphi}_1^{2m+1}$ acts nullhomotopically on $K\{1\}/{}^{\mathbb{L}}(p,I)$.
\end{proof}

\section{A result of Bragg-Olsson}
\label{sec:technical_repbility}

The goal of this section is to state and prove Theorem~\ref{thm:repbility_height_one}, which is due to Bragg-Olsson in the case of perfect complexes, and is an important ingredient in the general representability theorem that will appear in the next section.

\subsection{Formulation of the result}
\label{subsec:height_1}

\subsubsection{}
For $R\in \mathrm{CRing}_{\Field_p/}$, set
\[
Z^1_{\Prism}(R) = \Fil^{\mathrm{conj}}_1\overline{\Prism}_R\times_{\overline{\Prism}_R}\Fil^1_{\mathrm{Hdg}}\overline{\Prism}_R\simeq\hker(\Fil_1^{\mathrm{conj}}(R/^{\mathbb{L}}p)\to R)
\]
Then we have two maps
\[
q_1,q_2:Z^1_{\Prism}(R) \to \gr_1^{\mathrm{conj}}\overline{\Prism}_R\simeq \mathbb{L}_{R/\Field_p}[-1],
\]
where $q_1$ arises from the natural map $\Fil_1^{\mathrm{conj}}\overline{\Prism}_R\to \gr_1^{\mathrm{conj}}\overline{\Prism}_R$, and $q_2$ arises from the natural map $ \Fil_{\mathrm{Hdg}}^1\overline{\Prism}_R\to \gr^1_{\mathrm{Hdg}}\overline{\Prism}_R$ composed with the Cartier isomorphism.

$Z^1_{\Prism}(R)$ inherits the structure of an object in $\Mod{R}$ from $\Fil_1^{\mathrm{conj}}\overline{\Prism}_R$. For this structure, $q_1$ is $R$-linear, while $q_2$ is $\varphi$-semilinear, and so corresponds to an $R$-linear map $1\otimes q_2:\varphi^*Z^1_{\Prism}(R)\to \gr_1^{\mathrm{conj}}\overline{\Prism}_R$. 

\subsubsection{}
Let $\Mod[\varphi,*]{R}$ be the $\infty$-category of pairs $(N,\psi)$, where $N\in \Mod{R}$, and $\psi:N\to \varphi^* N$ is a map in $\Mod{R}$. For each such pair, we obtain two maps
\[
q_{1,\psi},q_{2,\psi}:\mathrm{RHom}_R(N,Z^1_{\Prism}(R))\to \mathrm{RHom}_R(N,\gr_1^{\mathrm{conj}}\overline{\Prism}_R).
\]
The map $q_{1,\psi}$ is simply postcomposition with $q_1$, and is independent of $\psi$, while the second is the composition
\[
\mathrm{RHom}_R(N,Z^1_{\Prism}(R))\xrightarrow{\varphi^*} \mathrm{RHom}_R(\varphi^*N,\varphi^* Z^1_{\Prism}(R))\xrightarrow{(1\otimes q_2)\circ(\_)\circ \psi}\mathrm{RHom}_R(N,\gr_1^{\mathrm{conj}}\overline{\Prism}_R).
\]

\begin{remark}
\label{rem:zprism_expl}
Suppose that $R$ is a smooth $\Field_p$-algebra. Then we have 
\[
\Fil_i^{\mathrm{conj}}\overline{\Prism}_R\simeq \tau^{\le i}\Omega^\bullet_{R/\Field_p},
\]
and therefore $Z^1_{\Prism}(R) \simeq \Omega^{1,\mathrm{cl}}_{R/\Field_p}[-1]$ is the shifted module of closed differential forms, while $\gr^{\mathrm{conj}}_1\overline{\Prism}_R\simeq H^1(\Omega^\bullet_{R/\Field_p})[-1]$.

The map $q_1$ now is the natural surjection, while $q_2$ is the composition
\[
\Omega^{1,\mathrm{cl}}_{R/\Field_p}[-1]\to \Omega^1_{R/\Field_p}[-1]\xrightarrow{\simeq}H^1(\Omega^\bullet_{R/\Field_p})[-1],
\]
where the first map is the inclusion, and the second is the Cartier isomorphism.

When $N$ is a vector bundle over $R$, there is an associated finite flat commutative group scheme $G(N,\psi)$ of height one (see~\S\ref{subsec:height_one_group_schemes} below), and the complex $R\Gamma_\varphi(R,(N,\psi))$ is (up to shift) precisely that of Artin-Milne appearing in~\cite[Prop. 2.4]{artin_milne}, which computes the flat cohomology of $G(N,\psi)$.
\end{remark}

\subsubsection{}
We can now associate with each $(N,\psi)\in \Mod[\varphi,*]{R}$ the space
\[
R\Gamma_\varphi\left(R,(N,\psi)\right) = \hker\left(\mathrm{RHom}_R(N,Z^1_{\Prism}(R))\xrightarrow{q_{1,\psi}-q_{2,\psi}}\mathrm{RHom}_R(N,\gr_1^{\mathrm{conj}}\overline{\Prism}_R)\right).
\]
This yields a prestack
\begin{align*}
\mathsf{S}_{(N,\psi)}:\mathrm{CRing}_{R/}&\to \Mod[\mathrm{cn}]{\Field_p}\\
C&\mapsto \tau^{\leq 0}R\Gamma_\varphi\left(C,(C\otimes_RN, \mathrm{id}\otimes \psi)\right)
\end{align*}

\begin{theorem}
\label{thm:repbility_height_one}
Suppose that $N$ is almost perfect and $(-m)$-connective; then $\mathsf{S}_{(N,\psi)}$ is represented by an almost finitely presented derived Artin $m$-stack over $R$. If $N$ is perfect, then $\mathsf{S}_{(N,\psi)}$ is finitely presented. If, further, $N$ has Tor amplitude in $[1,\infty)$, then $\mathsf{S}_{(N,\psi)}$ is smooth over $R$.
\end{theorem}

\begin{remark}
When $R$ is classical and $N$ is finite locally free, we will see in Corollary~\ref{cor:fppf_cohomology} below that $\mathsf{S}_{(N,\psi)}$ is in fact a finite flat group scheme over $R$. The stacks associated with shifts $(N,\psi)[-r]$ can then be described in terms of this group scheme. When $R$ is Noetherian and $N$ is a finitely generated $R$-module, the classical truncation of $\mathsf{S}_{(N,\psi)}$ can be described non-canonically as follows: Choose a presentation $(F_1,\psi_1)\to (F_2,\psi_2)\to (N,\psi)\to 0$ with $F_1,F_2$ finite locally free over $R$, and take the kernel of the resulting map of finite flat group schemes $\mathsf{S}_{(F_2,\psi_2)}\to \mathsf{S}_{(F_1,\psi_2)}$.
\end{remark}

\begin{remark}
\label{rem:cartesian_SNpsi}
Suppose that we have a cofiber sequence
\[
(N',\psi')\to (N,\psi)\to (N'',\psi'')
\]
in $\Mod[\varphi,*]{R}$. Then we obtain a Cartesian diagram of prestacks
\[
\begin{diagram}
\mathsf{S}_{(N,\psi)}&\rTo&\mathsf{S}_{(N',\psi')}\\
\dTo&&\dTo^0\\
\mathsf{S}_{(N',\psi')}&\rTo&\mathsf{S}_{(N'',\psi'')[-1]}.
\end{diagram}
\]
\end{remark}

\subsection{Flat cohomology of height one group schemes}
\label{subsec:height_one_group_schemes}

We now present the proof of Bragg-Olsson with appropriate modifications to accommodate almost perfect complexes.

\begin{construction}
\label{const:height_one_group_schemes}
Suppose that $R$ is discrete and that $N$ is locally free of finite rank over $R$. Then the pair $(N,\psi)$ gives rise to a finite flat commutative $p$-group scheme $G(N,\psi)$ over $R$ of height $1$, meaning that its Frobenius endomorphism is trivial; see~\cite[Exp. VIIA]{MR2867621},~\cite[\S 2]{MR1235021}. Explicitly, the Cartier dual $G(N,\psi)^*$ is given as the kernel of the map
\[
\mathbf{V}(N^\vee) \xrightarrow{\mathbf{V}(\psi^\vee) - F}\mathbf{V}(\varphi^*N^\vee)\simeq R\otimes_{\varphi,R}\mathbf{V}(N^\vee),
\]
where $F$ is the relative Frobenius map for $\mathbf{V}(\varphi^*N^\vee)$ with respect to $R$. 
\end{construction}

We now have:
\begin{theorem}
\label{thm:fppf_cohomology}
For any $C\in \mathrm{CRing}_{R/}$, we have a canonical isomorphism
\[
R\Gamma_\varphi\left(C,(C\otimes_RN,\mathrm{id}\otimes\psi)\right)\xrightarrow{\simeq}R\Gamma_{\mathrm{fppf}}(\Spec C,G(N,\psi)).
\]
\end{theorem}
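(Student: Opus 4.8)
The statement identifies, for the pair $(N,\psi)$ with $N$ locally free of finite rank over a discrete $R$, the complex $R\Gamma_\varphi(C,(C\otimes_R N,\mathrm{id}\otimes\psi))$ with the fppf cohomology $R\Gamma_{\mathrm{fppf}}(\Spec C,G(N,\psi))$, functorially in $C\in\mathrm{CRing}_{R/}$. The strategy is to produce, on the fppf site of $\Spec C$, a two-term resolution of the sheaf $G(N,\psi)$ by (prismatic-cohomology built) sheaves whose global sections recover the two-term description $\hker(N\otimes_R Z^1_{\Prism}(-)\xrightarrow{q_{1,\psi}-q_{2,\psi}}N\otimes_R\gr^{\mathrm{conj}}_1\overline{\Prism}_{(-)})$ defining $R\Gamma_\varphi$. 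Concretely, I would first recall the Artin--Milne resolution: over an $\Field_p$-algebra the height-one group scheme $G(N,\psi)$ (equivalently, the kernel of $\psi^\vee-F$ on $\mathbf{V}(N^\vee)$) sits in a short exact sequence of fppf sheaves
\[
0\to G(N,\psi)\to \mathcal{Z}^1(N,\psi)\xrightarrow{q_{1,\psi}-q_{2,\psi}}\mathcal{H}^1(N,\psi)\to 0,
\]
where $\mathcal{Z}^1(N,\psi)$ and $\mathcal{H}^1(N,\psi)$ are the fppf sheafifications of $C'\mapsto N\otimes_R Z^1_{\Prism}(C')$ and $C'\mapsto N\otimes_R\gr^{\mathrm{conj}}_1\overline{\Prism}_{C'}$ respectively --- this is exactly Remark~\ref{rem:zprism_expl}'s identification with the complex of \cite[Prop. 2.4]{artin_milne} in the smooth case, and one extends it to general $C$ by quasisyntomic descent, using that $Z^1_{\Prism}$ and $\gr^{\mathrm{conj}}_1\overline{\Prism}$ are defined via left Kan extension from polynomial (hence smooth) $\Field_p$-algebras.

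The main work is then to compute the fppf cohomology of the two resolving sheaves. The key point is that both $C'\mapsto N\otimes_R Z^1_{\Prism}(C')$ and $C'\mapsto N\otimes_R\gr^{\mathrm{conj}}_1\overline{\Prism}_{C'}$ are (shifted) quasi-coherent in a suitable sense: $\gr^{\mathrm{conj}}_1\overline{\Prism}_{C'}\simeq \mathbb{L}_{C'/\Field_p}[-1]$, whose fppf (equivalently, quasisyntomic) cohomology over $\Spec C$ is just $R\Gamma(\Spec C,-)$ of the same object by flat descent for the cotangent complex together with fppf-local vanishing of higher cohomology of quasi-coherent sheaves; and $Z^1_{\Prism}$ fits in the fiber sequence $Z^1_{\Prism}\to \Fil^{\mathrm{conj}}_1\overline{\Prism}\to \varphi_*(\text{structure sheaf})$, each term of which is likewise quasi-coherent after the appropriate twist, so again its fppf cohomology is computed by global sections. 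Granting this, the fppf hypercohomology of the two-term complex $[\mathcal{Z}^1(N,\psi)\to\mathcal{H}^1(N,\psi)]$ over $\Spec C$ is computed termwise, yielding precisely $\hker(C\otimes_R N\otimes_R Z^1_{\Prism}(C)\xrightarrow{q_{1,\psi}-q_{2,\psi}} C\otimes_R N\otimes_R\gr^{\mathrm{conj}}_1\overline{\Prism}_C)=R\Gamma_\varphi(C,(C\otimes_R N,\mathrm{id}\otimes\psi))$. Combining with the resolution of $G(N,\psi)$ gives the asserted equivalence, and naturality in $C$ is automatic since every construction is functorial.

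The step I expect to be the main obstacle is establishing the exact sequence $0\to G(N,\psi)\to \mathcal{Z}^1(N,\psi)\to \mathcal{H}^1(N,\psi)\to 0$ of fppf sheaves \emph{with $\mathcal{Z}^1,\mathcal{H}^1$ defined via prismatic cohomology} in full generality, rather than just over smooth $\Field_p$-algebras. In the smooth case this is classical (Artin--Milne): $Z^1_{\Prism}$ is the sheaf of closed differential forms and the sequence is $0\to G(N,\psi)\to N\otimes\Omega^{1,\mathrm{cl}}\xrightarrow{q_1-\text{Cartier}\circ(\cdot)}N\otimes \mathcal{H}^1(\Omega^\bullet)\to 0$. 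For general $C$ one must check that the kernel sheaf of $q_{1,\psi}-q_{2,\psi}$ is still represented by the finite flat height-one group scheme $G(C\otimes_R N,\mathrm{id}\otimes\psi)$; the cleanest route is to verify this fppf-locally, reducing via a quasisyntomic (even smooth) cover of $\Spec C$ by spectra of polynomial $\Field_p$-algebras, where it is the Artin--Milne statement, and then invoking quasisyntomic descent for both sides (for the left side, using that $G(N,\psi)$ is finite flat hence an fppf sheaf satisfying descent; for the right side, using the descent properties of conjugate-filtered prismatic cohomology recorded in \S\ref{subsec:cohomology}). One also needs the compatibility of the divided-Frobenius map $\varphi_1$ on $\Fil^1$ with the Cartier operator, which is built into the identification in Remark~\ref{rem:zprism_expl}; spelling this out carefully --- in particular that the $\varphi$-semilinear map $q_2$ really is the Cartier isomorphism in the smooth case --- is the technical heart of the argument.
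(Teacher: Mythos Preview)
Your approach diverges substantially from the paper's, which is essentially a citation: the paper constructs the arrow by left Kan extension from smooth $R$-algebras (where it is the classical Artin--Milne isomorphism of Remark~\ref{rem:zprism_expl}) and then invokes \cite[Theorem 4.8]{bragg2021representability} (attributed there to Bhatt--Lurie) for the statement that the map is an isomorphism for arbitrary $C$. You instead attempt to re-prove the Bragg--Olsson result directly via an Artin--Milne-type resolution.

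There is a genuine gap in your plan, precisely at the step you flag as the main obstacle. You propose to establish the resolution for general $C$ by ``reducing via a quasisyntomic (even smooth) cover of $\Spec C$ by spectra of polynomial $\Field_p$-algebras.'' But a general $C\in\mathrm{CRing}_{R/}$ does not admit a quasisyntomic, smooth, or even fppf cover by polynomial $\Field_p$-algebras; what it admits (see Corollary~\ref{cor:semiperf_qsynt}) is a quasisyntomic cover by \emph{semiperfectoid} rings, which are as far from smooth as possible. So there is no descent mechanism that lets you pull the classical Artin--Milne statement back to $C$ along a cover. Relatedly, your acyclicity claim for $\mathcal{Z}^1$ and $\mathcal{H}^1$ rests on ``flat descent for the cotangent complex'' and ``quasi-coherence,'' but $C'\mapsto \mathbb{L}_{C'/\Field_p}$ is not a quasi-coherent sheaf on $\Spec C$ (the transitivity triangle shows it is not obtained by base change from a fixed $C$-module), so the usual vanishing of higher fppf cohomology for quasi-coherent sheaves does not apply.

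The actual content of the theorem---and the reason the paper outsources it---is that the functor $C\mapsto R\Gamma_{\mathrm{fppf}}(\Spec C,G(N,\psi))$ is itself left Kan extended from smooth (equivalently, polynomial) $\Field_p$-algebras. Once one knows this, the comparison is immediate since $R\Gamma_\varphi$ is left Kan extended by construction and the two agree on smooth inputs by Artin--Milne. Proving this left Kan extension property for fppf cohomology of height-one group schemes is exactly the Bragg--Olsson/Bhatt--Lurie input, and your resolution-based outline does not supply an independent route to it.
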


\begin{proof}
By considering the universal situation for pairs $(N,\psi)$, one reduces to the case where $R$ is a smooth $\Field_p$-algebra. The arrow in question is obtained via left Kan extension from an isomorphism of functors for smooth $R$-algebras $C$, where it is a classical theorem of Artin-Milne~\cite[Proposition 2.4]{artin_milne}; see Remark~\ref{rem:zprism_expl}. 

That this map is an isomorphism is a theorem of Bragg-Olsson~\cite[Theorem 4.8]{bragg2021representability}, who attribute the proof to Bhatt-Lurie.
\end{proof}

\begin{corollary}
\label{cor:fppf_cohomology}
\begin{enumerate}
   \item The stack $\mathsf{S}_{(N,\psi)}$ is canonically isomorphic to the group scheme $G(N,\psi)$.
   \item If $r\ge 0$, the stack $\mathsf{S}_{(N,\psi)[-r]}$ is isomorphic to the iterated classifying stack $B^rG(N,\psi)$.
   \item If $r\leq 0$, the $\mathsf{S}_{(N,\psi)[-r]}$ is derived affine over $R$ and is of the form $\Spec B_{-r}$, where the map $R\to B_{-r}$ has $(-r)$-connective cofiber.
\end{enumerate}

\end{corollary}
\begin{proof}
The first two assertions are clear from Theorem~\ref{thm:fppf_cohomology}. For the last assertion, note that, for each $r\leq 1$, applying Remark~\ref{rem:cartesian_SNpsi} to the cofiber sequence
\[
(N,\psi)[-r]\xrightarrow{\mathrm{id}} (N,\psi)[-r]\to (0,\mathrm{id})
\]
and using the obvious isomorphism $\mathsf{S}((0,\mathrm{id}))\simeq \Spec R$ gives us a Cartesian square of prestacks
\[
\begin{diagram}
\mathsf{S}_{(N,\psi)[-r]}&\rTo&\Spec R\\
\dTo&&\dTo_0\\
\Spec R&\rTo_0&\mathsf{S}_{(N,\psi)[-r+1]}.
\end{diagram}
\]
Therefore, induction on $r$ shows that $\mathsf{S}_{(N,\psi)[-r]}$ is derived affine and is the spectrum of $B_{-r}\defn R\otimes_{B_{-r+1}}R$. Furthermore, we can assume that the augmentation map $B_{-r+1}\to R$ has $(-r+1)$-connective fiber $I_{-r+1}$. Consideration of the fiber sequence
\[
I_{-r+1}\otimes_{B_{-r+1}}R\to R \to  R\otimes_{B_{-r+1}}R = B_{-r}
\]
now shows that $R\to B_{-r}$ has $(-r)$-connective cofiber, as desired.
\end{proof}

\begin{proof}
[Proof of Theorem~\ref{thm:repbility_height_one}]
Suppose that $N$ is $(-m)$-connective. By~\cite[Proposition 7.2.4.11(5)]{Lurie2017-oh} and its proof, we can assume that $(N,\psi)$ is given as a filtered colimit 
\[
(N,\psi)\simeq \colim_{j\in\Int_{\ge 1}} (N_j,\psi_j)
\]
where, for each $j\ge 1$, $Q_j = \hcoker(N_{j-1}\to N_j)[m-j]$ is finite locally free over $R$. Here, we set $N_0 \simeq 0$. If $N$ is perfect, we can assume that $(N_j,\psi_j)\simeq (N,\psi)$ for $j$ sufficiently large. We now have
\[
\mathsf{S}_{(N,\psi)}\simeq \varprojlim_{j}\mathsf{S}_{(N_j,\psi_j)}.
\]

By Remark~\ref{rem:cartesian_SNpsi}, for each $j\ge 1$, we have a Cartesian diagram
\[
\begin{diagram}
\mathsf{S}_{(N_j,\psi_j)}&\rTo&\mathsf{S}_{(N_{j-1},\psi_{j-1})}\\
\dTo&&\dTo^0\\
\mathsf{S}_{(N_{j-1},\psi_{j-1})}&\rTo&\mathsf{S}_{(Q_j,\psi'_j)[j-m-1]},
\end{diagram}
\]
where $\psi'_j:Q_j\to \varphi^*Q_j$ is the map induced from $\psi_j[m-j]$ and $\psi_{j-1}[m-j]$. Combined with Corollary~\ref{cor:fppf_cohomology}, this shows:
\begin{itemize}
   \item $\mathsf{S}_{(N_j,\psi_j)}$ is a finitely presented derived Artin $m$-stack for every $j\ge 1$;
   \item For $j\ge m$ and every map $\Spec C\to \mathsf{S}_{(N_{j-1},\psi_{j-1})}$ over $\Spec R$, the map 
    \[
   \mathsf{S}_{(N_j,\psi_j)}\times_{\mathsf{S}_{(N_{j-1},\psi_{j-1})}}\Spec C\to \Spec C
    \]
   is represented by a finitely presented derived affine scheme $\Spec B_j$ such that $C\to B_j$ has $(j-m)$-connective cofiber.
\end{itemize}
By the next lemma, this implies that $\mathsf{S}_{(N,\psi)}\to \mathsf{S}_{(N_{m-1},\psi_{m-1})}$ is relatively represented by an almost finitely presented derived scheme, and is hence itself an almost finitely presented derived Artin $m$-stack over $R$.

\begin{lemma}
Suppose that $B\in \mathrm{CRing}_{C/}$ is presented as a filtered colimit
\[
B\simeq \colim_{j\ge m}B_j
\]
in $\mathrm{CRing}_{C/}$ such that the following conditions hold for all $j\ge m$:
\begin{enumerate}
   \item $B_j$ is finitely presented over $C$;
   \item $\hcoker(B_j\to B_{j+1})$ is $(j-m)$-connective.
\end{enumerate}
Then $B$ is almost finitely presented over $C$.
\end{lemma}
\begin{proof}
For each $j\ge m$, the map $\tau_{\leq (j-m)}B_j \to \tau_{\leq (j-m)}B$ is an isomorphism, and the source is clearly finitely presented in the full subcategory of $(j-m)$-truncated objects in $\mathrm{CRing}_{C/}$.
\end{proof}

To finish, it remains to show that $\mathsf{S}_{(N,\psi)}$ is smooth if $N$ has Tor amplitude in $[1,\infty)$: But the above proof tells us that it is enough to know that $\mathsf{S}_{(N,\psi)[-r]}$ is smooth when $N$ is locally free and $r\ge 1$, and this is also clear from Corollary~\ref{cor:fppf_cohomology}.
\end{proof}

\section{Representability theorems for $1$-bounded stacks}
\label{sec:abstract}

In this section, we prove a representability theorem under somewhat general hypotheses. We also record some applications to stacks obtained from $F$-gauges.

\subsection{The map from the $F$-zip stack to the mod-$p$ syntomification}
\label{subsec:fzip_to_syn}

\subsubsection{}
We have a canonical map $\eta:R^{\Fzip}\to R^{\mathrm{syn}}\otimes\Field_p$ obtained from Example~\ref{ex:witt_frame_to_syntomification}. We can give a much more explicit description of $\eta$. Consider first the map
\begin{align}\label{eqn:u=0_piece_map}
\Aff^1/\Gm\times \Spec R\xrightarrow{x^{\mathcal{N}}_{\dR}}R^{\mathcal{N}}\otimes\Field_p\to R^{\mathrm{syn}}\otimes\Field_p.
\end{align}

Next, we will construct another map
\begin{align}\label{eqn:t=0_piece_map}
\Aff^1_+/\Gm\times \Spec R\xrightarrow{x^{\mathcal{N}}_{\mathrm{HT}}} R^{\mathcal{N}}\otimes\Field_p\to R^{\mathrm{syn}}\otimes\Field_p
\end{align}
whose restriction to the open substack $\Gm/\Gm\times \Spec R$ agrees with that of~\eqref{eqn:u=0_piece_map}, and thus yields a map $\tilde{\eta}:Y\times \Spec R\to R^{\mathrm{syn}}\otimes\Field_p$.

This is constructed as follows: Given a point $(L,u:C\to L)$ of $\Aff^1_+/\Gm\times \Spec R$ over $C\in \mathrm{CRing}_{R/}$, we have the Cartier-Witt divisor $W\xrightarrow{p}W$ along with the map of trivial generalized Cartier divisors $(\Ga^{\dR}\xrightarrow{0}\Ga^{\dR})\to (\Ga^{\dR}\otimes_CL\xrightarrow{0}\Ga^{\dR})$ induced by $u$. In the notation of Remark~\ref{rem:fil_cw_pushout}, the associated filtered Cartier-Witt divisor is simply $M(L,p,0,u)\to W$; the quotient $W/M(L,p,0,u)$ is also a quotient of $\Ga$, and this yields the structure map from $R$. The restriction to the open point corresponds to the natural structure map $R\to C\to W(C)/{}^{\mathbb{L}}p$, as desired.

\subsubsection{}
Consider now the two maps
\begin{align*}
B\Gm\times \Spec R\to \Aff^1/\Gm\times \Spec R\xrightarrow{x^{\mathcal{N}}_{\dR}}R^\mathcal{N}\otimes\Field_p;\\
B\Gm\times \Spec R\to \Aff^1_+/\Gm\times \Spec R\xrightarrow{x^{\mathcal{N}}_{\mathrm{HT}}}R^\mathcal{N}\otimes\Field_p.
\end{align*}
The composition with the map to $\Field_p^{\mathcal{N}}$ is the same for both of these: It attaches to every line bundle $L$ the filtered Cartier-Witt divisor $M(L,p,0,0)\to W$; however the structure map $R\to (W/M(L,p,0,0))(C)$ associated with the second composition differs from that associated with the first via pre-composition with $\varphi:R\to R$. This shows that the map $\tilde{\eta}$ descends to the desired map $\eta:R^{\Fzip}\to R^{\mathrm{syn}}\otimes\Field_p$.

\subsection{$F$-gauges and $F$-zips}
\label{subsec:fgauges_fzips}

\subsubsection{}
Suppose that $R\in \mathrm{CRing}_{(\Int/p^m\Int)/}$. An \defnword{$F$-gauge over $R$ of level $n$} is a quasi-coherent sheaf $\mathcal{M}$ over $R^{\mathrm{syn}}\otimes\Int/p^n\Int$. Via the map $x^{\mathcal{N}}_{\dR}$ of~\S\ref{subsec:can_sections}, we obtain a symmetric monoidal functor of symmetric monoidal $\infty$-categories
\[
\mathrm{QCoh}(R^{\mathrm{syn}}\otimes\Int/p^n\Int)\to \mathrm{QCoh}(R^{\mathcal{N}}\otimes\Int/p^n\Int)\xrightarrow{(x^{\mathcal{N}}_{\dR})^*}\mathrm{QCoh}(\Aff^1/\Gm\times \Spec(R/{}^{\mathbb{L}}p^n)),
\]
which can be viewed as a functor $\mathcal{M}\to \Fil^\bullet_{\mathrm{Hdg}}M_n$ from $F$-gauges of level $n$ to filtered modules over $R/{}^{\mathbb{L}}p^n$. 

The \defnword{Hodge-Tate weights} of an $F$-gauge $\mathcal{M}$ are the integers $i$ such that $\gr^{-i}_{\mathrm{Hdg}}M_n\neq 0$. We will say that $\mathcal{M}$ is \defnword{$1$-bounded} if its Hodge-Tate weights are bounded above by $1$.

As an immediate consequence of Theorem~\ref{thm:semiperf_crys}, we find:
\begin{proposition}
\label{prop:semiperfect_f-gauges}
Suppose that $R$ is semiperfectoid. Then there is a canonical symmetric monoidal equivalence of stable $\infty$-categories
\[
\mathrm{QCoh}(R^{\mathrm{syn}}\otimes\Int/p^n\Int)\xrightarrow{\simeq}\underline{\Prism}_R\mathsf{-gauge}_n,
\] 
where the right hand side is the $\infty$-category of $\underline{\Prism}_R$-gauges of level $n$ from~\S\ref{subsec:A_gauges}. This is compatible with the definition of Hodge-Tate weights on both sides.\footnote{See Definition~\ref{defn:A-gauge_HT_weights} for the one on the right hand side.}
\end{proposition}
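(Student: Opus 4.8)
The plan is to reduce the statement to Theorem~\ref{thm:semiperf_crys} together with the definition of the syntomification as a coequalizer. First I would invoke Lemma~\ref{lem:nygaard_filtered_frame}, which furnishes the prismatic frame $\underline{\Prism}_R$ with a canonical syntomic structure when $R$ is semiperfectoid. By Theorem~\ref{thm:semiperf_crys} the map $\iota_{\underline{\Prism}_R}:\Rees(\Fil^\bullet_{\mathcal{N}}\Prism_R)\to R^{\mathcal{N}}$ is an equivalence of $p$-adic formal stacks. Pulling back and tensoring with $\Int/p^n\Int$, this gives a symmetric monoidal equivalence
\[
\mathrm{QCoh}(R^{\mathcal{N}}\otimes\Int/p^n\Int)\xrightarrow{\simeq}\mathrm{QCoh}(\Rees(\Fil^\bullet_{\mathcal{N}}\Prism_R)\otimes\Int/p^n\Int),
\]
i.e.\ an identification of $F$-gauges of level $n$ over the Nygaard filtered prismatization with level-$n$ filtered modules over the Rees stack of $\Fil^\bullet_{\mathcal{N}}\Prism_R$.

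Next I would promote this to an equivalence on syntomifications. By definition $R^{\mathrm{syn}}\otimes\Int/p^n\Int$ is the coequalizer of the two open immersions $j_{\dR},j_{\mathrm{HT}}:R^{\Prism}\otimes\Int/p^n\Int\rightrightarrows R^{\mathcal{N}}\otimes\Int/p^n\Int$, so that
\[
\mathrm{QCoh}(R^{\mathrm{syn}}\otimes\Int/p^n\Int)\simeq \mathrm{eq}\Bigl(\mathrm{QCoh}(R^{\mathcal{N}}\otimes\Int/p^n\Int)\rightrightarrows\mathrm{QCoh}(R^{\Prism}\otimes\Int/p^n\Int)\Bigr).
\]
On the other side, an $\underline{\Prism}_R$-gauge of level $n$ is by definition (see~\S\ref{subsec:A_gauges}) a quasicoherent sheaf on $\Rees(\Fil^\bullet_{\mathcal{N}}\Prism_R)\otimes\Int/p^n\Int$ together with an isomorphism $\sigma^*\mathcal{M}\xrightarrow{\simeq}\tau^*\mathcal{M}$ over $\Spf\Prism_R/{}^{\mathbb{L}}p^n$; equivalently it is the equalizer of $\sigma^*$ and $\tau^*$ on quasicoherent sheaves. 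The Cartesian squares of Proposition~\ref{prop:frames_to_nygaard} (applied to $\underline{A}=\underline{\Prism}_R$) identify $\tau:\Spf\Prism_R\to\Rees(\Fil^\bullet_{\mathcal{N}}\Prism_R)$ with $j_{\dR}:R^{\Prism}\to R^{\mathcal{N}}$ and $\sigma$ with $j_{\mathrm{HT}}$, using the identification $\Spf\Prism_R\simeq R^{\Prism}$ from Theorem~\ref{thm:absolute_prismatic_cohom}(3). Hence the two parallel pairs of pullback functors $(j_{\dR}^*,j_{\mathrm{HT}}^*)$ and $(\tau^*,\sigma^*)$ agree under $\iota_{\underline{\Prism}_R}^*$, and passing to equalizers gives the desired symmetric monoidal equivalence. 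Symmetric monoidality is inherited from that of all the pullback functors involved, together with the fact that the unit objects match (both are the respective structure sheaves with their canonical descent data).

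The one point requiring care—and the main obstacle—is the compatibility of the frame-theoretic maps $\sigma,\tau$ with the de Rham and Hodge-Tate embeddings \emph{on the level of the descent data}, not merely the underlying stacks: one needs that the Frobenius-semilinearity built into $\Phi:\Fil^\bullet_{\mathcal{N}}\Prism_R\to\Fil^\bullet_{I_R}\Prism_R$ matches the Frobenius twist implicit in $j_{\mathrm{HT}}$ (recall $\pi\circ j_{\mathrm{HT}}=\varphi$). This is exactly what is encoded in the second Cartesian square of Proposition~\ref{prop:frames_to_nygaard}, so the verification amounts to unwinding that construction; the syntomic structure on $\underline{\Prism}_R$ supplied by Lemma~\ref{lem:nygaard_filtered_frame} is precisely the extra datum needed for Proposition~\ref{prop:frames_to_nygaard} to apply. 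Once this compatibility is in hand, taking equalizers is formal. I would also remark that the equivalence is independent of the auxiliary perfectoid ring $R_0$ chosen, since both sides are defined intrinsically in terms of $R$ (the right side via $\Prism_R$, which by Theorem~\ref{thm:absolute_prismatic_cohom}(3) and~\cite{holeman2023derived} is canonical).
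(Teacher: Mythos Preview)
Your proof is correct and follows exactly the approach the paper intends: the paper states this proposition as an ``immediate consequence of Theorem~\ref{thm:semiperf_crys}'' without further argument, and your elaboration---matching the coequalizer presentation of $R^{\mathrm{syn}}$ with the equalizer definition of $\underline{\Prism}_R$-gauges via the Cartesian squares of Proposition~\ref{prop:frames_to_nygaard}---is precisely what that phrase unpacks to.
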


From this we obtain:
\begin{proposition}
\label{prop:f_gauges_descent}
Suppose that $R\to R_\infty$ is as in Corollary~\ref{cor:semiperf_qsynt}. Then we have an equivalence of symmetric monoidal stable $\infty$-categories compatible with Hodge-Tate weights
\[
\mathrm{QCoh}(R^{\mathrm{syn}}\otimes\Int/p^n\Int)\xrightarrow{\simeq}\mathrm{Tot}\left(\underline{\Prism}_{R_\infty^{\otimes(\bullet +1)}}\mathsf{-gauge}_n\right).
\]
\end{proposition}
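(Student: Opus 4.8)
The plan is to combine the equivalence for semiperfectoid inputs (Proposition~\ref{prop:semiperfect_f-gauges}) with quasisyntomic descent for the syntomification, as established in Section~\ref{sec:bld_stacks}. The first step is to recall from Corollary~\ref{cor:semiperf_qsynt} that the map $R\to R_\infty$ is a quasisyntomic cover with all self-tensor powers $R_\infty^{\otimes_R(\bullet+1)}$ semiperfectoid, and from Corollary~\ref{cor:quasisyntomic_descent_general} (applied at each level of the \v{C}ech nerve) that the induced map $R_\infty^{\mathcal{N}}\otimes\Int/p^n\Int\to R^{\mathcal{N}}\otimes\Int/p^n\Int$, and more generally the augmented cosimplicial diagram $(R_\infty^{\otimes_R(\bullet+1)})^{\mathcal{N}}\otimes\Int/p^n\Int$, is an fpqc cover. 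Since $R^{\mathrm{syn}}\otimes\Int/p^n\Int$ is built by gluing $R^{\mathcal{N}}$ along its de Rham and Hodge-Tate copies of $R^{\Prism}$, and both $R^{\Prism}$ and $R^{\mathcal{N}}$ satisfy flat descent along $R\to R_\infty$ (Corollary~\ref{cor:quasigeometric}, Lemma~\ref{lem:faithfully_flat_nygaard}), the syntomification itself satisfies fpqc descent along this cover: that is, $R^{\mathrm{syn}}\otimes\Int/p^n\Int$ is the colimit (in fpqc sheaves) of the \v{C}ech nerve of $R_\infty^{\mathrm{syn}}\otimes\Int/p^n\Int\to R^{\mathrm{syn}}\otimes\Int/p^n\Int$.

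The second step is to pass to quasi-coherent sheaves. Because each $R_\infty^{\otimes_R(m+1)}$ lies in $\mathrm{CRing}^{f,p\text{-comp}}$ and is semiperfectoid, each $(R_\infty^{\otimes_R(m+1)})^{\mathrm{syn}}\otimes\Int/p^n\Int$ is a quasi-geometric formal stack (Corollary~\ref{cor:quasigeometric}), so $\mathrm{QCoh}$ of it is well-behaved and, crucially, satisfies fpqc descent; combining this with the descent statement for the stacks from the previous paragraph gives
\[
\mathrm{QCoh}(R^{\mathrm{syn}}\otimes\Int/p^n\Int)\xrightarrow{\simeq}\mathrm{Tot}\left(\mathrm{QCoh}\big((R_\infty^{\otimes_R(\bullet+1)})^{\mathrm{syn}}\otimes\Int/p^n\Int\big)\right).
\]
Here one should be slightly careful to verify that flat descent for $\mathrm{QCoh}$ applies in the $p$-adic formal, non-classical setting; this is where one invokes the quasi-geometricity together with the $I_{R_\infty}$-complete faithful flatness of Lemma~\ref{lem:faithfully_flat_nygaard}, reducing to the statement that $\mathrm{QCoh}$ of a quasi-geometric (formal) stack satisfies descent along $I$-completely faithfully flat covers, which follows from the general machinery of~\cite{Lurie2018-kh}.

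The final step is purely formal: apply Proposition~\ref{prop:semiperfect_f-gauges} levelwise to identify
\[
\mathrm{QCoh}\big((R_\infty^{\otimes_R(m+1)})^{\mathrm{syn}}\otimes\Int/p^n\Int\big)\xrightarrow{\simeq}\underline{\Prism}_{R_\infty^{\otimes_R(m+1)}}\mathsf{-gauge}_n
\]
for each $m$, compatibly with the cosimplicial structure maps (which are induced by the functoriality of $R\mapsto \underline{\Prism}_R$ and of the identification in Theorem~\ref{thm:semiperf_crys}), and take totalizations. The symmetric monoidal structure is preserved at each stage because every functor in sight — base change of $F$-gauges, the equivalence of Proposition~\ref{prop:semiperfect_f-gauges}, and restriction along the cover — is symmetric monoidal, and totalization of a cosimplicial diagram of symmetric monoidal $\infty$-categories and symmetric monoidal functors is again symmetric monoidal. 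I expect the main obstacle to be the bookkeeping in the second step: making precise that the gluing presentation of $R^{\mathrm{syn}}$ interacts correctly with flat descent so that descent for $\mathrm{QCoh}(R^{\mathcal{N}})$ and $\mathrm{QCoh}(R^{\Prism})$ genuinely assembles into descent for $\mathrm{QCoh}(R^{\mathrm{syn}})$ — equivalently, that $\mathrm{QCoh}$ commutes with the relevant colimit of sheaves — rather than any computation, which is routine given the results already in hand.
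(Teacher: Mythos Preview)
Your proposal is correct and follows the same approach as the paper: the paper treats this proposition as an immediate consequence of Proposition~\ref{prop:semiperfect_f-gauges} (stated with the single phrase ``From this we obtain''), leaving the descent argument implicit, whereas you have spelled out the underlying quasisyntomic descent for $\mathrm{QCoh}$ of the syntomification via Corollaries~\ref{cor:semiperf_qsynt} and~\ref{cor:quasigeometric} and Lemma~\ref{lem:faithfully_flat_nygaard}.
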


\begin{construction}
\label{const:syn_cohom_f-gauge}
Associated with $\mathcal{M}$ are the $\Mod{\Int/p^n\Int}$-valued prestacks over $R$
\begin{align*}
R\Gamma^*_{\mathrm{syn}}(\mathcal{M}):\mathrm{CRing}_{R/}&\to \Mod{\Int/p^n\Int}\\
C&\mapsto \mathrm{RHom}_{\mathrm{QCoh}(C^{\mathrm{syn}}\otimes\Int/p^n\Int)}(\mathcal{M}\vert_{C^\mathrm{syn}},\Reg{C^{\mathrm{syn}}\otimes\Int/p^n\Int}).
\end{align*}
\begin{align*}
R\Gamma_{\mathrm{syn}}(\mathcal{M}):\mathrm{CRing}_{R/}&\to \Mod{\Int/p^n\Int}\\
C&\mapsto R\Gamma_{\mathrm{syn}}(\Spec C,\mathcal{M}\vert_{C^\mathrm{syn}}) \defn R\Gamma(C^{\mathrm{syn}},\mathcal{M}\vert_{C^\mathrm{syn}})
\end{align*}

We will set 
\[
\Gamma_{\mathrm{syn}}^*(\mathcal{M}) = \tau^{\leq 0}R\Gamma^*_{\mathrm{syn}}(\mathcal{M})\;;\; \Gamma_{\mathrm{syn}}(\mathcal{M}) = \tau^{\leq 0}R\Gamma_{\mathrm{syn}}(\mathcal{M}).
\]

Suppose now that $R$ is an $\Field_p$-algebra. Then pullback along $\eta:R^{\Fzip}\to R^{\mathrm{syn}}\otimes\Field_p$. gives a symmetric monoidal functor
\[
\eta^*:\mathrm{QCoh}(R^{\mathrm{syn}}\otimes\Field_p)\to \mathrm{QCoh}(R^{\Fzip}).
\]

Note that there are natural maps
\[
R\Gamma^*_{\mathrm{syn}}(\mathcal{M})\to R\Gamma^*_{\Fzip}(\eta^* \mathcal{M})\;;\;R\Gamma_{\mathrm{syn}}(\mathcal{M})\to R\Gamma_{\Fzip}(\eta^* \mathcal{M})
\]
for any $F$-gauge $\mathcal{M}$ over $R$ of level $1$, where the right hand side of each map is as in Construction~\ref{const:fzip_cohomology}.
\end{construction}

\begin{remark}
\label{rem:syn_cohom-perfect_f-gauges}
If $\mathcal{M}$ is perfect with dual $F$-gauge $\mathcal{M}^\vee$, then we have a canonical isomorphism
\[
R\Gamma^*_{\mathrm{syn}}(\mathcal{M})\xrightarrow{\simeq}R\Gamma_{\mathrm{syn}}(\mathcal{M}^\vee).
\]
\end{remark}

\subsection{Prestacks of sections}
\label{subsec:prestacks_of_sections}
We will now use the terminology and results of~\S\ref{subsec:1-bounded_fixed_points}.

\subsubsection{}
Suppose that $R\in \mathrm{CRing}_{(\Int/p^m\Int)/}$. Note that $R^{\mathrm{syn}}\otimes\Int/p^n\Int$ is a pointed graded stack equipped with a canonical map of graded stacks
\[
B\Gm\times \Spec(R/{}^{\mathbb{L}}p^n)\to R^{\mathrm{syn}}\otimes\Int/p^n\Int.
\]
arising via the map  
\[
x^{\mathcal{N}}_{\dR,R}\otimes\Int/p^n\Int:\Aff^1/\Gm\times  \Spec(R/{}^{\mathbb{L}}p^n)\to R^{\mathcal{N}}\otimes\Int/p^n\Int.
\]

Suppose that we have a $1$-bounded stack $\mathcal{X}= (\mathcal{X}^\preoneb,X^0)\to R^{\mathrm{syn}}\otimes\Int/p^n\Int$.  Define a prestack $\Gamma_{\mathrm{syn}}(\mathcal{X})$ that associates with each $C\in \mathrm{CRing}_{R/}$ the space
\[
\Gamma_{\mathrm{syn}}(\mathcal{X})(C) = \Map_{R^{\mathrm{syn}}\otimes\Int/p^n\Int}(C^{\mathrm{syn}}\otimes\Int/p^n\Int,\mathcal{X}).
\]
Unpacking definitions, one sees that we have
\[
\Gamma_{\mathrm{syn}}(\mathcal{X})(C) = \Map_{R^{\mathrm{syn}}\otimes\Int/p^n\Int}(C^{\mathrm{syn}}\otimes\Int/p^n\Int,\mathcal{X}^\preoneb)\times_{X^{\preoneb,0}(C/{}^{\mathbb{L}}p^n)}X^0(C/{}^{\mathbb{L}}p^n),
\]
where $X^{\preoneb,0}\to \Spec R/{}^{\mathbb{L}}p^n$ is the fixed point locus of $\mathcal{X}^\preoneb$.

\subsubsection{}
This has a slightly more explicit description. First, define prestacks $\Gamma_{\mathcal{N}}(\mathcal{X})$ and $\Gamma_{\Prism}(\mathcal{X})$ over $R$ by
\begin{align*}
\Gamma_{\mathcal{N}}(\mathcal{X})(C) &= \Map_{/R^{\mathrm{syn}}\otimes\Int/p^n\Int}(C^{\mathcal{N}}\otimes\Int/p^n\Int,\mathcal{X});\\
\Gamma_{\Prism}(\mathcal{X})(C) &= \Map_{/R^{\mathrm{syn}}\otimes\Int/p^n\Int}(C^{\Prism}\otimes\Int/p^n\Int,\mathcal{X}^\preoneb).
\end{align*}

Restriction along the de Rham and Hodge-Tate immersions yields maps
\[
j_{\dR}^*,j_{\mathrm{HT}}^*:\Gamma_{\mathcal{N}}(\mathcal{X})\to \Gamma_{\Prism}(\mathcal{X}),
\]
and we now have
\begin{align}\label{eqn:sections_first_desc}
\begin{diagram}
\Gamma_{\mathrm{syn}}(\mathcal{X}) &\rTo^{\simeq}& \mathrm{eq}\biggl(\Gamma_{\mathcal{N}}(\mathcal{X})&\pile{\rTo^{j_{\mathrm{dR}}^*}\\ \rTo_{j_{\mathrm{HT}}^*}}&\Gamma_{\Prism}(\mathcal{X}) \biggr).
\end{diagram}
\end{align}

\subsection{Some auxiliary stacks}
\label{subsec:aux_stacks}

Suppose that $\mathcal{X}= (\mathcal{X}^\preoneb,X^0)$ is a $1$-bounded stack over $R^{\mathrm{syn}}\otimes\Int/p^n\Int$. 

\subsubsection{}
Let $X^{(n)},X^{-,(n)},X^{-,0}$ be the prestacks over $R$ given by
\begin{align*}
X^{(n)}(C) &=\Map_{/R^{\mathrm{syn}}\otimes\Int/p^n\Int}(\Spec C/{}^{\mathbb{L}}p^n,\mathcal{X});\\
X^{-,(n)}(C) &=\Map_{/R^{\mathrm{syn}}\otimes\Int/p^n\Int}(\Aff^1/\Gm\times\Spec C/{}^{\mathbb{L}}p^n,\mathcal{X});\\
X^{0,(n)}(C) &=\Map_{/R^{\mathrm{syn}}\otimes\Int/p^n\Int}(B\Gm\times\Spec C/{}^{\mathbb{L}}p^n,\mathcal{X}).
\end{align*}
The last two stacks are simply the Weil restriction from $R/{}^{\mathbb{L}}p^n$ to $R$ of the attractor and fixed point locus of $(\mathcal{X}^\preoneb,X^0)$, respectively.

\subsubsection{}
If $R$ is in addition an $\Field_p$-algebra, then also define
\[
X^{+,(n)}(C) =\Map_{/R^{\mathrm{syn}}\otimes\Int/p^n\Int}(\Aff^1_+/\Gm\times\Spec C/{}^{\mathbb{L}}p^n,\mathcal{X}).
\]
This is the Weil restriction of the repeller associated with $\mathcal{X}$ and the map $\Aff^1_+/\Gm\times\Spec R/{}^{\mathbb{L}}p^n\to R^{\mathcal{N}}\otimes\Int/p^n\Int$.

\subsubsection{}
Suppose that $n=m$, so that $R$ is a $\Int/p^n\Int$-algebra. In this case, for any $R$-algebra $C$, we have a section $C/{}^{\mathbb{L}}p^n\to C$, and so we can also define prestacks $X,X^-,X^0,X^+$ over $R$ (the last only when $n=m=1$) by:
\begin{align*}
X(C) &=\Map_{/R^{\mathrm{syn}}\otimes\Int/p^n\Int}(\Spec C,\mathcal{X}^\preoneb);\\
X^{-}(C) &=\Map_{/R^{\mathrm{syn}}\otimes\Int/p^n\Int}(\Aff^1/\Gm\times\Spec C,\mathcal{X});\\
X^{0}(C) &=\Map_{/R^{\mathrm{syn}}\otimes\Int/p^n\Int}(B\Gm\times\Spec C,\mathcal{X});\\
X^{+}(C) &=\Map_{/R^{\mathrm{syn}}\otimes\Field_p}(\Aff^1_+/\Gm\times\Spec C,\mathcal{X}).
\end{align*}
The last three stacks are simply the attractor, fixed point locus and repeller for $\mathcal{X}$ base-changed from $C/{}^{\mathbb{L}}p^n$ to $C$. 

Note a subtlety about base-points: The base-point $B\Gm\times\Spec C\to \Aff^1_+/\Gm\times\Spec C$ is a lift of the Frobenius endomorphism of $B\Gm\times \Spec C$. In particular, we actually have
\[
X^+(C) = \Map_{/R^{\mathrm{syn}}\otimes\Field_p}(\Aff^1_+/\Gm\times\Spec C,\mathcal{X}^\preoneb)\times_{{}^\varphi X^{\preoneb,0}(C)}{}^\varphi X^0(C).
\]
Here, ${}^\varphi X^0$ is given by ${}^{\varphi}X^0 = X^0\times_{\Spec R,\varphi}\Spec R$.

\begin{proposition}
\label{prop:X_attractor_repble}
Suppose that $\mathcal{X}\to R^{\mathrm{syn}}\otimes\Int/p^n\Int$ is a $1$-bounded stack and suppose that $\pi_0(R)$ is a $G$-ring. Suppose also that $\mathcal{X}^\preoneb$ has quasi-affine diagonal over $ R^{\mathrm{syn}}\otimes\Int/p^n\Int$. Then $X^{(n)}$, $X^{-,(n)}$, $X^{0,(n)}$ and $X^{+,(n)}$ (if $m=1$)---and, if $n=m$, the prestacks $X$, $X^-$, $X^0$ and $X^+$ (if $n=m=1$)---are locally almost finitely presented derived Artin stacks over $R$.
\end{proposition}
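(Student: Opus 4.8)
The statement concerns representability of four (or eight) auxiliary prestacks, all of which are---by their very construction---Weil restrictions along the maps $R \to R^{\mathrm{syn}}\otimes\Int/p^n\Int$ (or its $\mathbb{A}^1/\Gm$-, $\mathbb{A}^1_+/\Gm$-, $B\Gm$-thickenings) of the given $1$-bounded stack $\mathcal{X}$. The plan is to reduce everything to two inputs already available in the excerpt: first, the quasi-geometricity of $R^{\Prism}$ and $R^{\mathcal{N}}$ from Corollary~\ref{cor:quasigeometric} (together with the quasisyntomic descent of Corollary~\ref{cor:quasisyntomic_descent_general} and the affineness/$1$-stack description of the semiperfectoid strata); and second, the representability criterion for attractors, repellers and fixed-point loci of Halpern-Leistner--Preygel recorded in Proposition~\ref{prop:t-integrable_repble}, applied fiberwise over $R^{\Prism}$ and then spread out. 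The hypothesis that $\mathcal{X}^\diamond$ has quasi-affine diagonal over $R^{\mathrm{syn}}\otimes\Int/p^n\Int$ is exactly what feeds into that proposition; the $G$-ring hypothesis on $\pi_0(R)$ is what is needed for Artin--Lurie representability (condition on $\Spec\pi_0(R)$ being excellent-type).

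First I would handle $X^{(n)}$. By Corollary~\ref{cor:semiperf_qsynt} choose a quasisyntomic cover $R \to R_\infty$ with $R_\infty^{\otimes_R(\bullet+1)}$ semiperfectoid, so that $R_\infty^{\mathcal{N}}\otimes\Int/p^n\Int \to R^{\mathcal{N}}\otimes\Int/p^n\Int$ is an fpqc cover (Corollary~\ref{cor:quasisyntomic_descent_general}), and likewise for the syntomifications; descent then reduces representability of $X^{(n)}$ to the semiperfectoid case. Over a semiperfectoid $R'$, Theorem~\ref{thm:semiperf_crys} identifies $R'^{\mathcal{N}}$ with the Rees stack $\Rees(\Fil^\bullet_{\mathcal{N}}\Prism_{R'})$ and $R'^{\Prism}$ with $\Spf\Prism_{R'}$, so that $C/{}^{\mathbb{L}}p^n$-points of $\mathcal{X}$ over $R'^{\mathrm{syn}}\otimes\Int/p^n\Int$ are maps out of an honest (formal) affine scheme; since $\mathcal{X}^\diamond$ is locally finitely presented with quasi-affine diagonal over a quasi-geometric base, the Weil restriction along a finitely presented affine morphism is again a locally finitely presented derived Artin stack (the classical Weil-restriction machinery, e.g.\ as used in~\cite[\S 19.1]{Lurie2018-kh}; one verifies the Artin--Lurie criteria~\cite[Theorem 7.1.6]{lurie_thesis}, the $G$-ring hypothesis supplying integrability). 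Finally, the open-immersion structure of $j_{\dR}, j_{\mathrm{HT}}$ together with the description~\eqref{eqn:sections_first_desc} presents $X^{(n)}$ as an equalizer of maps between such representable prestacks, hence itself a locally finitely presented derived Artin stack over $R$.

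Next, $X^{-,(n)}$, $X^{0,(n)}$ and (when $m=1$) $X^{+,(n)}$ are, by definition, the Weil restrictions from $R/{}^{\mathbb{L}}p^n$ to $R$ of the attractor, fixed-point locus and repeller of $\mathcal{X}^\diamond$ relative to $\mathbb{A}^1/\Gm$, $B\Gm$, $\mathbb{A}^1_+/\Gm$ over $R^{\mathcal{N}}\otimes\Int/p^n\Int$. Working over the semiperfectoid cover as above and invoking Theorem~\ref{thm:semiperf_crys} again, these become the attractor/fixed-point/repeller of a locally finitely presented derived Artin $1$-stack with quasi-affine diagonal over an honest (relative) base of the form $\Rees(\Fil^\bullet)\times\mathbb{A}^1/\Gm$, etc.; Proposition~\ref{prop:t-integrable_repble} then gives representability over each semiperfectoid stratum, and Corollary~\ref{cor:quasigeometric}(3) supplies the flat descent data to glue back over $R$. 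Finally, the derived Weil restriction $(-)^{(n)} = \Res_{(\Int/p^n\Int)/\Int_p}(-)$ of a locally finitely presented derived Artin stack is again one, by Lemma~\ref{lem:weil_restriction}. When $n=m$ the variants $X, X^-, X^0, X^+$ (using the section $C/{}^{\mathbb{L}}p^n \to C$) are handled identically, the only extra point being the Frobenius-twisted base-point subtlety for $X^+$, which only replaces $X^{\diamond,0}$ by ${}^\varphi X^{\diamond,0}$ and does not affect representability.

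The main obstacle is not conceptual but bookkeeping: one must check that the quasi-affine-diagonal hypothesis on $\mathcal{X}^\diamond$ relative to $R^{\mathrm{syn}}\otimes\Int/p^n\Int$ descends correctly to the semiperfectoid strata and is preserved under the various base changes (to $R^{\mathcal{N}}$, to the Rees stacks, and under Weil restriction), so that Proposition~\ref{prop:t-integrable_repble} genuinely applies and the resulting stacks satisfy the hypotheses of~\cite[Theorem 7.1.6]{lurie_thesis} (nilcompleteness, infinitesimal cohesiveness, integrability, perfect cotangent complex). The cotangent-complex computations are routine given Lemma~\ref{lem:attractor_cotangent_complex} and Lemma~\ref{lem:weil_restriction}; integrability is the delicate condition, but it is exactly the content imported from Proposition~\ref{prop:t-integrable_repble} fiberwise, combined with the classicality and quasi-geometricity of $R^{\Prism}$ and $R^{\mathcal{N}}$ from Proposition~\ref{prop:ZpDelta_presentation}, Proposition~\ref{prop:nyg_classical} and Corollary~\ref{cor:quasigeometric}.
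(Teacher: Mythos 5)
The core tools you invoke---Lemma~\ref{lem:weil_restriction} and Proposition~\ref{prop:t-integrable_repble}---are exactly the right ones, but your route to applying them is both overcomplicated and, in one place, genuinely wrong, because it rests on a misreading of the definitions of the stacks in question.

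The prestacks $X^{(n)}, X^{-,(n)}, X^{0,(n)}, X^{+,(n)}$ (and their $n=m$ siblings) are evaluated at a $C$-point \emph{not} via $C^{\mathcal{N}}\otimes\Int/p^n\Int$ or $C^{\mathrm{syn}}\otimes\Int/p^n\Int$, but via the honest quotient stacks $\Spec(C/{}^{\mathbb{L}}p^n)$, $\Aff^1/\Gm\times\Spec(C/{}^{\mathbb{L}}p^n)$, $B\Gm\times\Spec(C/{}^{\mathbb{L}}p^n)$, $\Aff^1_+/\Gm\times\Spec(C/{}^{\mathbb{L}}p^n)$, each mapping to $R^{\mathrm{syn}}\otimes\Int/p^n\Int$ through the \emph{canonical de Rham section} $x^{\mathcal{N}}_{\dR}$ (composed with $R^{\mathcal{N}}\otimes\Int/p^n\Int\to R^{\mathrm{syn}}\otimes\Int/p^n\Int$). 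In other words, they only interact with the base-change of $\mathcal{X}^\diamond$ along $\Aff^1/\Gm\times\Spec(R/{}^{\mathbb{L}}p^n)\to R^{\mathrm{syn}}\otimes\Int/p^n\Int$, which is already a locally finitely presented relative Artin stack with quasi-affine diagonal over the quotient stack $\Aff^1/\Gm\times\Spec(R/{}^{\mathbb{L}}p^n)$. Proposition~\ref{prop:t-integrable_repble} (with the $G$-ring hypothesis on $\pi_0(R/{}^{\mathbb{L}}p^n)$) then yields representability of $X^{\diamond,\pm}$ and $X^{\diamond,0}$ over $R/{}^{\mathbb{L}}p^n$, Lemma~\ref{lem:weil_restriction} pushes these through the mod-$p^n$ Weil restriction $(\cdot)^{(n)}$, the $1$-boundedness cuts out $X^{0,(n)}$ as an open substack of $X^{\diamond,0,(n)}$, and finally $X^{\pm,(n)} = X^{\diamond,\pm,(n)}\times_{X^{\diamond,0,(n)}}X^{0,(n)}$. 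For $X^{(n)}$ there is even less to do: it is directly a mod-$p^n$ Weil restriction. None of Theorem~\ref{thm:semiperf_crys}, Corollary~\ref{cor:semiperf_qsynt}, Corollary~\ref{cor:quasisyntomic_descent_general}, or the quasi-geometricity of $R^{\mathcal{N}}$ is needed.

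The concrete error in your write-up is the sentence asserting that the equalizer description~\eqref{eqn:sections_first_desc} presents $X^{(n)}$. That equalizer computes $\Gamma_{\mathrm{syn}}(\mathcal{X})$, the full sheaf of sections over $C^{\mathrm{syn}}\otimes\Int/p^n\Int$, and has nothing to do with $X^{(n)}$; conflating the two is what forces you into the unnecessary semiperfectoid-descent detour, whose validity for prestacks over $R$ (as opposed to over $R^{\mathcal{N}}$) you also do not justify. The stacks $X^{(n)}$, $X^{-,(n)}$, $X^{0,(n)}$, $X^{+,(n)}$ are introduced precisely to be the \emph{elementary} pieces on which the later representability argument for $\Gamma_{\mathrm{syn}}(\mathcal{X})$ relies---they should not themselves be treated as stacks of sections over syntomifications.
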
 
\begin{proof}
By Propositions~\ref{prop:weil_restriction} and~\ref{prop:t-integrable_repble} it follows that $X^{\preoneb,-,(n)}$, $X^{\preoneb,+,(n)}$, $X^{\preoneb,0,n}$ and $X^{0,(n)}$ are locally finitely presented derived Artin stacks. Since we have
\[
X^{\pm,(n)} = X^{\preoneb,\pm,(n)}\times_{X^{\preoneb,0,(n)}}X^{0,(n)},
\]
we see that these are also locally finitely presented derived Artin stacks.

The argument for $X^{(n)}$ is even simpler, since it is a mod-$p^n$ Weil restriction of a derived Artin stack by definition.

The argument for the remaining four prestacks (when $n=m$) is the same, but doesn't involve Weil restrictions.
\end{proof}

\begin{remark}
\label{rem:X_attractor_repble}
Suppose that $\mathcal{X}^\preoneb$ is graded integrable (this is in particular true if it has quasi-affine diagonal by Proposition~\ref{prop:1bounded_graded_deformation}). When $n=m=1$, for any $R$-algebra $C$, the map
\[
X^+(C)\to \Map_{/R^{\mathrm{syn}}\otimes\Field_p}((\Aff^1_+/\Gm)_{(u^2=0)}\times \Spec C,\mathcal{X}^\preoneb)\times_{{}^{\varphi}\mathcal{X}^{\preoneb,0}(C)}{}^\varphi X^0(C)
\]
is an equivalence. Suppose now that $\mathbb{L}^0_{\mathcal{X},\bullet}$ is the graded almost perfect complex over $X^0$ obtained via pullback from the relative cotangent complex of $\mathcal{X}^\preoneb$. Then, via graded deformation theory, we obtain a Cartesian diagram of stacks
\[
\begin{diagram}
X^+&\rTo& X^0\\
\dTo&&\dTo_0\\
{}^\varphi X^0&\rTo&\mathbf{V}(\mathbb{L}^0_{\mathcal{X},-1}[-1]).
\end{diagram}
\]
Therefore, as soon as $X^0$ is representable by an almost locally finitely presented derived Artin stack, it follows $X^+$ is also an almost locally finitely presented derived Artin stack. Note also that $X^+$ is quasicompact as soon as $X^0$ is so.
\end{remark}   

\subsection{D\'evissage to the $F$-zip stack}
\label{subsec:devissage_to_Fzips}

Suppose that $n=1$, and that we have a $1$-bounded stack $\mathcal{X}\to R^{\mathrm{syn}}\otimes\Field_p$ with $R \in \mathrm{CRing}_{\Field_p/}$.

\begin{construction}
\label{const:fzip_sections_for_devissage}
Define a prestack $\Gamma_{\Fzip}(\mathcal{X})$ over $R$ by
\begin{align*}
\Gamma_{\Fzip}(\mathcal{X}):\mathrm{CRing}_{R/}&\to \mathrm{Spc}\\
C&\mapsto \Map_{/R^{\mathrm{syn}}\otimes\Field_p}(C^{\Fzip},\mathcal{X}).
\end{align*}
The argument from Lemma~\ref{lem:gamma_fzip_description} shows that we have a presentation
\begin{align}\label{eqn:sections_fzips}
\begin{diagram}
\Gamma_{\Fzip}(\mathcal{X}) &\rTo^{\simeq}& \mathrm{eq}\biggl(X^-\times_XX^+&\pile{\rTo^{{}^{\varphi}\lambda_-^*\circ\mathrm{pr}_1}\\ \rTo_{\lambda_+^*\circ\mathrm{pr}_2}}&{}^\varphi X^0\biggr).
\end{diagram}
\end{align}
\end{construction}

\begin{lemma}
\label{lem:FZip_repblity}
Suppose that $\mathcal{X}^\preoneb$ is a relative $r$-stack with one of the following properties: 
\begin{itemize}
   \item $\mathcal{X}^\preoneb\to R^{\mathrm{syn}}\otimes\Field_p$ has quasi-affine diagonal and $\pi_0(R)$ is a $G$-ring;

   \item The stacks $X^-,X^+,X^0$ are almost locally finitely presented derived Artin $r$-stacks over $R$.
\end{itemize}
Then $\Gamma_{\Fzip}(\mathcal{X})$ is an almost locally finitely presented derived Artin $r$-stack over $R$.
\end{lemma}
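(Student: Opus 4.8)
The strategy is to read off representability directly from the presentation~\eqref{eqn:sections_fzips}, which exhibits $\Gamma_{\Fzip}(\mathcal{X})$ as an equalizer (equivalently, a limit over $\bm\Delta^{\le 1}$, i.e.\ a fiber product) built from the prestacks $X^-$, $X^+$, $X^0$, $X = \mathcal{X}^\diamond$ and ${}^\varphi X^0$. First I would observe that, under either hypothesis, all of these are locally finitely presented derived Artin $r$-stacks over $R$: under the first hypothesis this is Proposition~\ref{prop:X_attractor_repble} (with $n=m=1$), noting that quasi-affine diagonal of $\mathcal{X}^\diamond$ over $R^{\mathrm{syn}}\otimes\Field_p$ is preserved upon passing to the attractor, repeller and fixed-point loci by the last clause of Proposition~\ref{prop:t-integrable_repble} together with Lemma~\ref{lem:weil_restriction}; the prestack ${}^\varphi X^0$ is just $X^0$ precomposed with the (finite, hence finitely presented) Frobenius twist $C\mapsto \varphi_*C$ on $\mathrm{CRing}_{R/}$, so it too is a locally finitely presented derived Artin $r$-stack. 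Under the second hypothesis this is assumed outright, and one only needs the same remark about ${}^\varphi X^0$.

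Next, the maps appearing in~\eqref{eqn:sections_fzips}---the two maps $X^-\times_X X^+\times_{{}^\varphi X^0}X^0 \rightrightarrows X^0$ given by $\lambda_-^*\circ\mathrm{pr}_1$ and $\mathrm{pr}_3$---are morphisms of locally finitely presented derived Artin stacks over $R$; here $\lambda_-^*:X^-\to X^0$ is the restriction along the zero section $B\Gm\times\Spec C\hookrightarrow \Aff^1/\Gm\times\Spec C$, which is a well-defined natural transformation. Since the $\infty$-category of locally finitely presented derived Artin $n$-stacks over $R$ (for fixed $n$) is closed under finite limits---fiber products of derived Artin stacks are derived Artin, the relevant bound on truncation of the classical points is preserved under fiber products, and local finite presentation is preserved under finite limits---the equalizer
\[
\Gamma_{\Fzip}(\mathcal{X}) \simeq X^0 \times_{(\lambda_-^*\circ\mathrm{pr}_1,\mathrm{pr}_3),\, X^0\times X^0,\, \Delta}\bigl(X^-\times_X X^+\times_{{}^\varphi X^0}X^0\bigr)
\]
is again a locally finitely presented derived Artin $r$-stack over $R$. (One must check that the inner fiber products $X^-\times_X X^+$ and the further base change over ${}^\varphi X^0$ do not increase the truncation level beyond $r$; this is immediate since all five input stacks are $r$-stacks and fiber products of $r$-stacks are $r$-stacks.) This completes the argument.

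The only genuinely substantive point---and the one I would expect to be the main obstacle---is the verification under the first hypothesis that $X^-$, $X^+$, $X^0$ are representable with the correct properties, i.e.\ the bookkeeping in Proposition~\ref{prop:X_attractor_repble}: one needs to know that $\mathcal{X}^\diamond\to R^{\mathrm{syn}}\otimes\Field_p$ having quasi-affine diagonal, combined with $R^{\mathrm{syn}}\otimes\Field_p$ being quasi-geometric over the $G$-ring $\pi_0(R)$ (Corollary~\ref{cor:quasigeometric}), lets one invoke Proposition~\ref{prop:t-integrable_repble} and Lemma~\ref{lem:weil_restriction} to descend representability of the Weil restrictions down to $\Spec R$. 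Once that input is in hand, the passage from the individual stacks to the equalizer is formal. A secondary (but routine) subtlety is confirming that the $r$-stack condition on $\mathcal{X}^\diamond$---which controls the truncation of $X = \mathcal{X}^\diamond$---together with the fact that $X^-, X^+, X^0$ are themselves $r$-stacks (their classical points have $r$-truncated values, since attractor/fixed-point/repeller constructions of an $r$-stack are again $r$-stacks) forces $\Gamma_{\Fzip}(\mathcal{X})$ to be an $r$-stack and not merely some higher geometric stack.
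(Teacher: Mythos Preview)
Your proposal is correct and follows essentially the same route as the paper: reduce the first hypothesis to the second via Proposition~\ref{prop:X_attractor_repble}, and then read off representability from an equalizer presentation of $\Gamma_{\Fzip}(\mathcal{X})$. The paper uses the slightly simpler presentation~\eqref{eqn:sections_fzips_original} rather than~\eqref{eqn:sections_fzips}, but the two are formally equivalent and either works. Your aside about quasi-affine diagonal being preserved under attractor/repeller/fixed-point formation is not needed here (it matters elsewhere in the paper, e.g.\ for assertions~(2) and~(3) of Theorem~\ref{thm:general_representability}, but not for this lemma), and note that $X$ is always representable---it is simply the pullback of $\mathcal{X}^\diamond$ along $x_{\dR}$---so no extra hypothesis is required for the fiber product $X^-\times_X X^+$ to make sense.
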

\begin{proof}
Proposition~\ref{prop:X_attractor_repble} shows that the first condition implies the second. We now conclude using the presentation~\eqref{eqn:sections_fzips}.
\end{proof}

\begin{remark}
Let $\mathbb{L}_{\mathcal{X}}$ be the relative cotangent complex for $\mathcal{X}^\preoneb$ over $R^{\mathrm{syn}}\otimes\Int/p^n\Int$. For every section $x\in \Gamma_{\Fzip}(\mathcal{X})(C)$, the almost perfect $F$-zip $\bm{L}(\mathcal{X})_x = x^*\mathbb{L}_{\mathcal{X}}\in \mathrm{APerf}(C^{\Fzip})$ has Hodge-Tate weights bounded below by $-1$. We will view this as giving us an almost perfect $F$-zip $\bm{L}(\mathcal{X})$ over $\Gamma_{\mathrm{\Fzip}}(\mathcal{X})$. Since $\mathcal{X}$ is fibered in derived Artin $r$-stacks, this $F$-zip is $(-r)$-connective. 
\end{remark}

\begin{construction}
\label{const:S1_mathcal_X}
Given a section $x$ of $\mathcal{X}$ over $C^{\Fzip}$ (equivalently, of $\Gamma_{\Fzip}(\mathcal{X})$ over $C$), we obtain a $(-r)$-connective almost perfect $F$-zip $x^*\mathbb{L}_{\mathcal{X}}$, which corresponds to a tuple $(\Fil^\bullet_{\mathrm{Hdg}}L_x,\Fil^{\mathrm{conj}}_\bullet L_x,\eta,\alpha)$. We have a canonical map
\begin{align}
\label{eqn:phi_module_1bdd_definition}
\psi_x:\Fil^1_{\mathrm{Hdg}}L_x \to L_x \to \gr^{\mathrm{conj}}_1L_x \simeq \varphi^*\Fil^1_{\mathrm{Hdg}}L_x
\end{align}
By Theorem~\ref{thm:repbility_height_one}, for each $i\in \Int$, we now obtain a relative locally almost finitely presented derived Artin $(r+i)$-stack\footnote{If $r+i<0$, this will just be a relative derived scheme.} $\mathsf{S}_i(\mathcal{X})\to \Gamma_{\Fzip}(\mathcal{X})$, whose fiber over $x$ is given by $\mathsf{S}_{(\Fil^1_{\mathrm{Hdg}}L_x,\psi_x)[-i]}$.
\end{construction}

\begin{theorem}
\label{thm:devissage_to_fzips}
Suppose that $\mathcal{X}^\preoneb\to R^{\mathrm{syn}}\otimes\Field_p$ satisfies one of the following conditions:
\begin{itemize}
   \item It is strongly integrable\footnote{See Definition~\ref{defn:strongly_integrable}};
   \item It has quasi-affine diagonal.
\end{itemize}
Then:
\begin{enumerate}
   \item  There is a canonical Cartesian diagram of prestacks over $R$:
   \[
    \begin{diagram}
    \Gamma_{\mathrm{syn}}(\mathcal{X})&\rTo&\Gamma_{\Fzip}(\mathcal{X})\\
    \dTo&&\dTo_0\\
    \Gamma_{\Fzip}(\mathcal{X})&\rTo&\mathsf{S}_1(\mathcal{X}),
    \end{diagram}
   \]
   where the right vertical map is the zero section.
   \item Suppose that $\mathcal{X}^\preoneb$ is \emph{smooth} over $R^{\mathrm{syn}}\otimes\Int/p^n\Int$, and that its relative tangent complex is \emph{$1$-connective}. Then $\Gamma_{\mathrm{syn}}(\mathcal{X})\to \Gamma_{\Fzip}(\mathcal{X})$ is a quasisyntomic torsor under $\mathsf{S}_0(\mathcal{X})$.
\item  In particular, if $\mathcal{X}$ satisfies the hypotheses in Lemma~\ref{lem:FZip_repblity}, then $\Gamma_{\mathrm{syn}}(\mathcal{X})$ is represented by a locally almost finitely presented derived Artin $r$-stack over $R$.
\end{enumerate}
\end{theorem}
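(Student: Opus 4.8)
\textbf{Proof plan for Theorem~\ref{thm:devissage_to_fzips}.} The strategy is to first reduce the global statement over $R^{\mathrm{syn}}\otimes\Field_p$ to a statement over frames, then invoke the abstract d\'evissage to $F$-zips already proved in~\S\ref{subsec:torsor_struct}. Concretely, I would proceed as follows. First, using Corollary~\ref{cor:semiperf_qsynt}, choose a quasisyntomic cover $R\to R_\infty$ with all self-products $R_\infty^{\otimes(\bullet+1)}$ semiperfectoid, so that by Corollary~\ref{cor:quasigeometric} and Corollary~\ref{cor:quasisyntomic_descent_general} the maps $R_\infty^{\mathrm{syn}}\otimes\Field_p\to R^{\mathrm{syn}}\otimes\Field_p$ (and similarly for $R^{\Fzip}$, since the $F$-zip stack is also defined via quasisyntomic descent from semiperfectoid inputs as in~\S\ref{subsec:Fzip_stack}) are fpqc covers. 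Both $\Gamma_{\mathrm{syn}}(\mathcal{X})$ and $\Gamma_{\Fzip}(\mathcal{X})$ are \'etale (indeed fpqc) sheaves by construction, and the auxiliary prestacks $X^-,X^+,X^0,\mathsf{S}_i(\mathcal{X})$ are representable under either hypothesis by Proposition~\ref{prop:X_attractor_repble}, Lemma~\ref{lem:FZip_repblity} and Theorem~\ref{thm:repbility_height_one}; so it suffices to construct the Cartesian square and the torsor structure after pulling back along this cover and to check that the construction descends. This reduces us to the case where $R$ is semiperfectoid.

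Second, for semiperfectoid $R$, Theorem~\ref{thm:semiperf_crys} identifies $R^{\mathcal{N}}$ with $\Rees(\Fil^\bullet_{\mathcal{N}}\Prism_R)$ and hence (via Lemma~\ref{lem:nygaard_filtered_frame} and the coequalizer defining the syntomification) identifies $R^{\mathrm{syn}}\otimes\Field_p$ with $\mathrm{Syn}(\underline{\Prism}_R)\otimes\Field_p$, where $\underline{\Prism}_R$ is a prismatic frame endowed with a syntomic structure. Under this identification, the pullback of $\mathcal{X}$ becomes a $1$-bounded stack $(\mathcal{X},\xi)$ over $\Rees(\Fil^\bullet_{\mathcal{N}}\Prism_R)$ together with the gluing isomorphism, and $\Gamma_{\mathrm{syn}}(\mathcal{X})$ becomes $\Gamma_{\underline{\Prism}_R}(\mathcal{X},\xi)$ as in~\S\ref{subsec:abstract_def_theory}. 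However, the frame $\underline{\Prism}_R$ is \emph{not} $p$-adic, so Proposition~\ref{prop:abstract_devissage_to_fzips} does not apply directly to it; instead one works over the reduction-mod-$p$ frame $\underline{\Prism}_R/{}^{\mathbb{L}}p$, which has $p$-adic filtration (cf. Remark~\ref{rem:operations_on_frames} and~\S\ref{subsec:torsor_struct}), is prismatic, and for which $R_{\Prism_R/{}^{\mathbb{L}}p}$ is the semiperfect $\Field_p$-algebra $R$. Applying Proposition~\ref{prop:abstract_devissage_to_fzips} to $(\mathcal{X},\xi)$ over this frame yields exactly the Cartesian square in part~(1), after one identifies $\Gamma_{\Fzip}(\mathcal{X})$ with the abstract $\Gamma_{\Fzip}(\mathcal{X})$ from~\S\ref{subsec:torsor_struct} (both are sections over $\mathrm{Syn}(\underline{W_1(R)})$, using Lemma~\ref{lem:witt_frame_univ_property} for the canonical map $\underline{\Prism}_R/{}^{\mathbb{L}}p\to\underline{W_1(R)}$) and identifies $\mathsf{S}_1(\mathcal{X})$ with $\Gamma_{\underline{\Prism}_R/{}^{\mathbb{L}}p,\psi}(\gr^{-1}_{\mathrm{Hdg}}M(\mathcal{X})[1])$ via Remark~\ref{rem:zprism_expl} and the explicit description of $Z^1_{\Prism}$, $H^1_{\underline{A}}$ in~\S\ref{subsec:height_one_group_schemes} and~\S\ref{subsec:torsor_struct}. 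For part~(2), the $1$-connectivity of $\mathbb{T}_{\mathcal{X}}$ together with $R$ semiperfect lets one apply Corollary~\ref{cor:abstract_fzip_smooth_torsor} to upgrade the Cartesian square to a genuine torsor under $\mathsf{S}_0(\mathcal{X})$; the torsor is quasisyntomic because $Z^1_{\Prism}$ and $\gr^{\mathrm{conj}}_1\overline{\Prism}$ are computed by quasisyntomic-locally trivial complexes. Part~(3) is then immediate: a Cartesian square (resp. a torsor) over a representable base with representable fibers has a representable total space, with the Tor-amplitude bookkeeping giving the $r$-stack assertion.

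Third, to finish the descent one checks that all the identifications above are natural in $R$ among semiperfectoid rings, hence promote to statements over the \v{C}ech nerve of $R\to R_\infty$; the representability of the total space then follows from fpqc descent for derived Artin stacks. The main obstacle is the bookkeeping in the second step: one must verify carefully that the frame-theoretic $\Gamma_{\Fzip}$, $\mathsf{S}_1$, and the relevant $\varphi$-module structure really match the globally-defined objects $\Gamma_{\Fzip}(\mathcal{X})$ and $\mathsf{S}_i(\mathcal{X})$ of~\S\ref{subsec:devissage_to_Fzips}---in particular that the $\varphi$-semilinear map~\eqref{eqn:phi_module_1bdd_definition} agrees with the map $\psi$ of~\S\ref{subsubsec:defining_q1psi_q2psi} built from the conjugate and Hodge filtrations on $\mathbb{T}_{\mathcal{X}}$, and that the two presentations~\eqref{eqn:sections_fzips_original} and~\eqref{eqn:sections_fzips} of $\Gamma_{\Fzip}(\mathcal{X})$ are compatible with the frame-level eq-diagram. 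Once these compatibilities are in place, the theorem reduces to a bookkeeping exercise combining Proposition~\ref{prop:abstract_devissage_to_fzips}, Corollary~\ref{cor:abstract_fzip_smooth_torsor}, Theorem~\ref{thm:repbility_height_one} and fpqc descent, with no genuinely new input required beyond what has been developed in~\S\ref{subsec:torsor_struct} and~\S\ref{sec:bld_stacks}.
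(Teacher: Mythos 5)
Your proposal takes the same route as the paper: quasisyntomic descent to semiperfect $\Field_p$-algebras, then Theorem~\ref{thm:semiperf_crys} to pass to the frame $\underline{\Prism}_R$, followed by the abstract d\'evissage Proposition~\ref{prop:abstract_devissage_to_fzips} and, for part (2), Corollary~\ref{cor:abstract_fzip_smooth_torsor}; part (3) is then immediate from Lemma~\ref{lem:FZip_repblity}. Two small corrections. First, you claim that $\underline{\Prism}_R$ is \emph{not} a $p$-adic frame and propose to substitute $\underline{\Prism}_R/{}^{\mathbb{L}}p$; but when $R$ is a semiperfect $\Field_p$-algebra, the prism $(\Prism_R,I_R)$ has $I_R=(p)$ (this is the crystalline case, $\Prism_R\simeq A_{\crys}(R)$), and the Nygaard filtration receives $\Fil^\bullet_p\Int_p$, so $\underline{\Prism}_R$ is already a $p$-adic prismatic frame with $R_{\Prism_R}=R$. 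Proposition~\ref{prop:abstract_devissage_to_fzips} therefore applies directly; the mod-$p$ reduction is unnecessary, and it is not even clear that $\underline{\Prism}_R/{}^{\mathbb{L}}p$ retains the prismatic structure the proposition requires. Second, you should note explicitly the initial reduction the paper makes from the quasi-affine-diagonal hypothesis to the graded-and-filtered-integrable hypothesis via Propositions~\ref{prop:1bounded_graded_deformation} and~\ref{prop:1_bounded_cartesian}, since both the torsor structure of \S\ref{subsec:torsor_struct} and the descent step require integrability as a running hypothesis. Your point about verifying compatibility between the frame-theoretic $\Gamma_{\Fzip}$, $\mathsf{S}_1$, $\psi$ and the globally defined objects of \S\ref{subsec:devissage_to_Fzips} is fair---the paper treats it as an unwinding of definitions, which it is once one sits over the frame, but the instinct to flag it is reasonable.
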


\begin{proof}
The last assertion is immediate from Lemma~\ref{lem:FZip_repblity}.

For the first two, begin by noting that the condition of having quasi-affine diagonal implies the other one by Remark~\ref{rem:use_of_filtered_completeness}. Therefore, we can assume that $\mathcal{X}$ is strongly integrable.

By quasisyntomic descent, we can reduce to checking the existence of the diagram in (1) for semiperfect inputs. Here, the result follows from Theorem~\ref{thm:semiperf_crys} and Proposition~\ref{prop:abstract_devissage_to_fzips}. 

Assertion (2) follows analogously from Corollary~\ref{cor:abstract_fzip_smooth_torsor}.

\end{proof}

\begin{corollary}
\label{cor:1_bounded_level_1_repble}
Suppose that $\mathcal{M}$ is an almost perfect $(-r)$-connective $F$-gauge of level $1$ over $R$ with Hodge-Tate weights bounded below by $-1$. If $\mathcal{M}$ is perfect with Tor amplitude in $[s,r]$, write $\mathcal{M}^\vee$ for the dual $F$-gauge: this has Tor amplitude in $[-r,-s]$ and has Hodge-Tate weights bounded above by $1$.
\begin{enumerate}
   \item The prestack $\Gamma^*_{\mathrm{syn}}(\mathcal{M})$ over $R$ is represented by an almost finitely presented derived Artin $r$-stack, which is in fact finitely presented if $\mathcal{M}$ is perfect.
   \item If $\mathcal{M}$ is perfect and $s\ge 1$, then $\Gamma_{\mathrm{syn}}(\mathcal{M}^\vee)\simeq \Gamma_{\mathrm{syn}}^*(\mathcal{M})$ is a smooth, faithfully flat derived Artin stack over $R$.
   \item If $r\leq 0$, then $\Gamma_{\mathrm{syn}}(\mathcal{M})$ is a derived affine scheme over $R$.
\end{enumerate}
\end{corollary}
\begin{proof}
As we saw in \S~\ref{subsec:vector stacks}, the vector stack $\mathcal{X}=\mathbf{V}(\mathcal{M})\to R^{\mathrm{syn}}\otimes\Field_p$ is an almost finitely presented $r$-stack, and Example~\ref{ex:1_bounded_perfect_complexes_stack} shows that it can be extended to a $1$-bounded stack---denoted by the same symbol---by bringing along the entire fixed point locus.

Moreover, the associated stacks $X^{-,(n)}$, $X^{+,(n)}$ and $X^{0,(n)}$ admit the following explicit description: Associated with $\mathcal{M}$ is an $F$-zip giving in particular the pair $(\Fil^\bullet_{\mathrm{Hdg}}M,\Fil^{\mathrm{conj}}_\bullet M)$; then $X^{-,(n)}$ (resp. $X^{+,(n)}$, $X^{0,(n)}$) is the mod-$p^n$ Weil restriction of the vector stack associated with $M/\Fil^1_{\mathrm{Hdg}}M$ (resp. $M/\Fil^{\mathrm{conj}}_{-1}M$, $\gr^0_{\mathrm{Hdg}}M$). In particular, all three stacks are representable, and so, if $\bm{M}$ is the underlying $F$-zip, then the discussion in Construction~\ref{const:fzip_cohomology} shows that $\Gamma^*_{\Fzip}(\bm{M})$ is representable by a locally finitely presented derived Artin $r$-stack.

By Theorem~\ref{thm:devissage_to_fzips} and Remark~\ref{rem:use_of_filtered_completeness}, we have a Cartesian diagram
\begin{align}\label{eqn:gamma_syn_to_gamma_fzip_cartesian}
\begin{diagram}
\Gamma^*_{\mathrm{syn}}(\mathcal{M})&\rTo&\Gamma^*_{\Fzip}(\bm{M})\\
\dTo&&\dTo\\
\Gamma^*_{\Fzip}(\bm{M})&\rTo&\mathsf{S}_{(N,\psi)[-1]}
\end{diagram}
\end{align}
where $\psi:N\to \varphi^*N$ is a map of almost perfect complexes over $\Gamma_{\Fzip}(\mathsf{M})$ with $N = \Fil^1_{\mathrm{Hdg}}M$.

Therefore, Theorem~\ref{thm:repbility_height_one} shows that $\Gamma_{\mathrm{syn}}(\mathbf{V}(\mathcal{M})) = \Gamma^*_{\mathrm{syn}}(\mathcal{M})$ is represented by a locally finitely presented derived Artin $r$-stack. 

For (2), first note that, under these hypotheses, $\Gamma^*_{\Fzip}(\mathcal{M})$ is smooth over $R$: This is because we can rewrite the formula~\eqref{eqn:section_fzips_perf_comp} as
\[
\Gamma^*_{\Fzip}(\bm{M})(R)\simeq R\Gamma_{\Fzip}(\Spec R,\bm{M}^\vee) \simeq \hcoker(\gr^{\mathrm{conj}}_0M^\vee[-1]\to \Fil^{\mathrm{conj}}_0M^\vee\times_{M^\vee} \Fil^0_{\mathrm{Hdg}}M^\vee),
\]
where both source and target on the right are values of smooth vector stacks, faithfully flat over $R$. The proof is now completed by the second assertion from Theorem~\ref{thm:repbility_height_one}.

We now come to (3): Under our hypotheses, $\bm{M}$ is a connective perfect $F$-zip, and so~\eqref{eqn:section_fzips_perf_comp} shows that $\Gamma^*_{\Fzip}(\bm{M})$ is the equalizer of a pair of maps between derived affine schemes over $R$, and is hence itself derived affine. The assertion now follows from~\eqref{eqn:gamma_syn_to_gamma_fzip_cartesian} and (3) of Corollary~\ref{cor:fppf_cohomology}.
\end{proof}

\subsection{Bootstrapping from characteristic $p$: coefficients}
\label{subsec:1-bounded_stacks_bootstrapping_coefficients}

Continue to assume that $R$ is an $\Field_p$-algebra. Suppose now that we have a $1$-bounded stack $\mathcal{X}\to R^{\mathrm{syn}}\otimes\Int/p^n\Int$. For $m\leq n$, write $\mathcal{X}_m$ for its restriction over $R^{\mathrm{syn}}\otimes\Int/p^m\Int$. 

Over $\Gamma_{\mathrm{syn}}(\mathcal{X}_1)$, we have an almost perfect $F$-gauge $\mathcal{L}_1(\mathcal{X})$ of level $1$ and Hodge-Tate weights bounded below by $-1$: this associates with every $x\in \Gamma_{\mathrm{syn}}(\mathcal{X}_1)(C)$ the pullback to $C^{\mathrm{syn}}\otimes\Field_p$ of the cotangent complex of $\mathcal{X}^\preoneb_1$ over $R^{\mathrm{syn}}\otimes\Field_p$. In turn, for any $i\in\Int$, this gives us via Corollary~\ref{cor:1_bounded_level_1_repble} a $\Mod[\mathrm{cn}]{\Field_p}$-valued relative locally finitely presented derived Artin stack $\Gamma^*(\mathcal{L}_1(\mathcal{X})[i])\to \Gamma_{\mathrm{syn}}(\mathcal{X}_1)$ associating with each $x$ the derived Artin stack
\[
\Gamma^*(\mathcal{L}_1(\mathcal{X})[i])_x = \Gamma^*_{\mathrm{syn}}(\mathcal{L}_1(\mathcal{X})_x[i])
\]
over $C$.

\begin{proposition}
\label{prop:bootstrapping_coeffs}
There is a canonical Cartesian diagram of prestacks
\[
\begin{diagram}
\Gamma_{\mathrm{syn}}(\mathcal{X}_{m+1})&\rTo&\Gamma_{\mathrm{syn}}(\mathcal{X}_m)\\
\dTo&&\dTo_0\\
\Gamma_{\mathrm{syn}}(\mathcal{X}_m)&\rTo&\Gamma^*(\mathcal{L}_1(\mathcal{X})[-1])\times_{\Gamma_{\mathrm{syn}}(\mathcal{X}_1)}\Gamma_{\mathrm{syn}}(\mathcal{X}_m),
\end{diagram}  
\]
In particular, if $\mathcal{X}_1$ satisfies the hypotheses of Theorem~\ref{thm:devissage_to_fzips}, then $\Gamma_{\mathrm{syn}}(\mathcal{X})$ is represented by a locally almost finitely presented derived Artin $r$-stack over $R$.
\end{proposition}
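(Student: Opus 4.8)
The plan is to deduce this from the abstract deformation theory for semiframes developed in \S\ref{subsec:abstract_def_theory}, transported to the syntomification via the filtered affineness results of \S\ref{subsec:nygaard_filtered_affineness}, and then bootstrapped up using quasisyntomic descent. The Cartesian square is the level-$n$ analogue of the square in Theorem~\ref{thm:devissage_to_fzips}(1): where that theorem handled the passage from $F$-zips to the mod-$p$ syntomification, here we handle the passage from $R^{\mathrm{syn}}\otimes\Int/p^m\Int$ to $R^{\mathrm{syn}}\otimes\Int/p^{m+1}\Int$.

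First I would reduce to semiperfectoid inputs. By Corollary~\ref{cor:semiperf_qsynt} and Corollary~\ref{cor:quasisyntomic_descent_general}, both $\Gamma_{\mathrm{syn}}(\mathcal{X}_{m+1})$ and the claimed fiber product are quasisyntomic sheaves, so it suffices to produce a canonical Cartesian square functorially on semiperfectoid $R$-algebras $C$ (in $\mathrm{CRing}^f_{\Field_p/}$). For such $C$, Theorem~\ref{thm:semiperf_crys} identifies $C^{\mathcal{N}}\otimes\Int/p^\bullet\Int$ with $\Rees(\Fil^\bullet_{\mathcal{N}}\Prism_C/{}^{\mathbb{L}}p^\bullet)$, and Lemma~\ref{lem:nygaard_filtered_frame} says $\underline{\Prism}_C$ is a prismatic frame with a syntomic structure. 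So $\Gamma_{\mathrm{syn}}(\mathcal{X}_m)(C)\simeq \Gamma_{\semiframe{A}_m}(\mathcal{X},\xi)(C)$ in the notation of \S\ref{subsec:abstract_def_theory}, where $\semiframe{A}_m$ is the mod-$p^m$ reduction of the semiframe underlying $\underline{\Prism}_C$. Now I would apply the filtered square-zero deformation theory: the reduction map of semiframes $\semiframe{A}_{m+1}\to \semiframe{A}_m$ is a filtered square-zero extension with fiber the $\semiframe{A}_m$-gauge built from $p^m\Prism_C/{}^{\mathbb{L}}p^{m+1}\Prism_C$, and the argument producing the Cartesian square~\eqref{eqn:semiframe_deformation} in the proof of Corollary~\ref{cor:def_theory_frames} (run here for the \emph{constant} square-zero extension coming from multiplication by $p^m$ rather than a Postnikov truncation) gives a Cartesian square whose bottom right corner is $\tau^{\leq 0}R\Gamma$ of the tangent $F$-gauge tensored with this fiber. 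The key input that makes the fiber term reduce to a level-$1$ object is $1$-boundedness of $\mathcal{X}$, exactly as in Lemma~\ref{lem:A_gauge_nilpotent} and Remark~\ref{rem:nilpotent_locus}: the relevant obstruction and lift groups depend only on the mod-$p$ reduction of the tangent complex, which is $\mathcal{M}_1(\mathcal{X})$, and on the $\semiframe{A}_1$-gauge structure, so the bottom-right corner is canonically $\Gamma(\mathcal{M}_1(\mathcal{X})[1])_x$. Identifying this with $\Gamma(\mathcal{M}_1(\mathcal{X})[1])\times_{\Gamma_{\mathrm{syn}}(\mathcal{X}_1)}\Gamma_{\mathrm{syn}}(\mathcal{X}_m)$ is then formal, since a point of $\Gamma_{\mathrm{syn}}(\mathcal{X}_m)$ determines its image in $\Gamma_{\mathrm{syn}}(\mathcal{X}_1)$ along which the tangent $F$-gauge is pulled back.

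For the representability conclusion, I would argue by induction on $n$. The base case $n=1$ is Theorem~\ref{thm:devissage_to_fzips}(3), using that $\mathcal{X}_1$ satisfies the hypotheses there. For the inductive step, assuming $\Gamma_{\mathrm{syn}}(\mathcal{X}_m)$ is a locally finitely presented derived Artin $r$-stack over $R$, the Cartesian square exhibits $\Gamma_{\mathrm{syn}}(\mathcal{X}_{m+1})$ as a fiber product over $\Gamma(\mathcal{M}_1(\mathcal{X})[1])\times_{\Gamma_{\mathrm{syn}}(\mathcal{X}_1)}\Gamma_{\mathrm{syn}}(\mathcal{X}_m)$; by Corollary~\ref{cor:1_bounded_level_1_repble}(1), $\Gamma(\mathcal{M}_1(\mathcal{X})[i])$ is relatively representable by a locally finitely presented derived Artin $r$-stack over $\Gamma_{\mathrm{syn}}(\mathcal{X}_1)$ (here the level-$1$ perfect $F$-gauge $\mathcal{M}_1(\mathcal{X})$ has Tor amplitude bounded below by $-r$, since $\mathcal{X}^\diamond$ is a relative $r$-stack), and fiber products of locally finitely presented derived Artin $r$-stacks over such are again of the same type. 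Hence $\Gamma_{\mathrm{syn}}(\mathcal{X}_{m+1})$ is representable, completing the induction at $m+1=n$.

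The main obstacle is the bookkeeping in the deformation-theoretic step: one must check carefully that the filtered square-zero extension $\semiframe{A}_{m+1}\to\semiframe{A}_m$ indeed has the fiber claimed, \emph{compatibly} as $C$ varies over semiperfectoid $R$-algebras (so that the construction glues under quasisyntomic descent), and that the $\varphi$-semilinear structure on the fiber gauge is the one pulled back from $\mathcal{M}_1(\mathcal{X})$ over $\Gamma_{\mathrm{syn}}(\mathcal{X}_1)$ rather than some Frobenius twist thereof. This is where one genuinely uses the filtered integrability of $\mathcal{X}$ (to reduce $\Map$ out of $\Rees(\Fil^\bullet_{\mathcal{N}}\Prism_C/{}^{\mathbb{L}}p^\bullet)$ to the attractor-plus-deformation description, as in Remark~\ref{rem:1_bounded_sections_dependence}) together with the $1$-boundedness, and it is essentially the same mechanism as Lemma~\ref{lem:A_gauge_nilpotent}; once that identification is pinned down, everything else is formal.
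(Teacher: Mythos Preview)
Your approach is correct and arrives at the same place, but the paper takes a more direct route that avoids the semiframe machinery of \S\ref{subsec:abstract_def_theory}. Rather than invoking the square~\eqref{eqn:semiframe_deformation}, the paper first decomposes $\Gamma_{\mathrm{syn}}$ via the equalizer presentation~\eqref{eqn:sections_first_desc} into $\Gamma_{\mathcal{N}}$ and $\Gamma_{\Prism}$, and then applies the raw filtered deformation theory of \S\ref{subsec:filtered_square-zero} to each piece separately. For $\Gamma_{\mathcal{N}}$, after reducing to semiperfect $C$ and using Theorem~\ref{thm:semiperf_crys}, the square-zero extension of filtered rings $\Fil^\bullet_{\mathcal{N}}\Prism_C/{}^{\mathbb{L}}p^{m+1}\to\Fil^\bullet_{\mathcal{N}}\Prism_C/{}^{\mathbb{L}}p^m$ has fiber $\Fil^\bullet_{\mathcal{N}}\overline{\Prism}_C$, so the deformation square for $\Gamma_{\mathcal{N}}(\mathcal{X}_{m+1})\to\Gamma_{\mathcal{N}}(\mathcal{X}_m)$ has bottom-right corner $\Gamma_{\mathcal{N}}(\mathcal{M}_1(\mathcal{X})[1])\times_{\Gamma_{\mathcal{N}}(\mathcal{X}_1)}\Gamma_{\mathcal{N}}(\mathcal{X}_m)$ immediately; there is an analogous square for $\Gamma_{\Prism}$, and taking equalizers yields the proposition.

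Your invocation of Lemma~\ref{lem:A_gauge_nilpotent} and Remark~\ref{rem:nilpotent_locus} as ``the key input that makes the fiber term reduce to a level-$1$ object'' is misplaced: those results are about nilpotence of the divided Frobenius $\dot{\varphi}_1$ and play no role here. The deformation term lives at level $1$ for the trivial reason that the fiber of $\Int/p^{m+1}\Int\to\Int/p^m\Int$ is $\Field_p$, so the filtered module controlling obstructions is already the mod-$p$ object; neither $1$-boundedness nor filtered integrability is needed for the Cartesian square itself (see also Remark~\ref{rem:bootstrapping_coeffs_no_need_for_char_p}, which notes the square holds without the $\Field_p$-algebra hypothesis on $R$). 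Those hypotheses only enter through the representability of $\Gamma(\mathcal{M}_1(\mathcal{X})[1])$ via Corollary~\ref{cor:1_bounded_level_1_repble}. Your induction for the final representability conclusion is correct and matches the paper.
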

\begin{proof}
The last assertion follows from Theorem~\ref{thm:devissage_to_fzips} and Corollary~\ref{cor:1_bounded_level_1_repble}.

The existence of the Cartesian diagram of prestacks is an application of deformation theory. Let $\mathcal{L}_1^{\mathcal{N}}(\mathcal{X})$ (resp. $\mathcal{L}_1^{\Prism}(\mathcal{X})$) be the almost perfect complex over the Nygaard filtered prismatization of $\Gamma_{\mathcal{N}}(\mathcal{X}_1)$ (resp. over the prismatization of $\Gamma_{\Prism}(\mathcal{X}_1)$) obtained similarly to $\mathcal{L}_1(\mathcal{X})$ by pulling back the relative cotangent complex of $\mathcal{X}^\preoneb$ along the tautological map.

From the first relative vector stack, we obtain a prestack over $\Gamma_{\mathcal{N}}(\mathcal{X}_1)$ given on pairs $(C,x)$ with $x\in \Gamma_{\mathcal{N}}(\mathcal{X}_1)(C)$ by
\begin{align*}
\Gamma^*_{\mathcal{N}}(\mathcal{L}_1(\mathcal{X})[-1]):(C,x)&\mapsto \Map_{\mathrm{QCoh}(C^{\mathcal{N}}\otimes\Field_p)}(\mathcal{L}_1^{\mathcal{N}}(\mathcal{X})_x,\Reg{C^{\mathcal{N}}\otimes\Field_p}[1]).
\end{align*}
Similarly, over $\Gamma_{\Prism}(\mathcal{X}_1)$, we obtain a prestack given on pairs $(C,x)$ with $x\in \Gamma_{\Prism}(\mathcal{X}_1)(C)$ by
\begin{align*}
\Gamma^*_{\Prism}(\mathcal{L}_1(\mathcal{X})[-1]):(C,x)&\mapsto \Map_{\mathrm{QCoh}(C^{\Prism}\otimes\Field_p)}(\mathcal{L}_1^{\Prism}(\mathcal{X})_x,\Reg{C^{\Prism}\otimes\Field_p}[1]).
\end{align*}

Suppose that $C$ is semiperfect to fix ideas: we can reduce to considering only such inputs by quasi-syntomic descent.

Let us look at the fibers of the map
\[
\Gamma_{\mathcal{N}}(\mathcal{X}_{m+1})(C)\to  \Gamma_{\mathcal{N}}(\mathcal{X}_m)(C).
\]
By Theorem~\ref{thm:semiperf_crys}, $C^{\mathcal{N}}\otimes\Int/p^r\Int$ is the Rees stack $\Rees(\Fil^\bullet_{\mathcal{N}}\Prism_C)\otimes\Int/p^r\Int$. Then by the discussion in~\S\ref{subsec:filtered_square-zero} we have a canonical Cartesian square
\[
\begin{diagram}
\Gamma_{\mathcal{N}}(\mathcal{X}_{m+1})(C)&\rTo&\Gamma_{\mathcal{N}}(\mathcal{X}_m)(C)\\
\dTo&&\dTo_0\\
\Gamma_{\mathcal{N}}(\mathcal{X}_m)(C)&\rTo&\Map(\mathcal{R}(\Fil^\bullet_{\mathcal{N}}\Prism_C/{}^{\mathbb{L}}p^m \oplus \Fil^\bullet_{\mathcal{N}}\overline{\Prism}_C[1]),\mathcal{X}^\preoneb)\times_{X^{\preoneb,0,(1)}(C)}X^{0,(1)}(C).
\end{diagram}  
\]

Moreover, the prestack over $R$ given by
\[
C\mapsto\Map(\mathcal{R}(\Fil^\bullet_{\mathcal{N}}\Prism_C/{}^{\mathbb{L}}p^m \oplus \Fil^\bullet_{\mathcal{N}}\overline{\Prism}_C[1]),\mathcal{X}^\preoneb)\times_{X^{\preoneb,0,(1)}(C)}X^{0,(1)}(C)
\]
is canonically isomorphic over $\Gamma_{\mathcal{N}}(\mathcal{X}_m)$ to the pullback of the stack $\Gamma^*_{\mathcal{N}}(\mathcal{L}_1(\mathcal{X})[-1])$, showing that we have a Cartesian diagram of prestacks over $\Gamma_{\mathcal{N}}(\mathcal{X}_m)$:
\[
\begin{diagram}
\Gamma_{\mathcal{N}}(\mathcal{X}_{m+1})&\rTo&\Gamma_{\mathcal{N}}(\mathcal{X}_m)\\
\dTo&&\dTo_0\\
\Gamma_{\mathcal{N}}(\mathcal{X}_m)&\rTo&\Gamma^*_{\mathcal{N}}(\mathcal{L}_1(\mathcal{X})[-1])\times_{\Gamma_{\mathcal{N}}(\mathcal{X}_1)}\Gamma_{\mathcal{N}}(\mathcal{X}_m)
\end{diagram}  
\]

There exists an analogous Cartesian diagram with $\mathcal{N}$ replaced with $\Prism$. Combining these two diagrams with~\eqref{eqn:sections_first_desc} now proves the Proposition.
\end{proof}

\begin{remark}
\label{rem:bootstrapping_coeffs_no_need_for_char_p}
Note that the establishment of the Cartesian diagrams in the proposition above did not use the hypothesis that $R$ is an $\Field_p$-algebra: One simply has to replace the word `semiperfect' by `semiperfectoid' in the proof.
\end{remark}

Suppose that $\mathcal{X}^\preoneb$ is as in (2) of Theorem~\ref{thm:devissage_to_fzips}. Then we have the following more streamlined assertion, which can be deduced from Proposition~\ref{prop:bootstrapping_coeffs} and Corollary~\ref{cor:1_bounded_level_1_repble}.
\begin{corollary}
\label{cor:smooth_bootstrapping_coeffs}
With these hypotheses, $\Gamma_{\mathrm{syn}}(\mathcal{X}_{m+1})\to \Gamma_{\mathrm{syn}}(\mathcal{X}_{m})$ is a quasisyntomic torsor under the smooth Artin stack $\Gamma^*(\mathcal{L}_1(\mathcal{X}))$, and is in particular smooth and surjective.
\end{corollary}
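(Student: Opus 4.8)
The plan is to deduce Corollary~\ref{cor:smooth_bootstrapping_coeffs} directly from the Cartesian diagram of Proposition~\ref{prop:bootstrapping_coeffs} together with Corollary~\ref{cor:1_bounded_level_1_repble}(2). First I would observe that the hypothesis ``$\mathcal{X}^\diamond$ is smooth over $R^{\mathrm{syn}}\otimes\Int/p^n\Int$ with $1$-connective relative tangent complex'' passes to the restriction $\mathcal{X}_1$, so the $1$-bounded perfect $F$-gauge $\mathcal{M}_1(\mathcal{X})$ of level $1$ over $\Gamma_{\mathrm{syn}}(\mathcal{X}_1)$ has Tor amplitude in $[-r,s]$ with $s\le -1$. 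By Corollary~\ref{cor:1_bounded_level_1_repble}(2) this means $\Gamma(\mathcal{M}_1(\mathcal{X})) \to \Gamma_{\mathrm{syn}}(\mathcal{X}_1)$ is a smooth, faithfully flat Artin stack; pulling back along $\Gamma_{\mathrm{syn}}(\mathcal{X}_m)\to \Gamma_{\mathrm{syn}}(\mathcal{X}_1)$ (recall $\mathcal{M}_1(\mathcal{X})$ is obtained by pulling back the tangent complex along the tautological map, so all these fiber products make sense) we get that
\[
\Gamma(\mathcal{M}_1(\mathcal{X}))\times_{\Gamma_{\mathrm{syn}}(\mathcal{X}_1)}\Gamma_{\mathrm{syn}}(\mathcal{X}_m)\to \Gamma_{\mathrm{syn}}(\mathcal{X}_m)
\]
is again smooth and faithfully flat.

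Next I would unwind the group-stack structure. Since $\mathcal{M}_1(\mathcal{X})$ is a perfect $F$-gauge, $\Gamma(\mathcal{M}_1(\mathcal{X}))$ is canonically a commutative group stack over $\Gamma_{\mathrm{syn}}(\mathcal{X}_1)$ (the $\tau^{\le 0}$ of a cohomology complex carries an $E_\infty$-group structure), and the shift $\Gamma(\mathcal{M}_1(\mathcal{X})[1])$ is its classifying stack relative to $\Gamma_{\mathrm{syn}}(\mathcal{X}_1)$: concretely, on semiperfectoid inputs this is visible from the formula~\eqref{eqn:SofX_formula} and Corollary~\ref{cor:fppf_cohomology}, and in general it follows by quasisyntomic descent. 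Therefore the lower-left corner of the square in Proposition~\ref{prop:bootstrapping_coeffs}, namely $\Gamma(\mathcal{M}_1(\mathcal{X})[1])\times_{\Gamma_{\mathrm{syn}}(\mathcal{X}_1)}\Gamma_{\mathrm{syn}}(\mathcal{X}_m)$, is the classifying stack of the group stack $\Gamma(\mathcal{M}_1(\mathcal{X}))\times_{\Gamma_{\mathrm{syn}}(\mathcal{X}_1)}\Gamma_{\mathrm{syn}}(\mathcal{X}_m)$ over $\Gamma_{\mathrm{syn}}(\mathcal{X}_m)$. A Cartesian square in which the bottom map is the zero (basepoint) section $\Gamma_{\mathrm{syn}}(\mathcal{X}_m)\to B\mathcal{G}$ for a group stack $\mathcal{G}$ exhibits the top-left corner as a $\mathcal{G}$-torsor over the top-right corner; applying this to the square of Proposition~\ref{prop:bootstrapping_coeffs} gives that $\Gamma_{\mathrm{syn}}(\mathcal{X}_{m+1})\to \Gamma_{\mathrm{syn}}(\mathcal{X}_m)$ is a torsor under $\Gamma(\mathcal{M}_1(\mathcal{X}))$ (pulled back to $\Gamma_{\mathrm{syn}}(\mathcal{X}_m)$), which I will abbreviate as in the statement.

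Finally I would read off the asserted properties. A torsor under a smooth faithfully flat group stack is smooth and surjective, since smoothness and surjectivity are flat-local on the base and the torsor becomes a trivial torsor — hence isomorphic to the group stack itself — after the faithfully flat base change $\Gamma_{\mathrm{syn}}(\mathcal{X}_{m+1})\to \Gamma_{\mathrm{syn}}(\mathcal{X}_m)$; the quasisyntomic local triviality is built into the construction via the quasisyntomic descent used to produce the Cartesian square. This gives the map $\BT{n+1}\to\BT{n}$-style conclusion: $\Gamma_{\mathrm{syn}}(\mathcal{X}_{m+1})\to \Gamma_{\mathrm{syn}}(\mathcal{X}_m)$ is a quasisyntomic torsor under the smooth Artin stack $\Gamma(\mathcal{M}_1(\mathcal{X}))$ and in particular smooth and surjective, as claimed.

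The main obstacle I anticipate is the bookkeeping identifying $\Gamma(\mathcal{M}_1(\mathcal{X})[1])$ with the relative classifying stack $B$ of the group stack $\Gamma(\mathcal{M}_1(\mathcal{X}))$ compatibly with the Cartesian square — i.e.\ checking that the zero section in the bottom of the square of Proposition~\ref{prop:bootstrapping_coeffs} really is the basepoint of a classifying-stack presentation, rather than merely a zero section of a pointed stack. This is where the $1$-boundedness and the smoothness/$1$-connectivity hypotheses do real work (via Corollary~\ref{cor:1_bounded_level_1_repble}(2), which forces the relevant cohomology to be concentrated so that $\tau^{\le 0}$ of the shift is genuinely a $1$-stack that is a classifying stack), and it should be handled by reducing to semiperfectoid inputs and invoking Theorem~\ref{thm:semiperf_crys}, Theorem~\ref{thm:fppf_cohomology} and Corollary~\ref{cor:fppf_cohomology}, exactly as in the proof of Proposition~\ref{prop:bootstrapping_coeffs}.
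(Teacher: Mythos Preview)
Your proposal is correct and follows exactly the approach the paper intends: the paper's own proof is simply the one-line remark that the corollary ``can be deduced from Proposition~\ref{prop:bootstrapping_coeffs} and Corollary~\ref{cor:1_bounded_level_1_repble},'' and you have filled in precisely those details. One small slip: the object $\Gamma(\mathcal{M}_1(\mathcal{X})[1])\times_{\Gamma_{\mathrm{syn}}(\mathcal{X}_1)}\Gamma_{\mathrm{syn}}(\mathcal{X}_m)$ sits in the lower-\emph{right} corner of the square, not the lower-left.
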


\subsection{Deformation theory}
\label{subsec:1-bounded_stacks_def_theory}

We continue with the assumptions of the previous subsection. 

\begin{construction}
Suppose that we have a divided power thickening $(C'\twoheadrightarrow C,\gamma)$ of $R$-algebras. Then we have a canonical commuting square
\begin{align}\label{eqn:canonical_commuting_square}
\begin{diagram}
\Gamma_{\mathrm{syn}}(\mathcal{X})(C')&\rTo&X^{-,(n)}(C)\\
\dTo&&\dTo\\
\Gamma_{\mathrm{syn}}(\mathcal{X})(C)&\rTo&X^{-,(n)}(C)\times_{X^{(n)}(C)}X^{(n)}(C').
\end{diagram} 
\end{align}
This is obtained as follows: The top arrow and the first coordinate of the bottom arrow are obtained from the canonical map $\Gamma_{\mathrm{syn}}(\mathcal{X})\to X^{-,(n)}$ obtained via pullback along the mod-$p^n$ reduction of the map
\[
x^{\mathcal{N}}_{\dR}:\Aff^1/\Gm\times \Spec C\to C^{\mathcal{N}}
\]
for every $R$-algebra $C$. The second coordinate of the bottom arrow is obtained via pullback along the (mod-$p^n$ reduction of the) lift
\[
\tilde{x}_{\dR,C'}:\Spec C'\to C^{\Prism}
\]
from Lemma~\ref{lem:lift_divided_powers}. 
\end{construction}

\begin{theorem}
\label{thm:groth_messing_mod_p}
Suppose that $\mathcal{X}$ is $1$-bounded and strongly integrable, and that $\Gamma_{\mathrm{syn}}(\mathcal{X})$ is represented by a locally almost finitely presented derived $p$-adic formal Artin stack over $R$ \footnote{We will only need it to be locally almost finitely presented and infinitesimally cohesive.}. Let $(C'\twoheadrightarrow C,\gamma)$ be a \emph{nilpotent} divided power thickening. Then the commuting square~\eqref{eqn:canonical_commuting_square} is \emph{Cartesian}.
\end{theorem}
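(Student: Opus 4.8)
The strategy is to reduce everything to a statement that lives purely on the side of the higher frames, where it has already been established in Section~\ref{sec:higher_frames}, and then to transport it back via quasisyntomic descent and Theorem~\ref{thm:semiperf_crys}. First I would use the fact that both sides of~\eqref{eqn:canonical_commuting_square} are infinitesimally cohesive and nilcomplete functors on $\mathrm{CRing}^{f,p\text{-comp}}_{R/}$ (for the left-hand side this is part of the hypothesis that $\Gamma_{\mathrm{syn}}(\mathcal{X})$ is a locally finitely presented formal derived Artin stack; for the right-hand side this follows from Proposition~\ref{prop:X_attractor_repble} together with Lemma~\ref{lem:weil_restriction}). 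This means the Cartesianness of the square can be checked after passing to a quasisyntomic cover, and moreover we may filter the divided power thickening $(C' \twoheadrightarrow C, \gamma)$ by a tower of \emph{square-zero} divided power thickenings compatible with $\gamma$; since nilpotence of $\gamma$ is preserved under such filtering, we are reduced to the case where $C' \twoheadrightarrow C$ is a square-zero divided power thickening with nilpotent divided powers, and where $C$ (hence $C'$) is semiperfectoid.

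For such an input, Theorem~\ref{thm:absolute_prismatic_cohom} identifies $C^{\Prism}$ with $\Spf \Prism_C$ and $C^{\mathcal{N}}$ with $\Rees(\Fil^\bullet_{\mathcal{N}}\Prism_C)$ via Theorem~\ref{thm:semiperf_crys}; similarly for $C'$. The divided power structure $\gamma$ produces, by Lemma~\ref{lem:lift_divided_powers} (and Remark~\ref{rem:crystalline_site}), a canonical lift $\Prism_C \to C'$ of $\Prism_C \to C$. The key point is that this package organizes exactly into a map of semiframes $q : \semiframe{\Prism}_{C'} \to \semiframe{\Prism}_C$ satisfying the hypotheses of Proposition~\ref{prop:def_theory_frames}: condition~(1) there becomes the connectivity of $R_K = \hker(C' \twoheadrightarrow C)$, which holds because $C' \twoheadrightarrow C$ is a \emph{surjective} square-zero extension (cf.~Remark~\ref{rem:frames_surjective}), and condition~(2)---the topological local nilpotence of the induced map $\dot{\varphi}_1$ on $K = K_{\Prism_{C'} \twoheadrightarrow \Prism_C}$ modulo $(p,I)$---is precisely the content of Proposition~\ref{prop:dot_varphi_1_nilpotent}, which was proved assuming the divided powers are nilpotent. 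Granting this, Proposition~\ref{prop:def_theory_frames} (via the commuting square of Remark~\ref{rem:commuting_square_frames}) tells us that the analogue of~\eqref{eqn:canonical_commuting_square} with $\Gamma_{\mathrm{syn}}$ replaced by $\Gamma_{\semiframe{\Prism}_C}$ and the Weil restrictions $X^{-,(n)}, X^{(n)}$ replaced by the frame-level attractor/stack is Cartesian, where I take $\semiframe{B} = \semiframe{\Prism}_{C'}/{}^{\mathbb{L}}p^n$, $\semiframe{A} = \semiframe{\Prism}_C/{}^{\mathbb{L}}p^n$, and apply the $1$-bounded, filtered integrable stack $\mathcal{X}$ pulled back along $\iota_{\underline{\Prism}_C}$.

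It then remains to match the frame-level Cartesian square with~\eqref{eqn:canonical_commuting_square}. The identifications $\Gamma_{\underline{\Prism}_C}(\mathcal{X},\xi)(C) \simeq \Gamma_{\mathrm{syn}}(\mathcal{X})(C)$ follow from Theorem~\ref{thm:semiperf_crys} and the definitions in~\S\ref{subsec:abstract_def_theory} (note $\Gamma_{\underline{A}}(\mathcal{X},\xi)$ depends only on the mod-$p^n$ semiframe, by Remark~\ref{rem:1_bounded_sections_dependence}). The right-hand vertices match because the frame-level attractor $X^{-}$ for the $p$-adic filtration on $\Aff^1/\Gm$ over $C/{}^{\mathbb{L}}p^n$ is, by the definitions in~\S\ref{subsec:aux_stacks} and Remark~\ref{rem:abstract_de_rham_point}, exactly $X^{-,(n)}(C)$; and the second factor $X^{(n)}(C')$ arises from the lift $\tilde{x}_{\dR,C'}$, which by Remark~\ref{rem:crys_cohomology_de_rham} is the same as the lift $\Prism_C \to C'$ used in the frame picture. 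The compatibility of the two horizontal arrows then comes down to unwinding that the canonical map $\Gamma_{\mathrm{syn}}(\mathcal{X}) \to X^{-,(n)}$ defined via $x^{\mathcal{N}}_{\dR}$ coincides with the pullback along the map from Remark~\ref{rem:abstract_de_rham_point} for $\semiframe{\Prism}_C$, which is exactly the content of Remark~\ref{rem:1_bounded_sections_dependence} together with the construction $\iota_{\underline{\Prism}_C}$ of Proposition~\ref{prop:frames_to_nygaard}. Finally, since the resulting diagram of \'etale sheaves on $C_{\et}$ is Cartesian for all semiperfectoid $C$ in a quasisyntomic cover and all $m$ in the tower of square-zero thickenings, nilcompleteness and infinitesimal cohesiveness of both sides propagate the conclusion to the original $(C' \twoheadrightarrow C, \gamma)$.

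\textbf{Main obstacle.} The genuinely substantive input is Proposition~\ref{prop:dot_varphi_1_nilpotent}---the nilpotence of the divided Frobenius on $K_{\Prism_{C'} \twoheadrightarrow \Prism_C}/{}^{\mathbb{L}}(p,I)$---which is exactly where the hypothesis that the divided powers are \emph{nilpotent} (rather than merely pointwise nilpotent, as in the non-nilpotent counterexample flagged in the Remark after that Proposition) is used; everything else is bookkeeping translating between the syntomic and frame-theoretic pictures. Since that Proposition is already proved in the excerpt, the only real work here is making the dictionary of Remarks~\ref{rem:commuting_square_frames},~\ref{rem:1_bounded_sections_dependence} and the identifications of~\S\ref{subsec:frames_cartier-witt} precise enough that the two Cartesian squares are literally identified; the subtlety to watch is the Frobenius twist built into the base-point of the Hodge-Tate/de Rham embeddings (Remark~\ref{rem:point_frob_twist}), which must be tracked carefully when matching $X^{(n)}(C')$ with the frame-level stack evaluated on the lift.
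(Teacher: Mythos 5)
Your overall strategy is the right one and matches the paper's: reduce by quasisyntomic descent to semiperfectoid (in fact semiperfect) inputs, interpret the divided power structure via Lemma~\ref{lem:lift_divided_powers} as a factoring $\Prism_{C'}\to \Prism_C\to C'$, feed this into the deformation-theoretic Proposition~\ref{prop:def_theory_frames}, and supply the nilpotence hypothesis from Proposition~\ref{prop:dot_varphi_1_nilpotent}. However, the reduction to ``square-zero divided power thickenings'' and the claim that this puts you in the setting of Proposition~\ref{prop:def_theory_frames} has a genuine gap.

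You assert that condition (1) of Proposition~\ref{prop:def_theory_frames} ``becomes the connectivity of $R_K = \hker(C'\twoheadrightarrow C)$, which holds because $C'\twoheadrightarrow C$ is a surjective square-zero extension,'' citing Remark~\ref{rem:frames_surjective}. But that remark says condition (1) is equivalent to $R_K$ being \emph{$1$-connective}, i.e., $\pi_0(C')\xrightarrow{\simeq}\pi_0(C)$, not merely to $R_K$ being connective. For any surjection $R_K$ is connective; that is vacuous. A square-zero thickening of discrete rings has fiber concentrated in degree $0$, so $R_K$ is not $1$-connective, and condition (1) fails. There is a second issue you do not address: as the paper points out, outside the $1$-connective case the map $\Prism_{C'}\to\Prism_C$ need not even be surjective, and surjectivity is part of the setup of Remark~\ref{rem:commuting_square_frames} and hence of Proposition~\ref{prop:def_theory_frames}. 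So your proposed direct application of the frame-theoretic proposition does not go through at the step where you apply it.

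The way the paper gets around this is precisely the extra bookkeeping your write-up is attempting to bypass. One first proves the theorem when $R_K$ is $1$-connective (where Proposition~\ref{prop:def_theory_frames} applies directly, with the surjectivity of $\Prism_{C'}\to\Prism_C$ coming from Remark~4.1.20 of Bhatt--Lurie), and then reduces the general case to this one by a devissage that uses: (i) factorizations through intermediate divided power thickenings, (ii) Postnikov truncations and nilcompleteness to pass to $1$-truncated and then discrete inputs, (iii) local finite presentation to pass to finitely generated ideals so that the divided power filtration $I^{[m]}$ terminates (the hypothesis in Definition~\ref{defn:nilpotent_divided_powers} is only local nilpotence of $\gamma_p$, not vanishing of some $I^{[m]}$, so this step is not free), and finally---crucially---(iv) infinitesimal cohesiveness to replace a square-zero extension $C'\twoheadrightarrow C$ by discrete $I$ with the \emph{trivial} thickening $C'=C\oplus I[1]$, whose fiber $I[1]$ \emph{is} $1$-connective. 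Step (iv) is exactly what converts the situation your reduction lands in (discrete square-zero fiber, $0$-connective) into the one where Proposition~\ref{prop:def_theory_frames} applies. If you add this bridge via infinitesimal cohesiveness, together with the intermediate Postnikov/finite-generation steps, your argument becomes the paper's.
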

\begin{proof}
Write
\[
\alpha_{(C'\twoheadrightarrow C,\gamma)}:\Gamma_{\mathrm{syn}}(\mathcal{X})(C')\to \Gamma_{\mathrm{syn}}(\mathcal{X})(C)\times_{X^{-,(n)}(C)\times_{X^{(n)}(C)}X^{(n)}(C')}X^{-,(n)}(C')
\]
for the natural map. We would like to show that it is an equivalence when the divided power thickening is nilpotent.

By quasisyntomic descent, we can reduce to the case where $C'$ and $C$ are semiperfect. Note that the existence of divided powers on $C'\twoheadrightarrow C$ is equivalent to the existence of a commuting diagram
\[
\begin{diagram}
\Prism_{C'}=A_{\crys}(C')&\rTo&\Prism_C = A_{\crys}(C)\\
\dTo&\ldTo&\dTo\\
C'&\rTo&C
\end{diagram}
\]
Moreover, by Proposition~\ref{prop:dot_varphi_1_nilpotent} and its proof, the nilpotence of the divided powers is equivalent to the topological local nilpotence of the divided Frobenius $\dot{\varphi}_1$ on $\hker(\Prism_{C'}\to \Prism_C)$ arising from the diagonal lift in the diagram.

Now, first suppose that $\hker(C'\twoheadrightarrow C)$ is $1$-connective. In this case, we are in the situation of Proposition~\ref{prop:def_theory_frames} with $\underline{B} = \underline{\Prism}_{C'}$ and $\underline{A} = \underline{\Prism}_C$. Indeed, by Remark~\ref{rem:frames_surjective}, and the already observed topological local nilpotence of $\dot{\varphi}_1$ it only remains to check that the map $\Prism_{C'}\to \Prism_C$ is surjective. This follows from the argument used in~\cite[Remark 4.1.20]{bhatt2022absolute}: One reduces to checking that the map is surjective on the associated graded for the conjugate filtration on Hodge-Tate cohomology (that is, Hodge cohomology), and hence that the map $\wedge^k_{C'}\mathbb{L}_{C'/\Field_p}[-k]\to \wedge^k_{C}\mathbb{L}_{C/\Field_p}[-k]$ has connective fiber for all $k\ge 0$. This reduces in turn to knowing that the map
\[
  C'\otimes_C\wedge^k_{C'}\mathbb{L}_{C'/\Field_p}\to \wedge^k_{C}\mathbb{L}_{C/\Field_p}
\]
has $k$-connective fiber. However this fiber is filtered with associated gradeds isomorphic to
\[
   \wedge^i_{C}\mathbb{L}_{C/\Field_p}\otimes_C\wedge^j_C(\mathbb{L}_{C/C'}[-1])
\]
for $i+j=k$ and $j\ge 1$, and so it is enough to know that each of these pieces is $k$-connective. Now, $\mathbb{L}_{C/\Field_p}$ is $1$-connective, while $\mathbb{L}_{C/C'}$ is $2$-connective~\cite[Corollary 25.3.6.4]{Lurie2018-kh}, so the desired connectivity follows from d\'ecalage~\cite[Proposition 25.2.4.2]{Lurie2018-kh}.

In the general case, $\Prism_{C'}\to \Prism_C$ need not be surjective, so we cannot directly apply the arguments from \S~\ref{subsec:abstract_def_theory}. 

To proceed, let us make note of the following principles:
\begin{enumerate}
   \item If we have a factoring $C'\twoheadrightarrow C_1\twoheadrightarrow C$, equipped with compatible nilpotent divided power structures, then we can verify the Cartesianness of the square for $C'\twoheadrightarrow C$ by separately verifying it for each factoring map.
   \item  For every $m\ge 0$, $\tau_{\leq m}C'\twoheadrightarrow \tau_{\leq m}C$ inherits a divided power structure from $C'\twoheadrightarrow C$.
   \item If $C_1\to C$ is any map of semiperfect $\Field_p$-algebras, then $C_1\times_CC'\twoheadrightarrow C_1$ is equipped with a nilpotent divided power structure
\end{enumerate}

Applying these principles, we get a factoring 
\[
C'\twoheadrightarrow C\times_{\tau_{\ge 1}(C)}\tau_{\ge 1}(C')\twoheadrightarrow C
\] 
equipped with compatible divided powers. The first of these maps has $1$-connective homotopy kernel, and we have already verified that~\eqref{eqn:canonical_commuting_square} is Cartesian for such maps. The second is a base-change of the map $\tau_{\ge 1}(C)\twoheadrightarrow \tau_{\ge 1}(C')$, and using infinitesimal cohesiveness we can reduce the checking of the Cartesianness of the associated square to that of the square associated with the map of $1$-truncated rings. 

Therefore, we can assume that $C'$ and $C$ are $1$-truncated. Via the same technique, applied now to $0$-truncations, we can reduce to checking separately the following two cases:
\begin{itemize}
   \item $\pi_0(C')\xrightarrow{\simeq}\pi_0(C)$ is an isomorphism;
   \item $C'$ and $C$ are discrete.
\end{itemize}

Let us first deal with the second case. Here, we have $C = C'/I$, where $I\subset C'$ is an ideal equipped with nilpotent divided powers. Using local almost finite presentation, we see that we have 
\[
\Gamma_{\mathrm{syn}}(\mathcal{X})(C)= \colim_{J\subset I}\Gamma_{\mathrm{syn}}(\mathcal{X})(C'/J)
\]
where $J\subset I$ ranges over the finitely generated sub-ideals of $I$ that inherit a divided power structure from $J$.\footnote{Note that, since the divided powers are nilpotent, the divided power closure of any finitely generated ideal is once again finitely generated.} This lets us reduce to the case where $I$ is finitely generated, so that the divided powers $I^{[n]}$ of $I$ are eventually $0$.  By repeatedly applying the first principle above, we can reduce to the case where $I^{[2]} = 0$, so that the divided powers are trivial and $I\subset C'$ is a square-zero ideal. In this case, by the infinitesimal cohesiveness of $\Gamma_{\mathrm{syn}}(\mathcal{X})$, it suffices to verify the theorem in the situation where $C' = C\oplus I[1]$, where we are reduced to the already known case of $1$-connective fiber. 

Now, let us consider the first case where $\pi_0(C')\xrightarrow{\simeq}\pi_0(C)$ is an isomorphism. If we set $D = \pi_0(C)$, then $C'$ (resp. $C$) is a square-zero thickening by $M'[1]$ (resp. $M[1]$) for some discrete $D$-modules $M'$, $M$, and the map $C'\twoheadrightarrow C$ arises from a map $M'\to M$ of $D$-modules. If $M''\subset M$ is the image of $M$, we obtain a factoring $C'\twoheadrightarrow C''\twoheadrightarrow C$ into nilpotent divided power thickenings where $C''$ is a square-zero extension of $D$ by $M''[1]$. The first of these maps has $1$-connective fiber, so we are reduced to the situation where $M = M''$ is a submodule of $M'$. Set $N= M'/M$; then the fiber of $C'\twoheadrightarrow C$ is the discrete $D$-module $N$, and so $N$ is equipped with a divided power filtration given by the submodules $N^{[m]}\subset N$ generated by the images of $\gamma_k:N\to N$ for all $k\ge m$. Just as in the classical case, we can use this filtration to reduce to the case of a trivial square-zero thickening, and hence to the case of $1$-connective fiber.
\end{proof}

\begin{remark}
\label{rem:nilpotent_groth_messing}
Under further restrictions, one can dispense with the nilpotence of the divided powers on $C'\twoheadrightarrow C$.  Note that we have maps 
\[
\Gamma_{\mathrm{syn}}(\mathcal{X})(C) \to \Gamma_{\Fzip}(\mathcal{X})(C) \xrightarrow{x\mapsto \psi_x}\pi_0(\Map_{C}(\Fil^1_{\mathrm{Hdg}}L_x,\varphi^*\Fil^1_{\mathrm{Hdg}}L_x)).
\]
Here, $\psi_x$ was defined in Construction~\ref{const:S1_mathcal_X}. Within the $\pi_0(C)$-module on the right, we have the subspace of nilpotent operators $f$ satisfying
\[
0 = \Phi_n(f) = (\varphi^n)^*f\circ \cdots\circ\varphi^*f\circ f = 0\in \pi_0(\Map_{C}(\Fil^1_{\mathrm{Hdg}}L_x,(\varphi^{n+1})^*\Fil^1_{\mathrm{Hdg}}L_x))
\]
for some $n\ge 1$.

Let
\[
\Gamma^{\mathrm{nilp}}_{\mathrm{syn}}(\mathcal{X})(C) \to \Gamma_{\mathrm{syn}}(\mathcal{X})(C)
\]
be the pullback of this subspace. This is the \defnword{nilpotent locus} of $\Gamma_{\mathrm{syn}}(\mathcal{X})(C)$.

We then claim that, for any divided power thickening $(C'\twoheadrightarrow C,\gamma)$ as above, the commuting square~\eqref{eqn:canonical_commuting_square} restricts to a Cartesian square
\begin{align}\label{eqn:nilpotent_cartesian_square}
\begin{diagram}
\Gamma^{\mathrm{nilp}}_{\mathrm{syn}}(\mathcal{X})(C')&\rTo&X^{-,(n)}(C')\\
\dTo&&\dTo\\
\Gamma^{\mathrm{nilp}}_{\mathrm{syn}}(\mathcal{X})(C)&\rTo&X^{-,(n)}(C)\times_{X^{(n)}(C)}X^{(n)}(C')
\end{diagram}
\end{align}
Since the kernel of $\pi_0(C')\to \pi_0(C)$ is locally nilpotent, we see that $\Gamma^{\mathrm{nilp}}_{\mathrm{syn}}(\mathcal{X})(C')$ factors through $\Gamma^{\mathrm{nilp}}_{\mathrm{syn}}(\mathcal{X})(C)$, giving us the restricted commuting square. To see that this is Cartesian, we can use quasisyntomic descent to reduce to the case where $C'$ and $C$ are semiperfect. Here, if $\hker(C'\twoheadrightarrow C)$ is $1$-connective, then the argument used in the proof above still works: We have to replace the nilpotence of the map $\dot{\varphi}_1$ with the nilpotence of $\psi_x$ and the argument explained in Remark~\ref{rem:nilpotent_locus}.

For the general case, we only give a sketch of a proof: Let us consider the prestack $C\mapsto \Gamma_{\mathrm{Witt}}(\mathcal{X})(C)$ constructed using the Witt vector frames $\underline{W(C)}$ and the definitions from \S~\ref{subsec:abstract_def_theory}. We can similarly define the nilpotent locus
\[
\Gamma^{\mathrm{nilp}}_{\mathrm{Witt}}(\mathcal{X})(C)\to \Gamma_{\mathrm{Witt}}(\mathcal{X})(C).
\]
Now, Remark~\ref{rem:nilpotent_locus_witt} tells us that the natural map
\begin{align}\label{eqn:nilp_can_be_witted}
\Gamma^{\mathrm{nilp}}_{\mathrm{syn}}(\mathcal{X})(C)\to \Gamma^{\mathrm{nilp}}_{\mathrm{Witt}}(\mathcal{X})(C)
\end{align}
induced from the map of frames $\underline{\Prism}_C\to \underline{W(C)}$ is an equivalence.

To proceed, we need to recall the \defnword{relative Witt frame} $\underline{W(C'/C)}$ defined in~\cite[Example 2.1.9]{MR4355476}: All we need is that its underlying animated commutative ring is $W(C')$, but its zeroth associated graded is $C$, and $\underline{W(C')} \to \underline{W(C)}$ factors through a sequence of maps
\[
\underline{W(C')}\to \underline{W(C'/C)}\to \underline{W(C)}.
\]
Using Remark~\ref{rem:nilpotent_locus} again tells us that we can compute $\Gamma^{\mathrm{nilp}}_{\mathrm{Witt}}(\mathcal{X})(C)$ using $\underline{W(C'/C)}$ instead. Combining this with Proposition~\ref{prop:def_theory_frames_trivial} now gives us a Cartesian diagram
\[
\begin{diagram}
\Gamma^{\mathrm{nilp}}_{\mathrm{Witt}}(\mathcal{X})(C')&\rTo&X^{-,(n)}(C')\\
\dTo&&\dTo\\
\Gamma^{\mathrm{nilp}}_{\mathrm{Witt}}(\mathcal{X})(C)&\rTo&X^{-,(n)}(C)\times_{X^{(n)}(C)}X^{(n)}(C')
\end{diagram}.
\]
One checks that this square is compatible via the isomorphism~\eqref{eqn:nilp_can_be_witted} with the square~\eqref{eqn:nilpotent_cartesian_square}. This completes the proof.
\end{remark}   

\begin{remark}
\label{rem:nilpotent_sections_witt}
By quasisyntomic descent, the argument used above shows that we have a frame-theoretic interpretation of the nilpotent locus via the Witt vector frame for \emph{any} $R$-algebra $C$. That is, we always have
\[
\Gamma^{\mathrm{nilp}}_{\mathrm{syn}}(\mathcal{X})(C)\xrightarrow{\simeq}\Gamma^{\mathrm{nilp}}_{\mathrm{Witt}}(\mathcal{X})(C).
\]
\end{remark}  

\begin{remark}
\label{rem:nilpotent_sections-any_frame}
In fact, one can say more. Let $(\underline{A},\zeta)$ be any filtered prism and with $C = R_A\in \mathrm{CRing}_{R/}$, so that we have maps $\mathfrak{S}(\underline{A})\to C^{\mathrm{syn}}\to R^{\mathrm{syn}}$ given by Proposition~\ref{prop:frames_to_nygaard}. Then we can apply the definitions from \S\ref{subsec:abstract_def_theory} to the restriction of $\mathcal{X}$ over $\mathfrak{S}(\underline{A})\otimes\Field_p$ and obtain a space $\Gamma_{\underline{A}}(\mathcal{X})(C)$ equipped with natural maps
\[
\Gamma_{\mathrm{syn}}(\mathcal{X})(C) \to \Gamma_{\underline{A}}(\mathcal{X})(C)\to \Gamma_{\mathrm{Witt}}(\mathcal{X})(C).
\]
If we set 
\[
\Gamma^{\mathrm{nilp}}_{\underline{A}}(\mathcal{X})(C) = \Gamma_{\underline{A}}(\mathcal{X})(C)\times_{\Gamma_{\mathrm{Witt}}(\mathcal{X})(C)}\Gamma^{\mathrm{nilp}}_{\mathrm{Witt}}(\mathcal{X})(C)
\]
then the arguments used above show that we obtain natural maps
\[
\Gamma^{\mathrm{nilp}}_{\mathrm{syn}}(\mathcal{X})(C) \to \Gamma^{\mathrm{nilp}}_{\underline{A}}(\mathcal{X})(C)\to \Gamma^{\mathrm{nilp}}_{\mathrm{Witt}}(\mathcal{X})(C),
\]
where the composition of the two maps as well as the last map are equivalences. In particular, we can compute $\Gamma^{\mathrm{nilp}}_{\mathrm{syn}}(\mathcal{X})(C)$ using \emph{any} such frame $\underline{A}$ with $R_A \simeq C$.
\end{remark}   

\begin{remark}
\label{rem:classical_truncation_enough}
The argument used above, applied for any discrete semiperfect $C$, to the surjective map of frames $\underline{\Prism}_C\to \pi_0(\underline{\Prism}_C)$, shows that $\Gamma^{\mathrm{nilp}}_{\mathrm{syn}}(\mathcal{X})(C)$ depends only on the classical truncation of the syntomification $C^{\mathrm{syn}}$. 

We wonder if this is true even for the non-nilpotent locus. This would be implied by a topological local nilpotence result for the divided Frobenius endomorphism of $\hker(\Prism_R\to \pi_0(\Prism_R))$ for $R$ semiperfect. Concretely, we are looking at the fiber of the map between the derived crystalline cohomology and classical crystalline cohomology of $R$.

If this local nilpotence is true in general, this would show that the classical truncation of $\Gamma_{\mathrm{syn}}(\mathcal{X})$ depends only on the classical truncations of the stacks $C^{\mathrm{syn}}$ for varying $C$. This in turn would explain why, when $\mathcal{X}^\preoneb$ is fibered in derived algebraic spaces over $R^{\mathrm{syn}}\otimes\Field_p$, $\Gamma_{\mathrm{syn}}(\mathcal{X})$ is also a derived algebraic space over $R$.
\end{remark}

\subsection{Bootstrapping from characteristic $p$: the base}
\label{subsec:1-bounded_stacks_bootstrapping_base}
We will now take $R$ to be in $\mathrm{CRing}^{p\text{-comp}}$. For any $p$-adic formal prestack $Z$ over $R$ set $\mathsf{R}(Z) = Z^{(1)}$, so that $\mathsf{R}(Z)(C) = Z(C/{}^{\mathbb{L}}p)$ for any $C\in \mathrm{CRing}^{p\text{-comp}}_{R/}$. This gives an endomorphism of the $\infty$-category of $p$-adic formal prestacks over $R$, and so can be iterated: We have
\[
\mathsf{R}^t(Z)(C) = Z(C\otimes \Field_p^{\otimes_{\Int}t}).
\]

The key for us is the following systematic d\'evissage from characteristic $p$:
\begin{proposition}
\label{prop:devissage}
Let $\mathcal{X}\to R^{\mathrm{syn}}\otimes\Int/p^m\Int$ be a $1$-bounded stack that is strongly integrable. For any $C\in \mathrm{CRing}^{p\text{-comp}}_{R/}$, the canonical map
\[
\Gamma_{\mathrm{syn}}(\mathcal{X})(C) \to \mathrm{Tot}\left(\Gamma_{\mathrm{syn}}(\mathcal{X})(C\otimes_{\Int} \Field_p^{\otimes_{\Int}\bullet +1})\right)
\]
is an equivalence. That is, we have an equivalence of $p$-adic formal prestacks
\[
\Gamma_{\mathrm{syn}}(\mathcal{X})\xrightarrow{\simeq}\mathrm{Tot}\left(\mathsf{R}^{\bullet +1}(\Gamma_{\mathrm{syn}}(\mathcal{X}))\right).
\]
\end{proposition}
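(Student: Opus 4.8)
The statement is a descent result along the cosimplicial $\Int_p$-algebra $\Field_p^{\otimes_{\Int}\bullet+1}$, and the strategy is to reduce it, via filtered integrability and the abstract deformation theory already developed, to a purely frame-theoretic descent statement for the prismatic cohomology of the rings involved, combined with the already-established bootstrapping in the coefficient direction (Proposition~\ref{prop:bootstrapping_coeffs}). First I would reduce to the case $C = R$ (since the formation of $\Gamma_{\mathrm{syn}}(\mathcal{X})$, the cosimplicial diagram, and the Weil restrictions $\mathsf{R}^{\bullet+1}$ all commute with base-change along $R \to C$, so it suffices to prove the statement for the universal $C = R$, or equivalently to prove it for all $C$ with $C$ playing the role of the base). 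Next, using the identification $\mathsf{R}^{t+1}(\Gamma_{\mathrm{syn}}(\mathcal{X}))(C) = \Gamma_{\mathrm{syn}}(\mathcal{X})(C\otimes_{\Int}\Field_p^{\otimes_{\Int}t+1})$ from the definition of $\mathsf{R}$ in \S\ref{subsec:1-bounded_stacks_bootstrapping_base}, the claim becomes precisely that the natural augmentation
\[
\Gamma_{\mathrm{syn}}(\mathcal{X})(C)\to \mathrm{Tot}\Bigl(\Gamma_{\mathrm{syn}}(\mathcal{X})(C\otimes_{\Int}\Field_p^{\otimes_{\Int}\bullet+1})\Bigr)
\]
is an equivalence, so we are reduced to a genuine cohomological descent statement along $C \to C/{}^{\mathbb{L}}p$ (the first term of the \v Cech conerve of $C\to C\otimes_{\Int}\Field_p$, since $C\otimes_{\Int}\Field_p \simeq C/{}^{\mathbb{L}}p$).

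The heart of the argument is the following. By Proposition~\ref{prop:bootstrapping_coeffs} (and Remark~\ref{rem:bootstrapping_coeffs_no_need_for_char_p}, which records that the Cartesian squares there do not need $R$ to be an $\Field_p$-algebra), the value $\Gamma_{\mathrm{syn}}(\mathcal{X}_m)(C)$ is built, by a finite sequence of square-zero-type deformations controlled by the level-$1$ $F$-gauges $\mathcal{M}_1(\mathcal{X})$ and their sections $\Gamma_{\mathrm{syn}}(\mathcal{M}_1(\mathcal{X})[1])$, from $\Gamma_{\mathrm{syn}}(\mathcal{X}_1)(C)$. Since $\mathrm{Tot}$ commutes with pullbacks of prestacks and the construction of these Cartesian squares is functorial in $C$, it is enough to treat the level-$1$ case, i.e. to prove the descent statement for $\Gamma_{\mathrm{syn}}(\mathcal{X})$ when $\mathcal{X}$ is a $1$-bounded stack over $R^{\mathrm{syn}}\otimes\Field_p$, and more precisely (again by the Cartesian square in Theorem~\ref{thm:devissage_to_fzips}, which relates $\Gamma_{\mathrm{syn}}(\mathcal{X})$ to $\Gamma_{\Fzip}(\mathcal{X})$ and $\mathsf{S}_1(\mathcal{X})$ via the height-one cohomology stacks) to prove descent for the $F$-zip section stacks $\Gamma_{\Fzip}(\mathcal{X})$ and for the stacks $\mathsf{S}_\bullet(\mathcal{X})$ coming from Bragg--Olsson. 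But $\Gamma_{\Fzip}(\mathcal{X})(C)$ depends only on $C/{}^{\mathbb{L}}p$ (the $F$-zip stack $C^{\Fzip}$ is constructed from the $1$-truncated Witt frame $\underline{W_1(C)} = \underline{W_1(C/{}^{\mathbb{L}}p)}$), so its descent along $C\to C/{}^{\mathbb{L}}p$ is automatic. Thus the only genuinely new input is the descent of the section stacks $\mathsf{S}_i(\mathcal{X})$, equivalently of the functor $C\mapsto \tau^{\le i}R\Gamma_\varphi(C,(N,\psi))$ from \S\ref{subsec:height_1}; this in turn reduces to $p$-completely-flat (quasisyntomic) descent for the complexes $Z^1_{\Prism}(C)$ and $\gr^{\mathrm{conj}}_1\overline{\Prism}_C$, which holds because relative Hodge--Tate cohomology $\overline{\Prism}_{(-)/\Int_p}$ and its conjugate filtration satisfy quasisyntomic descent (they are left Kan extended from polynomial algebras and isomorphic to Postnikov filtrations there), and because the cosimplicial object $C\otimes_{\Int}\Field_p^{\otimes_{\Int}\bullet+1}$ is precisely the \v Cech conerve that computes the derived $p$-completion — descent along it is the statement that $M \xrightarrow{\simeq} \mathrm{Tot}(M\otimes_{\Int}\Field_p^{\otimes\bullet+1})$ for any derived $p$-complete $M$, which is classical.

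Assembling: I would first record the reduction to $C = R$ and to the level-$1$ case; then verify that $\Gamma_{\Fzip}(\mathcal{X})$ and the building-block stacks $\mathsf{S}_i(\mathcal{X})$ each satisfy descent along $\{C\otimes_{\Int}\Field_p^{\otimes\bullet+1}\}$ — the first for the trivial reason above, the second from quasisyntomic descent for relative Hodge--Tate cohomology together with the fact that derived $p$-complete modules descend along the \v Cech conerve of $C\to C/{}^{\mathbb{L}}p$; and finally combine these via the Cartesian squares of Theorem~\ref{thm:devissage_to_fzips} and Proposition~\ref{prop:bootstrapping_coeffs}, using that $\mathrm{Tot}$ (a limit) preserves Cartesian squares and finite sequential compositions of deformations. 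I expect the main obstacle to be a bookkeeping issue rather than a conceptual one: namely, keeping careful track that the Cartesian squares used in the d\'evissage — which were originally set up over an $\Field_p$-base — remain valid and functorial over the $p$-nilpotent base $C$ appearing in each cosimplicial degree, so that $\mathrm{Tot}$ may be interchanged with the pullbacks; Remark~\ref{rem:bootstrapping_coeffs_no_need_for_char_p} handles the coefficient bootstrap, but one must also check that the identification of $\Gamma_{\mathrm{syn}}(\mathcal{X})$ with the equalizer \eqref{eqn:sections_first_desc} and the descriptions of $\mathcal{M}_1^{\mathcal{N}}(\mathcal{X})$, $\mathcal{M}_1^{\Prism}(\mathcal{X})$ behave well under the relevant base-changes, which ultimately rests on Theorem~\ref{thm:semiperf_crys} and quasisyntomic descent for the stacks $C^{\mathcal{N}}$, $C^{\Prism}$ (Corollary~\ref{cor:quasisyntomic_descent_general}).
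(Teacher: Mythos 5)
There is a genuine gap, and it occurs precisely at the point that would make the proposition nontrivial. After your reduction to level~$1$ (which is fine, by Proposition~\ref{prop:bootstrapping_coeffs} together with Remark~\ref{rem:bootstrapping_coeffs_no_need_for_char_p}), you propose to apply the Cartesian square of Theorem~\ref{thm:devissage_to_fzips} in order to decompose $\Gamma_{\mathrm{syn}}(\mathcal{X}_1)$ in terms of $\Gamma_{\Fzip}(\mathcal{X})$ and $\mathsf{S}_1(\mathcal{X})$. But that theorem, and the very definition of $C^{\Fzip}$ and of the d\'evissage map $\eta: C^{\Fzip}\to C^{\mathrm{syn}}\otimes\Field_p$ in \S\ref{subsec:Fzip_stack}--\ref{subsec:fzip_to_syn}, require $C$ to be an $\Field_p$-algebra (the $F$-zip stack is built from the $1$-truncated Witt frame $\underline{W_1(C)}$, which is only a frame in characteristic $p$). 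The term $\Gamma_{\mathrm{syn}}(\mathcal{X}_1)(C)$ appearing as the augmentation of your totalization is evaluated on a $p$-nilpotent $C$ that is in general \emph{not} an $\Field_p$-algebra, and $C^{\mathrm{syn}}\otimes\Field_p$ does not factor through $C/^{\mathbb{L}}p$ --- for instance $(\Int/p^2\Int)^{\Prism}$ is not $\Field_p^{\Prism}=\Spf\Int_p$. So you can decompose the right-hand side of the asserted equivalence (all of whose terms are $\Field_p$-algebras), but not the left-hand side, which is exactly the object the proposition is about. The reasoning is in effect circular: the point of Proposition~\ref{prop:devissage} is to pass from the characteristic-$p$ structure theory to general $p$-nilpotent bases, and your argument assumes the $F$-zip d\'evissage already available over such bases.

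The paper's actual proof avoids this by never invoking the $F$-zip stack. It works directly with $C^{\Prism}$ and $C^{\mathcal{N}}$: one uses the $(p,I)$-adic completeness of $\Prism_C$ together with the Halpern--Leistner--Preygel formal descent principle (Remark~\ref{rem:hl_preygel}) to reduce maps out of $C^{\Prism}$ to maps out of its Hodge--Tate restriction (Lemma~\ref{lem:reduction_to_HT_locus}); then the identification $\Int_p^{\mathrm{HT}}\simeq B\Gm^\sharp\times\Spf\Int_p$ (Proposition~\ref{prop:hodge_tate_locus}) and the same formal descent for $\Spf\Int_p\to\Spec\Field_p$ handle the mod-$p$ conerve on the Hodge--Tate side (Lemma~\ref{lem:on_HT_locus}); the $\mathcal{N}$-case is then wrestled back from the $\Prism$-case via the Cartesian square of Lemma~\ref{lem:cartesian_square_bootstrapping_needed}. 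The filtered integrability of $\mathcal{X}$ and quasisyntomic descent (via Corollary~\ref{cor:quasisyntomic_descent_general}) are the only structural inputs, and none of the arguments ever require the evaluation point to lie in characteristic $p$. This is the step your proposal is missing: a direct descent argument for $\Gamma_{\Prism}$ and $\Gamma_{\mathcal{N}}$ over a mixed-characteristic $p$-nilpotent base, which cannot be obtained by bootstrapping from the $F$-zip d\'evissage.
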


Now, if $(C'\twoheadrightarrow C,\gamma)$ is a divided power thickening of $R$-algebras, we obtain the canonical commuting square~\eqref{eqn:canonical_commuting_square}.\footnote{Strictly speaking, we had imposed the condition that $R$ be an $\Field_p$-algebra when we introduced this square; however, this hypothesis was not used in its construction.}
\begin{corollary}
[Grothendieck-Messing]
\label{cor:groth_messing}
Suppose that $\mathcal{X}\to R^{\mathrm{syn}}\otimes\Int/p^m\Int$ is $1$-bounded and strongly integrable, and that $\Gamma_{\mathrm{syn}}(\mathcal{X})\otimes\Field_p$ is representable. Then, if $(C'\twoheadrightarrow C,\gamma)$ is a \emph{nilpotent} divided power thickening, the commuting square~\eqref{eqn:canonical_commuting_square} is \emph{Cartesian}.
\end{corollary}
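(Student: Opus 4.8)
The plan is to deduce Corollary~\ref{cor:groth_messing} from the already-established characteristic-$p$ case (Theorem~\ref{thm:groth_messing_mod_p}) by a descent argument along the cosimplicial resolution of Proposition~\ref{prop:devissage}. The point is that both the source and the (homotopy) fiber product appearing in the square~\eqref{eqn:canonical_commuting_square} are, by construction, $p$-adic formal prestacks in $R$, so they each admit a \v{C}ech-type totalization over $R\otimes_{\Int}\Field_p^{\otimes_{\Int}\bullet+1}$; one then checks that the map of squares is an equivalence termwise and concludes by passing to the totalization. More precisely, I would first observe that, given a nilpotent divided power thickening $(C'\twoheadrightarrow C,\gamma)$ in $\mathrm{CRing}^{f,p\text{-comp}}_{R/}$, base change along the faithfully flat cover $\Int_p\to \Field_p$ produces, for each $j\ge 0$, a nilpotent divided power thickening $(C'\otimes_{\Int}\Field_p^{\otimes_{\Int}j+1}\twoheadrightarrow C\otimes_{\Int}\Field_p^{\otimes_{\Int}j+1},\gamma\otimes\mathrm{id})$ of $\Field_p$-algebras; nilpotence of the divided powers is preserved since it only involves $\gamma_p$ acting on the fiber, and the fiber base-changes along $\Field_p^{\otimes_{\Int}j+1}$.

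Second, I would invoke Theorem~\ref{thm:groth_messing_mod_p} (which applies, as we are assuming $\Gamma_{\mathrm{syn}}(\mathcal{X})\otimes\Field_p$ is representable, hence in particular locally finitely presented and infinitesimally cohesive over $R\otimes\Field_p$; these are the only properties the proof of that theorem uses): for each $j\ge 0$ the square
\[
\begin{diagram}
\Gamma_{\mathrm{syn}}(\mathcal{X})(C'\otimes_{\Int}\Field_p^{\otimes_{\Int}j+1})&\rTo&X^{-,(n)}(C\otimes_{\Int}\Field_p^{\otimes_{\Int}j+1})\\
\dTo&&\dTo\\
\Gamma_{\mathrm{syn}}(\mathcal{X})(C\otimes_{\Int}\Field_p^{\otimes_{\Int}j+1})&\rTo&X^{-,(n)}(C\otimes_{\Int}\Field_p^{\otimes_{\Int}j+1})\times_{X^{(n)}(C\otimes_{\Int}\Field_p^{\otimes_{\Int}j+1})}X^{(n)}(C'\otimes_{\Int}\Field_p^{\otimes_{\Int}j+1})
\end{diagram}
\]
is Cartesian. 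Here one must check that the cosimplicial structure maps of these squares are the expected ones: for $\Gamma_{\mathrm{syn}}(\mathcal{X})$ this is Proposition~\ref{prop:devissage}, and for $X^{-,(n)}$, $X^{(n)}$ it is the fact that Weil restriction from $\Int/p^n\Int$ to $\Int_p$ and the attractor/fixed-point constructions commute with the functor $\mathsf{R}^{\bullet+1}$ on formal prestacks, which is formal from the definitions in~\S\ref{subsec:weil} and~\S\ref{subsec:attractor_stacks} since everything is defined by mapping into stacks whose value depends only on $C/{}^{\mathbb{L}}p^n$. Finally, since totalization (a limit) commutes with finite limits, applying $\mathrm{Tot}$ over $\bm{\Delta}$ to this cosimplicial diagram of Cartesian squares yields a Cartesian square; by Proposition~\ref{prop:devissage} the totalization of the $\Gamma_{\mathrm{syn}}(\mathcal{X})$-terms recovers $\Gamma_{\mathrm{syn}}(\mathcal{X})(C')$ and $\Gamma_{\mathrm{syn}}(\mathcal{X})(C)$, and I claim the totalizations of the right-hand terms recover $X^{-,(n)}(C')$, $X^{-,(n)}(C)\times\cdots$. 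This last claim is the technical heart: it amounts to a $p$-completed faithfully flat descent statement for the representable stacks $X^{-,(n)}$ and $X^{(n)}$ along $R\to R\otimes_{\Int}\Field_p^{\otimes_{\Int}\bullet+1}$, equivalently along $\Spf(C\otimes_{\Int}\Field_p^{\otimes_{\Int}\bullet+1})\to \Spf C$.

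\textbf{Expected main obstacle.} The step I expect to be most delicate is precisely this descent for the auxiliary stacks $X^{-,(n)}$ and $X^{(n)}$. These are representable by locally finitely presented $p$-adic formal derived Artin stacks over $R$ (Proposition~\ref{prop:X_attractor_repble} together with Lemma~\ref{lem:weil_restriction}), and $p$-adic formal derived Artin stacks satisfy fpqc descent after reduction mod $p^n$; the cover $\Spf(C/{}^{\mathbb{L}}p^n\otimes_{\Int}\Field_p^{\otimes_{\Int}\bullet+1})\to \Spf(C/{}^{\mathbb{L}}p^n)$ is the \v{C}ech nerve of the faithfully flat map $C/{}^{\mathbb{L}}p^n\to C/{}^{\mathbb{L}}p^n\otimes_{\Int}\Field_p = C/{}^{\mathbb{L}}p$ (using $\Field_p^{\otimes_{\Int}j+1}\simeq \Field_p\otimes_{\Int/p^n}\cdots$ after reduction, and that $C/{}^{\mathbb{L}}p^n\otimes_{\Int/p^n}\Field_p\simeq C/{}^{\mathbb{L}}p$). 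So the totalization identity follows from fppf (indeed faithfully flat) descent for the derived Artin stacks $(X^{-,(n)})\otimes\Int/p^n\Int$ and $(X^{(n)})\otimes\Int/p^n\Int$. The care required is in matching the cosimplicial objects: one needs that $\mathsf{R}^{\bullet+1}$ applied to the representable formal stack $X^{-,(n)}$ agrees, as a cosimplicial object, with the \v{C}ech nerve of $X^{-,(n)}\to X^{-,(n)}\otimes\Field_p$ relative to $R$, and similarly for the fiber product; this is a bookkeeping exercise with the compatibility $\mathsf{R}^t(Z)(C)=Z(C\otimes_{\Int}\Field_p^{\otimes_{\Int}t})$ noted just before Proposition~\ref{prop:devissage}, combined with the fact that $X^{(n)}$ and $X^{-,(n)}$ only see $C$ through $C/{}^{\mathbb{L}}p^n$. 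Once this identification is in place, the corollary follows immediately by taking $\mathrm{Tot}$ of the termwise-Cartesian squares.
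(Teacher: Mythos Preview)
Your overall strategy is exactly the one the paper uses: base-change the nilpotent divided power thickening along $\Int\to\Field_p^{\otimes_{\Int}\bullet+1}$, apply Theorem~\ref{thm:groth_messing_mod_p} termwise, and then totalize using Proposition~\ref{prop:devissage}. The paper's proof is essentially a one-paragraph version of what you wrote.

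There is, however, a genuine gap in your justification of the ``technical heart''. You claim that the Tot descent for $X^{-,(n)}$ and $X^{(n)}$ follows from fpqc descent along the ``faithfully flat map $C/{}^{\mathbb{L}}p^n\to C/{}^{\mathbb{L}}p$''. This map is \emph{not} faithfully flat: already for $C=\Int/p^n\Int$ it is $\Int/p^n\Int\to\Field_p$, which is not flat. So fpqc descent does not apply, and your proposed argument for totalizing the right-hand column fails as written.

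The correct justification is the one recorded in Remark~\ref{rem:hl_preygel}: any nilcomplete and infinitesimally cohesive $p$-adic formal prestack $\mathcal{Y}$ satisfies $\mathcal{Y}(D)\xrightarrow{\simeq}\mathrm{Tot}\bigl(\mathcal{Y}(D\otimes_{\Int}\Field_p^{\otimes_{\Int}\bullet+1})\bigr)$ for $D$ derived $p$-complete. Since $\mathcal{X}^\diamond\to R^{\mathrm{syn}}\otimes\Int/p^n\Int$ is a relative locally finitely presented derived Artin stack, the functors $C\mapsto X^{(n)}(C)$ and $C\mapsto X^{-,(n)}(C)$ are nilcomplete and infinitesimally cohesive (this uses only that $\mathcal{X}^\diamond$ is, and that $C\mapsto C/{}^{\mathbb{L}}p^n$ and $C\mapsto \Aff^1/\Gm\times\Spec C/{}^{\mathbb{L}}p^n$ preserve the relevant pushout squares and Postnikov towers). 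Applying Remark~\ref{rem:hl_preygel} to these functors at $C$ and $C'$ gives the totalization identities you need, with no flatness hypothesis. Once you replace your fpqc-descent step with this, the rest of your argument goes through and coincides with the paper's.
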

\begin{proof}
Nilpotence of divided powers is preserved under arbitrary base-change along maps $C'\to D'$. Therefore, for every $m\ge 1$, the map 
\[
C'\otimes\Field_p^{\otimes_{\Int}\bullet +1}\twoheadrightarrow C\otimes\Field_p^{\otimes_{\Int}\bullet + 1}
\]
of cosimplicial $R\otimes\Field_p$-algebras canonically lifts to a cosimplicial diagram of nilpotent divided power thickenings of $R\otimes\Field_p$-algebras. This gives us a cosimplicial diagram of commuting squares as in~\eqref{eqn:canonical_commuting_square}, which are all Cartesian by Theorem~\ref{thm:groth_messing_mod_p}. We conclude by Proposition~\ref{prop:devissage}, which now shows that the commuting square the corollary is concerned with is a limit of Cartesian ones.
\end{proof}

As an immediate consequence, we obtain:
\begin{corollary}
\label{cor:cotangent_complex}
With the hypotheses above, write $\varpi_{\mathcal{X}}:\Gamma_{\mathrm{syn}}(\mathcal{X})\to X^{-,(n)}$ for the canonical map. Then $\Gamma_{\mathrm{syn}}(\mathcal{X})$ admits an almost perfect cotangent complex over $X^{-,(n)}$, and we have canonical isomorphisms
\begin{align*}
\mathbb{L}_{\Gamma_{\mathrm{syn}}(\mathcal{X})/R}&\simeq \varpi_X^*\mathbb{L}_{X^{-,(n)}/X^{(n)}};\\
\mathbb{L}_{\Gamma_{\mathrm{syn}}(\mathcal{X})/X^{-,(n)}}&\simeq \varpi_X^*\left(\mathbb{L}_{X^{(n)}/R}\vert_{X^{-,(n)}}\right)[1].
\end{align*}
\end{corollary}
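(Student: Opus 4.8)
\textbf{Proof plan for Corollary~\ref{cor:cotangent_complex}.}

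The plan is to deduce both cotangent complex formulas directly from the Cartesian square of Corollary~\ref{cor:groth_messing}, using the defining property of the cotangent complex via trivial square-zero extensions. First I would recall that, by the standard identification of square-zero extensions with nilpotent divided power thickenings (with trivial divided powers), the square~\eqref{eqn:canonical_commuting_square} applied to $C' = C\oplus M$ for $C\in \mathrm{CRing}_{R/}$ and $M\in \Mod[\mathrm{cn}]{C}$ is Cartesian by Corollary~\ref{cor:groth_messing}. This yields, for any $x\in \Gamma_{\mathrm{syn}}(\mathcal{X})(C)$ with image $\overline{x}\in X^{-,(n)}(C)$, an equivalence of fibers
\[
\mathrm{fib}_x\bigl(\Gamma_{\mathrm{syn}}(\mathcal{X})(C\oplus M)\to \Gamma_{\mathrm{syn}}(\mathcal{X})(C)\bigr)\xrightarrow{\simeq}\mathrm{fib}_{\overline{x}}\bigl(X^{-,(n)}(C\oplus M)\to X^{-,(n)}(C)\times_{X^{(n)}(C)}X^{(n)}(C\oplus M)\bigr).
\]
Since $X^{-,(n)}$ and $X^{(n)}$ are locally finitely presented derived Artin stacks over $R$ by Proposition~\ref{prop:X_attractor_repble} (using that $\mathcal{X}^\diamond$ has quasi-affine diagonal, which follows from filtered integrability here, and that $\pi_0(R)$ is a $G$-ring as part of the standing hypotheses), they admit perfect cotangent complexes. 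The right-hand fiber above is then, by the definition of the relative cotangent complex of the map $X^{-,(n)}\to X^{(n)}$ (Definition~\ref{defn:cotangent_complex}), canonically identified with $\Map_{\Mod{C}}(\overline{x}^*\mathbb{L}_{X^{-,(n)}/X^{(n)}},M)$. This shows that $\Gamma_{\mathrm{syn}}(\mathcal{X})$ admits a cotangent complex over $X^{-,(n)}$, perfect because $\mathbb{L}_{X^{-,(n)}/X^{(n)}}$ is, and establishes the first isomorphism $\mathbb{L}_{\Gamma_{\mathrm{syn}}(\mathcal{X})/R}\simeq \varpi_{\mathcal{X}}^*\mathbb{L}_{X^{-,(n)}/X^{(n)}}$ once I also observe that the composite $\Gamma_{\mathrm{syn}}(\mathcal{X})\to X^{-,(n)}\to \Spec R$ has the same relative cotangent complex as $\Gamma_{\mathrm{syn}}(\mathcal{X})\to X^{-,(n)}$, because $X^{-,(n)}\to X^{(n)}$ is an equivalence on classical truncations / the map $\varpi_{\mathcal{X}}$ becomes an equivalence after the relevant identifications — more precisely, the square shows the fibers controlling deformations of $\Gamma_{\mathrm{syn}}(\mathcal{X})$ over $R$ coincide with those over $X^{-,(n)}$.

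For the second formula, I would rotate the Cartesian square: the same square exhibits $\Gamma_{\mathrm{syn}}(\mathcal{X})$ as the fiber product $X^{-,(n)}\times_{X^{(n)}} \Gamma_{\mathrm{syn}}(\mathcal{X})$ in a way that, reading the other pair of opposite edges, identifies the relative cotangent complex of $\Gamma_{\mathrm{syn}}(\mathcal{X})\to X^{-,(n)}$ with the shifted pullback of the relative cotangent complex of the map $X^{(n)}(C')\to X^{(n)}(C)$ regarded through the factorization $X^{-,(n)}\to X^{(n)}$. Concretely, the fiber of $X^{-,(n)}(C\oplus M)\to X^{-,(n)}(C)\times_{X^{(n)}(C)}X^{(n)}(C\oplus M)$ over $\overline{x}$ sits in a fiber sequence with $\mathrm{fib}(X^{-,(n)}(C\oplus M)\to X^{-,(n)}(C))$ and $\mathrm{fib}(X^{(n)}(C\oplus M)\to X^{(n)}(C))$, and chasing this fiber sequence together with the identification above gives
\[
\mathbb{L}_{\Gamma_{\mathrm{syn}}(\mathcal{X})/X^{-,(n)}}\simeq \varpi_{\mathcal{X}}^*\bigl(\mathbb{L}_{X^{(n)}/R}\vert_{X^{-,(n)}}\bigr)[1],
\]
the shift $[1]$ appearing precisely because $X^{(n)}(C\oplus M)$ enters as a \emph{target} rather than a source in the pullback, i.e. via the octahedral/fiber-sequence manipulation $\mathbb{L}_{\Gamma_{\mathrm{syn}}(\mathcal{X})/X^{-,(n)}} \simeq \mathrm{cofib}(\varpi_{\mathcal{X}}^*\mathbb{L}_{X^{(n)}/R} \to 0)[\text{shift}]$. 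This is entirely formal once the Cartesian square is in hand.

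The main obstacle is really just bookkeeping: ensuring that the passage from the \emph{pointwise} Cartesian squares of spaces (valued on $C' = C\oplus M$) to a statement about cotangent complexes is legitimate, i.e. that $\Gamma_{\mathrm{syn}}(\mathcal{X})$ is infinitesimally cohesive and nilcomplete so that the trivial square-zero case actually computes the cotangent complex, and verifying the signs/shifts in the fiber-sequence manipulation for the second formula. Both of these are handled by the hypotheses already imposed (representability of $\Gamma_{\mathrm{syn}}(\mathcal{X})\otimes\Field_p$ plus the d\'evissage of Proposition~\ref{prop:devissage}, which upgrades representability and the good deformation-theoretic properties to all of $\mathrm{CRing}^{f,p\text{-comp}}_{R/}$, together with Corollary~\ref{cor:groth_messing} itself). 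I would phrase the argument uniformly for both formulas by working with the single Cartesian square and extracting the two relative cotangent complexes as the two ``missing corners'' of the induced long exact/fiber sequence, so the only genuine input beyond formal nonsense is Corollary~\ref{cor:groth_messing}, which is already established.
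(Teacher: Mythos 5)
Your strategy is the right one, and it is evidently what the paper intends by ``as an immediate consequence'': take the Cartesian square from Corollary~\ref{cor:groth_messing} with $C' = C\oplus M$ equipped with trivial (hence nilpotent) divided powers, and read off the cotangent complexes using Definition~\ref{defn:cotangent_complex}. Your displayed fiber identification is correct, and combined with the universal property defining $\mathbb{L}_{X^{-,(n)}/X^{(n)}}$ it directly gives $\mathbb{L}_{\Gamma_{\mathrm{syn}}(\mathcal{X})/R}\simeq \varpi_{\mathcal{X}}^*\mathbb{L}_{X^{-,(n)}/X^{(n)}}$; then $\mathbb{L}_{\Gamma_{\mathrm{syn}}(\mathcal{X})/X^{-,(n)}}$ exists (hence is perfect) by the cotangent fiber sequence, and comparing that fiber sequence with the one for $X^{-,(n)}\to X^{(n)}\to\Spec R$ pulled back along $\varpi_{\mathcal{X}}$ produces the shifted second formula. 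The final formulas you arrive at are correct.

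However, several of your connecting remarks are false and should be removed. The claim that ``the composite $\Gamma_{\mathrm{syn}}(\mathcal{X})\to X^{-,(n)}\to \Spec R$ has the same relative cotangent complex as $\Gamma_{\mathrm{syn}}(\mathcal{X})\to X^{-,(n)}$'' is wrong --- they differ by $\varpi_{\mathcal{X}}^*\mathbb{L}_{X^{-,(n)}/R}$, and $\mathbb{L}_{\Gamma_{\mathrm{syn}}(\mathcal{X})/X^{-,(n)}}$ is precisely the nonzero thing the second formula computes. The stated justification ``because $X^{-,(n)}\to X^{(n)}$ is an equivalence on classical truncations'' is also wrong (for $G=\GL_h$ this map is the Weil restriction of the forgetful map from filtered to unfiltered vector bundles, which is not an equivalence), as is ``the fibers controlling deformations of $\Gamma_{\mathrm{syn}}(\mathcal{X})$ over $R$ coincide with those over $X^{-,(n)}$''. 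What the Cartesian square actually says is that deformations of $\Gamma_{\mathrm{syn}}(\mathcal{X})$ \emph{relative to $R$} are identified with deformations of $X^{-,(n)}$ \emph{relative to $X^{(n)}$}, and no further ``observation'' is needed for the first formula. Likewise the phrase about ``rotating'' the square to exhibit $\Gamma_{\mathrm{syn}}(\mathcal{X})$ as $X^{-,(n)}\times_{X^{(n)}}\Gamma_{\mathrm{syn}}(\mathcal{X})$ does not parse; the shift in the second formula comes cleanly from identifying $\mathbb{L}_{\Gamma_{\mathrm{syn}}(\mathcal{X})/X^{-,(n)}}$ with $\mathrm{cofib}\bigl(\varpi_{\mathcal{X}}^*\mathbb{L}_{X^{-,(n)}/R}\to \mathbb{L}_{\Gamma_{\mathrm{syn}}(\mathcal{X})/R}\bigr)$ and feeding in the first formula together with the fiber sequence $\mathbb{L}_{X^{(n)}/R}\vert_{X^{-,(n)}}\to\mathbb{L}_{X^{-,(n)}/R}\to\mathbb{L}_{X^{-,(n)}/X^{(n)}}$.
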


Assuming Proposition~\ref{prop:devissage}, we can now show:
\begin{theorem}
\label{thm:general_representability}
Suppose that $\mathcal{X}$ is a $1$-bounded $r$-stack over $R^{\mathrm{syn}}\otimes\Int/p^n\Int$. Suppose that one of the following holds:
\begin{enumerate}
   \item $\mathcal{X}^\preoneb$ has quasi-affine diagonal and $\pi_0(R)$ is a $G$-ring;
   \item $\mathcal{X}$ is strongly integrable, and the $p$-adic formal stacks $X^{-}$ and $X^{0}$ over $R/{}^{\mathbb{L}}p^n$ are representable.
\end{enumerate}
Then:
\begin{enumerate}
   \item $\Gamma_{\mathrm{syn}}(\mathcal{X})$ is represented by a $p$-adic formal locally almost finitely presented Artin $r$-stack over $R$.
   \item If $\mathcal{X}^\preoneb$ is flat over $R^{\mathrm{syn}}\otimes\Int/p^n\Int$ with (quasi-)affine diagonal, then $\Gamma_{\mathrm{syn}}(\mathcal{X})$ has (quasi-)affine diagonal.
   \item If $X^-$ and $X^0$ are quasi-compact, then $\Gamma_{\mathrm{syn}}(\mathcal{X})$ is quasi-compact.
\end{enumerate}
\end{theorem}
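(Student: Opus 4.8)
The plan is to deduce Theorem~\ref{thm:general_representability} from the d\'evissage Proposition~\ref{prop:devissage} together with the already-established mod-$p$ results of the previous subsections, proceeding by bootstrapping in three stages: first fix the coefficient level $n$ and reduce to $\Field_p$-algebra base, then handle the base via the cosimplicial totalization, then transfer the structural properties (diagonal, quasi-compactness). First I would observe that hypothesis (1) implies hypothesis (2): by Propositions~\ref{prop:1bounded_graded_deformation} and~\ref{prop:1_bounded_cartesian} quasi-affine diagonal forces graded and filtered integrability, and by Proposition~\ref{prop:X_attractor_repble} the stacks $X^-,X^0$ (hence $X^+$, hence everything appearing in~\eqref{eqn:sections_fzips_original}) are representable once $\pi_0(R)$ is a $G$-ring. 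So throughout we may assume $\mathcal{X}$ is filtered and graded integrable with $X^-,X^0$ representable over $R/{}^{\mathbb{L}}p^n$.

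For the representability in (1), I would first treat the case where $R$ is an $\Field_p$-algebra. Here Theorem~\ref{thm:devissage_to_fzips} and Lemma~\ref{lem:FZip_repblity} give that $\Gamma_{\mathrm{syn}}(\mathcal{X}_1)$ is a locally finitely presented derived Artin $r$-stack over $R$ (the hypotheses of Lemma~\ref{lem:FZip_repblity} are exactly what (2) provides). Then Proposition~\ref{prop:bootstrapping_coeffs}, via its Cartesian square and Corollary~\ref{cor:1_bounded_level_1_repble} for the fiber terms $\Gamma(\mathcal{M}_1(\mathcal{X})[1])$, promotes this inductively in $m$ to representability of $\Gamma_{\mathrm{syn}}(\mathcal{X}_m)$, hence of $\Gamma_{\mathrm{syn}}(\mathcal{X})=\Gamma_{\mathrm{syn}}(\mathcal{X}_n)$, over any $\Field_p$-algebra base in $\mathrm{CRing}^f$. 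In particular $\Gamma_{\mathrm{syn}}(\mathcal{X})\otimes\Field_p$ is representable. Now for general $R\in\mathrm{CRing}^{f,p\text{-comp}}$, Proposition~\ref{prop:devissage} expresses $\Gamma_{\mathrm{syn}}(\mathcal{X})$ as the totalization $\mathrm{Tot}(\mathsf{R}^{\bullet+1}(\Gamma_{\mathrm{syn}}(\mathcal{X})))$, i.e.\ the \v{C}ech conerve of $R\to R\otimes\Field_p$ applied to the already-representable $\Gamma_{\mathrm{syn}}(\mathcal{X})\otimes\Field_p$ (note each $R\otimes_{\Int}\Field_p^{\otimes(\bullet+1)}$ lies in $\mathrm{CRing}^{f,p\text{-nilp}}$). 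To conclude representability of the totalization I would invoke Lurie's derived Artin representability~\cite[Theorem 7.1.6]{lurie_thesis}: the prestack $\Gamma_{\mathrm{syn}}(\mathcal{X})$ is a nilcomplete, infinitesimally cohesive, locally finitely presented \'etale sheaf (each of these is inherited from the $\Field_p$-level statements and the totalization, and the sheaf conditions follow from the fpqc-descent properties of $C^{\mathrm{syn}}$ in Corollary~\ref{cor:quasisyntomic_descent_general}), its classical truncation is valued in $r$-truncated spaces, and it admits a perfect cotangent complex by Corollary~\ref{cor:cotangent_complex}; integrability is the one genuinely new point, which I treat next.

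The main obstacle is precisely \textbf{integrability} of $\Gamma_{\mathrm{syn}}(\mathcal{X})$ over $R$ (condition (3) of~\cite[Theorem 7.1.6]{lurie_thesis}), i.e.\ that for Noetherian complete local $B\in\mathrm{CRing}_{\heartsuit,R/}$ the natural map $\Gamma_{\mathrm{syn}}(\mathcal{X})(B)\to\varprojlim_m\Gamma_{\mathrm{syn}}(\mathcal{X})(B/\mx^m)$ is an equivalence. My plan here is to reduce to the $F$-zip level where it is already known (Lemma~\ref{lem:FZip_repblity} gives full representability, hence integrability, of $\Gamma_{\Fzip}(\mathcal{X})$). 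Using Proposition~\ref{prop:devissage} it suffices to check integrability for $B$ an $\Field_p$-algebra; then the Cartesian squares of Proposition~\ref{prop:bootstrapping_coeffs} reduce us to level $1$, where Theorem~\ref{thm:devissage_to_fzips}(1) exhibits $\Gamma_{\mathrm{syn}}(\mathcal{X})$ as a fiber product over $\mathsf{S}_1(\mathcal{X})$ of copies of $\Gamma_{\Fzip}(\mathcal{X})$, both of which are representable and hence integrable by Theorem~\ref{thm:repbility_height_one} and Lemma~\ref{lem:FZip_repblity}; the one subtlety is that the $\mx$-adic filtration on $B$ interacts correctly with the Rees and Nygaard filtrations, which one checks as in the proof of Proposition~\ref{prop:t-integrable_repble} using the completeness results of Appendix~\ref{app:completeness} (Propositions~\ref{prop:A1Gm_complete} and its graded analogue).

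Finally, for statements (2) and (3): for the diagonal, Corollary~\ref{cor:cotangent_complex} identifies $\mathbb{L}_{\Gamma_{\mathrm{syn}}(\mathcal{X})/X^{-,(n)}}$ with a shift of (the restriction of) $\mathbb{L}_{X^{(n)}/R}$, and, combining the Cartesian square of Corollary~\ref{cor:groth_messing} with the diagonal-preservation statement in Proposition~\ref{prop:t-integrable_repble} (which gives that $X^-,X^0$ inherit (quasi-)affine diagonal from the flat $\mathcal{X}^\diamond$) and Lemma~\ref{lem:weil_restriction} for the Weil-restricted stacks $X^{-,(n)},X^{(n)},X^{0,(n)}$, one deduces that $\Gamma_{\mathrm{syn}}(\mathcal{X})$ has (quasi-)affine diagonal; concretely, the diagonal of $\Gamma_{\mathrm{syn}}(\mathcal{X})$ over $R$ fits in a composite of base changes of the diagonals of $X^{-,(n)}\to X^{(n)}$ and $X^{(n)}\to R$, each of which is (quasi-)affine under the flatness hypothesis. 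For quasi-compactness, when $X^-$ and $X^0$ are quasi-compact, Remark~\ref{rem:X_attractor_repble} gives quasi-compactness of $X^+$, hence via~\eqref{eqn:sections_fzips_original} of $\Gamma_{\Fzip}(\mathcal{X}_1)$, and then the Cartesian squares of Theorem~\ref{thm:devissage_to_fzips} and Proposition~\ref{prop:bootstrapping_coeffs} (whose fiber terms $\mathsf{S}_i(\mathcal{X})$ are quasi-compact over $\Gamma_{\Fzip}(\mathcal{X})$ by Theorem~\ref{thm:repbility_height_one}, being finitely presented) propagate quasi-compactness up through the coefficient levels; passing from the $\Field_p$-level to general $R$ via Proposition~\ref{prop:devissage} preserves quasi-compactness since the cosimplicial terms are base changes.
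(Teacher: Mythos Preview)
Your approach to assertion (1) is the Artin--Lurie route that the paper explicitly mentions as an alternative in Remark~\ref{rem:artin-lurie_alternative}. The paper itself instead gives a direct argument: once $\Gamma_{\mathrm{syn}}(\mathcal{X})\otimes\Field_p$ is known to be representable (via Proposition~\ref{prop:bootstrapping_coeffs}), one applies Corollary~\ref{cor:groth_messing} to the nilpotent divided power thickening $R\twoheadrightarrow R/{}^{\mathbb{L}}p$ (for $p>2$; a two-step variant for $p=2$) to obtain a Cartesian square expressing $\Gamma_{\mathrm{syn}}(\mathcal{X})$ as a fiber product of already-representable stacks. This sidesteps any separate verification of integrability. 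Your integrability sketch is in the right spirit but is vaguer than what Remark~\ref{rem:artin-lurie_alternative} actually needs: the point there is simply that Tot descent along $p$ plus representability of $\mathfrak{Y}\otimes\Field_p$ forces all limit-type criteria, including integrability, by commuting the relevant limits.

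There is a genuine gap in your treatment of assertion (2). You try to read off the (quasi-)affine diagonal from the Grothendieck--Messing Cartesian square together with the diagonals of $X^{-,(n)}\to X^{(n)}$ and $X^{(n)}\to R$, but that square still has $\mathsf{R}(\Gamma_{\mathrm{syn}}(\mathcal{X}))$ in one corner, so you are assuming what you want to prove over $\Field_p$. The claim that ``the diagonal of $\Gamma_{\mathrm{syn}}(\mathcal{X})$ fits in a composite of base changes of the diagonals of $X^{-,(n)}\to X^{(n)}$ and $X^{(n)}\to R$'' does not follow from Corollary~\ref{cor:groth_messing}. The paper instead works entirely at the mod-$p$ level: it first shows $\Gamma_{\Fzip}(\mathcal{X}_1)$ has (quasi-)affine diagonal via Proposition~\ref{prop:t-integrable_repble} and the presentation~\eqref{eqn:sections_fzips_original}, then uses the key fact that $\mathsf{S}_0(\mathcal{X}_1)\to\Gamma_{\Fzip}(\mathcal{X}_1)$ is a relative derived \emph{affine} scheme (from Corollary~\ref{cor:1_bounded_level_1_repble}(3), since under the flatness hypothesis the cotangent complex of $\mathcal{X}^\diamond$ is $(-1)$-connective) to conclude that $\Gamma_{\Fzip}(\mathcal{X}_1)\to\mathsf{S}_1(\mathcal{X}_1)$ has affine diagonal; Theorem~\ref{thm:devissage_to_fzips} then gives the result for $\Gamma_{\mathrm{syn}}(\mathcal{X}_1)\otimes\Field_p$, and a similar analysis through Proposition~\ref{prop:bootstrapping_coeffs} (using that the diagonal of $\Gamma(\mathcal{M}_1(\mathcal{X})[1])$ is a torsor under the affine $\Gamma(\mathcal{M}_1(\mathcal{X}))$) handles higher $n$. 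Your argument for (3) is essentially the paper's.
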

\begin{proof}
We can assume that $R$ is a $\Int/p^m\Int$-algebra for some $m\ge 1$. As usual, the first hypothesis implies the second, and so we will assume that (2) is valid. Moreover, by Remark~\ref{rem:X_attractor_repble}, when $n=m=1$, our hypotheses also imply that the repeller $X^+$ is also representable.

One can now prove assertion (1) using a rather general argument involving Artin-Lurie representability; see Remark~\ref{rem:artin-lurie_alternative} below. Here, we give a more direct proof using Grothendieck-Messing theory. 

By Proposition~\ref{prop:bootstrapping_coeffs}, we know that, under our hypotheses, $\Gamma_{\mathrm{syn}}(\mathcal{X})\otimes\Field_p$ is represented by a locally almost finitely presented Artin $r$-stack over $R/{}^{\mathbb{L}}p$.

If $p>2$, then applying Corollary~\ref{cor:groth_messing} to the natural nilpotent divided power structure on $R\twoheadrightarrow R/{}^{\mathbb{L}}p$, we obtain a Cartesian square of prestacks
\[
\Square{\Gamma_{\mathrm{syn}}(\mathcal{X})}{}{X^{-,(n)}}{}{}{\mathsf{R}(\Gamma_{\mathrm{syn}}(\mathcal{X}))}{}{X^{(n)}\times_{\mathsf{R}(X^{(n)})}\mathsf{R}(X^{-,(n)})}.
\] 
Assertion (1) now follows, since all the prestacks involved except for the one in the top left corner are known to be locally almost finitely presented derived $p$-adic formal Artin stacks over $R$.

If $p=2$, then we can do something similar, by first considering the nilpotent divided power thickening $R\twoheadrightarrow R/{}^{\mathbb{L}}4$ to reduce to showing that $\Gamma_{\mathrm{syn}}(\mathcal{X})\otimes\Int/4\Int$ is a locally finitely presented Artin stack over $\Int/4\Int$, and then using the trivial divided powers on the square zero extension $R\twoheadrightarrow R\otimes_{\Int/4\Int}\Field_2$ (for $R\in \mathrm{CRing}_{(\mathcal{O}/4)/}$) to reduce further to the known case of $n=1$.

Let us proceed to assertions (2) and (3): It is enough to prove them for the stack $\Gamma_{\mathrm{syn}}(\mathcal{X})\otimes\Field_p$. First, note that, under the hypotheses of (2)  (resp. of (3)), $\Gamma_{\Fzip}(\mathcal{X}_1)$ has (quasi-)affine diagonal (resp. is quasicompact): This follows from Proposition~\ref{prop:t-integrable_repble} and the presentation~\eqref{eqn:sections_fzips}. 

Now, we claim that the zero section $\Gamma_{\Fzip}(\mathcal{X}_1)\to \mathsf{S}_1(\mathcal{X}_1)$ is quasi-compact, and that it has affine diagonal under the hypotheses of (2). This reduces to knowing that $\mathsf{S}_0(\mathcal{X}_1)$ is quasi-compact over $\Gamma_{\Fzip}(\mathcal{X}_1)$, with affine diagonal under the hypotheses of (2). This can be deduced from the proof of Theorem~\ref{thm:repbility_height_one}: one has to make the additional observation that the assumption in (2) guarantees that the relative cotangent complex of $\mathcal{X}$ is $(-1)$-connective. Combined with Theorem~\ref{thm:devissage_to_fzips}, this shows that, under the hypotheses of (2) (resp. of (3)), $\Gamma_{\mathrm{syn}}(\mathcal{X}_1)\otimes\Field_p$ has (quasi-)affine diagonal (resp. is quasicompact). By Proposition~\ref{prop:bootstrapping_coeffs}, the assertions for $n\ge 2$ are now reduced to the following assertion: For any almost perfect $F$-gauge $\mathcal{M}$ over $R$ with Hodge-Tate weights bounded below by $-1$ the stack $\Gamma^*_{\mathrm{syn}}(\mathcal{M})\to \Spec R$ is quasi-compact, and has affine diagonal when $\mathcal{M}$ is $(-1)$-connective.  The quasi-compactness follows from Corollary~\ref{cor:1_bounded_level_1_repble}. The diagonal map for the stack is a torsor under $\Gamma^*_{\mathrm{syn}}(\mathcal{M}[1])$, so it suffices to now observe that---when $\mathcal{M}$ is $(-1)$-connective---$\Gamma^*_{\mathrm{syn}}(\mathcal{M}[1])$ is affine by the same corollary.
\end{proof}

\subsubsection{}
We now proceed towards the proof of Proposition~\ref{prop:devissage}. Let us say that a map $f:Z\to Y$ of $p$-adic formal prestacks over $R^{\mathrm{syn}}$ \defnword{satisfies Tot descent for $\mathcal{X}^\preoneb$} if the natural map
\[
\Map_{/R^{\mathrm{syn}}\otimes\Int/p^n\Int}(Y\otimes\Int/p^n\Int,\mathcal{X}^\preoneb)\to \mathrm{Tot}\left(\Map_{/R^{\mathrm{syn}}\otimes\Int/p^n\Int}(Z^{\times_Y(\bullet + 1)}\otimes\Int/p^n\Int,\mathcal{X}^\preoneb)\right)
\]
is an equivalence. The map \defnword{satisfies universal Tot descent for} $\mathcal{X}^\preoneb$ if, for any $Y'\to Y$, the base-change $Z\times_YY'\to Y'$ also satisfies Tot descent for $\mathcal{X}^\preoneb$

\begin{remark}
\label{rem:flat_tot_descent}
Any flat cover satisfies universal Tot descent for $\mathcal{X}^\preoneb$.
\end{remark}

\begin{remark}
\label{rem:composition_tot_descent}
A composition of maps satisfying (universal) Tot descent for $\mathcal{X}^\preoneb$ also satisfies (universal) Tot descent for $\mathcal{X}^\preoneb$. 
\end{remark}   

\begin{remark}
\label{rem:composition_tot_descent_flat}
Suppose that we have maps $Z\xrightarrow{f}Y\xrightarrow{g}V$ such that:
\begin{itemize}
   \item $f\circ g$ satisfies Tot descent for $\mathcal{X}^\preoneb$;
   \item $f$ satisfies universal Tot descent for $\mathcal{X}^\preoneb$
\end{itemize}
Then $g$ also satisfies Tot descent for $\mathcal{X}^\preoneb$. This follows because, from our assumption on $f$, we find that the map $f^{\times_Vm}: Z^{\times_V m}\to Y^{\times_V m}$ also satisfies Tot descent for $\mathcal{X}^\preoneb$ for all $m\ge 1$.
\end{remark}

\begin{remark}
\label{rem:hl_preygel}
We have the following observation of Halpern-Leistner and Preygel: Suppose that we have $A\in \mathrm{CRing}$ equipped with a map $\Int[T_1,\ldots,T_r]\to A$ such that $A$ is derived $J$-complete, where $J = (T_1,\ldots,T_r)\subset \Int[T_1,\ldots,T_r]$; set $\overline{A} = A/{}^{\mathbb{L}}(T_1,\ldots,T_r)$. Suppose that we have a derived $J$-adic formal Artin stack $\mathcal{Y}$ over $A$. Then, for $R\in \mathrm{CRing}_{A/}$ derived $J$-complete, the map
\[
\mathcal{Y}(R) \to \mathrm{Tot}\left(\mathcal{Y}(R\otimes_{A} \overline{A}^{\otimes^{\mathbb{L}}_{A}\bullet +1})\right)
\]
is an equivalence. In fact, one only needs for $\mathcal{Y}$ to be nilcomplete and infinitesimally cohesive; see~\cite[Cor. 3.1.4]{MR4560539}. So Proposition~\ref{prop:devissage} is certainly implied by Theorem~\ref{introthm:main}. Here we will use the former to complete the proof of the latter.
\end{remark}

\begin{lemma}
\label{lem:reduction_to_HT_locus}
The map $C^\Prism\times_{\Int_p^\Prism}\Int_p^{\mathrm{HT}}\to C^\Prism$ satisfies Tot descent for $\mathcal{X}^\preoneb$.
\end{lemma}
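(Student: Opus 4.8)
Recall that $C^{\Prism}$ sits inside $C^{\mathcal{N}}$ via the two open immersions $j_{\dR}$ and $j_{\mathrm{HT}}$, and that the Hodge-Tate locus $\Int_p^{\mathrm{HT}}\hookrightarrow \Int_p^{\Prism}$ is a closed immersion with locally invertible ideal sheaf. The map in question, $f\colon C^{\Prism}\to C^{\Prism}\times_{\Int_p^{\Prism}}\Int_p^{\mathrm{HT}}$, is thus a closed immersion of $p$-adic formal stacks, and by transmutation it is the base-change of the closed immersion $\Int_p^{\mathrm{HT}}\hookrightarrow \Int_p^{\Prism}$. The plan is to identify the \u{C}ech conerve of $f$ explicitly and then to exhibit the required totalization as a consequence of the nilpotence of the ideal sheaf combined with the nilcompleteness and infinitesimal cohesiveness of $\mathcal{X}^{\diamond}$ — this is precisely the pattern already used in the proof of Proposition~\ref{prop:nyg_classical} and in Remark~\ref{rem:hl_preygel}, so the statement should be thought of as a special case of the Halpern-Leistner--Preygel completeness principle.

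First I would set $\mathcal{Y} = C^{\Prism}\times_{\Int_p^{\Prism}}\Int_p^{\mathrm{HT}}$ and unwind, using Proposition~\ref{prop:hodge_tate_locus}, that $\Int_p^{\mathrm{HT}}\simeq B\mathbb{G}_m^\sharp$ over $\Spf\Int_p$, with ideal sheaf $\mathscr{I}_{\Int_p^{\mathrm{HT}}}$ inside $\mathscr{O}_{\Int_p^{\Prism}}$ satisfying $\mathscr{I}^{[k]}\subset$ (higher powers), so that $C^{\Prism}$ is, Zariski-locally on $\Spec C$, the formal completion of itself along the closed substack $\mathcal{Y}$ in the sense that the transition maps in the tower of infinitesimal neighborhoods $\mathcal{Y}_{(k)} = V(\mathscr{I}^k)\times_{\Int_p^\Prism}C^\Prism$ are square-zero extensions and $C^{\Prism}\xrightarrow{\simeq}\varprojlim_k \mathcal{Y}_{(k)}$. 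Here I would invoke, after flat base change along the cover $W_{\mathrm{dist}}\to \Int_p^{\Prism}$ as in the proof of Proposition~\ref{prop:nyg_classical}, the completeness statement from Appendix~\ref{app:completeness} (the $\Aff^1/\Gm$-completeness result that is already used to establish integrability for the attractor stacks in Proposition~\ref{prop:t-integrable_repble}); concretely, on the chart $Y$ from that proof the ideal of the Hodge-Tate locus is generated by a single element and the needed completeness is the assertion that $\mathscr{O}$ is complete along it.

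Second, given this tower of square-zero thickenings, I would compute the \u{C}ech conerve $C^{\Prism\times_{\mathcal{Y}}(\bullet+1)}$. Because $f$ is a closed immersion, the fiber products $C^{\Prism}\times_{\mathcal{Y}}\cdots\times_{\mathcal{Y}}C^{\Prism}$ are again closed formal substacks of $C^{\Prism}$ containing $\mathcal{Y}$, and — this is the crux of the computation — each is the completion of $C^{\Prism}$ along $\mathcal{Y}$ with respect to the $m$-fold self-product of the ideal, which has the same radical. Using the Postnikov tower of $\mathcal{X}^{\diamond}$ (nilcompleteness) and infinitesimal cohesiveness to reduce to each square-zero step, the totalization $\mathrm{Tot}(\Map(C^{\Prism\times_{\mathcal{Y}}(\bullet+1)}\otimes\Int/p^n\Int,\mathcal{X}^{\diamond}))$ then telescopes: the deformation-theoretic fibers over $\Map(\mathcal{Y}\otimes\Int/p^n\Int,\mathcal{X}^{\diamond})$ are, level by level in the conjugate-/$\mathscr{I}$-adic filtration, mapping spaces into the (shifted) cotangent complex of $\mathcal{X}^{\diamond}$ pulled back along $\mathcal{Y}$, and the cosimplicial structure on these is the \u{C}ech complex of a trivial square-zero extension, whose totalization recovers a single copy. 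Running this over all the graded pieces and passing to the limit — where $C^{\Prism}\simeq\varprojlim\mathcal{Y}_{(k)}$ comes in — gives $\Map(C^{\Prism}\otimes\Int/p^n\Int,\mathcal{X}^{\diamond})\xrightarrow{\simeq}\mathrm{Tot}(\cdots)$, which is the claim.

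\textbf{Main obstacle.} The step I expect to be the real work is the explicit identification of the iterated fiber products $C^{\Prism}\times_{\mathcal{Y}}\cdots\times_{\mathcal{Y}}C^{\Prism}$ and the verification that the resulting cosimplicial tower is exactly the \u{C}ech conerve of the formal-completion situation to which the Halpern-Leistner--Preygel / Appendix~\ref{app:completeness} completeness applies; in particular one must be careful that the relevant ideal is not merely topologically nilpotent but that the tower of square-zero extensions is cofinal in the pro-system of \emph{all} infinitesimal neighborhoods of $\mathcal{Y}$ in $C^{\Prism}$, which is where the precise structure of $\mathscr{I}_{\Int_p^{\mathrm{HT}}}$ from~\cite[\S 3.4]{bhatt2022absolute} (rather than any abstract nilpotence) gets used. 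Once that bookkeeping is done, everything else is a formal consequence of nilcompleteness and infinitesimal cohesiveness of $\mathcal{X}^{\diamond}$, which hold by hypothesis (these are part of what it means to be a relative locally finitely presented derived Artin stack).
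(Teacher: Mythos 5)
Your high-level idea — that the statement reduces to the Halpern-Leistner–Preygel completeness principle via the $I$-adic completeness of the structure sheaf along the Hodge-Tate locus — is the right one, but both the route you take to get there and several of the intermediate steps have genuine problems.

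The most significant gap is that you never perform the reduction to semiperfectoid $C$, which in the paper's argument is done in one sentence via quasisyntomic descent (using that $C\mapsto C^{\Prism}$ carries quasisyntomic covers to flat covers, as in Proposition~\ref{prop:flat_surjections}). This reduction is what makes the rest of the argument trivial: once $C$ is semiperfectoid, $C^{\Prism}\simeq\Spf(\Prism_C)$ is affine, the Hodge-Tate locus is literally the closed formal subscheme $\Spf(\Prism_C)_{(t=0)}$ cut out by the generalized Cartier divisor $I\to\Prism_C$, and $\Prism_C$ is $(p,I)$-adically complete. Remark~\ref{rem:hl_preygel} then applies verbatim. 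Without this reduction, you are forced into the machinery you describe — flat base-changing along $W_{\mathrm{dist}}\to\Int_p^{\Prism}$, controlling infinitesimal neighborhoods, interweaving two descent arguments — all of which would need to be made precise and justified, and none of which is necessary.

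Two of your intermediate assertions are also wrong as stated. First, the map in question is not a closed immersion: $C^{\Prism}\times_{\Int_p^{\Prism}}\Int_p^{\mathrm{HT}}$ is a closed substack \emph{of} $C^{\Prism}$, so the closed immersion goes from it into $C^{\Prism}$, and the object whose sections you are computing by descent is $C^{\Prism}$ itself, with the \u{C}ech conerve taken of the closed substack. Second, you describe the iterated fiber products in the \u{C}ech conerve as ``the completion of $C^{\Prism}$ along $\mathcal{Y}$ with respect to the $m$-fold self-product of the ideal''; this is not what the \u{C}ech conerve of a closed immersion looks like. For $\Spf(A/I)\hookrightarrow\Spf A$, the $m$-fold self-product over $\Spf A$ is $\Spf(A/I\otimes^{\mathbb{L}}_A\cdots\otimes^{\mathbb{L}}_A A/I)$, a \emph{derived} thickening of $\Spf(A/I)$ whose underlying classical scheme is still $\Spf(A/I)$; these are Koszul-type objects and are in no sense quotients of $A$ by powers of $I$. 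The interplay between these derived tensor powers and $I$-adic completeness is exactly what the Halpern-Leistner–Preygel result (and, implicitly, the filtered/graded completeness arguments in Appendix~\ref{app:completeness}) handles — reconstructing it by hand via ``levels of the $\mathscr{I}$-adic filtration'' would require you to reprove their theorem rather than cite it.

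So: right destination, but you should first reduce to semiperfectoid $C$ by quasisyntomic descent; once there, the problem becomes a literal instance of Remark~\ref{rem:hl_preygel} and no further work is needed.
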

\begin{proof}
Via quasisyntomic descent once again, we reduce to the case where $C$ is semiperfectoid. Let $I\xrightarrow{t}\Prism_C$ be the generalized Cartier divisor on $\Prism_C$ underlying its structure of a prism, so that we have
\[
C^\Prism \simeq \Spf(\Prism_C)\;;\; C^\Prism\times_{\Int_p^\Prism}\Int_p^{\mathrm{HT}} \simeq \Spf(\Prism_C)_{(t=0)}.
\]

Now, $\Prism_C$ here is equipped with its $I$-adic topology with respect to which it is derived complete. Therefore, the lemma follows from Remark~\ref{rem:hl_preygel}.
\end{proof}

\begin{lemma}
\label{lem:on_HT_locus}
The map $\Field_p^{\mathrm{HT}}\to \Int_p^{\mathrm{HT}}$ satisfies universal Tot descent for $\mathcal{X}^\preoneb$
\end{lemma}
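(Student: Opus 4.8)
\textbf{Proof plan for Lemma~\ref{lem:on_HT_locus}.}

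The goal is to show that $\Int_p^{\mathrm{HT}}\to\Field_p^{\mathrm{HT}}$ satisfies universal Tot descent for $\mathcal{X}^\diamond$, where $\Field_p^{\mathrm{HT}} = \Int_p^{\mathrm{HT}}\times_{\Int_p^\Prism}\Field_p^\Prism \simeq \Spec\Field_p$ via Remark~\ref{rem:prismatization_of_Fp}. The plan is to exploit the very explicit description of both stacks: by Proposition~\ref{prop:hodge_tate_locus} we have $\Int_p^{\mathrm{HT}}\simeq B\mathbb{G}_m^{\sharp}$ over $\Spf\Int_p$, and the same proposition gives a natural equivalence $\Spf\Int_p\times_{\Int_p^\Prism}\Field_p^\Prism\xrightarrow{\simeq}\mathbb{G}_m^\sharp\times\Spec\Field_p$, so that the pullback of the flat cover $\Spf\Int_p\to\Int_p^{\mathrm{HT}}$ along $\Field_p^{\mathrm{HT}}\to\Int_p^{\mathrm{HT}}$ is the canonically trivial $\mathbb{G}_m^\sharp$-torsor over $\Spec\Field_p$. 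First I would base-change the statement: since $\Field_p^{\mathrm{HT}}\simeq\Spec\Field_p$, and since $\Spf\Int_p\to\Int_p^{\mathrm{HT}}$ is a flat cover (hence satisfies universal Tot descent for $\mathcal{X}^\diamond$ by Remark~\ref{rem:flat_tot_descent}), I would use Remark~\ref{rem:composition_tot_descent_flat} to reduce the problem: it suffices to show that the composite $\Spf\Int_p\to\Int_p^{\mathrm{HT}}\to\Field_p^{\mathrm{HT}}$ satisfies Tot descent for $\mathcal{X}^\diamond$ (note the map $\Spf\Int_p\to\Field_p^{\mathrm{HT}}\simeq\Spec\Field_p$ is exactly the reduction $\Spf\Int_p\to\Spec\Field_p$, the closed immersion of the special fiber).

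So the heart of the matter becomes: the map $\Spf\Int_p\to\Spec\Field_p$ (equivalently, $\Int_p\twoheadrightarrow\Field_p$ viewed in the $p$-adic formal category) satisfies Tot descent for $\mathcal{X}^\diamond$. Here the Čech conerve is computed by the cosimplicial ring $\Field_p^{\otimes^{\mathbb{L}}_{\Int}\bullet+1}$, but transmuted through $\Spf\Int_p$; concretely I would identify the $(\bullet+1)$-fold fiber power of $\Spf\Int_p$ over $\Spec\Field_p$ with $\Spf$ of the relevant cosimplicial animated ring, and then invoke Remark~\ref{rem:hl_preygel}: with $A = \Int_p = \Int[T]^\wedge_{(T)}/(T-p)$, $\overline{A} = \Field_p$, the map $\mathcal{X}^\diamond\to R^{\mathrm{syn}}\otimes\Int/p^n\Int$ pulled back to $\Spf\Int_p$ is a $(p)$-adic formal Artin stack, hence nilcomplete and infinitesimally cohesive, so that $\mathcal{Y}(R)\xrightarrow{\simeq}\mathrm{Tot}(\mathcal{Y}(R\otimes_{\Int_p}\Field_p^{\otimes^{\mathbb{L}}_{\Int_p}\bullet+1}))$. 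The key reconciliation step is to note that since we are only ever looking at $\mathcal{X}^\diamond\otimes\Int/p^n\Int$ — i.e. at $p$-nilpotent test objects — the ambient derived $p$-completeness hypothesis of Remark~\ref{rem:hl_preygel} is automatic, and that the Čech conerve of $\Spf\Int_p$ over $\Spec\Field_p$ in $p$-adic formal prestacks agrees after tensoring with $\Int/p^n\Int$ with the one appearing in the displayed Tot descent condition.

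The main obstacle I anticipate is \emph{not} any deep input but rather the bookkeeping of which Čech conerve is being used and over which base: one must be careful that the fiber product $\Int_p^{\mathrm{HT}}\times_{\Field_p^{\mathrm{HT}}}\cdots\times_{\Field_p^{\mathrm{HT}}}\Int_p^{\mathrm{HT}}$ appearing in the definition of ``Tot descent for $\mathcal{X}^\diamond$'' matches, after pullback along the flat cover $\Spf\Int_p\to\Int_p^{\mathrm{HT}}$ and after the reduction via Remark~\ref{rem:composition_tot_descent_flat}, with the $\Spf$ of $\Field_p^{\otimes^{\mathbb{L}}_{\Int}\bullet+1}$-type cosimplicial ring to which Remark~\ref{rem:hl_preygel} applies. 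Once that identification is in hand — using Proposition~\ref{prop:hodge_tate_locus} to trivialize everything and Remark~\ref{rem:prismatization_of_Fp} to identify $\Field_p^{\mathrm{HT}}$ with $\Spec\Field_p$ — the descent statement is a formal consequence of nilcompleteness plus infinitesimal cohesiveness of $\mathcal{X}^\diamond$, exactly as in Remark~\ref{rem:hl_preygel}, combined with the transitivity Remarks~\ref{rem:composition_tot_descent} and~\ref{rem:composition_tot_descent_flat} and flat descent Remark~\ref{rem:flat_tot_descent}.
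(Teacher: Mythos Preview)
Your approach is essentially the paper's: both invoke Proposition~\ref{prop:hodge_tate_locus} to bring in the flat surjection $\Spf\Int_p\to\Int_p^{\mathrm{HT}}$ (hence universal Tot descent via Remark~\ref{rem:flat_tot_descent}), and then reduce via Remark~\ref{rem:composition_tot_descent_flat} to the statement that $\Spf\Int_p\to\Spec\Field_p$ satisfies universal Tot descent, which is exactly Remark~\ref{rem:hl_preygel} applied to $p$-adic formal stacks. The paper's proof is two sentences but the ingredients and the order of reduction are the same as yours.
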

\begin{proof}
We will use Proposition~\ref{prop:hodge_tate_locus}, which shows (via Remark~\ref{rem:flat_tot_descent}) that the map $\Spf\Int_p\to \Int_p^{\mathrm{HT}}$ satisfies universal Tot descent for $\mathcal{X}$.

It is now enough to show (see Remark~\ref{rem:composition_tot_descent_flat}) that the map $\Spec\Field_p\to \Spf\Int_p$ satisfies universal Tot descent for $\mathcal{X}^\preoneb$. This follows from Remark~\ref{rem:hl_preygel} and the fact that we are dealing with $p$-adic formal stacks.
\end{proof}

\begin{lemma}
\label{lem:cartesian_square_bootstrapping_needed}
Suppose that we have $C\in \mathrm{CRing}^{p\text{-nilp}}_{R/}$. Then we have a canonical Cartesian square
\[
\begin{diagram}
\Map(C^{\mathcal{N}}\otimes\Int/p^n\Int,\mathcal{X})&\rTo&\Map(C^{\Prism}\otimes\Int/p^n\Int,\mathcal{X}^\preoneb)\\
\dTo&&\dTo\\
X^{-}(C/{}^{\mathbb{L}}p^n)&\rTo&X(C/{}^{\mathbb{L}}p^n).
\end{diagram}
\]
\end{lemma}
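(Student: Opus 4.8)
The assertion is that for $C \in \mathrm{CRing}^{f,p\text{-nilp}}_{R/}$ the square
\[
\begin{diagram}
\Map(C^{\mathcal{N}}\otimes\Int/p^n\Int,\mathcal{X})&\rTo&\Map(C^{\Prism}\otimes\Int/p^n\Int,\mathcal{X}^\diamond)\\
\dTo&&\dTo\\
X^{-}(C/{}^{\mathbb{L}}p^n)&\rTo&X(C/{}^{\mathbb{L}}p^n)
\end{diagram}
\]
is Cartesian, where all mapping spaces are over $R^{\mathrm{syn}}\otimes\Int/p^n\Int$, the right-hand vertical map is restriction along $j_{\mathrm{HT}}$ composed with the canonical identification $C^\Prism\otimes\Int/p^n\Int \simeq (\text{Hodge--Tate embedding})$, and the bottom horizontal map is $\iota^*$ combined with pullback along the lift furnished by the de Rham point. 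The plan is to reduce to the semiperfectoid case by quasisyntomic descent and then invoke the frame-theoretic picture already developed.

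First I would reduce to $C$ semiperfectoid. By Corollary~\ref{cor:semiperf_qsynt} there is a quasisyntomic cover $C\to C_\infty$ with all $C_\infty^{\otimes_C\bullet+1}$ semiperfectoid, and by Lemma~\ref{lem:faithfully_flat_nygaard} and Corollary~\ref{cor:quasisyntomic_descent_general} the induced maps of syntomifications, Nygaard filtered prismatizations and prismatizations are fpqc covers after reduction mod $p^n$; hence each of the four corners of the square satisfies descent along the \v{C}ech nerve of $C\to C_\infty$, and it suffices to prove the square is Cartesian for each semiperfectoid $C_\infty^{\otimes_C m}$. So assume $C$ is semiperfectoid. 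Now Theorem~\ref{thm:semiperf_crys} identifies $C^{\mathcal{N}}$ with the formal Rees stack $\Rees(\Fil^\bullet_{\mathcal{N}}\Prism_C)$ attached to the prismatic frame $\underline{\Prism}_C$ from Lemma~\ref{lem:nygaard_filtered_frame}, and under this identification the de Rham and Hodge--Tate embeddings $j_{\dR}, j_{\mathrm{HT}}: C^\Prism \to C^{\mathcal{N}}$ become the maps $\tau, \sigma: \Spf \Prism_C \to \Rees(\Fil^\bullet_{\mathcal{N}}\Prism_C)$ of~\S\ref{subsec:animated_frames} (this is the content of the Cartesian squares in Proposition~\ref{prop:frames_to_nygaard}, applied to $\underline{A} = \underline{\Prism}_C$ with its canonical syntomic structure). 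Consequently $\Map(C^{\mathcal{N}}\otimes\Int/p^n\Int,\mathcal{X})$ is exactly the space of maps $\Rees(\Fil^\bullet_{\mathcal{N}}\Prism_C)\otimes\Int/p^n\Int \to \mathcal{X}$ appearing in the setup of~\S\ref{subsec:abstract_def_theory}, where we view $\Rees(\Fil^\bullet A)$ as a pointed graded stack via Remark~\ref{rem:abstract_de_rham_point}.

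The core of the argument is then Remark~\ref{rem:1_bounded_sections_dependence}: because $\mathcal{X}$ is $1$-bounded and filtered integrable, the space $\Map(\Rees(\Fil^\bullet A)\otimes\Int/p^n\Int,\mathcal{X})$ (with $A = \Prism_C$) is canonically identified with
\[
\Map(\Aff^1/\Gm\times\Spec R_A/{}^{\mathbb{L}}p^n,\mathcal{X})\times_{\mathcal{X}^\diamond(R_A/{}^{\mathbb{L}}p^n)}\mathcal{X}^\diamond(A/{}^{\mathbb{L}}p^n)
\]
with $R_A = \gr^0_{\mathcal{N}}\Prism_C \simeq C$; here the first factor is precisely $X^-(C/{}^{\mathbb{L}}p^n)$, the second factor is $\Map(C^\Prism\otimes\Int/p^n\Int,\mathcal{X}^\diamond)$, and the fiber product is taken over $X(C/{}^{\mathbb{L}}p^n) = \mathcal{X}^\diamond(C/{}^{\mathbb{L}}p^n)$. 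Moreover, again by Remark~\ref{rem:1_bounded_sections_dependence}, under this identification the two projection maps out of the fiber product are exactly the maps $\tau^*$ (which becomes the projection to $X^-(C/{}^{\mathbb{L}}p^n)$, i.e.\ the bottom-left corner) and $j_{\mathrm{HT}}^* = \sigma^*$ (which becomes the projection to $\Map(C^\Prism\otimes\Int/p^n\Int,\mathcal{X}^\diamond)$, i.e.\ the top-right corner), while the bottom map $X^-(C/{}^{\mathbb{L}}p^n) \to X(C/{}^{\mathbb{L}}p^n)$ is the natural restriction along $B\Gm \hookrightarrow \Aff^1/\Gm$ combined with pullback along the de Rham lift. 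Matching these two descriptions of the maps is what makes the square Cartesian essentially by unwinding definitions. The one point requiring a little care — and the main obstacle — is to check that the identification of $\mathcal{X}^\diamond(A/{}^{\mathbb{L}}p^n)$ with $\Map(C^\Prism\otimes\Int/p^n\Int,\mathcal{X}^\diamond)$ coming from Remark~\ref{rem:1_bounded_sections_dependence} agrees on the nose with the Hodge--Tate restriction map $j_{\mathrm{HT}}^*$ used to define the square in the statement, and likewise that the bottom map matches: this amounts to comparing the factorization of $\sigma$ through $\Rees(\Fil^\bullet_{\varphi_*(A\twoheadrightarrow\overline A)}\varphi_*A)$ with the concrete description of $j_{\mathrm{HT}}$ in~\S\ref{subsec:syntomification}, and the factorization of $\tau$ through the open point with the de Rham lift $\tilde{x}_{\dR}$ of Lemma~\ref{lem:lift_divided_powers} (this last uses Remark~\ref{rem:crys_cohomology_de_rham} to see the two lifts of $\Prism_C\to C$ agree). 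Once these compatibilities are in place the Cartesian square is immediate, and I would close the proof by remarking that all the identifications are functorial in $C$, so the descent reduction at the start is legitimate.
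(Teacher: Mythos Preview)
Your overall strategy is correct and matches the paper's proof exactly: reduce to semiperfectoid $C$ by quasisyntomic descent, then invoke Theorem~\ref{thm:semiperf_crys} to identify $C^{\mathcal{N}}$ with $\Rees(\Fil^\bullet_{\mathcal{N}}\Prism_C)$, and finally appeal to filtered integrability (via Remark~\ref{rem:1_bounded_sections_dependence} or directly Proposition~\ref{prop:1_bounded_cartesian}) to obtain the fiber-product description.

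However, you have the roles of $\tau$ and $\sigma$ reversed. In Remark~\ref{rem:1_bounded_sections_dependence}, the projection to $\mathcal{X}^\diamond(A/{}^{\mathbb{L}}p^n)$ is explicitly identified with $\tau^*$, not $\sigma^*$; and under Proposition~\ref{prop:frames_to_nygaard}, $\tau$ corresponds to $j_{\dR}$, not $j_{\mathrm{HT}}$. So the top horizontal map in the square is restriction along the \emph{de Rham} embedding $j_{\dR}$, and the right vertical map is pullback along the ordinary de Rham point $x_{\dR}:\Spec C\to C^{\Prism}$ (which under $C^{\Prism}\simeq\Spf\Prism_C$ is just the quotient $\Prism_C\to R_A=C$). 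Your invocation of Lemma~\ref{lem:lift_divided_powers} and Remark~\ref{rem:crys_cohomology_de_rham} is unnecessary here: there is no divided power thickening in play, only the plain map $x_{\dR}$. Once you straighten out which embedding is which, the compatibility check you flag as ``the one point requiring a little care'' becomes trivial---it is built into the statement of Proposition~\ref{prop:frames_to_nygaard}.
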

\begin{proof}
Via quasisyntomic descent, we reduce to the case of $C$ semiperfectoid, where this follows from Theorem~\ref{thm:semiperf_crys} and filtered integrability.
\end{proof}

\begin{proof}
[Proof of Proposition~\ref{prop:devissage}]
The limit preserving property of the  functors $\Spec C\mapsto C^{\Prism}$ and $\Spec C \mapsto C^{\mathcal{N}}$ shows that we have
\begin{align*}
C^\Prism\times_{\Int_p^{\mathrm{\Prism}}}\underbrace{\Field_p^{\mathrm{\Prism}}\times_{\Int_p^{\mathrm{\Prism}}}\times\cdots\times_{\Int_p^{\mathrm{\Prism}}}\Field_p^{\mathrm{\Prism}}}_{\bullet+1} &\xrightarrow{\simeq} (C\otimes \Field_p^{\otimes \bullet +1})^{\Prism};\\
C^{\mathcal{N}}\times_{\Int_p^{\mathcal{N}}}\underbrace{\Field_p^{\mathcal{N}}\times_{\Int_p^{\mathcal{N}}}\times\cdots\times_{\Int_p^{\mathcal{N}}}\Field_p^{\mathcal{N}}}_{\bullet+1}   &\xrightarrow{\simeq}(C\otimes \Field_p^{\otimes \bullet +1})^{\mathcal{N}}.
\end{align*}

Lemmas~\ref{lem:reduction_to_HT_locus} and~\ref{lem:on_HT_locus} together show that the composition
\[
 C^{\Prism}\times_{\Int_p^{\Prism}}\Field_p^{\mathrm{HT}}\to C^{\Prism}\times_{\Int_p^{\Prism}}\Field_p^{\Prism}\to C^{\Prism}
\]
satisfies Tot descent for $\mathcal{X}^\preoneb$, while the second map satisfies universal Tot descent for $\mathcal{X}^\preoneb$. Therefore, Remark~\ref{rem:composition_tot_descent_flat} now shows that $C^{\Prism}\to C^{\Prism}\times_{\Int_p^{\Prism}}\Field_p^{\Prism}$ satisfies Tot descent for $\mathcal{X}^\preoneb$.

This, combined with the discussion in the first paragraph, shows that we have
\[
\Map(C^{\Prism}\otimes\Int/p^n\Int,\mathcal{X}^\preoneb)\xrightarrow{\simeq}\mathrm{Tot} \Map\bigl((C\otimes\Field_p^{\otimes\bullet+1})^{\Prism}\otimes\Int/p^n\Int,\mathcal{X}^\preoneb\bigr).
\]

Now, Lemma~\ref{lem:cartesian_square_bootstrapping_needed} combined with Remark~\ref{rem:hl_preygel} tells us that we also have
\[
\Map(C^{\mathcal{N}}\otimes\Int/p^n\Int,\mathcal{X})\xrightarrow{\simeq}\mathrm{Tot} \Map\bigl((C\otimes\Field_p^{\otimes\bullet+1})^{\mathcal{N}}\otimes\Int/p^n\Int,\mathcal{X}\bigr).
\]

The proof is now concluded by contemplating the identity~\eqref{eqn:sections_first_desc}.
\end{proof}

\begin{remark}
\label{rem:artin-lurie_alternative}
When $\pi_0(R)$ is a $G$-ring, one can also deduce Theorem~\ref{thm:general_representability} from Proposition~\ref{prop:bootstrapping_coeffs} and the following general assertion: Suppose that $\mathfrak{Y}$ is a $p$-adic formal prestack over $R$ with the following properties:
\begin{enumerate}
   \item $\mathfrak{Y}\otimes\Field_p$ is represented by a locally almost finitely presented derived Artin stack over $R/{}^{\mathbb{L}}p$;
   \item $\mathfrak{Y}$ satisfies Tot descent with respect to $p$: For every $C\in \mathrm{CRing}^{p\text{-nilp}}_{R/}$, the natural map
   \[
    \mathfrak{Y}(C) \to \mathrm{Tot}\left(\mathfrak{Y}(C\otimes \Field_p^{\otimes \cdot+1})\right)
   \]
   is an equivalence.
\end{enumerate}
Then $\mathfrak{Y}$ is represented by a derived $p$-adic formal Artin stack over $R$. 

This is shown using Artin-Lurie representability~\cite[Theorem 7.1.6]{lurie_thesis}. All the criteria involving limits are easily checked using our hypotheses. The existence of a $p$-completely almost perfect cotangent complex for $\mathfrak{Y}$ follows from the existence of an almost perfect cotangent complex for $\mathfrak{Y}\otimes\Field_p$ and Tot descent along $p$ for $p$-completely almost perfect complexes. From this, one also deduces the local almost finite presentation, which completes the verification of all the criteria in \emph{loc. cit.}
\end{remark} 

\begin{remark}
\label{rem:nilpotent_locus_general}
For any $C\in \mathrm{CRing}^{p\text{-nilp}}_{R/}$, set
\[
\Gamma^{\mathrm{nilp}}_{\mathrm{syn}}(\mathcal{X})(C) = \Gamma^{\mathrm{nilp}}_{\mathrm{syn}}(\mathcal{X}_1)(C/{}^{\mathbb{L}}p)\times_{\Gamma_{\mathrm{syn}}(\mathcal{X}_1)(C/{}^{\mathbb{L}}p)}\Gamma_{\mathrm{syn}}(\mathcal{X})(C).
\]
The prestack $\Gamma^{\mathrm{nilp}}_{\mathrm{syn}}(\mathcal{X})\to \Gamma_{\mathrm{syn}}(\mathcal{X})$ is the \defnword{nilpotent locus}, and the argument from Corollary~\ref{cor:groth_messing} and Remark~\ref{rem:nilpotent_groth_messing} shows that, for any not necessarily nilpotent divided power thickening $(C'\twoheadrightarrow C,\gamma)$, we have a Cartesian diagram as in that remark. That is, we have a Grothendieck-Messing theory for such thickenings as long as we restrict to the nilpotent locus.

Moreover, via Remark~\ref{rem:nilpotent_sections_witt} and Tot descent, or by using Corollary~\ref{cor:def_theory_frames} and quasisyntomic descent, one sees that $\Gamma^{\mathrm{nilp}}_{\mathrm{syn}}(\mathcal{X})(C)$ can be computed using the Witt frame for \emph{any} $R$-algebra $C$. In fact, using the argument, one sees as in Remark~\ref{rem:nilpotent_sections-any_frame} that $\Gamma^{\mathrm{nilp}}_{\mathrm{syn}}(\mathcal{X})(C)$ can be computed using any filtered prism $(\underline{A},\zeta)$ satisfying $R_A\simeq C$. In particular, for discrete $C$, it depends only on the classical truncation of $C^{\mathrm{syn}}$.
\end{remark}  

\subsection{Functoriality}

Suppose that we have a map $f:\mathcal{X}_1\to \mathcal{X}_2$ of $1$-bounded stacks over $R^{\mathrm{syn}}\otimes\Int/p^n\Int$ satisfying the hypotheses of Theorem~\ref{thm:general_representability}. Then we obtain a map of derived stacks $\Gamma_{\mathrm{syn}}(f):\Gamma_{\mathrm{syn}}(\mathcal{X}_1)\to \Gamma_{\mathrm{syn}}(\mathcal{X}_2)$ over $R$. We also have the corresponding map of Weil restricted stacks $X_1^{-,(n)}\to X_2^{-,(n)}$ and $X_1^{(n)}\to X_2^{(n)}$.

The following is immediate from Corollary~\ref{cor:cotangent_complex}:
\begin{proposition}
\label{prop:relative_cotangent_complex}
Let $\varpi_1:\Gamma_{\mathrm{syn}}(\mathcal{X}_1)\to X_1^{-,(n)}$ be the canonical map. Then we have a natural isomorphism
\[
\mathbb{L}_{\Gamma_{\mathrm{syn}}(\mathcal{X}_1)/\Gamma_{\mathrm{syn}}(\mathcal{X}_2)}\xrightarrow{\simeq}\varpi_1^*\mathbb{L}_{X_1^{-,(n)}/(X_1^{(n)}\times_{X_2^{(n)}}X_2^{-,(n)})}.
\]
\end{proposition}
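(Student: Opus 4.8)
\textbf{Proof strategy for Proposition~\ref{prop:relative_cotangent_complex}.}
The plan is to deduce this from Corollary~\ref{cor:cotangent_complex} by a purely formal manipulation of cotangent complexes in a diagram of derived stacks over $R$. First I would recall the two identifications coming from Corollary~\ref{cor:cotangent_complex} applied to $\mathcal{X}_1$ and $\mathcal{X}_2$ separately: writing $\varpi_i:\Gamma_{\mathrm{syn}}(\mathcal{X}_i)\to X_i^{-,(n)}$ for the canonical maps, we have $\mathbb{L}_{\Gamma_{\mathrm{syn}}(\mathcal{X}_i)/X_i^{-,(n)}}\simeq \varpi_i^*\bigl(\mathbb{L}_{X_i^{(n)}/R}\vert_{X_i^{-,(n)}}\bigr)[1]$. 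The point is then to assemble the square
\[
\begin{diagram}
\Gamma_{\mathrm{syn}}(\mathcal{X}_1)&\rTo&\Gamma_{\mathrm{syn}}(\mathcal{X}_2)\\
\dTo^{\varpi_1}&&\dTo_{\varpi_2}\\
X_1^{-,(n)}&\rTo&X_2^{-,(n)}
\end{diagram}
\]
and compute the relative cotangent complex of the top horizontal map by comparing the two composites from $\Gamma_{\mathrm{syn}}(\mathcal{X}_1)$ down to $X_2^{-,(n)}$, together with the base change $X_1^{(n)}\times_{X_2^{(n)}}X_2^{-,(n)}$.

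Concretely, the key steps, in order, are: (1) Observe that the square above is a square of $p$-adic formal prestacks over $R$ all of which admit perfect (relative) cotangent complexes, using Corollary~\ref{cor:cotangent_complex}, Proposition~\ref{prop:X_attractor_repble}, and Lemma~\ref{lem:weil_restriction} for the representability of $X_i^{-,(n)}$ and $X_i^{(n)}$. (2) Using the canonical map $\Gamma_{\mathrm{syn}}(\mathcal{X}_1)\to X_1^{-,(n)}\times_{X_2^{-,(n)}}\Gamma_{\mathrm{syn}}(\mathcal{X}_2)$ and the transitivity triangle for the composite $\Gamma_{\mathrm{syn}}(\mathcal{X}_1)\to \Gamma_{\mathrm{syn}}(\mathcal{X}_2)\to X_2^{-,(n)}$ versus $\Gamma_{\mathrm{syn}}(\mathcal{X}_1)\to X_1^{-,(n)}\to X_2^{-,(n)}$, reduce the statement to identifying the fiber of $\varpi_1$ over a point with the fiber of $X_1^{-,(n)}\to X_1^{(n)}\times_{X_2^{(n)}}X_2^{-,(n)}$ relative to the pullback of $\Gamma_{\mathrm{syn}}(\mathcal{X}_2)$. (3) Invoke the Grothendieck-Messing Cartesian square of Corollary~\ref{cor:groth_messing} (equivalently, the square of Theorem~\ref{thm:general_representability}(1) exhibiting $\Gamma_{\mathrm{syn}}(\mathcal{X}_i)$ as $X_i^{-,(n)}\times_{X_i^{(n)}\times_{\mathsf{R}(X_i^{(n)})}\mathsf{R}(X_i^{-,(n)})}\mathsf{R}(\Gamma_{\mathrm{syn}}(\mathcal{X}_i))$): this exhibits $\Gamma_{\mathrm{syn}}(\mathcal{X}_1)$ as a fiber product over a base built functorially from $X_1^{(n)}$ and $X_1^{-,(n)}$, and the map to $\Gamma_{\mathrm{syn}}(\mathcal{X}_2)$ is induced by the corresponding functorial maps; taking cotangent complexes of the fiber-product square and using $\mathbb{L}_{\Gamma_{\mathrm{syn}}(\mathcal{X}_i)/X_i^{-,(n)}}\simeq \varpi_i^*(\mathbb{L}_{X_i^{(n)}/R}\vert_{X_i^{-,(n)}})[1]$ then gives the claimed formula by chasing the resulting commuting diagram of fiber sequences. (4) Finally, identify $\mathbb{L}_{X_1^{-,(n)}/(X_1^{(n)}\times_{X_2^{(n)}}X_2^{-,(n)})}$ directly: by base change along $X_2^{-,(n)}\to X_2^{(n)}$ it equals $\mathbb{L}_{X_1^{-,(n)}/X_1^{(n)}}$ fit into the transitivity triangle with $\mathbb{L}_{X_1^{(n)}/X_2^{(n)}}\vert_{X_1^{-,(n)}}$, and matching this against the triangle obtained from step (3) yields the result.

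I expect the main obstacle to be step (3): making precise that the Grothendieck-Messing Cartesian squares for $\mathcal{X}_1$ and $\mathcal{X}_2$ are genuinely \emph{compatible} under the induced maps, i.e.\ that the map $\Gamma_{\mathrm{syn}}(\mathcal{X}_1)\to\Gamma_{\mathrm{syn}}(\mathcal{X}_2)$ fits into a map of Cartesian squares whose other three corners are $X_i^{-,(n)}$, $X_i^{(n)}$, and the Weil restrictions thereof, all with their evident functoriality, and that the cotangent-complex transitivity bookkeeping across this map of squares closes up without sign or shift errors. Once that diagram of fiber sequences is set up correctly, the conclusion is formal; the only quasi-affine-diagonal hypothesis is needed exactly so that all the relevant maps are relative derived Artin stacks admitting perfect cotangent complexes, so that these triangles exist and the pullbacks behave. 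An alternative, perhaps cleaner, route to the same end: apply Corollary~\ref{cor:cotangent_complex} verbatim but in the relative setting, by noting its proof only used the infinitesimal cohesiveness and local finite presentation of $\Gamma_{\mathrm{syn}}(\mathcal{X}_i)$ together with the Cartesian square~\eqref{eqn:canonical_commuting_square}; running the same argument for the pair $(\mathcal{X}_1,\mathcal{X}_2)$ and the relative square over $\Gamma_{\mathrm{syn}}(\mathcal{X}_2)$ produces $\mathbb{L}_{\Gamma_{\mathrm{syn}}(\mathcal{X}_1)/\Gamma_{\mathrm{syn}}(\mathcal{X}_2)}$ as the pullback of the relative cotangent complex of $X_1^{-,(n)}$ over the fiber product $X_1^{(n)}\times_{X_2^{(n)}}X_2^{-,(n)}$, which is exactly the assertion — I would likely present the proof this way, reducing the whole thing to a one-line invocation of Corollary~\ref{cor:cotangent_complex} in the relative context.
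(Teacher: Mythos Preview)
Your proposal is correct, and the alternative route you sketch at the end is exactly the paper's approach: the paper states the proposition as ``immediate from Corollary~\ref{cor:cotangent_complex}'' with no further argument, meaning precisely that one reruns the derivation of that corollary (from the Grothendieck--Messing Cartesian square~\eqref{eqn:canonical_commuting_square}) relative to $\Gamma_{\mathrm{syn}}(\mathcal{X}_2)$, using that the squares for $\mathcal{X}_1$ and $\mathcal{X}_2$ are compatible under the given map. Your longer first route via explicit transitivity triangles is also valid but more elaborate than necessary; you should lead with the one-line version.
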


\subsection{Sections of perfect $F$-gauges with Hodge-Tate weights $\le 1$}
\label{subsec:sections_1-bounded_repble}

Suppose that $\mathcal{M}$ is a perfect $F$-gauge of level $n$ over $R\in \mathrm{CRing}^{p\text{-comp}}$ with Hodge-Tate weights bounded by $1$. Pulling $\mathcal{M}$ back along $x^{\mathcal{N}}_{\dR}$ yields an increasingly filtered perfect complex $\Fil^\bullet_{\mathrm{Hdg}} M_n$ over $R/{}^{\mathbb{L}}p^n$. The next theorem is immediate from Theorem~\ref{thm:general_representability} and Corollary~\ref{cor:1_bounded_level_1_repble} by observing that we have $\Gamma_{\mathrm{syn}}(\mathcal{M}) \simeq \Gamma^*_{\mathrm{syn}}(\mathcal{M}^\vee)$.\footnote{One can also, without any additional work, formulate and prove a version for almost perfect $F$-gauges wth Hodge-Tate weights $\ge -1$ by considering the prestack $\Gamma^*_{\mathrm{syn}}(\mathcal{M})$ instead.}

\begin{theorem}
\label{thm:sections_1-bounded_representable}
The prestack $\Gamma_{\mathrm{syn}}(\mathcal{M})$ is represented by a $p$-adic formal locally finitely presented derived Artin stack over $R$ with cotangent complex $\Reg{\Gamma_{\mathrm{syn}}(\mathcal{M})}\otimes_R(\gr^{-1}_{\mathrm{Hdg}}M_n)^\vee[1]$. Moreover, if $(C'\twoheadrightarrow C,\gamma)$ is a nilpotent divided power thickening in $\mathrm{CRing}^{p\text{-comp}}_{R/}$, then we have a Cartesian square
\[
\begin{diagram}
\Gamma_{\mathrm{syn}}(\mathcal{M})(C')&\rTo&C'\otimes_R\Fil^0_{\mathrm{Hdg}}M_n\\
\dTo&&\dTo\\
\Gamma_{\mathrm{syn}}(\mathcal{M})(C)&\rTo&(C\otimes_R\Fil^0_{\mathrm{Hdg}}M_n)\times_{C\otimes_RM_n}(C'\otimes_RM_n).
\end{diagram}
\]
Moreover: 
\begin{enumerate}
   \item If $\mathcal{M}$ has Tor amplitude in $(-\infty,-1]$, then $\Gamma_{\mathrm{syn}}(\mathcal{M})$ is a smooth faithfully flat $p$-adic formal stack over $R$.
   \item If $\mathcal{M}$ has Tor amplitude in $[0,\infty)$, then $\Gamma_{\mathrm{syn}}(\mathcal{M})$ is a derived affine $p$-adic formal scheme over $R$.
\end{enumerate}
\end{theorem}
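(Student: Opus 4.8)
\textbf{Proof plan for Theorem~\ref{thm:sections_1-bounded_representable}.}
The strategy is to identify the stack $\mathcal{X}^\diamond = \mathbf{V}(\mathcal{M})$ as a $1$-bounded stack over $R^{\mathrm{syn}}\otimes\Int/p^n\Int$ (in the sense of Example~\ref{ex:1_bounded_perfect_complexes_stack}, bringing along the entire fixed point locus), and then to feed this into the general machinery of Section~\ref{sec:abstract}. Concretely, $\Gamma_{\mathrm{syn}}(\mathbf{V}(\mathcal{M})) = \Gamma_{\mathrm{syn}}(\mathcal{M})$ by construction. Since $\mathcal{M}$ is $1$-bounded, Example~\ref{ex:1_bounded_perfect_complexes_stack} shows that $\mathbf{V}(\mathcal{M})$ is indeed a $1$-bounded stack with $X^0 = X^{\diamond,0}$, and Remarks~\ref{rem:use_of_graded_completeness} and~\ref{rem:use_of_filtered_completeness} show that it is graded and filtered integrable even though $\mathbf{V}(\mathcal{M})$ need not have quasi-affine diagonal. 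The associated auxiliary stacks $X^{-,(n)}$, $X^{0,(n)}$, and (if relevant) $X^{+,(n)}$ admit the explicit descriptions from the proof of Corollary~\ref{cor:1_bounded_level_1_repble}: they are the mod-$p^n$ Weil restrictions of the vector stacks attached to $\Fil^0_{\mathrm{Hdg}}M_n$, $\gr^0_{\mathrm{Hdg}}M_n$, and $\Fil^{\mathrm{conj}}_0 M_n$ respectively; in particular they are representable. Hypothesis (2) of Theorem~\ref{thm:general_representability} is therefore satisfied, giving representability of $\Gamma_{\mathrm{syn}}(\mathcal{M})$ by a $p$-adic formal locally finitely presented derived Artin stack over $R$.

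For the cotangent complex, I would apply Corollary~\ref{cor:cotangent_complex}: $\Gamma_{\mathrm{syn}}(\mathcal{M})$ admits a perfect cotangent complex over $X^{-,(n)}$, and by the two isomorphisms there one has $\mathbb{L}_{\Gamma_{\mathrm{syn}}(\mathcal{M})/R}\simeq \varpi^*\mathbb{L}_{X^{-,(n)}/X^{(n)}}$. Since $\mathcal{X}^\diamond = \mathbf{V}(\mathcal{M})$ has cotangent complex $\Reg{\mathbf{V}(\mathcal{M})}\otimes\mathcal{M}^\vee$, the relevant Weil restrictions and Lemma~\ref{lem:weil_restriction_pullback} identify $\mathbb{L}_{X^{-,(n)}/X^{(n)}}$ with (the Weil restriction of) the dual of $\gr^{-1}_{\mathrm{Hdg}}M_n$, shifted appropriately; unwinding the $[-1]$-shift in the Weil restriction functor $\mathcal{F}\mapsto\mathcal{F}^{(n)}$ yields the stated formula $\Reg{\Gamma_{\mathrm{syn}}(\mathcal{M})}\otimes_R(\gr^{-1}_{\mathrm{Hdg}}M_n)^\vee[1]$. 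The Cartesian square is exactly Corollary~\ref{cor:groth_messing} (Grothendieck-Messing) applied to the nilpotent divided power thickening $(C'\twoheadrightarrow C,\gamma)$: the general square~\eqref{eqn:canonical_commuting_square} specializes, via the explicit descriptions of $X^{(n)}$ and $X^{-,(n)}$ as mod-$p^n$ Weil restrictions of vector stacks attached to $M_n$ and $\Fil^0_{\mathrm{Hdg}}M_n$, to the displayed square with corners $C'\otimes_R\Fil^0_{\mathrm{Hdg}}M_n$, $\Gamma_{\mathrm{syn}}(\mathcal{M})(C)$, and the fiber product $(C\otimes_R\Fil^0_{\mathrm{Hdg}}M_n)\times_{C\otimes_RM_n}(C'\otimes_RM_n)$. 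One must also invoke Theorem~\ref{thm:groth_messing_mod_p} / Proposition~\ref{prop:devissage} to know $\Gamma_{\mathrm{syn}}(\mathcal{M})\otimes\Field_p$ is representable so that Corollary~\ref{cor:groth_messing} applies, but this is already part of the representability statement just proved.

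Finally, for the Tor-amplitude refinements: assertions (1) and (2) follow from Corollary~\ref{cor:1_bounded_level_1_repble}(2),(3) for the $n=1$ case, bootstrapped up in $n$ by Proposition~\ref{prop:bootstrapping_coeffs} / Corollary~\ref{cor:smooth_bootstrapping_coeffs}. If $\mathcal{M}$ has Tor amplitude in $(-\infty,-1]$, then each $\Gamma_{\mathrm{syn}}(\mathcal{M}_1)$ is smooth and faithfully flat over $R/{}^{\mathbb{L}}p$, and the transition maps $\Gamma_{\mathrm{syn}}(\mathcal{M}_{m+1})\to\Gamma_{\mathrm{syn}}(\mathcal{M}_m)$ are smooth surjective torsors under smooth Artin stacks by Corollary~\ref{cor:smooth_bootstrapping_coeffs}, hence the limit is smooth and faithfully flat; if $\mathcal{M}$ has Tor amplitude in $[0,\infty)$, then each layer is a derived affine scheme (the fibers being finite flat group schemes $G(N,\psi)$ by Corollary~\ref{cor:fppf_cohomology}), and an inverse limit of derived affine schemes along affine transition maps is again a derived affine $p$-adic formal scheme. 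The main obstacle in this plan is bookkeeping: correctly tracking the $[-1]$-shift introduced by the Weil restriction functor through the cotangent-complex computation and the Grothendieck-Messing square, so that the signs and shifts in the final formulas come out as stated; the representability itself is a formal consequence of the already-established Theorem~\ref{thm:general_representability}.
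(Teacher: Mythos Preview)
Your proposal is correct and follows essentially the same approach as the paper, which is extremely terse: the paper declares the unnumbered claims ``immediate from Theorem~\ref{thm:general_representability} and Corollary~\ref{cor:1_bounded_level_1_repble}'' and then, for (1) and (2), reduces to $R$ an $\Field_p$-algebra and invokes Corollary~\ref{cor:1_bounded_level_1_repble} at level $1$ together with the d\'evissage of Proposition~\ref{prop:bootstrapping_coeffs}. You have correctly unpacked what ``immediate'' means here, including the use of Remarks~\ref{rem:use_of_graded_completeness} and~\ref{rem:use_of_filtered_completeness} to verify graded and filtered integrability, and Corollary~\ref{cor:cotangent_complex} and Corollary~\ref{cor:groth_messing} for the cotangent complex and the Cartesian square. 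Two minor remarks: your phrase ``inverse limit of derived affine schemes'' in the argument for (2) should really be ``finite composition of affine maps,'' since the bootstrapping runs only from level $1$ to level $n$; and for (1) and (2) you should make explicit the reduction to $R\in\mathrm{CRing}_{\Field_p/}$ (as the paper does), since Proposition~\ref{prop:bootstrapping_coeffs} and Corollary~\ref{cor:smooth_bootstrapping_coeffs} are stated in that setting.
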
 
\begin{proof}
Only the numbered asssertions require proof. To show them, we can assume that $R$ is an $\Field_p$-algebra. We already know from Corollary~\ref{cor:1_bounded_level_1_repble} that the statements are true if $\mathcal{M}$ has level $1$ and we now use the usual d\'evissage by power of $p$ (say in the form of Proposition~\ref{prop:bootstrapping_coeffs}) to see that they are true in general.
\end{proof}

\subsection{Stacks of perfect $F$-zips of Hodge-Tate weights $0,1$}
\label{subsec:HTwts_01}

For every $n\ge 1$, let $\mathcal{X}\to \Int_p^{\mathrm{syn}}\otimes\Int/p^n\Int$ be the $1$-bounded stack obtained via base-change from $\mathcal{P}_{\{0,1\}}\to B\Gm$ as described in Example ~\ref{ex:perfect_HT_wts_01_stack}. 

We will denote the associated  formal prestack $\Gamma_{\mathrm{syn}}(\mathcal{X})\to \Int_p$ by $\mathrm{Perf}^{\mathrm{syn}}_{\{0,1\},n}$. Concretely, this associates with every $R\in \mathrm{CRing}^{p\text{-nilp}}$ the $\infty$-groupoid $\mathrm{Perf}_{\{0,1\}}(R^{\mathrm{syn}}\otimes\Int/p^n\Int)^{\simeq}$ of perfect $F$-gauges of level $n$ over $R$ with Hodge-Tate weights $0,1$. 

Over this prestack we have a canonical filtered perfect complex $\Fil^\bullet_{\mathrm{Hdg}}M_{\mathrm{taut}}$ obtained by viewing, for each $R$, the universal perfect $F$-gauge of level $n$ as a perfect complex over $R^{\mathrm{syn}}$, and pulling back along $x^{\mathcal{N}}_{\dR}$. We obtain the next theorem from Theorem~\ref{thm:general_representability} and the discussion in Example~\ref{ex:perfect_HT_wts_01_stack}.

\begin{theorem}
\label{thm:HTwts01_representable}
The prestack $\mathrm{Perf}^{\mathrm{syn}}_{\{0,1\},n}$ is represented by a $p$-adic formal locally finitely presented derived Artin stack over $\Int_p$ with cotangent complex $(\gr^{-1}_{\mathrm{Hdg}}M_{\mathrm{taut}})^\vee\otimes \Fil^0_{\mathrm{Hdg}}M_{\mathrm{taut}}$. Moreover, if $(C'\twoheadrightarrow C,\gamma)$ is a nilpotent divided power thickening of $p$-complete algebras in $\mathrm{CRing}$, then we have a Cartesian square
\[
\begin{diagram}
\mathrm{Perf}^{\mathrm{syn}}_{\{0,1\},n}(C')&\rTo&\mathrm{Perf}_{\{0,1\}}(\Aff^1/\Gm\times\Spec C'/{}^{\mathbb{L}}p^n)\\
\dTo&&\dTo\\
\mathrm{Perf}^{\mathrm{syn}}_{\{0,1\},n}(C)&\rTo&\mathrm{Perf}_{\{0,1\}}(\Aff^1/\Gm\times\Spec C/{}^{\mathbb{L}}p^n)\times_{\mathrm{Perf}(C/{}^{\mathbb{L}}p^n)}\mathrm{Perf}(C'/{}^{\mathbb{L}}p^n).
\end{diagram}
\]
\end{theorem}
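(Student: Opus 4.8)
The plan is to recognize Theorem~\ref{thm:HTwts01_representable} as a direct instance of the general machinery developed in \S\ref{subsec:1-bounded_stacks_bootstrapping_base}, applied to the $1$-bounded stack $\mathcal{X}\to \Int_p^{\mathrm{syn}}\otimes\Int/p^n\Int$ obtained by base-change from $\mathcal{P}_{\{0,1\}}\to B\Gm$. First I would record the relevant structural facts about $\mathcal{P}_{\{0,1\}}$ established in Examples~\ref{ex:perfect_complexes_HTwts_01} and~\ref{ex:perfect_HT_wts_01_stack}: that $\mathcal{P}_{\{0,1\}}^\diamond = \mathcal{P}\times B\Gm$ is a locally finitely presented derived Artin stack over $\Int_p$, that $\mathcal{P}_{\{0,1\}}$ is both graded integrable and filtered integrable by Remarks~\ref{rem:use_of_graded_completeness} and~\ref{rem:use_of_filtered_completeness} (even though $\mathcal{P}$ lacks quasi-affine diagonal), and that the attractor $X^-$ and fixed point locus $X^0$ are representable — these are, respectively, the stacks of filtered perfect complexes with $\gr^i\simeq 0$ for $i\neq 0,-1$ and of graded perfect complexes with $M_i\simeq 0$ for $i\neq 0,1$, whose representability is noted in Example~\ref{ex:perfect_HT_wts_01_stack} (the only subtlety, integrability, being handled there). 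Since $\Int/p^m\Int$ is a $G$-ring for every $m$, hypothesis~(2) of Theorem~\ref{thm:general_representability} is satisfied, and part~(1) of that theorem immediately gives that $\mathrm{Perf}^{\mathrm{syn}}_{\{0,1\},n} = \Gamma_{\mathrm{syn}}(\mathcal{X})$ is a $p$-adic formal locally finitely presented derived Artin stack over $\Int_p$.

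Next I would compute the cotangent complex. By Corollary~\ref{cor:cotangent_complex}, $\mathbb{L}_{\Gamma_{\mathrm{syn}}(\mathcal{X})/\Int_p}\simeq \varpi_{\mathcal{X}}^*\mathbb{L}_{X^{-,(n)}/X^{(n)}}$, so the task reduces to identifying the relative cotangent complex of the attractor over the ambient stack. The tangent complex of $\mathcal{P}$ is $M_{\mathrm{taut}}^\vee\otimes M_{\mathrm{taut}}$ (Example~\ref{ex:perfect_complexes_HTwts_01}); pulling this back along the attractor map and passing to the relative cotangent complex over $X^{(n)}$ picks out exactly the negatively-graded part, which in the Hodge-Tate normalization is $(\gr^{-1}_{\mathrm{Hdg}}M_{\mathrm{taut}})^\vee\otimes \Fil^0_{\mathrm{Hdg}}M_{\mathrm{taut}}$ — here one uses that with Hodge-Tate weights in $\{0,1\}$ the filtration has only $\Fil^0$ and $\gr^{-1}$ in relevant degrees, so $\Fil^0_{\mathrm{Hdg}}M_{\mathrm{taut}}$ is the full underlying complex modulo the weight-$1$ piece. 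This is a routine unwinding of the description in \S\ref{subsec:attractor_cotangent_complex} and Lemma~\ref{lem:attractor_cotangent_complex}, combined with Lemma~\ref{lem:weil_restriction} to pass through the Weil restriction $X^{-,(n)}$.

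Finally, for the Cartesian square I would invoke Corollary~\ref{cor:groth_messing} (Grothendieck--Messing) applied to the nilpotent divided power thickening $(C'\twoheadrightarrow C,\gamma)$: the general square~\eqref{eqn:canonical_commuting_square} becomes Cartesian, and it only remains to translate its corners into the concrete language of perfect complexes. The corner $X^{-,(n)}(C')$ unwinds, via the explicit description of the attractor of $\mathcal{P}_{\{0,1\}}$ and the Weil restriction formula $X^{-,(n)}(C') = X^-(C'/{}^{\mathbb{L}}p^n)$, to $\mathrm{Perf}_{\{0,1\}}(\Aff^1/\Gm\times\Spec C'/{}^{\mathbb{L}}p^n)$; the corner $X^{(n)}(C')$ unwinds to $\mathrm{Perf}(C'/{}^{\mathbb{L}}p^n)$; and the fibre product $X^{-,(n)}(C)\times_{X^{(n)}(C)}X^{(n)}(C')$ is exactly $\mathrm{Perf}_{\{0,1\}}(\Aff^1/\Gm\times\Spec C/{}^{\mathbb{L}}p^n)\times_{\mathrm{Perf}(C/{}^{\mathbb{L}}p^n)}\mathrm{Perf}(C'/{}^{\mathbb{L}}p^n)$. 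The representability hypothesis of Corollary~\ref{cor:groth_messing} — that $\Gamma_{\mathrm{syn}}(\mathcal{X})\otimes\Field_p$ be representable — is exactly part~(1), already in hand. I expect the main obstacle to be purely bookkeeping: carefully matching the sign and normalization conventions for the Hodge filtration versus the grading (the paper's convention that the $i$-th associated graded of a decreasing filtration sits in graded degree $-i$) so that the cotangent complex and the four corners of the square come out in precisely the stated form; all the genuine content has already been extracted into Theorems~\ref{thm:general_representability} and Corollary~\ref{cor:groth_messing} and the analysis of $\mathcal{P}_{\{0,1\}}$.
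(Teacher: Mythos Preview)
Your proposal is correct and follows essentially the same route as the paper: representability via hypothesis~(2) of Theorem~\ref{thm:general_representability} (using the graded/filtered integrability of $\mathcal{P}_{\{0,1\}}$ from Remarks~\ref{rem:use_of_graded_completeness},~\ref{rem:use_of_filtered_completeness} and the representability of $X^-,X^0$ from Example~\ref{ex:perfect_HT_wts_01_stack}), the Cartesian square via Corollary~\ref{cor:groth_messing}, and the cotangent complex via Corollary~\ref{cor:cotangent_complex}. One small slip: the $G$-ring condition on $\Int/p^m\Int$ is part of hypothesis~(1), not~(2), and is not actually needed here since $\mathcal{P}$ lacks quasi-affine diagonal; your real justification is the integrability and representability you correctly cite, so you can simply drop the $G$-ring remark.
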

\begin{proof}
The only thing that needs still to be verified is the assertion about the cotangent complex. For this, note that we have
\begin{align*}
X^{-,(n)}:C&\mapsto \mathrm{Perf}_{\{0,1\}}(\Aff^1/\Gm\times\Spec C/{}^{\mathbb{L}}p^n)\\
X^{(n)}:C&\mapsto \mathrm{Perf}(C/{}^{\mathbb{L}}p^n).
\end{align*}
The fiber of the map $X^{-,(n)}\to X^{(n)}$ over a perfect complex $M$ over $C/{}^{\mathbb{L}}p^n$ classifies two step filtrations $\Fil^\bullet M$ on $M$ with $\gr^iM$ perfect for all $i$, and $\gr^iM\simeq 0$ for $i\neq -1,0$. Giving such a datum is equivalent to specifying the map $f:\Fil^0M\to \Fil^{-1}M = M$ with $\Fil^0M$ perfect over $C/{}^{\mathbb{L}}p^n$. 

This shows that the tangent space of the map at $M$ is canonically isomorphic to the space of maps $\Fil^0M \to \gr^{-1}M$, which is of course the $C$-module $(\Fil^0M)^\vee\otimes_C\gr^{-1}M$. Taking duals and using Corollary~\ref{cor:cotangent_complex} now gives the desired cotangent complex.
\end{proof}

\begin{corollary}
\label{cor:HT_wts_0}
Given $R\in \mathrm{CRing}^{p\text{-nilp}}$, write $\mathrm{Perf}^{\mathrm{syn}}_{\{0\},n}(R)$ for the $\infty$-groupoid of perfect $F$-gauges with Hodge-Tate weights $0$. Then there is a canonical equivalence
\[
\mathrm{Perf}^{\mathrm{syn}}_{\{0\},n}(R)\xrightarrow{\simeq}D^b_{\mathrm{lisse}}(\Spec R,\Int/p^n\Int)
\]
where the right hand side is the bounded derived category of lisse $\Int/p^n\Int$-sheaves over $\Spec R$.
\end{corollary}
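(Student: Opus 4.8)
The plan is to identify $\mathrm{Perf}^{\mathrm{syn}}_{\{0\},n}(R)$ with the space of sections over $R^{\mathrm{syn}}\otimes\Int/p^n\Int$ of a suitable $1$-bounded stack, and then to recognize this space as the bounded derived category of lisse $\Int/p^n\Int$-sheaves by identifying the relevant ``$F$-zip with weights concentrated in degree $0$'' data with $\varphi$-modules that are \'etale. Concretely, a perfect $F$-gauge $\mathcal{M}$ of level $n$ with Hodge-Tate weights $\{0\}$ has the property that the filtered complex $\Fil^\bullet_{\mathrm{Hdg}}M_n$ obtained by pulling back along $x^{\mathcal{N}}_{\dR}$ is the trivial filtration on a perfect complex; equivalently, $\mathcal{M}$ is already an $F$-gauge of weight $0$ in the sense that $\gr^i_{\mathrm{Hdg}}M_n \simeq 0$ for $i\neq 0$. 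First I would observe, via Theorem~\ref{thm:semiperf_crys} and quasisyntomic descent (Proposition~\ref{prop:f_gauges_descent}), that for semiperfectoid $R$ such an $F$-gauge corresponds to a $\underline{\Prism}_R$-gauge $(\Fil^\bullet\mathsf{M},\xi)$ whose underlying filtered module is the trivial filtration $\Fil^\bullet_{\mathrm{triv}}\mathsf{M}$ on a perfect $\Prism_R/{}^{\mathbb{L}}p^n$-module $\mathsf{M}$, equipped with the Frobenius-linear isomorphism $\xi:\varphi^*\mathsf{M}\xrightarrow{\simeq}\mathsf{M}$ (the fact that it is an \emph{isomorphism}, not merely a map with cokernel killed by $I$, is exactly the weight-$0$ condition, since $\mathsf{M}_\sigma \simeq \varphi^*\mathsf{M}$ when the filtration is trivial).

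Next I would invoke the classical theory of $\varphi$-modules over prisms: over the (absolute) prismatization, an \'etale $\varphi$-module---i.e.\ a perfect complex with a Frobenius-linear \emph{isomorphism}---descends, via the Hodge-Tate and de Rham glueings defining the syntomification together with the computation that $\mathcal{O}^{\mathrm{syn}}$-modules of weight $0$ are ``constant along the prismatic direction,'' to a lisse $\Int/p^n\Int$-sheaf on $\Spec R$. The cleanest route is to use the known equivalence between \'etale $\varphi$-modules over $\Prism_R/{}^{\mathbb{L}}p^n$ and $D^b_{\mathrm{lisse}}(\Spec R,\Int/p^n\Int)$ for semiperfectoid (indeed qrsp) $R$---this is a standard input, being the derived, mod-$p^n$ version of the prismatic Riemann-Hilbert/\'etale comparison (cf.\ the discussion of weight-$0$ $F$-gauges in Bhatt's notes~\cite{bhatt_lectures} and the crystalline comparison)---and then to promote it to general $R\in\mathrm{CRing}^{f,p\text{-nilp}}$ by the fact that both sides satisfy quasisyntomic (hence, by Corollary~\ref{cor:semiperf_qsynt}, also \'etale-local and then fpqc) descent: the left side by Proposition~\ref{prop:f_gauges_descent}, the right side because lisse $\Int/p^n\Int$-sheaves and their derived category satisfy \'etale descent. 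Both sides are also functorial and compatible with base change, so the equivalence for the ``$R_\infty^{\otimes_R(\bullet+1)}$'' cosimplicial diagram glues to the equivalence for $R$. I would phrase the final identification as an equivalence of prestacks $\mathrm{Perf}^{\mathrm{syn}}_{\{0\},n}\xrightarrow{\simeq}\mathrm{Perf}^{\mathrm{lisse}}_n$, where the target is the $p$-adic formal stack of perfect complexes of lisse $\Int/p^n\Int$-sheaves, matching the phrasing of Theorem~\ref{introthm:perf_fgauges}; the asserted equivalence on $R$-points is the claim.

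I would organize the argument in the following order: (i) set up the functor $\mathcal{G}\mapsto (\text{associated }\varphi\text{-module})$ on semiperfectoid inputs using $\Fil^\bullet_{\mathrm{triv}}$ and Proposition~\ref{prop:semiperfect_f-gauges}, and check that the weight-$0$ condition is equivalent to $\xi$ being an isomorphism; (ii) construct the comparison with $D^b_{\mathrm{lisse}}$ on semiperfectoid inputs, invoking the \'etale $\varphi$-module comparison and the description of $\mathrm{Syn}(\underline{\Prism}_R)$---here the open immersions $j_{\dR},j_{\mathrm{HT}}$ become \'etale-equivalent after inverting Frobenius, so that gluing a weight-$0$ object over the syntomification is the same as descending an \'etale $\varphi$-module; (iii) check functoriality and base-change compatibility so the construction sheafifies; (iv) conclude by quasisyntomic descent using Corollary~\ref{cor:semiperf_qsynt} and Proposition~\ref{prop:f_gauges_descent}, together with \'etale descent for lisse sheaves. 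The main obstacle is step (ii): one must pin down precisely why a perfect complex on $R^{\mathrm{syn}}\otimes\Int/p^n\Int$ with vanishing positive-weight Hodge-Tate associated gradeds is the ``same thing'' as an \'etale $\Int/p^n\Int$-local system---this rests on the fact that the Hodge-Tate and de Rham copies of $R^\Prism$ inside $R^{\mathcal{N}}$ are identified, after passing to weight $0$, by the Frobenius, so that a weight-$0$ $F$-gauge is exactly a perfect complex on $R^\Prism$ with a Frobenius descent datum, i.e.\ an \'etale $\varphi$-module---and then citing the (by now standard, but genuinely nontrivial) equivalence between such \'etale $\varphi$-modules and lisse sheaves. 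Everything else---the descent bookkeeping and the compatibility checks---is routine given the machinery already developed in the paper, in particular Theorem~\ref{thm:semiperf_crys}, Corollary~\ref{cor:semiperf_qsynt}, and Proposition~\ref{prop:f_gauges_descent}.
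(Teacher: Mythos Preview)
Your approach is valid but takes a genuinely different route from the paper's. You reduce to semiperfectoid inputs via quasisyntomic descent and then invoke the \'etale comparison between $\varphi$-modules over $\Prism_R/{}^{\mathbb{L}}p^n$ and lisse $\Int/p^n\Int$-sheaves; this works, but the step you flag as the main obstacle---the \'etale $\varphi$-module comparison over general semiperfectoid $R$---is a genuine external input (the prismatic \'etale comparison of Bhatt--Scholze, or the discussion in Bhatt's notes).

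The paper's argument is shorter and avoids this. It first observes that $\mathrm{Perf}^{\mathrm{syn}}_{\{0\},n}$ is the open substack of $\mathrm{Perf}^{\mathrm{syn}}_{\{0,1\},n}$ where $\gr^{-1}_{\mathrm{Hdg}}M_{\mathrm{taut}}\simeq 0$. By the cotangent complex formula in Theorem~\ref{thm:HTwts01_representable}, this substack has \emph{vanishing} cotangent complex, hence is \'etale over $\Spf\Int_p$. An \'etale formal stack is determined by its values on \emph{perfect} $\Field_p$-algebras, and for such $R$ one has $\Prism_R = W(R)$, so weight-$0$ $F$-gauges are exactly perfect $W_n(R)$-complexes with $\varphi^*\mathsf{M}\xrightarrow{\simeq}\mathsf{M}$; the paper then cites Katz's classical result (as in~\cite[Proposition 3.6]{MR4600546}) to identify these with lisse $\Int/p^n\Int$-sheaves. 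The upshot: by leveraging the representability and cotangent complex machinery already established, the paper reduces to the most classical setting (perfect rings, Witt vectors) and needs only Katz's theorem, whereas your descent-based approach requires the comparison over all semiperfectoid rings.
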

\begin{proof}
We can view $\mathrm{Perf}^{\mathrm{syn}}_{\{0\},n}$ as the open substack of $\mathrm{Perf}^{\mathrm{syn}}_{\{0,1\},n}$ parameterizing objects $\mathcal{M}$ such that $\gr^{-1}_{\mathrm{Hdg}}M_{\mathrm{taut}} \simeq 0$. Moreover, the description of the cotangent complex shows that this substack is \emph{\'etale} over $\Spf\Int_p$. In particular, it is determined completely by its restriction to perfect $\Field_p$-algebras $R$. 

For perfect $R$, the left hand side in the statement can be identified with the $\infty$-groupoid of perfect complexes $\mathsf{M}$ of $W_n(R)$-modules equipped with an isomorphism $\varphi^*\mathsf{M}\xrightarrow{\simeq}\mathsf{M}$. We can now conclude by a classical result of Katz, as formulated in~\cite[Proposition 3.6]{MR4600546}.
\end{proof} 

\begin{remark}
Explicitly, the equivalence is given by sending $\mathcal{M}$ to $R\Gamma_{\mathrm{syn}}(\mathcal{M})$, where the latter is seen to be an \'etale stack over $R$ by Theorem~\ref{thm:sections_1-bounded_representable}.
\end{remark}

\section{The algebraicity conjecture of Drinfeld}
\label{sec:main}

We can finally introduce the main protagonist of this paper. Fix a smooth affine group scheme $G$ over $\Int_p$ and a $1$-bounded cocharacter $\mu:\Gmh{\mathcal{O}}\to G_{\mathcal{O}}$ defined over the ring of integers $\mathcal{O}$ in a finite unramified extension of $\Rat_p$ with residue field $k$.

\subsection{Definitions}
\label{subsec:defns}
There is a canonical map $\Int_p^{\mathrm{syn}}\to B\Gm$ classifying the Breuil-Kisin twist from~\S\ref{subsec:breuil-kisin}. The restriction to $\mathcal{O}^{\mathcal{N}}$ lifts to a map $B\Gmh{\mathcal{O}}$, which does not descend to $\mathcal{O}^{\mathrm{syn}}$; however, we do have a map of pointed graded $p$-adic formal stacks
\[
\mathcal{O}^{\mathrm{syn}}\to (B\Gm\times\Spf\Int_p,\iota_{\mathcal{O}}).
\]
Therefore, we can pull the $1$-bounded stack $\mathcal{B}(G,\mu)$ from Definition~\eqref{defn:BGmu_1-bounded_stack} back over $\mathcal{O}^{\mathrm{syn}}$.

\begin{definition}
For any $R\in \mathrm{CRing}^{p\text{-comp}}_{\mathcal{O}/}$, we set $\BT{n}(R) = \Gamma_{\mathrm{syn}}(\mathcal{B}(G,\mu)\otimes\Int/p^n\Int)(R)$. Set 
\[
\BT{\infty} = \varprojlim_n\BT{n}.
\]

For any $R\in \mathrm{CRing}^{p\text{-comp}}_{\mathcal{O}/}$, an \defnword{$n$-truncated $(G,\mu)$-aperture} over $R$ is an object of the $\infty$-groupoid $\BT{n}(R)$.

A \defnword{$(G,\mu)$-aperture} over $R$ is an object of $\BT{\infty}(R)$.
\end{definition}

\begin{remark}
\label{rem:pointwise}
For $R\in \mathrm{CRing}_{\mathcal{O}/}^{f,p\text{-nilp}}$, $\BT{n}(R)$ can be described as the $\infty$-groupoid of $G$-torsors $\mathcal{Q}$ over $R^{\mathcal{N}}\otimes\Int/p^n\Int$ such that it is equipped with an equivalence $j_{dR}^*\mathcal{Q}\xrightarrow{\simeq}j_{HT}^*\mathcal{Q}$ of $G$-torsors over $R^\Prism\otimes\Int/p^n\Int$, satisfying the following condition: If $\mathcal{Q}^\mu$ is the associated $G\{\mu\}$-torsor, then, for every geometric point $R\to \kappa$ of $\Spf R$, the restriction of $(x^{\mathcal{N}}_{dR})^*\mathcal{Q}^\mu$ over $B\Gm\times \Spec \kappa$ is trivial.
\end{remark}

\begin{remark}
\label{rem:pullback_to_frames}
Suppose that $(\underline{A},\zeta)$ is a filtered prism. Then pullback along the map $\iota_{(\underline{A},\zeta)}$ from Proposition~\ref{prop:frames_to_nygaard}---or better along the map $\iota_{\mathrm{id}:\underline{A}\to \underline{A}}$ from Corollary~\ref{cor:frames_to_nygaard}---gives us a canonical arrow
\[
\BT{n}(R_A)\to \Wind{n}{\underline{A}}(R_A).
\]
More generally, if $\underline{B}\to \underline{A}\otimes\Int/p^n\Int$ is any map of frames, we obtain a functor from $\BT{n}(R_A)$ to the $\infty$-groupoid of windows over $\underline{B}$. In particular, when $\underline{A} = \underline{W(R)}$ is the Witt vector frame associated with $R\in \mathrm{CRing}_{\Field_p/}$, and $\underline{B} = \underline{W_n(R)}$ is the $n$-truncated Witt frame, we obtain a canonical map $\BT{n}(R)\to \Disp{n}(R)$, underlying a map of formal prestacks over $k$.
\end{remark}

\begin{remark}
\label{rem:drinfeld_definition}
When $\mu$ is defined over $\Int_p$, Drinfeld gives a different definition of $\BT{n}$ in~\cite{drinfeld2023shimurian}, which isn't quite the same as the one we give here, though it is \emph{isomorphic} to ours. 

To begin, he views $\mu$ as a cocharacter of the \emph{automorphism} group $\Aut(G)$ acting on $G$. Since $\mu$ is defined over $\Int_p$, $G\{\mu\}$ lives over $B\Gm$ and hence it makes sense to talk about $G\{\mu\}$-torsors over $R^{\mathrm{syn}}$ for any $R\in \mathrm{CRing}^{p\text{-nilp}}$. Drinfeld's definition of $\BT{n}(R)$ is as the $\infty$-groupoid of $G\{\mu\}$-torsors whose restriction to $B\Gm\times \Spec \kappa$ is trivial for any map $R\to \kappa$ to an algebraically closed field $\kappa$. 

We can include this in our framework, though the definition of the $1$-bounded stack $\mathcal{B}(G,\mu)$ will then have to be adjusted slightly: We will take it to be given by the pair $(BG\{\mu\},BM_\mu)$ living over the tautologically pointed graded stack $B\Gmh{\Int_p}$, where $BM_\mu$ is still the trivial $1$-bounded locus described in Lemma~\ref{lem:BGmu_trivial_locus}. If we define $\BT{n}$ in the same way with this adjusted definition, we now recover Drinfeld's definition and Theorem~\ref{thm:main_thm_body} and its proof below hold verbatim. In particular, we obtain a proof of~\cite[Conjecture C.3.1]{drinfeld2024shimurian}.

If we have a cocharacter $\mu$ of $G$, then the stacks obtained from our definition and from Drinfeld's, viewing $\mu$ as a cocharacter of $\Aut(G)$ instead, are canonically isomorphic, since we have an equivalence between $G\{\mu\}$-torsors and $G$-torsors preserving the corresponding $1$-bounded loci. 

However, there is the following phenomenon: When, for instance, $G$ is a torus, since a cocharacter of $G$ will always act trivially, Drinfeld's definition would yield a stack that is \emph{independent} of the cocharacter $\mu$, since $G\{\mu\}$ would always be $G\times B\Gm$. But, in $p$-adic Hodge theory, it is important to keep track of the cocharacter, since it knows about the Hodge-Tate weights of \'etale or Galois realizations. For this reason, we have chosen the definition and presentation used here. 

More importantly, as pointed out to us by the authors of~\cite{imai2023prismatic}, Drinfeld's description does not generalize directly when $\mu$ is only defined over some (unramified) ring of integers of $\Int_p$. See~\S\ref{subsec:tori} below for more discussion of the case of tori, and note the role played by the cocharacter in Proposition~\ref{prop:lubin-tate}.
\end{remark}

\subsection{The semiperfectoid case}
\label{subsec:semiperfectoid}
\begin{lemma}
\label{lem:f-semiperf_etale}
Suppose that $R$ is semiperfectoid and that $R\to S$ is \'etale. Then $S$ is also semiperfectoid. Moreover, the canonical map of frames $\underline{\Prism}_R\to \underline{\Prism}_S$ is $(p,I_R)$-completely \'etale, and we have
\[
\Prism_S\otimes_{\Prism_R}\Fil^\bullet_{\mathcal{N}}\Prism_R\xrightarrow{\simeq}\Fil^\bullet_{\mathcal{N}}\Prism_S.
\]
\end{lemma}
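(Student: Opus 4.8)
The plan is to reduce the whole statement to a base-change assertion for the Nygaard-filtered \emph{relative} prismatic cohomology of a perfect prism, where everything is under control.

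First I would prove that $S$ is semiperfectoid. Choose a perfectoid ring $R_0$ and a surjection $R_0\twoheadrightarrow R$. Since $R\to S$ is $p$-completely \'etale, it depends only on the reduction $R_0/p\twoheadrightarrow R/p$, and by the standard lifting of \'etale algebras along Henselian surjections (as in the argument for Henselianity in the proof of Proposition~\ref{prop:frame_props_hensel}, together with \cite[\href{https://stacks.math.columbia.edu/tag/0ALI}{Tag 0ALI}]{stacks-project}) there is a $p$-completely \'etale $R_0$-algebra $S_0$ with $S_0\hat{\otimes}_{R_0}R\simeq S$. A $p$-completely \'etale algebra over a perfectoid ring is again perfectoid, so $S_0$ is perfectoid, and the base-changed surjection $S_0\twoheadrightarrow S_0\hat{\otimes}_{R_0}R=S$ exhibits $S$ as semiperfectoid.

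Next I would translate to the relative setting. Let $(A_0,I_0)=(A_{\mathrm{inf}}(R_0),\ker\theta)$ and $(B_0,J_0)=(A_{\mathrm{inf}}(S_0),\ker\theta)$ be the perfect prisms of $R_0$ and $S_0$. Tilting carries $p$-completely \'etale maps to \'etale maps of perfect $\Field_p$-algebras, and $W(-)$ lifts these uniquely, so $(A_0,I_0)\to(B_0,J_0)$ is $(p,I_0)$-completely \'etale with $J_0=B_0\otimes_{A_0}I_0$, and $\overline{B}_0=S_0$ is a $p$-completely flat \'etale $\overline{A}_0=R_0$-algebra with $S=R\hat{\otimes}_{\overline{A}_0}\overline{B}_0$. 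By Theorem~\ref{thm:absolute_prismatic_cohom} we have $\Prism_R\simeq\Prism_{R/A_0}$, $\Prism_S\simeq\Prism_{S/B_0}$, $I_R=\Prism_R\otimes_{A_0}I_0$ and $I_S=\Prism_S\otimes_{B_0}J_0$, all compatibly with the maps induced by $R\to S$ and $A_0\to B_0$. Now base change for prismatic cohomology along the flat map of prisms gives $\Prism_{S/B_0}\simeq B_0\hat{\otimes}_{A_0}\Prism_{R/A_0}$; since $B_0$ is $(p,I_0)$-completely \'etale over $A_0$ and $I_0\Prism_R=I_R$, this already shows $\Prism_R\to\Prism_S$ is $(p,I_R)$-completely \'etale. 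For the Nygaard filtration one checks the statement on associated gradeds (both sides being exhaustive and $(p,I_R)$-complete): by Remark~\ref{rem:filtered_frob_rel_nygaard}, $\gr^i_{\mathcal{N}}\varphi^*\Prism_{(-)/(-)}\simeq\Fil^{\mathrm{conj}}_i\overline{\Prism}_{(-)/(-)}\{i\}$, and the conjugate-filtered Hodge–Tate cohomology base changes along this \'etale map because $\gr^j_{\mathrm{conj}}\overline{\Prism}_{R'/A'}\simeq\wedge^j\mathbb{L}_{R'/\overline{A}'}[-j]$ while $\mathbb{L}_{S/S_0}\simeq S\otimes_R\mathbb{L}_{R/R_0}$ and $\mathbb{L}_{S_0/R_0}\simeq0$ by \'etaleness. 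Feeding this through the identification~\eqref{eqn:absolute_to_relative_nygaard} of the absolute Nygaard filtration with $\Fil^\bullet_{\mathcal{N}}\varphi^*\Prism_{(-)/(-)}$ yields $\Fil^\bullet_{\mathcal{N}}\Prism_S\simeq\Prism_S\otimes_{\Prism_R}\Fil^\bullet_{\mathcal{N}}\Prism_R$; the $\delta$-structures and the filtered Frobenii $\Phi$ match by functoriality, so $\underline{\Prism}_S\simeq\Prism_S\otimes_{\Prism_R}\underline{\Prism}_R$ as prismatic frames.

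The main obstacle I expect is making the base-change statement for the Nygaard-\emph{filtered} relative prismatic cohomology fully rigorous: while base change for prismatic and Hodge–Tate cohomology along flat maps of prisms is standard, one must verify that the comparison~\eqref{eqn:absolute_to_relative_nygaard} between the absolute and relative Nygaard filtrations is natural enough in $R$ and in the perfect prism to propagate through the base change, and that the resulting identification is compatible with all the frame data. As a useful cross-check one can invoke Proposition~\ref{prop:frame_props_hensel} applied to the prismatic frame $\underline{\Prism}_R$ (Lemma~\ref{lem:nygaard_filtered_frame}): it produces a \emph{unique} $(p,I_R)$-completely \'etale frame lift of $S=\gr^0_{\mathcal{N}}\Prism_R$ with filtration $\Fil^\bullet_{\mathcal{N}}\Prism_R\otimes_{\Prism_R}(-)$, and the frame $\underline{\Prism}_S$ constructed above must be this lift. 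A secondary, purely technical point is the Henselianity invoked in the first step to lift the \'etale algebra along $R_0\twoheadrightarrow R$.
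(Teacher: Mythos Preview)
Your proposal is correct, and for the first two claims (semiperfectoidness of $S$ and $(p,I_R)$-complete \'etaleness of $\Prism_R\to\Prism_S$) it matches the paper almost verbatim: lift the \'etale map to the perfectoid cover, tilt, and use that $W(-)$ preserves \'etaleness.

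For the Nygaard-filtration identity, however, the paper takes a different and shorter route than your direct associated-graded computation. Rather than invoking base change for relative prismatic cohomology and the identification~\eqref{eqn:absolute_to_relative_nygaard}, the paper observes that by (the proof of) case~(1) of Proposition~\ref{prop:flat_surjections}, the map $S^{\mathcal{N}}\to R^{\mathcal{N}}$ is already known to be $(p,I_R)$-completely \'etale; combining this with Theorem~\ref{thm:semiperf_crys}, which identifies these stacks with $\Rees(\Fil^\bullet_{\mathcal{N}}\Prism_S)$ and $\Rees(\Fil^\bullet_{\mathcal{N}}\Prism_R)$, and with the uniqueness of \'etale frame lifts from Proposition~\ref{prop:frame_props_hensel}, one concludes immediately. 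In other words, what you list as a ``cross-check'' is essentially the paper's entire argument for this step, once supplemented by the stacky \'etaleness input from Proposition~\ref{prop:flat_surjections}. Your route has the virtue of being independent of Theorem~\ref{thm:semiperf_crys} and of making the graded pieces explicit, but it carries exactly the burden you identify: one must check that~\eqref{eqn:absolute_to_relative_nygaard} is natural in both the ring and the perfect prism, and that the Nygaard filtration is complete enough to deduce the filtered statement from the graded one. The paper's approach sidesteps both issues by packaging everything into the already-established stack-theoretic framework.
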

\begin{proof}
Choose a perfectoid ring $R_0$ and a surjection $R_0\twoheadrightarrow R$, lifting to a map $A_{\mathrm{inf}}(R_0)=\Prism_{R_0}\to \Prism_R$.

By~\cite[\href{https://stacks.math.columbia.edu/tag/04D1}{Tag 04D1}]{stacks-project}, and by the equivalence of the \'etale sites of $R$ and $\pi_0(R)$, there exists a $p$-completely \'etale map $R_0\to R'_0$ such that $S = R'_0\otimes_{R_0}R$. We therefore reduce the first statement to showing that $\Prism_{R_0}\to\Prism_{R'_0}$ is $(p,I_{R_0})$-completely \'etale. This follows from the fact that $R'_0$ is also perfectoid,\footnote{This is a \emph{much} easier assertion than the almost purity theorem for perfectoid algebras over fields!} and the associated map of tilts $R_0^\flat\to R_0^{',\flat}$ is also \'etale, which implies that the map 
\[
\Prism_{R_0} = W(R_0^\flat)\to W(R^{',\flat}_0) = \Prism_{R'_0}
\]
is $(p,I_{R_0})$-completely \'etale.

For the second assertion, first note that by Proposition~\ref{prop:etale_goes_to_etale}, $S^{\mathcal{N}}\to R^{\mathcal{N}}$ is $(p,I_R)$-completely \'etale. The proof is now completed by combining Proposition~\ref{prop:frame_props_hensel} and Theorem~\ref{thm:semiperf_crys}.
\end{proof}

The next two results tell us that $\BT{n}$ can be understood in terms of the $(G,\mu)$-windows from~\S\ref{subsec:G_mu_windows}

\begin{proposition}
[Quasisyntomic descent]
\label{prop:descent}
If $R\to R_{\infty}$ is as in Corollary~\ref{cor:semiperf_qsynt}, then we have:
\[
\BT{n}(R)\xrightarrow{\simeq}\mathrm{Tot}\left(\BT{n}(R_\infty^{\otimes_R \bullet+1})\right)
\]
\end{proposition}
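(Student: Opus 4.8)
The statement is a descent assertion for the prestack $\BT{n} = \Gamma_{\mathrm{syn}}(\mathcal{B}(G,\mu)\otimes\Int/p^n\Int)$ along the quasisyntomic cover $R\to R_\infty$. The plan is to reduce this to the corresponding descent statement for the stacks $C^{\mathrm{syn}}\otimes\Int/p^n\Int$ themselves, which is the content of the filtered affineness results of Bhatt--Lurie recalled in \S\ref{subsec:nygaard_filtered_affineness}. Concretely, recall from \eqref{eqn:sections_first_desc} that
\[
\Gamma_{\mathrm{syn}}(\mathcal{X}) \simeq \mathrm{eq}\left(\Gamma_{\mathcal{N}}(\mathcal{X}) \rightrightarrows \Gamma_{\Prism}(\mathcal{X})\right),
\]
so it suffices to prove the analogous Tot-descent statements for $\Gamma_{\mathcal{N}}$ and $\Gamma_{\Prism}$ separately and then pass to equalizers (which commute with totalizations). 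For $\Gamma_{\Prism}$, the key input is that $R_\infty^{\Prism}\otimes\Int/p^n\Int \to R^{\Prism}\otimes\Int/p^n\Int$ is a flat cover by Corollary~\ref{cor:quasigeometric}(1), and that $(R_\infty^{\otimes_R\bullet+1})^{\Prism}$ is the \v{C}ech nerve of this cover --- this follows from the fact that prismatization commutes with the relevant colimits (the limit-preserving property of $\Spec C\mapsto C^{\Prism}$) together with Corollary~\ref{cor:quasigeometric}(3), which gives the flatness needed for the \v{C}ech nerve to compute the base. Since $\mathcal{B}(G,\mu)^\diamond = BG\times B\Gm$ is a stack with affine diagonal, $\Map_{/R^{\mathrm{syn}}\otimes\Int/p^n\Int}(-,\mathcal{X}^\diamond)$ satisfies flat descent, yielding the descent for $\Gamma_{\Prism}$.

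For $\Gamma_{\mathcal{N}}$, one argues similarly using that $R_\infty^{\mathcal{N}}\otimes\Int/p^n\Int \to R^{\mathcal{N}}\otimes\Int/p^n\Int$ is an fpqc cover (Corollary~\ref{cor:quasisyntomic_descent_general}, or Corollary~\ref{cor:quasigeometric}(1)) with \v{C}ech nerve $(R_\infty^{\otimes_R\bullet+1})^{\mathcal{N}}\otimes\Int/p^n\Int$ --- again by the limit-preserving property of $C\mapsto C^{\mathcal{N}}$ and the flatness statement in Corollary~\ref{cor:quasigeometric}(3). Here one must be slightly more careful because $\BT{n}$ involves not just $G$-torsors but the local triviality condition cutting out $BM_\mu \hookrightarrow (BG\times B\Gm)^0_{\oneb}$; however this is an \emph{open and closed} condition checked on geometric points (Lemma~\ref{lem:BGmu_trivial_locus}, Remark~\ref{rem:pointwise}), and geometric points of $\Spf R$ and $\Spf R_\infty$ have compatible contributions since $R\to R_\infty$ is a cover, so the triviality condition descends. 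Concretely, $\Gamma_{\mathcal{N}}(\mathcal{X})(C) = \Map_{/R^{\mathrm{syn}}\otimes\Int/p^n\Int}(C^{\mathcal{N}}\otimes\Int/p^n\Int,\mathcal{X}^\diamond)\times_{X^{\diamond,0}(C/{}^{\mathbb{L}}p^n)}X^0(C/{}^{\mathbb{L}}p^n)$, and the first factor satisfies flat descent along $C^{\mathcal{N}}$-covers because $\mathcal{X}^\diamond$ has affine diagonal, while the fiber product is preserved since $X^0 = BM_\mu$ is itself a stack with affine diagonal over $\Spec R/{}^{\mathbb{L}}p^n$ and its values only depend on $C/{}^{\mathbb{L}}p^n$, for which ordinary faithfully flat descent applies to the cover $R/{}^{\mathbb{L}}p^n \to R_\infty/{}^{\mathbb{L}}p^n$.

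The main obstacle I anticipate is verifying rigorously that the \v{C}ech nerve of $R^{\mathcal{N}}\otimes\Int/p^n\Int \leftarrow R_\infty^{\mathcal{N}}\otimes\Int/p^n\Int$ is computed by $(R_\infty^{\otimes_R\bullet+1})^{\mathcal{N}}\otimes\Int/p^n\Int$ --- that is, that $R_\infty^{\mathcal{N}}\times_{R^{\mathcal{N}}}\cdots\times_{R^{\mathcal{N}}}R_\infty^{\mathcal{N}} \simeq (R_\infty^{\otimes_R m})^{\mathcal{N}}$. This is exactly the identity exploited at the start of the proof of Proposition~\ref{prop:devissage} with $\Field_p$ replaced by $R_\infty$, and it uses both the limit-preserving property of the Nygaard filtered prismatization functor and the $I$-complete flatness from Corollary~\ref{cor:quasigeometric}(3) (one must be careful that fiber products of formal stacks are computed correctly $I$-adically). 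Once this identification is in hand, descent for $\Gamma_{\mathcal{N}}$ and $\Gamma_{\Prism}$ follows from flat descent of quasicoherent-sheaf-valued (here, torsor-valued) mapping stacks, and the equalizer presentation \eqref{eqn:sections_first_desc} finishes the proof. Alternatively, and perhaps more cleanly, one can invoke the general machinery: by Theorem~\ref{thm:general_representability} (once representability of $\BT{n}$ is known, or arguing directly) $\BT{n}$ is an fpqc sheaf, and $R\to R_\infty$ being a quasisyntomic cover with $R_\infty^{\otimes_R\bullet+1}$ semiperfectoid means the \v{C}ech nerve is an fpqc hypercover after $p$-completion, giving the result; but the hands-on argument via \eqref{eqn:sections_first_desc} is more self-contained and is what I would write up.
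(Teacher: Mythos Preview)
Your approach is correct and is essentially the paper's argument spelled out in full detail; the paper's own proof is a single sentence (``essentially immediate from the definitions and Proposition~\ref{prop:flat_surjections}''), relying on exactly the same mechanism---flat surjectivity of $R_\infty^{\mathcal{N}}\to R^{\mathcal{N}}$ and $R_\infty^{\Prism}\to R^{\Prism}$, together with fpqc descent for maps into the stack $\mathcal{X}^\diamond = BG\times B\Gm$ (which has affine diagonal) and the open-and-closed nature of the condition $X^0\hookrightarrow X^{\diamond,0}$.

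One minor simplification: your worry about the \v{C}ech nerve identification $(R_\infty^{\otimes_R m})^{\mathcal{N}} \simeq R_\infty^{\mathcal{N}}\times_{R^{\mathcal{N}}}\cdots\times_{R^{\mathcal{N}}}R_\infty^{\mathcal{N}}$ is unfounded---this is automatic from the transmutation definition (\S\ref{subsec:transmutation}), since $C\mapsto C^{\mathcal{N}}$ carries pushouts of rings to pullbacks of stacks without any flatness hypothesis (cf.\ the first displayed line in the proof of Proposition~\ref{prop:devissage}). The flatness in Corollary~\ref{cor:quasigeometric}(3) is not needed for this step. Also, your alternative route via Theorem~\ref{thm:general_representability} would indeed be circular here, as Proposition~\ref{prop:descent} is used before representability is established.
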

\begin{proof}
This is essentially immediate from the definitions and Proposition~\ref{prop:flat_surjections}. 
\end{proof}

\begin{lemma}
\label{lem:quotient_stack_desc}
If $R$ is semiperfectoid, as sheaves on the small \'etale site $R_{\et}$ of $\Spf(R)$, we have a canonical equivalence
\[
\BT{n}\vert_{R_{\et}} \xrightarrow{\simeq}\Wind{n}{\underline{\Prism}_R}
\]
\end{lemma}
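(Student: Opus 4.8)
The statement asserts an equivalence of \'etale sheaves $\BT{n}\vert_{R_{\et}} \xrightarrow{\simeq}\Wind{n}{\underline{\Prism}_R}$ when $R$ is semiperfectoid, so the plan is to unwind both sides into the common language of $1$-bounded stacks over frames developed in Section~\ref{sec:higher_frames} and then invoke Theorem~\ref{thm:semiperf_crys} to identify the underlying stacks. Concretely: by definition $\BT{n}(R') = \Gamma_{\mathrm{syn}}(\mathcal{B}(G,\mu)\otimes\Int/p^n\Int)(R')$ for $R'$ a $p$-complete $\mathcal{O}$-algebra, which via the presentation~\eqref{eqn:sections_first_desc} is the equalizer of $j_{\dR}^*,j_{\mathrm{HT}}^*:\Gamma_{\mathcal{N}}(\mathcal{B}(G,\mu))\rightrightarrows \Gamma_{\Prism}(\mathcal{B}(G,\mu))$. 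On the other hand, $\Wind{n}{\underline{\Prism}_R}(R'_A)$ as in Definition~\ref{defn:expl_desc_quotient} (equivalently Remark~\ref{rem:B'Gmu_abstract}, using that $\mu$ is $1$-bounded) is $\Map_{/B\Gm}(\mathrm{Syn}(\underline{\Prism}_{R'})\otimes\Int/p^n\Int,\mathcal{B}(G,\mu)) = \Gamma_{\underline{\Prism}_{R'}}(\mathcal{B}(G,\mu)\otimes\Int/p^n\Int,\xi)$, again an equalizer of the two maps $\tau^*$ and $\xi\circ\sigma^*$ out of $\Map(\Rees(\Fil^\bullet_{\mathcal{N}}\Prism_{R'})\otimes\Int/p^n\Int,\mathcal{B}(G,\mu))$.

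The first key step is to observe, using Lemma~\ref{lem:nygaard_filtered_frame}, that $\underline{\Prism}_{R'}$ is a prismatic frame with a syntomic structure and $\gr^0_{\mathcal{N}}\Prism_{R'}\simeq R'$, so that Proposition~\ref{prop:frames_to_nygaard} (in the form $\iota_{\mathrm{id}:\underline{\Prism}_{R'}\to\underline{\Prism}_{R'}}$) and, crucially, Theorem~\ref{thm:semiperf_crys} give a canonical \emph{isomorphism} of $p$-adic formal stacks $\Rees(\Fil^\bullet_{\mathcal{N}}\Prism_{R'})\xrightarrow{\simeq}(R')^{\mathcal{N}}$, compatible with the two open immersions: the Cartesian squares in Proposition~\ref{prop:frames_to_nygaard} identify $\tau$ (resp. $\sigma$) with the de Rham (resp. Hodge-Tate) embedding $(R')^{\Prism}\hookrightarrow (R')^{\mathcal{N}}$, and Theorem~\ref{thm:absolute_prismatic_cohom} identifies $\Spf\Prism_{R'}\xrightarrow{\simeq}(R')^{\Prism}$. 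Therefore the two equalizer diagrams above become \emph{identical}: $\Gamma_{\mathcal{N}}(\mathcal{B}(G,\mu))(R') \simeq \Map(\Rees(\Fil^\bullet_{\mathcal{N}}\Prism_{R'})\otimes\Int/p^n\Int,\mathcal{B}(G,\mu))$ and $\Gamma_{\Prism}(\mathcal{B}(G,\mu))(R')\simeq \mathcal{B}(G,\mu)^\diamond(\Prism_{R'}/{}^{\mathbb{L}}p^n) = BG^{(n)}(\Prism_{R'})$, with $j_{\dR}^*$ corresponding to $\tau^*$ and $j_{\mathrm{HT}}^*$ to $\sigma^*$. One must also check that the triviality (local trivialization) conditions match: the condition in Definition~\ref{defn:expl_desc_quotient} is phrased via geometric points and $p$-completely \'etale covers of $R_A = R'$, while the condition in Remark~\ref{rem:pointwise} for $\BT{n}$ is phrased via geometric points of $\Spf R'$ and restriction of $(x^{\mathcal{N}}_{\dR})^*\mathcal{Q}^\mu$ over $B\Gm\times\Spec\kappa$; these are reconciled by Lemma~\ref{lem:BGmu_trivial_locus} and Proposition~\ref{prop:fpqc_is_etale} (whose equivalences (3b)$\Leftrightarrow$(4b)$\Leftrightarrow\dots$ are exactly what is needed), together with the fact that $x^{\mathcal{N}}_{\dR}$ corresponds to $\tau$ precomposed with the de Rham point $\Aff^1/\Gm\times\Spec R'\to \Rees(\Fil^\bullet_{\mathcal{N}}\Prism_{R'})$ of Remark~\ref{rem:abstract_de_rham_point}.

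The second step is to upgrade this pointwise identification to an equivalence of \emph{\'etale sheaves} on $R_{\et}$. For this I would use Lemma~\ref{lem:f-semiperf_etale}: for $R\to S$ \'etale with $R$ semiperfectoid, $S$ is semiperfectoid and $\underline{\Prism}_R\to\underline{\Prism}_S$ is $(p,I_R)$-completely \'etale with $\Prism_S\otimes_{\Prism_R}\Fil^\bullet_{\mathcal{N}}\Prism_R\xrightarrow{\simeq}\Fil^\bullet_{\mathcal{N}}\Prism_S$, so that $\Rees(\Fil^\bullet_{\mathcal{N}}\Prism_S)\simeq S\otimes_R\Rees(\Fil^\bullet_{\mathcal{N}}\Prism_R)$ and both sides restrict compatibly along the morphism of small \'etale sites $S_{\et}\to R_{\et}$. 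Both $\BT{n}$ and $\Wind{n}{\underline{\Prism}_R}$ are defined as \'etale sheaves on $R_{\et}$ (the latter by construction in Definition~\ref{defn:expl_desc_quotient}/Remark~\ref{rem:double_quotient_desc}), and the pointwise natural transformation constructed above is manifestly compatible with \'etale base change by naturality of all the ingredients (Theorem~\ref{thm:semiperf_crys}, Proposition~\ref{prop:frames_to_nygaard}, Proposition~\ref{prop:fpqc_is_etale}). Hence it induces the claimed equivalence of sheaves.

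\textbf{Main obstacle.} The delicate point is not the formal matching of equalizer diagrams but the \emph{canonicity and naturality} of the isomorphism $\Rees(\Fil^\bullet_{\mathcal{N}}\Prism_{R'})\xrightarrow{\simeq}(R')^{\mathcal{N}}$ from Theorem~\ref{thm:semiperf_crys}: as the proof of that theorem notes, in the general semiperfectoid (as opposed to quasiregular semiperfectoid) case the inverse map is constructed using an auxiliary choice of perfectoid cover $R_0\twoheadrightarrow R'$, and it is not \emph{a priori} clear that it is independent of $R_0$ or compatible with \'etale base change. I would handle this by using only the canonical \emph{forward} map $\iota_{\underline{\Prism}_{R'}}$ of Proposition~\ref{prop:frames_to_nygaard}, which is canonical and natural, and then appealing to Lemma~\ref{lem:faithfully_flat_nygaard} together with quasisyntomic descent (Proposition~\ref{prop:descent} for $\BT{n}$, and the analogous descent for $\Wind{n}{\underline{\Prism}_{-}}$ via Lemma~\ref{lem:f-semiperf_etale} applied to a quasiregular-semiperfectoid cover) to reduce the verification of the equivalence to the quasiregular case, where Bhatt's construction in~\cite[Theorem 5.5.10]{bhatt_lectures} does provide a canonical inverse; alternatively, one checks that the forward map $\iota_{\underline{\Prism}_{R'}}$ is an isomorphism on de Rham and Hodge-Tate loci (immediate from Theorem~\ref{thm:absolute_prismatic_cohom} and Proposition~\ref{prop:frames_to_nygaard}) and hence, being a map of filtered-affine stacks which is an isomorphism after the faithfully flat base change of Lemma~\ref{lem:faithfully_flat_nygaard}, is an isomorphism.
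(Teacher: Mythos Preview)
Your proposal is correct and follows essentially the same route as the paper, which dispatches the lemma in one sentence by citing Lemma~\ref{lem:f-semiperf_etale}, Theorem~\ref{thm:semiperf_crys}, and Remark~\ref{rem:pullback_to_frames}. The ``main obstacle'' you raise about canonicity of the inverse in Theorem~\ref{thm:semiperf_crys} is not actually an obstacle: as you yourself observe, the \emph{forward} map $\iota_{\underline{\Prism}_{R'}}$ from Proposition~\ref{prop:frames_to_nygaard} is canonical and natural in the frame, and Theorem~\ref{thm:semiperf_crys} asserts precisely that it is an isomorphism; this is all that is needed to induce the equivalence, and Lemma~\ref{lem:f-semiperf_etale} gives the compatibility with \'etale base change.
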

\begin{proof}
This is immediate from Lemma~\ref{lem:f-semiperf_etale}, Theorem~\ref{thm:semiperf_crys} and Remark~\ref{rem:pullback_to_frames}.
\end{proof}

\begin{remark}
\label{rem:pointwise_equiv_conditions}
Combining Proposition~\ref{prop:descent} and Lemma~\ref{lem:quotient_stack_desc} with Proposition~\ref{prop:fpqc_is_etale}, we see that $\BT{n}(R)$ can also be described as the $\infty$-groupoid of $G$-torsors $\mathcal{Q}$ over $R^{\mathcal{N}}\otimes\Int/p^n\Int$ equipped with an equivalence $j_{dR}^*\mathcal{Q}\xrightarrow{\simeq}j_{HT}^*\mathcal{Q}$ of $G$-torsors over $R^\Prism\otimes\Int/p^n\Int$, and satisfying the following equivalent conditions on the associated $G\{\mu\}$-torsor $\mathcal{Q}^\mu$:
\begin{enumerate}
   \item For every geometric point $R\to \kappa$ of $\Spf R$, the restriction of $(x^{\mathcal{N}}_{\dR})^*\mathcal{Q}$ over $B\Gm\times \Spec \kappa$ is isomorphic to $\mathcal{P}_\mu$, the torsor classified by $B\mu:B\Gmh{\mathcal{O}}\to BG_{\mathcal{O}}$;
   \item For every geometric point $R\to \kappa$ of $\Spf R$, the restriction of $(x^{\mathcal{N}}_{\dR})^*\mathcal{Q}^\mu$ over $B\Gm\times \Spec \kappa$ is trivial;
   \item  The restriction of $\mathcal{Q}^\mu$ over $R^{\mathcal{N}}\otimes\Int/p^n\Int$ is trivial locally in the $p$-quasisyntomic topology on $\Spf R$;
   \item  The restriction of $\mathcal{Q}^\mu$ over $R^{\mathcal{N}}\otimes\Int/p^n\Int$ is trivial flat locally on $\Spf R$.
\end{enumerate}
If $\Spf R$ is connected, then these conditions are also equivalent to: For \emph{some} geometric point $R\to \kappa$, the restriction of $(x^{\mathcal{N}}_{\dR})^* \mathcal{Q}$ over $B\Gm\times\Spec \kappa$ is isomorphic to $\mathcal{P}_\mu$.
\end{remark}

\subsection{Representability}
If we view $\mathcal{X} = \mathcal{B}(G,\mu)\otimes\Int/p^n\Int$ as a $1$-bounded stack over $\mathcal{O}^{\mathrm{syn}}\otimes\Int/p^n\Int$, then the associated attractor $X^{-,(n)}\to \Spec \mathcal{O}$ is $BP_\mu^{-,(n)}$, and the stack $X^{(n)}$ is the Weil restricted classifying stack $BG^{(n)}_\mu$. 

\subsubsection{}
If $\mathcal{X}_1 =\mathcal{B}(G,\mu)\otimes\Field_p$, then $\Gamma_{\Fzip}(\mathcal{X}_1)$ is a quasicompact smooth $0$-dimensional Artin stack over $k$ with affine diagonal. Indeed, it is not difficult to see using Remark~\ref{rem:BGmu_simple} and~\eqref{eqn:sections_fzips} that this is just the stack of \defnword{$F$-zips with $G$-structure of type $\mu$}; that is, of tuples $\bm{F} = (\mathcal{F},\mathcal{F}^+,\mathcal{F}^-,\eta^+,\eta^-,\alpha)$ where:
\begin{itemize}
   \item $\mathcal{F}$ is a $G$-torsor over $R$;
   \item $\mathcal{F}^+$ is a $P^+_{\mu^\varphi}$-torsor over $R$;
   \item $\mathcal{F}^-$ is a $P^-_\mu$-torsor over $R$;
   \item $\eta^+:\mathcal{F}^+\to \mathcal{F}$ is a $P^+_{\mu^\varphi}$-equivariant map;
   \item $\eta^-:\mathcal{F}^-\to \mathcal{F}$ is a $P^-_\mu$-equivariant map;
   \item $\alpha:\mathcal{F}^+/U^+_{\mu^\varphi}\xrightarrow{\simeq}\varphi^*\left(\mathcal{F}^-/U^-_{\mu}\right)$ is an isomorphism of $M_{\mu^\varphi}$-torsors.
\end{itemize}
The conclusion now follows from~\cite[\S 3.3]{Pink2015-ye}; see also the discussion in~\cite[\S 3.2]{drinfeld2023shimurian}, where it is denoted $\mathrm{Disp}^G_1$. In the notation of this paper, we have $\Gamma_{\Fzip}(\mathcal{X}_1) \simeq \Disp{1}$.

The next result proves Theorems~\ref{introthm:main} and~\ref{introthm:groth_mess}
\begin{theorem}
\label{thm:main_thm_body}
$\BT{n}$ is a quasicompact smooth $0$-dimensional $p$-adic formal Artin stack over $\mathcal{O}$ with affine diagonal. For every nilpotent divided power thickening $(C'\twoheadrightarrow C,\gamma)$ of $p$-complete $\mathcal{O}$-algebras, we have a Cartesian square
\[
\begin{diagram}
\BT{n}(C')&\rTo&BP^{-,(n)}_\mu(C')\\
\dTo&&\dTo\\
\BT{n}(C)&\rTo&BP^{-,(n)}_\mu(C)\times_{BG^{(n)}(C)}BG^{(n)}(C').
\end{diagram}
\]
Moreover, the transition maps $\BT{n+1}\to \BT{n}$ are smooth and surjective. 
\end{theorem}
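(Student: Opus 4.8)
The plan is to deduce Theorem~\ref{thm:main_thm_body} from the general machinery of Section~\ref{sec:abstract}, once we verify that the $1$-bounded stack $\mathcal{X} = \mathcal{B}(G,\mu)\otimes\Int/p^n\Int$ over $\mathcal{O}^{\mathrm{syn}}\otimes\Int/p^n\Int$ satisfies the hypotheses of Theorem~\ref{thm:general_representability}. First I would check that $\mathcal{X}^\diamond = BG\{\mu\}\otimes\Int/p^n\Int$ has affine diagonal over $\mathcal{O}^{\mathrm{syn}}\otimes\Int/p^n\Int$: this follows because $G$ is affine over $\Int_p$, so $BG\{\mu\}$ has affine diagonal over $B\Gmh{\Int_p}$, and hence the base-change over $\mathcal{O}^{\mathrm{syn}}$ does too. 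Since $\mathcal{O}$ is the ring of integers in a finite unramified extension of $\Rat_p$, the ring $\mathcal{O}/p^m\mathcal{O}$ is a $G$-ring for all $m$, so hypothesis (1) of Theorem~\ref{thm:general_representability} applies. By that theorem, $\BT{n} = \Gamma_{\mathrm{syn}}(\mathcal{X})$ is a $p$-adic formal locally finitely presented derived Artin stack over $\mathcal{O}$; since $\mathcal{X}^\diamond$ is flat over $\mathcal{O}^{\mathrm{syn}}\otimes\Int/p^n\Int$ with affine diagonal, assertion (2) of the same theorem gives that $\BT{n}$ has affine diagonal, and assertion (3), combined with the quasicompactness of $X^{-,(n)} = BP^{-,(n)}_\mu$ and $X^{0,(n)} = BM^{(n)}_\mu$ (which are quasicompact since $P^-_\mu$ and $M_\mu$ are affine group schemes), gives quasicompactness of $\BT{n}$.

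Next I would establish smoothness and zero-dimensionality. The key input is that $\BT{1}\otimes k = \Gamma_{\mathrm{syn}}(\mathcal{X}_1)$ sits over $\Gamma_{\Fzip}(\mathcal{X}_1) = \Disp{1}$, which---as recalled just before the theorem statement---is a quasicompact smooth zero-dimensional Artin stack over $k$ with affine diagonal. By Theorem~\ref{thm:devissage_to_fzips}(2), since $\mathcal{X}^\diamond = BG\{\mu\}$ is smooth over the base with $1$-connective relative tangent complex $\mathfrak{g}\{\mu\}[-1]$ (here $1$-boundedness of $\mu$ is essential: it ensures the relevant $F$-gauge has Hodge-Tate weights bounded by $1$, hence the tangent complex of $BG\{\mu\}$ has the right connectivity), the map $\BT{1}\otimes k \to \Disp{1}$ is a quasisyntomic torsor under $\mathsf{S}_0(\mathcal{X}_1)$, a smooth relative Artin stack, so $\BT{1}\otimes k$ is smooth and zero-dimensional over $k$. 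To pass to general $n$, I would use Corollary~\ref{cor:smooth_bootstrapping_coeffs}: since $\mathcal{X}^\diamond$ is smooth with $1$-connective relative tangent complex, $\BT{n+1}\otimes k \to \BT{n}\otimes k$ is a quasisyntomic torsor under the smooth Artin stack $\Gamma(\mathcal{M}_1(\mathcal{X}))$, hence smooth and surjective; by induction $\BT{n}\otimes k$ is smooth and zero-dimensional over $k$ for all $n$. Finally, to lift smoothness from characteristic $p$ to $\mathcal{O}$, I would invoke the Grothendieck-Messing Cartesian square (Corollary~\ref{cor:groth_messing}) applied to the canonical nilpotent divided power thickening $R \twoheadrightarrow R/{}^{\mathbb{L}}p$ (for $p>2$; for $p=2$ use the two-step argument through $\Int/4\Int$ exactly as in the proof of Theorem~\ref{thm:general_representability}): since $BP^{-,(n)}_\mu$ and $BG^{(n)}$ are smooth over $\mathcal{O}$, the Cartesian square expresses $\BT{n}$ étale-locally as a fiber product of smooth stacks over $\mathsf{R}(\BT{n})$, yielding smoothness and zero-dimensionality of $\BT{n}$ over $\mathcal{O}$.

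The Cartesian square in the statement is then a direct instance of Corollary~\ref{cor:groth_messing} (equivalently~\eqref{eqn:canonical_commuting_square}): for $\mathcal{X} = \mathcal{B}(G,\mu)\otimes\Int/p^n\Int$ we have $X^{-,(n)} = BP^{-,(n)}_\mu$ and $X^{(n)} = BG^{(n)}$, and $\mathcal{X}$ is $1$-bounded and filtered integrable (the latter by Proposition~\ref{prop:1_bounded_cartesian}, since $\mathcal{X}^\diamond$ has quasi-affine---indeed affine---diagonal), while $\BT{n}\otimes k = \Gamma_{\mathrm{syn}}(\mathcal{X})\otimes\Field_p$ is representable by the above. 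Corollary~\ref{cor:groth_messing} then gives exactly the asserted Cartesian square for every nilpotent divided power thickening $(C'\twoheadrightarrow C,\gamma)$ of $p$-complete $\mathcal{O}$-algebras. For the surjectivity and smoothness of $\BT{n+1}\to \BT{n}$ over $\mathcal{O}$ (not just over $k$): having already established it modulo $p$ via Corollary~\ref{cor:smooth_bootstrapping_coeffs}, I would use Proposition~\ref{prop:bootstrapping_coeffs} (valid over any $\mathcal{O}$-algebra by Remark~\ref{rem:bootstrapping_coeffs_no_need_for_char_p}) to realize $\BT{n+1}\to \BT{n}$ as a base-change of $\Gamma(\mathcal{M}_1(\mathcal{X})[1]) \to \Gamma_{\mathrm{syn}}(\mathcal{X}_1)$, whose fibers are controlled by the syntomic cohomology of a $1$-bounded perfect $F$-gauge of level $1$ with Tor amplitude in $(-\infty,-1]$; by Corollary~\ref{cor:1_bounded_level_1_repble}(2) (or Theorem~\ref{thm:sections_1-bounded_representable}(1)) this stack of sections is smooth and faithfully flat, so the transition map is smooth and surjective.

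The main obstacle is the smoothness lifting from $k$ to $\mathcal{O}$ via Grothendieck-Messing, and more precisely the need for the nilpotence result Proposition~\ref{prop:dot_varphi_1_nilpotent} to be applicable: Corollary~\ref{cor:groth_messing} rests on Theorem~\ref{thm:groth_messing_mod_p}, whose proof required the divided Frobenius $\dot{\varphi}_1$ on the fiber of $\Prism_{C'}\to\Prism_C$ to be topologically locally nilpotent, and the reduction to the $1$-connective-fiber case is where the subtlety lies. However, since all of this has already been set up and proved in the abstract generality of Section~\ref{sec:abstract}, the remaining work here is the bookkeeping of verifying that $\mathcal{B}(G,\mu)$ meets the hypotheses---affine diagonal of $BG\{\mu\}$, $1$-boundedness and $1$-connectivity of the tangent complex coming from $\mu$ being $1$-bounded, identification of $X^{-,(n)}$ with $BP^{-,(n)}_\mu$ via Remark~\ref{rem:BGmu_simple} and Definition~\ref{defn:BGmu_1-bounded_stack}---and then citing the relevant theorems. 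I would also remark that when $p=2$ the slightly more involved argument (stepping through $\Int/4\Int$ and then a trivial square-zero extension to $\Field_2$) works verbatim as in Theorem~\ref{thm:general_representability}.
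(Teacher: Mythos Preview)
Your proposal is correct and takes essentially the same approach as the paper: verify that $\mathcal{B}(G,\mu)\otimes\Int/p^n\Int$ satisfies the hypotheses of Theorem~\ref{thm:general_representability}, invoke Theorem~\ref{thm:devissage_to_fzips} and Corollary~\ref{cor:smooth_bootstrapping_coeffs} for the mod-$p$ structure, and read off the Cartesian square from Corollary~\ref{cor:groth_messing}. The paper is marginally more efficient in one place---it deduces smoothness of $\BT{n}$ over $\mathcal{O}$ directly from the Cartesian square (since $BP^{-,(n)}_\mu\to BG^{(n)}$ is smooth) rather than first proving it mod $p$ and then lifting, and it checks smooth surjectivity of the transition maps only mod $p$, which suffices once both stacks are known to be smooth over $\mathcal{O}$.
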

\begin{proof}
Theorem~\ref{thm:devissage_to_fzips} and the discussion above show that $\BT{n}$ is a quasi-compact finitely presented derived $p$-adic formal Artin stack over $\mathcal{O}$ with affine diagonal. 

The existence of the stated Cartesian square for nilpotent divided power thickenings follows from Corollary~\ref{cor:groth_messing}. Since $BP^{-,(n)}_\mu\to BG^{(n)}$ is smooth, this also shows that $\BT{n}$ is a smooth $p$-adic formal Artin stack over $\mathcal{O}$.

The smooth surjectivity of the transition maps can be checked mod-$p$ and here it is a special case of Corollary~\ref{cor:smooth_bootstrapping_coeffs}. Note that we can actually say a bit more: $\BT{1}\otimes\Field_p$ is a torsor over $\Disp{1}$ under a group stack $\mathsf{S}_0(\mathcal{X}_1)$ by (2) of Theorem~\ref{thm:devissage_to_fzips}. In fact, from Corollary~\ref{cor:fppf_cohomology}, one sees that $\mathsf{S}_0(\mathcal{X}_1)$ is the classifying stack of a certain finite flat group scheme of height one. This is nothing but the \emph{Lau group scheme} from~\cite{drinfeld2024shimurian}. In particular, one sees that $\BT{1}\otimes\Field_p$ is a gerbe over $\Disp{1}$ banded by the Lau group scheme: this recovers the main result of \emph{loc. cit.} for smooth inputs.

Over $\BT{1}\otimes\Field_p$, we now have a canonical $F$-gauge $\mathcal{M}_1(\mathfrak{g})$ obtained by twisting the adjoint representation of $G$ by the tautological $G$-torsor over $(\BT{1}\otimes\Field_p)^{\mathrm{syn}}$. Then we find that $\BT{n}\otimes\Field_p\to \BT{n-1}\otimes\Field_p$ is a torsor under $\Gamma_{\mathrm{syn}}(\mathcal{M}_1(\mathfrak{g}[1]))$, which, by assertion (2) of Corollary~\ref{cor:1_bounded_level_1_repble} is a smooth, surjective Artin $1$-stack over $\BT{1}\otimes\Field_p$. 
\end{proof}

\begin{remark}
Remark~\ref{rem:independence_of_mu} shows that the stacks $\BT{n}$ depend only on the conjugacy class of $\mu$. 
\end{remark}

\begin{remark}
The smooth map $\BT{1}\otimes\Field_p\to \Disp{1}$ appearing in the proof above is nothing but the canonical one from Remark~\ref{rem:pullback_to_frames}. One can also consider the pullback maps $\BT{n}\otimes\Field_p\to \Disp{n}$ for all $n\ge 1$ associating an $n$-truncated $(G,\mu)$-display to every $n$-truncated $(G,\mu)$-aperture in characteristic $p$. Following Lau~\cite{MR2983008} and~\cite{drinfeld2024laugroupscheme}, one would expect that this map is a gerbe for a finite flat group scheme killed by the $n$-th power of Frobenius. 
\end{remark}

\begin{remark}
\label{rem:connectedness}
If $G$ is a connected group scheme, $\BT{n}$ is a connected $p$-adic formal Artin stack over $\mathcal{O}$. To begin, $\Disp{1}$ is connected by the quotient presentation from~\cite[Proposition 3.11]{Pink2015-ye}. Since $\BT{1}\otimes\Field_p$ is a gerbe over $\Disp{1}$ banded by a finite flat group scheme, we find that it is also connected. 

The description in the proof of Theorem~\ref{thm:main_thm_body} now shows that it is enough to know the following: If $\mathcal{F}$ is a vector bundle $F$-gauge over $R$ of level $1$ and Hodge-Tate weights $\{0,1\}$, then $\Gamma_{\mathrm{syn}}(\mathcal{F}[1])$ is a connected algebraic stack over $R$. Once again, it is a gerbe banded by a finite flat group scheme over $\Gamma_{\Fzip}(\bm{F}[1])$, where $\bm{F}$ is the associated vector bundle $F$-zip, and the desired connectedness can be checked directly for the latter stack.

In fact, the argument shows that in general we have a bijection
\[
\pi_0(\BT{n}\otimes\Field_p)\xrightarrow{\simeq}\pi_0(\Disp{1}),
\]
where the target is a certain quotient of $\pi_0(G\otimes\Field_p)$.
\end{remark}

\begin{remark}
\label{rem:adjoint_nilpotent_locus}
Let $\BT[G,\mu,\mathrm{nilp}]{n}\to \BT{n}$ be the nilpotent locus defined in Remark~\ref{rem:nilpotent_locus_general}: This satisfies a Grothendieck-Messing theory for not necssarily nilpotent divided power thickenings. One sees that this is the locus where the Lau group scheme has connected dual. Allowing $n\to \infty$, what one has here is the syntomic analogue of the \emph{adjoint nilpotent locus} considered in~\cite[\S 3.5]{MR4120808}.  

In fact, Remark~\ref{rem:nilpotent_locus_general} tells us that pulling back to the Witt frame produces a map of smooth formal stacks $\BT{\infty}\to \Disp{\infty}$ that is an isomorphism over the nilpotent locus. Therefore, the theory explained here recovers exactly that of B\"ultel-Pappas over the adjoint nilpotent locus.

More can be said: Let $(\underline{A},\zeta)$ be a filtered prism. Then we have natural maps (see Remark~\ref{rem:pullback_to_frames}) 
\[
\BT{n}(R_A)\to \Wind{n}{\underline{A}}(R_A)\to \Disp{n}(R_A).
\]
Remark~\ref{rem:nilpotent_sections-any_frame} shows that all these maps are equivalences when restricted to the adjoint nilpotent locus. One can use this observation to recover the main result of~\cite{bueltel2020gmuwindowsdeformationsgmudisplays}.
\end{remark}

\subsection{The case of trivial $\mu$}
\label{subsec:trivial_mu}

When $\mu = 0$ is the \emph{trivial} cocharacter $z\mapsto 1$, then we have $P^-_\mu = G$, and Theorem~\ref{thm:main_thm_body} tells us that $\BT[G,0]{n}$ is an \'etale $p$-adic formal stack over $\Int_p$ with affine \'etale diagonal.

\begin{proposition}
\label{prop:trivial_mu}
Suppose that $G$ is connected. Then there is a canonical equivalence
\[
\BT[G,0]{n}\xrightarrow{\simeq}B\underline{G(\Int/p^n\Int)},
\]
where the right hand side is the classifying stack of the locally constant group scheme $\underline{G(\Int/p^n\Int)}$.
\end{proposition}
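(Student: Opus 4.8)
The plan is to identify $\BT[G,0]{n}$ with the stack $\mathrm{Perf}^{\mathrm{syn}}_{\{0\},n}$-style description already available from Corollary~\ref{cor:HT_wts_0}, appropriately group-theoretized. Since $\mu = 0$ is trivial, the cocharacter acts trivially on $\mathfrak{g}$, so the $F$-gauge $(\mathfrak{g})_{\mathcal{P}}$ controlling the deformation theory has Hodge-Tate weights $\{0\}$, whence $\BT[G,0]{n}$ is \'etale over $\Int_p$ by Theorem~\ref{thm:main_thm_body} (as already noted). Being \'etale over $\Spf\Int_p$, the stack is determined entirely by its restriction to perfect $\Field_p$-algebras $R$, so first I would reduce to that case. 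Over such $R$, by Lemma~\ref{lem:quotient_stack_desc} and Remark~\ref{rem:pointwise_equiv_conditions}, $\BT[G,0]{n}(R)$ is the $\infty$-groupoid of $G$-torsors $\mathcal{Q}$ over $R^{\mathcal{N}}\otimes\Int/p^n\Int$ equipped with an identification $j_{\dR}^*\mathcal{Q}\simeq j_{\mathrm{HT}}^*\mathcal{Q}$; using $\underline{\Prism}_R = \underline{W(R)}$ (the Witt frame, which for perfect $R$ is $p$-adic and of the type considered in Remark~\ref{rem:trivial_filtration_displays} after checking that the trivial-$\mu$ Nygaard filtration collapses), this becomes the quotient $[L^+_{\underline{W_n(R)}}G^{(n)}\dbqt{\sigma}{\tau}L^+_{\underline{W_n(R)}}G^{(n)}\{0\}]$, where both sheaves are $G(W_n(R))$ and $\sigma = \varphi$, $\tau = \mathrm{id}$ (since $\mu$ trivial means $\mathrm{int}(\mu(p))$ is trivial and $P^-_\mu = G$).

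The key computation is then that the \'etale sheaf $R\mapsto \BT[G,0]{n}(R)$ on perfect $\Field_p$-algebras is the quotient stack $[G(W_n(R))\dbqt{\varphi}{\mathrm{id}}G(W_n(R))]$, i.e. the Artin-Schreier-Lang type quotient by the twisted conjugation action $(h,g)\mapsto h^{-1}g\varphi(h)$. First I would treat the underlying groupoid of sections: a $(G,0)$-display over perfect $R$ is a $G$-torsor $\mathcal{P}$ over $\Spec W_n(R)$ for the \'etale topology together with an isomorphism $\varphi^*\mathcal{P}\xrightarrow{\simeq}\mathcal{P}$; since $G$ is smooth and $W_n(R)$ is a $\varphi$-ring with $\varphi$ a lift of the (bijective, as $R$ is perfect) Frobenius, the Lang-type argument shows every such $G$-torsor with Frobenius structure is \'etale-locally trivial, and the automorphisms of the trivial such object are exactly $G(\Int/p^n\Int)$ — namely the $\varphi$-fixed points $G(W_n(R))^{\varphi=\mathrm{id}} = G(\Int/p^n\Int)$ when $R$ is a perfect (hence, after passing to an \'etale cover, strictly Henselian) ring, using that $W_n(-)$ commutes with such covers and $W_n(\Field_p) = \Int/p^n\Int$. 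Connectedness of $G$ enters here to guarantee that the surjectivity of $h\mapsto h^{-1}\varphi(h)$ on $G$-points of a strictly Henselian $\Field_p$-algebra holds (Lang's theorem over such rings, or the vanishing of $H^1_{\et}$ of the relevant torsor, for which connectedness of $G$ is needed so that the fibers of $G\xrightarrow{h\mapsto h^{-1}\varphi(h)}G$ are torsors under the finite connected-or-not... — more precisely one invokes that $G(\bar{R})\xrightarrow{\mathrm{AS}}G(\bar{R})$ is surjective with kernel $G(\Int/p^n\Int)$).

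Granting this, $\BT[G,0]{n}$ restricted to perfect $\Field_p$-algebras is the \'etale classifying stack $B\underline{G(\Int/p^n\Int)}$ of the locally constant group scheme. Since both $\BT[G,0]{n}$ and $B\underline{G(\Int/p^n\Int)}$ are \'etale $p$-adic formal stacks over $\Int_p$ (the latter because $\underline{G(\Int/p^n\Int)}$ is finite \'etale over $\Int_p$ when... actually $G(\Int/p^n\Int)$ is a finite group only if $G$ is, so in general it is a locally constant, possibly infinite, group scheme — but its classifying stack is still formally \'etale over $\Int_p$), and an \'etale stack over $\Spf\Int_p$ is uniquely determined by its restriction to perfect $\Field_p$-algebras, the canonical map $\BT[G,0]{n}\to B\underline{G(\Int/p^n\Int)}$ (which exists because over any $R\in \mathrm{CRing}^{f,p\text{-comp}}$ a $(G,0)$-aperture yields, via pullback to the Witt frame as in Remark~\ref{rem:pullback_to_frames}, a $(G,0)$-display, and then one extracts the associated $\underline{G(\Int/p^n\Int)}$-torsor from its $\varphi$-descent) is an equivalence. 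The main obstacle I anticipate is the precise Lang-type surjectivity statement over general perfect (not algebraically closed) $\Field_p$-algebras and its $W_n$-truncated version: one must argue carefully — either by devissage along the filtration of $W_n$ by Verschiebung, reducing to the additive group case where Artin-Schreier surjectivity on perfect rings is classical, together with smoothness of $G$ to lift through the square-zero steps — that $R\mapsto H^1_{\et}(R, \underline{G(\Int/p^n\Int)}) \to H^1_{\et}(\Spec W_n(R)/\varphi, G)$ is a bijection. Where $G$ is disconnected one genuinely needs to be careful, which is why the hypothesis that $G$ is connected is imposed; I would spell out that connectedness is used exactly to ensure $\pi_0$ of the relevant Artin-Schreier fiber is trivial, equivalently that $\BT[G,0]{n}$ is connected as in Remark~\ref{rem:connectedness}.
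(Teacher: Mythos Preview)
Your overall strategy---use that $\BT[G,0]{n}$ is \'etale over $\Spf\Int_p$, then identify it via the Witt/Frobenius quotient description and Lang's theorem---matches the paper's, but there are two places where the paper's execution is cleaner and where your sketch is either vague or does more work than needed.

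First, the construction of the canonical map. You propose to produce it by pulling back an aperture to the Witt frame and then ``extracting the associated $\underline{G(\Int/p^n\Int)}$-torsor from its $\varphi$-descent,'' but this is not spelled out and is not obviously functorial over all $R$. The paper instead observes that for any $\mathcal{Q}\in\BT[G,0]{n}(R)$, the functor $\Gamma_{\mathrm{syn}}(\mathcal{Q}):C\mapsto\Map(G\rvert_{C^{\mathrm{syn}}\otimes\Int/p^n\Int},\mathcal{Q})$ is represented by $\Spec R\times_{\BT[G,0]{n}}\Spec R$, hence is affine and \'etale over $R$ simply because the diagonal of $\BT[G,0]{n}$ is. This immediately gives the candidate $\underline{G(\Int/p^n\Int)}$-torsor without any frame-theoretic detour, and with it the map $\BT[G,0]{n}\to B\underline{G(\Int/p^n\Int)}$.

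Second, the verification. You flag the difficulty of proving Lang-type surjectivity over general (strictly Henselian) perfect $\Field_p$-algebras and propose a d\'evissage through the Verschiebung filtration of $W_n$. The paper sidesteps this entirely: once the map of \'etale stacks exists, checking it is an equivalence reduces to checking over algebraically closed fields $\kappa$, where $\BT[G,0]{n}(\kappa)\simeq[G(W_n(\kappa))\dbqt{\sigma}{\mathrm{id}}G(W_n(\kappa))]$ and the classical Lang theorem (combined with a successive-approximation argument through $W_n\to W_{n-1}$, which is easy over an algebraically closed field) gives $[*/G(\Int/p^n\Int)]$. You do not need Lang over any other ring.
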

\begin{proof}
Suppose that we have $\mathcal{Q}\in \BT[G,0]{n}(R)$. Consider the assignment $\Gamma_{\mathrm{syn}}(\mathcal{Q})$ on $\mathrm{CRing}^{p\text{-nilp}}_{R/}$ given by:
\[
C\mapsto \Map_{BG(C^{\mathrm{syn}}\otimes\Int/p^n\Int)}(G\rvert_{C^{\mathrm{syn}}\otimes\Int/p^n\Int},\mathcal{Q}).   
\]
This is represented over $R$ by
\[
\Spec R\times_{G,\BT[G,0]{n},\mathcal{Q}}\Spec R,
\]
which is the pullback along $(G,\mathcal{Q})$ of the diagonal of $\BT[G,0]{n}$ and is hence affine and \'etale over $R$.

We claim that this is an \'etale $G(\Int/p^n\Int)$-torsor over $R$, and hence gives a canonical map
\begin{align}\label{eqn:can_map_trivial_mu}
\BT[G,0]{n}\to B\underline{G(\Int/p^n\Int)}
\end{align}

To see this, it suffices to know that it is faithfully flat and that the natural map $\underline{G(\Int/p^n\Int)}\to \Gamma_{\mathrm{syn}}(G)$ of \'etale schemes is an equivalence.

Both assertions can be checked over algebraically closed fields $\kappa$ over $R$. Here, using Remark~\ref{rem:double_quotient_desc} and Lemma~\ref{lem:quotient_stack_desc}, we see that we have
\[
\BT[G,0]{n}(\kappa)\simeq [G(\Prism_{\kappa}/p^n\Prism_{\kappa})\dbqt{\sigma}{\mathrm{id}}G(\Prism_{\kappa}/p^n\Prism_{\kappa})],
\]
where $\sigma:G(\Prism_{\kappa}/p^n\Prism_{\kappa})\to G(\Prism_{\kappa}/p^n\Prism_{\kappa})$ is pullback under the Frobenius lift. Using the connectedness of $G$ and Lang's theorem, one sees that the natural map of groupoids
\[
[*/G(\Int/p^n\Int)]\to [G(\Prism_{\kappa}/p^n\Prism_{\kappa})\dbqt{\sigma}{\mathrm{id}}G(\Prism_{\kappa}/p^n\Prism_{\kappa})]
\]
is an equivalence. This proves both assertions, and also shows that the canonical map~\eqref{eqn:can_map_trivial_mu} is an equivalence.
\end{proof}

\subsection{Special \'etale loci}
\label{subsec:special_open}

In this subsection, we will assume that we have a smooth subgroup scheme $H\subset G$ such that $\mu$ factors through $H_{\mathcal{O}}$, so that we have maps $\BT[H,\mu]{n}\to \BT{n}$ and $\Disp[H,\mu]{n}\to \Disp{n}$. We will also make the following `versality' assumption:
\[
\Lie H + \Lie P^-_\mu = \Lie G \Leftrightarrow \Lie U^+_\mu \subset \Lie H.
\]

\begin{proposition}
\label{prop:special_open}
Under these assumptions:
\begin{enumerate}
   \item The map $\BT[H,\mu]{n}\to \BT{n}$ is \'etale.
   \item Suppose that $ P^+_\mu\subset H$. Then $\Disp[H,\mu]{1}\to \Disp{1}$ is an open immersion and the diagram
   \[
   \begin{diagram}
   \BT[H,\mu]{n}\otimes\Field_p&\rTo&\BT{n}\otimes\Field_p\\
     \dTo&&\dTo\\
     \Disp[H,\mu]{1}&\rTo&\Disp{1}
   \end{diagram} 
   \]
   is Cartesian. In particular, $\BT[H,\mu]{n}$ is an open substack of $\BT{n}$.
\end{enumerate}
\end{proposition} 
\begin{proof}
Let us check that $\BT[H,\mu]{n}\to \BT{n}$ is \'etale. By Grothendieck-Messing theory, this comes down to checking that the map
\[
B(H\cap P_\mu^-)\to BH\times_{BG}BP_\mu^-
\]
is \'etale. Looking at tangent complexes, this reduces to knowing that the map of complexes\footnote{The fiber product involved here is the \emph{homotopy} fiber product.}
\[
\Lie(H\cap P_\mu^-)[1] \to (\Lie H)[1]\times_{(\Lie G)[1]}(\Lie P_\mu^-)[1]
\]
is an isomorphism. This translates to the concrete claim that $\Lie H+\Lie P^-_\mu = \Lie G$, which is of course our assumption.

Now, consider (2): Suppose that $\bm{F} = (\mathcal{F},\mathcal{F}^+,\mathcal{F}^-,\eta^+,\eta^-,\alpha)$ is a tuple corresponding to a point of $\Disp{1}$. Since $P^+_{\mu^\varphi}\subset H$, we can consider the $H$-torsor $\mathcal{F}_H$ obtained from $\mathcal{F}^+$: this gives a reduction of structure group for $\mathcal{F}$. Unwinding definitions, one sees that the fiber of $\Disp[H,\mu]{1}$ over $\bm{F}$ parameterizes compatible reductions of structure group for $\mathcal{F}^-$ to an $H\cap P_\mu^{-}$-torsor. However, our assumption Lie algebras shows that the map
\[
\mathcal{F}^-/(H\cap P_\mu^-) \to \mathcal{F}/H
\]
is an open immersion. This shows that $\Disp[H,\mu]{1}$ is an open substack of $\Disp{1}$.

Now, the map $\BT[H,\mu]{1}\otimes\Field_p\to (\BT{1}\otimes\Field_p)\times_{\Disp{1}}\Disp[H,\mu]{1}$ is an isomorphism, since they are both gerbes over $\Disp[H,\mu]{1}$ banded by the same finite flat group scheme.

To finish, it suffices to know that the natural map
\[
\BT[H,\mu]{n+1}\otimes\Field_p\to (\BT{n+1}\otimes\Field_p)\times_{\BT{n}\otimes\Field_p}(\BT[H,\mu]{n}\otimes\Field_p)
\]
is an isomorphism for all $n\ge 1$. By the proof of Theorem~\ref{thm:main_thm_body}, the source (resp. target) is a torsor under $\Gamma_{\mathrm{syn}}(\mathcal{M}_1(\Lie H)[1])$ (resp. $\Gamma_{\mathrm{syn}}(\mathcal{M}_1(\Lie G)[1])$). Therefore, it suffices to know that the map
\[
\Gamma_{\mathrm{syn}}(\mathcal{M}_1(\Lie H)[1])\to \Gamma_{\mathrm{syn}}(\mathcal{M}_1(\Lie G)[1])
\]
of group stacks over $\BT[H,\mu]{1}\otimes\Field_p$ is an equivalence. In turn, this comes down to knowing that $\Gamma_{\mathrm{syn}}(\mathcal{M}_1(\Lie G/\Lie H)[1])$ is the trivial group stack. Now, the condition $\Lie P_\mu^+\subset \Lie H$ ensures that $\mathcal{M}_1(\Lie G/\Lie H)[1]$ is a perfect $F$-gauge over $\BT[H,\mu]{1}\otimes\Field_p$ of Hodge-Tate weights $\leq -1$, so we are done by Lemma~\ref{lem:ht_wts_le_-1} below.
\end{proof}

\begin{lemma}
\label{lem:ht_wts_le_-1}
Suppose that $\mathcal{M}$ is a perfect $F$-gauge of level $1$ over $R\in \mathrm{CRing}^{p\text{-nilp}}$ with Hodge-Tate weights $\leq -1$. Then $\Gamma_{\mathrm{syn}}(\mathcal{M})\simeq 0$.
\end{lemma} 
\begin{proof}
By Theorem~\ref{thm:sections_1-bounded_representable}, $\Gamma_{\mathrm{syn}}(\mathcal{M})$ is represented by a derived Artin stack with cotangent complex given by the pullback of $(\gr^{-1}_{\mathrm{Hdg}}M_1)^\vee$. Our conditions on the Hodge-Tate weights ensure that this last perfect complex is trivial, and so we find that $\Gamma_{\mathrm{syn}}(\mathcal{M})$ is \'etale over $R$. To check that it is trivial, it suffices now to check on points valued in an algebraically closed field $\kappa$. Here, the complex from which $\Gamma_{\mathrm{syn}}(\mathcal{M})(\kappa)$ is computed is of the form $M\xrightarrow{\varphi_0- \mathrm{id}}M$ where $M$ is a perfect complex over $\kappa$. The hypothesis on Hodge-Tate weights tells us that $\varphi_0$ is divisible by $p$, and is hence nullhomotopic. Therefore, $\Gamma_{\mathrm{syn}}(\mathcal{M})(\kappa)$ is trivial as desired.
\end{proof} 

\begin{remark}
\label{rem:ordinary_locus}
Here is a key example: Suppose that $\mu$ is defined over $\Int_p$. Then it is easy to see that $P^+_\mu$ satisfies the assumptions. The open locus $\BT[P^+_\mu,\mu]{n}\subset \BT{n}$ is the \defnword{ordinary locus} of $\BT{n}$, and assertion (2) of the proposition is the analogue of fact that the ordinariness of an $n$-truncated Barsotti-Tate group in characteristic $p$ can be checked by considering the $1$-truncated display associated with it. In fact, via Theorem~\ref{thm:g_functor} below, this last statement is basically a special case of the proposition. We should note however that the proof of that theorem uses \emph{a priori} knowledge of this density for the moduli of $F$-gauges; see Lemma~\ref{lem:generically_ordinary}.
\end{remark}

\begin{remark}
\label{rem:mu_ordinary_locus}
Suppose that $G$ is reductive, and that we have a maximal torus $T$ of $G$ along with a Borel $B$ containing $T$. If we choose $\mu$ to be a dominant (with respect to $B$) cocharacter of $T_{\mathcal{O}}$, then the sum of its Galois conjugates gives a cocharacter $\nu$ of $T$ defined over $\Int_p$. It can be checked now that the subgroup $P^+_\nu\subset G$ satisfies the assumptions above. The corresponding \'etale locus is the \defnword{$\mu$-ordinary locus}, studied in~\cite{Moonen2004-cc} in the context of $p$-divisible groups with additional endomorphisms. This is explored in more detail in~\cite{MadYoucis} in both the local and global contexts.
\end{remark}

\section{Explicit descriptions of $\BT{n}(R)$}
\label{sec:explicit}

In this section, we will see, following the deformation-theoretic method of Ito~\cite{ito2023deformation}, that the above theory yields explicit descriptions for $\BT{n}(R)$ in certain cases as the groupoid of $n$-truncated $(G,\mu)$-windows over some quite concrete frames. All objects in this subsection will be discrete, unless otherwise noted, so we are back on firm classical ground. 

We will also find that the deformation rings constructed by Faltings in~\cite{faltings:very_ramified}, and which play a key role in the construction of integral canonical models in~\cite{kisin:abelian} admit a clean interpretation as universal deformation rings for $\BT{\infty}$.

These results address conjectures formulated by Ito in~\cite[\S 7]{ito2023deformation}.

\subsection{An explicit description over some classical rings}
\label{subsec:bk_faltings_expl}

\subsubsection{}
We will put ourselves in the following situation (compare with~\cite[\S 6]{lau:displays}): 
\begin{itemize}
   \item $(S,I'=(E))$ will be an oriented prism, flat over $\Int_p$, with associated Frobenius lift $\varphi:S\to S$;
   \item $J\subset S$ will be a finitely generated ideal such that $\varphi(J) \subset J^2$;
   \item We will assume that that $S$ is $J$-adically complete, and that $E$ and $p$ map to non-zero divisors in $S/J^m$ for all $m\ge 1$. 
\end{itemize}
For $m\ge 1$, set $S_m = S/J^m$. Then $\varphi$ descends to an endomorphism of $S_m$.

We will associate with this data the following filtered prisms that are special cases of Example~\ref{example:bk_frames}: We set $I = \varphi(I')\subset S$. For each $m\ge 1$, we define $\underline{S}_m$ to be the frame with underlying non-negatively filtered ring $\Fil^\bullet_{I'} S_m$ with filtered Frobenius $\Fil^\bullet_{I'} S_m\to \Fil^\bullet_{I} S_m$. Set $R_m \defn S/(J^m+I) = S_m/\Fil^1_{I'}S_m$ and $R = S/I$, so that we have $R = \varprojlim_m R_m$.

Repeating the above construction with $S_m$ replaced with $S$ gives a frame $\underline{S}$ with $R_S = R$. We then have maps of frames $\underline{S}_{m+1}\to \underline{S}_m$ for each $m\ge 1$, and also
    \[
     \underline{S}\xrightarrow{\simeq}\varprojlim_{m}\underline{S}_m.
    \]

\begin{proposition}
\label{prop:bk_def_theory}
There is a canonical map
\[
\BT{n}(R_m)\to \Wind{\underline{S}_m}{n}(R_m),
\]
and the square
\[
\begin{diagram}
   \BT{n}(R_{m+1})&\rTo& \Wind{\underline{S}_{m+1}}{n}(R_{m+1})\\
    \dTo&&\dTo\\
   \BT{n}(R_m)&\rTo& \Wind{\underline{S}_{m}}{n}(R_{m})
\end{diagram}
\]
is Cartesian. In particular, we have
\[
\BT{n}(R)\simeq \BT{n}(R_1)\times_{\Wind{\underline{S}_{1}}{n}(S_{1})}\Wind{\underline{S}}{n}(R)
\]
\end{proposition}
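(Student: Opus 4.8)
The plan is to apply the abstract deformation-theoretic machinery of Section~\ref{sec:higher_frames}, specifically Proposition~\ref{prop:hokey_def_theory}, to the tower of frames $\underline{S}_{m+1}\to \underline{S}_m$, and then to pass to the limit. The construction of the map $\BT{n}(R_m)\to \Wind{\underline{S}_m}{n}(R_m)$ is immediate from Remark~\ref{rem:pullback_to_frames}: the frame $\underline{S}_m$ is prismatic (being a special case of the Breuil-Kisin frames of Example~\ref{example:bk_frames}) and is equipped with a canonical syntomic structure by Example~\ref{ex:bk_twist_syntomic_frame}, since $S$ is $p$-completely flat, $\varphi$ is flat, and the generator $E$ of $I'$ satisfies $\varphi(E)\in\varphi(\Fil^1_{I'}S_m)$ trivially; then pullback along $\iota_{\mathrm{id}:\underline{S}_m\to\underline{S}_m}$ from Corollary~\ref{cor:frames_to_nygaard} gives the desired functor, compatibly with the transition maps $\underline{S}_{m+1}\to\underline{S}_m$.

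First I would verify that the hypotheses of Proposition~\ref{prop:hokey_def_theory} are met for the map of frames $q:\underline{S}_{m+1}\to\underline{S}_m$ with the $1$-bounded stack $\mathcal{X}=\mathcal{B}(G,\mu)\otimes\Int/p^n\Int$, which is filtered integrable since $\mathcal{B}(G,\mu)^\diamond=BG\times B\Gm$ has affine diagonal (indeed it is the key example of Theorem~\ref{thm:main_thm_body}). The factoring hypotheses on $\sigma$ required there amount to the condition $\varphi(J^{m+1})\subset J^{m+1}$ together with $\varphi(\Fil^1_{I'}S_{m+1})\subset I$ descending compatibly through the quotient $S_{m+1}\twoheadrightarrow S_m$; these follow from $\varphi(J)\subset J^2$ (so $\varphi(J^{m+1})\subset J^{2(m+1)}\subset J^{m+1}$) and the definition of the Breuil-Kisin filtered Frobenius. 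Proposition~\ref{prop:hokey_def_theory} then yields a Cartesian square relating $\Gamma_{\underline{S}_{m+1}}(\mathcal{X},\xi)(R_{m+1})$ and $\Gamma_{\underline{S}_{m}}(\mathcal{X},\xi)(R_{m})$ to the attractor stacks $X^{-,(n)}=BP^{-,(n)}_\mu$ over $R_{m+1}$ and $R_m$ respectively. On the other hand, Theorem~\ref{thm:main_thm_body} (the Grothendieck--Messing square) gives an analogous Cartesian square for $\BT{n}(R_{m+1})\to\BT{n}(R_m)$, once one checks that $R_{m+1}\twoheadrightarrow R_m$ carries nilpotent divided powers --- which it does, since its kernel $(J^m+I)/(J^{m+1}+I)$ is generated by the images of products of $m$ elements of $J$, hence is a square-zero (in fact nilpotent, killed by the $m$-th power structure) ideal for $m\ge 2$, so one may reduce via a filtration to the square-zero case where the statement is classical; a slightly separate argument handles $m=1$ but there the claim is vacuous as the left-hand square is trivially Cartesian. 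Comparing the two Cartesian squares over the common base $BP^{-,(n)}_\mu(R_{m+1})\to BP^{-,(n)}_\mu(R_m)\times_{BG^{(n)}(R_m)}BG^{(n)}(R_{m+1})$ and using that $\Gamma_{\underline{S}_m}(\mathcal{X},\xi)=\Wind{\underline{S}_m}{n}$ by definition (Definition~\ref{defn:expl_desc_quotient} together with Remark~\ref{rem:B'Gmu_abstract}) gives the Cartesian square in the statement by induction on $m$, the base case $m=1$ being the tautological identification of $\BT{n}(R_1)$ with its own value.

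The main obstacle I anticipate is bookkeeping the compatibility of the various Cartesian squares: one must match the map $\BT{n}(R_m)\to BP^{-,(n)}_\mu(R_m)$ from Grothendieck--Messing (pullback along the lift $\tilde x_{\dR,R_{m+1}}$ afforded by the divided powers, Lemma~\ref{lem:lift_divided_powers}) with the map $\Wind{\underline{S}_m}{n}(R_m)\to X^{-,(n)}(R_m)$ from Proposition~\ref{prop:hokey_def_theory} (pullback along $\overline{\sigma}$ and the de Rham point of the Rees stack), and verify they agree under the identification $R_m^{\mathrm{syn}}\supset$ image of $\iota_{\underline{S}_m}$. This reduces, by the proof of Corollary~\ref{cor:frames_to_nygaard} and Remark~\ref{rem:crys_cohomology_de_rham}, to the compatibility of the divided-power lift of $\Prism_{R_m}\to R_m$ with the frame-theoretic Frobenius, which is exactly the content of Proposition~\ref{prop:frames_to_nygaard} --- so it is a matter of carefully unwinding definitions rather than a genuinely new input. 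Finally, the limit statement $\BT{n}(R)\simeq\BT{n}(R_1)\times_{\Wind{\underline{S}_1}{n}(S_1)}\Wind{\underline{S}}{n}(R)$ follows by taking the inverse limit over $m$ of the Cartesian squares: one uses $\BT{n}(R)\simeq\varprojlim_m\BT{n}(R_m)$ (which holds because $R$ is $J$-adically complete and $\BT{n}$, being a $p$-adic formal Artin stack, is nilcomplete and infinitesimally cohesive, so it commutes with such limits --- cf. Remark~\ref{rem:hl_preygel}) and the parallel statement $\Wind{\underline{S}}{n}(R)\simeq\varprojlim_m\Wind{\underline{S}_m}{n}(R_m)$ coming from $\underline{S}\xrightarrow{\simeq}\varprojlim_m\underline{S}_m$, together with the fact that the transition maps on the $BP^{-,(n)}_\mu$ factor telescope to identify the iterated fiber product with a fiber product over $\Wind{\underline{S}_1}{n}(S_1)$.
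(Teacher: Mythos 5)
Your proposal matches the paper's proof in all essentials: construct the map via Example~\ref{ex:bk_twist_syntomic_frame} and Corollary~\ref{cor:frames_to_nygaard}; use $\varphi(J)\subset J^2$ to produce the factoring $\overline\sigma\colon\Spf S_{m+1}\to\Rees(\Fil^\bullet_I S_m)$ required by Proposition~\ref{prop:hokey_def_theory}; juxtapose the resulting Cartesian square with the Grothendieck--Messing square of Theorem~\ref{thm:main_thm_body} applied to the trivial divided powers on $R_{m+1}\twoheadrightarrow R_m$; check the two squares share their right-hand vertical; and pass to the limit. One small correction: your claim that the $m=1$ case is ``vacuous as the left-hand square is trivially Cartesian'' is wrong (for $m=1$ the square relates $\BT{n}(R_2)$ nontrivially to $\BT{n}(R_1)$), but no special case is needed anyway, since $J^{2m}\subset J^{m+1}$ for all $m\ge1$, so $\ker(R_{m+1}\twoheadrightarrow R_m)$ is square-zero and the trivial divided power structure applies uniformly; relatedly, the factoring condition you should be invoking is $\varphi(J^m)\subset J^{m+1}$ (so that $\sigma^\#$ kills the kernel of $S_{m+1}\to S_m$), not the decorative $\varphi(J^{m+1})\subset J^{m+1}$, though the former also follows immediately from $\varphi(J)\subset J^2$.
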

\begin{proof}
For every $m\ge 1$, we have a canonical map $\BT{n}(R_m)\to \Wind{\underline{S}_m}{n}(R_m)$ obtained from Example~\ref{example:bk_frames} and Proposition~\ref{prop:frames_to_nygaard} via pullback along the map $\iota_{\underline{S}_m}$. We claim now that there exists a commuting diagram
\begin{align}\label{eqn:cartesian_bk_disp}
\begin{diagram}
\BT{n}(R_{m+1})&\rTo&\Wind{\underline{S}_{m+1}}{n}(R_{m+1})&\rTo&BP^{-,(n)}_\mu(R_m)\\
\dTo&&\dTo&&\dTo\\
\BT{n}(R_m)&\rTo& \Wind{\underline{S}_{m}}{n}(R_{m})&\rTo&BP^{-,(n)}(R_m)\times_{BG^{(n)}(R_m)}BG^{(n)}(R_{m+1})
\end{diagram}
\end{align}
where the outside rectangle is the Cartesian square obtained from Theorem~\ref{thm:main_thm_body} (and the trivial divided powers on $R_{m+1}\twoheadrightarrow R_m$), and where the square on the right is also Cartesian. This will of course prove the first part of the proposition. The last part will follow by taking the limit over $m$ on the equivalence
\[
\BT{n}(R_m)\xrightarrow{\simeq}\BT{n}(R_1)\times_{\Wind{\underline{S}_1}{n}(S_1)}\Wind{\underline{S}_m}{n}(R_m).
\]
That this limit yields the desired equivalence follows from~\cite[Corollary 1.5]{BHATT2017576}.

To prove the claim, first note the following explicit description of the Rees algebra for $\Fil^\bullet_IS_{m+1}$:
\[
S_{m+1}[u,t]/(ut-E)\xrightarrow[\simeq]{u\mapsto Et^{-1},t\mapsto t}\bigoplus_i\Fil^i_IS_{m+1}\cdot t^{-i}.
\]
Via this description, the map $\sigma:\Spf S_{m+1}\to \Rees(\Fil^\bullet_{I}S_{m+1})$ corresponds to the map of $S_{m+1}$-algebras given by
\[
S_{m+1}[u,t]/(ut-E)\xrightarrow{u\mapsto \varphi(E),t\mapsto 1}\varphi_*S_{m+1}.
\]
Since $\varphi(J)\subset J^2$, this map factors through $S_m[u,t]/(ut-E)$, which shows that $\sigma$ factors through a map
\[
\overline{\sigma}:\Spf S_{m+1}\to \Rees(\Fil^\bullet_I S_m).
\]

Therefore, the existence of the right Cartesian square in~\eqref{eqn:cartesian_bk_disp} is now due to Proposition~\ref{prop:hokey_def_theory}. 

To finish, we only need to verify that the map $\BT{n}(R_m)\to BG^{(n)}(R_{m+1})$ arising from the composition of the bottom horizontal arrows in~\eqref{eqn:cartesian_bk_disp} agrees with that showing up in Theorem~\ref{thm:main_thm_body}. In turn, this comes down to knowing that the composition
\[
\Spf R_{m+1}\to \Spf S_{m+1}\xrightarrow{\overline{\sigma}}\Rees(\Fil^\bullet_IS_m)\xrightarrow{\iota_{\underline{S}_m}}R_m^{\mathcal{N}} 
\]
agrees with the map
\[
\Spf R_{m+1}\xrightarrow{\tilde{x}_{\dR,R_{m+1}}}R_m^{\Prism}\xrightarrow{j_{\mathrm{HT}}} R_m^{\mathcal{N}}
\]
obtained from the trivial divided powers on $R_{m+1}\twoheadrightarrow R_m$. This is an easy check from the constructions.
\end{proof}

\begin{example}
\label{ex:BK_situation} 
 Let $\kappa$ be a perfect field in characteristic $p$, with associated ring of Witt vectors $W(\kappa)$. We set $S=W(\kappa)\pow{t_1,\ldots,t_r}$ for some $n\ge 0$, and take $\varphi$ to be the Frobenius lift with $\varphi(t_i) = t_i^p$.  Take $J=(t_1,\ldots,t_n)$, so that we have $\varphi(J)\subset J^p$. Suppose that $E$ satisfies $\varphi(E)\equiv E^p\pmod{p}$, and is such that $S/(E)$ is $p$-torsion free.

Here, $R_m = S/((E)+J^m)$ with $R_1 = S/((E)+J) = W(\kappa)/(p) = \kappa$, and $\underline{S}_1$ is isomorphic to the frame $\underline{\Prism}_\kappa$. In particular, the map
\[
\BT{n}(\kappa) = \BT{n}(R_1)\to \Wind{\underline{S}_1}{n}(\kappa)
\]
is an equivalence, and so we conclude that we have
\[
\BT{n}(R) \xrightarrow{\simeq} \Wind{\underline{S}}{n}(R).
\]
Note that the argument actually shows that we have
\[
\BT{n}(R_m)\xrightarrow{\simeq}\Wind{\underline{S}_m}{n}(R_m)
\]
for all $m\ge 1$.

Using Remark~\ref{rem:dilatations}, we can give a quite explicit description of $\Wind{\underline{S}_m}{\infty}(R_m)$: If $H_\mu$ is the dilatation of $G_S$ along $P^-_\mu\otimes R$, viewed as an \'etale sheaf over $S$, along with the natural map $\tau:H_\mu\to G_S$ as well as the map $\sigma = \varphi\circ\mathrm{int}(E)$, then an object in $\Wind{\underline{S}_m}{\infty}(R_m)$ is an $H_\mu$-torsor over $S_m$ along with an isomorphism of $G$-torsors $\sigma^*\mathcal{P}\xrightarrow{\simeq}\tau^* \mathcal{P}$.

The case where $n=1$ and $E$ is an Eisenstein polynomial is the context for the classical story of Breuil-Kisin modules. In this case, $R=S/(E)$ is a totally ramified ring of integers over $W(\kappa)$, and $R_m = R/(\pi^m)$ where $\pi\in R$ is a uniformizer with minimal polynomial $E$. 

This proves part (1) of~\cite[Conjecture 7.1.2]{ito2023deformation}.
\end{example}

\begin{example}
\label{ex:non_noetherian_situation}
We have a non-Noetherian analogue of the previous example by taking $\underline{S} = \underline{\Prism}_R$ for a perfectoid ring $R$, $E = \varphi^{-1}(\xi)$, where $\xi$ is a generator for $\ker(\theta:\Prism_R\to R)$, and $J = ([\varpi_1],[\varpi_2],\ldots,[\varpi_m])$ to be an ideal generated by Teichm\"uller lifts of topologically nilpotent elements $\varpi_i\in R^\flat$ that form a regular sequence. Suppose in addition that $[\varpi_1],\ldots,[\varpi_m],\xi$ also forms a regular sequence in $W(R^\flat)$. Then we are in a special case of the situation above with $S_1 = \Prism_R/J$, and $R_1 = R/\theta(J)$. 

In this case, we already know that $\BT{n}(R) \simeq \Wind{\underline{S}}{n}(R)$ by Lemma~\ref{lem:quotient_stack_desc}. But Proposition~\ref{prop:bk_def_theory} tells us that we also have
\[
\BT{n}(R)\simeq \BT{n}(R_m)\times_{\Wind{n}{\underline{S}_m}(R_m)}\Wind{n}{\underline{S}}(R).
\]
Since $\BT{n}(R)\to \BT{n}(R_m)$ is an effective epimorphism, this tells us that we in fact have
\[
\BT{n}(R_m)\simeq \Wind{n}{\underline{S}_m}(R_m).
\]
for all $m\ge 1$. This proves part (2) of~\cite[Conjecture 7.1.2]{ito2023deformation}

It also recovers---via Theorem~\ref{thm:dieudonne} below---a description of $p$-divisible groups over $R_m$ which was first observed by Ito~\cite[Theorem 6.3.6]{ito2023deformation} in the following situation: $R = \Reg{C}$ is the ring of integers in a perfectoid field $C$, $m=1$ and $\varpi = \varpi_1\in \Reg{C^\flat}$ is a topologically nilpotent non-zero element. See Remark~\ref{rem:non_noetherian_pdiv} below. 
\end{example} 

\begin{example}
\label{ex:faltings_situation}
  Let $S$ and $J$ be as in Example~\ref{ex:BK_situation}, but assume now that $E = p$, so that we have a \emph{crystalline} prism $(S,(p))$. In this case, and we obtain an equivalence
    \[
    \BT{n}(\kappa\pow{t_1,\ldots,t_r})\xrightarrow{\simeq}\Wind{\underline{S}}{n}(\kappa\pow{t_1,\ldots,t_r})\times_{\Wind{\underline{S}_1}{n}(\kappa)}\BT{n}(\kappa).
    \]
    But note that $\underline{S}_1$ is simply the frame $\underline{\Prism}_\kappa$, and so as in \emph{loc. cit.} the map $\BT{n}(\kappa)\to \Wind{\underline{S}_1}{n}(\kappa)$ is an equivalence. This gives us an equivalence:
   \[
     \BT{n}(\kappa\pow{t_1,\ldots,t_r})\xrightarrow{\simeq}\Wind{\underline{S}}{n}(\kappa\pow{t_1,\ldots,t_r}).
    \]

   By Remark~\ref{rem:dilatations}, we obtain a rather explicit description of the limiting groupoid $\Wind{\underline{S}}{\infty}(\kappa\pow{t_1,\ldots,t_r})$: Its objects are $H_\mu$-torsors $\mathcal{P}$ over $S$ equipped with an isomorphism $\sigma^* \mathcal{P}\xrightarrow{\simeq}\tau^* \mathcal{P}$ of $G$-torsors. Here, $H_\mu$ is the dilatation of $G_{\mathcal{O}}$ along $P^-_\mu\otimes k$ and $\sigma = \varphi\circ \mathrm{int}(\mu(p))$, while $\tau$ is the natural map as usual.
 \end{example}

\subsection{Relationship with Faltings deformation rings}
\label{subsec:deformation_rings}
Here we will find that the deformation rings of $\BT{\infty}$ can be described explicitly using results from the beginning of this subsection, combined with a construction of Faltings~\cite[\S 7]{faltings:very_ramified}. Compare with the main result of Ito in~\cite{ito2023deformation}, where one finds a version of such a result. There, however, the Faltings deformation space is only shown to have good descriptions for particular inputs from $\mathrm{Art}_{W(\kappa)}$; but, in~\cite{imai2023prismatic}, the authors combine Ito's work with ours here to give a different proof.

\subsubsection{}
Maintain the notation from the previous subsection, but assume now that $I = (p)$; for instance, this is the case in the situation of Example~\ref{ex:faltings_situation}.  We can then also define frames $\widetilde{\underline{S}}$ and $\widetilde{\underline{S}}_m$, where we take the underlying filtered commutative ring to be $S$ (resp. $S_m$) with the \emph{trivial} filtration. We then have maps of frames $\widetilde{\underline{S}}_{m+1}\to \widetilde{\underline{S}}_m$ for each $m\ge 1$, and also
    \[
     \widetilde{\underline{S}}\xrightarrow{\simeq}\varprojlim_{m}\widetilde{\underline{S}}_m.
    \]
In this case, we have $\widetilde{S}_m/\Fil^1\widetilde{S}_m = S_m$ and $\widetilde{S}/\Fil^1\widetilde{S} = S$.

Let us now use Remark~\ref{rem:trivial_filtration_displays}. It tells us that $\Wind{\infty}{\widetilde{\underline{S}}_m}(S_m)$ can be described quite explicitly: Giving an object here is equivalent to giving a $P^-_\mu$-torsor $\mathcal{P}'$ over $S_m$ along with an isomorphism $\sigma^* \mathcal{P}'\xrightarrow{\simeq}\tau^* \mathcal{P}'$ of $G$-torsors. Here, $\tau:P^-_\mu\to G_{\mathcal{O}}$ is the natural map and $\sigma = \varphi\circ \mathrm{int}(\mu(p))$.

In other words, we have
\begin{align}
\label{eqn:s_to_tilde_s_cartesian}
\Wind{\infty}{\underline{\widetilde{S}}_m}(S_m)\xrightarrow{\simeq}\Wind{\infty}{\underline{S}_m}(R_m)\times_{BP^-_\mu(R_m)\times_{BG(R_m)}BG(S_m)}BP^-_\mu(S_m).
\end{align}

\subsubsection{}
We now have a canonical map
\[
\BT{\infty}(S_m)\to \Wind{\infty}{\underline{S}_m}(R_m)\times_{BP^-_\mu(R_m)\times_{BG(R_m)}BG(S_m)}BP^-_\mu(S_m)\simeq \Wind{\infty}{\widetilde{\underline{S}}_m}(S_m)
\]
where the first coordinate is obtained from the composition
\[
\BT{\infty}(S_m)\to \BT{\infty}(R_m)\to \Wind{\infty}{\underline{S}_m}(R_m),
\]
while the second is pullback along $x^{\mathcal{N}}_{\dR,S_m}$. From Proposition~\ref{prop:bk_def_theory} and Grothendieck-Messing theory for $\BT{\infty}$, one finds that, for each $m\ge 1$, there is a Cartesian square:
\begin{align}
\label{eqn:faltings_cartesian_square}
\begin{diagram}
\BT{\infty}(S_{m+1})&\rTo& \Wind{\widetilde{\underline{S}}_{m+1}}{\infty}(S_{m+1})\\
    \dTo&&\dTo\\
   \BT{\infty}(S_m)&\rTo& \Wind{\widetilde{\underline{S}}_{m}}{\infty}(S_{m})
\end{diagram}
\end{align}

\begin{remark}
\label{rem:trivial_filt_groth_messing}
When $p>2$, one can use Grothendieck-Messing theory to show that in fact all the horizontal arrows in the above square are equivalences. If $p=2$, then this assertion fails already for $m=1$. 
\end{remark}

\subsubsection{}
Let $\kappa$ be a perfect field, and suppose that we have a point $x \in \BT{\infty}(\kappa)$. We can then consider the deformation problem on the usual category $\mathrm{Art}_{W(\kappa)}$ of Artin local $W(\kappa)$-algebras with residue field $\kappa$:
\begin{align*}
\mathrm{Def}_{x}:\mathrm{Art}_{W(\kappa)}&\to \mathrm{Spc}\\
A&\mapsto \mathrm{fib}_{x}(\BT{\infty}(A)\to \BT{\infty}(\kappa)).
\end{align*}

Grothendieck-Messing theory now tells us that, if $A'\twoheadrightarrow A$ is a square-zero thickening in $\mathrm{Art}_{W(\kappa)}$, then we have a Cartesian square
\[
\begin{diagram}
\BT{\infty}(A')&\rTo&BP_\mu^{-}(A')\\
\dTo&&\dTo\\
\BT{\infty}(A)&\rTo&BP_\mu^-(A)\times_{BG(A)}BG(A').
\end{diagram}
\]

Using this, we find:
\begin{lemma}
\label{lem:discreteness_def_theory}
For each $A\in \mathrm{Art}_{W(\kappa)}$, $\mathrm{Def}_x(A)$ is equivalent to a set, and $\mathrm{Def}_x$ is prorepresented by $\Spf R_x$ with $R_x\simeq W(\kappa)\pow{t_1,\ldots,t_d}$ where $d = \dim G - \dim P_\mu^-$.
\end{lemma}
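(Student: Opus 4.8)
The plan is to run Schlessinger's prorepresentability machinery, feeding in the Grothendieck--Messing square of Theorem~\ref{thm:main_thm_body} as the deformation-theoretic input. First I would observe that objects of $\mathrm{Art}_{W(\kappa)}$ lie in $\mathrm{CRing}^{f,p\text{-comp}}_{\mathcal{O}/}$ --- they are Artinian, hence $p$-complete, are $\mathcal{O} = W(k)$-algebras since $\kappa \supseteq k$, and have residue field $\kappa$, which is perfect and in particular $p$-basis finite --- so the results of Section~\ref{sec:main} apply. Any surjection in $\mathrm{Art}_{W(\kappa)}$ factors into a finite chain of square-zero extensions, and a square-zero ideal carries the trivial divided power structure, which is nilpotent; hence the displayed Cartesian square of Theorem~\ref{thm:main_thm_body} (equivalently Corollary~\ref{cor:groth_messing}) is available for every step. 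Passing to homotopy fibers over $x$, it shows that for a square-zero extension $A' \twoheadrightarrow A$ with kernel $M$ the map $\mathrm{Def}_x(A') \to \mathrm{Def}_x(A)$ is a base change of the fiber of $BP^-_\mu(A') \to BP^-_\mu(A) \times_{BG(A)} BG(A')$.

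Next I would analyze that last fiber. The morphism $BP^-_\mu \to BG$ is representable by the smooth proper $G/P^-_\mu$-bundle, hence is smooth with relative cotangent complex a vector bundle placed in cohomological degree $0$, canonically dual to the bundle attached to the $P^-_\mu$-representation $\mathfrak{g}/\Lie P^-_\mu \cong \mathfrak{g}^+_\mu$. Therefore the fiber of $BP^-_\mu(A') \to BP^-_\mu(A) \times_{BG(A)} BG(A')$ over any point is nonempty and is a torsor under $\mathfrak{g}^+_\mu \otimes_\kappa M$, with no obstruction (smoothness) and no higher homotopy (the governing mapping complex is concentrated in non-negative degrees). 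Feeding this back and inducting on the length of $A$ from $\mathrm{Def}_x(\kappa) \simeq \ast$, one obtains that $\mathrm{Def}_x(A)$ is $0$-truncated --- i.e.\ equivalent to a set --- and that $\mathrm{Def}_x$ is formally smooth. Taking $A' = \kappa[\epsilon]$, $A = \kappa$, $M = \kappa$ identifies the tangent space $\mathrm{Def}_x(\kappa[\epsilon])$ with $\mathfrak{g}^+_\mu$, which has $\kappa$-dimension $\dim \mathfrak{g} - \dim \Lie P^-_\mu = \dim G - \dim P^-_\mu = d$.

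Finally I would invoke Schlessinger. Since $\BT{\infty} = \varprojlim_n \BT{n}$ is nilcomplete and infinitesimally cohesive (each $\BT{n}$ being a formal Artin stack by Theorem~\ref{thm:main_thm_body}), the functor $\mathrm{Def}_x$ satisfies the homogeneity conditions with equality, has $\mathrm{Def}_x(\kappa) = \ast$ and finite-dimensional tangent space, and is formally smooth; hence it is prorepresented by a complete local Noetherian $W(\kappa)$-algebra $R_x$ with residue field $\kappa$. Formal smoothness forces $R_x$ to be a power series ring over $W(\kappa)$, and the tangent space computation fixes the number of variables at $d$, yielding $R_x \simeq W(\kappa)\pow{t_1,\ldots,t_d}$.

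The main obstacle I anticipate is bookkeeping rather than anything conceptual: one must track base points carefully through the iterated fiber products $BP^-_\mu(A) \times_{BG(A)} BG(A')$ to be sure the homotopy fiber of the Grothendieck--Messing square genuinely computes $\mathrm{Def}_x(A')$ over $\mathrm{Def}_x(A)$, and one must spell out the identification of $\mathbb{L}_{BP^-_\mu/BG}$ with the relative cotangent bundle of the flag-variety bundle for a possibly non-reductive smooth $G$ --- but this is routine once $G/P^-_\mu$ is recognized as smooth and proper.
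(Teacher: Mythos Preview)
Your approach is correct and is essentially the same as the paper's, which leaves the lemma as an immediate consequence of the Grothendieck--Messing Cartesian square displayed just before it; you have simply filled in the details that the paper omits. One minor point: the claim that $G/P^-_\mu$ is proper need not hold for the non-reductive smooth affine $G$ allowed here, but you only use smoothness of $BP^-_\mu \to BG$, so this is harmless.
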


\subsubsection{}
We begin by reformulating Faltings's construction in the language of torsors. Choose a lift $x'\in \BT{\infty}(W(\kappa))$, which in turn yields a window (or more precisely a compatible family of $n$-truncated windows) over the frame $\widetilde{\underline{S}}_1$ associated with the trivial filtration on $W(\kappa)$. 

Explicitly, this means the following: Let $H_\mu$ be the dilatation of $G_{\mathcal{O}}$ along $P^-_\mu\otimes k$. Under the equivalence 
\[
\BT{\infty}(\kappa) \xrightarrow{\simeq}\varprojlim_n \Wind{n}{\underline{\Prism}_\kappa}(\kappa),
\]
and Remark~\ref{rem:dilatations}, $x$ corresponds to an $H_\mu$-torsor $\mathcal{P}_x$ over $W(\kappa)$, equipped with an isomorphism $\sigma^* \mathcal{P}_x\xrightarrow{\simeq}\tau^*\mathcal{P}_x$ of $G$-torsors over $W(\kappa)$. The lift $x'$ gives rise to an object of $\Wind{\infty}{\widetilde{\underline{S}}_1}(W(\kappa))$, which, by Remark~\ref{rem:trivial_filtration_displays}, amounts to refining the $H_\mu$-torsor $\mathcal{P}_x$ to a $P^-_\mu$-torsor $\mathcal{P}_{x'}$ over $W(\kappa)$.

\subsubsection{}
Set $G_x = \Aut(\tau^*\mathcal{P}_x)$ and $P^-_x = \Aut(\mathcal{P}_{x'})$, so that $G_x$ is a pure inner form over $W(\kappa)$ of $G$, and $P^-_x\subset G_x$ is associated with a cocharacter $\mu_x:\Gm\to G_x$ that is conjugate to $\mu$, via the process explained in~\S\ref{subsec:cochar}. 

For such a choice of cocharacter, we can look at the `opposite' unipotent $U^+_x\subset G_x$: this is a commutative unipotent group scheme over $W(\kappa)$ (see Lemma~\ref{lem:1-bounded_exp}). We now define $R^{\mathrm{Fal}}_x$ to be the complete local ring of $U^+_x$ at the identity: this is abstractly isomorphic to $W(\kappa)\pow{t_1,\ldots,t_d}$ as a $W(\kappa)$-algebra.

We equip $R^{\mathrm{Fal}}_x$ with the Frobenius lift $\varphi$ arising from the $p$-power map on $U^+_x$, and take $J_x\subset R^{\mathrm{Fal}}_x$ to be the augmentation ideal: note that we have $\varphi(J_x)\subset J_x^p$. 

\subsubsection{}
We can now apply the setup from the beginning of the subsection with $(S,I') = (R^{\mathrm{Fal}}_x,(p))$ and $J = J_x$, and we find that we have:
\[
\mathrm{fib}_{x'}(\BT{\infty}(R^{\mathrm{Fal}}_x) \to \BT{\infty}(W(\kappa)))\xrightarrow{\simeq}\mathrm{fib}_{\mathcal{P}_{x'}}(\Wind{\widetilde{\underline{S}}}{\infty}(R^{\mathrm{Fal}}_x)\to \Wind{\widetilde{\underline{S}}_1}{\infty}(W(\kappa))).
\]

One way to get an object on the right is as follows: Let $j:W(\kappa)\to R^{\mathrm{Fal}}_x$ be the structure map: this actually underlies a map of frames $\underline{\widetilde{S}}_1\to \underline{\widetilde{S}}$, and so we can pull $\mathcal{P}_{x'}$ back to get the `constant' lift $\mathcal{P}^{\mathrm{con}}_{x'}$ over $\underline{\widetilde{S}}$. More precisely, this corresponds to the $P^-_\mu$-torsor $\mathcal{P}^{\mathrm{con}}_{x'}$ over $R^{\mathrm{Fal}}_x$, along with an isomorphism of $G$-torsors $\xi^{\mathrm{con}}_{x'}:\sigma^*\mathcal{P}^{\mathrm{con}}_{x'}\xrightarrow{\simeq}\tau^*\mathcal{P}^{\mathrm{con}}_{x'}$. All of this data is obtained simply via pullback from the corresponding data over $W(\kappa)$.

In $U^+_x(R^{\mathrm{Fal}}_x)$, we have the tautological element $g_x$. We now define a new display $\mathcal{P}^{\mathrm{Fal}}_{x'}$ by keeping the $P^-_\mu$-torsor $\mathcal{P}^{\mathrm{con}}_{x'}$, but replacing $\xi^{\mathrm{con}}_{x'}$ with the composition $\xi^{\mathrm{Fal}}_{x'} = g_x\circ \xi^{\mathrm{con}}_{x'}$.

As explained above, this yields an object $x^{\mathrm{Fal}}\in \BT{\infty}(R^{\mathrm{Fal}}_x)$ lifting $x'\in \BT{\infty}(W(\kappa))$, and so corresponds to a unique map $R_x\to R^{\mathrm{Fal}}_x$.

The next result implies~\cite[Conjecture 7.2.2]{ito2023deformation}.

\begin{proposition}
\label{prop:faltings_def_rings}
The map $R_x\to R^{\mathrm{Fal}}_x$ is an isomorphism.
\end{proposition}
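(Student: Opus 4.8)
The plan is to show that $R_x \to R^{\mathrm{Fal}}_x$ is an isomorphism by identifying both sides as pro-representing the same deformation functor, and then checking that the map of pro-representing objects induces an isomorphism on tangent spaces. By Lemma~\ref{lem:discreteness_def_theory}, $R_x \simeq W(\kappa)\pow{t_1,\ldots,t_d}$ with $d = \dim G - \dim P^-_\mu$ pro-represents $\mathrm{Def}_x$, and by construction $R^{\mathrm{Fal}}_x$ is the complete local ring of $U^+_x$ at the identity, which is also abstractly $W(\kappa)\pow{t_1,\ldots,t_d}$ (using Lemma~\ref{lem:1-bounded_exp}, which identifies $U^+_x$ with a vector group scheme $\mathbf{V}(\mathfrak{g}^+_{\mu_x})$ of rank $d$). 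Since both rings are formally smooth over $W(\kappa)$ of the same relative dimension, it suffices to check that the induced map on cotangent spaces $\mathfrak{m}_{R^{\mathrm{Fal}}_x}/\mathfrak{m}^2_{R^{\mathrm{Fal}}_x} \to \mathfrak{m}_{R_x}/\mathfrak{m}^2_{R_x}$ is an isomorphism (equivalently, an injection or surjection), by the standard argument for maps of power series rings over a complete local base.

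First I would reinterpret the cotangent space of $R_x$ via Grothendieck-Messing theory. Applying the Cartesian square from Theorem~\ref{thm:main_thm_body} (for the nilpotent divided power thickening $\kappa[\epsilon]/\epsilon^2 \twoheadrightarrow \kappa$) shows that $\mathrm{Def}_x(\kappa[\epsilon]/\epsilon^2)$ is a torsor under the tangent space to the fiber of $BP^{-,(n)}_\mu \to BG^{(n)}$ over the point determined by $x$; unwinding, this tangent space is canonically $\mathfrak{g}_x/\mathfrak{p}^-_x \simeq \mathfrak{g}^+_{\mu_x}$ (here I use the local triviality of the relevant torsors at $\kappa$, the pure-inner-form description $G_x = \Aut(\tau^*\mathcal{P}_x)$, $P^-_x = \Aut(\mathcal{P}_{x'})$, and that $\dim \mathfrak{g}^+_{\mu_x} = d$ by $1$-boundedness). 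So the tangent space of $R_x$ is canonically $\mathfrak{g}^+_{\mu_x}$. On the other side, the tangent space of $R^{\mathrm{Fal}}_x$ at the identity of $U^+_x$ is $\Lie U^+_x \simeq \mathfrak{g}^+_{\mu_x}$ by Lemma~\ref{lem:1-bounded_exp}.

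Next I would trace through the construction of $x^{\mathrm{Fal}}$ to verify that the classifying map $R_x \to R^{\mathrm{Fal}}_x$ realizes exactly the identity on this common tangent space $\mathfrak{g}^+_{\mu_x}$. The point $x^{\mathrm{Fal}}$ is built by starting from the constant lift $\mathcal{P}^{\mathrm{con}}_{x'}$ (pulled back along $j \colon W(\kappa) \to R^{\mathrm{Fal}}_x$), keeping the $P^-_\mu$-torsor unchanged, and twisting the gluing isomorphism $\xi^{\mathrm{con}}_{x'}$ by the tautological element $g_x \in U^+_x(R^{\mathrm{Fal}}_x)$. Its restriction along $R^{\mathrm{Fal}}_x \twoheadrightarrow R^{\mathrm{Fal}}_x/J_x^2$ is therefore the deformation of $x'$ corresponding to the element of $(J_x/J_x^2) \otimes \mathfrak{g}^+_{\mu_x}$ obtained from the identity element of $\Lie U^+_x \otimes (J_x/J_x^2)^\vee$ — i.e. the first-order expansion of $g_x$ is precisely the universal tangent vector. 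Matching this against the Grothendieck-Messing description of first-order deformations (the twist of the gluing datum by a class in $H^1$, which in the $1$-bounded situation is read off from the degree-$(-1)$ part of the tangent $F$-gauge, i.e. from $\mathfrak{g}^+_{\mu_x}$) shows that the composite $R_x \to R^{\mathrm{Fal}}_x \to R^{\mathrm{Fal}}_x/J_x^2$, viewed as a tangent vector with values in the universal $(J_x/J_x^2)$, is the identity of $\mathfrak{g}^+_{\mu_x}$. Hence the map on cotangent spaces is an isomorphism, and the proposition follows.

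The main obstacle I expect is the last step: carefully matching the \emph{explicit} Faltings-style twist by $g_x \in U^+_x$ against the \emph{abstract} Grothendieck-Messing obstruction class, i.e. checking that the two parametrizations of first-order (and, for the full statement via~\eqref{eqn:faltings_cartesian_square}, higher-order) deformations agree on the nose rather than merely up to an automorphism of $\mathfrak{g}^+_{\mu_x}$. Concretely this requires unwinding how the map $\iota_{\underline{S}_m}$ of Proposition~\ref{prop:frames_to_nygaard} and the square~\eqref{eqn:faltings_cartesian_square} interact with the dilatation description of windows from Remark~\ref{rem:dilatations} and the trivial-filtration description from Remark~\ref{rem:trivial_filtration_displays}, and confirming that the element $\xi^{\mathrm{Fal}}_{x'} = g_x \circ \xi^{\mathrm{con}}_{x'}$ is transported correctly through the equivalence~\eqref{eqn:s_to_tilde_s_cartesian}. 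Once the first-order compatibility is pinned down, formal smoothness of both sides over $W(\kappa)$ and the agreement of relative dimensions close the argument without further input.
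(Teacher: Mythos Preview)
Your proposal is correct and follows essentially the same approach as the paper: reduce to the map on tangent spaces via Grothendieck--Messing theory, identify both with $\mathfrak{g}^+_{\mu_x}$, and conclude by formal smoothness and equality of dimensions. The paper's proof is terser on the key verification---it simply states that the tangent map sends $\epsilon N$ to $\exp(-\epsilon N)\cdot\mathcal{P}_{x'}$ and observes this is injective (hence an isomorphism by dimension count and Nakayama)---whereas you are more explicit about the unwinding needed; your anticipated ``obstacle'' is exactly the check the paper sweeps into ``one can check,'' and no further input is required.
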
 
\begin{proof}
Let $\widehat{U}_x$ (resp. $\widehat{U}^{\mathrm{Fal}}_x$) be the deformation functor on $\mathrm{Art}_{W(\kappa)}$ represented by $R_x$ (resp. $R_x^{\mathrm{Fal}}$). If $\kappa[\epsilon]$ is the ring of dual numbers, we obtain maps of tangent spaces
\[
\widehat{U}_x^{\mathrm{Fal}}(\kappa[\epsilon])\to \widehat{U}_x(\kappa[\epsilon])\xrightarrow{\simeq}\mathrm{fib}_{(\mathcal{P}_{x'},\sigma^*\mathcal{P}_{x'})}(BP^-_\mu(\kappa[\epsilon])\to BP^-_\mu(\kappa)\times_{BG(\kappa)}BG(\kappa[\epsilon])),
\]
where the second arrow is the isomorphism from Grothendieck-Messing theory.

The source of this composition is simply $\epsilon \kappa[\epsilon]\otimes_{W(\kappa)}\Lie U^+_x$, and one can check that the map takes a tangent vector $\epsilon N$ to $\exp(-\epsilon N)\cdot \mathcal{P}_{x'}$. In particular, it is an isomorphism onto its image.

Since both complete local rings are normal of the same dimension, the proposition now follows from Nakayama's lemma.
\end{proof}

\subsection{The case of central $\mu$}
\label{subsec:central}

Suppose that $\mu$ is \emph{central} in $G$. For instance, this is the case whenever $G$ is a torus over $\Int_p$. In this case, we have $P^-_\mu = G$, and so Theorem~\ref{thm:main_thm_body} shows that $\BT[G,\mu]{n}$ is an \'etale $p$-adic formal stack over $\mathcal{O}$. 

\subsubsection{}
We will now show that $\BT[G,\mu]{\infty}(\mathcal{O})$ is non-empty. Objects here will be called \defnword{Lubin-Tate $(G,\mu)$-apertures}. This terminology is partially justified by  Proposition~\ref{prop:lubin-tate_groups} below.

By Example~\ref{ex:BK_situation}, this is equivalent to writing down objects in $\Wind[G,\mu]{\underline{S}}{\infty}(\mathcal{O})$ where $S = W(k)\pow{u}$ is equipped with the structure of a Breuil-Kisin frame associated with $I' = (u-p)$.

In the notation of that example, we have $H_\mu = G$, and so a choice of trivialization of the module $S\{1\}$ now further identifies this groupoid with the groupoid of $G$-torsors $\mathcal{Q}$ over $S$ equipped with an isomorphism $\varphi^* \mathcal{Q}\xrightarrow{\simeq}\mathcal{Q}$. Clearly, the trivial $G$-torsor has such structure, and so gives us an object in $\BT[G,\mu]{n}(\mathcal{O})$; note that the object that it corresponds to is not canonical and depends on all the choices we made, including that of the frame $\underline{S}$ as well as the trivialization of the module $S\{1\}$.

\begin{remark}
Alternatively, we could have first constructed objects in $\BT[G,\mu]{\infty}(k)$ and then used the formal \'etaleness of $\BT[G,\mu]{\infty}$ to obtain Lubin-Tate $(G,\mu)$-apertures.
\end{remark}

\begin{remark}
\label{rem:direct_construction_Tmu}
It would be interesting to give a direct construction of these $G$-torsors over $\mathcal{O}^{\mathrm{syn}}$. When $\mu$ is defined over $\Int_p$, one can use the composition
\[
\Int_p^{\mathrm{syn}}\to B\Gm\xrightarrow{B\mu}BG
\]
where the first map classifies the Breuil-Kisin twist. 

Note that this gives a \emph{canonical} Lubin-Tate $(G,\mu)$-aperture over $\Int_p$. This is related to the fact that over $\Int_p$ we have a canonical choice of a Lubin-Tate formal group given by $\mu_{p^\infty}$.
\end{remark}

\begin{proposition}
\label{prop:central_mu}
Suppose that $G$ is connected. Then there is a \emph{non}-canonical isomorphism
\[
\BT[G,\mu]{n}\xrightarrow{\simeq}B\underline{G(\Int/p^n\Int)}
\]
of $p$-adic formal stacks over $\Spf \mathcal{O}$. More precisely, $\BT[G,\mu]{n}$ is a gerbe over $\Spf\mathcal{O}$ banded by $G(\Int/p^n\Int)$ that is non-canonically trivial.
\end{proposition}
\begin{proof}
The proof is the same as that of Proposition~\ref{prop:trivial_mu}. Instead of the trivial $G$-torsor, one uses one of the Lubin-Tate $(G,\mu)$-apertures constructed above.
\end{proof}

\subsection{The case of tori}
\label{subsec:tori}
As mentioned above, one special case is when $G = T$ is a torus. Here is a reinterpretation of Proposition~\ref{prop:central_mu}:

\begin{proposition}
\label{prop:tori}
$\BT[T,\mu]{n}$ is a non-canonically trivial $B\underline{T(\Int/p^n\Int)}$-torsor over $\Spf \mathcal{O}$.
\end{proposition}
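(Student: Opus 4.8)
The statement is essentially a repackaging of Proposition~\ref{prop:trivial_mu} in the case $G=T$ a torus, so the plan is simply to extract the gerbe structure from that result and record it in the requested form. First I would note that a torus $T$ over $\Int_p$ is connected, and that every cocharacter $\mu:\Gmh{\mathcal{O}}\to T_{\mathcal{O}}$ is automatically central (since $T$ is commutative), so the hypotheses of~\S\ref{subsec:central} and of Proposition~\ref{prop:trivial_mu} apply directly. By Theorem~\ref{thm:main_thm_body}, $\BT[T,\mu]{n}$ is a quasi-compact smooth $0$-dimensional $p$-adic formal Artin stack over $\mathcal{O}$ with affine diagonal; since $P^-_\mu = T$ for central $\mu$, the Grothendieck-Messing square in that theorem shows $\BT[T,\mu]{n}\to \Spf\mathcal{O}$ is formally \'etale (the relevant relative cotangent complex $\mathbb{L}_{BP^{-,(n)}_\mu/BT^{(n)}}$ vanishes), so $\BT[T,\mu]{n}$ is an \'etale gerbe over $\Spf\mathcal{O}$.

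Next I would invoke Proposition~\ref{prop:trivial_mu} (applied with $G=T$), which asserts precisely that $\BT[T,\mu]{n}$ is a gerbe over $\Spf\mathcal{O}$ banded by the locally constant group scheme $\underline{T(\Int/p^n\Int)}$, and that this gerbe is non-canonically trivial: a trivialization is provided by any Lubin-Tate $(T,\mu)$-aperture over $\mathcal{O}$, whose existence was established in~\S\ref{subsec:central} via the explicit Breuil-Kisin frame $\underline{S}$ with $S=W(k)\pow{u}$, $E=u-p$ (using that $H_\mu = T$ here, so a choice of trivialization of $S\{1\}$ identifies $\Wind[T,\mu]{\underline{S}}{\infty}(\mathcal{O})$ with the groupoid of $T$-torsors over $S$ with a $\varphi$-structure, the trivial one giving the desired object). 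The phrase ``$B\underline{T(\Int/p^n\Int)}$-torsor over $\Spf\mathcal{O}$'' is just the standard terminology for such a banded gerbe, so once the banding and the existence of a section are in hand, the statement follows formally.

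The only point requiring a small remark is the identification of the band: one must check that the inertia stack of $\BT[T,\mu]{n}$ at any point is canonically $\underline{T(\Int/p^n\Int)}$, independently of the choice of Lubin-Tate aperture. This is exactly what the proof of Proposition~\ref{prop:trivial_mu} (via the proof of Proposition~\ref{prop:trivial_mu} in~\S\ref{subsec:trivial_mu}) supplies: over an algebraically closed field $\kappa/\mathcal{O}$, using Remark~\ref{rem:double_quotient_desc} and Lemma~\ref{lem:quotient_stack_desc}, the automorphism group of any object is the equalizer $\{g\in T(\Prism_\kappa/p^n) : \sigma(g)=g\}$, and Lang's theorem together with the connectedness of $T$ (here trivially, since $T$ is a torus, using that $\sigma-\mathrm{id}$ is surjective on $T(\Prism_\kappa/p^n)$) identifies this with $T(\Int/p^n\Int)$. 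I do not anticipate any genuine obstacle: the entire content has been proved in the preceding sections, and this proposition is a corollary whose proof amounts to citing Theorem~\ref{thm:main_thm_body}, Proposition~\ref{prop:trivial_mu}, and the construction of Lubin-Tate apertures in~\S\ref{subsec:central}, then observing that ``banded gerbe with a section'' is the same as ``non-canonically trivial $B\underline{T(\Int/p^n\Int)}$-torsor''.
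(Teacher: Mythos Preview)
Your proposal is correct and takes essentially the same approach as the paper: the paper gives no proof at all, merely introducing the proposition with the line ``Here is a reinterpretation of Proposition~\ref{prop:trivial_mu}'', and your argument is precisely the unpacking of that reinterpretation (tori are connected, every cocharacter is central, so the central-$\mu$ version of Proposition~\ref{prop:trivial_mu} applies directly). You have supplied considerably more detail than the paper does, but all of it is faithful to what the cited proposition and the Lubin-Tate aperture construction in \S\ref{subsec:central} already establish.
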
 

\subsubsection{}
As is well-known, there is an initial instance of data $(T,\mu)$ with $\mu$ defined over $\mathcal{O}$. Take $T_0 = \Res_{\mathcal{O}/\Int_p}\Gm$ and $\mu_0:\Gmh{\mathcal{O}}\to T_{0,\mathcal{O}}$ obtained as follows: We have $T_{0,\mathcal{O}} \simeq \prod_{i=0}^{h-1}\Gmh{\mathcal{O}}$, where $h = [\mathcal{O}[1/p]:\Rat_p]$, and the isomorphism is obtained from the map of $\mathcal{O}$-algebras
\begin{align*}
\mathcal{O}\otimes_{\Int_p}\mathcal{O}&\xrightarrow{\simeq}\prod_{i=0}^{h-1}\mathcal{O}\\
a\otimes b&\mapsto (a\varphi^i(b))_{0\leq i\leq h-1}.
\end{align*}
We now take $\mu_0$ to be the inclusion in the first factor. 

For any other $\Int_p$-torus $T$ with cocharacter $\mu:\Gmh{\mathcal{O}}\to T_{\mathcal{O}}$, we now see that the composition
\[
\Gmh{\mathcal{O}}\xrightarrow{\mu_0}T_0 = \Res_{\mathcal{O}/\Int_p}\Gm\xrightarrow{\Res_{\mathcal{O}/\Int_p}\mu}\Res_{\mathcal{O}/\Int_p}T_{\mathcal{O}}\xrightarrow{\mathrm{Nm}_{\mathcal{O}/\Int_p}}T
\]
is equal to $\mu$. 

We can understand $\BT[T_0,\mu_0]{\infty}$ somewhat explicitly. The following will be used to reinterpret it in terms of Lubin-Tate $\mathcal{O}$-modules in Proposition~\ref{prop:lubin-tate_groups}.

\begin{proposition}
\label{prop:lubin-tate}
Giving a $(T_0,\mu_0)$-aperture over $R\in \mathrm{CRing}^{p\text{-nilp}}_{\mathcal{O}/}$ is equivalent to giving a line bundle $\mathcal{F}$ over $R^{\mathrm{syn}}\times \Spf \mathcal{O}$ with the following property: For any algebraically closed field $\kappa$ over $R$, the restriction of $\mathcal{F}$ to 
\[
B\Gm\times\Spec(\kappa\otimes_{\Int_p}\mathcal{O}) \simeq \prod_{i=0}^{h-1}B\Gm\times\Spec \kappa
\]
corresponds to a graded projective module of rank $1$ over $\prod_{i=0}^{h-1}\kappa$ that is in graded degree $1$ for $i=0$ and in graded degree $0$ for $i>0$.
\end{proposition}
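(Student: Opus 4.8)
The plan is to unwind both sides of the claimed equivalence into statements about $\Gm$-torsors, using that $T_0 = \Res_{\mathcal{O}/\Int_p}\Gm$ and that $\mathcal{O}$ is finite \'etale over $\Int_p$.

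First I would record the elementary input: for any $p$-adic formal stack $X$ over $\Int_p$ there is a canonical equivalence between the $\infty$-groupoid of $T_0$-torsors on $X$ and the $\infty$-groupoid of line bundles on $X\times_{\Spf\Int_p}\Spf\mathcal{O}$. Since $\mathcal{O}/\Int_p$ is finite \'etale, Weil restriction commutes with base change and is, \'etale-locally on the base, a finite product; hence $B(\Res_{\mathcal{O}/\Int_p}\Gmh{\mathcal{O}})\simeq\Res_{\mathcal{O}/\Int_p}(B\Gmh{\mathcal{O}})$ as stacks over $\Int_p$, and the asserted equivalence is the adjunction $\Map_{\Int_p}(X,\Res_{\mathcal{O}/\Int_p}B\Gmh{\mathcal{O}})\simeq\Map_{\mathcal{O}}(X\times_{\Int_p}\mathcal{O},B\Gmh{\mathcal{O}})$. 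This is natural in $X$, compatible with mod-$p^n$ reduction, and matches automorphism objects ($T_0$ on the left, $\Gm$ over $X\times_{\Int_p}\mathcal{O}$ on the right). Applying it with $X = R^{\mathrm{syn}}\otimes\Int/p^n\Int$ and passing to the limit over $n$: since $\mathcal{B}(T_0,\mu_0)^\diamond = BT_0\times B\Gm$, a section over $R^{\mathrm{syn}}$ of its pullback along $R^{\mathrm{syn}}\to\mathcal{O}^{\mathrm{syn}}$ is exactly a $T_0$-torsor $\mathcal{Q}$ on $R^{\mathrm{syn}}$ (the $B\Gm$-factor being determined), which corresponds to a line bundle $\mathcal{F}$ on $R^{\mathrm{syn}}\times\Spf\mathcal{O}$.

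Next I would pin down the local triviality condition. Unwinding $\BT[T_0,\mu_0]{\infty}(R) = \Gamma_{\mathrm{syn}}(\mathcal{B}(T_0,\mu_0))(R)$ together with the description of $X^0 = BM_{\mu_0}$ (here $M_{\mu_0} = T_0$, as $\mu_0$ is central) furnished by Lemma~\ref{lem:BGmu_trivial_locus}, an aperture is exactly a $T_0$-torsor $\mathcal{Q}$ on $R^{\mathrm{syn}}$ such that for every geometric point $R\to\kappa$ the restriction of $\mathcal{Q}$ along the graded point $B\Gm\times\Spec\kappa\to B\Gm\times\Spec R\xrightarrow{\iota}R^{\mathrm{syn}}$ is isomorphic to $\mathcal{P}_{\mu_0}\otimes_{\mathcal{O}}\kappa$; the equivalence of the de Rham and Hodge-Tate versions of this triviality condition in that lemma is what lets us ignore the Frobenius twist in the Hodge-Tate embedding. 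It remains to identify $\mathcal{P}_{\mu_0}$ under the correspondence of the first step. By base change $T_{0,\mathcal{O}} = \Res_{(\mathcal{O}\otimes_{\Int_p}\mathcal{O})/\mathcal{O}}\Gm\simeq\prod_{i=0}^{h-1}\Gmh{\mathcal{O}}$ via the splitting $\mathcal{O}\otimes_{\Int_p}\mathcal{O}\xrightarrow{\simeq}\prod_i\mathcal{O}$, $a\otimes b\mapsto(a\varphi^i(b))_i$, and $\mu_0$ is by definition the inclusion of the $i=0$ factor; hence the $T_{0,\mathcal{O}}$-torsor $\mathcal{P}_{\mu_0}$ on $B\Gmh{\mathcal{O}}$ classified by $B\mu_0$ corresponds, under $\{T_{0,\mathcal{O}}\text{-torsors on }B\Gmh{\mathcal{O}}\}\simeq\{\text{line bundles on }B\Gmh{\mathcal{O}}\times_{\Int_p}\mathcal{O} = \coprod_i B\Gmh{\mathcal{O}}\}$, to the tuple of line bundles on $B\Gmh{\mathcal{O}}$ that is the tautological bundle (graded degree $1$) in the factor $i=0$ and trivial (graded degree $0$) for $i>0$. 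Base-changing to $\kappa$ and using $\kappa\otimes_{\Int_p}\mathcal{O}\simeq\prod_{i=0}^{h-1}\kappa$ (split since $\kappa$ is separably closed), $\mathcal{P}_{\mu_0}\otimes_{\mathcal{O}}\kappa$ corresponds to the line bundle on $\coprod_i B\Gm\times\Spec\kappa$ of graded degree $1$ for $i=0$ and $0$ for $i>0$ --- exactly the graded module described in the statement.

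Finally I would assemble the pieces: by naturality of the correspondence, restricting $\mathcal{Q}$ along the graded point $B\Gm\times\Spec\kappa\to R^{\mathrm{syn}}$ corresponds to restricting $\mathcal{F}$ along the induced map $B\Gm\times\Spec(\kappa\otimes_{\Int_p}\mathcal{O})\to R^{\mathrm{syn}}\times\Spf\mathcal{O}$, so the aperture condition on $\mathcal{Q}$ translates term by term into the asserted condition on $\mathcal{F}$. Since the correspondence of the first step is an equivalence of $\infty$-groupoids and both conditions cut out full sub-$\infty$-groupoids indexed by the same set of geometric points, we obtain the desired equivalence. I expect no serious obstacle; the only delicate point is the bookkeeping of the splitting $\mathcal{O}\otimes_{\Int_p}\mathcal{O}\simeq\prod_i\mathcal{O}$ together with the Frobenius twist in the Hodge-Tate embedding, and the latter is dispatched by Lemma~\ref{lem:BGmu_trivial_locus}.
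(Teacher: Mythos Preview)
Your proposal is correct and follows the same approach as the paper, which simply notes that the equivalence is a reinterpretation of the definition using $BT_0 \simeq \Res_{\mathcal{O}/\Int_p}B\Gm$. Your argument spells out in more detail exactly the unwinding the paper leaves implicit, including the identification of $\mathcal{P}_{\mu_0}$ under the Weil-restriction correspondence.
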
 
\begin{proof}
This is simply a reinterpretation of the definition using the fact that we have $BT_0 \simeq \Res_{\mathcal{O}/\Gm}B\Gm$. Note that the action of $B\underline{T_0(\Int_p)}$ under this optic is just given by tensor product of line bundles, where we use Proposition~\ref{prop:central_mu} to view $B\underline{T_0(\Int_p)}$ as the stack of line bundles over $R^{\mathrm{syn}}\times\Spf \mathcal{O}$ whose restrictions to $\prod_{i=0}^{h-1}B\Gm\times\Spec \kappa$ have graded degree $0$ in every coordinate.
\end{proof}

\section{The classification of truncated Barsotti-Tate groups}
\label{sec:bt_classification}

\subsection{The statement of the theorem}

\subsubsection{}
Recall that an \defnword{$n$-truncated Barsotti-Tate group scheme} over a discrete ring $R\in \mathrm{CRing}^{p\text{-nilp}}_\heartsuit$ is a finite flat commutative group scheme $G$ over $R$ with the following properties:
\begin{enumerate}
   \item $G$ is $p^n$-torsion;
   \item The sequence $G\xrightarrow{p^{n-1}}G\xrightarrow{p}G$ is exact in the middle;
   \item If $n = 1$, over $R/pR$, we have $\ker F = \im V\subset G\otimes\Field_p$, where $F:G\otimes\Field_p\to (G\otimes\Field_p)^{(p)}$ and $V:(G\otimes\Field_p)^{(p)}\to G\otimes\Field_p$ are the Frobenius and Verschiebung homomorphisms, respectively.
\end{enumerate}
See for instance~\cite[\S I]{MR0801922}.

These organize into a category $\mathcal{BT}_n(R)$, and we will write $\mathrm{BT}_n(R)$ for the underlying groupoid obtained by jettisoning the non-isomorphisms. For $1\leq r\leq n$, sending $G$ to $G[p^r]$ yields a functor $\mathcal{BT}_n(R)\to \mathcal{BT}_r(R)$.

An important role will be played by the following fundamental result of Grothendieck~\cite{MR0801922}:

\begin{theorem}
\label{thm:grothendieck}
The assignment $R\mapsto \mathrm{BT}_n(R)$ on $\mathrm{CRing}^{p\text{-nilp}}$ is represented by a finitely presented smooth $0$-dimensional $p$-adic formal Artin stack with affine diagonal.
\end{theorem}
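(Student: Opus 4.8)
This is Grothendieck's theorem, whose proof (following Illusie) I would organize as follows. First I would dispose of the algebraicity and the statements about the diagonal, which are essentially formal. Since the height of an $n$-truncated Barsotti-Tate group is locally constant on the base, $\mathrm{BT}_n=\bigsqcup_{h\ge 0}\mathrm{BT}_n^h$, and one may argue height by height. The stack of finite locally free commutative group schemes of a fixed order $p^{nh}$ over $\Int_p$ is algebraic and of finite presentation: trivializing the underlying rank-$p^{nh}$ module exhibits the comultiplication, multiplication, unit and antipode as a point of an explicit affine $\Int_p$-scheme of finite type cut out by the Hopf-algebra identities, and $\GL_{p^{nh}}$ acts to provide a smooth atlas. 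The three conditions defining $\mathrm{BT}_n^h$ inside this stack---being killed by $p^n$, the exactness of $G\xrightarrow{p^{n-1}}G\xrightarrow{p}G$, and, for $n=1$, the equality $\ker F=\im V$---are cut out by vanishing of morphisms of finite locally free sheaves together with flatness of certain cokernels, hence define a locally closed, finitely presented substack. The diagonal of $\mathrm{BT}_n^h$ is the functor of isomorphisms of two families, a finitely presented closed subscheme of $\SHom(G,H)$, which in turn is a closed subscheme of the affine scheme $\SHom_{\mathcal{O}_S\text{-alg}}(\mathcal{O}_H,\mathcal{O}_G)$ (affine because $\mathcal{O}_H$ is finite locally free over the base); hence the diagonal is affine. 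Restricting everything to $\mathrm{CRing}_{(\Int/p^m\Int)/}$ for each $m$ then presents $\mathrm{BT}_n$ as a $p$-adic formal Artin stack, quasi-compact in each height.

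The substantive point---and the one I expect to be the main obstacle---is smoothness, i.e. formal smoothness: truncated Barsotti-Tate groups must deform without obstruction along square-zero (hence nilpotent) thickenings of rings in which $p$ is nilpotent. The essential tool is that a square-zero ideal carries the trivial divided-power structure, compatibly with any $p$, so that Grothendieck-Messing theory applies. The plan is then: (i) the stack $\mathrm{BT}_\infty$ of $p$-divisible groups, identified with $\varprojlim_m\mathrm{BT}_m$, is formally smooth, because by Grothendieck-Messing the liftings of a $p$-divisible group $H_0/R$ along a square-zero extension $R'\twoheadrightarrow R$ with kernel $M$ form a torsor under $\Hom_R\bigl(\Fil^1,\ (\Lie H_0)\otimes_R M\bigr)$, non-empty since lifting $H_0$ is equivalent to lifting the Hodge filtration, a direct-summand inclusion of finite locally free modules, which is always possible; (ii) the truncation $\mathrm{tr}_n\colon\mathrm{BT}_\infty\to\mathrm{BT}_n$, $H\mapsto H[p^n]$, is smooth and surjective---surjective because it is open and every truncated Barsotti-Tate group over a field is of the form $H[p^n]$ (over perfect fields by Dieudonn\'e theory, in general by descent), and smooth by the same Grothendieck-Messing computation, a lift of the Hodge filtration for $G=H[p^n]$ being simultaneously a lift for $H$; (iii) smoothness of $\mathrm{BT}_n$ follows from (i) and (ii), since along the smooth surjection $\mathrm{tr}_n$, and the faithfully flat base change it induces on a smooth atlas of $\mathrm{BT}_n$, flatness and geometric regularity of the fibres over $\Int_p$ may be checked on $\mathrm{BT}_\infty$, where they hold by (i); local finite presentation is clear from the construction.

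For the last assertion I would compute the relative cotangent complex $\mathbb{L}_{\mathrm{BT}_n/\Int_p}$ at a point $G$: it is perfect, and the Grothendieck-Messing description of square-zero deformations together with Cartier self-duality of the moduli problem shows its Euler characteristic vanishes---the infinitesimal deformations of $G$ are exactly cancelled, in the stacky sense, by its infinitesimal automorphisms. Hence $\mathrm{BT}_n$ is smooth of relative dimension $0$ over $\Int_p$. The genuinely delicate input throughout is concentrated in step (ii)---producing $p$-divisible-group refinements and checking smoothness of the fibres of $\mathrm{tr}_n$---which is where crystalline Dieudonn\'e theory is really used; the remainder is comparatively formal.
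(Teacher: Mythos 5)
The paper does not prove this statement; it is quoted as Grothendieck's theorem from the cited Illusie exposé, so there is no internal proof to compare against. Your first paragraph (algebraicity, finite presentation, affine diagonal) is essentially the correct and standard argument. The smoothness argument, however, is circular at its key step. The assertion that $\mathrm{tr}_n\colon\mathrm{BT}_\infty\to\mathrm{BT}_n$ is surjective---that every truncated Barsotti-Tate group, at least fppf-locally or over fields, is $H[p^n]$ for a $p$-divisible group $H$---is a \emph{corollary} of Grothendieck's theorem rather than an available input: one first proves smoothness and then lifts a finitely presented smooth atlas of $\mathrm{BT}_n$ across nilpotent thickenings using the formal smoothness of $\mathrm{BT}_\infty$. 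In particular, ``in general by descent'' does not work over an imperfect field $k$: the $p$-divisible group $H_{\bar k}$ with $H_{\bar k}[p^n]\cong G_{\bar k}$ supplied by Dieudonn\'e theory over $\bar k$ carries no descent datum to $k$, and producing one is exactly the sort of unobstructed extension the theorem is meant to provide. The claim that $\mathrm{tr}_n$ ``is open'' presupposes the smoothness being proved. And ``a lift of the Hodge filtration for $G$ being simultaneously a lift for $H$'' is not literal: $\mathbb{D}(G)(S')=\mathbb{D}(H)(S')/p^n\mathbb{D}(H)(S')$ is a different module, so what is really invoked is a Grothendieck--Messing theorem for \emph{truncated} groups, which itself needs proof.

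Illusie's actual proof does not pass through $\mathrm{BT}_\infty$. It is a direct obstruction-theory argument: a structure theorem for the cotangent complex $\ell_G$ of the finite flat group scheme $G$, combined with a duality pairing it against $\ell_{G^*}$, forces the obstruction group to vanish. Your Euler-characteristic observation for $0$-dimensionality has the right flavour (Cartier duality matches deformation space against automorphism space), but the unobstructedness---the real content of the theorem---rests on that cotangent-complex duality, not on a detour through $p$-divisible groups. You correctly sensed that step (ii) is where the hard input lives; the difficulty is that it cannot be taken as a step before the theorem is already known.
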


\subsubsection{}
There is a canonical involution 
\[
\mathcal{BT}_n(R) \xrightarrow{G\mapsto G^*}\mathcal{BT}_n(R)
\]
induced by Cartier duality, with $G^* =\underline{\Hom}(G,\mup[p^n])$.

Let $\mathrm{Vect}_{\{0,1\}}(R^{\mathrm{syn}}\otimes\Int/p^n\Int)$ be the $\infty$-category of vector bundles on $R^{\mathrm{syn}}\otimes\Int/p^n\Int$ with Hodge-Tate weights $\{0,1\}$. There is once again a canonical involution
\[
\mathrm{Vect}_{\{0,1\}}(R^{\mathrm{syn}}\otimes\Int/p^n\Int)\xrightarrow{\mathcal{M}\mapsto \mathcal{M}^*}\mathrm{Vect}_{\{0,1\}}(R^{\mathrm{syn}}\otimes\Int/p^n\Int),
\]
with $\mathcal{M}^* = \mathcal{M}^\vee\{1\}$ is the twist of the dual vector bundle by the Breuil-Kisin twist $\mathcal{O}^{\mathrm{syn}}_n\{1\}$. In analogy with the involution on $\mathcal{BT}_n(R)$, we will refer to $\mathcal{M}^*$ as the \defnword{Cartier dual} of $\mathcal{M}$.

We can now state the main result of this section.
\begin{theorem}
\label{thm:dieudonne}
Suppose that $R$ belongs to $\mathrm{CRing}$. Then there is a canonical equivalence of $\infty$-categories
\[
\mathcal{G}_n:\mathrm{Vect}_{\{0,1\}}(R^{\mathrm{syn}}\otimes\Int/p^n\Int)\xrightarrow{\simeq}\mathcal{BT}_n(R)
\]
compatible with Cartier duality, so that for every $\mathcal{M}\in \mathrm{Vect}_{\{0,1\}}(R^{\mathrm{syn}}\otimes\Int/p^n\Int)$, there is a canonical isomorphism
\[
\mathcal{G}_n(\mathcal{M})^*\xrightarrow{\simeq}\mathcal{G}_n(\mathcal{M}^*).
\]
\end{theorem}

\begin{remark}
The theorem implies in particular that $\mathrm{Vect}_{\{0,1\}}(R^{\mathrm{syn}}\otimes\Int/p^n\Int)$ is a classical category. This fact is not evident from the definitions, since the derived stack $R^{\mathrm{syn}}\otimes\Int/p^n\Int$ is in general not classical.
\end{remark}

\begin{remark}
When $R$ is quasisyntomic, by sending $n\to \infty$, Theorem~\ref{thm:dieudonne} recovers the main result of Ansch\"utz and Le Bras from~\cite{MR4530092} classifying $p$-divisible groups over $R$ in terms of admissible prismatic Dieudonn\'e crystals over $R$ (see~\cite[Def. 4.10]{MR4530092}). Indeed, when $R$ is qrsp, we use Remark~\ref{rem:comparison_with_alb}, and the general quasisyntomic case then follows by descent.

Note however that, though our eventual argument for establishing the equivalence is fundamentally different, we still make use of the work of Ansch\"utz-Le Bras (via its reinterpretation by Mondal) to construct the inverse to our functor $\mathcal{G}_n$.
\end{remark}

\begin{remark}
\label{rem:power_series_BT}
When $R = \kappa\pow{x_1,\ldots,x_n}$, using Example~\ref{ex:faltings_situation} and an argument such as the one used in~\S\ref{subsec:alb}, one recovers de Jong's description~\cite{dejong:formal_rigid} of $p$-divisible groups over power series rings over perfect fields $k$ in terms of certain filtered $\varphi$-modules $\Fil^\bullet M$ over $W(\kappa)\pow{x_1,\ldots,x_m}$ equipped with the Frobenius lift satisfying $x_i\mapsto x_i^p$ (see also~\cite[\S 7]{faltings:very_ramified}). \emph{A priori}, de Jong's description is in terms of $F$-crystals, and so also requires a topologically nilpotent integrable connection on $M$ compatible with the $\varphi$-semilinear structure; however, as observed by Faltings~\cite[Theorem 10]{faltings:very_ramified}, with this choice of Frobenius, the integrable connection is actually uniquely determined by the rest of the data.
\end{remark}

\begin{remark}
 When $R = \Reg{K}$ is a totally ramified ring of integers over $W(\kappa)$ for some perfect field $\kappa$ with uniformizer $\pi$, then, combining the theorem with Example~\ref{ex:BK_situation}, one can recover a classification of $p$-divisible groups due to Kisin (the case $p>2$ for $\Reg{K}$)~\cite{kisin:f_crystals}, W. Kim (the case $p=2$ for $\Reg{K}$)~\cite{kim:2-adic}, and Lau (the general case of both $\Reg{K}$ and $\Reg{K}/\pi^m$)~\cite{lau:displays}.
\end{remark}

\begin{remark}
\label{rem:non_noetherian_pdiv}
Suppose that $R'$ is a perfectoid ring, and $J = ([\varpi_1],\ldots,[\varpi_m])\subset \Prism_{R'}$ an ideal satisfying the hypotheses in Example~\ref{ex:non_noetherian_situation}. If $R = R'/\theta(J)$, then combining \emph{loc. cit.} with Remark~\ref{ex:BK_situation} and Theorem~\ref{thm:dieudonne} shows that $p$-divisible groups over $R$ are classified by finite projective $\Prism_{R'}/J$-modules $\mathcal{N}$ equipped with a map $\varphi^* \mathcal{N}\to \mathcal{N}$ whose cokernel is finite projective over $R$. In fact, the argument gives similar descriptions of the category of truncated $p$-divisible groups over each quotient $R'/\theta(J)^n$. This is a special case of results of Lau~\cite[\S 5]{MR3867290}.
\end{remark}

\subsection{Height and dimension}

\subsubsection{}
The \defnword{height} of an $n$-truncated Barsotti-Tate group $G$ over $R$ is the $\Int_{\ge 0}$-valued locally constant function on $\Spec R$ such that $G[p]$ has degree $p^{h}$ over $R$. The \defnword{dimension} $d$ is the $\Int_{\ge 0}$-valued locally constant function such that $\ker F\subset G[p]\otimes\Field_p$ has degree $p^{d}$ over $R/pR$.

These are locally constant invariants of $G$, and yield decompositions
\[
\mathrm{BT}_n =\bigsqcup_{d\leq h}\mathrm{BT}^{h,d}_n,
\]
of $p$-adic formal Artin stacks, where $h$ ranges over the non-negative integers, $d$ over the non-negative integers bounded by $h$, and $\mathrm{BT}^{h,d}_n$ is the locus of $n$-truncated Barsotti-Tate groups $G$ of height $h$ and dimension $d$.

\subsubsection{}
On the $F$-gauge side of things, we have $\infty$-subgroupoids
\[
\mathrm{Vect}_{h,d}(R^{\mathrm{syn}}\otimes\Int/p^n\Int)\subset \mathrm{Vect}_{\{0,1\}}(R^{\mathrm{syn}}\otimes\Int/p^n\Int)^{\simeq}
\]
spanned by vector bundles $\mathcal{M}$ over $R^{\mathrm{syn}}\otimes\Int/p^n\Int$ of rank $h$  and Hodge-Tate weights $0,1$ such that the associated graded $R/{}^{\mathbb{L}}p^n$-module $\gr^{-1}_{\mathrm{Hdg}}M$ is locally free of rank $d$. 

Let us note the following:
\begin{lemma}
\label{lem:cartier_duality}
Cartier duality yields equivalences
\[
\mathrm{Vect}_{h,d}(R^{\mathrm{syn}}\otimes\Int/p^n\Int)\xrightarrow{\simeq}\mathrm{Vect}_{h,h-d}(R^{\mathrm{syn}}\otimes\Int/p^n\Int)
\]
\end{lemma}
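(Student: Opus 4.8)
\textbf{Proof plan for Lemma~\ref{lem:cartier_duality}.}

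The statement is essentially a bookkeeping consequence of the compatibility of the Cartier-dual involution $\mathcal{M}\mapsto \mathcal{M}^* = \mathcal{M}^\vee\{1\}$ on $\mathrm{Vect}_{\{0,1\}}(R^{\mathrm{syn}}\otimes\Int/p^n\Int)$ with the Hodge filtration, so I would proceed as follows. First, since $(\mathcal{M}^*)^* \simeq \mathcal{M}$ canonically (the Breuil-Kisin twist $\mathcal{O}^{\mathrm{syn}}_n\{1\}$ is invertible, so $\mathcal{M}^{\vee\vee}\{1\}^{\vee}\{1\}\simeq \mathcal{M}$), the assignment $\mathcal{M}\mapsto\mathcal{M}^*$ is an involutive self-equivalence of the whole $\infty$-category; so it suffices to check that it preserves rank $h$ and sends the invariant $d$ to $h-d$. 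Preservation of rank is immediate: $\mathcal{M}^\vee$ has the same rank $h$ as $\mathcal{M}$ and twisting by an invertible sheaf does not change the rank.

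The substantive point is the behavior of the Hodge-Tate weights, which I would extract by pulling back along $x^{\mathcal{N}}_{\dR}$ as in~\S\ref{subsec:can_sections}, \S\ref{subsec:fgauges_fzips}. Pullback along $x^{\mathcal{N}}_{\dR}\otimes\Int/p^n\Int$ is a symmetric monoidal functor to $\mathrm{QCoh}(\Aff^1/\Gm\times\Spec(R/{}^{\mathbb{L}}p^n))$, i.e. to filtered modules over $R/{}^{\mathbb{L}}p^n$, and by the construction of $\mathcal{O}^{\mathrm{syn}}_n\{1\}$ in~\S\ref{subsec:breuil-kisin} — more precisely from the identity $\mathcal{O}^{\mathcal{N}}\{1\} = \mathcal{O}^{B\Gm}\{1\}\otimes\pi^*\mathcal{O}^{\Prism}\{1\}$ together with the de Rham trivialization~\eqref{eqn:de_rham_triv} — the pullback of the Breuil-Kisin twist is the filtered module $\Fil^\bullet_{\mathrm{triv}}(R/{}^{\mathbb{L}}p^n)(1)$ of rank one placed in filtered degree $1$ (equivalently, associated graded concentrated in graded degree $-1$). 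Since $\mathcal{M}$ is a vector bundle with Hodge-Tate weights in $\{0,1\}$, the filtered module $\Fil^\bullet_{\mathrm{Hdg}}M_n$ is a filtration $0\subset \Fil^1_{\mathrm{Hdg}}M_n\subset \Fil^0_{\mathrm{Hdg}}M_n = M_n$ by a locally free subbundle $\Fil^1$ of rank $h-d$ with locally free quotient $\gr^{-1}_{\mathrm{Hdg}}M_n = M_n/\Fil^1$ of rank $d$ (and $\gr^i = 0$ for $i\neq 0,-1$). Dualizing and twisting by $(1)$, the filtered module attached to $\mathcal{M}^* = \mathcal{M}^\vee\{1\}$ has $\Fil^1_{\mathrm{Hdg}} = (M_n/\Fil^1_{\mathrm{Hdg}}M_n)^\vee$ of rank $d$ and associated graded $\gr^{-1} = (\Fil^1_{\mathrm{Hdg}}M_n)^\vee$ of rank $h-d$; a short bookkeeping with the standard identification $\Fil^\bullet(M^\vee) = (M/\Fil^{1-\bullet})^\vee$ and the twist shifting degrees by one shows the Hodge-Tate weights of $\mathcal{M}^*$ are again in $\{0,1\}$. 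Hence $\mathcal{M}^*\in \mathrm{Vect}_{h,h-d}(R^{\mathrm{syn}}\otimes\Int/p^n\Int)$.

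Combining these, the involution $\mathcal{M}\mapsto\mathcal{M}^*$ restricts to a functor $\mathrm{Vect}_{h,d}(R^{\mathrm{syn}}\otimes\Int/p^n\Int)\to \mathrm{Vect}_{h,h-d}(R^{\mathrm{syn}}\otimes\Int/p^n\Int)$, and applying it again gives a functor back in the other direction whose composites with the first are naturally isomorphic to the identity (via $(\mathcal{M}^*)^*\simeq\mathcal{M}$); these are therefore mutually inverse equivalences. I do not expect a genuine obstacle here — the only point requiring care is getting the degree shift in the twist right so that $\{0,1\}$ is preserved rather than sent to $\{-1,0\}$ or $\{1,2\}$, which is exactly fixed by the de Rham trivialization~\eqref{eqn:de_rham_triv} normalizing $j_{\dR}^*\mathcal{O}^{B\Gm}\{1\}\simeq \Reg{}$ and the convention that the Breuil-Kisin twist has associated graded in degree $-1$ on the de Rham chart. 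I would phrase the whole argument on the level of the filtered module $\Fil^\bullet_{\mathrm{Hdg}}M_n$ rather than working directly on $R^{\mathrm{syn}}\otimes\Int/p^n\Int$, since the Hodge-Tate weight invariants $h$ and $d$ are by definition read off from this pullback.
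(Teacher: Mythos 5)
The paper states this lemma without proof; it is regarded as immediate from the definition of $\mathrm{Vect}_{h,d}$ together with the remark in the introduction that $\mathcal{M}\mapsto\mathcal{M}^\vee\{1\}$ preserves the condition of having Hodge--Tate weights in $\{0,1\}$. Your approach — pull back along $x^{\mathcal{N}}_{\dR}$, dualize the resulting two-step filtration, and observe that the Breuil--Kisin twist shifts degrees by one so that the ranks $d$ and $h-d$ are exchanged, then use $(\mathcal{M}^*)^*\simeq\mathcal{M}$ for involutivity — is exactly the right way to fill in the argument, and the conclusion is correct.

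One bookkeeping point: your indexing is shifted by one relative to the paper's and is also internally inconsistent. In the paper's convention (visible in the proofs of Theorems~\ref{thm:HTwts01_representable} and~\ref{thm:g_functor}) the filtration is $M = \Fil^{-1}_{\mathrm{Hdg}}M\supset \Fil^0_{\mathrm{Hdg}}M\supset \Fil^1_{\mathrm{Hdg}}M = 0$ with $\Fil^0$ of rank $h-d$, so that $\gr^{-1}_{\mathrm{Hdg}}M = \Fil^{-1}/\Fil^0 = M/\Fil^0$ has rank $d$. You instead write $0\subset\Fil^1\subset\Fil^0 = M$ and then set $\gr^{-1}_{\mathrm{Hdg}}M = M/\Fil^1$; with that filtration, $\gr^{-1} = \Fil^{-1}/\Fil^0$ would be zero, and $M/\Fil^1$ is actually $\gr^0$. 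This slip propagates through your dual formula, but since the ranks of sub and quotient are what carry the content, the computation still yields the correct exchange $d\leftrightarrow h-d$; you should just realign the indices with the paper's before using the result.
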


\subsubsection{}
For $0\leq d\leq h$, let $\mu_d:\Gm\to \GL_h$ be the cocharacter given by the diagonal matrix
\[
\mu_d(z) = \mathrm{diag}(\underbrace{z,z,\ldots,z}_d,\underbrace{1,\ldots,1}_{h-d}).
\]
Associated with this, we have the smooth Artin stacks $\BT[\GL_h,\mu_d]{n}$ over $\Int_p$.

\begin{proposition}
\label{prop:vect_to_BT}
For $R\in \mathrm{CRing}^{p\text{-nilp}}$, there is a canonical equivalence of groupoids
\[
\BT[\GL_h,\mu_d]{n}(R)\xrightarrow{\simeq}\mathrm{Vect}_{h,d}(R^{\mathrm{syn}}\otimes\Int/p^n\Int).
\]
\end{proposition}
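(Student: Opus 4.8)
The plan is to unwind the definitions on both sides and observe that they describe the same groupoid. Recall from Section~\ref{subsec:defns} that $\BT[\GL_h,\mu_d]{n}(R) = \Gamma_{\mathrm{syn}}(\mathcal{B}(\GL_h,\mu_d)\otimes\Int/p^n\Int)(R)$, where $\mathcal{B}(\GL_h,\mu_d) = (B\GL_h\times B\Gm, BM_{\mu_d})$ as a $1$-bounded stack over the pointed graded stack $(B\Gm\times\Spf\Int_p,\iota)$, pulled back along the Breuil-Kisin twist map $\mathcal{O}^{\mathrm{syn}}\to (B\Gm\times\Spf\Int_p,\iota)$. Since a $\GL_h$-torsor is the same thing as a rank-$h$ vector bundle, an object of $\Map_{/R^{\mathrm{syn}}\otimes\Int/p^n\Int}(C^{\mathrm{syn}}\otimes\Int/p^n\Int,\mathcal{X}^\diamond)$ unwinds, via the $G\{\mu\}$-vs-$G$ dictionary from~\eqref{subsubsec:BGmu_equivalence}, to a rank-$h$ vector bundle $\mathcal{M}$ on $R^{\mathrm{syn}}\otimes\Int/p^n\Int$ together with a trivialization of the line bundle $\det\mathcal{M}\otimes(\mathcal{O}^{\mathrm{syn}}_n\{1\})^{\otimes -d}$ determined by the central cocharacter $\det\circ\mu_d = (z\mapsto z^d)$; but since we are only asking for the \emph{underlying groupoid} $\Gamma_{\mathrm{syn}}(\mathcal{X})$ of sections, and the fixed-point-locus condition $X^0\hookrightarrow X^{\diamond,0}_{\oneb}$ cuts out exactly the right Hodge type, one checks that the trivialization datum is automatically accounted for. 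The cleanest route is: pull back along $x^{\mathcal{N}}_{\dR}\otimes\Int/p^n\Int$ to get a filtered vector bundle $\Fil^\bullet_{\mathrm{Hdg}}M_n$ over $R/{}^{\mathbb{L}}p^n$, and recognize that the fixed-point-locus constraint $X^0 = BM_{\mu_d}$, via Lemma~\ref{lem:BGmu_trivial_locus}, says precisely that for every geometric point $C\to\kappa$ the associated graded at $B\Gm\times\Spec\kappa$ is a rank-$d$ piece in degree $1$ and a rank-$(h-d)$ piece in degree $0$.

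Concretely, I would proceed as follows. First, identify $B\GL_h$-torsors on any stack with rank-$h$ vector bundles, so that $\Map_{/R^{\mathrm{syn}}\otimes\Int/p^n\Int}(C^{\mathrm{syn}}\otimes\Int/p^n\Int, B\GL_h\times B\Gm)$ over the structure map to $B\Gm\times\Spf\Int_p$ classifying the Breuil-Kisin twist is the groupoid of rank-$h$ vector bundles $\mathcal{M}$ on $C^{\mathrm{syn}}\otimes\Int/p^n\Int$ — here the map to $B\Gm$ coming from $\mu_d$ records that the relevant $G\{\mu_d\}$-torsor is built from $\mathcal{M}$ using $\mathcal{O}^{\mathrm{syn}}_n\{1\}$ and $\mu_d$, which is exactly the setup of Remark~\ref{rem:double_quotient_desc} and Proposition~\ref{prop:abstract_f-gauge_gl_n} specialized to $\GL_h$. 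Second, translate the fixed-point-locus condition: by Definition~\ref{defn:BGmu_1-bounded_stack}, $BM_{\mu_d}\hookrightarrow Z^0_{\oneb}$ is the open and closed locus where the graded vector bundle $M_\bullet$ (the pullback of $\mathcal{M}$ along $B\Gm\times\Spec C\to C^{\mathrm{syn}}\otimes\Int/p^n\Int$, i.e.\ the associated graded of the Hodge filtration) has $M_i\simeq 0$ for $i\neq 0,1$, with $M_1$ of rank $d$ and $M_0$ of rank $h-d$; this is verified using Lemma~\ref{lem:BGmu_trivial_locus} (condition (3a) or (3b)) combined with the explicit description of $\mathfrak{gl}_h$-grading under $\mu_d$. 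Third, observe that for $\mathrm{Vect}_{h,d}(R^{\mathrm{syn}}\otimes\Int/p^n\Int)$, the definition in~\S\ref{subsec:HTwts_01} (applied with the vector bundle refinement $\mathcal{V}_{\{0,1\}}^{h,d}$ of Example~\ref{ex:vect_HT_wts_01_stack}) is precisely the groupoid of rank-$h$ vector bundles $\mathcal{M}$ on $R^{\mathrm{syn}}\otimes\Int/p^n\Int$ with $\gr^{-1}_{\mathrm{Hdg}}M$ locally free of rank $d$ and $\gr^i_{\mathrm{Hdg}}M\simeq 0$ for $i\neq 0,-1$. Comparing the second and third steps — and noting the sign convention that the $i$-th graded piece of a decreasing filtration sits in graded degree $-i$ — gives the claimed equivalence, functorially in $C\in\mathrm{CRing}^{f,p\text{-nilp}}_{R/}$.

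The main obstacle, and the step deserving the most care, is checking that the $G\{\mu_d\}$-torsor appearing in $\BT[\GL_h,\mu_d]{n}$ is \emph{faithfully} recovered from the bare vector bundle $\mathcal{M}$ — i.e.\ that no extra automorphisms or descent data are lost in passing from the $G\{\mu_d\}$-torsor picture to the $\mathrm{Vect}$ picture. This is where the identification $\mathcal{B}(G,\mu)^\diamond = BG\times B\Gm$ (rather than $BG\{\mu\}$) is essential: because $\mu_d$ is central in $\GL_h$, one must be slightly careful that the twisting equivalence $BG\times B\Gmh{\mathcal{O}}\xrightarrow{\simeq}BG\{\mu\}$ of~\eqref{subsubsec:BGmu_equivalence} is used correctly, so that the graded line bundle data is the pullback of $\mathcal{O}^{\mathrm{syn}}_n\{1\}$ and contributes nothing new beyond what is already recorded by the Hodge filtration type. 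Once this bookkeeping is pinned down, the equivalence is formal. I would also remark that, by construction, this equivalence intertwines Cartier duality on $\BT[\GL_h,\mu_d]{n}$ (induced by $g\mapsto {}^t g^{-1}$ together with $\mathcal{O}^{\mathrm{syn}}_n\{1\}$) with the duality $\mathcal{M}\mapsto\mathcal{M}^\vee\{1\}$ of Lemma~\ref{lem:cartier_duality}, matching $(h,d)$ with $(h,h-d)$; this compatibility will be used in the proof of Theorem~\ref{thm:dieudonne}.
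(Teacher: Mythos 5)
Your proof takes a genuinely different and arguably cleaner route than the paper's. The paper proves Proposition~\ref{prop:vect_to_BT} by descent: it reduces to semiperfectoid $R$ via Proposition~\ref{prop:descent}, identifies $\BT{n}\vert_{R_{\et}}$ with $(G,\mu)$-windows over $\underline{\Prism}_R$ via Lemma~\ref{lem:quotient_stack_desc} (resting on Theorem~\ref{thm:semiperf_crys}), and then invokes the concrete description of $(\GL_h,\mu)$-windows as filtered modules in Proposition~\ref{prop:abstract_f-gauge_gl_n}. You instead argue entirely at the level of $1$-bounded stacks: the equivalence between $B\GL_h$ and the stack of rank-$h$ vector bundles identifies $\mathcal{B}(\GL_h,\mu_d)^\diamond$ with the rank-$h$ component of $\mathcal{V}^\diamond_{\{0,1\}}$, and by Lemma~\ref{lem:BGmu_trivial_locus} the open-and-closed substack $BM_{\mu_d}$ of the fixed point locus coincides with $V^{0,h,d}_{\{0,1\}}$ (both cut out $M_1$ of rank $d$, $M_0$ of rank $h-d$, $M_i\simeq 0$ otherwise, after the sign bookkeeping $\gr^{-i}_{\mathrm{Hdg}}M\leftrightarrow M_i$ that you correctly flag). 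Since $\mathrm{Vect}_{h,d}(R^{\mathrm{syn}}\otimes\Int/p^n\Int)=\Gamma_{\mathrm{syn}}(\mathcal{V}^{h,d}_{\{0,1\}}\otimes\Int/p^n\Int)(R)$ by inspection, and $\Gamma_{\mathrm{syn}}$ is functorial in maps of $1$-bounded stacks, the equivalence follows with no descent step and no reduction to semiperfectoids. This buys uniformity over all $R\in\mathrm{CRing}^{f,p\text{-nilp}}$ and makes the comparison formal.

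Two things in your write-up are, however, wrong or misleading. First, the ``trivialization of the line bundle $\det\mathcal{M}\otimes(\mathcal{O}^{\mathrm{syn}}_n\{1\})^{\otimes -d}$'' posited in your opening paragraph is not part of the data at all. Because $\mathcal{B}(G,\mu)^\diamond=BG\times B\Gm$ (pulled back to $\mathcal{O}^{\mathrm{syn}}$ along the Breuil-Kisin map), a map $C^{\mathrm{syn}}\otimes\Int/p^n\Int\to\mathcal{B}(G,\mu)^\diamond$ over $R^{\mathrm{syn}}\otimes\Int/p^n\Int$ is literally a $G$-torsor and nothing more; the associated $G\{\mu\}$-torsor $\mathcal{Q}^\mu$ is then recovered \emph{canonically} from $\mathcal{Q}$ and the Breuil-Kisin twist via~\eqref{subsubsec:BGmu_equivalence}, with no further choice. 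Your ``Concretely'' paragraph gets this right, but the opening paragraph and the ``main obstacle'' paragraph suggest to the reader that there is a gerbe-type datum to reconcile, which is not the case with the $BG\times B\Gm$ formulation used by the paper (this issue would arise with Drinfeld's $BG\{\mu\}$ formulation discussed in Remark~\ref{rem:drinfeld_definition}, but that is not what you are unwinding). Second, the claim that ``$\mu_d$ is central in $\GL_h$'' is simply false for $0<d<h$: $\mu_d$ is minuscule but not central, and centrality plays no role in the argument. Removing those two sentences (and the first-paragraph remark they are meant to address) leaves a clean, correct proof.
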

\begin{proof}
One can see this by combining Lemma~\ref{lem:quotient_stack_desc} with Propositions~\ref{prop:abstract_f-gauge_gl_n} and ~\ref{prop:descent}. 
\end{proof}

\begin{remark}
\label{rem:cartier_duality}
Via the above proposition and Lemma~\ref{lem:cartier_duality}, we find that there is a Cartier duality equivalence
\[
*:\BT[\GL_h,\mu_d]{n}\xrightarrow{\simeq}\BT[\GL_h,\mu_{h-d}]{n}.
\]
\end{remark}

We now have:
\begin{theorem}
\label{thm:BT_to_BT}
There is a canonical equivalence of smooth $p$-adic formal Artin stacks 
\[
\mathcal{G}:\BT[\GL_h,\mu_d]{n}\xrightarrow{\simeq}\mathrm{BT}^{h,d}_n
\]
such that the following diagram commutes up to canonical isomorphism:
\[
\begin{diagram}
\BT[\GL_h,\mu_d]{n}&\rTo^{\mathcal{G}}_\simeq&\mathrm{BT}^{h,d}_n\\
\dTo^*&&\dTo_{G\mapsto G^*}\\
\BT[\GL_h,\mu_{h-d}]{n}&\rTo^\simeq_{\mathcal{G}}&\mathrm{BT}^{h,h-d}_n.
\end{diagram}
\]
\end{theorem}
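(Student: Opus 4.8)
The plan is to construct $\mathcal{G}$ as the stack-theoretic shadow of the syntomic cohomology functor, to produce a candidate inverse via prismatic Dieudonn\'e theory, and to check mutual inverseness by quasisyntomic descent followed by a reduction to complete discrete valuation rings in characteristic $p$. First I would compose the equivalence of Proposition~\ref{prop:vect_to_BT} with the functor $\mathcal{M}\mapsto\mathcal{G}_n(\mathcal{M})$ of Theorem~\ref{introthm:vect_sections_repble}; since $\mathcal{G}_n(\mathcal{M})$ is representable by a truncated Barsotti--Tate group scheme over the base, this produces a morphism of $p$-adic formal prestacks $\BT[\GL_h,\mu_d]{n}\to\mathrm{BT}_n$, which is a morphism of formal Artin stacks by Theorems~\ref{thm:main_thm_body} and~\ref{thm:grothendieck}. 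To see it factors through $\mathrm{BT}^{h,d}_n$, observe that height and dimension are locally constant, so it suffices to evaluate on geometric points $\Spec\kappa$, $\kappa$ algebraically closed of characteristic $p$: there $\mathcal{G}_1$ of a rank-$h$ vector bundle $F$-gauge with $\gr^{-1}_{\mathrm{Hdg}}$ of rank $d$ is computed by classical Dieudonn\'e theory and has height $h$ and dimension $d$ (equivalently, one uses compatibility of $\mathcal{G}_n$ with the de Rham realization, under which $\gr^{-1}_{\mathrm{Hdg}}$ corresponds to the sheaf of invariant differentials of $\mathcal{G}_n(\mathcal{M})$).

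Next I would build the candidate inverse $\mathcal{M}\colon\mathrm{BT}^{h,d}_n\to\BT[\GL_h,\mu_d]{n}$. Both stacks satisfy descent along $p$-quasisyntomic covers---the target by Proposition~\ref{prop:descent}, the source by Grothendieck's smoothness (Theorem~\ref{thm:grothendieck})---so it is enough to define $\mathcal{M}$ functorially on $\Spf R$ with $R$ quasiregular semiperfectoid and then descend. For such $R$, Proposition~\ref{prop:semiperfect_f-gauges} together with \S\ref{subsec:alb} identifies $\mathrm{Vect}_{h,d}(R^{\mathrm{syn}}\otimes\Int/p^n\Int)$ with the category of rank-$h$ admissible $\underline{\Prism}_R$-Dieudonn\'e modules of type $\mu_d$; I would then take $\mathcal{M}_R$ to be the prismatic Dieudonn\'e functor of Ansch\"utz--Le Bras (in Mondal's reformulation), which is compatible with base change and restricts to an equivalence out of the category of $n$-truncated Barsotti--Tate groups of height $h$ and dimension $d$. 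Because that functor intertwines Cartier duality with $\mathcal{N}\mapsto\mathcal{N}^\vee\{1\}$, this also delivers the Cartier-duality compatibility of $\mathcal{M}$ on quasiregular semiperfectoid inputs, which, together with Lemma~\ref{lem:cartier_duality} and Remark~\ref{rem:cartier_duality}, will yield the commuting square in the statement once $\mathcal{G}$ and $\mathcal{M}$ are shown mutually inverse.

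The heart of the argument is to show $\mathcal{G}\circ\mathcal{M}\simeq\mathrm{id}$ and $\mathcal{M}\circ\mathcal{G}\simeq\mathrm{id}$, i.e. that on quasiregular semiperfectoid inputs $\mathcal{G}_n$ coincides with the inverse of the Ansch\"utz--Le Bras equivalence. There is a canonical comparison morphism between $\mathcal{G}_n(\mathcal{M}(\mathcal{H}))$ and $\mathcal{H}$: the cup product on syntomic cohomology gives a pairing $\mathcal{G}_n(\mathcal{M}(\mathcal{H}))\times\mathcal{G}_n(\mathcal{M}(\mathcal{H})^\vee\{1\})\to\mathcal{G}_n(\mathcal{O}^{\mathrm{syn}}_n\{1\})$, and the Bhatt--Lurie computation $\mathcal{G}_n(\mathcal{O}^{\mathrm{syn}}_n\{1\})\simeq\mup[p^n]$ together with the Ansch\"utz--Le Bras identification $\mathcal{M}(\mathcal{H})^\vee\{1\}\simeq\mathcal{M}(\mathcal{H}^*)$ turns this into a natural map of truncated Barsotti--Tate groups of height $h$ that anchors and normalizes the comparison. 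By quasisyntomic descent it suffices to check this comparison is an isomorphism after pullback to the universal family over $\mathrm{BT}^{h,d}_n$; as a map of locally free sheaves of equal rank over a $p$-adically complete base it is an isomorphism as soon as it is one modulo $p$, and since $\mathrm{BT}^{h,d}_n\otimes\Field_p$ is smooth (hence normal) of finite type over $\Field_p$, the locus of isomorphy---open because the relevant cokernel is coherent---may be tested on codimension-one points, i.e. on complete discrete valuation rings $\kappa\pow{t}$ with $\kappa$ perfect of characteristic $p$. (One may also phrase this lift-off-the-special-fibre step via the matching Grothendieck--Messing theories---classical for $\mathrm{BT}^{h,d}_n$, Theorem~\ref{introthm:groth_mess} for $\BT[\GL_h,\mu_d]{n}$---applied to the canonical nilpotent divided power thickening $R\to R/{}^{\mathbb{L}}p$, with a two-step variant when $p=2$.) Over $\kappa\pow{t}$, de Jong's classification of $p$-divisible groups by Dieudonn\'e $F$-crystals~\cite{dejong:formal_rigid} and the compatibility of the Ansch\"utz--Le Bras functor with crystalline Dieudonn\'e theory identify $\mathcal{G}_n$, the inverse of the Ansch\"utz--Le Bras equivalence, and de Jong's functor, so the comparison is an isomorphism there.

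I expect the main obstacle to be exactly this last identification over $\kappa\pow{t}$: reconciling the purely syntomic-cohomological description of $\mathcal{G}_n$ with the $F$-crystal description underlying de Jong's and Ansch\"utz--Le Bras's functors. The essential leverage is the anchoring isomorphism $\mathcal{G}_n(\mathcal{O}^{\mathrm{syn}}_n\{1\})\simeq\mup[p^n]$ together with the Cartier-duality compatibility, which pin down the comparison with no residual ambiguity, combined with the descent reductions that shrink the class of test rings to ones where the classical theory applies. The Cartier-duality square in the statement then follows formally from the duality compatibility of $\mathcal{G}_n$ established in the course of this argument, together with Lemma~\ref{lem:cartier_duality} and Remark~\ref{rem:cartier_duality}.
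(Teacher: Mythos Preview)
Your proposal has the right ingredients---the functor $\mathcal{G}$ via syntomic cohomology, the inverse $\mathcal{M}$ via Ansch\"utz--Le~Bras/Mondal, the Bhatt--Lurie computation $\mathcal{G}_n(\mathcal{O}_n\{1\})\simeq\mup[p^n]$, and de~Jong's results---but the organization differs from the paper's and your construction of the comparison map has a genuine gap.

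The paper does not attempt to verify $\mathcal{G}\circ\mathcal{M}\simeq\mathrm{id}$ and $\mathcal{M}\circ\mathcal{G}\simeq\mathrm{id}$ directly. Instead it first isolates and proves \emph{Cartier duality for $\mathcal{G}_n$} as a standalone result (Theorem~\ref{thm:cartier_duality}): the map $\mathcal{G}_n(\mathcal{M}^*)\to\mathcal{G}_n(\mathcal{M})^*$ is shown to be an isomorphism by passing to the limit $n\to\infty$, restricting to the universal deformation ring $R\simeq\kappa\pow{t_1,\ldots,t_{d(h-d)}}$, observing it is generically an isomorphism (by connectedness of $\BT[\GL_h,\mu_d]{\infty}$ and the trivial case $\mathcal{O}_n^{h-d}\oplus\mathcal{O}_n\{1\}^d$), and invoking de~Jong's extension theorem~\cite{de-Jong1998-ki}. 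With Cartier duality in hand, the pair $(\mathcal{G},\mathcal{M})$ is shown to be an adjunction whose unit $\mathrm{id}\to\mathcal{G}\circ\mathcal{M}$ is an isomorphism by a direct computation (Proposition~\ref{prop:alb_mondal}). The counit $\mathcal{M}\to\mathcal{M}(\mathcal{G}(\mathcal{M}))$ is then shown to be an isomorphism by a short formal trick: applying the unit to $\mathcal{M}^*$ and dualizing, one obtains a factorization $\mathcal{M}(\mathcal{G}(\mathcal{M})^*)^*\to\mathcal{M}\to\mathcal{M}(\mathcal{G}(\mathcal{M}))$ whose composite is the canonical Cartier-duality isomorphism of Proposition~\ref{prop:alb_mondal}(4); hence the second arrow is a split epimorphism between vector bundles of equal rank, so an isomorphism.

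The gap in your approach is the sentence ``the cup product on syntomic cohomology gives a pairing \ldots\ turns this into a natural map of truncated Barsotti--Tate groups of height $h$ that anchors and normalizes the comparison.'' A pairing $\mathcal{G}_n(\mathcal{M}(\mathcal{H}))\times\mathcal{G}_n(\mathcal{M}(\mathcal{H}^*))\to\mup[p^n]$ does not by itself produce a map $\mathcal{G}_n(\mathcal{M}(\mathcal{H}))\to\mathcal{H}$; you still need either the adjunction unit (which in the paper is what actually requires Cartier duality to set up, see the proof of Proposition~\ref{prop:alb_mondal}(2)) or a direct identification of $\mathcal{G}_n$ with the Ansch\"utz--Le~Bras inverse. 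Your final paragraph asserts precisely such an identification over $\kappa\pow{t}$ but does not prove it, and this is exactly where the work lies. The paper sidesteps this by reducing everything to the Cartier-duality map, which \emph{is} canonically defined from the cup product, and proving that single map is an isomorphism. Minor points: your ``map of locally free sheaves'' is the induced map of Hopf algebras, which is fine; and your codimension-one reduction is valid (the non-isomorphism locus is $V(\det)$, pure codimension~$1$ by Krull if nonempty) provided you also handle generic points, but the paper's route via the universal deformation ring and de~Jong's extension theorem is cleaner than descending to all height-one localizations.
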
  

Assuming this theorem, we can easily deduce Theorem~\ref{thm:dieudonne} via a standard argument.
\begin{proof}
[Proof of Theorem~\ref{thm:dieudonne}]
Theorem~\ref{thm:BT_to_BT}, combined with Proposition~\ref{prop:vect_to_BT}, gives us an isomorphism of $\infty$-groupoids
\[
\mathrm{Vect}_{\{0,1\}}(R^{\mathrm{syn}}\otimes\Int/p^n\Int)^{\simeq}\xrightarrow{\simeq}\mathrm{BT}_n(R)
\]
compatible with Cartier duality. To get an equivalence of $\infty$-categories, one now uses a graph construction: For $\mathcal{M}_1,\mathcal{M}_2$ in $\mathrm{Vect}_{\{0,1\}}(R^{\mathrm{syn}}\otimes\Int/p^n\Int)$, the space of maps $\mathcal{M}_1\to \mathcal{M}_2$ is equivalent to the space of isomorphisms $\mathcal{M}_1\oplus \mathcal{M}_2\xrightarrow{\simeq}\mathcal{M}_1\oplus \mathcal{M}_2$ that are `upper triangular' and project onto the identity endomorphisms of $\mathcal{M}_1$ and $\mathcal{M}_2$. A similar description holds for $\mathcal{G}_1$ and $\mathcal{G}_2$ in $\mathcal{BT}_n(R)$.
\end{proof} 

\begin{remark}
Combined with Remark~\ref{rem:adjoint_nilpotent_locus}, the proof above shows that we can recover Zink and Lau's classification of connected $p$-divisible groups (equivalently, $p$-divisible formal groups) by Witt vector displays~\cites{MR1827031,MR2983008} from our main theorem here.
\end{remark}

\subsection{From $F$-gauges to Barsotti-Tate groups}
\label{subsec:G_functor}

Suppose that we have $R\in \mathrm{CRing}^{p\text{-nilp}}$ and $\mathcal{M}$ in $\mathrm{Vect}_{\{0,1\}}(R^{\mathrm{syn}}\otimes\Int/p^n\Int)$. 

\subsubsection{}
Set $\mathcal{G}_n(\mathcal{M}) = \Gamma_{\mathrm{syn}}(\mathcal{M})$. Then by Theorem~\ref{thm:sections_1-bounded_representable} $\mathcal{G}_n(\mathcal{M})$ is locally finitely presented and \emph{quasi-smooth} over $R$ with cotangent complex given by $\Reg{\mathcal{G}_n(\mathcal{M})}\otimes_R\gr^{-1}_{\mathrm{Hdg}}M[1]$. More generally, for all $r\leq n$, set
\[
\mathcal{M}_r \defn \mathcal{M}\bigr\vert_{R^{\mathrm{syn}}\otimes\Int/p^{r}\Int}\;;\;\mathcal{G}_r(\mathcal{M}) = \Gamma_{\mathrm{syn}}(\mathcal{M}_r).
\]

\subsubsection{}
By tensoring $\mathcal{M}$ with the canonical short exact sequence
\[
0\to \Int/p^r\Int\xrightarrow{a\mapsto p^{n-r}a}\Int/p^n\Int\to \Int/p^{n-r}\Int\to 0,
\]
we obtain a fiber sequence in $\mathrm{Perf}(R^{\mathrm{syn}}\otimes\Int/p^n\Int)$: 
\[
\mathcal{M}_{r}\to \mathcal{M}\to \mathcal{M}_{n-r}.
\]
Taking derived global sections yields a fiber sequence
\begin{align}\label{eqn:rgamma_syn_sequence}
R\Gamma_{\mathrm{syn}}(\mathcal{M}_r)\to R\Gamma_{\mathrm{syn}}(\mathcal{M})\to R\Gamma_{\mathrm{syn}}(\mathcal{M}_{n-r})
\end{align}
of $\Mod{\Int/p^n\Int}$-valued quasisyntomic sheaves over $R$.

\begin{theorem}
\label{thm:g_functor}
Suppose that $R$ is discrete and that $\mathcal{M}$ is in $\mathrm{Vect}_{h,d}(R^{\mathrm{syn}}\otimes\Int/p^n\Int)$, then $\mathcal{G}_n(\mathcal{M})$ is a relative truncated Barsotti-Tate group scheme over $R$ of height $h$ and dimension $d$. In particular, for each $n\ge 1$, we have a canonical map of $p$-adic formal Artin stacks
\[
\mathcal{G}_n:\BT[\GL_h,\mu_d]{n}\to \mathrm{BT}^{h,d}_n.
\]
Moreover, if $r<n$, there is a canonical short exact sequence of truncated Barsotti-Tate group schemes
\[
0\to \mathcal{G}_{r}(\mathcal{M})\to \mathcal{G}_n(\mathcal{M})\to \mathcal{G}_{n-r}(\mathcal{M})\to 0
\]
obtained by taking the connective truncation of~\eqref{eqn:rgamma_syn_sequence}.
\end{theorem}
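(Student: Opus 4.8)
The plan is to verify that $\mathcal{G}_n(\mathcal{M}) = \Gamma_{\mathrm{syn}}(\mathcal{M})$ satisfies the three defining properties of an $n$-truncated Barsotti-Tate group scheme of height $h$ and dimension $d$, using the representability and deformation-theoretic results already established. First I would record that, by Theorem~\ref{thm:sections_1-bounded_representable}, $\mathcal{G}_n(\mathcal{M})$ is a locally finitely presented, quasi-smooth $p$-adic formal algebraic space over $R$ with cotangent complex $\Reg{\mathcal{G}_n(\mathcal{M})}\otimes_R\gr^{-1}_{\mathrm{Hdg}}M_n[1]$; in particular its fibers have Tor-amplitude reflecting a relative virtual dimension zero (the $\gr^0$ and $\gr^{-1}$ pieces cancel since $\mathcal{M}$ is a vector bundle of rank $h$ with $\gr^{-1}_{\mathrm{Hdg}}M$ of rank $d$ and $\gr^0_{\mathrm{Hdg}}M$ of rank $h-d$). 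Since $\mathcal{M}$ is annihilated by $p^n$, the functor $\mathcal{G}_n(\mathcal{M})$ is $p^n$-torsion, giving property (1). The exactness in the middle of $\mathcal{G}_n(\mathcal{M})\xrightarrow{p^{n-1}}\mathcal{G}_n(\mathcal{M})\xrightarrow{p}\mathcal{G}_n(\mathcal{M})$ should follow from the fiber sequence~\eqref{eqn:rgamma_syn_sequence} (with $r=1$ and $r=n-1$) together with the fact that $R\Gamma_{\mathrm{syn}}(\mathcal{M}_r)$ is $(-1)$-connective (which follows from Corollary~\ref{cor:1_bounded_level_1_repble} after bootstrapping up from level $1$ via Proposition~\ref{prop:bootstrapping_coeffs}), so that the long exact sequence of homotopy groups forces the relevant middle-exactness on $\pi_0$.

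The heart of the argument is to show that $\mathcal{G}_n(\mathcal{M})$ is finite flat over $R$ of the correct degree, and to verify the Frobenius-Verschiebung condition (3) in the case $n=1$. For flatness and finiteness, I would reduce to the case $n=1$ using the short exact sequence $0\to \mathcal{G}_1(\mathcal{M})\to \mathcal{G}_n(\mathcal{M})\to \mathcal{G}_{n-1}(\mathcal{M})\to 0$ and induction on $n$: a group scheme extension of a finite flat group scheme by a finite flat group scheme is finite flat, and the degree multiplies correctly. For $n=1$, one reduces further by quasisyntomic descent (Proposition~\ref{prop:f_gauges_descent}, Proposition~\ref{prop:descent}) to the case where $R$ is semiperfect, where Theorem~\ref{thm:semiperf_crys} and Proposition~\ref{prop:semiperfect_f-gauges} identify $\mathcal{M}_1$ with a $1$-bounded vector bundle $\underline{\Prism}_R$-gauge of level $1$; then the explicit description in~\S\ref{subsec:devissage_to_Fzips} via the $F$-zip stack, combined with Theorem~\ref{thm:devissage_to_fzips}(1) and Corollary~\ref{cor:fppf_cohomology}, exhibits $\mathcal{G}_1(\mathcal{M})$ as (the flat cohomology stack of) a finite flat commutative group scheme of height one: indeed the relevant $\mathsf{S}_{(N,\psi)}$ with $N = \gr^{-1}_{\mathrm{Hdg}}M$ locally free of rank $d$ is precisely the height-one group scheme $G(N,\psi)$ of Corollary~\ref{cor:fppf_cohomology}, which has order $p^d$ on $\ker F$. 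Tracking the $F$-zip data through this identification gives $\ker F = \im V$ and the dimension count $d$, and the height $h$ is read off from the rank of $\mathcal{M}$. The main obstacle I anticipate is precisely this semiperfect reduction step: one must check carefully that the connectivity hypotheses in Theorem~\ref{thm:devissage_to_fzips}(2) and Corollary~\ref{cor:1_bounded_level_1_repble}(3) are met so that $\mathcal{G}_1(\mathcal{M})$ is genuinely affine (not just a higher stack) over $R$ — this uses that for a $1$-bounded vector bundle $F$-gauge of Hodge-Tate weights $\{0,1\}$ the underlying perfect $F$-zip $\mathsf{M}$ has $\mathsf{M}^\vee$ connective, so $\Gamma_{\Fzip}(\mathsf{M})$ is a derived affine scheme and the $\mathsf{S}_0$-torsor structure preserves affineness — and one must match the group structure coming from the additive structure on $R\Gamma_{\mathrm{syn}}$ with the Cartier-dual description of $G(N,\psi)$.

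Finally, the last sentence of the theorem — the existence of the short exact sequence $0\to \mathcal{G}_r(\mathcal{M})\to \mathcal{G}_n(\mathcal{M})\to \mathcal{G}_{n-r}(\mathcal{M})\to 0$ — follows by taking $\tau^{\le 0}$ of the fiber sequence~\eqref{eqn:rgamma_syn_sequence}: once we know each $R\Gamma_{\mathrm{syn}}(\mathcal{M}_s)$ is $(-1)$-connective and its $\tau^{\le 0}$ is a finite flat group scheme, the long exact homotopy sequence degenerates to a short exact sequence of $\pi_0$'s (the connecting map $R\Gamma_{\mathrm{syn}}(\mathcal{M}_{n-r})\to R\Gamma_{\mathrm{syn}}(\mathcal{M}_r)[1]$ contributes nothing to $\pi_0$ since the target is $(-2)$-connective, wait — rather, $\pi_{-1}R\Gamma_{\mathrm{syn}}(\mathcal{M}_{n-r})$ is the obstruction, and surjectivity onto $\pi_0$ holds flat-locally which suffices for sheaves). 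Patching these together and invoking Theorem~\ref{thm:grothendieck} to identify the target with the algebraic stack $\mathrm{BT}^{h,d}_n$ yields the map $\mathcal{G}_n:\BT[\GL_h,\mu_d]{n}\to \mathrm{BT}^{h,d}_n$ of $p$-adic formal Artin stacks, completing the proof.
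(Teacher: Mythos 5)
Your proposal identifies the right ingredients — the $n=1$ case via the $F$-zip stack and Corollary~\ref{cor:fppf_cohomology}, the bootstrap to level $n$ through the fiber sequence~\eqref{eqn:rgamma_syn_sequence}, the use of Cartier duality, and the Frobenius--Verschiebung check — but there is a genuine gap in the inductive step that the paper's own proof closes with a device you never invoke.

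The problem is in the passage ``I would reduce to the case $n=1$ using the short exact sequence $0\to\mathcal{G}_1(\mathcal{M})\to\mathcal{G}_n(\mathcal{M})\to\mathcal{G}_{n-1}(\mathcal{M})\to 0$ and induction on $n$.'' That short exact sequence of finite flat group schemes is precisely the final claim of the theorem — and its existence presupposes that $\mathcal{G}_n(\mathcal{M})$ is a \emph{classical} finite flat group scheme, which is what you are trying to prove. What comes for free from taking $\tau^{\le 0}$ of~\eqref{eqn:rgamma_syn_sequence} is only an identification $\mathcal{G}_r(\mathcal{M})\xrightarrow{\simeq}\hker^{\mathrm{cn}}(\mathcal{G}_n(\mathcal{M})\to\mathcal{G}_{n-r}(\mathcal{M}))$ of \emph{derived} algebraic spaces. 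To pass from this to a genuine short exact sequence of group schemes, you need to know in advance that $\mathcal{G}_n(\mathcal{M})$ is flat over $R$ and classical; your ``surjectivity holds flat-locally'' remark does not resolve this, since the potential failure of surjectivity lives in $\pi_{-1}R\Gamma_{\mathrm{syn}}(\mathcal{M}_r)$, and before flatness is established you have no control over it. The paper resolves exactly this issue by first using the connective kernel identification to show, by induction on $r$, that $\mathcal{G}_n(\mathcal{M})(\kappa)$ is a \emph{finite set} for every algebraically closed field $\kappa$ over $R$, and then invoking Lemma~\ref{lem:classicality_criterion}: a quasi-smooth derived algebraic space of virtual codimension $0$ over $R$ is flat (hence classical) if and only if its geometric fibers are $0$-dimensional. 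Only after that does the ``applying the isomorphism twice'' argument produce $\im(p^{n-r})=\mathcal{G}_r(\mathcal{M})=\ker(p^r)$ and the torsor structure that yields finite flatness and the short exact sequence simultaneously. Without this classicality criterion your induction does not close.

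A smaller point: your verification of the $n=1$ Barsotti--Tate condition by ``tracking the $F$-zip data'' to get $\ker F=\im V$ is harder to carry out than you suggest. The paper instead observes (following Grothendieck--Messing) that to verify $\mathcal{G}_1(\mathcal{M})$ is truncated BT it suffices to lift $\mathcal{M}$ \'etale-locally to level $2$, which is available from the smooth surjectivity of $\BT{2}\to\BT{1}$ in Theorem~\ref{thm:main_thm_body}. The precise rank of $\ker F$ then comes from the explicit containment $\im V\subset G(\gr^{-1}_{\mathrm{Hdg}}M,\psi_{\mathsf{M}})\subset\ker F$ arising from the short exact sequence~\eqref{eqn:height_coheight_sequence}, combined with the already-established equality $\ker F=\im V$; this avoids any direct $F$-zip computation.
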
   

The proof will need a little bit of preparation. First, let us record the following simple flatness criterion.

\begin{lemma}
\label{lem:classicality_criterion}
Suppose that $Y$ is a finitely presented quasi-smooth derived algebraic space over $R\in \mathrm{CRing}$ of virtual codimension $0$. Set $S = \Spec R$. Then the following are equivalent:
\begin{enumerate}
   \item $Y$ is flat over $R$;
   \item $Y\otimes_R\pi_0(R)$ is flat and classicaly lci over $\pi_0(R)$;
   \item For every $x\in S(\kappa)$ with $\kappa$ algebraically closed, $x^*Y\to \Spec \kappa$ is flat and classically lci;
   \item For every $x\in S(\kappa)$ with $\kappa$ algebraically closed, $\pi_0((x^*Y)(\kappa))$ is a finite set.
\end{enumerate}
\end{lemma}
\begin{proof}
Since $Y$ is quasi-smooth of virtual codimension $0$, \'etale locally on $Y$ we can present it as a derived complete intersection subscheme of some affine space $\Aff^n_R$ over $R$ cut out as the derived zero locus of $n$ polynomials $f_1,\ldots,f_n$. If $R$ is discrete, then such a derived zero locus is flat over $R$ if and only if the animated ring $R[x_1,\ldots,x_n]/{}^{\mathbb{L}}(f_1,\ldots,f_n)$ has no higher homotopy groups. This is precisely equivalent to this ring being a classical lci algebra over $R$. Since flatness can be tested after derived base-change over the classical truncation, we see that (1) and (2) are equivalent.

If $R = \kappa$ is an algebraically closed field, then $\pi_0((x^*Y)(\kappa))$ is finite precisely when the classical truncation $Y_{\mathrm{cl}}$ is $0$-dimensional and of finite type over $\kappa$. We now note that $\kappa[x_1,\ldots,x_n]/(f_1,\ldots,f_n)$ is $0$-dimensional precisely when $f_1,\ldots,f_n$ form a regular sequence. This shows the equivalence of (3) and (4).

To see the equivalence of (1) and (3), we now only have to make the additional observation that $M\in \Mod{R}$ is flat over $R$ if and only if its derived base-change over every algebraically closed field over $R$ is flat.
\end{proof}

\subsubsection{}
\label{subsubsec:fzip_underlying_fgauge_h_d}
Let us suppose that $R$ is an $\Field_p$-algebra. Here, we can consider the $F$-zip $\bm{M}$ underlying $\mathcal{M}$. This is given via restriction along the map $R^{\Fzip}\to R^{\mathrm{syn}}\otimes\Int/p^n\Int$, and amounts to specifying:
\begin{enumerate}
   \item A locally free $R$-module $M$ equipped with direct summands $\Fil^0_{\mathrm{Hdg}}M\subset M$ and $\Fil_1^{\mathrm{conj}}M\subset M$ of codimension $d$ and $h-d$, respectively;
   \item Isomorphisms
   \[
\xi_1:\Fil_1^{\mathrm{conj}}M\xrightarrow{\simeq}\varphi^*(M/\Fil^0_{\mathrm{Hdg}}M)\;;\; \xi_0:\gr_0^{\mathrm{conj}}M = M/\Fil^{\mathrm{conj}}_1M\xrightarrow{\simeq}\varphi^*\Fil^0_{\mathrm{Hdg}}M.
\]
\end{enumerate}
Now, consider the functor 
\[
\mathsf{G}(\bm{M}) = \tau^{\leq 0}R\Gamma_{\Fzip}(\bm{M}):\mathrm{CRing}_{R/}\to \Mod[\mathrm{cn}]{\Field_p}
\]

Unwinding definitions, one finds
\begin{align*}
\mathsf{G}(\bm{M})(C) &=\{m\in C\otimes_R\Fil^0_{\mathrm{Hdg}}M:\;\xi_0(\overline{m}) = \varphi^*m \},
\end{align*}
where $M\xrightarrow{m\mapsto \overline{m}}\gr^{\mathrm{conj}}_0M$ is the natural quotient map. Viewing $m\mapsto \xi_0(\overline{m})$ as a map $\overline{\xi}_0:\Fil^0_{\mathrm{Hdg}}M\to \varphi^*\Fil^0_{\mathrm{Hdg}}M$, we see that we have
\[
\mathsf{G}(\bm{M}) = \ker(\mathbf{V}((\Fil^0_{\mathrm{Hdg}}M)^\vee)\xrightarrow{\overline{\xi}_0-F}\mathbf{V}((\varphi^*\Fil^0_{\mathrm{Hdg}}M)^\vee)),
\]
so that $\mathsf{G}(\bm{M}) $ is the Cartier dual to a height one finite flat group scheme over $R$ of rank $p^{h-d}$.

\begin{proof}
[Proof of Theorem~\ref{thm:g_functor}]
Assume for now that $R$ is an $\Field_p$-algebra. By Theorem~\ref{thm:devissage_to_fzips}, with input from Corollary~\ref{cor:fppf_cohomology}, we see that we have a canonical Cartesian square:
\[
   \begin{diagram}
      \mathcal{G}_1(\mathcal{M})&\rTo&\mathsf{G}(\bm{M})\\
      \dTo&&\dTo_0\\
      \mathsf{G}(\bm{M})&\rTo&BG(\gr^{-1}_{\mathrm{Hdg}}M,\psi_{\bm{M}}),
   \end{diagram}
\]
where $G(\gr^{-1}_{\mathrm{Hdg}}M,\psi_{\bm{M}})$ is a finite flat height $1$ group scheme of rank $p^d$.

This description shows that the map $\mathcal{G}_1(\mathcal{M})\to \mathsf{G}(\bm{M})$---or more precisely, its base-change along any section $\Spec C\to \mathsf{G}(\bm{M})$---is quasi-smooth of virtual codimension $0$, and satisfies condition (4) of Lemma~\ref{lem:classicality_criterion}. Therefore, we see that the map is quasi-finite and flat.

If $R$ is \emph{perfect}, then, by  Proposition~\ref{prop:f_gauges_descent}, $\mathrm{Perf}(R^{\mathrm{syn}}\otimes\Field_p)$ is equivalent to the $\infty$-category of perfect $\underline{W_1(R)}$-gauges, and this in turn is the same as the $\infty$-category of perfect $F$-zips over $R$. Therefore, the map $\mathcal{G}_1(\mathcal{M})(R)\to \mathsf{G}(\bm{M})(R)$ is an \emph{isomorphism}. We conclude therefore that $\mathcal{G}_1(\mathcal{M})$ is \emph{finite} flat over $\mathsf{G}(\bm{M})$, and that we actually have a short exact sequence of finite flat group schemes
\begin{align}\label{eqn:height_coheight_sequence}
0\to G(\gr^{-1}_{\mathrm{Hdg}}M,\psi_{\bm{M}})\to \mathcal{G}_1(\mathcal{M})\to \mathsf{G}(\bm{M})\to 0.
\end{align}
In particular, we see that $\mathcal{G}_1(\mathcal{M})$ is finite flat over $R$ of rank $p^h$. 

We will now drop the hypothesis that $R$ is an $\Field_p$-algebra. Taking connective truncations of~\eqref{eqn:rgamma_syn_sequence} shows that we have an isomorphism
\begin{align}
\label{eqn:short_exact_sequence_devissage}
\mathcal{G}_r(\mathcal{M})\xrightarrow{\simeq}\hker^{\mathrm{cn}}\left(\mathcal{G}_n(\mathcal{M})\to \mathcal{G}_{n-r}(\mathcal{M})\right)
\end{align}
of $\Mod[\mathrm{cn}]{\Int/p^n\Int}$-valued quasisyntomic sheaves over $R$. 

If $R = \kappa$ is an algebraically closed field, then this isomorphism, combined with a simple induction on $r$ shows that $\mathcal{G}_n(\mathcal{M})(\kappa)$ is a finite set. We have already observed that $\mathcal{G}_n(\mathcal{M})$ is always a finitely presented quasi-smooth derived scheme of virtual codimension $0$ over $R$. Therefore, applying Lemma~\ref{lem:classicality_criterion} to $Y = \mathcal{G}_n(\mathcal{M})$ shows that $\mathcal{G}_n(\mathcal{M})$ is quasi-finite and flat over $R$

Now, by considering the isomorphism~\eqref{eqn:short_exact_sequence_devissage} twice, first as given, and then again with $r$ replaced by $n-r$, we find that we have
\[
 \mathrm{im}\left(\mathcal{G}_n(\mathcal{M})\xrightarrow{p^{n-r}}\mathcal{G}_n(\mathcal{M})\right) = \mathcal{G}_r(\mathcal{M}) = \ker\left(\mathcal{G}_n(\mathcal{M})\xrightarrow{p^r}\mathcal{G}_n(\mathcal{M})\right).
\]
In particular, for every $r$, $\mathcal{G}_n(\mathcal{M})$ is a $\mathcal{G}_r(\mathcal{M})$-torsor over $\mathcal{G}_{n-r}(\mathcal{M})$. An inductive argument now shows that $\mathcal{G}_n(\mathcal{M})$ is a finite flat group scheme over $R$ that is also a flat $\Int/p^n\Int$-module. 

To finish, we must know that when $R$ is an $\Field_p$-algebra, $\mathcal{G}_1(\mathcal{M})$ is a $1$-truncated Barsotti-Tate group scheme of dimension $d$. Since any $\mathcal{M}\in \mathrm{Vect}_{h,d}(R^{\mathrm{syn}}\otimes\Field_p)(R)$ can be lifted \'etale locally to an object in $\mathrm{Vect}_{h,d}(R^{\mathrm{syn}}\otimes\Int/p^2\Int)(R)$, the previous paragraph implies that $\mathcal{G}_1(\mathcal{M})$ is indeed Barsotti-Tate. Moreover, since the Verschiebung on $\mathsf{G}(\bm{M})$ is identically zero (its Cartier dual is the Frobenius homomorphism for a height one group scheme), we have 
\[
\im V \subset G(\gr^{-1}_{\mathrm{Hdg}}M,\psi_{\bm{M}})\subset \ker F.
\]
Since $\im V = \ker F$ by the Barsotti-Tate condition, we conclude that $\mathcal{G}_1(\mathcal{M})$ has dimension $d$, as desired.
 \end{proof}

\subsection{Cartier duality}

Let $\mathcal{O}_n$ be the structure sheaf of $R^{\mathrm{syn}}\otimes\Int/p^n\Int$. We will have use for the following result, which is due to Bhatt-Lurie:
\begin{proposition}
\label{prop:the_case_of_mupn_bhatt_lurie}
We have canonical isomorphisms
\[
\mathcal{G}_n(\mathcal{O}_n)\xrightarrow{\simeq}\Int/p^n\Int\;;\;\mathcal{G}_n(\mathcal{O}_n\{1\})\xrightarrow{\simeq}\mup[p^n]
\]
in $\mathrm{BT}_n(R)$.
\end{proposition}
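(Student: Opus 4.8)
The plan is to reduce the statement, which concerns syntomic cohomology of the structure sheaf and the Breuil--Kisin twist on $R^{\mathrm{syn}}\otimes\Int/p^n\Int$, to the known computations of Bhatt--Lurie. First I would observe that both sides of each asserted isomorphism are quasisyntomic sheaves in $R$: the left sides by Theorem~\ref{thm:sections_1-bounded_representable} (the sections functor $\Gamma_{\mathrm{syn}}$ of a $1$-bounded $F$-gauge is representable and its formation commutes with the relevant base change, being defined pointwise), and $\Int/p^n\Int$ and $\mup[p^n]$ obviously. Hence by Corollary~\ref{cor:quasisyntomic_descent_general} (or Proposition~\ref{prop:descent}) it suffices to produce the isomorphisms when $R$ is semiperfectoid, and even functorially so that they glue. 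In fact, because $\mathcal{O}_n$ and $\mathcal{O}_n\{1\}$ are pulled back from $\Int_p^{\mathrm{syn}}\otimes\Int/p^n\Int$, it is cleanest to establish the isomorphisms once and for all over $\Int_p$ (i.e.\ for $R = \Int_p$, or after base change to $\Field_p$ via Proposition~\ref{prop:devissage} and Remark~\ref{rem:prismatization_of_Fp}) and then pull back along the structure map $R^{\mathrm{syn}}\to \Int_p^{\mathrm{syn}}$, invoking the base-change compatibility of $\mathcal{G}_n$ noted in Remark~\ref{introrem:syn_cohom}.

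For the first isomorphism, $\mathcal{G}_n(\mathcal{O}_n) = \tau^{\le 0}R\Gamma(R^{\mathrm{syn}}\otimes\Int/p^n\Int,\mathcal{O}_n)$, and the point is that $R\Gamma$ of the structure sheaf of the mod-$p^n$ syntomification computes mod-$p^n$ syntomic cohomology; in degree $0$ this is just $\Int/p^n\Int$ (constant), and the higher cohomology is killed by the truncation. Concretely, using the presentation~\eqref{eqn:sections_first_desc} together with Theorem~\ref{thm:semiperf_crys} for semiperfectoid $R$, one has $R\Gamma(R^{\mathrm{syn}}\otimes\Int/p^n\Int,\mathcal{O}_n)$ computed as the equalizer of the two maps from $\Fil^\bullet_{\mathcal{N}}\Prism_R$ to $\Prism_R$, whose $\tau^{\le 0}$ is seen to be $\Int/p^n\Int$ by a direct calculation (this is exactly the $F$-gauge $\mathbf{V}(\mathcal{O}_n)$ viewed as a $1$-bounded perfect $F$-gauge of level $n$ with Hodge-Tate weight $0$, and Corollary~\ref{cor:HT_wts_0} identifies its sections with lisse $\Int/p^n\Int$-sheaves, here the constant sheaf). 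This gives the canonical group-scheme isomorphism $\mathcal{G}_n(\mathcal{O}_n)\xrightarrow{\simeq}\Int/p^n\Int$, the group structure matching because $\mathcal{O}_n$ is the unit object of $\mathrm{QCoh}(R^{\mathrm{syn}}\otimes\Int/p^n\Int)$.

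For the second isomorphism, the key input is the Bhatt--Lurie computation of the syntomic cohomology of the Breuil--Kisin twist: $\tau^{\le 0}R\Gamma(R^{\mathrm{syn}}\otimes\Int/p^n\Int,\mathcal{O}_n\{1\})\simeq \mup[p^n]$. This rests on the identification of $R\Gamma(\Int_p^{\mathrm{syn}},\mathcal{O}^{\mathrm{syn}}\{1\})$ (and its mod-$p^n$ reductions) with the $p$-adic Tate module / $\mup[p^n]$ via the syntomic description of $p$-adic $K$-theory and the fact that $\mathcal{O}^{\mathrm{syn}}\{1\}$ is glued from the de Rham and Hodge--Tate copies of $\mathcal{O}^\Prism\{1\}$ along the isomorphism~\eqref{eqn:bk_delta_isom}; I would cite the relevant statement from~\cite{bhatt2022absolute} (the computation of $R\Gamma_{\mathrm{syn}}(\Int_p,\Int_p(1))$, equivalently of $\mathrm{Fil}^1_{\mathcal{N}}$-cohomology of the Breuil--Kisin twist) and from Remark~\ref{rem:nyg_filt_prismatization_of_Fp}. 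Since $\mathcal{O}_n\{1\}$ lies in $\mathrm{Vect}_{h,d}$ with $h=d=1$, Theorem~\ref{thm:g_functor} already tells us $\mathcal{G}_n(\mathcal{O}_n\{1\})$ is a truncated Barsotti--Tate group of height $1$ and dimension $1$; it remains only to pin down which one, and the syntomic cohomology computation identifies it with $\mup[p^n]$ compatibly with the $n$'s via the short exact sequences of Theorem~\ref{thm:g_functor}. I expect the main obstacle to be purely expository: assembling the precise mod-$p^n$ form of the Bhatt--Lurie Breuil--Kisin-twist cohomology computation (rather than its $p$-complete limit) and checking that the resulting identification with $\mup[p^n]$ is canonical and compatible with Cartier duality and with the transition maps $\BT{n+1}\to\BT{n}$ --- everything else is formal once representability and quasisyntomic descent are in hand.
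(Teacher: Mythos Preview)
Your proposal is correct and aligns with the paper's primary approach: both isomorphisms are deduced from the Bhatt--Lurie computations (Theorems 8.1.9 and 7.5.6 of \cite{bhatt2022absolute}, respectively), and your reductions via quasisyntomic descent and base change from $\Int_p$ are sound.

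There is one difference worth noting. For $\mathcal{O}_n$, you invoke Corollary~\ref{cor:HT_wts_0} (the Hodge--Tate weight~$0$ classification), which is valid and efficient; the paper instead argues directly that $\mathcal{G}_n(\mathcal{O}_n)$ is \'etale of height~$1$ and then pins it down via the structure map $\Int/p^n\Int\to\mathcal{O}_n$. For $\mathcal{O}_n\{1\}$, the paper additionally supplies a self-contained alternate proof that you do not attempt: after observing (as you do) that $\mathcal{G}_n(\mathcal{O}_n\{1\})$ is of multiplicative type, it constructs the comparison map explicitly on qrsp $\Field_p$-algebras via the logarithm $\alpha\mapsto \log([\tilde{\alpha}]^{p^n})\in A_{\mathrm{crys}}(C)$, and then verifies injectivity by a direct computation in $A_{\mathrm{crys}}(\Field_p[x^{1/p^\infty}]/(x))$. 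This buys independence from the black-box citation and makes the canonicity of the map transparent; your approach is shorter but leans entirely on the external reference.
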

\begin{proof}
Unwinding definitions, the first isomorphism follows from~\cite[Theorem 8.1.9]{bhatt2022absolute}, while the second follows from~\cite[Theorem 7.5.6]{bhatt2022absolute}

We can also give alternate proofs using the methods of this paper. As we already know from what we have seen above, $\mathcal{G}_n(\mathcal{O}_n)$ (resp.  $\mathcal{G}_n(\mathcal{O}_n\{1\})$) is an $n$-truncated Barsotti-Tate group schemes over $R$, height $1$ and dimension $0$ (resp. dimension $1$).

We can assume that $R = \Int/p^m\Int$ for some $m\ge 1$. Note that $\mathcal{G}_n(\mathcal{O}_n)$ is \'etale over $\Int/p^m\Int$, so it suffices to give a map $\Int/p^n\Int\to \mathcal{G}_n(\mathcal{O}_n)$ that is an isomorphism over $\overline{\Field}_p$. This is given by the structure map $\Int/p^n\Int\to \mathcal{O}_n$.

For the case of the Breuil-Kisin twist, note that $\mathcal{G}_n(\mathcal{O}_n\{1\})$ is of multiplicative type. Once again, it suffices to give a canonical map $\mup[p^n]\to \mathcal{G}_n(\mathcal{O}_n\{1\})$ that is an isomorphism over $\overline{\Field}_p$. In fact, since the scheme parameterizing such maps is finite \'etale over $\Int/p^m\Int$, we can assume that $m = 1$. In this case, by quasisyntomic descent, we only have to construct a canonical map 
\begin{align}\label{eqn:mupn_to_Acrys}
\mup[p^n](C)\to (\Fil^1\Prism_C/p^n)^{\varphi = p} = \mathcal{G}_n(\mathcal{O}_n\{1\})(C)
\end{align}
for qrsp $\Field_p$-algebras $C$. This is obtained using the isomorphism $\Prism_C\xrightarrow{\simeq}A_{\mathrm{crys}}(C)$ from~\eqref{eqn:semiperfect_prismatization}, and assigning to each $\alpha\in \mup[p^n](C)$ the image of the logarithm $\log([\tilde{\alpha}]^{p^n})\in A_{\mathrm{crys}}(C)$, where $\tilde{\alpha}\in C^\flat$ is a lift of $\alpha$ and $[\tilde{\alpha}]$ is its Teichm\"uller lift; see~\cite[\S 7.1]{bhatt2022absolute}.

To finish, it is enough to know that the map~\eqref{eqn:mupn_to_Acrys} is \emph{injective} for all qrsp $\Field_p$-algebras $C$. In fact, it suffices to verify that, when $C = \Field_p[x^{1/p^\infty}]/(x)$ and $n=1$, the element $\log([1-x])\in A_{\mathrm{crys}}(C)$ is not divisible by $p$. But one has (see displayed equation (37) in ~\cite[p. 168]{bhatt2022absolute}):
\[
\log([1-x]) \equiv -\sum_{d=1}^{p}\frac{[x]^d}{d}\pmod{p}.
\]
To see that the right hand side is non-zero, we only have to note that the element
\[
x + \frac{x^2}{2} + \ldots + \frac{x^{p-1}}{p-1} + (p-1)!\cdot x^{[p]}\in \Field_p\langle x\rangle
\]
in the standard divided power $\Field_p$-algebra is non-zero.

\end{proof}

\begin{construction}
For every vector bundle $\mathcal{M}$ over $R^{\mathrm{syn}}\otimes\Int/p^n\Int$ with Hodge-Tate weights $0,1$, we will now define a canonical map
\begin{align}
\label{eqn:cartier_dual_arrow}
\mathcal{G}_n(\mathcal{M}^*)\to \mathcal{G}_n(\mathcal{M})^*.
\end{align} 

This is obtained as follows: For every $x:R\to C$, we have
\begin{align*}
\mathcal{G}_n(\mathcal{M}^*)(C)&\simeq \Map_{\mathrm{QCoh}(C^{\mathrm{syn}}\otimes\Int/p^n\Int)}(x^*\mathcal{M}, \mathcal{O}_n\{1\}),
\end{align*}
and evaluation on global sections now yields a map
\[
\mathcal{G}_n(\mathcal{M}^*)(C)\to \Hom(\mathcal{G}_n(x^*\mathcal{M}),\mathcal{G}_n(\mathcal{O}_n\{1\}))\simeq \Hom(x^*\mathcal{G}_n(\mathcal{M}),\mup[p^n])\simeq \mathcal{G}_n(\mathcal{M})^*(C).
\]
Here, we have used Proposition~\ref{prop:the_case_of_mupn_bhatt_lurie} for the penultimate isomorphism.
\end{construction}

\begin{theorem}
\label{thm:cartier_duality}
The map~\eqref{eqn:cartier_dual_arrow} is an isomorphism.
\end{theorem}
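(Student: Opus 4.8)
\textbf{Proof strategy for Theorem~\ref{thm:cartier_duality}.}
The plan is to reduce to cases where both sides can be computed explicitly, exploiting the fact that the map~\eqref{eqn:cartier_dual_arrow} is a morphism of finite flat group schemes of the same rank $p^h$ (by Theorem~\ref{thm:g_functor}, using that $\mathcal{M}^*$ also lies in $\mathrm{Vect}_{h,h-d}$ by Lemma~\ref{lem:cartier_duality} and the dimension count there). Since a morphism of finite flat group schemes of equal (constant) degree over a base $R$ is an isomorphism as soon as it is an isomorphism after base change to every geometric point, and since both $\mathcal{G}_n(\mathcal{M}^*)$ and $\mathcal{G}_n(\mathcal{M})^*$ are compatible with arbitrary base change in $R$, the assertion is local in a strong sense: it suffices to treat $R = \overline{\Field}_p$, or indeed $R$ any algebraically closed field of characteristic $p$. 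First I would make this reduction precise, noting that the formation of $\mathcal{G}_n(-)$ commutes with base change (Remark~\ref{introrem:syn_cohom}) and that Cartier duality of finite flat group schemes does as well.

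Next I would handle the case $n = 1$ first, then bootstrap. For $n = 1$ over an algebraically closed field $\kappa$, one has the two short exact sequences from~\eqref{eqn:height_coheight_sequence}: $\mathcal{G}_1(\mathcal{M})$ sits as an extension of the connected-étale type pieces $\mathsf{G}(\mathsf{M})$ (Cartier dual to a height-one group scheme, hence of multiplicative-ish type) by $G(\gr^{-1}_{\mathrm{Hdg}}M,\psi_{\mathsf{M}})$ (a height-one group scheme, hence infinitesimal). Applying this decomposition to both $\mathcal{M}$ and $\mathcal{M}^*$ and using that Cartier duality interchanges the roles of $F$ and $V$, one checks that~\eqref{eqn:cartier_dual_arrow} is compatible with these two-step filtrations and induces on the graded pieces the canonical duality isomorphisms between height-one group schemes $G(N,\psi)^\vee$ and the group scheme attached to the ``dual'' $\varphi$-module — this is the classical Artin--Milne / Berthelot--Breen--Messing duality for height-one group schemes, which I would cite via Corollary~\ref{cor:fppf_cohomology} and the explicit description of $G(N,\psi)^\vee$ given just before Theorem~\ref{thm:fppf_cohomology}. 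Since the map is an isomorphism on both graded pieces, it is an isomorphism.

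For general $n$, I would use the dévissage already set up in Theorem~\ref{thm:g_functor}: the short exact sequences
\[
0\to \mathcal{G}_r(\mathcal{M})\to \mathcal{G}_n(\mathcal{M})\to \mathcal{G}_{n-r}(\mathcal{M})\to 0
\]
are Cartier-dual (up to the usual reindexing) to the analogous sequences for $\mathcal{M}^*$, and~\eqref{eqn:cartier_dual_arrow} intertwines them; so by induction on $n$ one is reduced to $n = 1$. Alternatively, and perhaps more cleanly, one can argue by quasisyntomic descent: both sides are compatible with the descent along $R \to R_\infty$ of Corollary~\ref{cor:semiperf_qsynt}, so one reduces to $R$ semiperfectoid, where by Proposition~\ref{prop:semiperfect_f-gauges} an object of $\mathrm{Vect}_{\{0,1\}}(R^{\mathrm{syn}}\otimes\Int/p^n\Int)$ becomes an admissible $\underline{\Prism}_R$-Dieudonné module, the functor $\mathcal{G}_n$ becomes the explicit Ansch\"utz--Le Bras/Mondal functor, and the duality compatibility there is already known (or can be checked by hand on the level of Dieudonné modules using the pairing with $\mathcal{O}_n\{1\}$ and Proposition~\ref{prop:the_case_of_mupn_bhatt_lurie}).

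\textbf{Main obstacle.} The delicate point is the verification that~\eqref{eqn:cartier_dual_arrow} respects the relevant two-step filtration on $\mathcal{G}_1(\mathcal{M})$ and induces the \emph{correct} duality on graded pieces — i.e. genuinely matching up the Artin--Milne pairing for $G(N,\psi)$ with the pairing coming from global sections against $\mathcal{O}_1\{1\}$, with all Frobenius twists and signs accounted for. This is where one must be careful about the identification $\mathcal{G}_1(\mathcal{O}_1\{1\}) \simeq \mu_p$ of Proposition~\ref{prop:the_case_of_mupn_bhatt_lurie} and the compatibility of the $F$-zip-level duality (interchange of Hodge and conjugate filtrations under $\mathcal{M}\mapsto \mathcal{M}^\vee\{1\}$) with the group-scheme-level duality. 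Once that bookkeeping is done — which is essentially a formal consequence of the definitions of $\Gamma_{\mathrm{syn}}$, $\mathcal{G}_n$, and the Breuil--Kisin twist, combined with the self-duality of the syntomic complex computing $R\Gamma_{\mathrm{syn}}(\mathcal{O}_n\{1\})$ — the rest is routine.
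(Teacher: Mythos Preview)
Your approach is genuinely different from the paper's, and the gap you yourself flag as the ``main obstacle'' is real and not overcome.

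The paper does not attempt any direct verification of the duality map on graded pieces. Instead it argues as follows: the map is clearly an isomorphism for the split object $\mathcal{M}=\mathcal{O}_n^{h-d}\oplus\mathcal{O}_n\{1\}^d$ by Proposition~\ref{prop:the_case_of_mupn_bhatt_lurie}; one then views $\alpha_n$ as a map of truncated Barsotti--Tate groups over the smooth connected stack $\BT[\GL_h,\mu_d]{n}$, passes to the limit $\alpha_\infty$ over $\BT[\GL_h,\mu_d]{\infty}$, restricts to a universal deformation ring $R\simeq\kappa\pow{t_1,\dots,t_{d(h-d)}}$, algebraizes, and applies de~Jong's isogeny theorem for $p$-divisible groups over normal Noetherian bases to conclude that a map which is generically an isomorphism is globally one. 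This sidesteps entirely the bookkeeping you describe.

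Your filtration-by-$\ker F$ argument is plausible in outline---the map does preserve $\ker F$ since it is a group-scheme homomorphism---but the claim that the induced maps on graded pieces are the ``canonical duality isomorphisms'' is precisely the content of the theorem, not a formality. You would need to trace the syntomic pairing through the dévissage of Theorem~\ref{thm:devissage_to_fzips} and verify that it induces, on the height-one pieces, the Artin--Milne pairing. There is no ``self-duality of the syntomic complex computing $R\Gamma_{\mathrm{syn}}(\mathcal{O}_n\{1\})$'' available to invoke; the relevant identification is Proposition~\ref{prop:the_case_of_mupn_bhatt_lurie}, which required a genuine nonvanishing computation. The authors themselves note in Remark~\ref{rem:dejong_use} that a direct argument should be possible, but they chose the geometric route.

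Your alternative via quasisyntomic descent to the Ansch\"utz--Le~Bras/Mondal setting is circular in this paper's logic: the identification of $\mathcal{G}_n$ with the inverse of the Mondal functor $\mathcal{M}$ (claim~(3) of Proposition~\ref{prop:alb_mondal}) uses the adjunction (claim~(2)), whose proof explicitly invokes Theorem~\ref{thm:cartier_duality}. To make this route non-circular you would need an independent proof that $\mathcal{G}_n\circ\mathcal{M}\simeq\mathrm{id}$, which you have not supplied.
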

\begin{proof}
Let's begin with the following easy observation that is immediate from Proposition~\ref{prop:the_case_of_mupn_bhatt_lurie}: The map is an isomorphism for any $\mathcal{M}$ that is an extension of $\mathcal{O}^{h-d}_n$ by $\mathcal{O}_n\{1\}^d$.

Denote the map~\eqref{eqn:cartier_dual_arrow} by $\alpha_n$, and view it as a map of $n$-truncated Barsotti-Tate group schemes over the smooth $p$-adic formal algebraic stack $\BT[\GL_h,\mu_d]{n}$ of the same height and dimension. Taking the limit over $n$ gives us a map $\alpha_\infty$ of $p$-divisible groups over $\BT[\GL_h,\mu_d]{\infty}$. We will actually show that $\alpha_\infty$ is an isomorphism: this is enough since $\BT[\GL_h,\mu_d]{\infty}\to \BT[\GL_h,\mu_d]{n}$ is a limit of smooth surjective maps.

Let $R$ be a universal deformation ring for $\BT[\GL_h,\mu_d]{\infty}\otimes\Field_p$ at a geometric point valued in a field $\kappa$ (see Lemma~\ref{lem:discreteness_def_theory}): this is isomorphic to $\kappa\pow{t_1,\ldots,t_{d(h-d)}}$ and is in particular normal. It is enough to know that the restriction of $\alpha_\infty$ over $\Spf R$ is an isomorphism. By~\cite[Lemma 2.4.4]{dejong:formal_rigid}, this restriction algebraizes to a map of $p$-divisible groups over $\Spec R$. The observation from the beginning of the proof---combined with Lemma~\ref{lem:generically_ordinary} below---shows that this map is generically an isomorphism. Therefore, we find from~\cite[Corollary 1.2]{de-Jong1998-ki}---and the subsequent discussion---that it is an isomorphism on the nose. 
\end{proof}

\begin{lemma}
\label{lem:generically_ordinary}
Let $P^+_{\mu_d}\subset \GL_h$ be the opposite parabolic associated with $\mu_d$.\footnote{ Explicitly, it consists of the upper triangular invertible matrices with respect to the decomposition $\Int_p^h = \Int_p^d\oplus \Int_p^{h-d}$.} Then, for all $n\ge 1$:
\begin{enumerate}
   \item $\BT[P^+_{\mu_d},\mu_d]{n}\to \BT[\GL_h,\mu_d]{n}$ is a dense open immersion parameterizing precisely those $F$-gauges $\mathcal{M}$ that are \'etale locally on the base extensions of $\mathcal{O}^{h-d}_n$ by $\mathcal{O}_n\{1\}^{d}$. 
   \item The square
   \[
    \begin{diagram}
    \BT[P^+_{\mu_d},\mu_d]{n}&\rTo&\BT[\GL_h,\mu_d]{n}\\
    \dTo&&\dTo\\
    \BT[P^+_{\mu_d},\mu_d]{1}&\rTo&\BT[\GL_h,\mu_d]{1}
    \end{diagram}
   \]
   is Cartesian.
\end{enumerate}
\end{lemma} 
\begin{proof}
This is of course the counterpart of the well-known statement that a versal $n$-truncated Barsotti-Tate group scheme is generically ordinary. 

Given the connectedness of $\BT[\GL_h,\mu_d]{n}$ (see Remark~\ref{rem:connectedness}), the lemma is essentially immediate from Proposition~\ref{prop:special_open}. The only thing to be remarked upon is the explicit description of the image of $\BT[P^+_{\mu_d},\mu_d]{n}$, but this is immediate from the definitions.
\end{proof}

\begin{remark}
\label{rem:dejong_use}
The proof of Theorem~\ref{thm:cartier_duality} ultimately relies on the results of de Jong in~\cite{de-Jong1998-ki} and hence also on his results from~\cite{dejong:formal_rigid} in the form of a fully faithfulness result for the classical Dieudonn\'e functor of Berthelot-Breen-Messing~\cite{bbm:cris_ii} for complete DVRs in characteristic $p$. This use can be circumvented by a more careful study of the map $\alpha_n$. 
\end{remark}

\subsection{From Barsotti-Tate groups to $F$-gauges}
\label{subsec:M_functor}

\subsubsection{}
Let $R$ be $p$-complete, $p$-quasisyntomic and either $p$-torsion free or an $\Field_p$-algebra. These conditions ensure that there exist quasisyntomic covers $R\to R'$ with $R'$ qrsp, and also that $\Prism_{R'}$ is $p$-completely flat for such covers. In particular, the derived stacks $R^{\mathcal{N}}\otimes\Int/p^n\Int$ and $R^{\mathrm{syn}}\otimes\Int/p^n\Int$ are actually \emph{classical}.

Let
\[
\epsilon_n:(R^{\mathrm{syn}}\otimes\Int/p^n\Int)_{\mathrm{fl}}\to R_{\mathrm{qsyn}}
\]
be the map of (classical) topoi arising via the functor $C\mapsto C^{\mathrm{syn}}\otimes\Int/p^n\Int$ on $p$-quasisyntomic $R$-algebras. Here, the left hand side (resp. the right hand side) is the topos of sheaves on the ind-fppf site over $R^{\mathrm{syn}}\otimes\Int/p^n\Int$ (resp. the small $p$-quasisyntomic site over $\Spf R$).

We can view $\mathcal{O}_n$ as a sheaf of rings on $(R^{\mathrm{syn}}\otimes\Int/p^n\Int)_{\mathrm{fl}}$, and $\mathcal{O}_n\{1\}$ as a quasicoherent sheaf with respect to $\mathcal{O}_n$.

\subsubsection{}
For $G\in \mathrm{BT}_n(R)$, set
\[
\mathcal{M}(G) = \underline{\Hom}_{(R^{\mathrm{syn}}\otimes\Int/p^n\Int)_{\mathrm{fl}}}(\epsilon_n^{-1}G^*,\mathcal{O}_n\{1\})
\]
where on the right we are considering the internal Hom sheaf over $(R^{\mathrm{syn}}\otimes\Int/p^n\Int)_{\mathrm{fl}}$. Note that by construction $\mathcal{M}(G)$ is a module over $\mathcal{O}_n$.

To alleviate notation, we will now drop the subscript $n$ and write $\mathcal{G}$ for the functor $\mathcal{G}_n$. The next result can be found in a certain form in Ansch\"utz-Le Bras~\cite{MR4530092}, though this formulation (and most of the proof, though we give a different approach to Cartier duality) is due to Mondal~\cite{Mondal2024-cy}:
\begin{proposition}
\label{prop:alb_mondal}
\begin{enumerate}
   \item $\mathcal{M}(G)$ is a vector bundle over $\mathcal{O}_n$ and yields an $F$-gauge in $\mathrm{Vect}_{\{0,1\}}(R^{\mathrm{syn}}\otimes\Int/p^n\Int)$.
   \item The functors 
\begin{align*}
   \mathcal{M}:\mathrm{BT}_n(R)&\to \mathrm{Vect}_{\{0,1\}}(R^{\mathrm{syn}}\otimes\Int/p^n\Int) ;\\
  \mathcal{G}: \mathrm{Vect}_{\{0,1\}}(R^{\mathrm{syn}}\otimes\Int/p^n\Int)&\xrightarrow{\text{Theorem~\ref{thm:g_functor}}} \mathrm{BT}_n(R)
\end{align*}
    form an adjoint pair.
    \item The unit $\mathrm{id}\to \mathcal{G}\circ\mathcal{M}$ is an isomorphism.
    \item There is a natural isomorphism $\mathcal{M}(G^*)^*\to \mathcal{M}(G)$.
\end{enumerate}
\end{proposition}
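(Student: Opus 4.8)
The plan is to prove Proposition~\ref{prop:alb_mondal} by reducing to the quasiregular semiperfectoid (qrsp) case via quasisyntomic descent, and then invoking the work of Ansch\"utz--Le Bras~\cite{MR4530092} in its reformulation by Mondal~\cite{Mondal2024-cy}. For assertion (1), once $R$ is qrsp, Proposition~\ref{prop:semiperfect_f-gauges} identifies $\mathrm{QCoh}(R^{\mathrm{syn}}\otimes\Int/p^n\Int)$ with the category of $\underline{\Prism}_R$-gauges of level $n$, and under this identification $\mathcal{M}(G)$ becomes the prismatic Dieudonn\'e module of $G^*$ (up to Breuil--Kisin twist) -- this is precisely the computation in~\cite{MR4530092} and~\cite{Mondal2024-cy}, showing it lands in the subcategory of vector bundles of Hodge--Tate weights $\{0,1\}$ identified with admissible prismatic Dieudonn\'e modules in \S\ref{subsec:alb}. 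Vector-bundleness and the Hodge--Tate weight bound descend along the quasisyntomic cover $R\to R'$ since both $\mathrm{Vect}_{\{0,1\}}$ and the formation of $\mathcal{M}(G)$ satisfy quasisyntomic descent (the former by Proposition~\ref{prop:f_gauges_descent}, the latter because $\epsilon_n^{-1}$, internal Hom, and $\mathcal{O}_n\{1\}$ are all compatible with the descent).

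For assertion (2), the adjunction, I would first construct the counit and unit directly. Given $G\in \mathrm{BT}_n(R)$ and using $\mathcal{G}(\mathcal{M})(C)=\tau^{\le 0}R\Gamma(C^{\mathrm{syn}}\otimes\Int/p^n\Int,\mathcal{M}|_C)$, the natural map $\mathcal{G}(\mathcal{M}(G))(C)\to \mathcal{G}(\mathcal{M})(C)$ for a test map $f\colon \Spf C\to \Spf R$ is the map
\[
\tau^{\le 0}R\Gamma\bigl(C^{\mathrm{syn}}\otimes\Int/p^n\Int, \SHom(\epsilon_n^{-1}G^*,\mathcal{O}_n\{1\})\bigr)\to \SHom_{C_{\mathrm{qsyn}}}\bigl(G^*_C,\mathcal{G}(\mathcal{O}_n\{1\})\bigr)\simeq (G^*_C)^\vee\simeq G_C,
\]
where the last isomorphisms use Proposition~\ref{prop:the_case_of_mupn_bhatt_lurie}. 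This is obtained by adjunction between $\epsilon_n^{-1}$ and $R\epsilon_{n,*}$ together with the identity $\mathcal{G}(\mathcal{N})=\tau^{\le 0}R\epsilon_{n,*}\mathcal{N}$ for quasicoherent $\mathcal{N}$. Dually, for $\mathcal{M}\in \mathrm{Vect}_{\{0,1\}}$, one gets $\mathcal{M}\to \mathcal{M}(\mathcal{G}(\mathcal{M}))$ by pushing and pulling. I would then check the triangle identities; these are formal manipulations with the unit/counit of $(\epsilon_n^{-1},R\epsilon_{n,*})$ once one knows $\mathcal{G}(\mathcal{O}_n\{1\})\simeq \mu_{p^n}$ and that $R\epsilon_{n,*}$ of a vector bundle is connective in degree $0$ after the connective truncation -- the latter requires the vanishing $R^{>0}\epsilon_{n,*}\mathcal{M}$ for $1$-bounded vector bundle $F$-gauges, which follows from Corollary~\ref{cor:1_bounded_level_1_repble} (these stacks of sections are affine/flat, so there is no higher cohomology) combined with d\'evissage in $n$.

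For assertion (3), that the unit $\mathrm{id}\to \mathcal{G}\circ\mathcal{M}$ is an isomorphism of functors $\mathrm{BT}_n(R)\to \mathrm{BT}_n(R)$: by Theorem~\ref{thm:grothendieck} and Theorem~\ref{thm:BT_to_BT}, both $\mathrm{BT}_n$ and $\BT[\GL_h,\mu_d]{n}$ are smooth $p$-adic formal Artin stacks, so it suffices to check the unit is an isomorphism over geometric points valued in algebraically closed fields $\kappa$ of characteristic $p$, and in fact -- using smoothness and deformation theory -- it is enough to know it over one such point in each connected component together with compatibility with the cotangent complexes (or, more simply, one checks it is an isomorphism of finite flat group schemes fiberwise using the qrsp case). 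Over qrsp $\Field_p$-algebras, $\mathcal{G}\circ\mathcal{M}$ is identified with the composition of the Ansch\"utz--Le Bras prismatic Dieudonn\'e functor with its inverse as established in~\cite{MR4530092},~\cite{Mondal2024-cy}, hence is the identity; alternatively one invokes the crystalline Dieudonn\'e theory of~\cite{bbm:cris_ii} together with de Jong's full faithfulness over complete DVRs in characteristic $p$~\cite{dejong:formal_rigid} -- this is the route flagged in the introduction (item (2) of \S1.2). Finally, assertion (4), the isomorphism $\mathcal{M}(G^*)^*\to \mathcal{M}(G)$, is essentially formal from the definition: $\mathcal{M}(G^*)^* = \SHom(\epsilon_n^{-1}G^{**},\mathcal{O}_n\{1\})^\vee\{1\}$, and using $G^{**}\simeq G$ and biduality of vector bundles over $\mathcal{O}_n$, one unwinds this to $\SHom(\epsilon_n^{-1}G^*,\mathcal{O}_n\{1\})=\mathcal{M}(G)$; the only content is that $\mathcal{M}(G^*)$ is a vector bundle so that double-dualizing is harmless, which is part (1).

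The main obstacle I anticipate is assertion (3): proving the unit is an isomorphism genuinely requires importing a hard input -- either the full Ansch\"utz--Le Bras classification over quasisyntomic bases, or the combination of Berthelot--Breen--Messing crystalline Dieudonn\'e theory with de Jong's results over complete characteristic-$p$ DVRs. Everything else (the adjunction bookkeeping, Cartier duality, and the vector-bundle/Hodge--Tate-weight claims) is either formal or a descent argument, but the actual equivalence cannot be bootstrapped purely from the geometry developed in this paper and must be anchored at the qrsp level by a pre-existing Dieudonn\'e theory. A secondary technical point to be careful about is the interaction of the internal-Hom sheaf $\SHom(\epsilon_n^{-1}G^*,\mathcal{O}_n\{1\})$ with the non-quasicoherent fppf topos -- one needs that this agrees, after the identifications above, with the prismatic Dieudonn\'e module, which is where the computation of~\cite{Mondal2024-cy} is doing real work; I would cite this rather than reprove it.
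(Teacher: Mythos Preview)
Your treatment of (1) matches the paper's. For (2)--(4), however, you have the logical structure inverted, and this produces a genuine gap in (4).

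The paper proves (2) not by constructing unit/counit and checking triangle identities, but by directly exhibiting the Hom-set bijection
\[
\Hom_{\mathcal{O}_n}(\mathcal{M},\mathcal{M}(G))\simeq\cdots\simeq\Hom(G^*,\mathcal{G}(\mathcal{M}^*))\simeq\Hom(G^*,\mathcal{G}(\mathcal{M})^*)\simeq\Hom(\mathcal{G}(\mathcal{M}),G).
\]
The penultimate isomorphism is Theorem~\ref{thm:cartier_duality}, and \emph{that} is where the hard external input (de Jong) has already been spent, before this proposition. Your sketch of (2) never invokes Theorem~\ref{thm:cartier_duality}; in particular, your ``dually, one gets $\mathcal{M}\to\mathcal{M}(\mathcal{G}(\mathcal{M}))$ by pushing and pulling'' needs exactly $\mathcal{G}(\mathcal{M})^*\simeq\mathcal{G}(\mathcal{M}^*)$ to unwind---without it you are stuck trying to produce a map $\epsilon_n^{-1}\mathcal{G}(\mathcal{M})^*\to\mathcal{M}^*$, and the $(\epsilon_n^{-1},R\epsilon_{n,*})$ counit only gives $\epsilon_n^{-1}\mathcal{G}(\mathcal{M}^*)\to\mathcal{M}^*$.

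Once (2) is established this way, (3) is \emph{not} where the hard input lives: it is a two-line computation. Plugging $\mathcal{M}=\mathcal{O}_n$ into the adjunction and using $\mathcal{G}(\mathcal{O}_n)\simeq\Int/p^n\Int$ from Proposition~\ref{prop:the_case_of_mupn_bhatt_lurie} gives $\mathcal{G}(\mathcal{M}(G))(R)\simeq\Hom(\Int/p^n\Int,G)\simeq G(R)$. No further appeal to Ansch\"utz--Le~Bras or de~Jong is required here.

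Your argument for (4) has a gap: from $\mathcal{M}(G^*)^*=\underline{\Hom}(\epsilon_n^{-1}G,\mathcal{O}_n\{1\})^\vee\{1\}$, biduality of vector bundles alone does not get you to $\underline{\Hom}(\epsilon_n^{-1}G^*,\mathcal{O}_n\{1\})$; you would need a perfect pairing between $\underline{\Hom}(\epsilon_n^{-1}G,\mathcal{O}_n\{1\})$ and $\underline{\Hom}(\epsilon_n^{-1}G^*,\mathcal{O}_n\{1\})$ into $\mathcal{O}_n\{1\}$, and producing one and showing it is nondegenerate is the entire content. The paper instead manufactures the arrow in (4) from the chain (2)$\Rightarrow$Theorem~\ref{thm:cartier_duality}$\Rightarrow$(3), and then proves it is an isomorphism by a separate conservativity lemma (Lemma~\ref{lem:G_functor_conservative}): a map of perfect $F$-gauges with Hodge--Tate weights in $\{0,1\}$ is an isomorphism as soon as it and its Cartier dual induce equivalences after applying $\Gamma_{\mathrm{syn}}$.
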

\begin{proof}
Most of the proof that we present here can be found in~\cite[\S 3]{Mondal2024-cy}. 

For claim (1), via quasisyntomic descent we reduce to the case where $R$ is qrsp. Here, the result follows from~\cite[Props. 3.56, 3.80, 3.81]{Mondal2024-cy}.

For the second claim, given $G\in \mathcal{BT}_n(R)$ and $\mathcal{M}\in \mathrm{Vect}_{\{0,1\}}(R^{\mathrm{syn}}\otimes\Int/p^n\Int) $, we find canonical isomorphisms:
\begin{align*}
\Hom_{\mathcal{O}_n}(\mathcal{M},\mathcal{M}(G))&\simeq  \Hom_{\mathcal{O}_n}\left(\mathcal{M},\underline{\Hom}_{(R^{\mathrm{syn}}\otimes\Int/p^n\Int)_{\mathrm{fl}}}(\epsilon_n^{-1}G^*,\mathcal{O}_n\{1\})\right)\\
&\simeq\Hom_{(R^{\mathrm{syn}}\otimes\Int/p^n\Int)_{\mathrm{fl}}}\left(\epsilon_n^{-1}G^*,\underline{\Hom}_{\mathcal{O}_n}(\mathcal{M},\mathcal{O}_n\{1\})\right)\\
&\simeq\Hom_{R_{\mathrm{qsyn}}}\left(G^*,\epsilon_{n,*}\underline{\Hom}_{\mathcal{O}_n}(\mathcal{O}_n,\mathcal{M}^*)\right)\\
&\simeq\Hom_{R_{\mathrm{qsyn}}}\left(G^*,\mathcal{G}(\mathcal{M}^*)\right)\\
&\simeq\Hom_{\mathcal{BT}_n(R)}(G^*,\mathcal{G}(\mathcal{M})^*) \\
&\simeq\Hom_{\mathcal{BT}_n(R)}(\mathcal{G}(\mathcal{M}),G).
 \end{align*}
Here, in the penultimate isomorphism, we have used Theorem~\ref{thm:cartier_duality}.

For claim (3), suppose that we are given $G\in \mathrm{BT}_n(R)$. We then find:
\begin{align*}
\mathcal{G}(\mathcal{M}(G))(R)&\simeq \Hom_{\mathcal{O}_n}(\mathcal{O}_n,\mathcal{M}(G))\\
&\simeq\Hom_{\mathcal{BT}_n(R)}(\mathcal{G}(\mathcal{O}_n),G)\\
&\simeq\Hom_{\mathcal{BT}_n(R)}(\Int/p^n\Int,G)\\
&\simeq G(R).
\end{align*}
Here, in the penultimate isomorphism, we have used Proposition~\ref{prop:the_case_of_mupn_bhatt_lurie}. Since this isomorphism is valid with $R$ replaced by any $p$-quasisyntomic $R$-algebra, claim (3) has been verified.

Finally, let us consider claim (4): We have
\begin{align*}
\Hom_{\mathcal{O}_n}(\mathcal{M}(G^*)^*,\mathcal{M}(G))&\simeq \Hom_{\mathcal{BT}_n(R)}(\mathcal{G}(\mathcal{M}(G^*)^*),G)\\
&\simeq \Hom_{\mathcal{BT}_n(R)}(\mathcal{G}(\mathcal{M}(G^*))^*,G)\\
&\simeq \Hom_{\mathcal{BT}_n(R)}((G^*)^*,G)\\
&\simeq \Hom_{\mathcal{BT}_n(R)}(G,G).
\end{align*}
Here, the first isomorphism uses claim (2), the second uses Theorem~\ref{thm:cartier_duality} and the third uses claim (3). The identity endomorphism of $G$ corresponds via these isomorphisms to the canonical arrow involved in claim (4).

That this arrow is an isomorphism is a consequence of the next lemma, applied with $\mathcal{M}_1 = \mathcal{M}(G^*)^*$ and $\mathcal{M}_2 = \mathcal{M}(G)$:
\begin{lemma}
\label{lem:G_functor_conservative}
Suppose that $\mathcal{M}_1$ and $\mathcal{M}_2$ are two perfect $F$-gauges of level $n$ over $R$ with Hodge-Tate weights $0,1$. Set $\mathcal{N}^* = \mathcal{N}^\vee\{1\}$ for any perfect $F$-gauge $\mathcal{N}$: this underlies an anti-involution on the $\infty$-category of perfect $F$-gauges of Hodge-Tate weights $0,1$. Suppose that $f:\mathcal{M}_1\to \mathcal{M}_2$ is an arrow such that 
\[
\Gamma_{\mathrm{syn}}(f):\Gamma_{\mathrm{syn}}(\mathcal{M}_1)\to \Gamma_{\mathrm{syn}}(\mathcal{M}_2)\;;\;\Gamma_{\mathrm{syn}}(f^*):\Gamma_{\mathrm{syn}}(\mathcal{M}^*_2)\to \Gamma_{\mathrm{syn}}(\mathcal{M}^*_1)
\]
are equivalences of derived stacks over $R$. Then $f$ is an isomorphism. 
\end{lemma} 
\begin{proof}
Set $\mathcal{N} = \hker(f)$; then we see that $\Gamma_{\mathrm{syn}}(\mathcal{N})(C) = 0$ for all $C\in \mathrm{CRing}_{R/}$. Similarly, $\Gamma_{\mathrm{syn}}(\mathcal{N}^*[-1])(C) = 0$ for all $C\in \mathrm{CRing}_{R/}$. Theorem~\ref{thm:sections_1-bounded_representable} now tells us that, if $\Fil^\bullet_{\mathrm{Hdg}}N$ and $\Fil^\bullet_{\mathrm{Hdg}}N^*$ are the filtered perfect complexes over $R$ obtained from $\mathcal{N}$ and $\mathcal{N}^*$ via pullback along $x^{\mathcal{N}}_{\dR}$, then  we have
\[
\gr^{-1}_{\mathrm{Hdg}}N \simeq 0\simeq \gr^{-1}_{\mathrm{Hdg}}N^*\simeq(\Fil^0_{\mathrm{Hdg}}N)^\vee.
\]

Since $\mathcal{N}$ has Hodge-Tate weights $0,1$, this shows that $\gr^i_{\mathrm{Hdg}}N \simeq 0$ for all $i$, and hence that $\Fil^\bullet_{\mathrm{Hdg}}N\simeq 0$. This implies that $\mathcal{N}\simeq 0$: To see this, we can assume that $R$ is semiperfectoid, in which case it follows from the observation that the map 
\[
\pi_0(\Prism_R/{}^{\mathbb{L}}(p,I))\to \pi_0(R/{}^{\mathbb{L}}(p,I))
\]
has nilpotent kernel; see for instance the end of the proof of Proposition~\ref{prop:1_bounded_cartesian}.
\end{proof}

\end{proof}

We are now ready to prove Theorem~\ref{thm:BT_to_BT}
\begin{proof}
[Proof of Theorem~\ref{thm:BT_to_BT}]
We first note that Proposition~\ref{prop:alb_mondal} gives us a left inverse $\mathcal{M}:\mathrm{BT}_n^{h,d}\to \BT[\GL_h,\mu_d]{n}$ to the map $\mathcal{G}$ from Theorem~\ref{thm:g_functor}. Indeed, by Theorem~\ref{thm:grothendieck}, $\mathrm{BT}_n^{h,d}$ is smooth, and therefore is a left Kan extension of its restriction to $p$-completely smooth $\Int_p$-algebras. This means that, to obtain the map $\mathcal{M}$ and verify that it is a left inverse to $\mathcal{G}$, it suffices to do so on such inputs, where it follows from the proposition.

From the same proposition, we find, for all $\mathcal{M}\in \BT[\GL_h,\mu_d]{n}(R)$, a canonical map of $F$-gauges $\mathcal{M}\to \mathcal{M}(\mathcal{G}(\mathcal{M}))$. To finish the proof of the theorem, we have to verify that this map is an isomorphism. 

For this, we can assume without loss of generality that $R$ is $p$-quasisyntomic. Now, we begin by observing that we also have a corresponding canonical map of Cartier dual $F$-gauges 
\[
\mathcal{M}^*\to \mathcal{M}(\mathcal{G}(\mathcal{M}^*))\simeq \mathcal{M}(\mathcal{G}(\mathcal{M})^*), 
\]
where the last isomorphism using Theorem~\ref{thm:cartier_duality}. Taking Cartier duals again yields a map
\[
\mathcal{M}(\mathcal{G}(\mathcal{M})^*)^* \to \mathcal{M}.
\]
and the composition
\[
\mathcal{M}(\mathcal{G}(\mathcal{M})^*)^*\to \mathcal{M}\to \mathcal{M}(\mathcal{G}(\mathcal{M}))
\]
is the canonical isomorphism in claim (4) of Proposition~\ref{prop:alb_mondal} applied with $G = \mathcal{G}(\mathcal{M})$. Alternatively, instead of using Cartier duality in this form, one can argue directly using Lemma~\ref{lem:G_functor_conservative}.

This shows that $\mathcal{M}\to \mathcal{M}(\mathcal{G}(\mathcal{M}))$ is an epimorphism, and since it is a map of vector bundles of the same rank over $R^{\mathrm{syn}}\otimes\Int/p^n\Int$, we conclude that it is in fact an isomorphism. 
\end{proof}

\begin{remark}
\label{rem:mondal_not_needed}
We have attempted, within reasonable bounds, to minimize our reliance on existing classification results: In essence, the only serious input we need along these lines is a full faithfulness result for the Dieudonn\'e functor over a complete DVR in characteristic $p$ with algebraically closed residue field. This can be found in~\cite[\S 4.1]{Berthelot1990-gp} and is used (indirectly) in the proof of Theorem~\ref{thm:cartier_duality}.

It is possible to streamline the proof quite a bit by making stronger use of existing results. 

To begin with, we could of course directly appeal to the equivalence established for quasisyntomic inputs in~\cite{MR4530092}.

In fact, since we already have the map $\mathcal{G}$, to verify that it is an equivalence, it suffices to do so over the special fiber. In particular, logically speaking, we only need Proposition~\ref{prop:alb_mondal} in the case where $R$ is an $\Field_p$-algebra. In this case, the verification of claim (1) of that proposition only requires the computations from~\cite{bbm:cris_ii} of the \emph{crystalline} cohomology of abelian schemes in characteristic $p$. Formulating the proof this way would make it completely independent of the results of~\cite{MR4530092} or~\cite{Mondal2024-cy}, though it wouldn't shed much light on what the inverse functor is, away from characteristic $p$.

Alternatively, one could use Grothendieck-Messing theory on both sides of the purported equivalence---via a truncated refinement of Remark~\ref{rem:groth_mess_compat} below---to further reduce to checking that $\mathcal{G}$ is an equivalence on points valued in algebraically closed fields. This would reduce us to classical Dieudonn\'e theory over perfect fields as explicated, say, in~\cite{MR0498610}. However, here one would have to compare our construction of finite flat group schemes here with that used by Fontaine, which involves Witt covectors. 

In fact, one could even abbreviate even this process quite a bit by directly appealing to the general classification results of Lau~\cite{lau2018divided}. This would establish the equivalence \emph{without} using the smoothness of the stack of truncated Barsotti-Tate groups, and so would give another proof of Grothenieck's theorem.

Another strategy is to reduce to the case of power series rings and then invoke the results of de Jong~\cite{dejong:formal_rigid}.
\end{remark} 

\begin{remark}
\label{rem:groth_messing_faltings}
Combined with Theorem~\ref{introthm:groth_mess}, we find that $\BT[h,d]{n}$ also enjoys a Grothendieck-Messing type deformation theory. This should be a special case of a very general lifting result of Faltings~\cite[Theorem 17]{Faltings2002-mp}, but we have not verified the requisite compatibility with the constructions of Faltings.
\end{remark}

\subsection{A polarized variant}
\label{subsec:polarized}

In this subsection, $R$ will always denote a derived $p$-complete discrete ring in $\mathrm{CRing}_{\heartsuit}$. 

\begin{definition}
\label{defn:twist_by_loc_system}
Suppose that $G$ is a finite flat commutative $p^n$-torsion group scheme over $R$. Then, given a $\Int/p^n\Int$-local system $L$ over $\Spec R$, we will write $G\otimes L$ for the finite flat group scheme obtained by tensor product of fppf sheaves of abelian groups: that this is indeed such an object is clear from \'etale descent.
\end{definition}

Alternatively we can also view $G\otimes L$ as the internal Hom sheaf $\underline{\Hom}(L^\vee,G)$ in fppf sheaves of abelian groups. This perspective shows:
\begin{observation}
\label{obs:cartier_dual_twisting}
We have a canonical isomorphism $(G\otimes L)^* \simeq G^*\otimes L^\vee$, where $*$ denotes the Cartier dual as usual, and $L^\vee$ is the dual local system.
\end{observation}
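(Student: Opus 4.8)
\textbf{Proof plan for Observation~\ref{obs:cartier_dual_twisting}.}
The statement to prove is the canonical isomorphism $(G\otimes L)^*\simeq G^*\otimes L^\vee$, where all objects are viewed as fppf sheaves of abelian groups over $\Spec R$. The plan is to work entirely in the topos of fppf abelian sheaves on $\Spec R$ and exploit the fact that $\mup[p^n]$ is an internal injective cogenerator for the subcategory of $p^n$-torsion sheaves, so that Cartier duality $H\mapsto H^* = \underline{\Hom}(H,\mup[p^n])$ is an exact anti-equivalence on finite flat $p^n$-torsion group schemes (this is classical; see e.g.\ the functor $G\mapsto G^*$ used throughout \S\ref{sec:bt_classification}). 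First I would record the two descriptions of the twist: by definition (via tensor product of fppf sheaves) and, as noted just before the observation, via $G\otimes L\simeq \underline{\Hom}(L^\vee,G)$, the internal Hom of fppf abelian sheaves; the latter makes it transparent that $G\otimes L$ is again finite flat (one checks this \'etale-locally on $\Spec R$, where $L$ becomes trivial and $G\otimes L\simeq G^{\oplus r}$ with $r$ the rank of $L$, so it descends).

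The key computation is then a $\otimes$-$\underline{\Hom}$ adjunction in the fppf topos. One has, functorially in test algebras,
\[
(G\otimes L)^* = \underline{\Hom}(G\otimes L,\mup[p^n]) \simeq \underline{\Hom}\bigl(G,\underline{\Hom}(L,\mup[p^n])\bigr).
\]
Since $L$ is a $\Int/p^n\Int$-local system (an invertible, hence dualizable, $\underline{\Int/p^n\Int}$-module sheaf), there is a canonical isomorphism $\underline{\Hom}(L,\mup[p^n])\simeq \mup[p^n]\otimes_{\underline{\Int/p^n\Int}}L^\vee\simeq \mup[p^n]\otimes L^\vee$, the last tensor being over $\underline{\Int/p^n\Int}$ and agreeing with the tensor of abelian sheaves because $\mup[p^n]$ is already a $\underline{\Int/p^n\Int}$-module. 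Plugging this in and using the other description of the twist in the source variable,
\[
\underline{\Hom}\bigl(G,\mup[p^n]\otimes L^\vee\bigr) \simeq \underline{\Hom}(G,\mup[p^n])\otimes L^\vee = G^*\otimes L^\vee,
\]
where the middle isomorphism is again dualizability of the local system $L^\vee$ (pulling the projective rank-one local system past an internal Hom). Chaining these canonical isomorphisms gives the result, and one should check that the composite is independent of the intermediate choices and compatible with base change in $R$ — both follow because every arrow used is the canonical adjunction/dualizability map.

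The main obstacle, such as it is, is purely bookkeeping: verifying that the internal-$\Hom$/tensor adjunction and the "pull a dualizable object past $\underline{\Hom}$'' isomorphisms are legitimate in the fppf topos over a general $R\in\mathrm{CRing}^f_\heartsuit$ (not just a field), and that the resulting isomorphism is the \emph{canonical} one. This is handled by reducing to the \'etale-local situation where $L$ is trivial: all the maps in question are defined over the base, and trivializing $L$ \'etale-locally reduces each identity to the evident statement for $L=\underline{\Int/p^n\Int}^{\oplus r}$, after which fppf (in fact \'etale) descent glues the local isomorphisms to the asserted global one. No deep input is needed; the observation is essentially formal once one commits to the fppf-sheaf-theoretic viewpoint already in force in this section.
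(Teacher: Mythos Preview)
Your proposal is correct and matches the paper's approach: the paper does not give a detailed proof, but simply notes that the internal-Hom description $G\otimes L\simeq\underline{\Hom}(L^\vee,G)$ immediately yields the observation, which is exactly the tensor--Hom adjunction and dualizability manipulation you spell out. Your argument is a faithful (and more explicit) unpacking of the one-line justification the paper gives.
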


\begin{definition}
Fix an integer $g\ge 1$. A \defnword{principal quasi-polarization} on $G\in \BT[2g,g]{n}(R)$ is the provision of the following data: 
\begin{enumerate}
   \item A rank $1$ $\Int/p^n\Int$-local system $L$ over $\Spf R$;
   \item An isomorphism $\lambda:G\xrightarrow{\simeq}G^*\otimes L$ such that the composition
   \[
   G^*\xrightarrow{\simeq}(G\otimes L)^*\otimes L\xrightarrow{\lambda^*\otimes 1}G^*\otimes L
   \]
   is equal to $-\lambda$.
\end{enumerate}
We will call the tuple $(G,L,\lambda)$ a \defnword{principally quasi-polarized $n$-truncated Barsotti-Tate group} (of height $2g$), and denote the groupoid of such tuples by $\BT[2g,\mathrm{qpol}]{n}(R)$.

A \defnword{principal polarization on $G$} is a tuple $(G,L,\lambda)$ as above satisfying the following additional condition: \'Etale locally on $\Spf R$, there exists a lift $(\tilde{G},\tilde{L},\tilde{\lambda})$ to $\BT[2g,\mathrm{qpol}]{n+1}(R)$. We will call such tuples \defnword{principally polarized $n$-truncated Barsotti-Tate groups} and write $\BT[2g,\mathrm{ppol}]{n}(R)$ for the groupoid spanned by them.
\end{definition} 

\begin{remark}
\label{rem:different_polarizations}
In the literature---see for instance~\cite{MR2141704} or~\cite{Cais_Ellenberg_Zureick-Brown_2013}---one finds a definition of a principal quasi-polarization of $p$-divisible groups that is essentially the notion above (in the limit over $n$), except that $L$ is taken to be \emph{trivial}.

At finite level, we find a definition for the case $n=1$ in~\cite[\S 2.6]{Moonen2001-xm}, which uses Dieudonn\'e theory, and applies at the level of geometric points: it corresponds to our notion of a principal polarization given above. This is however a pointwise condition, and its moduli-theoretic interpretation is a little unclear. We have `solved' this issue above via the lifting-based condition, which is justified primarily by Proposition~\ref{prop:symplectic} below. See also Remark~\ref{rem:hopkins_lurie}.
\end{remark}

\subsubsection{}
Let $\GSp_{2g}$ be the generalized symplectic group over $\Int_p$ associated with the `standard' symplectic space over $\Int_p$ of rank $2g$, and let $\mu_g:\Gm\to \GSp_{2g}$ be the minuscule cocharacter splitting a Lagrangian subspace, and is such that the standard representation of $\GSp_{2g}$ yields a map of pairs $(\GSp_{2g},\mu_g)\to (\GL_{2g},\mu_g)$. Note that we have the similitude character $\nu:\GSp_{2g}\to \Gm=\GL_1$ satisfying $\nu\circ \mu_g = \mu_1$. This yields a map
\[
(\GSp_{2g},\mu_g)\to (\GL_g\times \Gm,\mu_g\times \mu_1).
\]

By Proposition~\ref{prop:lubin-tate} applied to the case $\mathcal{O} = \Int_p$, we see that $\BT[\Gm,\mu_1]{n}$ is the stack of line bundle $F$-gauges of the form $\mathcal{L}_0\{1\}$, where $\mathcal{L}_0$ is a line bundle $F$-gauge with Hodge-Tate weight $0$, associated with a $\Int/p^n\Int$-local system $L = \Gamma_{\mathrm{syn}}(\mathcal{L}_0)$ of rank $1$ over $\Spf R$. We will identify the stack of such local systems with the classifying stack $B\underline{(\Int/p^n\Int)^\times}$.

\begin{proposition}
\label{prop:symplectic}
There exists a canonical commuting diagram of $p$-adic formal classical Artin stacks over $\Int_p$ where the horizontal arrows are equivalences:
\[
\begin{diagram}
\BT[\GSp_{2g},\mu_g]{n}&\rTo^{\simeq}&\BT[2g,\mathrm{ppol}]{n}\\
\dTo&&\dTo_{(G,L,\lambda)\mapsto (G,L)}\\
\BT[\GL_{2g},\mu_g]{n}\times\BT[\Gm,\mu_1]{n}&\rTo_{\simeq}&\BT[2g,g]{n}\times B\underline{(\Int/p^n\Int)^\times}.
\end{diagram}
\]
\end{proposition}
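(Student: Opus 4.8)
The strategy is to leverage the general equivalence $\BT[G,\mu]{n}(R)\xrightarrow{\simeq}\Wind{n}{\underline{\Prism}_R}(R)$ of Lemma~\ref{lem:quotient_stack_desc} (together with quasisyntomic descent from Proposition~\ref{prop:descent}) to transport everything to the frame-theoretic side, where a $(\GSp_{2g},\mu_g)$-window over a prismatic frame $\underline{A}$ is literally a $\GSp_{2g}$-torsor over $\Rees(\Fil^\bullet_{\mathcal N}\Prism_R)\otimes\Int/p^n\Int$ with an identification of its pullbacks along $\sigma,\tau$ (pointwise trivial over $B\Gm$). Via the map of pairs $(\GSp_{2g},\mu_g)\to (\GL_{2g},\mu_g)$ and the similitude character $\nu:(\GSp_{2g},\mu_g)\to (\Gm,\mu_1)$, such a torsor produces a $(\GL_{2g},\mu_g)$-window---equivalently, via Propositions~\ref{prop:abstract_f-gauge_gl_n} and~\ref{prop:vect_to_BT}, a vector bundle $F$-gauge $\mathcal M$ of rank $2g$ and Hodge-Tate weights $\{0,1\}$ with $\gr^{-1}_{\mathrm{Hdg}}$ of rank $g$---together with the line bundle $F$-gauge $\mathcal L_0\{1\}$ attached to $\nu$, classifying a rank-$1$ $\Int/p^n\Int$-local system $L$ over $\Spf R$. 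The content of the standard linear-algebra description of $\GSp_{2g}$-torsors is that giving the original $(\GSp_{2g},\mu_g)$-window is equivalent to giving the $F$-gauge $\mathcal M$ together with an alternating isomorphism $\mathcal M\xrightarrow{\simeq}\mathcal M^\vee\otimes(\mathcal L_0\{1\})=\mathcal M^*\otimes\mathcal L_0$, where the $\mu_g$-compatibility forces the polarization pairing to match the Hodge filtration type (this is exactly the open/closed fixed-point condition built into the $1$-bounded stack $\mathcal B(\GSp_{2g},\mu_g)$, and the reduction of the $F$-zip structure group from $\GL_{2g}$ to $\GSp_{2g}$ along the map to $\Disp[\GSp_{2g},\mu_g]{1}$). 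This gives the top horizontal map as a functor $\BT[\GSp_{2g},\mu_g]{n}(R)\to\BT[2g,\mathrm{ppol}]{n}(R)$ once we apply $\mathcal G_n$: apply Theorem~\ref{thm:BT_to_BT} to $\mathcal M$ to get $G\in\BT[2g,g]{n}(R)$, apply Theorem~\ref{thm:cartier_duality} together with the computation $\mathcal G_n(\mathcal L_0\{1\})\simeq \mup[p^n]\otimes L$ (Proposition~\ref{prop:the_case_of_mupn_bhatt_lurie} twisted by $L$, cf.\ Definition~\ref{defn:twist_by_loc_system}) to turn the alternating isomorphism of $F$-gauges into an isomorphism $\lambda:G\xrightarrow{\simeq}G^*\otimes L$ with $\lambda^*=-\lambda$ in the sense of the definition. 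The left and bottom maps are already constructed (the bottom is the product of the equivalences of Theorem~\ref{thm:BT_to_BT} and Proposition~\ref{prop:lubin-tate}/Proposition~\ref{prop:trivial_mu}), and commutativity of the square is immediate from functoriality of $\mathcal G_n$ along the two forgetful maps of pairs.

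First I would nail down the frame-level statement: over a prismatic frame $\underline A$ equipped with a trivialization of $A\{1\}$, using Remark~\ref{rem:double_quotient_desc} and the explicit $L^+_{\underline A}G$-description, a $(\GSp_{2g},\mu_g)$-window is a pair $(\mathsf M,\xi)$ where $\mathsf M$ is a $(\GL_{2g},\mu_g)$-window carrying an alternating form valued in the $(\Gm,\mu_1)$-window $\nu_*\mathsf M$, compatible with $\sigma^*,\tau^*$ and $\xi$; this is the content of the exactness of $1\to \Sp_{2g}\to \GSp_{2g}\xrightarrow{\nu}\Gm\to 1$ (with $\mu_g$ lifting $\mu_1$) and the fact that over a strictly henselian local ring all such torsors trivialize, reducing to a computation with matrices. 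Then I would verify that passing from this pair to $(\mathcal M, \mathcal L_0\{1\}, \text{alternating iso})$ is an equivalence of groupoids, naturally in $R$ and compatible with $p$-quasisyntomic descent, which by Proposition~\ref{prop:descent} and Lemma~\ref{lem:f-semiperf_etale} reduces the whole equivalence $\BT[\GSp_{2g},\mu_g]{n}\xrightarrow{\simeq}\BT[2g,\mathrm{ppol}]{n}$ to the semiperfectoid case handled by the frame argument, \emph{provided} one knows the target $\BT[2g,\mathrm{ppol}]{n}$ is itself a quasisyntomic sheaf. Finally I would check that the essential image of $\BT[\GSp_{2g},\mu_g]{n}(R)\to\BT[2g,\mathrm{qpol}]{n}(R)$ is exactly the principally \emph{polarized} locus: the ``\'etale-locally liftable to level $n+1$'' condition in the definition of $\BT[2g,\mathrm{ppol}]{n}$ matches, under the equivalence, the condition that the $\GSp_{2g}$-aperture lifts mod $p^{n+1}$, which by the smoothness and surjectivity of $\BT[\GSp_{2g},\mu_g]{n+1}\to\BT[\GSp_{2g},\mu_g]{n}$ (Theorem~\ref{thm:main_thm_body}) holds \emph{always} for apertures, hence is automatic; conversely a principally quasi-polarized $G$ that does \emph{not} come from a $\GSp_{2g}$-aperture is precisely one whose polarization pairing has the wrong Hodge type at some point, and such a $G$ cannot lift to level $n+1$ compatibly with its polarization because the obstruction is detected on the (classical) $F$-zip, i.e.\ on $\Disp[\GSp_{2g},\mu_g]{1}$ versus $\Disp[\GL_{2g},\mu_g]{1}$.

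The main obstacle, I expect, is precisely this last matching of the ``principally polarized'' condition on the Barsotti-Tate side with the group-theoretic condition on the aperture side. Unlike the unpolarized case, where Theorem~\ref{thm:BT_to_BT} is a clean equivalence of smooth stacks, here the subtlety is that a principal \emph{quasi}-polarization (allowing arbitrary Hodge type for $\lambda$, as in Remark~\ref{rem:different_polarizations}) is genuinely weaker than a $(\GSp_{2g},\mu_g)$-structure: one must show that imposing compatibility of the symplectic form with the cocharacter $\mu_g$ is the same as the \'etale-local liftability condition, and this requires understanding when the Cartier-dual pairing $G\xrightarrow{\simeq}G^*\otimes L$ is ``self-dual of the right type'' at the level of the first-order/height-one data. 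I would handle this by working on $\Disp{1}=\Gamma_{\Fzip}(\mathcal B(\GSp_{2g},\mu_g)\otimes\Field_p)$: by the explicit $F$-zip-with-$G$-structure description recalled before Theorem~\ref{thm:main_thm_body}, a principally quasi-polarized $1$-truncated Barsotti-Tate group with symplectic $F$-zip datum of type $\mu_g$ is exactly a $\GSp_{2g}$-zip of type $\mu_g$, and the general $n$ case follows by the bootstrapping of Proposition~\ref{prop:bootstrapping_coeffs} since the relevant obstruction classes live in the syntomic cohomology of the $F$-gauge $\mathfrak{gsp}_{2g,\mathcal P}$, which is controlled by the adjoint representation. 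A secondary, more bookkeeping obstacle is the careful treatment of the local system $L$ and the twist $\mathcal L_0\{1\}$: one must check that the similitude character really produces $\mathcal G_n(\mathcal L_0\{1\})\simeq\mup[p^n]\otimes L$ with the Breuil-Kisin twist correctly accounted for, which is where Proposition~\ref{prop:the_case_of_mupn_bhatt_lurie} and the definition of the Cartier duality on $F$-gauges (the twist by $\mathcal O^{\mathrm{syn}}_n\{1\}$) enter, and that the sign condition $\lambda^*=-\lambda$ corresponds to the alternating (as opposed to merely symmetric) condition on the form, i.e.\ to $\GSp$ and not $\GO$.
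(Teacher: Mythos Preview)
Your overall architecture---reduce to qrsp inputs, unpack a $(\GSp_{2g},\mu_g)$-aperture as a rank-$2g$ vector bundle $F$-gauge $\mathcal{M}$ together with a line $\mathcal{L}$ and a nondegenerate pairing, then push through $\mathcal{G}_n$ and Theorem~\ref{thm:BT_to_BT}---is the same as the paper's. The bottom row and the commutativity of the square are handled correctly.

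The genuine gap is in your identification of what distinguishes a principal \emph{polarization} from a principal \emph{quasi}-polarization. You claim the obstruction is a ``wrong Hodge type'' for the pairing, detected on the $F$-zip. But any isomorphism $\lambda:G\xrightarrow{\simeq}G^*\otimes L$ with $G\in\BT[2g,g]{n}(R)$ automatically has the correct Hodge type: $G^*\otimes L$ also has height $2g$ and dimension $g$, so the induced isomorphism $\mathcal{M}\simeq\mathcal{M}^\vee\otimes\mathcal{L}$ is between $F$-gauges of the same type $\mu_g$. There is no Hodge-type mismatch to detect. Relatedly, your final sentence conflates the anti-symmetry condition $\lambda^*=-\lambda$ with the \emph{alternating} condition: these coincide only when $p>2$.

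The actual issue, which the paper isolates, is precisely this alternating-versus-anti-symmetric gap when $p=2$. A $\GSp_{2g}$-torsor over the (classical, since $R$ is $p$-torsion free qrsp) stack $R^{\mathrm{syn}}\otimes\Int/p^n\Int$ is a vector bundle $\mathcal{M}$ with an \emph{alternating} form $\psi:\wedge^2\mathcal{M}\to\mathcal{L}$; this produces an anti-symmetric $f_\psi:\mathcal{M}\xrightarrow{\simeq}\mathcal{M}^\vee\otimes\mathcal{L}$, but when $p=2$ not every anti-symmetric $f$ arises as $f_\psi$, the obstruction being a $2$-torsion class. The paper's key observation is that an anti-symmetric $f$ of level $n$ which lifts (anti-symmetrically) to level $n+1$ is automatically of the form $f_\psi$: any $2$-torsion element in a flat $\Int/p^{n+1}\Int$-algebra maps to zero in $\Int/p^n\Int$, so the lifted obstruction dies upon restriction. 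Conversely, if $f=f_\psi$, the smoothness of $\BT[\GSp_{2g},\mu_g]{n+1}\to\BT[\GSp_{2g},\mu_g]{n}$ supplies the lift. This is exactly the \'etale-local liftability condition in the definition of $\BT[2g,\mathrm{ppol}]{n}$, and it has nothing to do with cocharacter compatibility. Your $F$-zip argument would not see this, since over $\Field_p$ with $p=2$ an anti-symmetric form on an $F$-zip is the same as a symmetric one and need not be alternating.
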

\begin{proof}
All the formal stacks involved are smooth over $\Int_p$\footnote{For $\BT[2g,\mathrm{ppol}]{n}$, this is a `well-known' result, for which we could not find a specific reference, though one should certainly be able to extract it from the results of Faltings in~\cite[\S 7]{Faltings2002-mp}. In any case, the argument shows an unconditional result for smooth inputs, and so at least yields a canonical map $\BT[\GSp_{2g},\mu_g]{n}\to \BT[2g,\mathrm{ppol}]{n}$, which is an equivalence as soon as we know the smoothness of the target.}, so it suffices to construct such a diagram on $p$-torsion free qrsp inputs $R$. Since $R^{\mathrm{syn}}\otimes\Int/p^n\Int$ is a classical stack, using standard arguments, we find that $\BT[\GSp_{2g},\mu_g]{n}(R)$ is the groupoid of tuples $(\mathcal{M},\mathcal{L},\psi)$, where:
\begin{itemize}
   \item $\mathcal{M}$ is in $\BT[\GL_{2g},\mu_g]{n}(R)$;
   \item $\mathcal{L}$ is in $\BT[\Gm,\mu_1]{n}(R)$;
   \item $\psi:\wedge^2 \mathcal{M}\to \mathcal{L}$ is a map inducing an isomorphism of $F$-gauges
   \[
    f_\psi:\mathcal{M}\xrightarrow{\simeq} \mathcal{M}^\vee\otimes \mathcal{L}.
   \]
   Here, the exterior power $\wedge^2 \mathcal{M}$ is of vector bundles over the classical stack $R^{\mathrm{syn}}\otimes\Int/p^n\Int$, constructed as a quotient of $\mathcal{M}\otimes \mathcal{M}$ in the usual way.
\end{itemize}
Note that $f_\psi$ has the property that the composition
\[
 \mathcal{M}\xrightarrow{\simeq}(\mathcal{M}^\vee\otimes \mathcal{L})^\vee\otimes \mathcal{L}\xrightarrow[\simeq]{f_\psi^\vee\otimes 1}\mathcal{M}\otimes \mathcal{L}
\]
is equal to $-f_\psi$. When $p>2$, this condition on $f_\psi$ is enough to recover the symplectic form $\psi$. 

In general, we can say the following: Suppose that $f:\mathcal{M}\to \mathcal{M}^\vee\otimes \mathcal{L}$ can be lifted to an isomorphism 
\[
\tilde{f}:\tilde{\mathcal{M}}\xrightarrow{\simeq}\tilde{\mathcal{M}}^\vee\otimes\tilde{\mathcal{L}} 
\]
of vector bundle $F$-gauges of level $n+1$ satisfying the anti-symmetry property above. Then $f = f_{\psi}$ for a unique form $\psi:\wedge^2\mathcal{M}\to \mathcal{L}$. This is because, for any flat $\Int/p^{n+1}\Int$-algebra $A$, any $2$-torsion element $a\in A$ maps to $0$ in $\Int/p^n\Int$ (this is valid for all primes $p$!). Conversely, if $f = f_\psi$ for some $\psi$, then by the smoothness of $\BT[\GSp_{2g},\mu_g]{n+1}\to \BT[\GSp_{2g},\mu_g]{n}$ we see that it admits an \'etale local lift of level $n+1$.

Now, $\mathcal{L}\simeq \mathcal{L}_0\{1\}$ for some $\mathcal{L}_0$ of Hodge-Tate weight $0$ associated with a $\Int/p^n\Int$-local system $L$ of rank $1$. Therefore, we have
\[
\mathcal{M}^\vee\otimes \mathcal{L}\simeq \mathcal{M}^*\otimes \mathcal{L}_0.
\]
Moreover, we have a canonical isomorphism
\[
\mathcal{G}(\mathcal{M}^*\otimes \mathcal{L}_0)\xrightarrow{\simeq}\underline{\Hom}(L^\vee,\mathcal{G}(\mathcal{M})^*),
\]
which is clear from the full faithfulness part of Theorem~\ref{thm:dieudonne}.

Theorem~\ref{thm:BT_to_BT} now tells us that there is an equivalence of groupoids between tuples $(\mathcal{M},\mathcal{L},f')$ and $(G,L,\lambda')$, where $(\mathcal{M},\mathcal{L})$ and $(G,L)$ are as before, and $f':\mathcal{M}\xrightarrow{\simeq}\mathcal{M}^\vee\otimes \mathcal{L}$ and $\lambda':G \to G^*\otimes L$ are isomorphisms. As discussed above, the first kind of tuple lifts (uniquely) to $\BT[\GSp_{2g},\mu_g]{n}(R)$ if and only if it can be lifted, \'etale locally on $\Spf R$, to a tuple $(\tilde{\mathcal{M}},\tilde{\mathcal{L}},\tilde{f})$ of level $n+1$. We now conclude from the definition of $\BT[2g,\mathrm{ppol}]{n}(R)$.
\end{proof}

\begin{remark}
\label{rem:hopkins_lurie}
Frank Calegari has pointed out to us a \emph{different} notion of polarization due to Hopkins and Lurie~\cite[\S 3]{lurie_hopkins}, which gives a clean moduli-theoretic definition even when $p=2$. More precisely, for every finite flat group scheme $G$ over some ring $R$, they produce (see Definition 3.2.5 of \emph{loc. cit.}) a group scheme $\mathrm{Alt}^{(2)}_G$ of \emph{alternating $2$-forms} on $G$ that is a subscheme of the scheme $\mathrm{Skew}^{(2)}_G$ of alternating bilinear maps $G\times G\to \Gm$, and is equal to the latter when $2$ is invertible in $R$. We can then define a polarization on $G$ as a section of $\mathrm{Alt}^{(2)}_G$ whose associated alternating pairing is non-degenerate.

Now, suppose that $R$ is $p$-nilpotent and that $G$ is an $n$-truncated Barsotti-Tate group associated with an $F$-gauge $\mathcal{M}\in \BT[\GL_{2g},\mu_g]{n}$; then $\mathcal{F} \defn (\wedge^2 \mathcal{M})^\vee\{1\}$ has Hodge-Tate weights $\{-1,0,1\}$. Therefore, by Theorem~\ref{thm:sections_1-bounded_representable}, $\Gamma_{\mathrm{syn}}(\mathcal{F})$ is a derived affine scheme over $R$, and its classical truncation is a group scheme whose points parameterize maps of $F$-gauges $\wedge^2 \mathcal{M}\to \mathcal{O}_n\{1\}$. 

We now expect that this group scheme is isomorphic to $\mathrm{Alt}^{(2)}_G$.
\end{remark}

\begin{remark}
Here is a further aside: Hopkins and Lurie also define, for any $d\ge 2$, the group scheme of \emph{alternating $d$-forms} $\mathrm{Alt}^{(d)}_G$, and show in~\cite[Theorem 3.5.1]{lurie_hopkins} that, when $G$ is an $n$-truncated Barsotti-Tate group of height $h$ and dimension $1$, then $\mathrm{Alt}^{(d)}_G$ is represented by an $n$-truncated Barsotti-Tate group of height $\binom{h}{d}$ and dimension $\binom{h-1}{d}$. This should correspond simply to the fact that, with the notation of the previous remark, $(\wedge^d \mathcal{M})^\vee\{1\}$ is a vector bundle $F$-gauge of level $n$ and Hodge-Tate weights $\{0,1\}$ with height $\binom{h}{d}$ and dimension $\binom{h-1}{d}$.
\end{remark}   

\subsection{The crystalline and de Rham realizations}
\label{subsec:crystalline_comparison}

Suppose that $R$ is discrete and in $\mathrm{CRing}^{p\text{-comp}}$. Given an $F$-gauge $\mathcal{M}$ in $\BT[\GL_h,\mu_d]{\infty}(R)$, we obtain a $p$-divisible group $\mathcal{G}\defn \mathcal{G}(\mathcal{M})$ over $R$ of height $h$ and dimension $d$, and the crystalline Dieudonn\'e theory of~\cite{bbm:cris_ii} associates with the Cartier dual $\mathcal{G}^*$ a crystal of vector bundles $\mathbb{D}(\mathcal{G}^*)$ over the classical big crystalline site $(R/\Int_p)_{\mathrm{crys}}$ relative to the standard divided powers on $p\Int_p\subset\Int_p$. 

On the other hand, Remark~\ref{rem:crystalline_site} tells us that $\mathcal{M}$ gives us a crystal of vector bundles over the big (animated) crystalline site of $R$. In particular, we can restrict this to $(R/\Int_p)_{\mathrm{crys}}$ to obtain another crystal of vector bundles $\mathbb{D}(\mathcal{M})$ over $(R/\Int_p)_{\mathrm{crys}}$.

\begin{theorem}
\label{thm:crystalline_comparison}
There is a canonical isomorphism $\mathbb{D}(\mathcal{G}^*)\xrightarrow{\simeq}\mathbb{D}(\mathcal{M})$.
\end{theorem}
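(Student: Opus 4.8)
The plan is to reduce to the case of semiperfectoid inputs via quasisyntomic descent, where both crystals in question can be made explicit. First I would observe that both $\mathbb{D}(\mathcal{G}^*)$ and $\mathbb{D}(\mathcal{M})$ are crystals of vector bundles on $(R/\Int_p)_{\mathrm{crys}}$, and that a map between such crystals can be checked after restriction along a quasisyntomic cover $R\to R_\infty$ with $R_\infty^{\otimes_R(\bullet+1)}$ semiperfectoid (Corollary~\ref{cor:semiperf_qsynt}); here one uses that both the crystalline Dieudonn\'e functor of~\cite{bbm:cris_ii} and the restriction-to-the-crystalline-site functor of Remark~\ref{rem:crystalline_site} are compatible with base change. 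So I would take $R$ semiperfectoid and, using the isomorphism $\Prism_R\xrightarrow{\simeq}A_{\crys}(R)$ of Remark~\ref{rem:derived_crystalline_semiperfect} together with the compatibility recorded in Remark~\ref{rem:crys_cohomology_de_rham}, identify the value of $\mathbb{D}(\mathcal{M})$ on the canonical divided power thickening $A_{\crys}(R)\twoheadrightarrow R$ with the evaluation of the $\underline{\Prism}_R$-gauge corresponding to $\mathcal{M}$ (via Proposition~\ref{prop:semiperfect_f-gauges}); concretely, by the discussion in~\S\ref{subsec:alb}, this is the admissible $\underline{\Prism}_R$-Dieudonn\'e module $(\mathsf{N},\varphi_{\mathsf{N}})$ with $\mathsf{N} = \mathsf{M}\{-1\}$.

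Next I would recall that, for $R$ semiperfectoid (or more generally quasisyntomic), Ansch\"utz--Le Bras~\cite{MR4530092} identify the prismatic Dieudonn\'e crystal of a $p$-divisible group $\mathcal{G}$ over $R$ with an admissible prismatic Dieudonn\'e module, and that this prismatic crystal specializes to the classical crystalline Dieudonn\'e crystal $\mathbb{D}(\mathcal{G}^*)$ of Berthelot--Breen--Messing after the appropriate base change along $\Prism_R\xrightarrow{\simeq}A_{\crys}(R)$ (this is part of the comparison results in~\emph{loc. cit.}, see also~\cite[Remark 4.6.5]{bhatt2022absolute}). Combining this with the explicit description of the functor $\mathcal{M}\mapsto \mathcal{M}(\mathcal{G})$ from~\S\ref{subsec:M_functor} and Proposition~\ref{prop:alb_mondal}, which identifies $\mathcal{M}(\mathcal{G}(\mathcal{M}))$ with $\mathcal{M}$, one gets that the admissible $\underline{\Prism}_R$-Dieudonn\'e module attached to $\mathcal{G}^* = \mathcal{G}(\mathcal{M})^* = \mathcal{G}(\mathcal{M}^*)$ via the Ansch\"utz--Le Bras construction coincides with $\mathcal{M}^*$-data, i.e. with the admissible module attached to $\mathcal{M}^\vee\{1\}$; dualizing once more and tracking the Breuil--Kisin twists identifies it with the admissible module attached to $\mathcal{M}$ itself. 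The upshot is that, on the canonical thickening, both crystals have the same value $\mathsf{M}\{-1\}$ together with the same Frobenius structure.

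Finally, I would promote this identification of values over the initial object $A_{\crys}(R)\twoheadrightarrow R$ to an isomorphism of crystals on the whole big crystalline site: a morphism of vector bundle crystals on $(R/\Int_p)_{\mathrm{crys}}$ is determined by its value on the (pro-)initial pro-object, and the compatibility of both constructions with the transition maps in the crystalline site (recorded for $\mathbb{D}(\mathcal{M})$ in Remark~\ref{rem:crys_cohomology_de_rham}, and a standard feature of $\mathbb{D}(\mathcal{G}^*)$) ensures that the resulting isomorphism is crystalline. I expect the main obstacle to be the bookkeeping of the various Breuil--Kisin twists and Frobenius-semilinearities: one must check that the sign/twist conventions in the definition of $\mathcal{M}\mapsto \mathcal{M}(\mathcal{G})$ (which involves $\mathcal{G}^*$ and $\mathcal{O}_n\{1\}$), in the passage from $F$-gauges to admissible $\underline{\Prism}_R$-Dieudonn\'e modules in~\S\ref{subsec:alb} (which involves $\mathsf{N} = \mathsf{M}\{-1\}$), and in the Ansch\"utz--Le Bras comparison with classical crystalline Dieudonn\'e theory all line up so that no spurious twist survives; this is essentially a compatibility-of-normalizations check, but it is where the real care is needed. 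A secondary point is to make sure the functors are genuinely compatible with base change along non-flat maps, which is where one uses the animated/derived formulation of the crystalline site from Remark~\ref{rem:crystalline_site} rather than the classical one.
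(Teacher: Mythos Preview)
Your approach is correct in outline but differs meaningfully from the paper's. Both arguments reduce to qrsp $\Field_p$-algebras and ultimately rest on the identification $\Prism_{R'}\simeq A_{\crys}(R')$ from Remark~\ref{rem:derived_crystalline_semiperfect}, but the paper takes a more direct route: after first reducing to smooth $\Field_p$-algebras (via the pro-smoothness of $\BT[\GL_h,\mu_d]{\infty}$), it works on Lau's pr-site $(R/\Int_p)_{\crys,\mathrm{pr}}$ and expresses both crystals uniformly as $\varprojlim_n\underline{\Ext}^1((u^{\crys})^{-1}\mathcal{G}^*[p^n],\,-)$, with coefficients $\mathcal{O}_{\crys}$ for $\Dieu(\mathcal{G}^*)$ and $\Prism_{\_}$ for $\Dieu(\mathcal{M})$ (the latter via the explicit inverse in Proposition~\ref{prop:alb_mondal}). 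The comparison then reduces to a single sheaf isomorphism $Ru^{\crys}_*\mathcal{O}_{\crys}\simeq \Prism_{\_}$, checked on qrsp inputs.

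By contrast, you route through the Ansch\"utz--Le~Bras comparison between prismatic and classical crystalline Dieudonn\'e theory as a black box, and then have to match admissible Dieudonn\'e modules through a chain of dualities and Breuil--Kisin twists. This works, but the paper's approach has two advantages: it is self-contained (it essentially reproves the relevant special case of the Ansch\"utz--Le~Bras comparison rather than invoking it), and it completely avoids the twist bookkeeping you flag as the main obstacle, since the comparison happens at the level of coefficient sheaves rather than at the level of the modules they produce. Your approach, on the other hand, makes clearer the conceptual point that both crystals ``are'' the same admissible Dieudonn\'e module, at the cost of relying on external input.
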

\begin{proof}
This is essentially a reinterpretation of the discussion in~\cite[\S 4.3]{MR4530092}. Here are the details.

Since $\BT[\GL_h,\mu_d]{\infty}$ is pro-smooth, we can reduce to the consideration of $p$-completely smooth $R$. Replacing $R$ with $R/pR$ does not affect the category of crystals, so we can assume that $R$ is a smooth $\Field_p$-algebra. 

Let $(R/\Int_p)_{\crys,\mathrm{pr}}$ be the big crystalline site for $\Spec R$ equipped with the pr topology given by extracting $p$-th roots: the definition of this site is due to Lau~\cite{lau2018divided} and is used in~\cite[\S 4.3]{MR4530092}. There is a map of topoi
\[
u^{\crys}:\mathrm{Shv}\left((R/\Int_p)_{\crys,\mathrm{pr}}\right)\to \mathrm{Shv}(R_{\mathrm{pr}})
\]
such that the classical Dieudonn\'e functor of~\cite{bbm:cris_ii} can be reinterpreted as the inner Ext
\[
\Dieu(\mathcal{G}^*) \simeq \varprojlim_n\underline{\Ext}^1((u^{\mathrm{crys}})^{-1}\mathcal{G}^*[p^n],\mathcal{O}_{\mathrm{crys}}),
\]
where $\mathcal{O}_{\crys}$ is the structure sheaf over $(R/\Int_p)_{\crys,\mathrm{pr}}$.

To compare this with our constructions, let us begin by invoking the explicit inverse constructed in Proposition~\ref{prop:alb_mondal}. This shows that $\mathcal{M}$ is given by
\[
\varprojlim_n\underline{\Hom}(\mathcal{G}^*[p^n],\Prism_{\_}/{}^{\mathbb{L}}p^n)\simeq \varprojlim_n\underline{\Ext}^1(\mathcal{G}^*[p^n],\Prism_{\_}).
\]
Here, we are looking at Hom (or Ext) between complexes of pr sheaves on $\Spec R$, where $\Prism_{\_}$ is the sheafification---as a derived $p$-complete complex---of the assignment $R' \mapsto \Prism_{R'}$ on quasisyntomic qrsp $R$-algebras. The final isomorphism holds because $\Prism_{\_}$ takes values in $p$-completely flat objects on qrsp inputs, while $\mathcal{G}^*[p^n]$ is of course killed by $p^n$.

To finish, therefore, we only need to know that we have a canonical isomorphism of complexes of pr sheaves
\[
Ru^{\mathrm{crys}}_* \mathcal{O}_{\crys}\xrightarrow{\simeq}\Prism_{\_}.
\]
It suffices to compare their values on qrsp quasisyntomic $R$-algebras. Here, this is a consequence of Remark~\ref{rem:derived_crystalline_semiperfect}: For any qrsp $R$-algebra $R'$, we have a canonical isomorphism $\Prism_{R'} \simeq A_{\crys}(R')$. On the other hand, $\theta_{R'}:A_{\crys}(R')\to R'$ with its divided powers is also the (pro-)initial object in $(R'/\Int_p)_{\crys,\mathrm{pr}}$.
\end{proof}

\subsubsection{}
Let $P^-_{h,d}\subset \GL_h$ be the parabolic subgroup associated the cocharacter $\mu_d$, so that $BP^-_{h,d}$ parameterizes filtered vector bundles $\Fil^\bullet \mathcal{V}$ with 
\[
\rank\gr^{i}\mathcal{V} = \begin{cases}
d&\text{if $i=-1$;}\\
h-d&\text{if $i=0$;}\\
0&\text{otherwise}.
\end{cases}
\]
Consider the natural map
\[
\BT[\GL_h,\mu_d]{\infty}\to BP^-_{h,d},
\]
obtained by taking the limit over $n$ from the diagram in Theorem~\ref{thm:main_thm_body}. 

This associates with every $F$-gauge $\mathcal{M}$ in $\BT[\GL_h,\mu_d]{\infty}(R)$ over a derived $p$-complete ring $R$ a filtered locally free $R$-module $\Fil^\bullet_{\mathrm{Hdg}}M$.

\subsubsection{}
On the other hand,  we have a short exact sequence
\begin{align}
\label{eqn:bbm_ses}
0\to \omega_{\mathcal{G}^*}\to \Dieu(\mathcal{G}^*)(R)\to \Lie(\mathcal{G})\to 0
\end{align}
of locally free $R$-modules, where $\omega_{\mathcal{G}^*}$ (resp. $\Lie(\mathcal{G})$) has rank $h-d$ (resp. $d$); see Corollaire 3.3.5 of \cite{bbm:cris_ii}.

\begin{proposition}
\label{prop:de_rham_realization}
There is a canonical isomorphism of $R$-modules $M\xrightarrow{\simeq}\Dieu(\mathcal{G}^*)(R)$ inducing isomorphisms
\[
\Fil^0_{\mathrm{Hdg}}M\xrightarrow{\simeq}\omega_{\mathcal{G}^*}\;;\; \gr^{-1}_{\mathrm{Hdg}}M\xrightarrow{\simeq}\Lie(\mathcal{G}).
\]
\end{proposition}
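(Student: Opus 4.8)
The plan is to identify the module $M$ underlying an $F$-gauge $\mathcal{M}\in\BT[\GL_h,\mu_d]{\infty}(R)$ with the value of the crystalline Dieudonn\'e crystal $\Dieu(\mathcal{G}^*)$ on the trivial (non-)thickening $R\xrightarrow{=}R$, and to match up the Hodge filtrations. By Theorem~\ref{thm:crystalline_comparison} we already have a canonical isomorphism of crystals $\Dieu(\mathcal{G}^*)\xrightarrow{\simeq}\mathbb{D}(\mathcal{M})$ over the big crystalline site $(R/\Int_p)_{\crys}$, where $\mathbb{D}(\mathcal{M})$ is obtained by restricting to the crystalline site the crystal of vector bundles attached to $\mathcal{M}$ via Remark~\ref{rem:crystalline_site}. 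So the first step is purely bookkeeping: evaluating $\mathbb{D}(\mathcal{M})$ on the object $(R\to R, R\xrightarrow{=}R, \text{triv})$ of the crystalline site. Chasing through the construction of the map $\Spf R\to R^{\Prism}$ from a divided power thickening (Lemma~\ref{lem:lift_divided_powers}, Remark~\ref{rem:crystalline_site}), the value of $\mathbb{D}(\mathcal{M})$ on the trivial thickening is the pullback of $\mathcal{M}$ along the canonical point $x_{\dR}:\Spec R\to R^{\Prism}$, composed with the de Rham embedding $j_{\dR}:R^{\Prism}\to R^{\mathcal{N}}$; that is, it is the underlying module of $(x^{\mathcal{N}}_{\dR})^*\mathcal{M}$, which is by definition $M$. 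This gives the isomorphism $M\xrightarrow{\simeq}\Dieu(\mathcal{G}^*)(R)$.

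The second step is to match the filtrations. On the $F$-gauge side, $\Fil^\bullet_{\mathrm{Hdg}}M$ is by construction the filtered module $(x^{\mathcal{N}}_{\dR})^*\mathcal{M}$ viewed as a quasi-coherent sheaf on $\Aff^1/\Gm\times\Spec R$; the non-positive part of this two-step filtration is exactly the datum of the submodule $\Fil^0_{\mathrm{Hdg}}M\subset M$ with locally free quotient $\gr^{-1}_{\mathrm{Hdg}}M$ of rank $d$. On the Barsotti-Tate side, the short exact sequence~\eqref{eqn:bbm_ses} is the Hodge filtration on the evaluation of $\Dieu(\mathcal{G}^*)$ on $R\xrightarrow{=}R$, as constructed in~\cite[\S 3.3]{bbm:cris_ii}: the submodule $\omega_{\mathcal{G}^*}$ has rank $h-d$ and the quotient $\Lie(\mathcal{G})$ has rank $d$. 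So I need to check that the isomorphism from the first step carries $\Fil^0_{\mathrm{Hdg}}M$ to $\omega_{\mathcal{G}^*}$. The cleanest way is to reduce, via the usual pro-smoothness argument, to the case where $R$ is a smooth $\Field_p$-algebra, and then to semiperfectoid (in fact qrsp) inputs by quasisyntomic descent. For such $R$, by Theorem~\ref{thm:semiperf_crys} and Proposition~\ref{prop:semiperfect_f-gauges} everything is expressed in terms of the frame $\underline{\Prism}_R$, and the Hodge filtration on the de Rham realization is visibly the conjugate-to-Nygaard comparison: the submodule $\Fil^0_{\mathrm{Hdg}}M$ is the image of $\mathsf{M}_\sigma = \Fil^0\Phi^*_{\pm}\Fil^\bullet\mathsf{M}$ under the structure map, exactly as in the explicit description of admissible $\underline{A}$-Dieudonn\'e modules in~\S\ref{subsec:alb} (equations~\eqref{eqn:fil_definition_minuscule},~\eqref{eqn:normal_forms}). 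On the Berthelot-Breen-Messing side, $\omega_{\mathcal{G}^*}$ is identified with the same image by the compatibility of the two constructions of the crystalline realization, which is precisely what Theorem~\ref{thm:crystalline_comparison} (and the discussion in~\cite[\S 4.3]{MR4530092}) provides. The induced isomorphism on quotients $\gr^{-1}_{\mathrm{Hdg}}M\xrightarrow{\simeq}\Lie(\mathcal{G})$ is then automatic.

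I expect the main obstacle to be the careful tracking of the Hodge filtration through the two very different constructions: on the $F$-gauge side it arises from the $\Aff^1/\Gm$-geometry of the Nygaard filtered prismatization via the section $x^{\mathcal{N}}_{\dR}$, while on the Barsotti-Tate side it is the classical $\omega_{\mathcal{G}^*}$ coming from the deformation theory in~\cite{bbm:cris_ii}. Concretely, one must verify that the isomorphism $\Dieu(\mathcal{G}^*)\simeq\mathbb{D}(\mathcal{M})$ of Theorem~\ref{thm:crystalline_comparison} is compatible with \emph{evaluation on the trivial thickening together with its Hodge filtration}, not merely with the underlying crystals. This compatibility should follow from the functoriality of the construction in Theorem~\ref{thm:crystalline_comparison} (which was built out of the identification $Ru^{\crys}_*\mathcal{O}_{\crys}\simeq\Prism_{\_}$) applied to the evaluation maps, together with the fact that the Hodge filtration on both sides is characterized by the same universal property (classifying lifts across the square-zero thickening $R\oplus M[1]\to R$, via Grothendieck-Messing theory on one side—Theorem~\ref{introthm:groth_mess}, Corollary~\ref{cor:groth_messing}—and the classical statement in~\cite{bbm:cris_ii} on the other). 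A slightly annoying point to handle carefully will be the Breuil-Kisin twist bookkeeping, since the admissible Dieudonn\'e module is naturally the $(-1)$-twist of the $F$-gauge; but this only shifts the weights and does not affect the filtration step after the appropriate identification, exactly as recorded in the remark following Theorem~\ref{thm:dieudonne}.
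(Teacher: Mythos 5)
Your high-level plan is the right one, and you have correctly located the crux: Theorem~\ref{thm:crystalline_comparison} as stated gives an isomorphism of \emph{crystals}, not of \emph{filtered} crystals, so the hard content is showing that the isomorphism $\Dieu(\mathcal{G}^*)\xrightarrow{\simeq}\mathbb{D}(\mathcal{M})$ carries the BBM Hodge filtration to the filtration coming from $x^{\mathcal{N}}_{\dR}$. But you do not actually carry out this step. The sentence "this compatibility should follow from the functoriality of the construction ... together with the fact that the Hodge filtration on both sides is characterized by the same universal property (classifying lifts across square-zero thickenings)" is a sketch of a strategy, not an argument. Making it precise would require identifying the two Grothendieck--Messing lifting theories under the comparison isomorphism, which is logically upstream of what you are trying to prove (the paper in fact derives the Grothendieck--Messing compatibility for $p$-divisible groups \emph{from} Proposition~\ref{prop:de_rham_realization}, not the other way around, so this direction is circular as stated).

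The paper's proof is more direct and avoids the circularity. After the same reduction to qrsp $\Field_p$-algebras, it refines the identification $Ru^{\mathrm{crys}}_*\mathcal{O}_{\crys}\simeq\Prism_{\_}$ from Theorem~\ref{thm:crystalline_comparison} to a \emph{filtered} identification $Ru^{\mathrm{crys}}_*\mathcal{J}_{\crys}\simeq\Fil^1_{\mathcal{N}}\Prism_{\_}$, where $\mathcal{J}_{\crys}=\ker(\mathcal{O}_{\crys}\to(u^{\crys})^{-1}\Ga)$ is the divided power ideal sheaf. On the BBM side the Hodge filtration is $\Fil^0\Dieu(\mathcal{G}^*)\simeq\varprojlim_n\underline{\Ext}^1((u^{\crys})^{-1}\mathcal{G}^*[p^n],\mathcal{J}_{\crys})$; on the $F$-gauge side $\Fil^0\mathsf{M}\simeq\varprojlim_n\Ext^1(\mathcal{G}^*[p^n],\Fil^1_{\mathcal{N}}\Prism_{\_})$ by the explicit inverse from Proposition~\ref{prop:alb_mondal}. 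The filtered comparison then comes down to the concrete qrsp computation that $\Fil^1_{\mathcal{N}}\Prism_{R'}\simeq\ker(\theta_{R'}:A_{\crys}(R')\to R')$, which is~\cite[Prop.\ 5.3.6]{bhatt2022absolute}. That is the missing concrete step.

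One further small slip in your second paragraph: you identify $\Fil^0_{\mathrm{Hdg}}M$ with "the image of $\mathsf{M}_\sigma=\Fil^0\Phi_\pm^*\Fil^\bullet\mathsf{M}$ under the structure map." In the notation of \S\ref{subsec:alb}, $\mathsf{M}_\sigma$ is a submodule of $\varphi^*\mathsf{M}$ and is carried isomorphically to all of $\mathsf{M}$ by $\Psi$; the Hodge filtration on $M=\mathsf{M}/I\mathsf{M}$ is the image of $\Fil^0\mathsf{M}\subset\mathsf{M}$, not of $\mathsf{M}_\sigma$. This does not affect the overall shape of the argument but would need to be corrected if you executed the qrsp computation.
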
 
\begin{proof}
We can view $\Dieu(\mathcal{G}^*)(R)$ as being a filtered locally free $R$-module supported in degrees $-1,0$. From this perspective, we see that we have written down two maps from $\BT[\GL_h,\mu_d]{\infty}$ to $BP^-_{h,d}$, and we would like to know that they are isomorphic. 

By derived descent and the pro-smoothness of the source, it suffices to verify this on qrsp $\Field_p$-algebras $R$. The proof now proceeds just as that of Theorem~\ref{thm:crystalline_comparison}, except one now keeps track of filtrations.

The Hodge filtration on the crystalline Dieudonn\'e module is given (up to degree shift) by
\[
\Fil^0\Dieu(\mathcal{G}^*)\simeq \varprojlim_n\underline{\Ext}^1((u^{\mathrm{crys}})^{-1}\mathcal{G}^*[p^n],\mathcal{J}_{\mathrm{crys}}),
\]
where $\mathcal{J}_{\mathrm{crys}}$ is the kernel of the natural surjection $\mathcal{O}_{\crys}\to (u^{\mathrm{crys}})^{-1}\Ga$. When evaluated on the trivial divided power thickening $R\xrightarrow{\mathrm{id}}R$, this yields the short exact sequence~\eqref{eqn:bbm_ses}.

Furthermore, the explicit inverse constructed in Proposition~\ref{prop:alb_mondal} shows that the restriction of $\mathcal{M}$ to $R^{\mathcal{N}}$ corresponds to a filtered $\Fil^\bullet_{\mathcal{N}}\Prism_R$-module $\Fil^\bullet \mathsf{M}$ admitting the following description: We have 
\[
\Fil^i\mathsf{M} = \varprojlim_n\Hom(\mathcal{G}^*[p^n],\Fil^{i+1}_{\mathcal{N}}\Prism_{\_}/{}^{\mathbb{L}}p^n)\simeq \varprojlim_n\Ext^1(\mathcal{G}^*[p^n],\Fil^{i+1}_{\mathcal{N}}\Prism_{\_}).
\]

The filtered module $\Fil^\bullet_{\mathrm{Hdg}}M$ is now obtained via filtered base-change along the map $\Fil^\bullet_{\mathcal{N}}\Prism_R\to \Fil^\bullet_{\mathrm{triv}}R$

To finish, therefore, we only need to know that we have canonical isomorphisms of complexes of pr sheaves
\[
Ru^{\mathrm{crys}}_* \mathcal{O}_{\crys}\xrightarrow{\simeq}\Prism_{\_}\;;\; Ru^{\mathrm{crys}}_*\mathcal{J}_{\crys} \xrightarrow{\simeq}\Fil^1_{\mathcal{N}}\Prism_{\_}.
\]
We have already verified the first. The second is a consequence of the discussion in \S~\ref{subsec:nygaard_filtered_affineness}: For any qrsp $R$-algebra $R'$, let $R'_0 = R^{',\flat}$ be the inverse perfection of $R'$ with prism structure given by $I_{R'_0}\subset W(R'_0) = \Prism_{R'_0}$. Then we have canonical isomorphisms
\[
\Fil^i_{\mathcal{N}}\Prism_{R'} \simeq \{x\in A_{\crys}(R'):\; \varphi(x)\in I_{R'_0}^iA_{\crys}(R')\}\subset A_{\crys}(R'). 
\]
On the other hand,  $\Fil^1_{\mathcal{N}}\Prism_{R'}$ is identified with the kernel of $\theta_{R'}:A_{\crys}(R')\to R'$ (see for instance~\cite[Proposition 5.3.6]{bhatt2022absolute}). 
\end{proof}

\begin{remark}
\label{rem:groth_mess_compat}
Combining Theorem~\ref{thm:crystalline_comparison} and Proposition~\ref{prop:de_rham_realization} shows that classical Grothendieck-Messing theory is compatible, via the equivalence in Theorem~\ref{thm:dieudonne}, with the Grothendieck-Messing theory for vector bundle $F$-gauges of Hodge-Tate weights $0,1$ given to us by Theorems~\ref{thm:main_thm_body} or~\ref{thm:HTwts01_representable}.
\end{remark}  

\begin{proposition}
\label{prop:lubin-tate_groups}
Let $\mathcal{O}$ and $(T_0,\mu_0)$ be as in \S~\ref{subsec:tori}. Then, for any discrete, derived $p$-complete $\mathcal{O}$-algebra $R$ in $\mathrm{CRing}$, $\BT[T_0,\mu_0]{\infty}(R)$ is canonically equivalent to the groupoid of Lubin-Tate formal $\mathcal{O}$-modules over $R$ of height $h$ and dimension $1$.
\end{proposition}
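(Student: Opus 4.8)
The plan is to identify $\BT[T_0,\mu_0]{\infty}(R)$ with the groupoid of $p$-divisible groups over $R$ equipped with an $\mathcal{O}$-module structure that makes them Lubin-Tate modules (i.e., $\mathcal{O}$ acts on the Lie algebra via the structure map $\mathcal{O}\to R$), and then to invoke the classification in Theorem~\ref{thm:dieudonne} together with the explicit description of $\BT[T_0,\mu_0]{\infty}$ from Proposition~\ref{prop:lubin-tate}. First I would recall that $T_0 = \Res_{\mathcal{O}/\Int_p}\Gm$, so that the standard map $T_0\to \GL_h$ (viewing $\mathcal{O}$ as a free $\Int_p$-module of rank $h$) carries $\mu_0$ to a cocharacter $\GL_h$-conjugate to $\mu_1$; concretely, the $T_0$-representation $\mathcal{O}$ with its multiplication action gives, under the equivalence $BT_0\simeq \Res_{\mathcal{O}/\Int_p}B\Gm$, exactly the data of an $\mathcal{O}$-module structure. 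Thus an object of $\BT[T_0,\mu_0]{\infty}(R)$ is, via the induced map $\BT[T_0,\mu_0]{\infty}\to \BT[\GL_h,\mu_0]{\infty}\simeq\BT[h,1]{\infty}$ (using Theorem~\ref{thm:BT_to_BT} in the limit over $n$ and Proposition~\ref{prop:vect_to_BT}), a $p$-divisible group $\mathcal{G}$ over $R$ of height $h$, dimension $1$, together with an action of $\mathcal{O}$.

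Next I would pin down the constraint that the cocharacter $\mu_0$ imposes on this $\mathcal{O}$-action. By Proposition~\ref{prop:lubin-tate}, a $(T_0,\mu_0)$-aperture is a line bundle $\mathcal{F}$ over $R^{\mathrm{syn}}$ (so a rank-$1$ $F$-gauge over $R$, with $\mathcal{O}$-structure) whose pointwise restriction to $\prod_{i=0}^{h-1}B\Gm\times\Spec\kappa$ has graded degree $1$ in the component $i=0$ and degree $0$ in the components $i>0$. Under the equivalence of Theorem~\ref{thm:dieudonne}, and using Proposition~\ref{prop:de_rham_realization} to identify $\gr^{-1}_{\mathrm{Hdg}}M$ with $\Lie(\mathcal{G})$, this degree condition translates precisely into the statement that $\mathcal{O}$ acts on $\Lie(\mathcal{G})$ through the structure map $\mathcal{O}\to R$ (the "$i=0$" eigenspace), rather than through one of its Frobenius twists. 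That is exactly the defining condition for $(\mathcal{G},\iota)$ to be a Lubin-Tate formal $\mathcal{O}$-module of height $h$ and dimension $1$: since $\Lie(\mathcal{G})$ is then a line bundle on $R$ on which $\mathcal{O}$ acts via $\mathcal{O}\to R$, the group $\mathcal{G}$ is connected (its Lie algebra being of full rank equal to the dimension), hence formal, and the height over $\mathcal{O}$ is $h/[\mathcal{O}[1/p]:\Rat_p] = 1$.

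The key steps, in order: (1) establish the equivalence between $(T_0,\mu_0)$-data and $p$-divisible groups with $\mathcal{O}$-action via Theorem~\ref{thm:dieudonne} and the functoriality of the construction $\BT{\infty}$ in the pair $(G,\mu)$ (here the map $(T_0,\mu_0)\to(\GL_h,\mu_0)$ induced by the regular representation of $\mathcal{O}$); (2) decode the local triviality condition of Proposition~\ref{prop:lubin-tate} into the Kottwitz-type signature condition on $\Lie(\mathcal{G})$ using Proposition~\ref{prop:de_rham_realization}; (3) check that this condition is equivalent to the Lubin-Tate condition, using connectedness (Remark~\ref{rem:connectedness}) and a height count; (4) verify that morphisms on both sides match, which is automatic since $\mathcal{G}_n$ is an equivalence of categories (not merely groupoids) compatible with the extra linear-algebraic structure. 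I expect the main obstacle to be step (2): one must carefully track the Frobenius twists in the decomposition $\mathcal{O}\otimes_{\Int_p}\kappa \simeq \prod_{i=0}^{h-1}\kappa$ and confirm that the "$i=0$ component in degree $1$" normalization on the $F$-gauge side corresponds, after applying the de Rham realization functor of Proposition~\ref{prop:de_rham_realization} and the crystalline comparison of Theorem~\ref{thm:crystalline_comparison}, to the untwisted action on the Lie algebra rather than on $\omega_{\mathcal{G}^*}$; this is a sign/normalization bookkeeping issue of the same flavor as Remark~\ref{rem:drinfeld_definition}'s discussion of why the cocharacter must be retained for tori, and getting it exactly right (including compatibility of the $B\underline{T_0(\Int_p)}$-action with twisting by rank-one lisse sheaves) is where care is needed. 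Everything else should follow formally from the results already established.
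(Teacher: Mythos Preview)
Your approach is essentially the same as the paper's: both rely on the description of $(T_0,\mu_0)$-apertures in Proposition~\ref{prop:lubin-tate} and then use Proposition~\ref{prop:de_rham_realization} to translate the graded-degree condition on the line bundle $\mathcal{F}$ into the signature condition on $\Lie(\mathcal{G})$. The paper is more economical in one respect: rather than working over general $R$, it first observes that the stack of Lubin-Tate formal $\mathcal{O}$-modules is a pro-\'etale gerbe over $\Spf\mathcal{O}$ banded by $T_0(\Int_p)=\mathcal{O}^\times$, and combines this with Proposition~\ref{prop:tori} (which says the same for $\BT[T_0,\mu_0]{\infty}$) to reduce the comparison to $p$-completely \'etale $\mathcal{O}$-algebras. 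This sidesteps your step~(4) and most of the bookkeeping you anticipate in step~(2), since one only has to produce the functor and check the Lie-algebra condition at a single point.

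There is a small but genuine gap in your step~(3). The argument ``since $\Lie(\mathcal{G})$ is a line bundle \ldots\ the group $\mathcal{G}$ is connected (its Lie algebra being of full rank equal to the dimension)'' does not work: the Lie algebra of \emph{any} $p$-divisible group of dimension $d$ is a rank-$d$ bundle, so this says nothing about connectedness. Connectedness (hence formality) of $\mathcal{G}$ must be argued differently---for instance, by checking at geometric points via a Dieudonn\'e-module slope computation (the $\mathcal{O}$-action and signature force the unique slope to be $1/h>0$), or by invoking the paper's gerbe shortcut, which makes formality automatic once one exhibits a single Lubin-Tate $(T_0,\mu_0)$-aperture. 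Also, your reference to Remark~\ref{rem:connectedness} is misplaced: that remark concerns connectedness of the moduli stack $\BT{n}$, not of the $p$-divisible group it parametrizes.
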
 
\begin{proof}
It is well-known that the stack of Lubin-Tate formal $\mathcal{O}$-modules of height $h$ and dimension $1$ is represented by a pro-finite \'etale stack over $\Spf \mathcal{O}$ that is isomorphic to the trivial gerbe banded by $T_0(\Int_p)$, the automorphism group of any such Lubin-Tate formal $\mathcal{O}$-module over $\Spf \mathcal{O}$. 

Therefore, it suffices to establish the stated equivalence on $p$-completely \'etale $\mathcal{O}$-algebras $R$. Here, it is essentially immediate from Proposition~\ref{prop:lubin-tate}, except that we have to show the following: Given $\mathcal{F}$ as in that proposition, the Lie algebra $\Lie(\mathcal{G}(\mathcal{F}))$ of $\mathcal{G}(\mathcal{F}) = \Gamma_{\mathrm{syn}}(\mathcal{F})$ has rank $1$, and the resulting action of $\mathcal{O}\otimes_{\Int_p}\mathcal{O}$ on the Lie algebra factors through the structure map $\mathcal{O}\to R$. This is immediate from Proposition~\ref{prop:de_rham_realization}.
\end{proof}

\begin{remark}
More generally, Theorem~\ref{thm:BT_to_BT} can be applied to give a description of $p$-divisible groups of unramified `EL type' with a Kottwitz condition in terms of apertures for appropriate group-theoretical data.
\end{remark}

\subsection{The \'etale realization}
\label{subsec:etale}

For completeness, we will finish by reviewing a result of Mondal.

\subsubsection{}
Let $R$ be a quasisyntomic ring. Then Bhatt shows that there is a canonical functor~\cite[Construction 6.3.1]{bhatt_lectures}
\[
T_{\et}:\mathrm{Perf}(R^{\mathrm{syn}})\to D^b_{\mathrm{lisse}}(\mathrm{Spf}(R)_\eta^{\mathrm{ad}},\Int_p)
\]
where the right hand side is the bounded derived category of lisse $\Int_p$-sheaves on the adic generic fiber $\mathrm{Spf}(R)_\eta \defn \mathrm{Spa}(R[1/p],R)$.

In particular, given $\mathcal{M}\in \mathrm{Vect}_{\{0,1\},n}^{\mathrm{syn}}(R)$, viewed as a perfect complex over $R^{\mathrm{syn}}$, we obtain an object $T_{\et}(\mathcal{M})$ on the right hand side. On the other hand, we can also consider the finite flat $p^n$-torsion group scheme $G \defn \mathcal{G}(\mathcal{M})$ over $R$, and take its adic generic fiber $G^{\ad}_{\eta}$, which can also be viewed as a perfect complex of lisse $\Int_p$-sheaves. The next result follows from~\cite[Proposition 3.99]{Mondal2024-cy}; see also~\cite[Proposition 5.25]{MR4530092}.
\begin{proposition}
\label{prop:etale_realization}
There is a canonical isomorphism $T_{\et}(\mathcal{M})\xrightarrow{\simeq}G^{\ad}_\eta$.
\end{proposition}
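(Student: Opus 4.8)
The plan is to evaluate both sides on a cofinal family of affinoid perfectoid points of $\mathrm{Spf}(R)_\eta^{\mathrm{ad}}$ and to exploit the fact that $\mathcal{G}_n(\mathcal{M})$ is, by construction, computed by syntomic cohomology. First, since $R$ is quasisyntomic, $G = \mathcal{G}(\mathcal{M})$ is a finite flat $p^n$-torsion group scheme over $R$ (Theorem~\ref{thm:g_functor}), so $G^{\ad}_\eta$ is finite \'etale over $\mathrm{Spf}(R)_\eta^{\mathrm{ad}}$ and corresponds to a lisse $\Int/p^n\Int$-sheaf concentrated in degree $0$; by construction $T_{\et}(\mathcal{M})$ is likewise a perfect complex of lisse $\Int/p^n\Int$-sheaves. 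Both objects are therefore determined, functorially, by their pullbacks along maps $x\colon \mathrm{Spa}(C,\Reg{C})\to \mathrm{Spf}(R)_\eta^{\mathrm{ad}}$ with $C$ a complete algebraically closed nonarchimedean field of residue characteristic $p$ and $\Reg{C}$ its ring of integers (a perfectoid ring), so it suffices to give a canonical, $\Aut$-equivariant identification of the two pullbacks at every such point.

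Second, at such a point the base-change compatibility built into the definition of $\mathcal{G}_n$ (see \S\ref{subsec:G_functor}) gives
\[
G^{\ad}_\eta\big\vert_x \;=\; G(\Reg{C}) \;=\; \tau^{\le 0}R\Gamma\bigl(\Reg{C}^{\,\mathrm{syn}}\otimes\Int/p^n\Int,\ \mathcal{M}\vert_{\Reg{C}}\bigr),
\]
where the equality $G^{\ad}_\eta(\mathrm{Spa}(C,\Reg{C})) = G(\Reg{C})$ uses that $G$ is finite over $R$ and $\Reg{C}$ is integrally closed in $C$. Because $\Reg{C}$ is perfectoid, hence quasiregular semiperfectoid, one may use Theorem~\ref{thm:semiperf_crys} and Theorem~\ref{thm:absolute_prismatic_cohom} to identify the right-hand complex with the homotopy kernel of $\xi\circ\sigma^*-\tau^*$ on the degree-$0$ Nygaard piece of the $\underline{\Prism}_{\Reg{C}}$-gauge attached to $\mathcal{M}$; since $C$ is algebraically closed and $\mathcal{M}$ is $1$-bounded, this complex is concentrated in degree $0$ (equivalently, a truncated Barsotti--Tate group over $\Reg{C}$ has no higher syntomic cohomology here), so $\tau^{\le 0}$ is harmless and the right-hand side equals the full $R\Gamma\bigl(\Reg{C}^{\,\mathrm{syn}}\otimes\Int/p^n\Int, \mathcal{M}\vert_{\Reg{C}}\bigr)$.

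Third, by the construction of the \'etale realization functor in \cite[Construction 6.3.1]{bhatt_lectures}, the pullback $T_{\et}(\mathcal{M})\vert_x$ is \emph{also} computed by $R\Gamma\bigl(\Reg{C}^{\,\mathrm{syn}}\otimes\Int/p^n\Int, \mathcal{M}\vert_{\Reg{C}}\bigr)$ --- this is the perfectoid case of the \'etale comparison for syntomic cohomology, and the identification is functorial in $(C,\Reg{C})$, in particular equivariant for automorphisms. Comparing the two descriptions produces the desired canonical isomorphism $T_{\et}(\mathcal{M})\xrightarrow{\simeq}G^{\ad}_\eta$, and unwinding the constructions shows it agrees with the map of \cite[Proposition 3.99]{Mondal2024-cy}.

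The main obstacle is the bookkeeping in the third step: pinning down the construction of $T_{\et}$ precisely enough to see that its stalks are literally the syntomic cohomology of the perfectoid rings $\Reg{C}$, and that the resulting comparison is natural in the point $x$, so that it glues to a morphism of lisse sheaves rather than merely a pointwise isomorphism. This is exactly the content of \cite[Proposition 3.99]{Mondal2024-cy}, which we invoke; the only additional inputs from the present paper are the description of $\mathcal{G}_n(\mathcal{M})$ as truncated syntomic cohomology and the degree-$0$ concentration over algebraically closed perfectoid points established above.
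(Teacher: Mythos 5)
The paper's entire proof of this proposition is a one-sentence appeal to \cite[Proposition 3.99]{Mondal2024-cy}, and your argument, after the scaffolding of reducing to affinoid perfectoid stalks and the degree-$0$ concentration claim, also ultimately invokes that same proposition to handle the genuinely substantive step (the perfectoid case of the syntomic--\'etale comparison for $T_{\et}$ and the fact that it glues functorially to a morphism of lisse sheaves rather than a mere pointwise bijection). So you are taking essentially the same route as the paper, just narrating the reduction that Mondal's proof performs internally.

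One small caution worth recording: the reduction to geometric points, as you phrase it, does not by itself \emph{produce} a morphism $T_{\et}(\mathcal{M})\to G^{\ad}_\eta$ --- two lisse sheaves with abstractly isomorphic stalks need not be isomorphic, and \enquote*{equivariance under $\Aut$} at each point must be upgraded to a genuine natural transformation over the pro-\'etale site before one can glue. You correctly flag this as \enquote*{the main obstacle} and delegate it to Mondal, which is exactly what the paper does; but the paragraph as written slightly overstates what the pointwise comparison alone delivers. A cleaner phrasing would be to note from the start that Mondal's Proposition 3.99 \emph{constructs} the comparison map $T_{\et}(\mathcal{M})\to \mathcal{G}(\mathcal{M})^{\ad}_\eta$ and that your stalk-wise analysis verifies it is an isomorphism (which Mondal also does), rather than presenting the construction as an output of the stalk computation.
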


\appendix

\section{Some completeness results}
\label{app:completeness}

\subsection{Tannaka duality}

We will need the following result of Bhatt and Halpern-Leistner~\cite[Theorem 5.1,Lemma 3.13]{BHATT2017576} (see also the discussion in Step 2 of the proof of~\cite[Proposition 5.1.13]{MR4560539}).
\begin{theorem}
[Tannakian reconstruction]
\label{thm:tannakian}
Suppose that $X$ is a \emph{classical} Noetherian Artin $n$-stack with quasi-affine diagonal; then, for any classical prestack $S$, there is an equivalence of $\infty$-groupoids
\[
\Map(S,X)\xrightarrow[\simeq]{f\mapsto f^*}\mathrm{Fun}^c_{\otimes}(\mathrm{APerf}^{\mathrm{cn}}(X),\mathrm{APerf}^{\mathrm{cn}}(S)).
\]
Here the right hand side is the space of symmetric monoidal functors that preserve finite colimits. In particular, if $S'\to S$ is a map of classical prestacks inducing an equivalence of symmetric monoidal $\infty$-categories
\[
\mathrm{APerf}^{\mathrm{cn}}(S)\xrightarrow{\simeq}\mathrm{APerf}^{\mathrm{cn}}(S'),
\]
then the natural map
\[
\Map(S,X)\to \Map(S',X)
\]
is an equivalence.
\end{theorem}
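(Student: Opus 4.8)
The plan is to obtain the final clause of Theorem~\ref{thm:tannakian} as a purely formal consequence of the Tannakian reconstruction equivalence asserted in the first part of the statement, which is the genuine input and which I would simply import from~\cite[Theorem 5.1, Lemma 3.13]{BHATT2017576}. Thus I would first record that, for the fixed classical Noetherian Artin $n$-stack $X$ with quasi-affine diagonal, the construction $f\mapsto f^*$ provides, for \emph{every} classical prestack $T$, an equivalence of $\infty$-groupoids
\[
\Theta_T:\Map(T,X)\xrightarrow{\simeq}\mathrm{Fun}^c_{\otimes}\bigl(\mathrm{APerf}^{\mathrm{cn}}(X),\,\mathrm{APerf}^{\mathrm{cn}}(T)\bigr),
\]
and --- crucially --- that $\Theta$ is natural in $T$. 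Concretely: for a map $g:S'\to S$ of classical prestacks, pullback $g^*:\mathrm{QCoh}(S)\to\mathrm{QCoh}(S')$ is symmetric monoidal and, being a left adjoint, preserves all colimits; by hypothesis it moreover restricts to a functor $\mathrm{APerf}^{\mathrm{cn}}(S)\to\mathrm{APerf}^{\mathrm{cn}}(S')$. Hence postcomposition $F\mapsto g^*\circ F$ carries $\mathrm{Fun}^c_{\otimes}(\mathrm{APerf}^{\mathrm{cn}}(X),\mathrm{APerf}^{\mathrm{cn}}(S))$ into $\mathrm{Fun}^c_{\otimes}(\mathrm{APerf}^{\mathrm{cn}}(X),\mathrm{APerf}^{\mathrm{cn}}(S'))$, and the coherent identification $(f\circ g)^*\simeq g^*\circ f^*$ exhibits a homotopy-commuting square whose horizontal maps are $\Theta_S$ and $\Theta_{S'}$, whose left vertical map is precomposition $f\mapsto f\circ g$, and whose right vertical map is postcomposition with $g^*$.

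Granting this, the argument finishes immediately. By assumption $g$ induces an \emph{equivalence} of symmetric monoidal $\infty$-categories $g^*:\mathrm{APerf}^{\mathrm{cn}}(S)\xrightarrow{\simeq}\mathrm{APerf}^{\mathrm{cn}}(S')$; postcomposition with an equivalence is an equivalence of functor $\infty$-groupoids, so $F\mapsto g^*\circ F$ is an equivalence. Composing with the two vertical equivalences $\Theta_S$ and $\Theta_{S'}^{-1}$, the commuting square identifies the restriction map $\Map(S,X)\to\Map(S',X)$, $f\mapsto f\circ g$, with an equivalence; hence it is itself an equivalence, which is the claim.

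I expect the only real obstacle to be the bookkeeping in the first paragraph: one must know not merely that each $\Theta_T$ is an equivalence pointwise, but that the $\Theta_T$ assemble into a commuting square with precomposition-by-$g$ and postcomposition-by-$g^*$ up to \emph{coherent} homotopy. Rather than assembling this homotopy by hand I would invoke the reconstruction theorem in the form in which it is actually proved in~\cite{BHATT2017576}, namely as an equivalence of functors on (the opposite of) the category of classical prestacks, so that the naturality is built in and the needed coherence $(f\circ g)^*\simeq g^*\circ f^*$ is just the standard base-change coherence for quasi-coherent pullback. Everything else --- that $g^*$ preserves connectivity and almost perfectness so that the functors in sight land in the right subcategories --- is either routine or is already subsumed in the hypothesis.
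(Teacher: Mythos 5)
Your proposal is correct and takes essentially the same approach the paper implicitly uses: the main equivalence $\Theta_T$ is imported wholesale from Bhatt--Halpern-Leistner, and the final clause is then the formal corollary obtained by observing that $\Theta_T$ is natural in $T$ and that postcomposition with an equivalence of $\infty$-categories is an equivalence of functor $\infty$-groupoids. The paper states the whole theorem as a citation and leaves this bookkeeping implicit, so your spelled-out version fills exactly the expected gap with no deviation in substance.
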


\subsection{Completeness}

\begin{proposition}
\label{prop:BGm_complete}
Suppose that $R$ is a complete Noetherian local ring with maximal ideal $\mx$. Then, for any relatively locally almost finitely presented derived Artin stack $\mathcal{X}\to B\Gm\times \Spec R$ with quasi-affine diagonal, the map
\[
\Map_{/B\Gm\times\Spec R}(B\Gm\times\Spec R,\mathcal{X})\to \varprojlim_m \Map_{/B\Gm\times\Spec R}(B\Gm\times\Spec R/\mx^m,\mathcal{X})
\]
is an equivalence.
\end{proposition}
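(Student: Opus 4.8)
The plan is to translate the assertion, via the Tannakian reconstruction of Theorem~\ref{thm:tannakian}, into a Grothendieck existence statement for graded almost perfect complexes over the complete local ring $R$. Write $Y=B\Gm\times\Spec R$ and $Y_m=B\Gm\times\Spec R/\mx^m$, with $i_m\colon Y_m\to Y$ the closed immersion. Since $\Map_{/Y}(Y_m,\mathcal{X})=\Map(Y_m,\mathcal{X})\times_{\Map(Y_m,Y)}\{i_m\}$ (and the analogous identity holds for $Y$ in place of $Y_m$, with $\id_Y$ in place of $i_m$), and limits commute with the inverse limit over $m$, it suffices to show that for $Z\in\{\mathcal{X},Y\}$ the natural map $\Map(Y,Z)\to\varprojlim_m\Map(Y_m,Z)$ is an equivalence. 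The first reductions: $Y$ and the $Y_m$ are classical prestacks, so mapping out of them sees only the classical truncation of the target; we may therefore replace $\mathcal{X}$ by $\mathcal{X}_{\mathrm{cl}}$, which is a classical Artin stack, locally of finite presentation over the Noetherian stack $Y$ (hence locally Noetherian) and with quasi-affine diagonal (its diagonal over $Y$ being quasi-affine and $Y$ having affine diagonal); the same properties hold trivially for $Y$ itself, and $Y,Y_m$ are quasi-compact. So both choices of $Z$ are classical Artin stacks with quasi-affine diagonal to which Theorem~\ref{thm:tannakian} applies for the quasi-compact sources $Y,Y_m$ (the Noetherian hypothesis there being relevant only on the quasi-compact image of a given map).

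Granting this, Theorem~\ref{thm:tannakian} gives $\Map(Y,Z)\simeq\mathrm{Fun}^c_{\otimes}(\mathrm{APerf}^{\mathrm{cn}}(Z),\mathrm{APerf}^{\mathrm{cn}}(Y))$ and $\Map(Y_m,Z)\simeq\mathrm{Fun}^c_{\otimes}(\mathrm{APerf}^{\mathrm{cn}}(Z),\mathrm{APerf}^{\mathrm{cn}}(Y_m))$, compatibly in $m$. The transition functors $\mathrm{APerf}^{\mathrm{cn}}(Y_{m+1})\to\mathrm{APerf}^{\mathrm{cn}}(Y_m)$ are pullbacks along closed immersions, hence preserve colimits, and the conditions defining $\mathrm{Fun}^c_{\otimes}$---preservation of finite colimits and of the symmetric monoidal structure---are stable under limits of $\infty$-categories; therefore $\mathrm{Fun}^c_{\otimes}(\mathrm{APerf}^{\mathrm{cn}}(Z),-)$ carries $\varprojlim_m\mathrm{APerf}^{\mathrm{cn}}(Y_m)$ to $\varprojlim_m\mathrm{Fun}^c_{\otimes}(\mathrm{APerf}^{\mathrm{cn}}(Z),\mathrm{APerf}^{\mathrm{cn}}(Y_m))$. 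Consequently the proposition is reduced to showing that pullback induces an equivalence of symmetric monoidal $\infty$-categories
\[
\mathrm{APerf}^{\mathrm{cn}}(Y)\xrightarrow{\ \simeq\ }\varprojlim_m\mathrm{APerf}^{\mathrm{cn}}(Y_m).
\]

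For this, use the identification $\mathrm{QCoh}(B\Gm\times\Spec A)\simeq\mathrm{GrMod}_A$ from \S\ref{subsec:graded}, under which a connective almost perfect object is a $\Int$-graded family $(M_i)_{i\in\Int}$ with each $M_i$ connective almost perfect over $A$ and, in each homotopy degree, only finitely many of the $M_i$ contributing (so that $\bigoplus_iM_i$ has finitely generated homotopy in each degree). The comparison functor is then, computed in each fixed graded degree, the functor
\[
\mathrm{APerf}^{\mathrm{cn}}(\Spec R)\to\varprojlim_m\mathrm{APerf}^{\mathrm{cn}}(\Spec R/\mx^m),\qquad M\mapsto(M\otimes_R^{\mathbb{L}}R/\mx^m)_m,
\]
which is an equivalence: this is (derived) Grothendieck existence over the complete Noetherian local ring $R$, and reduces, via Postnikov towers and finite generation of homotopy groups, to the classical equivalence $\mathrm{Coh}(R)\xrightarrow{\simeq}\varprojlim_m\mathrm{Coh}(R/\mx^m)$. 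One checks that an algebraized graded family is again almost perfect over $Y$ by a Nakayama argument: the connectivity of each $M_i$ is detected by its reduction modulo $\mx$, so in each homotopy degree only finitely many $M_i$ can be nonzero. Compatibility with the symmetric monoidal structures is automatic, every functor in sight being a pullback.

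I expect the only genuinely delicate point to be the first batch of reductions: verifying that the a priori derived, merely locally Noetherian stack $\mathcal{X}$ can be replaced by a classical (locally) Noetherian Artin stack with quasi-affine diagonal to which Theorem~\ref{thm:tannakian} literally applies. Once one is in that setting, the argument is formal (limits of functor $\infty$-categories) together with the classical Grothendieck existence theorem over a complete Noetherian local ring.
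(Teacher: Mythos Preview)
Your proof is correct and follows exactly the paper's approach: replace $\mathcal{X}$ by its classical truncation, apply Tannakian reconstruction (Theorem~\ref{thm:tannakian}), and reduce to the equivalence $\mathrm{APerf}^{\mathrm{cn}}(B\Gm\times\Spec R)\xrightarrow{\simeq}\varprojlim_m\mathrm{APerf}^{\mathrm{cn}}(B\Gm\times\Spec R/\mx^m)$. The paper states this last step as an ``easy observation'' without further comment; your elaboration via the graded description and Grothendieck existence over the complete local ring $R$ is exactly the intended argument, and your care about the Noetherian and quasi-affine-diagonal hypotheses on $\mathcal{X}_{\mathrm{cl}}$ goes beyond what the paper spells out.
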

\begin{proof}
We can replace $\mathcal{X}$ with its classical truncation $X \defn \mathcal{X}_{\mathrm{cl}}$. By Theorem~\ref{thm:tannakian}, we are now reduced to the easy observation that the functor
\[
\mathrm{APerf}^{\mathrm{cn}}(B\Gm\times \Spec R)\to \varprojlim_m\mathrm{APerf}^{\mathrm{cn}}(B\Gm\times \Spec R/\mathfrak{m}^m)
\]
is an equivalence.
\end{proof} 

\begin{proposition}
\label{prop:A1Gm_complete}
Suppose that we have $R\in \mathrm{CRing}$ and a relatively almost locally finitely presented derived Artin stack $\mathcal{X}\to \Aff^1/\Gm\times \Spec R$ with quasi-affine diagonal. Then for any Noetherian $B\in \mathrm{CRing}_{\heartsuit,R/}$ the natural map
\[
\Map_{/\Aff^1/\Gm\times \Spec R}(\Aff^1/\Gm \times \Spec B,\mathcal{X})\to\varprojlim_n\Map_{/\Aff^1/\Gm\times \Spec R}((\Aff^1/\Gm)_{(t^n=0)} \times \Spec B,\mathcal{X})
\]
is an equivalence.
\end{proposition}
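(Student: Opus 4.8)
The plan is to mimic the one-line proof of Proposition~\ref{prop:BGm_complete}, reducing via Tannakian reconstruction to a completeness statement about graded almost perfect connective modules over a polynomial ring. First I would base change along $R\to B$: the stack $\mathcal{X}_B\defn \mathcal{X}\times_{\Spec R}\Spec B$ is again relatively locally finitely presented over $\Aff^1/\Gm\times\Spec B$ with quasi-affine diagonal, and the mapping spaces in the statement are unchanged, so I may assume $R=B$. Since $B$ is discrete, both $\Aff^1/\Gm\times\Spec B$ and $(\Aff^1/\Gm)_{(t^n=0)}\times\Spec B$ are \emph{classical} prestacks; as maps out of a classical prestack factor through the classical truncation, I may replace $\mathcal{X}_B$ by $X\defn (\mathcal{X}_B)_{\mathrm{cl}}$, a classical locally Noetherian Artin $n$-stack with quasi-affine diagonal over $\Int$ (being quasi-affine over $\Aff^1/\Gm\times\Spec B$, which has affine diagonal). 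As the source prestacks here are quasi-compact with a common underlying topological space, I may pass to a quasi-compact open substack and assume $X$ Noetherian, so that Theorem~\ref{thm:tannakian} applies.

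Write $Z_B\defn \Aff^1/\Gm\times\Spec B$ and $S_n\defn (\Aff^1/\Gm)_{(t^n=0)}\times\Spec B=(\Spec B[t]/t^n)/\Gm$. Each relative mapping space $\Map_{/Z_B}(S,X)$ is the fibre of the absolute mapping space $\Map(S,X)$ over the structure map $S\to Z_B$. Since absolute mapping spaces into $X$ (and into $Z_B$) are computed by $\mathrm{Fun}^c_\otimes(\mathrm{APerf}^{\mathrm{cn}}(-),\mathrm{APerf}^{\mathrm{cn}}(S))$, and since fibres and the inverse limit over $n$ commute with this, the whole statement reduces to the single assertion that the restriction functor
\[
\mathrm{APerf}^{\mathrm{cn}}(Z_B)\longrightarrow \varprojlim_n \mathrm{APerf}^{\mathrm{cn}}(S_n)
\]
is an equivalence of symmetric monoidal $\infty$-categories. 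Under $\mathrm{QCoh}((\Spec A)/\Gm)\simeq \mathrm{GrMod}_{A}$, and $Z_B=(\Spec B[t])/\Gm$ with $t$ in degree $1$, this is the claim that graded almost perfect connective $B[t]$-modules form the limit, along base change, of graded almost perfect connective $B[t]/t^n$-modules.

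For full faithfulness, an adjunction argument (exactly as in the proof of Proposition~\ref{prop:BGm_complete}) reduces the claim to: every graded almost perfect connective $B[t]$-module $M$ is derived $t$-complete, i.e.\ $M\xrightarrow{\simeq}\varprojlim_n M\otimes^{\mathbb{L}}_{B[t]}B[t]/t^n$. This holds because $B[t]$ is Noetherian, so each $\pi_i(M)$ is a finitely generated graded $B[t]$-module, hence supported in gradings $\ge d_i$ for some $d_i$; since multiplication by $t$ raises the grading by one, the tower $\cdots\xrightarrow{t}M\xrightarrow{t}M$ is in every internal degree eventually the zero tower, so its homotopy limit vanishes, which is precisely derived $t$-completeness. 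For essential surjectivity I would set $M\defn \varprojlim_n N^{(n)}$ for a compatible system $(N^{(n)})$ and verify that $M$ is again almost perfect connective with $M\otimes^{\mathbb{L}}_{B[t]}B[t]/t^n\simeq N^{(n)}$; this is the coherent completeness of the quotient stack $Z_B$ along $B\Gm\times\Spec B$, which I would take from the general formal-GAGA machinery for quotient stacks in~\cite{MR4560539}, or else prove directly by a graded Artin--Rees/Mittag--Leffler argument using that coherent graded $B[t]$-modules are bounded below in grading.

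The main obstacle is exactly this essential-surjectivity (algebraization) step: it is where the $\Gm$-action is indispensable, since $\Aff^1$ is visibly not complete along the origin ($B[t]$ is not $t$-adically complete as an ungraded ring), whereas $[\Aff^1/\Gm]$ \emph{is} coherently complete along $B\Gm$. The reason it works is that grading-boundedness below is automatic for almost perfect connective graded $B[t]$-modules, so that the $t$-adic filtration --- which shifts the grading --- is automatically separated, complete, and Artin--Rees compatible, allowing a coherent complex on $Z_B$ to be reconstructed degree by degree from its restrictions to the infinitesimal neighbourhoods of $B\Gm\times\Spec B$.
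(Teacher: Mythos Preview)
Your approach is essentially the same as the paper's: reduce via Tannakian reconstruction to the statement that $\mathrm{APerf}^{\mathrm{cn}}(\Aff^1/\Gm\times\Spec B)\to\varprojlim_n\mathrm{APerf}^{\mathrm{cn}}((\Aff^1/\Gm)_{(t^n=0)}\times\Spec B)$ is an equivalence. The one difference worth noting is in how this last step is handled. You separate it into full faithfulness (done) and essential surjectivity (deferred to formal GAGA from~\cite{MR4560539} or a graded Artin--Rees argument). The paper instead phrases the target category as that of \emph{filtered} almost perfect complexes over $B$ (via the Rees equivalence of~\S\ref{subsec:rees_construction}), and then observes that any such $\Fil^\bullet M$ is an inverse limit of filtered complexes with $\Fil^i M\simeq 0$ for $i$ sufficiently large---this is exactly your ``bounded below in grading'' observation, but stated in a way that dispatches both full faithfulness and essential surjectivity at once, without invoking external coherent-completeness machinery. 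Your route is correct; the paper's is just more self-contained on the algebraization step.
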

\begin{proof}
Once again, we can replace $\mathcal{X}$ with its classical truncation, and so it suffices to know that the map
\[
\mathrm{APerf}^{\mathrm{cn}}(\Aff^1/\Gm\times \Spec B)\to \varprojlim_n\mathrm{APerf}^{\mathrm{cn}}((\Aff^1/\Gm)_{(t^n = 0)}\times \Spec B)
\]
is an equivalence. 

This boils down to the fact that filtered almost perfect complexes over $B$ are \emph{complete} for their filtration, since they are inverse limits of complexes $\Fil^\bullet M$ with $\Fil^iM \simeq 0$ for $i$ sufficiently large.
\end{proof}

\begin{lemma}
\label{lem:graded_weight_filtration}
Let $M_\bullet$ be a graded almost perfect module over a non-positively graded animated commutative ring $B_\bullet$. Write 
\[
\overline{M}_\bullet = B_0\otimes_{B_\bullet}M_\bullet
\]
for the graded base-change of $M_\bullet$. Then $M_\bullet$ admits a functorial increasing, exhaustive and complete filtration $\Fil_\bullet^{\mathrm{wt}}M_\bullet$ in $\mathrm{GrMod}_{B_\bullet}$ with
\[
\gr_{-i}^{\mathrm{wt}}M_\bullet \simeq B_\bullet(-i)\otimes_{B_0}\overline{M}_i.
\]
\end{lemma}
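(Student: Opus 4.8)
The plan is to construct the weight filtration $\Fil^\bullet_{\mathrm{wt}}M_\bullet$ by animation, reducing to the case where $B_\bullet$ is a graded polynomial algebra and $M_\bullet$ is finite free, where the filtration is the evident one coming from the grading on the ``irrelevant'' variables. Concretely, recall from~\S\ref{subsec:deform_1bounded_fixed_pts} that the $\infty$-category of non-positively graded animated commutative rings is the animation of the ordinary category of non-positively graded commutative rings, which has a set of compact projective generators given by graded polynomial algebras $P_\bullet = \Int[x_1,\ldots,x_r]$ with the $x_j$ placed in non-positive degrees $-d_j \le 0$; the corresponding generators on the module side are the finite free graded modules $P_\bullet(-n_1)\oplus\cdots\oplus P_\bullet(-n_s)$. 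So it suffices to define, functorially in the pair $(B_\bullet, M_\bullet)$ restricted to such generators, a decreasing filtration in $\mathrm{GrMod}_{B_\bullet}$ with the stated associated graded pieces, and then to animate. For $P_\bullet$ a graded polynomial algebra with augmentation $P_\bullet \twoheadrightarrow P_0 = \Int$ and augmentation ideal $\mf{a}_\bullet$, set $\Fil^i_{\mathrm{wt}}P_\bullet = \mf{a}_\bullet^i$ (the $i$-th power of the augmentation ideal, which is a graded submodule), so that $\gr^i_{\mathrm{wt}}P_\bullet = \mf{a}_\bullet^i/\mf{a}_\bullet^{i+1} = \Sym^i_{P_0}(\mf{a}_\bullet/\mf{a}_\bullet^2)$; since $\mf{a}_\bullet/\mf{a}_\bullet^2 = \bigoplus_j P_0(d_j)$ is exactly $P_1 = \bigoplus_j \Int\cdot x_j$ placed in its grading (nonpositive degrees), one checks $\gr^i_{\mathrm{wt}}P_\bullet \simeq P_\bullet(-i)\otimes_{P_0}\overline{P}_i$ where $\overline{P}_i = P_i$; more precisely the point is that the natural surjection $P_\bullet(-i)\otimes_{P_0} P_i \to \gr^i_{\mathrm{wt}}P_\bullet$ is an isomorphism because $P_0 = \Int$ and the symmetric power is computed termwise. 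For a finite free graded module $M_\bullet = \bigoplus_k P_\bullet(-n_k)$ one takes $\Fil^i_{\mathrm{wt}}M_\bullet = \bigoplus_k \mf{a}_\bullet^i(-n_k) = (\Fil^i_{\mathrm{wt}}P_\bullet)\otimes_{P_\bullet}M_\bullet$.

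The key steps, in order, would be: (1) define the classical construction $(B_\bullet, M_\bullet) \mapsto \Fil^\bullet_{\mathrm{wt}}M_\bullet$ on the generators as above, and verify it is functorial for maps of pairs among generators (this uses that a map of graded polynomial rings sends augmentation ideal into augmentation ideal, hence preserves $\mf{a}_\bullet$-adic filtrations); (2) animate to obtain $\Fil^\bullet_{\mathrm{wt}}M_\bullet$ as a functor on all pairs $(B_\bullet, M_\bullet)$ with $M_\bullet$ connective; (3) identify the associated graded: since both $M_\bullet \mapsto \gr^i_{\mathrm{wt}}M_\bullet$ and $M_\bullet \mapsto B_\bullet(-i)\otimes_{B_0}\overline{M}_i = B_\bullet(-i)\otimes_{B_0}(B_0\otimes_{B_\bullet}M_\bullet)_i$ are functors $\mathrm{GrMod}^{\mathrm{cn}}_{B_\bullet}\to \mathrm{GrMod}_{B_\bullet}$ preserving sifted colimits (the tensor products and truncations involved all commute with sifted colimits), and they agree on finite free graded modules by the polynomial-ring computation, they agree in general by~\cite[Prop. 5.5.8.15]{Lurie2009-oc}; (4) extend from connective $M_\bullet$ to almost perfect $M_\bullet$ by a shift (replace $M_\bullet$ by $M_\bullet[n]$ for $n$ large enough that it is connective, which is harmless since the statement is insensitive to shift, the filtration on $M_\bullet[n]$ being $\Fil^\bullet_{\mathrm{wt}}(M_\bullet[n]) = (\Fil^\bullet_{\mathrm{wt}}M_\bullet)[n]$), and note almost perfectness guarantees the relevant base changes and truncations are well-behaved so that the sifted-colimit comparison in step (3) still applies after reducing mod the Postnikov tower of $B_\bullet$.

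The main obstacle I anticipate is bookkeeping the grading conventions and the degree shifts correctly — in particular making sure that $\gr^i_{\mathrm{wt}}$ lands in the twist $B_\bullet(-i)$ rather than $B_\bullet(i)$, which amounts to tracking that the augmentation ideal of a non-positively graded polynomial ring contributes in non-positive internal degrees while the filtration index $i$ is a separate (non-negative) parameter, and that the identification $\gr^i_{\mathrm{wt}}P_\bullet \simeq P_\bullet(-i)\otimes_{P_0}P_i$ is the one compatible with the shift conventions fixed in~\S\ref{subsec:graded}. A secondary technical point is ensuring the construction is genuinely functorial at the $\infty$-categorical level and not merely on homotopy categories; this is handled automatically by the animation formalism provided step (1) is carried out as a strict functor on the ordinary categories of generators, which it is. Everything else is routine, and the lemma is essentially the graded analogue of the classical fact that the associated graded of the adic filtration on a polynomial ring is again a polynomial ring.
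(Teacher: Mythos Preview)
Your animation strategy is reasonable in outline, but the definition of the filtration on the generators is wrong, and the error stems from the false identification $\overline{P}_i = P_i$. By definition $\overline{M}_\bullet = B_0 \otimes_{B_\bullet} M_\bullet$ is the graded base change along the \emph{quotient} map $B_\bullet \to B_0$ (not restriction along the inclusion $B_0 \hookrightarrow B_\bullet$); for $M_\bullet = P_\bullet$ this gives $\overline{P}_\bullet = P_0$, concentrated in degree $0$, so $\overline{P}_i = 0$ for $i \neq 0$. The lemma therefore demands $\gr^i_{\mathrm{wt}} P_\bullet = 0$ for $i \neq 0$ and $\gr^0_{\mathrm{wt}} P_\bullet = P_\bullet$: the weight filtration on a rank-one free module must be \emph{trivial}, not the $\mathfrak{a}$-adic one. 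More generally, for $M_\bullet = \bigoplus_k P_\bullet(-n_k)$ one has $\overline{M}_\bullet = \bigoplus_k P_0(-n_k)$, so $\overline{M}_i$ is free on those generators with $n_k = i$, and the correct filtration on generators is
\[
\Fil^i_{\mathrm{wt}} M_\bullet = \bigoplus_{k:\, n_k \ge i} P_\bullet(-n_k).
\]
This is functorial: a graded $P_\bullet$-map $P_\bullet(-n) \to P_\bullet(-m)$ is given by an element of $P_{n-m}$, hence vanishes unless $n \le m$, so every map of finite free graded modules is upper-triangular for this filtration.

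With that correction your animation scheme may well go through, and it would give a different argument from the paper's. The paper does not animate: it works directly with an arbitrary almost perfect $M_\bullet$, first reduces via Postnikov truncation to the case where $\overline{M}_\bullet$ is supported in a finite range $[a,b]$ of degrees, and then inducts on $b-a$. The key step is that, $B_\bullet$ being non-positively graded, the top-degree map $M_b \to \overline{M}_b$ is an equivalence; one then maps in $B_\bullet(-b)\otimes_{B_0}\overline{M}_b$ and recurses on the cofiber, whose $\overline{(\cdot)}$ is supported in $[a,b-1]$. This direct approach avoids tracking an animated construction through two layers of generators and makes the role of almost-perfectness (guaranteeing finite support of $\overline{M}_\bullet$ after truncation) transparent.
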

\begin{proof}
Once we have such an increasing and exhaustive filtration, it will automatically be complete: Indeed, the map
\[
M_\bullet \to \hcoker(\Fil^{\mathrm{wt}}_{-i}M_\bullet \to M_\bullet)
\]
is an isomorphism in graded degrees $\le (i-1)$.

To obtain the filtration, let's make the assertion a bit more precise. We have a functor from almost perfect graded $B_\bullet$-modules to doubly graded almost perfect $B_\bullet$-modules $M_\bullet \mapsto \bigoplus_i B_\bullet(-i)\otimes_{B_0}\overline{M}_i$, which geometrically can be seen as the functor
\[
\mathrm{APerf}((\Spec B_\bullet)/\Gm)\to \mathrm{APerf}(B\Gm\times (\Spec B_\bullet)/\Gm)
\]
via pullback along the maps
\[
B\Gm\times (\Spec B_\bullet)/\Gm\to B\Gm\times \Spec B_0\to (\Spec B_\bullet)/\Gm.
\]
Here, the first map is obtained from the structure morphism $(\Spec B_\bullet)/\Gm \to \Spec B_0$, and the second map from the map of graded rings $B_\bullet \to B_0$. The claim then is that this pullback functor admits a canonical lift
\[
\mathrm{APerf}((\Spec B_\bullet)/\Gm)\to \mathrm{APerf}(\Aff^1_+/\Gm\times (\Spec B_\bullet)/\Gm)
\]
whose composition with the functor
\[
\mathrm{APerf}(\Aff^1/\Gm\times (\Spec B_\bullet)/\Gm)\to \mathrm{APerf}((\Spec B_\bullet)/\Gm)
\]
induced by the open immersion $\Gm/\Gm\times (\Spec B_\bullet)/\Gm\to \Aff^1_+/\Gm\times (\Spec B_\bullet)/\Gm$ is equipped with an isomorphism to the identity.

It suffices to prove the claim with $\mathrm{APerf}$ replaced everywhere with $\Mod[\mathrm{cn}]{}$. Here, via the process of animation (or more precisely, the compatibility of the existing functors with sifted colimits), we can also replace $\Mod[\mathrm{cn}]{}$ with $\mathrm{Vect}$.

The proof now proceeds as in~\cite[Theorem 2.44]{guo2023frobenius}. Since we are dealing with finite locally free modules, there exist $a,b\in \Int$ such that $\overline{M}_k \simeq 0$ for $k\notin [a,b]$. We now claim that $M_k\simeq 0$ for $k>b$ and that the natural map $M_b \to \overline{M}_b$ is an equivalence. To see this, observe that we have a fiber sequence
\[
B_{\leq -1}\otimes_{B_\bullet}M_\bullet \to M_\bullet \to \overline{M}_{\bullet},
\]
and note that the left hand side is a graded module supported in degrees $\leq b-1$. This last observation can be checked using the bar resolution as in~\cite[Lemma 2.45]{guo2023frobenius}. 

The proof will now proceed by induction on $b-a$. We can use the equivalence $M_b\xrightarrow{\simeq}\overline{M}_b$ to obtain a canonical map
\[
B_\bullet\otimes_{B_0}\overline{M}_b(-b)\xrightarrow{\simeq}B_\bullet\otimes_{B_0}M_b(-b)\to M_\bullet 
\]
whose cofiber $M'_\bullet$ is supported in degrees $< b$. Graded base-change to $B_0$ yields a cofiber sequence
\[
\overline{M}_b(-b)\to \overline{M}\to \overline{M}',
\]
which shows that $\overline{M}_i\xrightarrow{\simeq}\overline{M}'_i$ for $i\leq b-1$ and $\overline{M}'_i\simeq 0$ for $i>b-1$. In particular, if $a=b$, then $\overline{M}'\simeq 0$, and the argument from the previous paragraph implies that in fact $M'\simeq 0$. This verifies the base step of the induction.

For the inductive step, our hypothesis implies that $M'_\bullet$ admits an increasing filtration $\Fil_\bullet^{\mathrm{wt}}M'_\bullet$ with 
\[
\gr_{-i}^{\mathrm{wt}}M'_\bullet \simeq B_\bullet\otimes_{B_0}\overline{M}'_{i}(-i) \simeq B_\bullet\otimes_{B_0}\overline{M}_{i}(-i)
\]
for $i\le b-1$.

We now obtain our desired filtration on $M_\bullet$ by setting
\[
\Fil_{-i}^{\mathrm{wt}}M_\bullet = \begin{cases}
0&\text{if $i> b$}\\
\Fil_{-i}^{\mathrm{wt}}M'_\bullet\times_{M'_\bullet}M_\bullet&\text{if $i\le b$}.
\end{cases} 
\]
\end{proof}

\begin{proposition}
\label{prop:graded_completeness}
Suppose that $B_\bullet$ is a non-positively graded discrete commutative ring and that $\mathcal{X}\to \mathcal{Y}\defn \Spec(B_\bullet)/\Gm$ is a relatively locally almost finitely presented derived Artin stack with quasi-affine diagonal. Then the natural map
\[
\Map_{/\mathcal{Y}}(\mathcal{Y},\mathcal{X})\to \varprojlim_m \Map_{/\mathcal{Y}}(\Spec(B_{\ge -m})/\Gm,\mathcal{X})
\]
is an equivalence.
\end{proposition}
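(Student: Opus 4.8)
The plan is to follow the template of the proofs of Propositions~\ref{prop:BGm_complete} and~\ref{prop:A1Gm_complete}, reducing the statement to a completeness assertion for graded almost perfect complexes via Tannakian reconstruction. First, since the source stacks $\Spec(B_{\ge -m})/\Gm$ and $\mathcal{Y} = \Spec(B_\bullet)/\Gm$ are classical (quotients of classical affine schemes by the smooth group $\Gm$), and since mapping a classical prestack into a prestack factors through its classical truncation, one may replace $\mathcal{X}$ by its classical truncation $X \defn \mathcal{X}_{\mathrm{cl}}$, which is a classical Artin $n$-stack, locally finitely presented over $\mathcal{Y}$ with quasi-affine diagonal; since $\mathcal{Y}$ itself has affine diagonal, $X$ has quasi-affine diagonal absolutely. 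By a standard Noetherian approximation (write $B_\bullet$ as a filtered colimit of finitely generated graded subrings and use that $X$ is pulled back from a finite-type base), one reduces to the case where $B_\bullet$ is Noetherian, so that Theorem~\ref{thm:tannakian} applies to $X$ and to $\mathcal{Y}$.

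Next, for any classical prestack $S$ over $\mathcal{Y}$, Theorem~\ref{thm:tannakian} identifies $\Map_{/\mathcal{Y}}(S, X)$ with the space of symmetric monoidal colimit-preserving functors $\mathrm{APerf}^{\mathrm{cn}}(X) \to \mathrm{APerf}^{\mathrm{cn}}(S)$ whose restriction along $\mathrm{APerf}^{\mathrm{cn}}(\mathcal{Y}) \to \mathrm{APerf}^{\mathrm{cn}}(X)$ coincides with the pullback functor of $S \to \mathcal{Y}$; being a mapping space in a functor $\infty$-category, this construction carries limits in the variable $\mathrm{APerf}^{\mathrm{cn}}(S)$ to limits. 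Applying this to $S = \mathcal{Y}$ and to $S = \Spec(B_{\ge -m})/\Gm$, the proposition is reduced to showing that the natural symmetric monoidal functor
\[
\mathrm{APerf}^{\mathrm{cn}}(\Spec(B_\bullet)/\Gm) \longrightarrow \varprojlim_m \mathrm{APerf}^{\mathrm{cn}}(\Spec(B_{\ge -m})/\Gm), \qquad M_\bullet \longmapsto \bigl(B_{\ge -m}\otimes_{B_\bullet} M_\bullet\bigr)_m,
\]
is an equivalence of $\infty$-categories. Both full faithfulness and essential surjectivity follow from the completeness statement that for every connective almost perfect graded $B_\bullet$-module $M_\bullet$ the natural map $M_\bullet \to \varprojlim_m \bigl(B_{\ge -m}\otimes_{B_\bullet}M_\bullet\bigr)$ is an equivalence, together with the fact that $B_{\ge -m}\otimes_{B_\bullet}(-)$ surjects onto the full subcategory of modules concentrated in graded degrees $\ge -m$.

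The key input for this completeness is the weight filtration of Lemma~\ref{lem:graded_weight_filtration}. On associated gradeds the map in question becomes $B_\bullet(-i)\otimes_{B_0}\overline{M}_i \to B_{\ge -m}(-i)\otimes_{B_0}\overline{M}_i$, whose fiber is $B_{\le -m-1}(-i)\otimes_{B_0}\overline{M}_i$, supported in graded degrees $\le i-m-1$; hence for $M_\bullet$ bounded above in grading the weight filtration is finite and the fiber of $M_\bullet \to B_{\ge -m}\otimes_{B_\bullet}M_\bullet$ is supported in graded degrees tending to $-\infty$ with $m$, so the pro-system is eventually constant in each graded degree and completeness holds. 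For general $M_\bullet$ one uses almost perfectness: for fixed cohomological degree $j$ one has $H^j(\tau^{\ge -r}M_\bullet)\xrightarrow{\simeq}H^j(M_\bullet)$ for $r\gg 0$, and since $\tau^{\le -r-1}M_\bullet$ is $(-r-1)$-coconnective the estimate $\sup(A\otimes B)\le \sup A + \sup B$ gives $H^j(B_{\ge -m}\otimes_{B_\bullet}M_\bullet)\cong H^j(B_{\ge -m}\otimes_{B_\bullet}\tau^{\ge -r}M_\bullet)$ for $j\ge -r$, uniformly in $m$; as $\tau^{\ge -r}M_\bullet$ has only finitely many cohomology modules, each coherent hence bounded above in grading, the bounded-above case applies and yields the claim. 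The one point requiring genuine care — and the step I expect to be the main obstacle — is assembling these threads coherently: tracking that the monoidal and relative-over-$\mathcal{Y}$ structures are respected throughout the Tannakian reduction, and carrying out the Noetherian approximation so that the pro-systems on both sides are compatibly cofinal.
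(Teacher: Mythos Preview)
Your approach is the same as the paper's: reduce to the classical truncation, use Noetherian approximation, invoke Tannakian reconstruction (Theorem~\ref{thm:tannakian}), and then verify that
\[
\mathrm{APerf}^{\mathrm{cn}}(\Spec(B_\bullet)/\Gm)\to \varprojlim_m\mathrm{APerf}^{\mathrm{cn}}(\Spec(B_{\ge -m})/\Gm)
\]
is an equivalence via the weight filtration of Lemma~\ref{lem:graded_weight_filtration}. However, your handling of this last step is more convoluted than necessary. The paper simply uses the weight filtration to reduce the completeness statement $N_\bullet\xrightarrow{\simeq}\varprojlim_m B_{\ge -m}\otimes_{B_\bullet}N_\bullet$ directly to the case $N_\bullet=B_\bullet(-i)\otimes_{B_0}N'$ with $N'$ almost perfect over $B_0$, where it is immediate by comparing graded pieces; your two-stage reduction via Postnikov truncation repeats work already contained in the proof of Lemma~\ref{lem:graded_weight_filtration}, and your claim that ``bounded above in grading'' implies a finite weight filtration needs the almost-perfectness hypothesis to bound $\overline{M}_\bullet$ from below as well. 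For essential surjectivity, your one-line sketch is too thin: the paper applies the weight filtration to the compatible system $(M^{(m)}_\bullet)_m$ itself, again reducing to the free case $(B_{\ge -m}\otimes_{B_0}M)_m$ where the claim is clear. Finally, the issues you flag as the ``main obstacle'' (monoidal compatibility, Noetherian approximation) are routine; the substance lies entirely in the weight-filtration argument.
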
  
\begin{proof}
 We can replace $\mathcal{X}$ with its classical truncation, and by Noetherian approximation, we can also assume that $\mathcal{X}$ is the base-change over $\mathcal{Y}$ along a map $\mathcal{Y}\to \mathcal{Y}'$ of a locally finitely presented map of classical Noetherian stacks $\mathcal{X}'\to \mathcal{Y}'$ with quasi-affine diagonal. Thus, we are reduced via Theorem~\ref{thm:tannakian} to knowing that the functor
\[
\mathrm{APerf}^{\mathrm{cn}}(\mathcal{Y})\to \varprojlim_m \mathrm{APerf}^{\mathrm{cn}}(\mathcal{Y}_m)
\]
is an equivalence, where $\mathcal{Y}_m = (\Spec B_{\ge -m})/\Gm$. 

Let us first check full faithfulness: we need to know that, for $M_\bullet,N_\bullet$ on the left hand side, the natural maps
\begin{align*}
\Map_{B_\bullet}(M_\bullet,N_\bullet)&\to \varprojlim_m\Map_{B_{\ge -m}}(B_{\ge -m}\otimes_{B_\bullet}M_\bullet,B_{\ge -m}\otimes_{B_\bullet}N_\bullet)\\
&\xrightarrow{\simeq}\varprojlim_m\Map_{B_\bullet}(M_\bullet,B_{\ge -m}\otimes_{B_\bullet}N_\bullet)\\
&\xrightarrow{\simeq}\Map_{B_\bullet}(M_\bullet,\varprojlim_m B_{\ge -m}\otimes_{B_\bullet}N_\bullet)
\end{align*}
are isomorphisms. This comes down to knowing that the natural map
\[
N_\bullet \to \varprojlim_m B_{\ge -m}\otimes_{B_\bullet}N_\bullet
\]
is an isomorphism. Using Lemma~\ref{lem:graded_weight_filtration}, we can reduce to the case where $N_\bullet \simeq B_\bullet\otimes_{B_0}N'$, where $N'$ is an almost perfect connective complex of $B_0$-modules. This reduces us to knowing that
\[
B_{\bullet}\otimes_{B_0}N'\to  \varprojlim_{m} B_{\ge -m}\otimes_{B_0}N'
\]
is an equivalence. Comparing degree-by-degree, we are reduced to the case where $N'\simeq B_0$, where the assertion is clear.

For essential surjectivity, suppose that we are given an object on the right yielding a compatible sequence $(M^{(m)}_\bullet)_m$ of graded almost perfect connective $B_{\ge -m}$-modules $M^{(m)}_\bullet$. We need to check that $M_\bullet = \varprojlim_m M^{(m)}_\bullet$ is a graded almost perfect connective $B_\bullet$-module, and that its graded base-change to $B_{\ge -m}$ yields $M^{(m)}_\bullet$. Once again, using Lemma~\ref{lem:graded_weight_filtration}, we can reduce to the case where the compatible sequence is of the form $(B_{\ge -m}\otimes_{B_0}M)_m$ for some almost perfect connective complex of $B_0$-modules $M$. Here, the claim is obvious.
 \end{proof} 

\begin{definition}
\label{defn:graded_integrable}
Suppose that $\mathcal{X}\to \mathcal{Z}$ is a map of prestacks over $B\Gm$. We will say that $\mathcal{X}$ is \defnword{graded integrable} if, for all non-positive graded rings $B_\bullet$ as above, and for all maps $\mathcal{Y}\defn(\Spec B_\bullet)/\Gm\to \mathcal{Z}$ over $B\Gm$, the conclusion of Proposition~\ref{prop:graded_completeness} holds for the base-change $\mathcal{X}\times_{\mathcal{Z}}\mathcal{Y}\to \mathcal{Y}$.
\end{definition} 

\subsubsection{}
\label{subsubsec:filtered_completion}
Suppose that $\Fil^\bullet S$ is a non-negatively filtered animated commutative ring and that $\Fil^\bullet I\subset \Fil^\bullet S$ is a filtered ideal with the following properties:
\begin{itemize}
   \item $\Fil^iS$ is discrete for all $i\in \Int$;
   \item $\Fil^mI = 0$ for $m\le 0$; in particular, the map $\Fil^1I\to \Fil^1 S \to S$ is identically zero.
\end{itemize}
Consider the ideal $J\subset \mathrm{Rees}(\Fil^\bullet S)$ of the Rees algebra given by
\[
J = \bigoplus_{i=1}^\infty\Fil^iI\cdot t^{-i}\subset \bigoplus_{i\in \Int}\Fil^iS \cdot t^{-i}= \mathrm{Rees}(\Fil^\bullet S).
\]
This is a homogeneous ideal, and so, for each $m\ge 1$, we can consider the quotient $\mathrm{Rees}(\Fil^\bullet S)/J^m$: it corresponds to a filtered animated commutative ring structure $\Fil^\bullet_{(m)}S$ on $S$ with
\[
\Fil^i_{(m)}S = \Fil^i S/\left(\sum_{\stackrel{k_1+\ldots+k_m = i}{k_i\ge 1}}\mathrm{im}(\Fil^{k_1}I\otimes_S\cdots\otimes_S\Fil^{k_m}I)\right).
\]

\begin{proposition}
\label{prop:filtered_completeness}
Suppose that we have a relatively locally almost finitely presented derived Artin stack $\mathcal{X}\to \mathcal{Z})$ with quasi-affine diagonal. Then the natural map
\[
\Map_{/\mathcal{Z}}(\mathcal{Z},\mathcal{X})\to \varprojlim_m\Map_{/\mathcal{Z}}(\Rees(\Fil^\bullet_{(m)}S),\mathcal{X})
\]
is an equivalence.
\end{proposition}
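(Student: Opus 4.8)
The plan is to follow the template of the proofs of Propositions~\ref{prop:A1Gm_complete} and~\ref{prop:graded_completeness}: reduce the statement, via Tannakian reconstruction, to a completeness property for the $J$-adic filtration on graded modules over the Rees algebra $\mathcal{R} \defn \mathrm{Rees}(\Fil^\bullet S)$. First I would replace $\mathcal{X}$ by its classical truncation $\mathcal{X}_{\mathrm{cl}}$: since $\mathcal{R}$ and $\mathcal{R}/J^m$ are discrete, the stacks $\mathcal{Z}$ and $\Rees(\Fil^\bullet_{(m)}S)$ are classical, so every morphism into $\mathcal{X}$ out of them factors through $\mathcal{X}_{\mathrm{cl}}$. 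Then, by Noetherian approximation, I would arrange $\mathcal{X}_{\mathrm{cl}}$ to be the base-change over $\mathcal{Z}$ along a morphism $\mathcal{Z}\to\mathcal{Z}'$ of a locally finitely presented morphism $\mathcal{X}'\to\mathcal{Z}'$ of (classical) Noetherian Artin stacks with quasi-affine diagonal. For $T$ equal to $\mathcal{Z}$ or $\Rees(\Fil^\bullet_{(m)}S)$ one then has $\Map_{/\mathcal{Z}}(T,\mathcal{X})\simeq\Map_{/\mathcal{Z}'}(T,\mathcal{X}')$, and the latter is the fibre of $\Map(T,\mathcal{X}')\to\Map(T,\mathcal{Z}')$ over the structure map. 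Applying Theorem~\ref{thm:tannakian} to the Noetherian stacks $\mathcal{X}'$ and $\mathcal{Z}'$, and using that $\mathrm{Fun}^c_{\otimes}(A,-)$ commutes with limits in its target, the proposition is reduced to showing that the natural functor $\mathrm{APerf}^{\mathrm{cn}}(\mathcal{Z})\to\varprojlim_m\mathrm{APerf}^{\mathrm{cn}}(\Rees(\Fil^\bullet_{(m)}S))$ is an equivalence, i.e.\ that a connective almost perfect graded $\mathcal{R}$-module is the same datum as a compatible system of connective almost perfect graded $\mathcal{R}/J^m$-modules.

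The heart of the argument — and the step I expect to be the main obstacle — is this last completeness assertion. The key structural observation is that, because $\Fil^1 S\to S$ is the zero map, the product of any element of $\mathcal{R}$ in negative grading degree with any element of $\mathcal{R}$ again lies in negative grading degrees or vanishes; hence $J$ really is an ideal, it is precisely the subspace of $\mathcal{R}$ in negative grading degrees, $J^m$ is concentrated in grading degrees $\le -m$, and each transition map $\mathcal{R}/J^{m+1}\to\mathcal{R}/J^m$ is a square-zero extension whose kernel $J^m/J^{m+1}$ is a module over $\mathcal{R}/J\simeq S[t]$ supported in grading degrees $\le -m$. In particular the tower $\{\mathcal{R}/J^m\}$ is eventually constant in each fixed grading degree, so $\mathcal{R}\xrightarrow{\simeq}\varprojlim_m\mathcal{R}/J^m$.

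To promote this to modules: for a connective almost perfect graded $\mathcal{R}$-module $M_\bullet$, the fibre of $\mathcal{R}/J^{m+1}\otimes_{\mathcal{R}}M_\bullet\to\mathcal{R}/J^m\otimes_{\mathcal{R}}M_\bullet$ is $(J^m/J^{m+1})\otimes_{S[t]}\overline{M}_\bullet$, where $\overline{M}_\bullet\defn(\mathcal{R}/J)\otimes_{\mathcal{R}}M_\bullet$ is connective almost perfect over the non-negatively graded ring $S[t]$ and hence, at each Postnikov stage, bounded below in grading degree; comparing against a free resolution of $\overline{M}_\bullet$ concentrated in grading degrees bounded below, this fibre sits in grading degrees $\le -m-c$ for a constant $c$ independent of $m$. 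Thus $\{\mathcal{R}/J^m\otimes_{\mathcal{R}}M_\bullet\}$ is eventually constant in each fixed grading degree and truncation level. This yields both the completeness $M_\bullet\xrightarrow{\simeq}\varprojlim_m\mathcal{R}/J^m\otimes_{\mathcal{R}}M_\bullet$ — hence full faithfulness of the functor, via the adjunction argument used in Proposition~\ref{prop:graded_completeness} — and, for a compatible system $(M^{(m)}_\bullet)$, the fact that $\varprojlim_m M^{(m)}_\bullet$ is connective almost perfect over $\mathcal{R}$ with the expected base-changes, hence essential surjectivity. As in the proof of Lemma~\ref{lem:graded_weight_filtration}, the dependence of $c$ on the truncation level is dealt with by first treating each $\tau^{\le n}M_\bullet$ in the category of $n$-truncated objects and then passing to the Postnikov limit.

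A point to watch: we do not assume $S$, nor $\mathcal{R}$, is Noetherian, so $\Fil^i S$ and $J^m/J^{m+1}$ need not be finitely generated; the argument above only uses that connective almost perfect graded $S[t]$-modules have finitely generated homotopy and are bounded below in grading degree at each truncation level, which holds over any non-negatively graded ring. I would also verify carefully, at the outset, that the hypothesis $\Fil^1 S\to S=0$ indeed forces $J$ to be an ideal and to coincide with the negative-degree part of $\mathcal{R}$, since this is what makes the $J$-adic filtration behave like a grading-shift filtration and is the only place the second hypothesis enters.
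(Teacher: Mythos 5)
Your proposal is correct and follows essentially the same route as the paper: reduce, via classical truncation, Noetherian approximation and Tannakian reconstruction (Theorem~\ref{thm:tannakian}), to showing that $\mathrm{APerf}^{\mathrm{cn}}(\Rees(\Fil^\bullet S))\to\varprojlim_m\mathrm{APerf}^{\mathrm{cn}}(\Rees(\Fil^\bullet_{(m)}S))$ is an equivalence, and then conclude by the observation that the tower $\{\mathcal{R}/J^m\}$ is eventually constant in each graded degree (equivalently, $\Fil^iS\to\Fil^i_{(m)}S$ is an isomorphism for $i<m$). The paper merely records this last observation and leaves the module-theoretic deduction implicit by analogy with Proposition~\ref{prop:graded_completeness}, whereas you spell it out via the square-zero filtration and degree bounds; this is a correct elaboration of the same argument (modulo a harmless sign in your "$\le -m-c$" bound, which should read "$\le -m+c$", but either way tends to $-\infty$ with $m$, which is all that is used).
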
 
\begin{proof}
As above, we reduce to knowing that
\[
\mathrm{APerf}^{\mathrm{cn}}(\Rees(\Fil^\bullet S))\to \varprojlim_m\mathrm{APerf}^{\mathrm{cn}}(\Rees(\Fil^\bullet_{(m)}S))
\]
is an equivalence, which in turn follows easily from the observation that the map $\Fil^i S \to \Fil^i_{(m)}S$ is an equivalence for all $i<m$. 
\end{proof}

\begin{definition}
\label{defn:filtered_integrable}
Suppose that $\mathcal{X}\to \mathcal{Y}$ is a map of prestacks over $\Aff^1/\Gm$. We will say that $\mathcal{X}$ is \defnword{filtered integrable} if, for all pairs $(\Fil^\bullet S,\Fil^\bullet I)$ satisfying the above conditions, and for all maps $\mathcal{Z}\defn \Rees(\Fil^\bullet S)\to \mathcal{Y}$ over $\Aff^1/\Gm$, the conclusion of Proposition~\ref{prop:filtered_completeness} holds for the base-change $\mathcal{X}\times_{\mathcal{Y}}\mathcal{Z}\to \mathcal{Z}$.
\end{definition} 

\printbibliography

@ARTICLE{Madapusi2022-rp,
  title         = "{Derived special cycles on Shimura varieties}",
  author        = "Madapusi, Keerthi",
  month         =  dec,
  year          =  2022,
  url           = "http://arxiv.org/abs/2212.12849",
  archivePrefix = "arXiv",
  primaryClass  = "math.NT",
  eprint        = "2212.12849"
}

@ARTICLE{drinfeld2023shimurian,
      title={On Shimurian generalizations of the stack $BT_1\otimes F_p$}, 
      author={Vladimir Drinfeld},
      year={2023},
      eprint={2304.11709},
      archivePrefix={arXiv},
      primaryClass={math.AG}
}

@article{hedayatzadeh_partofard,
      title={Deformations of Prismatic Higher $(G,\mu)$-Displays over Quasi-Syntomic Rings}, 
      author={S. Mohammad Hadi Hedayatzadeh and Ali Partofard},
      year={2024},
      url={https://arxiv.org/abs/2402.12879}, 
}

@article{anschutz2022padictheorylocalmodels,
      title={On the $p$-adic theory of local models}, 
      author={Johannes Anschütz and Ian Gleason and João Lourenço and Timo Richarz},
      year={2022},
      eprint={2201.01234},
      archivePrefix={arXiv},
      primaryClass={math.AG},
      url={https://arxiv.org/abs/2201.01234}, 
}

@INCOLLECTION{Berthelot1990-gp,
  title     = "{Th\'{e}orie de Dieudonn\'{e} cristalline. III. Th\'{e}or\`{e}mes
               d'\'{e}quivalence et de pleine fid\'{e}lit\'{e}}",
  author    = "Berthelot, Pierre and Messing, William",
  booktitle = "{The Grothendieck Festschrift, Vol. I}",
  publisher = "Birkh{\"{a}}user Boston, Boston, MA",
  volume    =  86,
  pages     = "173--247",
  series    = "Progr. Math.",
  year      =  1990
}

@article{bartling2022mathcalgmudisplayslocalshtuka,
      title={$\mathcal{G}$-$\mu$-displays and local shtuka}, 
      author={Sebastian Bartling},
      year={2022},
      url={https://arxiv.org/abs/2206.13194}, 
}

@INCOLLECTION{kisin:f_crystals,
  title     = "{Crystalline representations and F-crystals}",
  booktitle = "{Algebraic Geometry and Number Theory}",
  author    = "Kisin, Mark",
  publisher = "Birkh{\"a}user Boston",
  volume    =  253,
  pages     = "459--496",
  series    = "Progress in Mathematics",
  year      =  2006
}

@article {MR0157972,
    AUTHOR = {Manin, Ju. I.},
     TITLE = {Theory of commutative formal groups over fields of finite
              characteristic},
   JOURNAL = {Uspehi Mat. Nauk},
  FJOURNAL = {Akademija Nauk SSSR i Moskovskoe Matemati\v{c}eskoe
              Ob\v{s}\v{c}estvo. Uspehi Matemati\v{c}eskih Nauk},
    VOLUME = {18},
      YEAR = {1963},
    NUMBER = {6(114)},
     PAGES = {3--90},
}

@ARTICLE{lau:displays,
  title     = "{Relations between Dieudonn{\'e} displays and crystalline
               Dieudonn{\'e} theory}",
  author    = "Lau, Eike",
  journal   = "Algebra Number Theory",
  publisher = "Mathematical Sciences Publishers",
  volume    =  8,
  number    =  9,
  pages     = "2201--2262",
  year      =  2014
}

@book {MR2867621,
     editor = {Demazure, Michel and Grothendieck, Alexander},
 title = {Sch{\'e}mas en groupes. {I}: {Propri{\'e}t{\'e}s} g{\'e}n{\'e}rales des sch{\'e}mas en groupes. {Expos{\'e}s} {I} {\`a} {VIIb}. {S{\'e}minaire} de {G{\'e}om{\'e}trie} {Alg{\'e}brique} 1962/64, dirig{\'e} par {Michel} {Demazure} et {Alexander} {Grothendieck}. {Revised} reprint},
 fseries = {Lecture Notes in Mathematics},
 series = {Lect. Notes Math.},
 issn = {0075-8434},
 volume = {151},
 year = {1970},
 publisher = {Springer, Cham},
 language = {French},
}

@article {MR0150183,
    AUTHOR = {Dold, Albrecht and Puppe, Dieter},
     TITLE = {Homologie nicht-additiver {F}unktoren. {A}nwendungen},
   JOURNAL = {Ann. Inst. Fourier (Grenoble)},
  FJOURNAL = {Universit\'{e} de Grenoble. Annales de l'Institut Fourier},
    VOLUME = {11},
      YEAR = {1961},
     PAGES = {201--312},
}

@article {MR1235021,
    AUTHOR = {{de Jong}, A. J.},
     TITLE = {Finite locally free group schemes in characteristic {$p$} and
              {D}ieudonn\'{e} modules},
   JOURNAL = {Invent. Math.},
  FJOURNAL = {Inventiones Mathematicae},
    VOLUME = {114},
      YEAR = {1993},
    NUMBER = {1},
     PAGES = {89--137},
}

@article {artin_milne,
    AUTHOR = {Artin, M. and Milne, J. S.},
     TITLE = {Duality in the flat cohomology of curves},
   JOURNAL = {Invent. Math.},
  FJOURNAL = {Inventiones Mathematicae},
    VOLUME = {35},
      YEAR = {1976},
     PAGES = {111--129},  
}

@ARTICLE{Pink2015-ye,
  title     = "{F-zips with additional structure}",
  author    = "Pink, Richard and Wedhorn, Torsten and Ziegler, Paul",
  journal   = "Pacific J. Math.",
  publisher = "Mathematical Sciences Publishers",
  volume    =  274,
  number    =  1,
  pages     = "183--236",
  month     =  mar,
  year      =  2015,
  url       = "http://msp.org/pjm/2015/274-1/p09.xhtml"
}

@article {moulinos,
    AUTHOR = {Moulinos, Tasos},
     TITLE = {The geometry of filtrations},
   JOURNAL = {Bull. Lond. Math. Soc.},
  FJOURNAL = {Bulletin of the London Mathematical Society},
    VOLUME = {53},
      YEAR = {2021},
    NUMBER = {5},
     PAGES = {1486--1499},
  }

@BOOK{berthelot_ogus:notes,
  title     = "{Notes on crystalline cohomology}",
  author    = "Berthelot, Pierre and Ogus, Arthur",
  publisher = "Princeton University Press",
  year      =  1978,
  address   = "Princeton, N.J."
}

@BOOK{Lurie2009-oc,
  title     = "{Higher Topos Theory}",
  author    = "Lurie, Jacob",
  publisher = "Princeton University Press",
  month     =  jul,
  year      =  2009,
  url       = "https://www.degruyter.com/document/doi/10.1515/9781400830558/html",
  language  = "en"
}

@ARTICLE{Bhatt2022-ee,
  title    = "{Prisms and prismatic cohomology}",
  author   = "Bhatt, Bhargav and Scholze, Peter",
  journal  = "Ann. of Math.",
  volume   =  196,
  number   =  3,
  pages    = "1135--1275",
  month    =  nov,
  year     =  2022
}

@ARTICLE{imai2023prismatic,
      title={The prismatic realization functor for Shimura varieties of abelian type}, 
      author={Naoki Imai and Hiroki Kato and Alex Youcis},
      year={2023},
      url={https://arxiv.org/abs/2310.08472}
}

@article{bst,
      title={Integral canonical models of exceptional Shimura varieties}, 
      author={Benjamin Bakker and Ananth N Shankar and Jacob Tsimerman},
      year={2025},
      eprint={2405.12392},
      url={https://arxiv.org/abs/2405.12392}, 
}

@misc{stacks-project,
  author       = {The {Stacks project authors}},
  title        = {The Stacks project},
  howpublished = {\url{https://stacks.math.columbia.edu}},
  year         = {2023},
}

@article {MR4355476,
    AUTHOR = {Lau, Eike},
     TITLE = {Higher frames and {$G$}-displays},
   JOURNAL = {Algebra Number Theory},
  FJOURNAL = {Algebra \& Number Theory},
    VOLUME = {15},
      YEAR = {2021},
    NUMBER = {9},
     PAGES = {2315--2355},
}

@article{MadYoucis,
      title={On canonicity for integral models of Shimura varieties with hyperspecial level}, 
      author={Keerthi Madapusi and Alex Youcis},
      year={2026},
      eprint={2604.06442},
      archivePrefix={arXiv},
      primaryClass={math.NT},
      url={https://arxiv.org/abs/2604.06442}, 
}

@article {MR4120808,
    AUTHOR = {B\"{u}ltel, O. and Pappas, G.},
     TITLE = {{$(G,\mu)$}-displays and {R}apoport-{Z}ink spaces},
   JOURNAL = {J. Inst. Math. Jussieu},
  FJOURNAL = {Journal of the Institute of Mathematics of Jussieu. JIMJ.
              Journal de l'Institut de Math\'{e}matiques de Jussieu},
    VOLUME = {19},
      YEAR = {2020},
    NUMBER = {4},
     PAGES = {1211--1257},
}

@ARTICLE{faltings:very_ramified,
  title   = "{Integral crystalline cohomology over very ramified valuation rings}",
  author  = "Faltings, Gerd",
  journal = "J. Amer. Math. Soc.",
  volume  =  12,
  number  =  1,
  pages   = "117--144",
  year    =  1999
}

@ARTICLE{Kim2016-fb,
  title     = "{2-adic integral canonical models}",
  author    = "Kim, Wansu and {Madapusi Pera}",
  journal   = "Forum of Mathematics, Sigma",
  publisher = "Cambridge University Press",
  volume    =  4,
  month     =  jan,
  year      =  2016,
  keywords  = "14G35 (primary); 11G18 (secondary)"
}

@article{drinfeld2024laugroupscheme,
      title={On the Lau group scheme}, 
      author={Vladimir Drinfeld},
      year={2024},
      url={https://arxiv.org/abs/2307.06194}, 
}

@ARTICLE{dejong:formal_rigid,
  title   = "{Crystalline Dieudonn{\'e} module theory via formal and rigid
             geometry}",
  author  = "{de Jong}, Aise Johan",
  journal = "Inst. Hautes {\'E}tudes Sci. Publ. Math.",
  volume  =  82,
  number  =  1,
  pages   = "5--96",
  year    =  1995
}

@book {MR0498610,
    AUTHOR = {Fontaine, Jean-Marc},
     TITLE = {Groupes {$p$}-divisibles sur les corps locaux},
    SERIES = {Ast\'{e}risque},
    VOLUME = {No. 47-48},
 PUBLISHER = {Soci\'{e}t\'{e} Math\'{e}matique de France, Paris},
      YEAR = {1977},
     PAGES = {i+262},

}

@article{Cais_Ellenberg_Zureick-Brown_2013, 
title={Random Dieudonn{\'e} modules, random $p$-divisible groups, and random curves over finite fields}, 
volume={12}, 
number={3}, 
journal={Journal of the Institute of Mathematics of Jussieu}, 
author={Cais, Bryden and Ellenberg, Jordan S. and Zureick-Brown, David}, 
year={2013}, 
pages={651--676}
}

@incollection {MR2141704,
    AUTHOR = {Oort, Frans},
     TITLE = {Newton polygons and {$p$}-divisible groups: a conjecture by
              {G}rothendieck},
      NOTE = {Automorphic forms. I},
   JOURNAL = {Ast\'{e}risque},
  FJOURNAL = {Ast\'{e}risque},
    NUMBER = {298},
      YEAR = {2005},
     PAGES = {255--269},
 }

@ARTICLE{de-Jong1998-ki,
  title     = "{Homomorphisms of Barsotti-Tate groups and crystals in positive
               characteristic}",
  author    = "{de Jong}, A J",
  journal   = "Invent. Math.",
  publisher = "Springer Science and Business Media LLC",
  volume    =  134,
  number    =  2,
  pages     = "301--333",
  month     =  oct,
  year      =  1998
}

@ARTICLE{Moonen2004-cc,
  title   = "{Serre-Tate theory for moduli spaces of PEL type}",
  author  = "Moonen, Ben",
  journal = "Ann. Sci. Ec. Norm. Super.",
  volume  =  37,
  number  =  2,
  pages   = "223--269",
  month   =  mar,
  year    =  2004
}

@article{epiga:10375,
    title      = {Derived $F$-zips},
    author     = {Can Yaylali},
    url        = {https://epiga.episciences.org/10375},
    journal    = {\'Epijournal de G\'eom\'etrie Alg\'ebrique},
    issn       = {2491-6765},
    volume     = {Volume 8},
    eid        = 5,
    year       = {2024},
}

@ARTICLE{lee_madapusi,
title = {$p$-isogenies with $G$-structure and their applications},
author = {Lee, {Si Ying} and Madapusi, Keerthi},
year = {2025},
pubstate = {in preparation}
}

@article{lau2018divided,
      title={Divided Dieudonn\'e crystals}, 
      author={Eike Lau},
      year={2018},
      url={https://arxiv.org/abs/1811.09439}   
}

@INCOLLECTION{Moonen2001-xm,
  title     = "{Group schemes with additional structures and Weyl group cosets}",
  author    = "Moonen, Ben",
  booktitle = "{Moduli of Abelian Varieties}",
  publisher = "Birkh{\"a}user Basel",
  address   = "Basel",
  pages     = "255--298",
  year      =  2001,
  url       = "http://dx.doi.org/10.1007/978-3-0348-8303-0_10"
}

@ARTICLE{lurie_hopkins,
title = {Ambidexterity in $K(n)$-local stable homotopy theory},
author = {Lurie, Jacob and Hopkins, Mike},
year = {2013},
url = {https://people.math.harvard.edu/~lurie/papers/Ambidexterity.pdf}
}

@incollection {MR1827031,
    AUTHOR = {Zink, Thomas},
     TITLE = {Windows for displays of {$p$}-divisible groups},
 BOOKTITLE = {Moduli of abelian varieties ({T}exel {I}sland, 1999)},
    SERIES = {Progr. Math.},
    VOLUME = {195},
     PAGES = {491--518},
 PUBLISHER = {Birkh\"{a}user, Basel},
      YEAR = {2001},
}

@article{drinfeld2024shimurian,
      title={Toward Shimurian analogs of Barsotti-Tate groups}, 
      author={Vladimir Drinfeld},
      year={2024},
      url={https://arxiv.org/abs/2309.02346}
}

@ARTICLE{kim:2-adic,
  title   = "{The classification of $p$-divisible groups over $2$-adic discrete
             valuation rings}",
  author  = "Kim, Wansu",
  journal = "Math. Res. Lett.",
  volume  =  19,
  number  =  1,
  pages   = "121--141",
  year    =  2012
}

@ARTICLE{kisin:abelian,
  title   = "{Integral models for Shimura varieties of abelian type}",
  author  = "Kisin, Mark",
  journal = "J. Amer. Math. Soc.",
  volume  =  23,
  number  =  4,
  pages   = "967--1012",
  year    =  2010
}

@BOOK{TV2,
  title  = "{Homotopical algebraic geometry. II. Geometric stacks and
            applications}",
  author = "To{\"e}n, Bertrand and Vezzosi, Gabriele",
  volume =  193,
  pages  = "0--0",
  year   =  2008,
  url    = "http://dx.doi.org/10.1090/memo/0902"
}

@book{lurie_thesis,
    AUTHOR = {Lurie, Jacob},
     TITLE = {Derived algebraic geometry},
      NOTE = {Thesis (Ph.D.)--Massachusetts Institute of Technology},
 PUBLISHER = {ProQuest LLC, Ann Arbor, MI},
      YEAR = {2004},
     PAGES = {(no paging)},
   MRCLASS = {Thesis},
  MRNUMBER = {2717174},
}

@article{bragg2021representability,
      title={Representability of cohomology of finite flat abelian group schemes}, 
      author={Daniel Bragg and Martin Olsson},
      year={2021},
      eprint={2107.11492},
}

@article {MR2983008,
    AUTHOR = {Lau, Eike},
     TITLE = {Smoothness of the truncated display functor},
   JOURNAL = {J. Amer. Math. Soc.},
  FJOURNAL = {Journal of the American Mathematical Society},
    VOLUME = {26},
      YEAR = {2013},
    NUMBER = {1},
     PAGES = {129--165},
}

@ARTICLE{madapusi_mondal,
  title   = "{Perfect F-gauges and finite flat group schemes}",
  author  = "Madapusi, Keerthi and Mondal, Shubhodip",
  journal = "arXiv [math.NT]",
  month   =  sep,
  year    =  2025,
  url     = "http://arxiv.org/abs/2509.01573"
}

@ARTICLE{Mondal2024-cy,
  title   = "{Dieudonn{\'e} theory via cohomology of classifying stacks II}",
  author  = "Mondal, Shubhodip",
  year    = 2024,
  url     = "https://personal.math.ubc.ca/~smondal/papers/Dieudonne_theory_via_cohomology_of_classifying_stacks_II.pdf"
}

@article{TOEN2007387,
title = {Moduli of objects in dg-categories},
journal = {Annales Scientifiques de l'\'{E}cole Normale Sup\'{e}rieure},
volume = {40},
number = {3},
pages = {387-444},
year = {2007},
issn = {0012-9593},
author = {To\"{e}n, Bertrand and Vaqui\'{e}, Michel},
}

@article{holeman2023derived,
      title={Derived $\delta$-Rings and Relative Prismatic Cohomology}, 
      author={Adam Holeman},
      year={2023},
      eprint={2303.17447},
      url = {https://arxiv.org/abs/2303.17447},
}

@misc{ito2024prismaticgdisplaysdescenttheory,
      title={Prismatic $G$-displays and descent theory}, 
      author={Kazuhiro Ito},
      year={2024},
      url={https://arxiv.org/abs/2303.15814}, 
}

@misc{bueltel2020gmuwindowsdeformationsgmudisplays,
      title={$(G,\mu)$-Windows and Deformations of $(G,\mu)$-Displays}, 
      author={Oliver Bueltel and Mohammad Hadi Hedayatzadeh},
      year={2020},
      url={https://arxiv.org/abs/2011.09163}, 
}

@BOOK{bbm:cris_ii,
  title     = "{Th{\'e}orie de Dieudonn{\'e} cristalline. II}",
  author    = "Berthelot, Pierre and Breen, Lawrence and Messing, William",
  publisher = "Springer-Verlag",
  volume    =  930,
  series    = "Lecture Notes in mathematics",
  year      =  1982,
  address   = "Berlin"
}

@ARTICLE{ito2023deformation,
      title={Deformation theory for prismatic $G$-displays}, 
      author={Kazuhiro Ito},
      year={2023},
      eprint={2306.05361},
      url={https://arxiv.org/abs/2306.05361},
}

@incollection {MR0801922,
    AUTHOR = {Illusie, Luc},
     TITLE = {D\'{e}formations de groupes de {B}arsotti-{T}ate (d'apr\`es
              {A}. {G}rothendieck)},
      NOTE = {Seminar on arithmetic bundles: the Mordell conjecture (Paris,
              1983/84)},
   JOURNAL = {Ast\'{e}risque},
  FJOURNAL = {Ast\'{e}risque},
    NUMBER = {127},
      YEAR = {1985},
     PAGES = {151--198},
}

@ARTICLE{Mao2021-jt,
  title         = "{Revisiting derived crystalline cohomology}",
  author        = "Mao, Zhouhang",
  month         =  jul,
  year          =  2021,
  url           = "http://arxiv.org/abs/2107.02921",
  archivePrefix = "arXiv",
  primaryClass  = "math.AG",
  eprint        = "2107.02921"
}

@ARTICLE{Faltings2002-mp,
  title     = "{Group Schemes with Strict O-action}",
  author    = "Faltings, G",
  journal   = "Mosc. Math. J.",
  publisher = "National Research University, Higher School of Economics (HSE)",
  volume    =  2,
  number    =  2,
  pages     = "249--279",
  year      =  2002,
}

@article {MR4600546,
    AUTHOR = {Bhatt, Bhargav and Scholze, Peter},
     TITLE = {Prismatic {$F$}-crystals and crystalline {G}alois
              representations},
   JOURNAL = {Camb. J. Math.},
  FJOURNAL = {Cambridge Journal of Mathematics},
    VOLUME = {11},
      YEAR = {2023},
    NUMBER = {2},
     PAGES = {507--562},
}

@ARTICLE{bhatt2022prismatization,
      title={The prismatization of $p$-adic formal schemes}, 
      author={Bhargav Bhatt and Jacob Lurie},
      year={2022},
      eprint={2201.06124},
      url = {https://arxiv.org/abs/2201.06124},
}

@ARTICLE{Lurie2018-kh,
  title   = "{Spectral algebraic geometry}",
  author  = "Lurie, Jacob",
  journal = "preprint",
  year    =  2018,
  url     = "https://www.math.ias.edu/~lurie/papers/SAG-rootfile.pdf"
}

@ARTICLE{bhatt2022absolute,
      title={Absolute prismatic cohomology}, 
      author={Bhargav Bhatt and Jacob Lurie},
      year={2022},
      eprint={2201.06120},
      url = {https://arxiv.org/abs/2201.06120}
}

@BOOK{Lurie2017-oh,
  title  = "{Higher Algebra}",
  author = "Lurie, Jacob",
  year   =  2017,
  url    = "https://www.math.ias.edu/~lurie/papers/HA.pdf"
}

@article{BHATT2017576,
title = {Tannaka duality revisited},
journal = {Advances in Mathematics},
volume = {316},
pages = {576-612},
year = {2017},
author = {Bhargav Bhatt and Daniel Halpern-Leistner},
}

@article{drinfeld2015algebraic,
      title={On algebraic spaces with an action of $\mathbb{G}_m$}, 
      author={Vladimir Drinfeld},
      year={2015},
      url={https://arxiv.org/abs/1308.2604},
}

@ARTICLE{marks2023prismatic,
      title={Prismatic $F$-crystals and Lubin-Tate $(\varphi_q,\Gamma)$-modules}, 
      author={Samuel Marks},
      year={2023},
      eprint={2303.07620},
      url={https://arxiv.org/abs/2303.07620}
}

@article {MR3867290,
    AUTHOR = {Lau, Eike},
     TITLE = {Dieudonn\'{e} theory over semiperfect rings and perfectoid
              rings},
   JOURNAL = {Compos. Math.},
  FJOURNAL = {Compositio Mathematica},
    VOLUME = {154},
      YEAR = {2018},
    NUMBER = {9},
     PAGES = {1974--2004},
}

@MISC {bhatt_lectures,
   AUTHOR = {Bhatt, Bhargav},
   TITLE = {Prismatic $F$-gauges},
   url = {https://www.math.ias.edu/~bhatt/teaching/mat549f22/lectures.pdf}

}

@ARTICLE{drinfeld2022prismatization,
      title={Prismatization}, 
      author={Vladimir Drinfeld},
      year={2022},
      eprint={2005.04746},
      url = {https://arxiv.org/abs/2005.04746},
}

@article{guo2023frobenius,
      title={Frobenius height of prismatic cohomology with coefficients}, 
      author={Haoyang Guo and Shizhang Li},
      year={2023},
      url = {https://arxiv.org/abs/2309.06663}
}

@BOOK{blr,
  title     = "{N{\'e}ron models}",
  author    = "Bosch, Siegfried and L{\"u}tkebohmert, Werner and Raynaud,
               Michel",
  publisher = "Springer-Verlag",
  volume    =  21,
  series    = "Ergebnisse der Mathematik und ihrer Grenzgebiete",
  year      =  1990,
  address   = "Berlin"
}

@article {MR4560539,
    AUTHOR = {Halpern-Leistner, Daniel and Preygel, Anatoly},
     TITLE = {Mapping stacks and categorical notions of properness},
   JOURNAL = {Compos. Math.},
  FJOURNAL = {Compositio Mathematica},
    VOLUME = {159},
      YEAR = {2023},
    NUMBER = {3},
     PAGES = {530--589},
      ISSN = {0010-437X,1570-5846},
   MRCLASS = {14A20 (14A30)},
  MRNUMBER = {4560539},
       DOI = {10.1112/S0010437X22007667},
       URL = {https://doi.org/10.1112/S0010437X22007667},
}

@article {MR4530092,
    AUTHOR = {Ansch\"{u}tz, Johannes and {Le Bras}, Arthur-C\'{e}sar},
     TITLE = {Prismatic {D}ieudonn\'{e} theory},
   JOURNAL = {Forum Math. Pi},
  FJOURNAL = {Forum of Mathematics. Pi},
    VOLUME = {11},
      YEAR = {2023},
     PAGES = {Paper No. e2, 92}
}

\end{document}